%% file: main.tex
\documentclass[11pt]{amsart}
\usepackage[left=1.25in,right=1.25in]{geometry}
\usepackage{amsmath,amsthm,amsfonts,amssymb}
\usepackage{tikz}
\usetikzlibrary{calc,shapes,backgrounds,arrows,positioning,decorations.pathmorphing,cd,decorations.markings,patterns}
\usepackage{subcaption}
\usepackage{hyperref}
\usepackage{cleveref}
\usepackage[style=alphabetic,doi=false,isbn=false,url=false,eprint=false]{biblatex}
\usepackage{comment}

\newcommand{\C}{\mathbb{C}}
\newcommand{\R}{\mathbb{R}}

\newcommand{\Z}{\mathbb{Z}}

\renewcommand{\P}{\mathbb{P}}
\newcommand{\F}{\mathbb{F}}

\newcommand{\cT}{\mathcal{T}}
\newcommand{\cR}{\mathcal{R}}
\newcommand{\cD}{\mathcal{D}}
\newcommand{\cH}{\mathcal{H}}

\DeclareMathOperator{\rank}{rank}
\DeclareMathOperator{\inv}{Inv}
\DeclareMathOperator{\wt}{wt}
\DeclareMathOperator{\depth}{depth}
\DeclareMathOperator{\codim}{codim}
\DeclareMathOperator{\gr}{Gr}
\DeclareMathOperator{\ev}{ev}
\DeclareMathOperator{\fl}{Fl}
\DeclareMathOperator{\Des}{Des}
\DeclareMathOperator{\Asc}{Asc}

\DeclareFontEncoding{LS1}{}{}
\DeclareFontSubstitution{LS1}{stix}{m}{n}
\DeclareSymbolFont{stixletters}{LS1}{stix}{m}{it}
\DeclareMathAccent{\cev}{\mathord}{stixletters}{"91}
\DeclareMathOperator{\flatten}{\cev{f}}
\DeclareMathOperator{\jump}{\cev{j}}
\DeclareMathOperator{\Jump}{\cev{J}}

\newcommand{\jmin}{\jump_{\min}}
\newcommand{\jmax}{\jump_{\max}}
\newcommand{\id}{\mathrm{id}}
\newcommand{\hpi}{\pi_{[n]\setminus i}}
\newcommand{\p}{\Lambda}
\newcommand{\op}{\mathrm{op}}
\newcommand{\pt}{\mathrm{pt}}
\newcommand{\ba}{\mathbf{a}}
\newcommand{\tilt}{\mathrm{tilt}}
\newcommand{\word}{\mathrm{word}}
\newcommand{\sign}{\mathrm{sign}}
\newcommand{\cyclic}{\tau}
\newcommand{\size}[1]{\#\left\{#1\right\}}
\newcommand{\bflat}{\cev{\mathrm{f}}}
\newcommand{\jasize}{\left|\Jump_\ba\right|}
\newcommand{\lesssimdot}{\mathrel{\ooalign{$\lesssim$\cr\hidewidth\raise0.45ex\hbox{$\cdot\mkern1mu$}\cr}}}

\DeclareTextFontCommand{\emph}{\color{blue}\em}

\numberwithin{equation}{section}

\newtheorem{theorem}{Theorem}[section]
\newtheorem{prop}[theorem]{Proposition}
\newtheorem{conj}[theorem]{Conjecture}
\newtheorem{lemma}[theorem]{Lemma}
\newtheorem{cor}[theorem]{Corollary}

\theoremstyle{definition}

\newtheorem{prob}[theorem]{Problem}
\newtheorem{defin}[theorem]{Definition}

\theoremstyle{remark}
\newtheorem{remark}[theorem]{Remark}
\newtheorem{ex}[theorem]{Example}
\newtheorem{notation}[theorem]{Notation}
\newtheorem{construction}[theorem]{Construction}

\theoremstyle{plain}

\title{Tilted Richardson Varieties}

\author{Jiyang Gao}
\address{Department of Mathematics, Harvard University, Cambridge, MA 02138}
\email{\href{mailto:jgao.math@alumni.harvard.edu}{{\tt jgao.math@alumni.harvard.edu}}}

\author{Shiliang Gao}
\address{Department of Mathematics, Cornell University, Ithaca, NY 14850}
\email{\href{mailto:shiliang.gao@cornell.edu}{{\tt shiliang.gao@cornell.edu}}}

\author{Yibo Gao}
\address{Beijing International Center for Mathematical Research, Peking University, Beijing 100871}
\email{\href{mailto:gaoyibo@bicmr.pku.edu.cn}{{\tt gaoyibo@bicmr.pku.edu.cn}}}

\date{\today}

\begin{document}

\begin{abstract}
The study of the flag variety $\mathrm{Fl}_n$ and its subvarieties, including Schubert and Richardson varieties, plays a fundamental role in algebraic geometry and algebraic combinatorics. In this paper, we introduce and develop the theory of tilted Richardson varieties $\mathcal{T}_{u,v}$, a new family of subvarieties of the flag variety that provides a geometric framework for the quantum Bruhat graphs. These varieties are defined for all pairs of permutations $u$ and $v$, extending the classical Richardson varieties in the case where $u\leq v$ in the Bruhat order. We establish their fundamental geometric properties, proving irreducibility and providing explicit dimension formulas. Moreover, we show that they have a well-defined stratification indexed by tilted Bruhat intervals, a generalization of classical Bruhat intervals previously introduced by Brenti, Fomin, and Postnikov. Additionally, we introduce a tilted generalization of the classical Deodhar decomposition of Richardson varieties, which leads to a combinatorial formula for tilted Kazhdan--Lusztig R-polynomials, a notion that arises naturally in our framework.

We further develop a theory of total positivity for tilted Richardson varieties. In particular, we define and study the totally nonnegative parts of tilted Richardson varieties, proving they form a CW complex. This generalizes earlier results on the totally nonnegative flag variety and answers Bj\"orner's questions regarding geometric realizations of tilted Bruhat intervals.

Finally, we establish explicit connections between tilted Richardson varieties and quantum Schubert calculus. Specifically, we prove that $\mathcal{T}_{u,v}$ coincides with minimal-degree two-point curve neighborhoods. As a result, we compute their cohomology classes and derive new relationships among Gromov--Witten invariants of the flag variety.
\end{abstract}

\maketitle
\tableofcontents

\input{tex/1-introduction}
\input{tex/2-preliminaries}
\input{tex/3-tilted-order}
\input{tex/4-richardson-def}

\input{tex/5-geometric}

\input{tex/6-deodhar}
\input{tex/7-curve-neighborhood}

\input{tex/8-projections}

\section*{Acknowledgements}
We are grateful to Alex Postnikov for introducing us to quantum Bruhat graphs. We thank Thomas Lam for pointing out the connection between R-polynomials and Hecke algebras. We thank Allen Knutson for explaining the relation between positroid varieties and curve neighborhoods in the Grassmannian. We are also grateful to Anders Buch and Leonardo Mihalcea for discussions on path Schubert polynomials and the cohomology classes of curve neighborhoods. We thank Sergey Fomin for introducing us to the Fomin--Shapiro conjecture \cite{fomin-shapiro}, and Lauren Williams for detailed explanations of \cite{rietsch-williams}. We also thank Grant Barkley, Sara Billey, Melissa Sherman-Bennett, Weihong Xu, and Alexander Yong for many helpful conversations. SG is partially supported by the NSF MSPRF under grant No. DMS-2402285. YG is partially supported by NSFC Grant No. 12471309.

\printbibliography

\end{document}

%% file: tex/1-introduction.tex
\section{Introduction}\label{sec:intro}

Hilbert's fifteenth problem \cite{hilbert}, \emph{Schubert calculus}, studies the \emph{(complete) flag variety} $\fl_n$, the space of nested linear subspaces in $\C^n$, and a family of subvarieties in $\fl_n$ known as the \emph{Schubert varieties} $X_w$, indexed by permutations $w \in S_n$. The main goal of Schubert calculus is to understand how Schubert varieties intersect, which is encoded in the cohomology ring $H^*(\fl_n)$. The ring has a natural linear basis given by the \emph{Schubert classes} $\{\sigma_w\}_{w\in S_n}$. The corresponding structure constants $c_{u,v}^w$ of $H^\ast(\fl_n)$, known as the \emph{(generalized) Littlewood--Richardson coefficients}, are nonnegative integers that enumerate points in the transverse intersection of three Schubert varieties $X_{w_0u}$, $X_{w_0v}$, and $X_{w}$ in general position. Finding a combinatorial interpretation of these numbers has been a long standing open problem. The study of flag varieties, Schubert varieties and their associated structure constants is central in algebraic geometry and algebraic combinatorics.

\emph{Quantum Schubert calculus} extends classical Schubert calculus by studying not only the intersection structure of Schubert varieties, but also rational curves of a fixed degree passing through them. This data is encoded in the \emph{(small) quantum cohomology ring} $QH^*(\fl_n)$, which can be obtained from the cohomology ring $H^\ast(\fl_n)$ by introducing quantum parameters $q_1,q_2,\dots,q_{n-1}$ and deforming the product structure accordingly. The structure constants of $QH^*(\fl_n)$ with respect to the Schubert basis, known as the \emph{3-point genus-0 Gromov--Witten invariants} $c_{u,v}^{w,d}$, are nonnegative integers that enumerate rational curves of degree $d$ passing through three Schubert varieties $X_{w_0u}$, $X_{w_0v}$, and $X_{w}$ in general position. These invariants recover the Littlewood--Richardson coefficients when $d=0$.

Various combinatorial and geometric objects play a central role in Schubert calculus, including the \emph{(strong) Bruhat order}, \emph{Bruhat intervals}, \emph{Schubert varieties}, and \emph{Richardson varieties}. Similarly, quantum Schubert calculus is associated with structures such as \emph{quantum Bruhat graphs}, \emph{tilted Bruhat intervals} \cite{BFP-tilted-Bruhat}, and \emph{two-point curve neighborhoods} \cite{BCMP}. While significant progress has been made in understanding their combinatorial \cite{BFP-tilted-Bruhat, Postnikov-quantum-Bruhat-graph} and geometric \cite{curveneighborhood, LiMihalcea, BM15, BCMP, bclm2020-quantumK} properties, a more explicit description of these ``quantum" objects would provide further insight.

\begin{prob}\label{prob:curve-neighborhood}
    Give explicit descriptions of tilted Bruhat orders, tilted Bruhat intervals, and two-point curve neighborhoods.
\end{prob}

The combinatorial structure of Bruhat orders and Bruhat intervals is also closely connected to the geometry of \emph{totally positive spaces}. In \cite{Bjorner-poset}, Bj\"orner showed that every Bruhat interval forms the face poset of a regular CW-complex. However, his construction of the CW-complex was entirely \textit{synthetic}, constructed through a succession of cell attachments. This led to a fundamental question posed in the same paper: does there exist a \textit{natural} realization of this CW-complex within the framework of complex algebraic geometry or representation theory? Fomin and Shapiro \cite{fomin-shapiro} took an initial step toward answering this question by proposing a geometric construction and formulating the Fomin--Shapiro conjecture. Williams \cite{williams-shelling} later generalized this conjecture, connecting it to the \emph{totally nonnegative flag variety} and its cell decomposition. These conjectures have since driven significant progress in the study of totally positive spaces and their regularity in recent years \cite{Lusztig98b,rietsch99,fomin-shapiro,williams-shelling,rietsch-williams,lauren-morse,hersh,GKL1,GKL2,GKL3,bao-he}.

It has been shown that every tilted Bruhat interval also forms the face poset of a regular CW-complex \cite{BFP-tilted-Bruhat}, so a natural step is to develop a tilted counterpart to this story.

\begin{prob}\label{prob:cw-complex}
    Find natural (regular) CW-complexes in the flag variety $\fl_n$ whose face posets correspond to tilted Bruhat intervals.
\end{prob}

Motivated by the two problems above, we introduce the \emph{tilted Richardson varieties} $\cT_{u,v}$ and the \emph{open tilted Richardson varieties} $\cT_{u,v}^{\circ}$, which are subvarieties of $\fl_n$ indexed by a pair of permutations $u,v\in S_n$. These varieties, explicitly defined through rank conditions on certain submatrices, recover classical Richardson varieties when $u\leq v$ in the Bruhat order. We provide multiple equivalent definitions of this family of varieties, further demonstrating the naturality of our construction.

\begin{defin}
    We provide four equivalent definitions of \emph{tilted Richardson varieties} $\cT_{u,v}$ and \emph{open tilted Richardson varieties} $\cT_{u,v}^{\circ}$ for any pair of permutations $u,v\in S_n$, using:
    \begin{enumerate}
    \item rank conditions on certain submatrices;
    \item intersections of cyclically rotated Richardson varieties;
    \item vanishing loci of Pl\"ucker coordinates;
    \item intersections of two opposite tilted Schubert cells.
    \end{enumerate}
\end{defin}

In this paper, we initiate the study of tilted Richardson varieties to answer the two aforementioned problems.
We develop a tilted analogue of the Deodhar decomposition, defining the totally nonnegative parts of tilted Richardson varieties (addressing \Cref{prob:cw-complex}), and establish connections to two-point curve neighborhoods in the minimal degree case (addressing \Cref{prob:curve-neighborhood}). Collectively, our results build a theory of tilted Richardson varieties that closely parallels the classical framework of Bruhat orders, Richardson varieties, and related Schubert geometry.

Our first main theorem relates the geometry of tilted Richardson varieties to the poset structure of tilted Bruhat intervals, and shows that these varieties possess remarkably elegant geometric properties, closely resembling those of classical Richardson varieties.

\begin{theorem}
    The (open) tilted Richardson varieties $\cT_{u,v}$ and $\cT^\circ_{u,v}$ satisfy the following geometric properties:
    \begin{enumerate}
    \item $\cT_{u,v}$ is a closed subvariety of $\fl_n$, and $\cT^\circ_{u,v}\subseteq \cT_{u,v}$ is an open subvariety;
    \item The torus fixed points in $\cT_{u,v}$ corresponds exactly to the elements of the tilted Bruhat interval $[u,v]$;
    \item $\cT_{u,v}=\bigsqcup_{[x,y]\subseteq[u,v]}\cT_{x,y}^{\circ}$ is stratified by open tilted Richardson varieties indexed by subintervals of the tilted Bruhat interval $[u,v]$;
    \item The dimensions of $\cT_{u,v}$ and $\cT_{u,v}^{\circ}$ are given by the length of the shortest path from $u$ to $v$ in the quantum Bruhat graph;
    \item The closure relation $\overline{\cT_{u,v}^{\circ}}=\cT_{u,v}$ holds.
    \item Both $\cT_{u,v}^\circ$ and $\cT_{u,v}$ are irreducible varieties. 
    \end{enumerate}
\end{theorem}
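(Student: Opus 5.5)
The plan is to use the second definition of $\cT_{u,v}$, as an intersection of cyclically rotated Richardson varieties, as the main tool, and to reduce every item to classical facts about Richardson varieties. The approach rests on the cyclic shift $\cyclic$ on $\fl_n$: it acts on Plücker coordinates by rotating column indices, with a sign. Brenti--Fomin--Postnikov show that a tilted interval $[u,v]$ becomes an honest Bruhat interval after a suitable cyclic rotation, and that it is an intersection of rotated Bruhat intervals. Concretely, I would write
\[
\cT_{u,v}=\bigcap_{k}\cyclic^{k}\bigl(R_{u_k,v_k}\bigr),
\]
where $u_k,v_k$ are the rotated permutations, and use this presentation together with the rank-condition definition throughout.

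\textbf{Items (1) and (2).} Closedness in (1) is immediate, since every defining condition is a vanishing of minors (definitions (1) and (3)). Openness of $\cT^\circ_{u,v}$ follows because it is cut out by additional nonvanishing conditions, for instance via the intersection of two opposite tilted Schubert cells in definition (4). For (2), a permutation flag $w$ lies in $\cT_{u,v}$ exactly when every rotated rank condition holds. This happens exactly when $\cyclic^{-k}w\in[u_k,v_k]$ in Bruhat order for all $k$, and by the combinatorial characterization of tilted intervals from Section 3 that is the condition $w\in[u,v]$.

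\textbf{Items (3), (4) and (5).} For (3), I would intersect the Bruhat decompositions of the rotated Schubert cells. A point of $\cT_{u,v}$ lies in a unique cell $\cyclic^{k}(X^\circ_{x_k})\cap\cyclic^k(X^{\op,\circ}_{y_k})$ for each $k$. Its $T$-fixed limits under a generic one-parameter subgroup then pin down a pair $x,y\in[u,v]$, and I would check that the resulting locus is $\cT^\circ_{x,y}$.

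For (4), I would construct an explicit parametrization of $\cT^\circ_{u,v}$ from a shortest path in the quantum Bruhat graph. Each edge contributes one parameter: a root subgroup $x_{\alpha}(t)$ for an up-step, and a twisted version for a quantum down-step. Multiplying these gives a map from $\C^{\ell}$, or an open subset of it, into $\cT^\circ_{u,v}$ that is injective on an open set, which gives the lower bound $\dim\ge\ell$, where $\ell$ is the length of the shortest path from $u$ to $v$.

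For the upper bound, I would use the rank-tracking argument of Postnikov: $\ell$ equals the length of $v\cyclic^{-k}$-type rotations in a suitable rotation. The variety embeds in a single rotated Richardson variety $\cyclic^k R_{u_k,v_k}$ whose dimension $\ell(v_k)-\ell(u_k)$ is minimized at the optimal $k$, and that minimum equals $\ell$.

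Item (5) then follows from (3), (4) and irreducibility. The boundary strata $\cT^\circ_{x,y}$ with $[x,y]\subsetneq[u,v]$ have strictly smaller dimension, and each lies in the closure because the one-parameter families from the parametrization degenerate to them as the corresponding parameters tend to $0$ or $\infty$.

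\textbf{Item (6), the main obstacle.} I expect irreducibility to be the hardest step, since an intersection of irreducible Richardson varieties need not be irreducible. My plan is to prove that $\cT^\circ_{u,v}$ is isomorphic to an open subset of affine space, or to a product of a torus and an affine space, via the path parametrization, in the manner of Deodhar components. This shows $\cT^\circ_{u,v}$ is irreducible, and $\cT_{u,v}$ is then irreducible as its closure by (5).

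If that fails, I would fall back on Kleiman transversality. The rotated Richardson varieties are $T$-stable and intersect generically transversely along the open stratum. A dimension count then shows that any extra component would have to contain a $T$-fixed point $w$ outside the interior, and this is excluded by local analysis in the chart $wU^-$, where the conditions become a Kazhdan--Lusztig-type variety.
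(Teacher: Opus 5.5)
Your framework rests on a false premise, and item (4) fails because of it. A tilted interval does not in general become a Bruhat interval after a cyclic rotation. Indeed, $\cyclic^{1-a}u\le\cyclic^{1-a}v$ holds exactly when $u[k]\le_a v[k]$ for all $k$ with one fixed $a$, and for $u=512346$, $v=246513$ no such $a$ exists ($a=5$ is forced at $k=1,2$ and fails at $k=4$). The paper notes this $[u,v]$ is not a translate of any Bruhat interval. Cyclic invariance of the unweighted quantum Bruhat graph together with $\ell(x,y)=\ell(y)-\ell(x)+2|d_{x,y}|$ gives $\ell(\cyclic^{1-a}v)-\ell(\cyclic^{1-a}u)=\ell(u,v)-2|d_{\cyclic^{1-a}u,\cyclic^{1-a}v}|$. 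So your ``minimum over rotations'' is strictly below $\ell(u,v)$ in such cases, and it is attained only by empty Richardson varieties. Nor can any single rotated Richardson variety containing $\cT_{u,v}$, such as $\pi_k^{-1}(\cR_{u[k],v[k],a_k})$, have dimension $\ell(u,v)$: being irreducible, it would then coincide with $\cT_{u,v}$, forcing $[u,v]$ to be a rotated Bruhat interval. So no embedding into one such variety yields the upper bound; the dimension drop comes from the interaction of all the conditions. The paper gets the upper bound by induction on $\ell(u,v)$ through the stratification. A component $Z$ of $\cT^\circ_{u,v}$ with $\dim Z>\ell(u,v)$ has boundary $\overline{Z}\cap\{\Delta_u\Delta_v=0\}$, which lies in strata of dimension less than $\ell(u,v)$. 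If this boundary is empty, $Z$ is projective and quasi-affine, hence a point; otherwise it has dimension at least $\dim Z-1\ge\ell(u,v)$, a contradiction. Your lower bound is also unsupported: nothing shows that products of root-subgroup elements along a quantum Bruhat path, with an unspecified ``twisted'' factor on quantum edges, land in $\cT^\circ_{u,v}$ or are generically injective. Naive products along Bruhat chains already leave the Richardson variety classically. The paper instead counts codimension in $\cT^\circ_{u,v}=X^\circ_{v,\ba}\cap\Omega^\circ_{u,\ba}$ for $u\lesssim_\ba v$, using $X^\circ_{v,\ba}\cong\C^{\ell_\ba(v)}$, $\Omega^\circ_{u,\ba}\cong\C^{\binom{n}{2}-\ell_\ba(u)}$ and $\ell_\ba(v)-\ell_\ba(u)=\ell(u,v)$.

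The remaining items defer the essential step. In (3), each $F_k$ lies in some $\cR^\circ_{I_k,J_k,a_k}$, but to assemble $x,y$ you must prove $I_{k-1}\subseteq I_k$ and $J_{k-1}\subseteq J_k$. This is not automatic when $a_{k-1}\ne a_k$, and a single generic one-parameter subgroup sees only one linear order on $[n]$, so its limits cannot read off strata for several rotations at once. The paper chooses $\ba$ with $u\lesssim_\ba v$ precisely so that the condition on $F_{k-1}$ can be read in the order $\le_{a_k}$ (using $\cR_{I,J,r}=\cR_{I,J,r'}$ when $I\le_r J$ and $I\le_{r'}J$). It then applies the incidence Pl\"ucker relation to $\Delta_{I_{k-1}}\Delta_{I_k}$. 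In (5), smaller dimension does not stop a boundary stratum from being an extra component. Your degeneration argument would need limits of an unconstructed family to fill out every codimension-one stratum. The paper instead proves $\cT_{u,v}\cap x\Omega^\circ_{\id}\cap u\Omega^\circ_{\id}=X^\circ_{v,x,\ba}\cap\Omega^\circ_{u,\ba}$ via skew tilted Schubert cells. This chart is therefore cut out by $\binom{n}{2}-\ell(u,v)$ equations in an affine space, so it is equidimensional of dimension $\ell(u,v)$, which rules out $\cT^\circ_{u,x}$ as a component. In (6), $\cT^\circ_{u,v}$ is not in general a torus times an affine space. This fails already classically: $\#\cR^\circ_{e,w_0}(\F_q)=(q-1)(q^2-q+1)$ in $S_3$. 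What is true is a tilted Deodhar decomposition, from a regular tilted reduced word, with a unique top cell $(\C^\ast)^{\ell(u,v)}$. Irreducibility follows only after combining this with equidimensionality from (4), since a parametrization of a dense piece says nothing about components in its complement. Your Kleiman fallback does not apply, because the rotated Richardson varieties are fixed, not general translates.
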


Our proof differs substantially from Richardson’s original proof of these results for Richardson varieties in \cite{richardson}, primarily due to the lack of a Borel orbit intersection structure in the tilted setting. To overcome this issue, we introduce \emph{tilted reduced words} and \emph{tilted Deodhar decomposition}. These new tools are essential both in proving geometric properties of tilted Richardson varieties and in establishing connections with Kazhdan--Lusztig theory and total positivity.


In \cite{Deodhar}, Deodhar introduced the \emph{Deodhar decomposition} to study the \emph{Kazhdan–Lusztig polynomials} \cite{KL79} (see \cite{combinatorial-invariance} for an exposition). These polynomials can be computed recursively using \emph{Kazhdan–Lusztig R-polynomials} $R_{u,v}(q)$, the $\F_q$-point counts of the open Richardson variety $\cR_{u,v}^\circ$. The Deodhar decomposition expresses $\cR^\circ_{u,v}$ as disjoint union of simple pieces, each isomorphic to $\C^a\times (\C^\ast)^b$ for some $a,b\in \mathbb{Z}_{\geq 0}$.Analogously, our second main theorem extends the Deodhar decomposition to open tilted Richardson varieties.


\begin{theorem}[tilted Deodhar decomposition]
    The open tilted Richardson variety $\cT^{\circ}_{u,v}$ admits a decomposition into simple pieces, each isomorphic to $\C^a\times (\C^\ast)^b$, indexed by tilted distinguished subwords of a regular tilted reduced word. 
\end{theorem}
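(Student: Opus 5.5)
The plan follows Deodhar's original construction for $\cR^\circ_{u,v}$ (in the Marsh--Rietsch formulation), with the Bruhat interval replaced by the tilted interval. First fix the input data. A \emph{tilted reduced word} is taken to be a shortest path from $u$ to $v$ in the quantum Bruhat graph, written as a product of transpositions (or of simple reflections after a cyclic rotation). A \emph{regular} tilted reduced word is one chosen so that, after rotating by a suitable power of the long cycle $\cyclic$, the path becomes an honest reduced word in the ordinary sense. I will use the intersection description of $\cT^\circ_{u,v}$ as an intersection of cyclically rotated Richardson varieties, together with the description as an intersection of two opposite tilted Schubert cells. These let me identify $\cT^\circ_{u,v}$, up to an explicit translation, with a subvariety cut out inside an open Bruhat cell where ordinary Bott--Samelson coordinates are available. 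By the dimension result of the first main theorem, the length of the word equals $\dim \cT^\circ_{u,v}$.

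The construction proceeds as follows.

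\begin{enumerate}
\item Parametrize the relevant tilted Schubert cell by products $g = y_{1}(t_1)\cdots y_{\ell}(t_\ell)$ along the regular tilted reduced word, where each factor is either a root subgroup element or a permutation-matrix factor.
\item For each flag $F\in\cT^\circ_{u,v}$, define its \emph{relative position sequence}. This is the sequence of tilted positions of the partial products relative to the opposite tilted Schubert cell, computed step by step via the Bruhat decomposition of the rotated flags.
\item Show that these sequences are exactly the \emph{tilted distinguished subwords}: subexpressions $\mathbf{w}$ with the property that whenever right multiplication by the next letter would step down in the tilted order, it must do so.
\item Show that the set of flags with a given sequence $\mathbf{w}$ is isomorphic to $\C^{a}\times(\C^\ast)^{b}$. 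Here $b$ is the number of positions where the letter is used with the path going up, and $a$ is the number of positions where the letter is skipped but the distinguished condition forces a descent, exactly as in Deodhar's setting. This is done by a local computation at each step: a single $SL_2$ move in the rotated coordinates either fixes a coordinate in $\C^\ast$, leaves a free $\C$, or is forced.
\end{enumerate}

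The decisive input is that each single step of the tilted word is an edge of the quantum Bruhat graph. Such an edge is either an ordinary Bruhat cover or a quantum edge. I will verify that in both cases the rotated flag sees a genuine simple-reflection step in an ordinary Bruhat decomposition, so that the classical rank-one computation of Deodhar applies verbatim. For quantum edges this means conjugating by $\cyclic^k$ so that the transposition becomes a simple reflection in the rotated frame. Regularity of the word is exactly what guarantees that one rotation works uniformly along the whole word.

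Disjointness and exhaustiveness then follow because every flag has a unique relative position sequence. That the sequence ends at $v$ and is distinguished follows from the rank conditions defining $\cT^\circ_{u,v}$.

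I expect the main obstacle to be step 3, namely proving that the relative position sequences are precisely the tilted distinguished subwords. In the classical case this rests on the lifting property of Bruhat order. Here one needs a tilted analogue: for an edge $x\to xs$ in the word and the target $v$, the tilted intervals $[x,v]$ and $[xs,v]$ must compare as in the $Z$-property. I would try to derive this from the combinatorics of tilted intervals developed in the section on the tilted order, using that a tilted interval $[u,v]$ is a rotation of an ordinary Bruhat interval on the geodesic. If that fails globally, I would fall back to proving it only for regular words by reducing directly to the classical lifting property after the cyclic rotation.
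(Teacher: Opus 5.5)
\textbf{Genuine gap.} Your reduction rests on the claim that a single power of $\cyclic$ turns the tilted data into classical data (``one rotation works uniformly along the whole word'', ``a tilted interval is a rotation of an ordinary Bruhat interval''). This is false, and the paper says so explicitly. For $u=512346$, $v=246513$, the interval $[u,v]$ is not of the form $w\cdot[u',v']$ for any $w\in S_n$ and any classical interval $[u',v']$ (remark after \Cref{thm:altdefTplucker}). So your argument at best covers the case $u\lesssim_\ba v$ with $\ba=(a,\dots,a)$ constant, i.e.\ rotated Richardson varieties, where the statement is just classical Deodhar transported by $\cyclic^{a-1}$.

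The local step also fails on its own terms. $\cyclic$ acts on the left (on row indices), while the letters of a word act on the right (on column positions). So no rotation turns a quantum edge $w\to wt_{ij}$ with $j>i+1$ into a simple reflection. Nor can you rotate columns, since their order defines the flag.

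The genuinely tilted phenomenon is that the cut-off $a_k$ varies with the column $k$. Handling this is the whole content of the paper's proof, and none of it has a counterpart in your plan:
\begin{itemize}
\item $\ba$-tilted words carry bars, and crossing a bar replaces $\ba$ by $\flatten(\ba)$. This is legitimate because $\cT^\circ_{u,v}=X^\circ_{v,\ba}\cap\Omega^\circ_{u,\ba}$ whenever $u\lesssim_\ba v$ (\Cref{thm:tilted-Schub-cell-intersect}), combined with \Cref{lemma:u<flatten(a)v}.
\item Relative positions are tilted ones, measured against the tilted Bruhat decompositions. The one-step moves of \Cref{prop:relative-position} exist only for letters $s_i$ with $a_i=a_{i+1}$.
\item The lifting property is proved directly for $\lesssim_\ba$ by lattice paths (\Cref{thm:lifting}). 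Your fallback for step 3 again reduces to classical lifting after a rotation, so it collapses with the rotation claim.
\item \emph{Regular} means that every bar is immediately preceded by a reduced word for the bi-Grassmannian permutation $s_{q-p,p}$. Via \Cref{lemma:regular-subword} and the rank condition $\rank_{[a_1,a_{q+1})_c}(F'_{q-p})=0$, this is exactly what keeps the parametrization inside the chart $G_{u,\ba}$ when a bar is crossed.
\end{itemize}

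\textbf{The combinatorial set-up is also off.} Deodhar cells are indexed by distinguished subwords for $u$ of a reduced word for $v$, whose partial products start at $\id$. A geodesic from $u$ to $v$ in $\Gamma_n$ is a different object. In the paper the word is an $\ba$-tilted reduced word for $v$, of length $\ell^{\word}_{\ba}(v)$ (classically $\ell(v)$). This is in general larger than $\dim\cT^\circ_{u,v}=\ell(u,v)=\ell^{\word}_{\ba}(v)-\ell^{\word}_{\ba}(u)$. The cell of $\mathbf{u}$ has dimension $\ell(u,v)-|J^-_{\mathbf{u}}|$, and only the unique positive distinguished subword attains $\ell(u,v)$.

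Your description of the cells is also swapped. The $\C^\ast$ factors come from skipped letters ($J^\circ_{\mathbf{u}}$), and the $\C$ factors come from used letters that go down ($J^-_{\mathbf{u}}$). Used letters that go up ($J^+_{\mathbf{u}}$) contribute only a point. This gives $\cD_{\mathbf{u},\mathbf{v}}\cong(\C^\ast)^{|J^\circ_{\mathbf{u}}|}\times\C^{|J^-_{\mathbf{u}}|}$.
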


The decomposition also provides a combinatorial formula for the $\F_q$-point count of $\cT_{u,v}^\circ$, which we called
\emph{tilted Kazhdan--Lusztig R-polynomial} $R^\tilt_{u,v}(q)$. In \Cref{sec:tilted-r-poly}, we further explore the connection of $R^\tilt_{u,v}(q)$ to the Hecke algebra, extending the results of \cite{KL79,GL24}, and to the combinatorial invariance conjecture of Lusztig and Dyer \cite{Dyer-thesis}.

The study of the \emph{totally nonnegative flag variety} $\fl_n^{\geq 0}$ has gained significant interest in recent years \cite{Lusztig98,MarshRietsch,Pos06,kodama-williams2,tsukerman-williams,Lusztig20,Boretsky,Bloch-Karp}. First introduced by Lusztig \cite{Lusztig98}, $\fl_n^{\geq 0}$ is defined as the closure of the set of flags that can be represented by matrices with nonnegative minors. The \emph{totally nonnegative parts} of Richardson varieties $\cR_{u,v}^{>0}$ and $\cR_{u,v}^{\geq 0}$ are then defined as the semi-algebraic intersections of $\fl_n^{\geq 0}$ with the Richardson varieties $\cR_{u,v}$ and $\cR_{u,v}^\circ$, respectively. A series of work \cite{williams-shelling,rietsch-williams,GKL3} has shown that the totally nonnegative part of a Richardson variety forms a regular CW-complex, whose face poset corresponds to the interval poset of the associated Bruhat interval. Through the tilted Deodhar decomposition, we extend this result to the setting of tilted Bruhat intervals by introducing the \emph{totally nonnegative parts} of tilted Richardson varieties. These spaces are conjectured to form a regular CW-complex whose face poset coincides with the interval poset of tilted Bruhat intervals, thus providing a potential solution to \Cref{prob:cw-complex}. Our third main theorem makes substantial progress toward this conjecture.

\begin{theorem}
    We define the totally nonnegative parts of tilted Richardson varieties $\cT_{u,v}^{>0}$ and $\cT_{u,v}^{\geq 0}$. They satisfy the following properties:
    \begin{enumerate}
        \item Each $\cT_{u,v}^{>0}$ is homeomorphic to an open ball;
        \item The space $\cT_{u,v}^{\geq 0}=\bigsqcup_{[x,y]\subseteq[u,v]}\cT_{x,y}^{>0}$ forms a CW-complex, whose face poset is conjecturally equal to the interval poset of the tilted Bruhat interval $[u,v]$.
    \end{enumerate}
\end{theorem}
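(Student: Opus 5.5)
The plan is to transport the classical totally nonnegative Richardson theory to the tilted setting through the tilted Deodhar decomposition. Following Marsh--Rietsch, I would define $\cT_{u,v}^{>0}$ as the image of the positive part of the unique top Deodhar component of $\cT_{u,v}^\circ$. This component is indexed by the positive distinguished subword of a regular tilted reduced word for $(u,v)$, the tilted analogue of the positive distinguished subexpression. I would then set $\cT_{u,v}^{\geq 0}$ to be the closure of $\cT_{u,v}^{>0}$. Concretely, the top piece is parametrized by $(\C^\ast)^{\ell}$, where $\ell=\dim\cT_{u,v}$ is the quantum Bruhat graph distance. The parametrization is a product of factors $x_i(t_i)$, possibly composed with the cyclic rotation used in the definition via rotated Richardson varieties. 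I would define $\cT_{u,v}^{>0}$ as the image of $\R_{>0}^\ell$ under this map.

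\textbf{Step 1 (ball).} Injectivity of the parametrization on $(\C^\ast)^\ell$ follows from the Deodhar isomorphism. So $\cT_{u,v}^{>0}\cong\R_{>0}^\ell$, which is an open ball. I also need to check that this is intrinsic. I would show that it equals $\fl_n^{\geq 0}$ suitably rotated, intersected with $\cT_{u,v}^\circ$, or else characterize it by positivity of the Plücker coordinates not forced to vanish by the Plücker description of $\cT_{u,v}$. The rotation step is where care is needed. Rotating by the cyclic shift $\cyclic$ preserves positivity of Grassmannian Plücker coordinates only up to sign. For the complete flag I would therefore use the intersection-of-rotated-Richardson description and check, term by term, that positivity is the one the parametrization produces.

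\textbf{Step 2 (stratification).} I would show that $\cT_{u,v}^{\geq 0}=\bigsqcup_{[x,y]\subseteq[u,v]}\cT_{x,y}^{>0}$. The inclusion $\supseteq$ is a limit argument. Sending some parameters $t_i\to 0$ or $\infty$ in the positive parametrization reaches the positive parametrizations of subintervals, mirroring the classical fact that closures of positive Deodhar cells are unions of smaller positive cells. For $\subseteq$, I would combine the stratification $\cT_{u,v}=\bigsqcup\cT_{x,y}^\circ$ from the first main theorem with a positivity statement. A limit of positive points lying in $\cT_{x,y}^\circ$ has all its nonvanishing relevant Plücker coordinates nonnegative. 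By the intrinsic characterization of Step 1, such a point lies in $\cT_{x,y}^{>0}$.

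\textbf{Step 3 (CW structure).} Compactness is clear, since $\cT_{u,v}^{\geq 0}$ is a closed subset of $\fl_n$. To obtain characteristic maps, I would extend each parametrization $\R_{>0}^\ell$ continuously to a closed ball mapping onto the closure of the cell, with boundary landing in lower cells. The strategy is that of Rietsch--Williams: use a Lusztig-type flow, or an explicit contracting vector field, to compactify, which shows the closure of each cell is a closed ball minus nothing worse. A simpler route suffices for a plain (not necessarily regular) CW complex: apply induction on $\dim$ with the standard criterion that a finite stratification by open balls whose boundaries lie in lower-dimensional strata is a CW complex once each closed stratum is compact. This criterion is satisfied by Step 2.

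\textbf{Main obstacle.} The hard part is Step 2's inclusion $\subseteq$, together with the intrinsic description needed in Step 1. Without Borel-orbit structure, one must prove that a positive limit cannot fall into a non-positive Deodhar piece of a smaller stratum. My first attempt would reduce to the classical case by applying the cyclic rotation, which identifies each $\cT_{x,y}^\circ$ locally with an intersection of rotated classical open Richardson varieties. The classical theorem of Marsh--Rietsch, that the totally nonnegative points of $\cR^\circ_{x,y}$ are exactly its positive Deodhar part, would then apply. The remaining work is a sign bookkeeping under rotation.
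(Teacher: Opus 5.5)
\textbf{Main gap: Step 2 and your definition of $\cT^{\geq0}_{u,v}$.} The decisive gap is in Step 2, and your choice of definition creates it. You set $\cT_{u,v}^{\geq0}:=\overline{\cT_{u,v}^{>0}}$. The inclusion $\bigsqcup_{[x,y]\subseteq[u,v]}\cT^{>0}_{x,y}\subseteq\cT^{\geq0}_{u,v}$ then says that every positive cell of a subinterval lies in the closure of the top cell. That is exactly \Cref{conj:positive_part_closure}, which is equivalent to the face-poset claim that the statement itself calls conjectural; the paper leaves it open. ``Send parameters to $0$ or $\infty$'' does not prove it. Degenerating Deodhar parameters produces some limit points, but nothing guarantees that every subinterval is reached. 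The paper also notes that Rietsch's classical argument for this closure property uses the unique maximal element $w_0$, which tilted Bruhat orders lack.

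The paper avoids the issue by defining the nonnegative parts intrinsically. Let $\fl_n^{\ba,\geq0}$ be the set of real flags such that, for each $k$, the Pl\"ucker coordinates $\Delta_{i_1,\dots,i_k}$ with $i_1<_{a_k}\cdots<_{a_k}i_k$ are all $\geq0$ or all $\leq0$. Set $\cT^{>0}_{u,v}:=\cT^\circ_{u,v}\cap\fl_n^{\ba,\geq0}$ and $\cT^{\geq0}_{u,v}:=\cT_{u,v}\cap\fl_n^{\ba,\geq0}$; these are independent of $\ba$ by a sign count. The decomposition then follows at once from \Cref{thm:Tunion} and \Cref{cor:xy}, and no closure statement is needed.

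\textbf{Step 1: the reduction to Marsh--Rietsch fails.} You cannot get the intrinsic description of the open cell by one cyclic rotation followed by Marsh--Rietsch. In $\cT^\circ_{x,y}=\bigcap_k\pi_k^{-1}(\cR^\circ_{x[k],y[k],a_k})$ the rotations $a_k$ depend on $k$, so no single rotation turns this into a classical flag Richardson variety.

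The paper proves \Cref{thm:tilted-deodhar-positive} by induction along a regular tilted reduced word:
\begin{itemize}
\item a final bar is removed by flattening;
\item for a final letter $s_i$ one has $a_i=a_{i+1}$, and the classical theory is applied only to the $\P^1$ of flags obtained by varying $F_i$;
\item that local step is \Cref{lemma:local-positive} (which also uses $\gr_{k,n}^{\geq0}=\pi_k(\fl_n^{\geq0})$), matched against the local tilted Deodhar configurations.
\end{itemize}
The resulting parametrization uses factors $\dot s_{i_j}$ and $y_{i_j}(\pm p_j)$ with \textit{predetermined} signs. It is not a product of $x_i(t_i)$ with all parameters positive; see \Cref{ex:tilted-positive}.

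\textbf{Step 3: the CW criterion you invoke is false.} A compact Hausdorff space partitioned into finitely many open balls, each with frontier in lower-dimensional strata, need not be a CW complex. For instance, the closure of a $2$-cell whose frontier is a circle can fail to be locally connected, and then no characteristic map exists.

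The paper gets characteristic maps from the Rietsch--Williams argument, with \Cref{lemma:positive-coeff} as the essential new input. That lemma says each Pl\"ucker coordinate of the positive parametrization is a polynomial in the $p_j$ with sign-coherent coefficients. This is what lets $\R_{>0}^{\ell(u,v)}\to\cT^{>0}_{u,v}$ extend to a closed ball whose boundary lands in $\cT^{\geq0}_{u,v}\setminus\cT^{>0}_{u,v}=\bigsqcup_{[x,y]\subsetneq[u,v]}\cT^{>0}_{x,y}$. Your proposal never identifies this ingredient. The flow and vector-field machinery you mention is the tool for the much stronger regularity results and is not needed for a plain CW structure.
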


We relate tilted Richardson varieties to quantum Schubert calculus. To study the quantum cohomology ring $QH^\ast(\fl_n)$ geometrically, Buch-Chaput-Mihalcea-Perrin \cite{curveneighborhood,BCMP} introduced the \emph{two-point curve neighborhood} $\Gamma_{d}(\Omega_u,X_v)$, defined as the closure of the union of all degree-$d$ rational curves intersecting the Schubert variety $X_v$ and the opposite Schubert variety $\Omega_u$ in the flag variety $\fl_n$. The cohomology classes of these varieties encode Gromov--Witten invariants of degree $d$, and their geometric and cohomological properties have been extensively studied \cite{curveneighborhood, LiMihalcea, BM15, BCMP, bclm2020-quantumK}. However, explicit descriptions of these curve neighborhoods are known only in special cases: when $v=\id$ \cite{BM15} or when $|d|\leq 1$ \cite{LiMihalcea}, where they coincide with classical Schubert or Richardson varieties and thus do not yield genuinely new families of subvarieties. Our fourth main theorem establishes an equivalence between tilted Richardson varieties and a \textit{new} class of curve neighborhoods, answering \Cref{prob:curve-neighborhood} in the \emph{minimal degree} case, and further identifies their cohomology classes.

\begin{theorem}
The tilted Richardson variety $\cT_{u,v}$ coincides with the minimal-degree two-point curve neighborhood $\Gamma_{d_{\min}}(\Omega_u,X_v)$. Furthermore, their cohomology classes $[\cT_{u,v}]$ and $[\Gamma_{d_{\min}}(\Omega_u,X_v)]$ in the cohomology ring $H^\ast(\fl_n)$ both equal the minimal quantum degree component of the quantum product $\sigma_{u}\star \sigma_{w_0v}$.
\end{theorem}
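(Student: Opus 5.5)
The plan is to prove the two assertions in turn: first the set-theoretic equality $\cT_{u,v}=\Gamma_{d_{\min}}(\Omega_u,X_v)$, then the statement about cohomology classes, which should follow from it.

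\textbf{Step 1: $\cT_{u,v}\subseteq \Gamma_{d_{\min}}(\Omega_u,X_v)$.} I would use the tilted Deodhar decomposition. By that theorem, together with the closure relation $\overline{\cT^\circ_{u,v}}=\cT_{u,v}$, it is enough to show that a dense set of points of $\cT^\circ_{u,v}$ lies on a degree-$d_{\min}$ rational curve meeting both $\Omega_u$ and $X_v$. Take a regular tilted reduced word for $[u,v]$. It records a shortest path in the quantum Bruhat graph from $u$ to $v$. Each quantum edge $w\to ws_{ij}$ of degree $q_i\cdots q_{j-1}$ is realized by a torus-invariant line, namely a $T$-stable $\P^1$ through $wB$ and $ws_{ij}B$ of degree $\alpha_{ij}^\vee$. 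The generic Deodhar cell is given by the product of the corresponding one-parameter subgroups. I would translate this chain of $T$-curves by the parameters of the Deodhar cell, so that its degree stays $d_{\min}$ and its endpoints land in $\Omega_u$ and $X_v$. Smoothing the chain into an irreducible curve, or using the connectedness results for $\Gamma_d$ in \cite{BCMP}, then shows that a generic point is in the curve neighborhood.

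\textbf{Step 2: $\Gamma_{d_{\min}}(\Omega_u,X_v)\subseteq\cT_{u,v}$.} Here I would use the Plücker-coordinate or rank-condition definition of $\cT_{u,v}$. Let $C$ be a degree-$d$ rational curve meeting $\Omega_u$ at $p$ and $X_v$ at $p'$. Projecting $C$ to each Grassmannian $\gr(k,n)$ gives a curve of degree $d_k$. On such a curve the rank of the relevant submatrices can drop or grow by at most $d_k$ in total, by the classical ``kernel/span'' argument of Buch for Grassmannians: the span of a degree-$d_k$ curve lies in a $(k+d_k)$-dimensional space and contains a $(k-d_k)$-dimensional kernel. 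Combining the rank bounds at $p$ and $p'$ with $d=d_{\min}$ should give exactly the cyclically rotated rank conditions that define $\cT_{u,v}$ via intersections of rotated Richardson varieties. Since $\cT_{u,v}$ is closed, the closure of the union of all such curves is also contained in it. I expect this step to be the main obstacle. The hard part is proving that the minimal degree forces every rank inequality to be exactly the tilted one. For this I would first reduce, by $T$-equivariance and degeneration, to chains of $T$-stable curves. I would then use the Postnikov description of $d_{\min}$ as the unique minimal degree of a path from $u$ to $v$ in the quantum Bruhat graph.

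\textbf{Step 3: cohomology.} I would compare dimensions: by the dimension formula in the first main theorem, $\dim\cT_{u,v}$ equals the length of the shortest quantum Bruhat path, which matches the expected dimension $\ell(v)-\ell(u)+\deg q^{d_{\min}}$ from \cite{BCMP}. The BCMP theory then says that when the curve neighborhood has the expected dimension, $[\Gamma_d(\Omega_u,X_v)]=\sum_w c^{w,d}_{u,w_0v}\,\sigma_{w_0w}$ up to the standard indexing. The class $[\cT_{u,v}]$ is therefore the $q^{d_{\min}}$ coefficient of $\sigma_u\star\sigma_{w_0v}$. This coefficient is the minimal one, since by Postnikov \cite{Postnikov-quantum-Bruhat-graph} all other degrees appearing in the product are at least $d_{\min}$.
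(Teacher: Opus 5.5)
\textbf{Steps 1 and 2 (the equality of varieties).} Step 2 is the heart of the proof, and your proposal does not supply it. The span/kernel bounds only give the classical rank inequalities at $p$ and $p'$, each weakened by $d_k$; as you concede, they do not produce the cyclically shifted conditions. The missing idea is that membership in $\cT_{u,v}$ is detected by torus-fixed points:
\begin{itemize}
\item By \Cref{thm:altdefTplucker}, $\cT_{u,v}=\{\Delta_w=0 \text{ for all } w\notin[u,v]\}$.
\item $\Gamma_{d_{u,v}}(\Omega_u,X_v)$ is closed and $T$-stable. If $\Delta_w$ does not vanish identically on it, it meets the chart $w\Omega^\circ_{\id}$. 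A one-parameter subgroup of $T$ contracts this chart to $e_w$, so $e_w\in\Gamma_{d_{u,v}}(\Omega_u,X_v)$.
\item It therefore suffices to show that every fixed point $e_w$ of the curve neighborhood has $w\in[u,v]$.
\end{itemize}
For the last point, take a $T$-invariant stable curve through $e_w$. By \Cref{lemma:t-fixed-tree} it is a tree in the Bruhat graph of total weight $q^{d_{u,v}}$, containing $w$, some $u'\geq u$ and some $v'\leq v$. Walk $u'\to w\to v'$ in the tree and replace each edge by a shortest quantum Bruhat path; this has weight at most $q_{ij}$ by \Cref{cor:weight-distance}, and an edge used in both directions costs quantum weight only once. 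Pad with Bruhat paths $u\to u'$ and $v'\to v$. The result is a path through $w$ whose weight divides $q^{d_{u,v}}$, so it is a shortest path by \Cref{lemma:shortest-length-equals-weight}.

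Your Step 1 is then unnecessary. Once $\Gamma_{d_{u,v}}(\Omega_u,X_v)\subseteq\cT_{u,v}$, equality follows because $\cT_{u,v}$ is irreducible (\Cref{thm:irreducible}). Both sides also have dimension $\ell(v)-\ell(u)+2|d_{u,v}|=\ell(u,v)$, by \Cref{thm:dimension}, and by \Cref{prop:Gamma-nice} with \Cref{prop:postnikov-minimal-degree}.

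As written, Step 1 also rests on a false premise. A tilted reduced word is a word for $v$ built up from $\id$; it does not encode a quantum Bruhat path from $u$ to $v$. The Deodhar chain $B=g_0B\to\cdots\to g_\ell B$ moves by $s_{i_j}$-steps of total degree $\sum_j\alpha^\vee_{i_j}$, far larger than $d_{u,v}$. Nothing in the sketch explains how a translate $g\cdot C$ of a $T$-chain can pass through a generic point of $\cT^\circ_{u,v}$ while its ends $g e_{u'}$ and $g e_{v'}$ lie in $\Omega_u$ and $X_v$.

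\textbf{Step 3 (the cohomology class).} The identity you attribute to BCMP is not what the projection formula gives. What you actually get is $(\ev_3)_*[GW_{d}(\Omega_u,X_v)]=\sum_w c_{u,w}^{v,d}\sigma_{w_0w}$. When $\Gamma_d$ has the expected dimension, $(\ev_3)_*[GW_d]=c\,[\Gamma_d]$ with $c=\deg(\ev_3\colon GW_d\to\Gamma_d)$. Expected dimension alone does not force $c=1$, so you have only shown $[\cT_{u,v}]=\frac{1}{c}[q^{d_{u,v}}]\sigma_u\star\sigma_{w_0v}$, as in \Cref{prop:Gamma-nice}.

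The paper closes this with an integrality argument:
\begin{itemize}
\item $[\cT_{u,v}]$ is an integral combination of Schubert classes, so $c$ divides every $c^{v,d_{u,v}}_{u,w}$.
\item By Postnikov's expansion (\Cref{prop:path-schub}), $c$ then divides every monomial coefficient of $[q^{d_{u,v}}]\mathfrak{S}_{u,v}(x,q)$.
\item \Cref{lemma:coeff1} exhibits a monomial $q^{d_{u,v}}x^{\rho-\beta}$ with coefficient exactly $1$. It is the revlex-leading monomial, coming from the unique $x^\beta$-admissible path assembled from the greedy maximal segments of \Cref{lemma:cover-max}.
\end{itemize}
You need some argument of this kind, or a citation proving that $\ev_3$ is birational in minimal degree, before Step 3 goes through.
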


As an application, we prove new relations among Gromov--Witten invariants of the flag variety that generalize the descent-cycling formula from \cite{knutson-cycling}.

Finally, we study the image of tilted Richardson varieties under the natural projection $\pi_k:\fl_n \rightarrow \mathrm{Gr}(k,n)$. The images of Richardson varieties are studied by Knutson-Lam-Speyer in \cite{KLSjuggling,KLSprojection}. We show that the set of projection images of tilted Richardson varieties coincides with the set of images of Richardson varieties. We also introduce \emph{k-tilted Bruhat order}, denoted as $\leq_\ba^k$, extending the \emph{k-Bruhat order} \cite{BS98,LS82a}, and use it to characterize when $\pi_k$ is birational on a tilted Richardson variety.

\begin{theorem}
    For any $u,v\in S_n$, the projection image $\Pi_{u,v}:= \pi_k(\cT_{u,v})$ is a positroid variety. Moreover, the map $\pi_k:\cT_{u,v} \rightarrow \Pi_{u,v}$ is birational if and only if $u\leq_{\ba}^k v$. 
\end{theorem}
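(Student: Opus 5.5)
The plan has two independent halves, one for each assertion. Throughout I use the description of $\cT_{u,v}$ as an intersection of cyclically rotated Richardson varieties, together with the stratification and irreducibility results from the first main theorem.

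\textbf{The image $\Pi_{u,v}$ is a positroid variety.} Work with the definition of $\cT_{u,v}$ as an intersection of cyclically rotated Richardson varieties (equivalently, through the vanishing of Pl\"ucker coordinates). By the first main theorem, $\cT_{u,v}$ is irreducible, and its torus fixed points are exactly the permutations in the tilted interval $[u,v]$. The argument then runs as follows.
\begin{itemize}
\item Since $\pi_k$ is proper and $T$-equivariant, $\Pi_{u,v}=\pi_k(\cT_{u,v})$ is a closed, irreducible, $T$-stable subvariety of $\gr(k,n)$.
\item To identify it as a positroid variety, compare it with a classical projected Richardson variety. Using a tilted reduced word for the pair $(u,v)$, factor the minimal quantum path from $u$ to $v$ into up-steps and down-steps.
\item Among the fixed points of $\cT_{u,v}$, choose $x\le y$ in $[u,v]$ so that $\pi_k(\cR_{x,y})$ and $\Pi_{u,v}$ have the same $T$-fixed points. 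The natural candidates are the elements of $[u,v]$ whose images in $\binom{[n]}{k}$ are extremal.
\item Show $\cR_{x,y}\subseteq \cT_{u,v}$ via the stratification. Projected Richardson varieties are exactly the positroid varieties (Knutson--Lam--Speyer), so this gives the inclusion $\pi_k(\cR_{x,y})\subseteq \Pi_{u,v}$.
\item For the reverse inclusion, use the Pl\"ucker-vanishing description. Every Pl\"ucker coordinate $\Delta_I$ that vanishes on $\cT_{u,v}$ because of its cyclic rank conditions in degree $k$ also vanishes on $\Pi_{u,v}$. Conversely, the cyclic rank conditions on the $k$-th step of the flag are exactly cyclic interval rank conditions, so $\Pi_{u,v}$ lies in the positroid variety they cut out.
\item Matching dimensions and irreducibility then force equality.
\end{itemize}

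\textbf{Birationality criterion.} The map $\pi_k:\cT_{u,v}\to\Pi_{u,v}$ is birational exactly when $\dim \cT_{u,v}=\dim \Pi_{u,v}$ and the generic fiber is a single point; since everything is irreducible over $\C$, generic injectivity suffices.
\begin{itemize}
\item By the first main theorem, $\dim\cT_{u,v}$ is the length of a shortest path from $u$ to $v$ in the quantum Bruhat graph.
\item $\dim\Pi_{u,v}$ is the length of the bounded affine permutation (or $k$-Bruhat interval length) attached to the positroid.
\item The $k$-tilted order $\leq_\ba^k$ should be defined so that $u\leq_\ba^k v$ holds exactly when some shortest quantum path from $u$ to $v$ uses only edges whose transpositions $(i\,j)$ straddle $k$, meaning $i\le k<j$.
\item Along such edges each step changes the $k$-subset, so the corresponding tilted Deodhar torus coordinates survive projection. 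This gives equality of dimensions, and on the open Deodhar piece an explicit section, hence injectivity.
\item Conversely, if no shortest path consists only of straddling edges, then some edge fixes the $k$-subset. The corresponding one-parameter family in the tilted Deodhar cell lies in a single fiber, so the generic fiber has positive dimension.
\end{itemize}

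\textbf{Main obstacle.} I expect the hardest step to be the last one: showing that failure of $u\leq_\ba^k v$ is witnessed uniformly on a dense open set. Minimal paths are not unique, so I would first reduce to the regular tilted reduced word, where the tilted Deodhar decomposition gives a single open cell. On that cell I would check directly whether the coordinate map to the Pl\"ucker coordinates of degree $k$ is injective.
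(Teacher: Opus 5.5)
\textbf{Positroid half.} Your plan depends on finding a classical pair $x\le y$ with $\cR_{x,y}\subseteq\cT_{u,v}$ and with $\pi_k(\cR_{x,y})$ having the same $T$-fixed points as $\Pi_{u,v}$. Such a pair need not exist.

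Note that $\cR_{x,y}\subseteq\cT_{u,v}$ forces the Bruhat interval $[x,y]$ to lie inside the tilted interval $[u,v]$. Take the paper's own example $u=231$, $v=123$, $k=1$:
\begin{itemize}
\item Here $\cT_{u,v}=\{F_\bullet : e_2\in F_2\}$, cut out by $\Delta_{13}(F_2)=0$, so $\Pi_{u,v}=\P^2$.
\item The only Bruhat intervals inside $[u,v]=\{231,213,321,123\}$ are points and the edges $\{123,213\}$, $\{213,231\}$, $\{231,321\}$.
\item Their first entries cover at most two values, so none of them projects onto $\P^2$.
\end{itemize}
The stratification cannot rescue this, since it only produces subvarieties $\cT_{x,y}$ indexed by tilted subintervals. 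Indeed, $\cT_{231,123}$ is a rotated Richardson variety (with $\ba=(2,2,2)$), not a classical one.

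Your reverse inclusion is also invalid as written. The degree-$k$ rank conditions only give $\Pi_{u,v}\subseteq\cR_{u[k],v[k],a_k}$, which is usually much larger. For $u=213$, $v=231$, $k=2$, one has $\Pi_{u,v}=\{V\ni e_2\}\cong\P^1$, but $\cR_{\{1,2\},\{2,3\},1}=\gr(2,3)$. Moreover, ``matching dimensions'' presupposes you know $\dim\Pi_{u,v}$, which is exactly the unknown.

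The missing idea is a reduction that leaves $[u,v]$ but preserves the image. For $u\lesssim_\ba v$ and $i\in\Des_\ba(v)$, the tilted Deodhar decomposition gives
\begin{itemize}
\item $\cT^\circ_{u,v}\cong(\cT^\circ_{u,vs_i}\times\C^*)\sqcup(\cT^\circ_{us_i,vs_i,\ba}\times\C)$ if $i\in\Asc_\ba(u)$, and
\item $\cT^\circ_{u,v}\cong\cT^\circ_{us_i,vs_i}$ if $i\in\Des_\ba(u)$.
\end{itemize}
These isomorphisms are right multiplication by elements of the $i$-th $\mathrm{SL}_2$, which does not move $F_k$ when $i\neq k$. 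Hence $\pi_k(\cT_{u,v})$ equals $\pi_k(\cT_{u,vs_i})$ or $\pi_k(\cT_{us_i,vs_i})$.

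Alternate such steps with flattening $\ba$, first from the front while its first jump is at most $k$, then from the back. The removed descents always differ from $k$, and you end at a rotated Richardson variety with the same image, where Knutson--Lam--Speyer applies.

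\textbf{Birationality half.} Tilted Deodhar parameters are attached to the letters $s_{i_j}$ of the tilted reduced word, not to the transpositions $t_{c,d}$ of a quantum path. So there is no mechanism behind either of your claims:
\begin{itemize}
\item ``straddling edges make the torus coordinates survive projection'';
\item ``a non-straddling edge yields a one-parameter family inside a fiber''.
\end{itemize}
A letter $s_i$ with $i\neq k$ never changes the $k$-subset, yet its parameter can still move $F_k$ through later factors. Even for $u\le v$, this equivalence is a nontrivial theorem of Knutson--Lam--Speyer, which the paper uses as a base case rather than reproving.

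The paper instead runs along the chain above:
\begin{itemize}
\item Flattenings, and steps $(u,v)\mapsto(us_i,vs_i)$ with $i\in\Des_\ba(u)\cap\Des_\ba(v)$ and $i\neq k$, preserve the image, the dimension and birationality.
\item A step $(u,v)\mapsto(u,vs_i)$ keeps the image but lowers the dimension, so birationality fails.
\end{itemize}
What links this to $\le^k_\ba$ is a lemma with two parts. First, $u\le^k_\ba v$ forces every $i\in\Des_\ba(v)\setminus\{k\}$ into $\Des_\ba(u)$. Second, for $i\in\Des_\ba(u)\cap\Des_\ba(v)$ with $i\neq k$, one has $u\le^k_\ba v\iff us_i\le^k_\ba vs_i$.
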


The structure of this paper is as follows. In \Cref{sec:preliminaries}, we present the necessary preliminaries. In \Cref{sec:graph}, we introduce the $\ba$-tilted Bruhat orders on $S_n$, use them to characterize tilted Bruhat intervals, and develop a theory of tilted reduced words. In \Cref{sec:tilted-richardson-definition}, we provide four equivalent definitions of tilted Richardson varieties. In \Cref{sec:tilted-richardson-geometric}, we establish the fundamental geometric properties of these varieties, including their stratification, dimension formula, and closure relations. In \Cref{sec:deodhar}, we develop the theory of tilted Deodhar decomposition, and present two applications of this decomposition: the definition of tilted Kazhdan--Lusztig R-polynomials, and a construction of the totally nonnegative parts of tilted Richardson varieties, which form a CW-complex whose face poset is closely related to the tilted Bruhat intervals. In \Cref{sec:curve-neighborhood-sec}, we establish an equivalence between tilted Richardson varieties and minimal-degree two-point curve neighborhoods, compute their cohomology classes, and prove an analogue of the descent-cycling formula. Finally, in \Cref{sec:projection}, we show the projection images of tilted Richardson varieties onto $\mathrm{Gr}(k,n)$ are all positroid varieties, and provide a characterization of when the projection is birational. 

%% file: tex/2-preliminaries.tex
\section{Preliminaries}\label{sec:preliminaries}

\subsection{Combinatorics of the Symmetric Group}\label{sec:prelim-1}
The \emph{symmetric group} $S_n$ consists of all permutations of the set $[n]:=\{1,2,\dots,n\}$. A permutation $w \in S_n$ is represented by its one-line notation $w(1)w(2)\cdots w(n)$, or simplified as $w_1w_2\cdots w_n$. Each permutation $w\in S_n$ also corresponds to an $n\times n$ \emph{permutation matrix}, having an $1$ in each position $(w_k,k)$ and $0$ elsewhere. For instance, $w=2314$ corresponds to the matrix
\[\begin{bmatrix}
    0 & 0 & 1 & 0\\1 & 0 & 0 & 0\\0 & 1 & 0 & 0\\0 & 0 & 0 & 1
\end{bmatrix}.\]

Let $\{t_{ij}=(i\; j):1\leq i<j\leq n\}$ be the set of \emph{transpositions}, and let $\{s_{i}=(i
\; i{+}1):1\leq i\leq n-1\}$ be the set of \emph{simple transpositions}. The \emph{identity permutation} is denoted by $\id:=12\cdots n$ (also written as $1$ when the context is clear), and the \emph{longest permutation} is denoted by $w_0:=n\cdots21$. The set of \emph{inversions} of $w\in S_n$ is defined as
\[\inv(w):=\{(i,j)\in[n]^2:i<j,\ w_i>w_j\}.\]
The \emph{length} of $w$, denoted $\ell(w)$, is defined as the number of inversions $\ell(w):=|\inv(w)|$. The \emph{descent set} and \emph{ascent set} of $w$ are defined by
\[\Des(w):=\{i\in[n-1]:w_i>w_{i+1}\},\quad\Asc(w):=\{i\in[n-1]:w_i<w_{i+1}\}.\]

The \emph{(strong) Bruhat order} is a partial order on $S_n$ defined by the transitive closure of the relations $w<wt_{ij}$ whenever $w_i<w_j$, for all $1\leq i<j\leq n$ and $w\in S_n$. It is a graded poset with rank function given by the length.  A \emph{Bruhat interval} $[u,v]$ is defined, for permutations $u\leq v$, as the set of all permutations between $u$ and $v$ in the Bruhat order. \emph{Ehresmann criterion} \cite{ehresmann} characterizes the Bruhat order efficiently. 
\begin{defin}\label{def:gale-order}
Given two subsets $A,B\in \binom{[n]}{k}$ with $A=\{a_1<a_2<\cdots<a_k\}$ and $B=\{b_1<b_2<\cdots<b_k\}$, we say that $A\leq B$ in the \emph{Gale order} if $a_i\leq b_i\text{ for all }i\in [k]$.
\end{defin}

\begin{prop}[Ehresmann Criterion \cite{ehresmann}]\label{prop:ehresmann}
    Let $u, v \in S_n$ be permutations. For each $k \in [n]$, define $u[k] := \{u_1, u_2, \dots, u_k\}$ and $v[k] := \{v_1, v_2, \dots, v_k\}$. Then
    \[u\leq v\text{ in the Bruhat order}\iff u[k]\leq v[k]\text{ in the Gale order for all }k\in [n].\]
\end{prop}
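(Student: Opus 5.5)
The plan is to reduce the criterion to the defining relations of the Bruhat order, namely $w<wt_{ij}$ whenever $w_i<w_j$, and prove the two implications separately. I write $\preceq$ for the relation ``$u[k]\leq v[k]$ in the Gale order for all $k$''. It is clearly a partial order on $S_n$: antisymmetry holds because $u[k]=v[k]$ for all $k$ forces $u=v$.

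For ($\Rightarrow$), transitivity of the Gale order reduces this to a single covering-type relation $w<wt_{ij}$ with $i<j$ and $w_i<w_j$. Set $w'=wt_{ij}$. If $k<i$ or $k\geq j$, then $w'[k]=w[k]$. If $i\leq k<j$, then $w'[k]$ is obtained from $w[k]$ by replacing $w_i$ with the larger value $w_j$. Replacing one element of a set by a larger element not already in the set can only increase each sorted entry, so $w[k]\leq w'[k]$ in the Gale order. This direction is routine.

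For ($\Leftarrow$), I argue by induction on $\ell(v)-\ell(u)$, or more robustly on the number of positions where $u$ and $v$ differ. Assume $u\preceq v$ with $u\neq v$. I aim to produce a transposition $t_{ij}$ with $u_i<u_j$ such that $u t_{ij}\preceq v$. Then $u<ut_{ij}$, and the induction (using $\ell(ut_{ij})>\ell(u)$, so $\ell(v)-\ell(ut_{ij})<\ell(v)-\ell(u)$, since $\preceq$ forces $\ell(u)\le\ell(v)$ through $\sum_k$ of the entries) gives $ut_{ij}\leq v$.

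The construction goes as follows. Let $i$ be the first position with $u_i\neq v_i$. Since $u[i]\leq v[i]$ and $u[i-1]=v[i-1]$, we get $u_i<v_i$. Let $j>i$ be the position with $u_j=v_i$, and among the values of $u$ at positions after $i$ lying in the interval $(u_i,v_i]$, choose $j$ to minimize $u_j$. This value set is nonempty since it contains $v_i$. Put $u'=ut_{ij}$. For $k<i$ and $k\geq j$ we have $u'[k]=u[k]$, which is fine. For $i\leq k<j$, we have $u'[k]=u[k]\setminus\{u_i\}\cup\{u_j\}$, and by the minimality of $u_j$ no value of $u$ strictly between $u_i$ and $u_j$ occurs in positions $i+1,\dots,k$. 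I must show $u'[k]\leq v[k]$. This is the main obstacle.

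My plan for it is to use the counting reformulation of the Gale order: $A\leq B$ if and only if $\#(A\cap[m,n])\leq\#(B\cap[m,n])$ for all $m$. Passing from $u[k]$ to $u'[k]$ raises these counts by one exactly for $m\in(u_i,u_j]$. So for each such $m$ I need the strict inequality $\#(u[k]\cap[m,n])<\#(v[k]\cap[m,n])$. Because $v_i=u_j\geq m$ and the prefixes agree before position $i$, I would compare counts on $[m,n]$. If equality held, then since $u[k]$ has no elements in $(u_i,m)\subseteq(u_i,u_j)$ beyond $u_i$ itself, I get $\#(u[k]\cap[u_i,n])=\#(u[k]\cap[m,n])+1=\#(v[k]\cap[m,n])+1$. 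If this exceeded $\#(v[k]\cap[u_i,n])$, it would contradict $u\preceq v$. So I must rule out $v[k]$ having an element in $[u_i,m)$. I expect to handle this by choosing $j$ more carefully if needed, for instance by taking the minimizing $u_j$ over the whole range, or by switching to the symmetric choice of adjusting $v$ downward instead. I would check the details on small examples before committing.
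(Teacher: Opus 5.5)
Your forward direction is fine, but the reverse direction has a genuine gap at exactly the step you flagged. Moreover, the rule you chose for $j$ is actually wrong, not just hard to verify. Take $u=2143$ and $v=2413$; then $u\preceq v$ (indeed $v=ut_{23}$). Here $i=2$, $u_i=1$, $v_i=4$, and the values of $u$ after position $2$ lying in $(1,4]$ are $u_3=4$ and $u_4=3$. Minimizing the value gives $j=4$ and $u'=ut_{24}=2341$. But $u'[3]=\{2,3,4\}\not\leq\{1,2,4\}=v[3]$, so $u'\not\preceq v$. Similarly, $u=1324$, $v=3142$ gives $u'=2314\not\preceq v$. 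In your counting language, at $k=3$ and $m=3$ the required strict inequality is an equality, precisely because $v[3]$ contains $1=u_i\in[u_i,m)$. So the configuration you hoped to rule out genuinely occurs: comparing downward toward $u_i$ gives no control over $v[k]\cap[u_i,m)$.

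The repair is to choose $j$ by position rather than by value, and then compare upward at $v_i$. Let $j>i$ be the smallest position with $u_j\in(u_i,v_i]$; it exists because $v_i$ occurs in $u$ after position $i$. For $i\le k<j$ you have $v_i\in v[k]\setminus u[k]$, hence
$\#(u[k]\cap[v_i,n])=\#(u[k]\cap[v_i+1,n])\le\#(v[k]\cap[v_i+1,n])=\#(v[k]\cap[v_i,n])-1$.
Now take $m\in(u_i,u_j]$. Every entry of $u[k]$ in $[m,v_i)$ sits at a position before $i$: position $i$ holds $u_i<m$, and positions $i+1,\dots,k$ avoid $(u_i,v_i]$ by the choice of $j$. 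So each such entry also lies in $v[k]$. Adding these counts to the inequality above gives $\#(u[k]\cap[m,n])<\#(v[k]\cap[m,n])$, which is exactly what you need. In the example this yields $j=3$ and $u'=v$.

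Separately, your induction measure needs fixing. Summing the Gale inequalities gives $\Psi(u)\le\Psi(v)$ for $\Psi(w)=\sum_p(n+1-p)w_p$. This is not the length: e.g.\ $\Psi(2143)=22\neq 23=\Psi(1342)$, although both have length $2$. Nor need the number of positions where $u$ and $v$ differ drop: e.g.\ $1234\to 2134$ against $v=4321$. Induct on $\Psi(v)-\Psi(u)\ge 0$ instead. It vanishes only when $u=v$, and $\Psi(ut_{ij})-\Psi(u)=(j-i)(u_j-u_i)>0$.
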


As a Coxeter group, $S_n$ is generated by the simple transpositions $\{s_i:i\in [n-1]\}$, subject to the following relations, last two of which known as the \emph{braid relations}:
\begin{enumerate}
    \item $s_i^2=1$,
    \item $s_is_j=s_js_i$ if $|i-j|>1$,
    \item $s_is_{i+1}s_i=s_{i+1}s_is_{i+1}$.
\end{enumerate}

A \emph{word} for $w\in S_n$, denoted $\mathbf{w}=s_{i_1}s_{i_2}\cdots s_{i_\ell}$, is a sequence of simple transpositions whose product equals $w$. The \emph{length} of $\mathbf{w}$ is $\ell$. If $\ell$ is minimal among all such words, then $\mathbf{w}$ is called a \emph{reduced word}. A classical fact is that the length of a reduced word equals $\ell(w)$.

A \emph{subword} $\mathbf{v}$ of a word $\mathbf{w}$, corresponding to $v\in S_n$, is obtained by replacing some of the factors in $\mathbf{w}$ with the identity element $1$, such that the resulting product equals $v$. For example, $1s_211s_1$ is a subword of $s_3s_2s_3s_2s_1$. When clear from context, we may omit the identity elements and simply write $s_2s_1$ as a subword of $s_3s_2s_3s_2s_1$.

Reduced words and their subwords exhibit many fundamental properties. We highlight four of them below. For detailed proofs and a more thorough introduction, see \cite{bjorner-brenti}.
\begin{enumerate}
    \item \textbf{Length Property:} The length $\ell(w)$, which is equal to the length of any reduced word for $w$, serves as a rank function for the Bruhat order.
    \item \textbf{Subword Property:} Let $u,v\in S_n$ and let $\mathbf{v}$ be a reduced word for $v$. Then, $u\leq v$ if and only if there exists a (reduced) subword $\mathbf{u}$ of $\mathbf{v}$ corresponding to $u$. 
    \item \textbf{Word Property:} Any two reduced words for the same permutation $w$ are connected by a sequence of moves involving braid relations.
    \item \textbf{Lifting Property:} If $u<v$ and $i\in \Des(v)\setminus \Des(u)$, then $us_i\leq v$ and $u\leq vs_i$.
\end{enumerate}

The \emph{root system} of type $A_{n-1}$ consists of $\Phi=\{e_i-e_j: 1\leq i,\neq j\leq n\}$, with \emph{positive roots} $\Phi^+=\{e_i-e_j: 1\leq i<j\leq n\}$ and \emph{simple roots} $\Delta=\{\alpha_i:=e_i-e_{i+1}:i\in[n-1]\}$. Its corresponding \emph{Weyl group} is identified with the symmetric group $S_n$, where the reflection across the hyperplane normal to $e_i-e_j$ is identified with the transposition $t_{ij}$.

An ordering of $\Phi^+$, denoted $\gamma_1,\ldots,\gamma_{\binom{n}{2}}$, is called a \emph{reflection ordering} if the root $e_i-e_k$ appears (not necessarily consecutively) between $e_i-e_j$ and $e_j-e_k$ in the ordering for all $i<j<k$. The following classical lemma gives a correspondence between reflection orderings and reduced words for the longest permutation $w_0\in S_n$.

\begin{lemma}[{\cite[Proposition~3]{bjorner1984orderings}}]\label{lemma:bjorner-reflection-order}
Reflection orderings are in bijection with reduced words for the longest permutation $w_0$. Specifically, given a reduced word $s_{i_1}s_{i_2}\cdots s_{i_\ell}$ for $w_0$, the corresponding reflection ordering $\gamma_1,\ldots,\gamma_{\ell}$ is constructed via $\gamma_j=s_{i_1}\cdots s_{i_{j-1}}(\alpha_{i_j})$ for $j=1,\ldots,\ell=\binom{n}{2}$.
\end{lemma}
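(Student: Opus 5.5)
The statement is the classical correspondence between reduced words of $w_0$ and reflection orderings of $\Phi^+$, and I would prove it in three steps. First, the map is well defined. Second, the image consists of reflection orderings. Third, the map is a bijection, shown by constructing the inverse.

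\textbf{Well-definedness.} Given a reduced word $s_{i_1}\cdots s_{i_\ell}$ for $w_0$, set $\gamma_j=s_{i_1}\cdots s_{i_{j-1}}(\alpha_{i_j})$. The standard fact about reduced words is that each $\gamma_j$ is a positive root. Moreover, $\{\gamma_1,\dots,\gamma_k\}$ is exactly the inversion set of $(s_{i_1}\cdots s_{i_k})^{-1}$, that is, the set of positive roots sent to negative roots by that element. Since $\ell=\binom n2=|\Phi^+|$ and the $\gamma_j$ are distinct, we obtain a total ordering of $\Phi^+$.

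\textbf{The image consists of reflection orderings.} The key property is that every initial segment $I_k=\{\gamma_1,\dots,\gamma_k\}$ is an inversion set, and inversion sets are \emph{biclosed}. This means two things:
\begin{itemize}
\item if $\beta,\beta'\in I_k$ and $\beta+\beta'\in\Phi^+$, then $\beta+\beta'\in I_k$;
\item the complement of $I_k$ in $\Phi^+$ has the same closure property.
\end{itemize}
In type $A$ both facts are checked directly. Write $w=(s_{i_1}\cdots s_{i_k})^{-1}$, so that $e_i-e_j$ is an inversion of $w$ iff $w(i)>w(j)$ for $i<j$. Then transitivity of $>$ on the values $w(i),w(j),w(k)$ gives both closure statements.

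Now fix $i<j<k$ and suppose, say, $e_i-e_j$ comes before $e_j-e_k$, but $e_i-e_k$ does not lie between them. There are two cases.
\begin{itemize}
\item If $e_i-e_k$ comes before both, take the initial segment ending just before $e_i-e_j$. It contains $e_i-e_k$ but neither summand, contradicting coclosure of its complement.
\item If $e_i-e_k$ comes after both, take the segment ending at the later of the two summands. It contains both summands but not their sum, contradicting closure.
\end{itemize}
The case where $e_j-e_k$ comes first is symmetric.

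\textbf{Bijectivity.} Injectivity follows inductively from $\gamma_1=\alpha_{i_1}$: the first root determines $i_1$. Then $s_{i_1}$ applied to $\gamma_2,\dots,\gamma_\ell$ gives the sequence attached to the word $s_{i_2}\cdots s_{i_\ell}$, though this is now a reduced word of $s_{i_1}w_0$ rather than of $w_0$. So I would prove the general statement for every $w$: reduced words of $w$ correspond bijectively to orderings of the inversion set of $w^{-1}$ all of whose initial segments are inversion sets.

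For surjectivity, start from a reflection ordering. The main point is to show its first root is simple. If $\gamma_1=e_i-e_k$ with $k>i+1$, then the condition for the triple $i<i+1<k$ forces $\gamma_1$ to lie strictly between $e_i-e_{i+1}$ and $e_{i+1}-e_k$, which is impossible for the first element. So $\gamma_1=\alpha_{i_1}$ for some $i_1$.

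Then apply $s_{i_1}$ to the remaining roots, which maps $\Phi^+\setminus\{\alpha_{i_1}\}$ to itself. One checks that the resulting ordering is still a reflection-type ordering, now of the inversion set of $w_0 s_{i_1}$. Equivalently, every initial segment $I_k$ is biclosed, and biclosed sets in type $A$ are exactly inversion sets. Inducting on length then recovers a reduced word.

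The hardest step is this equivalence between reflection orderings and orderings whose initial segments are all biclosed. The reverse direction needs the combinatorial lemma that in type $A$ any biclosed subset of $\Phi^+$ is an inversion set: its relation is a transitive tournament, hence comes from a permutation. It also needs the check that a reflection ordering has biclosed initial segments, which is exactly the betweenness condition read off in the two cases above. I would prove that check first and then run the induction.
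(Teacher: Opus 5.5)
Your proof is correct, apart from routine standard facts. The paper gives no argument of its own for this lemma; it simply cites Bj\"orner, whose result is stated for finite Coxeter groups in general. So your proposal is a self-contained type-$A$ proof rather than a reconstruction of anything in the paper.

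Your three ingredients are exactly the right ones:
\begin{enumerate}
\item The initial segments $\{\gamma_1,\dots,\gamma_k\}$ of the root sequence of a reduced word are the inversion sets $N\bigl((s_{i_1}\cdots s_{i_k})^{-1}\bigr)$, where $N(x)=\{\beta\in\Phi^+ : x\beta<0\}$.
\item A total order on $\Phi^+$ is a reflection ordering if and only if all of its initial segments are biclosed. Your two-case argument gives one direction; the converse is the same betweenness condition read the other way.
\item In type $A$, a biclosed subset of $\Phi^+$ is an inversion set, because it defines a transitive tournament on $[n]$.
\end{enumerate}

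The step you leave as ``one checks'' is the identity
\[
s_{i_1}\bigl(N(x^{-1})\setminus\{\alpha_{i_1}\}\bigr)=N\bigl((s_{i_1}x)^{-1}\bigr)\quad\text{whenever } \alpha_{i_1}\in N(x^{-1}),
\]
which holds because $s_{i_1}$ permutes $\Phi^+\setminus\{\alpha_{i_1}\}$. With this identity, your induction over arbitrary $w$ goes through. In that framework the simplicity of the first root is automatic, since a singleton $\{\gamma\}$ is an inversion set only when $\gamma$ is simple.

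As for the trade-off: the citation gives brevity and full Coxeter-theoretic generality. Your argument gives an elementary proof in exactly the paper's setting and conventions, and its only genuinely type-$A$ input is the tournament characterization of biclosed sets.
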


\begin{ex}
Consider the reduced word $4321=s_3s_1s_2s_1s_3s_2$, which corresponds to the reflection ordering written on top of the arrows in the diagram below. Each root $e_i-e_j$ records the pair of numbers $i$ and $j$ that are swapped at each step.
\[
\begin{tikzcd}
1234 \arrow[r, "e_3-e_4"]& 1243 \arrow[r, "e_1-e_2"] & 2143 \arrow[r, "e_1-e_4"] & 2413 \arrow[r, "e_2-e_4"] & 4213 \arrow[r, "e_1-e_3"] & 4231 \arrow[r, "e_2-e_3"] & 4321.
\end{tikzcd}
\]
\end{ex}


\subsection{Schubert Varieties and Richardson Varieties}\label{sec:prelim-2}

\subsubsection{The Flag Variety}\label{sec:prelim-2-1}

Let $n\in \Z_{>0}$. A \emph{(complete) flag}, denoted $F_\bullet$, is a sequence of nested linear subspaces $0\subseteq F_1\subseteq F_2\subseteq \cdots \subseteq F_n=\C^n$, where $\dim(F_i)=i$ for all $i\in [n]$. The \emph{(complete) flag variety} $\fl_n$ is defined as the set of all complete flags $F_\bullet$ in $\C^n$.

There is an equivalent description of the flag variety from the perspective of Lie theory. Let $G = \mathrm{GL}_n(\C)$ denote the \emph{general linear group} of invertible $n\times n$ complex matrices. Let and $B,B_{-}\subseteq G$ be the \emph{Borel subgroup} and \emph{opposite Borel subgroup}, consisting of upper and lower triangular matrices, respectively. Every flag $F_\bullet$ can be represented by an invertible matrix $M_F$ in $G$, called a \emph{matrix representative} of $F_\bullet$, such that for each $k\in [n]$, the subspace $F_k$ is the column span of the first $k$ columns of $M_F$. Since two matrix representatives of the same flag lie in the same left coset of $B$ in $G$, the flag variety $\fl_n$ can be identified with the quotient space $G/B$. We will denote an element of the flag variety either as $F_\bullet\in \fl_n$, emphasizing the complete flag, or as $gB\in \fl_n$, emphasizing its matrix representative $g\in G$.

\subsubsection{Pl\"ucker Coordinates}\label{sec:prelim-2-2}

Given a flag $F_\bullet$ and any matrix representative $M_F$, the \emph{Pl\"ucker coordinate} $\Delta_I(F_\bullet)$ for a subset $I\in \binom{[n]}{k}$ is the determinant of the submatrix of $M_F$ consisting of rows indexed by $I$ and the first $k$ columns. These Pl\"ucker coordinates are not intrinsically well-defined, as they depend on the choice of the matrix representative, but the collection $\left(\Delta_I(F_\bullet)_{I\in\binom{[n]}{k}}\right)$ is well-defined up to a common scalar multiple and thus determines a point in the projective space. These coordinates naturally give rise to the \emph{Pl\"ucker embedding}
\[\iota:\fl_n\hookrightarrow \prod_{k=1}^n\P^{\binom{[n]}{k}-1},\]
a closed embedding that maps a flag $F_\bullet$ to the tuple $\left((\Delta_I(F_\bullet))_{I\in\binom{[n]}{1}},\dots,(\Delta_I(F_\bullet))_{I\in\binom{[n]}{n}}\right)$.

We write  $\Delta_{i_1, \dots, i_k}(F_\bullet)$ as the determinant of the submatrix of a matrix representative $M_F$, formed by selecting the rows $i_1, \dots, i_k$ in that order and the first $k$ columns. This notation is alternating in the indices: for any permutation $\sigma \in S_k$, 
\[
\Delta_{i_1, \dots, i_k}(F_\bullet) = (-1)^{\ell(\sigma)} \Delta_{i_{\sigma(1)}, \dots, i_{\sigma(k)}}(F_\bullet).
\]
In particular, $\Delta_{i_1, \dots, i_k}(F_\bullet) = 0$ if the indices $i_1,\dots,i_k$ are not all distinct.

When the context suggests a natural ordering on a subset $I=\{i_1,\dots,i_k\}\subseteq [n]$, we may write $\Delta_I(F_\bullet):=\Delta_{i_1,i_2,\dots,i_k}(F_\bullet)$ without requiring the indices $i_1,\dots,i_k$ to be sorted. For example, given a permutation $w \in S_n$, we use the shorthand $\Delta_{w[k]}(F_\bullet) := \Delta_{w_1, \dots, w_k}(F_\bullet)$.

Now for $w\in S_n$, define the \emph{multi-Pl\"ucker coordinate} of the flag
$F_\bullet$ as
\[\Delta_w(F_\bullet):=\prod_{k=1}^{n}\Delta_{w[k]}(F_\bullet).\]

For a set $I=\{i_1,i_2,\dots,i_k\}$, whether ordered or unordered, define
\begin{align*}
    \Delta_{I+i}(F_\bullet) &:= \Delta_{i_1,\dots,i_k,i}(F_\bullet)&\text{ for all }i\in [n],\\
    \Delta_{I-i_j}(F_\bullet) &:= (-1)^{k-j}\Delta_{i_1,\dots,\widehat{i_j},\dots,i_k}(F_\bullet)&\text{ for all }j\in [k].
\end{align*}

The flag variety is cut out in projective space by multi-homogeneous polynomials in the Pl\"ucker coordinates, known as the \emph{incidence Pl\"ucker relations}. Their general form and related proofs can be found in \cite[Section~9.1]{youngtableaux}. Since the full generality is rather involved, we state three special cases of the incidence Pl\"ucker relations that will be particularly useful.
\begin{enumerate}
    \item For any $I\in \binom{[n]}{k}$ and $J \in \binom{[n]}{k-1}$, with $k\in [n]$, we have
    \begin{equation}\label{eqn:incidenceplucker1}
    \Delta_I\Delta_J(F_\bullet) = \sum_{i\in I}\Delta_{I-i}\Delta_{J+i}(F_\bullet).
    \end{equation}
    \item For any $I\in \binom{[n]}{r}$ and $J\in \binom{[n]}{s}$, with $r,s\in [n]$ and $r-s\geq 2$, we have
    \begin{equation}\label{eqn:incidenceplucker2}
     \sum_{i\in I} \Delta_{I-i}\Delta_{J+i}(F_\bullet) = 0.
    \end{equation}
    \item For any $I\in \binom{[n]}{r}$, $J\in \binom{[n]}{s}$, and $j\in [n]$, with $r,s\in [n]$ and $r-s\geq 1$, we have
    \begin{equation}\label{eqn:incidenceplucker3}
    \Delta_{I}\Delta_{J+j}(F_\bullet) = \sum_{i\in I}\Delta_{I-i+j}\Delta_{J+i}(F_\bullet).
    \end{equation}
\end{enumerate}

\subsubsection{Schubert Varieties and Richardson Varieties}\label{sec:prelim-2-3}

For a permutation $w\in S_n$, the \emph{Schubert cell} $X_w^\circ := B e_w$ (equivalently, $X_w^\circ= BwB$) is defined as the $B$-orbit of the $T$-fixed point $e_w$ in $\fl_n$. Similarly, the \emph{opposite Schubert cell} is defined as the $B_{-}$-orbit $\Omega_w^\circ:= B_{-} e_w= B_{-}wB$. The \emph{Schubert variety} and \emph{opposite Schubert variety} are defined as the closure of these cells
\[X_w := \overline{X_w^\circ},\quad\Omega_w := \overline{\Omega_w^\circ}.\]

There is also a natural matrix representative for all flags in a Schubert cell. By performing column reduction, every flag $F_\bullet\in X_{w,\ba}^\circ$ is represented by a unique matrix with pivot $1$'s at positions $(w_k,k)$ for all $k\in [n]$, and $0$'s below and to the right of each pivot $1$. We refer to this matrix as the \emph{canonical representative} or the \emph{canonical matrix} of $F_\bullet$. Similarly, every flag $F_\bullet\in \Omega_{w,\ba}^\circ$ is represented by a unique matrix with pivot $1$'s at positions $(w_k,k)$, and $0$'s above and to the right of each pivot $1$. An example of these canonical matrices is shown in \Cref{fig:Schub-cell}. It is immediate from this description that $X_{w}^\circ\cong \C^{\ell(w)}$ and $\Omega_w^\circ \cong \C^{\binom{n}{2}-\ell(w)}$.
\begin{figure}[ht]
    \centering
    \subcaptionbox{$X_{w}^\circ$\label{fig:Xw_classical}}[.4\textwidth]{
    \begin{tikzpicture}[scale = 0.7]
    \draw (0,0)--(4,0)--(4,4)--(0,4)--(0,0);
    \draw (1,0) -- (1,4);
    \draw (2,0) -- (2,4);
    \draw (3,0) -- (3,4);
    \draw (0,1) -- (4,1);
    \draw (0,2) -- (4,2);
    \draw (0,3) -- (4,3);
    \node at (-0.5,0.5) {$4$};
    \node at (-0.5,1.5) {$3$};
    \node at (-0.5,2.5) {$2$};
    \node at (-0.5,3.5) {$1$};
    \node at (0.5,-0.5) {$1$};
    \node at (1.5,-0.5) {$2$};
    \node at (2.5,-0.5) {$3$};
    \node at (3.5,-0.5) {$4$};
    \node at (0.5,0.5) {$0$};
    \node at (1.5,0.5) {$0$};
    \node at (2.5,0.5) {$0$};
    \node at (3.5,0.5) {$1$};
    \node at (0.5,1.5) {$0$};
    \node at (1.5,1.5) {$1$};
    \node at (2.5,1.5) {$0$};
    \node at (3.5,1.5) {$0$};
    \node at (0.5,2.5) {$1$};
    \node at (1.5,2.5) {$0$};
    \node at (2.5,2.5) {$0$};
    \node at (3.5,2.5) {$0$};
    \node at (0.5,3.5) {$\ast$};
    \node at (1.5,3.5) {$\ast$};
    \node at (2.5,3.5) {$1$};
    \node at (3.5,3.5) {$0$};
    \end{tikzpicture}}
    \subcaptionbox{$\Omega_{w}^\circ$\label{fig:Omegaw_classical}}[.4\textwidth]{
    \begin{tikzpicture}[scale = 0.7]
    \draw (0,0)--(4,0)--(4,4)--(0,4)--(0,0);
    \draw (1,0) -- (1,4);
    \draw (2,0) -- (2,4);
    \draw (3,0) -- (3,4);
    \draw (0,1) -- (4,1);
    \draw (0,2) -- (4,2);
    \draw (0,3) -- (4,3);
    \node at (-0.5,0.5) {$4$};
    \node at (-0.5,1.5) {$3$};
    \node at (-0.5,2.5) {$2$};
    \node at (-0.5,3.5) {$1$};
    \node at (0.5,-0.5) {$1$};
    \node at (1.5,-0.5) {$2$};
    \node at (2.5,-0.5) {$3$};
    \node at (3.5,-0.5) {$4$};
    \node at (0.5,0.5) {$\ast$};
    \node at (1.5,0.5) {$\ast$};
    \node at (2.5,0.5) {$\ast$};
    \node at (3.5,0.5) {$1$};
    \node at (0.5,1.5) {$\ast$};
    \node at (1.5,1.5) {$1$};
    \node at (2.5,1.5) {$0$};
    \node at (3.5,1.5) {$0$};
    \node at (0.5,2.5) {$1$};
    \node at (1.5,2.5) {$0$};
    \node at (2.5,2.5) {$0$};
    \node at (3.5,2.5) {$0$};
    \node at (0.5,3.5) {$0$};
    \node at (1.5,3.5) {$0$};
    \node at (2.5,3.5) {$1$};
    \node at (3.5,3.5) {$0$};
    \end{tikzpicture}}
    \caption{Canonical representatives of Schubert cells for $w=2314$}
    \label{fig:Schub-cell}   
\end{figure}

Schubert cells and Schubert varieties can also be described in terms of rank conditions on matrices. For any $i,k \in [n]$ and matrix $M \in G$, define the \emph{south-west rank} $\rank^{SW}_{i,k}(M)$ to be the rank of the $i \times k$ submatrix in the bottom-left corner of $M$. Similarly, define the \emph{north-west rank} $\rank^{NW}_{i,k}(M)$ to be the rank of the $i \times k$ submatrix in the top-left corner of $M$. Let $F_\bullet$ be a flag represented by a matrix $M_F \in G$. Then:
\[
\begin{aligned}
    F_\bullet \in X_w &\iff \rank^{SW}_{i,k}(M_F) \leq \rank^{SW}_{i,k}(w) &&\text{for all } i, k \in [n], \\
    F_\bullet \in \Omega_w &\iff \rank^{NW}_{i,k}(M_F) \leq \rank^{NW}_{i,k}(w) &&\text{for all } i, k \in [n].
\end{aligned}
\]
Moreover, $F_\bullet$ lies in the (opposite) Schubert cell if all $\leq$ above are replaced by $=$. We note that the values of these rank functions are independent of the choice of matrix representative $M_F$. For a detailed discussion of these rank conditions, along with their equivalence to the orbit and the canonical matrix definitions, we refer the readers to \cite[Chapter~10]{youngtableaux}.

The Schubert cells give rise to the \emph{Bruhat decomposition}:
\[
\fl_n = \bigsqcup_{w \in S_n} X_w^\circ = \bigsqcup_{w \in S_n} \Omega_w^\circ.
\]
Moreover, for each $w \in S_n$, the Schubert and opposite Schubert varieties admit decompositions that respect the Bruhat order:
\[
X_w = \bigsqcup_{u \leq w} X_u^\circ \quad \text{and} \quad \Omega_w = \bigsqcup_{w \leq v} \Omega_v^\circ.
\]

For permutations $u,v\in S_n$, the \emph{Richardson variety} is defined as $\cR_{u,v} = X_{v}\cap \Omega_u$, and the \emph{open Richardson variety} as $\cR^\circ_{u,v} = X^\circ_{v}\cap \Omega^\circ_{u}$. Richardson varieties share many of the nice geometric properties of Schubert varieties, although their structure was established much later and through very different techniques. Below, we summarize several key properties. See \cite{richardson} for details and proofs.
\begin{enumerate}
    \item The varieties $\cR^\circ_{u,v}$ and $\cR_{u,v}$ are nonempty if and only if $u \leq v$ in the Bruhat order. In that case, we have $\dim(\cR_{u,v}) = \dim(\cR_{u,v}^\circ) = \ell(v)-\ell(u)$.
    \item The Richardson varieties admit a stratification $\cR_{u,v} = \bigsqcup_{[x,y]\subseteq [u,v]}\cR_{x,y}^\circ$, where the disjoint union is over all Bruhat intervals $[x,y]$ contained in $[u,v]$.
    \item The open Richardson variety $\cR_{u,v}^{\circ}$ is the transverse intersection of $X_v^\circ$ and $\Omega_u^\circ$, and is smooth and dense in $\cR_{u,v}$.
\end{enumerate}

Finally, as subvarieties of $\fl_n$, Schubert varieties and Richardson varieties can also be described by the vanishing of multi-Pl\"ucker coordinates. Since there is limited literature on multi-Pl\"ucker coordinates, we include a brief proof of the following statement.

\begin{lemma}\label{lemma:SchubPlucker}
    Let $u, v \in S_n$ be permutations. Then
    \[\begin{aligned}
        X_v &= \{F_\bullet\in \fl_n:   \Delta_w(F_\bullet) = 0 \text{ for all }w \nleq v\}, & X_v^\circ &= X_v\cap \{\Delta_v \neq 0\},\\
        \Omega_u &= \{F_\bullet\in \fl_n:  \Delta_w(F_\bullet) = 0 \text{ for all }w \ngeq u\}, & \Omega_u^\circ &= \Omega_u \cap \{\Delta_u \neq 0\},\\
        \cR_{u,v} &= \{F_\bullet\in \fl_n:  \Delta_w(F_\bullet) = 0 \text{ for all } w \notin [u,v]\}, & \cR^\circ_{u,v} &= \cR_{u,v} \cap \{\Delta_u\Delta_v \neq 0\}.
    \end{aligned}
    \]
\end{lemma}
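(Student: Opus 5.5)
It suffices to prove the statement for $X_v$ and $X_v^\circ$. The opposite case follows by applying the automorphism of $\fl_n$ given by left multiplication by $w_0$. This automorphism exchanges $B$ and $B_-$, sends $X_v$ to $\Omega_{w_0v}$, and permutes Plücker coordinates so that $\Delta_w$ goes to $\pm\Delta_{w_0w}$, with $w\le v$ iff $w_0w\ge w_0v$. The Richardson statements then follow by intersecting: $\cR_{u,v}=X_v\cap\Omega_u$ is cut out by $\Delta_w=0$ for $w\not\le v$ together with $\Delta_w=0$ for $w\not\ge u$, i.e. for all $w\notin[u,v]$. For the open versions we use $\cR^\circ_{u,v}=X_v^\circ\cap\Omega_u^\circ$.

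For the inclusion $X_v\subseteq\{\Delta_w=0 \ \forall w\not\le v\}$, fix $w\not\le v$. By the Ehresmann criterion (\Cref{prop:ehresmann}) there is some $k$ with $w[k]\not\le v[k]$ in Gale order. We claim $\Delta_{w[k]}$ vanishes on $X_v$, so the product $\Delta_w$ does too. On $X_v^\circ$, the canonical matrix has zero entries below each pivot, and its pivots are at $(v_j,j)$. Hence the first $k$ columns have row support contained in a set whose $i$-th largest row is at most the $i$-th largest element of $v[k]$. In other words, any $k$-set $I$ with $\Delta_I\ne0$ satisfies $I\le v[k]$; this is the standard triangularity argument via the Laplace expansion. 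The locus $\{\Delta_{w[k]}=0\}$ is closed, so the vanishing extends from $X_v^\circ$ to its closure $X_v$.

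For the converse, take $F_\bullet$ with $\Delta_w(F_\bullet)=0$ for all $w\not\le v$. By the Bruhat decomposition, $F_\bullet\in X_x^\circ$ for a unique $x$. The canonical matrix of $F_\bullet$ has $\Delta_{x[k]}=\pm1$ for every $k$, since the pivot rows give a unitriangular minor after permuting. Thus $\Delta_x(F_\bullet)\ne0$, which forces $x\le v$, and so $F_\bullet\in X_x^\circ\subseteq X_v$ by the decomposition $X_v=\bigsqcup_{x\le v}X_x^\circ$. The same observation gives the open statement. If $F_\bullet\in X_v\setminus X_v^\circ$, then $F_\bullet\in X_x^\circ$ with $x<v$, so $x[k]<v[k]$ in Gale order for some $k$. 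By the triangularity above, $\Delta_{v[k]}(F_\bullet)=0$, hence $\Delta_v(F_\bullet)=0$. Conversely, $\Delta_v\ne0$ on $X_v^\circ$.

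The main obstacle is the clean verification of the triangularity claim: on $X_x^\circ$, $\Delta_I\neq0$ implies $I\le x[k]$ in Gale order. It also requires that for $x\neq v$, $x\le v$ implies $x[k]\neq v[k]$ for some $k$, which is immediate since the sets $x[k]$ for all $k$ determine $x$. I would prove the triangularity claim by noting that column $j$ of the canonical matrix is supported in rows $\le x_j$. A nonzero term in the Leibniz expansion of $\Delta_I$ thus gives a bijection $I\to x[k]$ with each row at most the corresponding $x_j$, and a Hall-type sorting argument yields $I\le x[k]$.
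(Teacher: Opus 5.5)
Your proof is correct and follows essentially the same route as the paper: for $X_v\subseteq\{\Delta_w=0 \ \forall w\not\le v\}$ you combine Ehresmann's criterion with a Gale-order bound on which Plücker coordinates can be nonzero, and for the reverse inclusion you use the Bruhat decomposition together with $\Delta_x\neq 0$ on $X_x^\circ$. The differences are minor. You obtain the Gale bound from the canonical matrix on $X_v^\circ$ and then pass to the closure, whereas the paper reads it off the rank conditions on $X_v$. You also write out the open, opposite (via $w_0$), and Richardson cases, which the paper leaves as following naturally.
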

\begin{proof}
We prove the statement for $X_v$, and the remaining cases follow naturally. For the containment $\subseteq$, suppose $F_\bullet \in X_v$ and $\Delta_w(F_\bullet) \neq 0$ for some $w \in S_n$. Then, by analyzing the rank conditions defining $X_v$, we must have $w[k] \leq v[k]$ for all $k \in [n]$. By \Cref{prop:ehresmann}, this implies $w \leq v$. Therefore, if $w \nleq v$, we must have $\Delta_w(F_\bullet) = 0$, as claimed.

For the reverse containment, assume that $\Delta_w(F_\bullet) = 0$ for all $w \nleq v$, and we need $F_\bullet \in X_v$. By the Bruhat decomposition, $F_\bullet \in X_u^\circ$ for some $u \in S_n$. If $u \nleq v$, then $\Delta_u(F_\bullet) \neq 0$, contradicting the assumption. Hence $u \leq v$, and so $F_\bullet \in X_u^\circ \subseteq X_v$.
\end{proof}

\subsubsection{Related Schubert Geometry in the Grassmannian}\label{sec:prelim-2-4}

For $n \in \Z_{>0}$ and $k \in [n]$, the \emph{Grassmannian} $\gr_{k,n}$ is the space of all $k$-dimensional linear subspaces of $\C^n$:
\[\gr_{k,n}:=\{V\subseteq \C^n:\dim(V)=k\}.\]
The Grassmannian has a (simpler) Schubert geometry theory analogous to that of the flag variety. See \cite[Chapter~9]{youngtableaux} for details.

A \emph{matrix representative} of a subspace $V \in \gr_{k,n}$ is a $n \times k$ matrix $M_V$ whose column span is equal to $V$. For a subset $I \in \binom{[n]}{k}$, the \emph{Pl\"ucker coordinate} $\Delta_I(V)$ is defined as the determinant of the submatrix of $M_V$ with rows indexed by $I$. Similar to the case of the flag variety, these coordinates define the \emph{Pl\"ucker embedding} $\iota:\gr_{k,n}\hookrightarrow\P^{\binom{n}{k}-1}$ that sends a subspace $V$ to $\left(\Delta_{I}(F_\bullet)\right)_{I\in\binom{[n]}{k}}$. The Pl\"ucker coordinates satisfy the \emph{Pl\"ucker relations}:
\[\Delta_I\Delta_J(V)=\sum_{i\in I}\Delta_{I-i}\Delta_{J+i}(V),\quad\text{for any } I\in\binom{[n]}{k+1} \text{ and }J\in\binom{[n]}{k-1}.\]

Let $V \in \gr_{k,n}$ be represented by a matrix $M_V$. For any subset $K \subseteq [n]$, let $\rank_K(V)$ denote the rank of the submatrix of $M_V$ formed by the rows indexed by $K$.  Analogous to the flag variety, for $I \in \binom{[n]}{k}$ we define the following four \emph{Grassmannian Schubert varieties} and \emph{cells} in $\gr_{k,n}$ using rank conditions:
\[
\begin{aligned}
    X_I &:= \{V\in \gr_{k,n}: \rank_{[n-j+1,n]}(V) \leq \#(I\cap [n-j+1,n])\text{ for all }j\in[n]\},\\
    X_I^\circ &:= \{V\in \gr_{k,n}: \rank_{[n-j+1,n]}(V) = \#(I\cap [n-j+1,n])\text{ for all }j\in[n]\},\\
    \Omega_I &:= \{V\in \gr_{k,n}: \rank_{[j]}(V) \leq \#(I\cap [j])\text{ for all }j\in[n]\},\\
    \Omega_I^\circ &:= \{V\in \gr_{k,n}: \rank_{[j]}(V) = \#(I\cap [j])\text{ for all }j\in[n]\}.
\end{aligned}
\]

As in the flag variety, we have $X_I = \overline{X_I^\circ}$ and $\Omega_I = \overline{\Omega_I^\circ}$. Moreover, the Grassmannian Schubert varieties admit stratifications with respect to the Gale order on $\binom{[n]}{k}$ (\Cref{def:gale-order}):
\[X_{J} = \bigsqcup_{I\leq J}X_I^\circ\ \quad\text{and}\quad \Omega_J = \bigsqcup_{I\geq J}\Omega_I^\circ.\]

When $I \leq J$ in the Gale order, we define the \emph{open Grassmannian Richardson variety} as $\cR_{I,J}^\circ := X_J^\circ \cap \Omega_I^\circ$, and its closure, the \emph{Grassmannian Richardson variety}, as $\cR_{I,J} := X_J \cap \Omega_I$. Let $[I,J]$ denote the interval in the Gale order. Then, as in the flag variety, we have:
\[
\cR_{I,J} = \overline{\cR_{I,J}^{\circ}} \quad \text{and} \quad \cR_{I,J} = \bigsqcup_{[I',J'] \subseteq [I,J]} \cR_{I',J'}^\circ.
\]

Finally, these varieties can also be defined by vanishing of Pl\"ucker coordinates.

\begin{lemma}\label{lemma:grSchubPlucker}
Let $I,J\in\binom{[n]}{k}$. Then we have
\[
    \begin{aligned}
        X_J &= \{V\in \gr_{k,n}: \Delta_K(V) = 0\text{ for all }K \nleq J\} & X_J^\circ &= X_J\cap \{\Delta_J \neq 0\}\\
        \Omega_I &= \{V\in \gr_{k,n}:   \Delta_K(V) = 0\text{ for all }K \ngeq I\} & \Omega_I^\circ &= \Omega_I \cap \{\Delta_I \neq 0\}\\
        \cR_{I,J} &= \{V\in \gr_{k,n}:   \Delta_K(V) = 0\text{ for all }K \notin [I,J]\} & \cR^\circ_{I,J} &= \cR_{I,J} \cap \{\Delta_I\Delta_J \neq 0\}.
    \end{aligned}
\]
\end{lemma}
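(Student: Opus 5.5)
We prove the statement for $X_J$ and $X_J^\circ$; the remaining cases follow the same way, as in the proof of \Cref{lemma:SchubPlucker}. The plan has two ingredients: the rank conditions defining $X_J$, and the Grassmannian Schubert decomposition $\gr_{k,n}=\bigsqcup_{I}X_I^\circ$.

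\textbf{Step 1: the containment $\subseteq$.} Let $V\in X_J$ and suppose $\Delta_K(V)\neq 0$. Then the rows of $M_V$ indexed by $K$ are linearly independent. Consequently, for every $j$,
\[
\#(K\cap[n-j+1,n])\leq \rank_{[n-j+1,n]}(V)\leq \#(J\cap[n-j+1,n]).
\]
Counting elements in terminal segments in this way is equivalent to $K\leq J$ in the Gale order. Indeed, the $i$-th largest element of $K$ is at most the $i$-th largest element of $J$ for all $i$ exactly when every terminal segment contains at least as many elements of $J$ as of $K$. Hence $K\nleq J$ forces $\Delta_K(V)=0$.

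\textbf{Step 2: the containment $\supseteq$.} Use the decomposition $\gr_{k,n}=\bigsqcup_I X_I^\circ$, which is the $J=[n-k+1,n]$ case of the stated stratification. Suppose $\Delta_K(V)=0$ for all $K\nleq J$, and let $V\in X_I^\circ$.

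We claim $\Delta_I(V)\neq0$. Column-reduce $M_V$ from the bottom; the rank equalities defining $X_I^\circ$ place pivots exactly in the rows of $I$. The submatrix on rows $I$ is then triangular with nonzero diagonal, so $\Delta_I(V)\neq 0$.

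By hypothesis this forces $I\leq J$. Therefore $V\in X_I^\circ\subseteq X_J$.

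\textbf{Step 3: the open cell.} For $X_J^\circ=X_J\cap\{\Delta_J\neq0\}$, the claim in Step 2 gives $\Delta_J\neq 0$ on $X_J^\circ$. Conversely, let $V\in X_J$ with $\Delta_J(V)\neq0$, and say $V\in X_I^\circ$ with $I\leq J$. Step 1, applied to $X_I$, shows $\Delta_J(V)\neq 0$ implies $J\leq I$. Hence $I=J$.

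The $\Omega$ statements follow by the same argument using initial segments $[j]$ and the reversed Gale inequality. For the Richardson case, intersect the two descriptions, noting that $K\notin[I,J]$ means $K\nleq J$ or $K\ngeq I$. The open Richardson statement is the intersection of the two open conditions.

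The only real point to check is the pivot/triangularity claim $\Delta_I\neq0$ on $X_I^\circ$, together with the translation between the counting condition and the Gale order.
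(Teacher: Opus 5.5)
Your proof is correct. The paper states this lemma without proof, but your argument is the Grassmannian version of its proof of \Cref{lemma:SchubPlucker}. You obtain $\subseteq$ from the rank bounds on the rows indexed by $K$, and $\supseteq$ from the decomposition $\gr_{k,n}=\bigsqcup_I X_I^\circ$ together with $\Delta_I\neq 0$ on $X_I^\circ$. Your Step~3 treats the open cell more carefully than the paper's ``follow naturally.''
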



\subsection{Classical and Quantum Schubert Calculus}\label{sec:prelim-3}

\subsubsection{Schubert Calculus}\label{sec:prelim-3-1}

The central goal of Schubert calculus is to study the intersection theory of Schubert varieties, which is encoded in the cohomology ring $H^*(\fl_n)$. As a consequence of the Bruhat decomposition, this ring admits a $\Z$-basis consisting of \emph{Schubert classes} $\{\sigma_w:w\in S_n\}$, where each $\sigma_w$ represents the cohomology class of a Schubert variety:
\[\sigma_w:=[\Omega_w]=[X_{w_0w}]\in H^{2\ell(w)}(\fl_n).\]
In particular, $\sigma_\id=1\in H^0(\fl_n)$ is the fundamental class, and $\sigma_{w_0}=[\pt]\in H^{2\dim(\fl_n)}(\fl_n)$ is the class of a point. Let $\int_{\fl_n}:H^{2\dim(\fl_n)}(\fl_n)\to \Z$ denote the \emph{integration map}, which sends the cohomology class of a point $[\pt]$ to $1$. The following lemma shows that Schubert classes are self-dual under the cup product:

\begin{lemma}[Duality Lemma]\label{lemma:duality}
    Let $u,v\in S_n$ such that $\ell(u)+\ell(v)=\binom{n}{2}$. Then
    \[\int_{\fl_n}\sigma_u\cdot \sigma_v=
    \begin{cases}
        1, &\text{if }v=w_0u,\\
        0, &\text{otherwise}.
    \end{cases}\]
\end{lemma}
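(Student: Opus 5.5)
The plan is to realize the pairing $\int_{\fl_n}\sigma_u\cdot\sigma_v$ as the number of points in a transverse intersection of a Schubert variety with an opposite Schubert variety, and to read off that intersection from the Richardson variety facts recalled above. Since $\sigma_u=[\Omega_u]$ and $\sigma_v=[X_{w_0v}]$, the product is $[\Omega_u]\cdot[X_{w_0v}]$. The varieties $\Omega_u$ and $X_{w_0v}$ are orbit closures for the opposite Borels $B_-$ and $B$, so they are in general position. Their intersection is the Richardson variety $\cR_{u,w_0v}=X_{w_0v}\cap\Omega_u$. By property (3) of Richardson varieties, this intersection is generically transverse along the dense open subset $\cR^\circ_{u,w_0v}$, and it has the expected dimension. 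Hence $\sigma_u\cdot\sigma_v=[\cR_{u,w_0v}]$.

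The next step is to split into cases using property (1). If $u\not\leq w_0v$, then $\cR_{u,w_0v}$ is empty and the integral is $0$. If $u\le w_0v$, then $\dim \cR_{u,w_0v}=\ell(w_0v)-\ell(u)=\binom n2-\ell(v)-\ell(u)=0$. A Bruhat interval $[u,w_0v]$ with $\ell(u)=\ell(w_0v)$ forces $u=w_0v$, which is equivalent to $v=w_0u$ since $w_0$ is an involution. In the remaining case $u=w_0v$, the stratification in property (2) gives $\cR_{u,u}=\cR^\circ_{u,u}$. This set is $X_u^\circ\cap\Omega_u^\circ$. By the canonical matrices, or by \Cref{lemma:SchubPlucker}, it is the single $T$-fixed point $e_u$. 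Transversality at that point makes the intersection multiplicity $1$, so the integral equals $1$.

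The main obstacle is justifying that the cup product of the two classes equals the class of the scheme-theoretic intersection with multiplicity one. This needs the transversality statement in property (3) together with reducedness. At the single point it suffices to check the tangent spaces directly. $T_{e_u}X_u^\circ$ is spanned by the root directions in $\inv$-positions of $u$, and $T_{e_u}\Omega_u^\circ$ by the complementary ones. The two therefore span $T_{e_u}\fl_n$, because the $\ast$-entries of the two canonical matrices for $w=u$ occupy complementary positions, as in \Cref{fig:Schub-cell}. In the empty case no multiplicity issue arises.
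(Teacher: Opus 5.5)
Your proposal is correct. There is nothing to compare it against: the paper does not prove this lemma but recalls it in the preliminaries as a standard fact.

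What you give is the usual textbook proof. It rests on the same transversality principle the paper invokes a few lines later when it writes $[\cR_{u,v}]=[\Omega_u]\cdot[X_v]$.

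Two small remarks.

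First, you do not actually need the general identity $\sigma_u\cdot\sigma_v=[\cR_{u,w_0v}]$. In the case $u\not\leq w_0v$, the product vanishes simply because the supports $\Omega_u$ and $X_{w_0v}$ are disjoint. In the case $u=w_0v$, everything is local at $e_u$, and your tangent-space check can be made completely explicit in the affine chart $u\Omega_{\id}^\circ\cong\C^{\binom n2}$:
\begin{itemize}
\item the free coordinates of the chart are the entries $(u_j,k)$ with $j>k$;
\item $X_u^\circ$ is the coordinate subspace where the entries with $u_j>u_k$ vanish;
\item $\Omega_u^\circ$ is the coordinate subspace where the entries with $u_j<u_k$ vanish.
\end{itemize}
Since each cell is open in its closure, near $e_u$ the varieties $X_u$ and $\Omega_u$ are two complementary linear subspaces of this chart. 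Hence they meet only at $e_u$, with local intersection multiplicity $1$.

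Second, the phrase ``in general position'' for the $B$- and $B_-$-orbit closures needs justification. You can cite Richardson's theorem, as you do. Alternatively, use Kleiman transversality together with the density of $B_-B$ in $G$: for generic $g=b_-b$ one has $\Omega_u\cap gX_{w}=b_-(\Omega_u\cap X_w)$, so $\Omega_u\cap X_w$ is itself a translate of a generic transverse intersection.
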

Given $u,v,w\in S_n$, the \emph{(generalized) Littlewood--Richardson coefficients} $c_{u,v}^w$ are the structure constants appearing in the expansion of the cup product of Schubert classes:
\[\sigma_u\cdot \sigma_v=\sum_{w\in S_n}c_{u,v}^w\sigma_w.\]
$c_{u,v}^w\in\mathbb{N}$ and is nonzero only if $\ell(w)=\ell(u)+\ell(v)$. By the duality lemma, they count the number of points in the transverse intersection of three Schubert varieties $X_{w_0u}$, $X_{w_0v}$, and $X_w$ in general position. Moreover, the coefficients exhibit an $S_3$-symmetry:
\[c_{u,v}^w=c_{v,u}^w=c_{u,w_0w}^{w_0v}=c_{w_0w,u}^{w_0v}=c_{v,w_0w}^{w_0u}=c_{w_0w,v}^{w_0u}.\]
A central open problem in Schubert calculus is to find a manifestly positive combinatorial rule for computing them.

These coefficients also appear in the cohomology classes of Richardson varieties. Since the open Richardson variety $\cR_{u,v}^\circ$ is a transverse intersection of $\Omega_u^\circ$ and $X_v^\circ$, we have
\[[\cR_{u,v}]=[\Omega_u]\cdot[X_v]=\sigma_u\cdot\sigma_{w_0v}=\sum_{w\in S_n}c_{u,w}^v\sigma_{w_0w}.\]

\subsubsection{Quantum Cohomology}\label{sec:prelim-3-3}
In this section, we provide an overview of the quantum cohomology of $\fl_n$. For a more detailed exposition, see \cite{quantum-intro}.

A \emph{degree} $d=(d_1,d_n,\dots,d_n)\in\Z_{\geq 0}^{n-1}$ is a tuple of nonnegative integers representing the homology class $\sum_{i=1}^{n-1}d_i[X_{s_i}]\in H_2(\fl_n)$, where $[X_{s_i}]$ denotes the homology class of the Schubert curve $X_{s_i}$. Let $C$ be a connected, reduced, nodal curve of genus $0$. A \emph{degree-$d$ stable map} is a morphism $f: C\to\fl_n$ such that $f_\ast[C]=d$, and the map satisfies certain stability conditions. The \emph{Kontsevich moduli space} $\overline{M}_{0,3}(\fl_n,d)$ parametrizes degree-$d$, genus-$0$ stable maps $f:C\to\fl_n$ with three marked points $p_1,p_2,p_3\in C$. The moduli space $\overline{M}_{0,3}(\fl_n,d)$ is an irreducible projective variety of pure dimension $\binom{n}{2} + 2|d|$, where $|d|:=\sum_{i=1}^{n-1}d_i$.

For $i\in\{1,2,3\}$, define the \emph{evaluation map} $\ev_i:\overline{M}_{0,3}(\fl_n,d)\rightarrow \fl_n$ which sends a point corresponding to a map $f:C\to\fl_n$ to the image $f(p_i)$ of the $i$-th marked point $p_i$. Given permutations $u,v,w\in S_n$ and degree $d$, the \emph{Gromov--Witten Invariant} $c_{u,v}^{w,d}$ is defined by:
\[c_{u,v}^{w,d}:=\int_{\overline{M}_{0,3}(\fl_n,d)}\ev_1^\ast(\sigma_u)\cdot\ev_2^\ast(\sigma_v)\cdot\ev_3^\ast(\sigma_{w_0w})\in \Z.\]
These invariants are nonnegative integers and are nonzero only if $\ell(u)+\ell(v)=\ell(w)+2|d|$. Geometrically, $c_{u,v}^{w,d}$ counts the number of rational curves of degree $d$ intersecting the Schubert varieties $X_{w_0u}$, $X_{w_0v}$, and $X_{w}$ in general position. These invariants satisfy a similar $S_3$-symmetry:
\[c_{u,v}^{w,d}=c_{v,u}^{w,d}=c_{u,w_0w}^{w_0v,d}=c_{w_0w,u}^{w_0v,d}=c_{v,w_0w}^{w_0u,d}=c_{w_0w,v}^{w_0u,d}.\]

Let $\Z[q]:=\Z[q_1,\dots,q_{n-1}]$ with $\deg(q_i)=2$. The \emph{(small) quantum cohomology ring} $QH^\ast(\fl_n)$ is defined as the free $\Z[q]$-module generated by the Schubert classes $\{\sigma_w:w\in S_n\}$, equipped with the \emph{quantum product} $\star$ defined by
\[\sigma_u\star\sigma_v=\sum_{\substack{w\in S_n\\d\in\Z_{\geq 0}^{n-1}}}c_{u,v}^{w,d}\; q^d\sigma_w,\]
where $q^d:=q_1^{d_1}\cdots q_{n-1}^{d_{n-1}}$. We use the symbol $\star$ to denote the quantum product on $QH^\ast(\fl_n)$, to distinguish it from the classical cup product $\sigma_u\cdot\sigma_v$ on $H^\ast(\fl_n)$.

Setting the quantum parameters $q_1=\cdots=q_{n-1}=0$, the quantum cohomology ring $QH^\ast(\fl_n)$ reduces to the classical $H^\ast(\fl_n)$. Moreover, when the degree $d=0$, the Gromov--Witten invariants $c_{u,v}^{w,d}$ recover the classical Littlewood--Richardson coefficients $c_{u,v}^w$.

\subsubsection{Two-Point Curve Neighborhoods}\label{sec:prelim-3-4}

For any permutations $u,v\in S_n$ and degree $d$, the \emph{Gromov--Witten variety} is defined as
\[GW_d(\Omega_u,X_v):= \ev_1^{-1}(\Omega_u)\cap \ev_2^{-1}(X_v)\subseteq \overline{M}_{0,3}(\fl_n,d).\]
These varieties exhibit many nice geometric properties. It was shown in \cite[Lemma~7.3]{FW} that if $GW_d(\Omega_u,X_v)$ is nonempty, then it is a reduced, closed subvariety of pure dimension $\ell(v)-\ell(u)+2|d|$. Later, \cite[Corollary~3.3]{curveneighborhood} established that it is irreducible.

The associated \emph{two-point curve neighborhood} is defined as the image
\[\Gamma_d(\Omega_u,X_v) :=\ev_3(GW_d(\Omega_u,X_v))\subseteq\fl_n.\]
Geometrically, this is the union of all degree-$d$ rational curves in $\fl_n$ that intersect both the Schubert variety $X_v$ and the opposite Schubert variety $\Omega_u$. These curve neighborhoods were introduced and studied in \cite{curveneighborhood, BCMP}. They also exhibit many nice geometric properties, and their cohomology classes encode information about Gromov--Witten invariants, as shown in the following proposition. Since we could not find an exact reference for the statement below, we include a proof.
\begin{prop}\label{prop:Gamma-nice}
    For any $u,v\in S_n$ and degree $d$, the two-point curve neighborhood $\Gamma_d(\Omega_u,X_v)$ is either empty or an irreducible, closed subvariety of $\fl_n$. If $q^d$ appears in the quantum product $\sigma_u\star \sigma_{w_0v}$, then $\Gamma_d(\Omega_u,X_v)$ is of pure dimension $\ell(v)-\ell(u)+2|d|$, and its cohomology class is given by:
    \[[\Gamma_d(\Omega_u,X_v)]=\frac{1}{c}\;[q^d]\sigma_u\star\sigma_{w_0v}=\frac{1}{c}\sum_{w\in S_n}c_{u,w}^{v,d}\sigma_{w_0w}\in H^\ast(\fl_n),\]
    where $[q^d]\sigma_u\star\sigma_{w_0v}$ denotes the component of quantum weight $q^d$ in the expansion, and $c\in\Z_{>0}$ is the degree of the projection map $\ev_3:GW_d(\Omega_u,X_v)\to\Gamma_d(\Omega_u,X_v)$.
\end{prop}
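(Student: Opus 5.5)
The proof splits into three parts: closedness and irreducibility, dimension, and the cohomology class. For the first, $\overline{M}_{0,3}(\fl_n,d)$ is projective, so $\ev_3$ is a proper morphism, and $GW_d(\Omega_u,X_v)$ is a closed subvariety. Hence $\Gamma_d(\Omega_u,X_v)=\ev_3(GW_d(\Omega_u,X_v))$ is closed in $\fl_n$. If $GW_d$ is nonempty, it is irreducible by \cite[Corollary~3.3]{curveneighborhood}. The image of an irreducible variety is irreducible, so $\Gamma_d$ is either empty or an irreducible closed subvariety.

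For the dimension and class, I would use the projection formula for $\ev_3$. Since $\Omega_u$ and $X_v$ are in general position (opposite Borels), Kleiman transversality on the homogeneous space $\fl_n\times\fl_n$ gives
\[[GW_d(\Omega_u,X_v)]=\ev_1^\ast\sigma_u\cdot\ev_2^\ast\sigma_{w_0v}\cap[\overline{M}_{0,3}(\fl_n,d)],\]
and the pure dimension $\ell(v)-\ell(u)+2|d|$ follows from \cite[Lemma~7.3]{FW}. Pushing forward and pairing with $\sigma_w$ for $w\in S_n$ yields
\[\int_{\fl_n}(\ev_3)_\ast[GW_d]\cdot\sigma_w=\int_{\overline{M}_{0,3}}\ev_1^\ast\sigma_u\,\ev_2^\ast\sigma_{w_0v}\,\ev_3^\ast\sigma_w=c_{u,w_0v}^{w_0w,d}.\]
By the $S_3$-symmetry, $c_{u,w_0v}^{w_0w,d}=c_{u,w}^{v,d}$. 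The Duality Lemma (\Cref{lemma:duality}) then gives $(\ev_3)_\ast[GW_d]=\sum_w c_{u,w}^{v,d}\sigma_{w_0w}=[q^d]\,\sigma_u\star\sigma_{w_0v}$.

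The remaining step is to compare $(\ev_3)_\ast[GW_d]$ with $[\Gamma_d]$. By definition of proper pushforward, $(\ev_3)_\ast[GW_d]=c\,[\Gamma_d]$ if $\ev_3|_{GW_d}$ is generically finite of degree $c$, and $(\ev_3)_\ast[GW_d]=0$ if $\dim\Gamma_d<\dim GW_d$. If $q^d$ appears in $\sigma_u\star\sigma_{w_0v}$, some $c_{u,w}^{v,d}\neq0$, so the pushforward is nonzero. This excludes the dimension drop, so $\ev_3$ is generically finite and surjective onto $\Gamma_d$. It follows that $\dim\Gamma_d=\ell(v)-\ell(u)+2|d|$ and $[\Gamma_d]=\frac1c[q^d]\sigma_u\star\sigma_{w_0v}$ with $c>0$.

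The main obstacle is the transversality statement identifying $[GW_d]$ with $\ev_1^\ast\sigma_u\cdot\ev_2^\ast\sigma_{w_0v}$, which must hold with multiplicity one. I would deduce it from Kleiman's theorem applied to $(\ev_1,\ev_2)$ and the $G\times G$-translates of $\Omega_u\times X_v$. Reducedness of $GW_d$ \cite{FW} then ensures the class is the fundamental class. I would also check that the opposite pair $(\Omega_u,X_v)$ is generic enough, using $B$ and $B_-$ equivariance, since the general-position translate can be moved to this pair.
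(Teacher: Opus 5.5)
Your proposal is correct and follows essentially the same route as the paper. You use irreducibility of $GW_d(\Omega_u,X_v)$ for the first claim, the projection formula plus the Duality Lemma to get $(\ev_3)_\ast[GW_d(\Omega_u,X_v)]=[q^d]\sigma_u\star\sigma_{w_0v}$, and the generically-finite-or-zero dichotomy for proper pushforward to get the dimension and the factor $\frac{1}{c}$. Your Kleiman/reducedness argument identifying $[GW_d(\Omega_u,X_v)]$ with $\ev_1^\ast\sigma_u\cdot\ev_2^\ast\sigma_{w_0v}\cap[\overline{M}_{0,3}(\fl_n,d)]$ fills in a step the paper leaves implicit, as does your use of the $S_3$-symmetry to rewrite $c_{u,w_0v}^{w_0w,d}$ as $c_{u,w}^{v,d}$.
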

\begin{proof}
    The first statement follows immediately from the irreducibility of $GW_d(\Omega_u,X_v)$. By the projection formula, for any $w\in S_n$, we have
    \[\int_{\fl_n}(\ev_3)_*[GW_d(\Omega_u,X_v)]\cdot \sigma_{w} =\int_{\overline{M}_{0,3}(\fl_n,d)}\ev_1^\ast(\sigma_u)\cdot\ev_2^\ast(\sigma_{w_0v})\cdot\ev_3^\ast(\sigma_{w})=c_{u,w}^{v,d}.\]
    Applying \Cref{lemma:duality}, this implies
    \[(\ev_3)_\ast[GW_d(\Omega_u,X_v)]=\sum_{w\in S_n}c_{u,w}^{v,d}\sigma_{w_0w}=[q^d]\sigma_u\star\sigma_{w_0v}\in H^\ast(\fl_n).\]
    The dimension formula and the cohomology class then follow from the proper pushforward:
    \[(\ev_3)_*[GW_d(\Omega_u,X_v)]=\begin{cases}
        c\cdot [\Gamma_d(\Omega_u,X_v)] &\text{if }\Gamma_d(\Omega_u,X_v))\text{ has the expected dimension},\\
        0 &\text{otherwise}.
    \end{cases}.\]
\end{proof}

We remark that when $d=0$, the curve neighborhood recovers the Richardson variety $\Gamma_0(\Omega_u,X_v)=\cR_{u,v}$. Although the curve neighborhoods $\Gamma_d(\Omega_u,X_v)$ play an important role in the study of Gromov--Witten invariants, there is currently no explicit characterization of when a flag $F_\bullet\in\Gamma_d(\Omega_u,X_v)$, except in certain special cases. In \cite{BM15}, it was shown that the \emph{one-point curve neighborhoods} $\Gamma_d(X_v):= \Gamma_d(\Omega_\id,X_v)$ coincide with specific Schubert varieties, and a combinatorial description was provided to identify which Schubert variety arises. Separately, in \cite{LiMihalcea}, it was shown that when $|d|=1$, the two-point curve neighborhoods $\Gamma_d(\Omega_u,X_v)$ coincide with certain Richardson varieties. Despite these partial results, it would be desirable to have an explicit criterion for when a flag lies in a two-point curve neighborhood in the general case. We leave the following for further investigations. 
\begin{prob}
Give an explicit combinatorial criterion for when a flag $F_\bullet$ lies in the curve neighborhood $\Gamma_d(\Omega_u,X_v)$.
\end{prob}

\subsubsection{The Quantum Bruhat Graph}\label{sec:prelim-3-5}

In \cite{BFP-tilted-Bruhat}, Brenti, Fomin, and Postnikov introduced the \emph{quantum Bruhat graph} $\Gamma_n$ in their study of the quatum cohomology of $\fl_n$. The graph is a weighted directed graph on $S_n$ with the following two types of edges:
\[
\begin{cases}
w\rightarrow wt_{ij}\text{ of weight }1&\text{ if }\ell(wt_{ij})=\ell(w)+1,\\
w\rightarrow wt_{ij}\text{ of weight }q_{ij}:=q_iq_{i+1}\cdots q_{j-1}&\text{ if }\ell(wt_{ij})=\ell(w)+1-2(j-i),
\end{cases}
\]
where $1\leq i<j\leq n$. Edges of the first type are called \emph{strong Bruhat edges}, while those of the second type are called \emph{quantum edges}. An example for $n=3$ is shown in \Cref{fig:quantum-Bruhat-graph-n=3}.

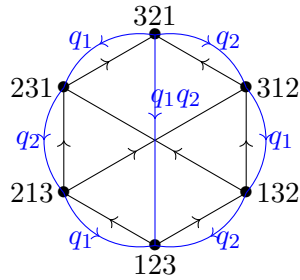
\begin{figure}[ht]
\centering
\begin{tikzpicture}[scale=0.7]
\coordinate (a) at (0,-2);
\coordinate (b) at (1.73205080757,-1);
\coordinate (c) at (1.73205080757,1);
\coordinate (d) at (0,2);
\coordinate (e) at (-1.73205080757,1);
\coordinate (f) at (-1.73205080757,-1);
\node at (a) {$\bullet$};
\node[below] at (a) {$123$};
\node at (b) {$\bullet$};
\node[right] at (b) {$132$};
\node at (c) {$\bullet$};
\node[right] at (c) {$312$};
\node at (d) {$\bullet$};
\node[above] at (d) {$321$};
\node at (e) {$\bullet$};
\node[left] at (e) {$231$};
\node at (f) {$\bullet$};
\node[left] at (f) {$213$};
\draw[postaction={decorate,decoration={markings,mark=at position 0.5 with {\arrow{>}}}}] (a) -- (b);
\draw[postaction={decorate,decoration={markings,mark=at position 0.5 with {\arrow{>}}}}] (a) -- (f);
\draw[postaction={decorate,decoration={markings,mark=at position 0.5 with {\arrow{>}}}}] (b) -- (c);
\draw[postaction={decorate,decoration={markings,mark=at position 0.4 with {\arrow{>}}}}] (b) -- (e);
\draw[postaction={decorate,decoration={markings,mark=at position 0.4 with {\arrow{>}}}}] (f) -- (c);
\draw[postaction={decorate,decoration={markings,mark=at position 0.5 with {\arrow{>}}}}] (f) -- (e);
\draw[postaction={decorate,decoration={markings,mark=at position 0.5 with {\arrow{>}}}}] (e) -- (d);
\draw[postaction={decorate,decoration={markings,mark=at position 0.5 with {\arrow{>}}}}] (c) -- (d);
\draw[blue, postaction={decorate,decoration={markings,mark=at position 0.5 with {\arrow{>}}}}] (b) to[bend left=40] (a);
\node[blue] at (1.4,-1.9) {$q_2$};
\draw[blue, postaction={decorate,decoration={markings,mark=at position 0.5 with {\arrow{>}}}}] (c) to[bend left=40] (b);
\node[blue] at (2.4,0) {$q_1$};
\draw[blue, postaction={decorate,decoration={markings,mark=at position 0.5 with {\arrow{>}}}}] (d) to[bend left=40] (c);
\node[blue] at (1.4,1.9) {$q_2$};
\draw[blue, postaction={decorate,decoration={markings,mark=at position 0.5 with {\arrow{>}}}}] (d) to[bend right=40] (e);
\node[blue] at (-1.4,1.9) {$q_1$};
\draw[blue, postaction={decorate,decoration={markings,mark=at position 0.5 with {\arrow{>}}}}] (e) to[bend right=40] (f);
\node[blue] at (-2.4,0) {$q_2$};
\draw[blue, postaction={decorate,decoration={markings,mark=at position 0.5 with {\arrow{>}}}}] (f) to[bend right=40] (a);
\node[blue] at (-1.4,-1.9) {$q_1$};
\draw[blue, postaction={decorate,decoration={markings,mark=at position 0.4 with {\arrow{>}}}}] (d) to (a);
\node[blue] at (0.4,0.8) {$q_1q_2$};
\end{tikzpicture}
\caption{The quantum Bruhat graph $\Gamma_3$ (unlabeled edges have weight $1$)}
\label{fig:quantum-Bruhat-graph-n=3}
\end{figure}
The quantum Bruhat graph encodes the \emph{quantum Chevalley--Monk formula} in the quantum cohomology ring $QH^\ast(\fl_n)$, first established in \cite[Theorem~1.3]{FGP-quantum-Schubert}:
\[\sigma_{s_k}\star \sigma_w=\sum_{w\to wt_{ij}}\wt(w\to wt_{ij})\sigma_{wt_{ij}},\]
where the sum is over all transpositions $t_{ij}$ with $i \leq k < j$ such that $w \to wt_{ij}$ is an edge in $\Gamma_n$, and $\wt(w\to w_{ij})$ denotes the weight of that edge.

An alternative criterion for the edge conditions in the quantum Bruhat graph can be uniformly expressed using the notion of \emph{cyclic intervals}:

\begin{defin}\label{def:cyclicinterval}
For $a, b \in [n]$, define the \emph{cyclic interval} $[a, b)_c$ as follows:
\[[a, b)_c :=\begin{cases}
\{a, a+1, \ldots, b-1\} & \text{if } a \leq b, \\
\{a, a+1, \ldots, n\} \cup \{1, \ldots, b-1\} & \text{if } a > b.\end{cases}\]
Cyclic intervals of the forms $[a,b]_c$, $(a,b]_c$, and $(a,b)_c$ are defined analogously.
\end{defin}

With this notation, the edge $w\rightarrow wt_{ij}$ is in the quantum Bruhat graph if and only if $w_k\not\in (w_i,w_j)_c$ for all $i<k<j$. Since this condition is invariant under cyclic shifts, we obtain the following immediate observation regarding the cyclic symmetry of the graph. See also \cite[Corollary~12]{cyclic-bruhat-graph}.
\begin{lemma}\label{lemma:cyclic-symmetry}
Let $\cyclic\in S_n$ be the long cycle $\cyclic=23\cdots n1$. Then the map $w\mapsto \cyclic w$ is an automorphism of the unweighted quantum Bruhat graph.
\end{lemma}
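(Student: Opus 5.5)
The statement to prove is \Cref{lemma:cyclic-symmetry}: left multiplication by $\cyclic$ is an automorphism of the unweighted quantum Bruhat graph $\Gamma_n$. The map $w\mapsto \cyclic w$ is a bijection of $S_n$ with inverse $w\mapsto \cyclic^{-1}w$. It therefore suffices to show two things: that $w\to wt_{ij}$ is an edge of $\Gamma_n$ if and only if $\cyclic w\to (\cyclic w)t_{ij}$ is an edge, and that the map respects endpoints. The endpoint compatibility is immediate from associativity, $\cyclic(wt_{ij})=(\cyclic w)t_{ij}$, since left multiplication commutes with right multiplication by transpositions. Thus the edge $w\to wt_{ij}$ is sent to the pair $\cyclic w\to (\cyclic w)t_{ij}$, which uses the same positions $i<j$.

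The key step is to use the cyclic-interval criterion stated just before the lemma: for $i<j$, the pair $w\to wt_{ij}$ is an edge of $\Gamma_n$ (strong Bruhat or quantum) if and only if $w_k\notin(w_i,w_j)_c$ for all $i<k<j$. I would take this criterion as given. If one wants to justify it, the case $w_i<w_j$ is the classical cover criterion $\ell(wt_{ij})=\ell(w)+1$, and the case $w_i>w_j$ is the condition $\ell(wt_{ij})=\ell(w)+1-2(j-i)$. The latter holds exactly when every $w_k$ with $i<k<j$ lies strictly between $w_j$ and $w_i$, that is, outside the cyclic interval $(w_i,w_j)_c=(w_i,n]\cup[1,w_j)$.

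Now set $w'=\cyclic w$, so $w'_k=\cyclic(w_k)=w_k+1$ modulo $n$, with representatives in $[n]$. Applying $\cyclic$ to values is rotation by one on the cycle $1,2,\dots,n$. Since cyclic intervals are defined purely in terms of this cyclic order, we have $\cyclic\bigl((a,b)_c\bigr)=(\cyclic(a),\cyclic(b))_c$. Hence for each $i<k<j$,
\[w_k\notin(w_i,w_j)_c \iff \cyclic(w_k)\notin(\cyclic(w_i),\cyclic(w_j))_c \iff w'_k\notin(w'_i,w'_j)_c.\]
The positions $i<k<j$ themselves are unchanged, so the edge condition for $(w,i,j)$ is equivalent to the edge condition for $(w',i,j)$.

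The only point needing care is the identity $\cyclic\bigl((a,b)_c\bigr)=(\cyclic(a),\cyclic(b))_c$ at the wraparound, when $a$ or $b$ equals $n$ and $\cyclic$ sends it to $1$. I would verify this directly from \Cref{def:cyclicinterval}: $(a,b)_c$ is the set of elements met strictly after $a$ and strictly before $b$ when walking cyclically upward from $a$, and $\cyclic$ preserves this cyclic successor structure. Weights need not be preserved, since the type of an edge can change between strong Bruhat and quantum, which is why the statement is only about the unweighted graph.
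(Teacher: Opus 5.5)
Your proof is correct and is the same argument the paper gives: the edge criterion $w_k\notin(w_i,w_j)_c$ for all $i<k<j$ is unchanged when every value is rotated by $\cyclic$ and the positions $i<k<j$ are kept. Your check of the criterion itself and of the wraparound identity $\cyclic\bigl((a,b)_c\bigr)=(\cyclic(a),\cyclic(b))_c$ for $a\neq b$ supplies details the paper leaves implicit.
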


For a directed path $P$ in the quantum Bruhat graph $\Gamma_n$, we define its \emph{weight} $\wt(P) \in \Z[q] = \Z[q_1, \dots, q_{n-1}]$ as the product of the weights of the edges along the path. The following lemma, proved by Postnikov in \cite[Lemma~1]{Postnikov-quantum-Bruhat-graph}, asserts that all shortest paths correspond precisely to those with minimal weight. Although the final clause of the lemma is not stated explicitly in \cite[Lemma~1]{Postnikov-quantum-Bruhat-graph}, it follows directly from the proof given there.

\begin{lemma}\label{lemma:shortest-length-equals-weight}
For any $u, v \in S_n$, there exists a directed path from $u$ to $v$ in $\Gamma_n$. Moreover, there exists a degree $d_{u,v} \in \Z_{\geq 0}^{n-1}$ such that all shortest paths from $u$ to $v$ have the same weight $q^{d_{u,v}}$. In fact, the weight of any path from $u$ to $v$ is divisible by $q^{d_{u,v}}$, and any path with weight exactly $q^{d_{u,v}}$ must be a shortest path. \end{lemma}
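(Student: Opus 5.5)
The plan is to follow Postnikov's strategy, which rests on the quantum Chevalley--Monk formula and the structure of $QH^\ast(\fl_n)$ as a deformation of $H^\ast(\fl_n)$. First, existence of a path from $u$ to $v$ is purely combinatorial. From any $w\neq v$, take the smallest $i$ with $w_i\neq v_i$ and let $j>i$ satisfy $w_j=v_i$. Among the positions $k$ with $i<k<j$, choose the one giving an edge $w\to wt_{ik}$: take $k$ with $w_k\in(w_i,v_i]_c$ closest to $w_i$ cyclically. This produces a quantum Bruhat edge that increases the length of the common prefix with $v$, or reduces the cyclic distance at position $i$. By induction on (prefix agreement, cyclic distance), we reach $v$. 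Alternatively, by \Cref{lemma:cyclic-symmetry}, we can rotate so that the edge is an ordinary Bruhat edge and use that Bruhat-up edges together with quantum edges connect everything.

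For the weight statement, we use that every edge $w\to wt_{ij}$ changes length by $1$ or by $1-2(j-i)$, and that its weight $q_{ij}$ has total degree $j-i$. Hence along any path $P$ from $u$ to $v$ with $m$ edges, we have $\ell(v)-\ell(u)=m-2\deg(\wt(P))$. So the length of a path is determined by the total degree of its weight: shorter paths are exactly those of smaller total degree. It therefore suffices to show the following two facts.

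\begin{enumerate}
\item There is a unique minimal degree $d_{u,v}$ among weights of paths.
\item Every path weight is divisible by $q^{d_{u,v}}$.
\end{enumerate}

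Together these give all the remaining claims. A path of weight exactly $q^{d_{u,v}}$ has minimal total degree, hence is shortest. Conversely, a shortest path has minimal total degree, and since its weight is divisible by $q^{d_{u,v}}$, it equals $q^{d_{u,v}}$.

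To get divisibility, I would iterate the quantum Chevalley--Monk formula. Choose $m$ generators $\sigma_{s_{k_1}},\dots,\sigma_{s_{k_m}}$ in a generic way; more cleanly, use the degree-$d$ weighted sum over all $k$ together with the fact that $\sigma_u\star\sigma_{w_0v}$ has nonnegative structure constants. Paths from $u$ to $v$ appear with positive coefficients in expansions of products $\sigma_{s_{k_1}}\star\cdots\star\sigma_{s_{k_m}}\star\sigma_u$. The weights occurring are then linked to the $q$-degrees present in the product.

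Since this ring-theoretic route is delicate, I would instead aim for a direct combinatorial definition of $d_{u,v}$. For each $k\in[n-1]$, set $(d_{u,v})_k$ to be the minimal number of times a path must "wrap" across position $k$. Concretely, a quantum edge $t_{ij}$ with $i\le k<j$ is forced when $u[k]\not\le v[k]$ in a cyclically adjusted Gale sense. I would prove a lower bound: the exponent of $q_k$ in any path weight is at least a statistic $\delta_k(u,v)$ defined via the sets $u[k]$ and $v[k]$. For instance, $\delta_k$ should be the minimal $r$ such that $u[k]$ rotated by $r$ is $\le v[k]$. The bound follows because each edge changes $w[k]$ by one element, and a quantum edge crossing $k$ is the only move decreasing the relevant cyclic Gale order; only these edges contribute $q_k$.

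The main obstacle is the matching upper bound: constructing a single path that achieves $\delta_k(u,v)$ simultaneously for all $k$. I would attempt this by induction on $\ell(v)-\ell(u)+2|\delta|$. At each step, choose an edge out of $u$ that decreases $\delta$ or raises length appropriately, so that the statistic drops by exactly the edge's contribution; the greedy choice from the first paragraph is the candidate. The hard part is verifying that this choice never increases $\delta_{k'}$ for other $k'$.
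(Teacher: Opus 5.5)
Your path-existence argument is fine, and so is your reduction. Since $\ell(v)-\ell(u)=m-2\deg\wt(P)$ for a path with $m$ edges, it suffices to find one path of weight $q^{\delta}$ such that $q^{\delta}$ divides every path weight. The lower-bound half of your plan works, but not with the statistic you propose. A rigid cyclic rotation of $u[k]$ is the wrong count: for $n=3$, $k=1$, both $213\to123$ and $321\to123$ use exactly one $q_1$, yet in either rotation direction one of $\{2\},\{3\}$ needs two unit rotations to become $\le\{1\}$. The right statistic is $\delta_k=\depth(u[k],v[k])$, and with it the bound is a clean potential argument. An edge $w\to wt_{ij}$ with $i\le k<j$ swaps $w_i$ for $w_j$ in $w[k]$, so $h(x)=\#(w[k]\cap[x])-\#(v[k]\cap[x])$ drops by one on $[w_i,w_j)$ for a Bruhat edge and rises by one on $[w_j,w_i)$ for a quantum edge. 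Hence $\depth(w[k],v[k])$ decreases only along quantum edges crossing $k$ (exactly those contributing $q_k$), by at most one each, while it goes from $\delta_k$ to $0$.

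The genuine gap is the upper bound, which you leave open, and the candidate you name fails. For $u=3142$, $v=2341$ one has $\delta=(1,0,0)$, attained by $3142\to1342\to2341$. Your rule at $i=1$ takes the value $4$ (closest to $3$ in $(3,2]_c$) and produces $3142\to4132\to1432\to2431$. The very first step raises $\depth(w[2],v[2])$ from $0$ to $1$, so by the lower bound every continuation costs an extra $q_2$ and the path is not shortest. This is exactly the failure you were worried about.

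What works is choosing, at stage $i$, the \emph{smallest position} $p>i$ with $w_p\in(w_i,v_i]_c$; this is the increasing path for the reflection ordering $e_1-e_2,e_1-e_3,\dots$. The lattice-path case analysis in the proof of \Cref{thm:weight-distance} shows, independently of the present lemma, two things about each stage. It uses at most one quantum edge $t_{ip}$. Between its start and end, $\depth(w[j],v[j])$ drops by exactly one for $i\le j<p$ when such an edge occurs, and is otherwise unchanged. By induction this path has weight exactly $q^{\delta}$, which completes your argument. (The paper itself gives no proof of this lemma; it cites Postnikov.)
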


We refer to such a degree $d_{u,v}$ as the \emph{minimal degree}, and the corresponding weight $q^{d_{u,v}}$ as the \emph{minimal weight}. The minimal degree $d_{u,v}$ is sometimes written as $d(u,v) = (d_1(u,v), \dots, d_{n-1}(u,v))$, or simply as $d_{\min}$ when the context is clear.

\begin{ex}
Let $u = 321$ and $v = 213$. As shown in \Cref{fig:quantum-Bruhat-graph-n=3}, there are two shortest paths from $u$ to $v$, each of length 2: $321 \rightarrow 231 \rightarrow 213$ and $321 \rightarrow 123 \rightarrow 213$.
Both paths have the same minimal weight $q_1 q_2$, so the minimal degree is $d_{321,213} = (1,1)$. It is straightforward to check that any other path from $u$ to $v$ has weight divisible by (and not equal to) $q_1q_2$.
\end{ex}

We remark that the minimal degrees $d_{u,v}$ are closely related to the unique minimal quantum weight $q^d$ that appears in a quantum product in $QH^\ast(\fl_n)$, as stated in the following proposition, proved in \cite[Corollary~3]{Postnikov-quantum-Bruhat-graph}.

\begin{prop}\label{prop:postnikov-minimal-degree}
For any $u,v\in S_n$, the minimal weight $q^{d_{u,v}}$ of a shortest path from $u$ to $v$ in the quantum Bruhat graph $\Gamma_n$ is the unique minimal quantum weight that appears in the quantum product of two Schubert classes $\sigma_u \star \sigma_{w_0 v}$ in $QH^\ast(\fl_n)$.
\end{prop}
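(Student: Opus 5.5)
The plan is to go through Fulton--Woodward's description of the minimal quantum degree and then reduce to torus-stable curves, whose degrees can be read off the quantum Bruhat graph.

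\textbf{Step 1: reduce to a minimal curve degree.} By the geometric meaning of the Gromov--Witten invariants recalled above, $q^d$ occurs in $\sigma_u\star\sigma_{w_0v}$ exactly when $c_{u,w}^{v,d}\neq 0$ for some $w$. Equivalently, by the projection-formula computation in the proof of \Cref{prop:Gamma-nice}, $(\ev_3)_\ast[GW_d(\Omega_u,X_v)]\neq 0$. Here I would invoke the result of Fulton--Woodward on the smallest degree in quantum products. It says that the degrees $d$ minimal among those with $q^d$ appearing are exactly the minimal degrees $d$ for which there is a degree-$d$ rational curve meeting both $\Omega_u$ and $X_v$. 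The point is that for such a minimal $d$ the curve neighborhood $\Gamma_d(\Omega_u,X_v)$ has the expected dimension, so its pushforward class is nonzero. So it suffices to show that $\{d : GW_d(\Omega_u,X_v)\neq\emptyset\}$ is exactly the set of $d\geq d_{u,v}$ componentwise, since then $q^{d_{u,v}}$ is its unique minimal element.

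\textbf{Step 2: degenerate to $T$-stable chains.} The set $GW_d(\Omega_u,X_v)$ is closed and stable under the torus $T$. If it is nonempty, Borel's fixed point theorem gives a $T$-fixed stable map. Its image is a connected chain of $T$-stable curves joining a fixed point $e_x\in\Omega_u$ to a fixed point $e_y\in X_v$, so $u\leq x$ and $y\leq v$. The $T$-stable curves of $\fl_n$ join $e_w$ to $e_{wt_{ij}}$ and have degree $q_{ij}=q_i\cdots q_{j-1}$. Hence a nonempty $GW_d(\Omega_u,X_v)$ gives a walk $u\leq x\to\cdots\to y\leq v$ using arbitrary transposition steps, with total weight dividing $q^d$. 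Conversely, from any such walk, smoothing the chain produces a curve of that degree, and $GW_{d'}\neq\emptyset$ for all $d'\geq d$ by attaching extra components.

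\textbf{Step 3 (main obstacle): compare walks with the quantum Bruhat graph.} I must show that every walk from Step 2 has weight divisible by $q^{d_{u,v}}$. The converse is immediate: a shortest path in $\Gamma_n$ is itself such a walk with $x=u$, $y=v$, and weight $q^{d_{u,v}}$ by \Cref{lemma:shortest-length-equals-weight}.

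For the divisibility I need two facts.
\begin{enumerate}
\item Bruhat steps $u\leq x$ and $y\leq v$ are paths of weight $1$ in $\Gamma_n$, because covers are strong Bruhat edges. So $u\to x$ and $y\to v$ cost nothing.
\item Each arbitrary step $w\to wt_{ij}$ of weight $q_{ij}$ can be replaced by a directed path in $\Gamma_n$ whose weight divides $q_{ij}$.
\end{enumerate}
For the second fact I would induct on $j-i$. If $w\to wt_{ij}$ is an edge of $\Gamma_n$, there is nothing to do. If it is not, the cyclic-interval criterion gives some $i<k<j$ with $w_k\in(w_i,w_j)_c$. Then I factor $t_{ij}=t_{ik}t_{kj}t_{ik}$ or use a similar three-step decomposition, chosen so that the weights $q_{ik}$ and $q_{kj}$ multiply to at most $q_{ij}$ after the cyclic rotation of \Cref{lemma:cyclic-symmetry}.

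Getting this factorization right is where I expect the difficulty to lie: a naive three-step decomposition overshoots the degree. The first thing I would try is the reduced walk $w\to wt_{ik}\to wt_{ik}t_{kj}$ appropriately ordered, checking separately the cases $w_i<w_j$ and $w_i>w_j$.

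With both facts in hand, concatenation gives a $\Gamma_n$-path from $u$ to $v$ whose weight divides the weight of the walk. By \Cref{lemma:shortest-length-equals-weight}, the weight of that path is divisible by $q^{d_{u,v}}$, so the walk's weight is too, which completes the proof.
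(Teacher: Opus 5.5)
Your overall strategy (Fulton--Woodward, then $T$-fixed stable maps, then comparison with $\Gamma_n$) is sound, and it is the route behind the cited result; the paper's proof of \Cref{prop:T-fixed-pts} carries out the same geometric steps. The attainment direction, however, is wrong as written. You assert that a shortest path in $\Gamma_n$ ``is itself such a walk with $x=u$, $y=v$, and weight $q^{d_{u,v}}$.'' In your Step~2 dictionary, every step $w\to wt_{ij}$ is realized by the $T$-stable curve of degree $q_{ij}$ (\Cref{lemma:t-fixed-tree}), whether or not it is a strong Bruhat edge. A strong Bruhat edge has weight $1$ in $\Gamma_n$, but its curve has positive degree. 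So a shortest path with strong Bruhat edges in its interior gives a chain of degree strictly larger than $d_{u,v}$, and you have not shown $GW_{d_{u,v}}(\Omega_u,X_v)\neq\emptyset$. Without this, your argument only shows that every degree occurring in $\sigma_u\star\sigma_{w_0v}$ is $\geq d_{u,v}$. It does not show that $q^{d_{u,v}}$ occurs, and hence does not give uniqueness either.

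What you need is a shortest path of the form $u\xrightarrow{1}\cdots\xrightarrow{1}x\xrightarrow{q}\cdots\xrightarrow{q}v$. The Bruhat part is then absorbed into $x\geq u$, and only quantum edges, whose weight $q_{ij}$ does equal the curve degree, are realized by curves. The proof of \Cref{prop:T-fixed-pts} produces such a path. Whenever $a\xrightarrow{q}b\xrightarrow{1}c$ occurs, $[a,c]$ is a diamond by \Cref{lemma:thin}, and its other side has the same weight $q_{ij}$ by \Cref{lemma:shortest-length-equals-weight}. That side cannot be $a\xrightarrow{q}b'\xrightarrow{1}c$, because a quantum weight $q_{ij}$ determines the transposition and would force $b'=b$. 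So it is $a\xrightarrow{1}b'\xrightarrow{q}c$ or $a\xrightarrow{q}b'\xrightarrow{q}c$. Each replacement moves a quantum edge to the right or increases their number, so iterating terminates.

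Fact (2) is also left open, and the fixes you sketch do not work. A two-step walk $w\to wt_{ik}\to wt_{ik}t_{kj}$ never ends at $wt_{ij}$, since a product of two transpositions is not a transposition. Also, \Cref{lemma:cyclic-symmetry} is an automorphism of the \emph{unweighted} graph only, so rotating cannot control weights. The statement you need is exactly \Cref{cor:weight-distance}. It follows from the depth formula \Cref{thm:weight-distance} without using the present proposition, so you may simply cite it.

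Alternatively, your three-step idea works once you notice that one step is free:
\begin{itemize}
\item If $w_i<w_j$, then $w<wt_{ij}$ in Bruhat order, and a chain of covers has weight $1$.
\item If $w_i>w_j$ and $w\to wt_{ij}$ is not an edge, choose $i<k<j$ with $w_k>w_i$ or $w_k<w_j$. In the first case take $w\to wt_{ik}\to wt_{ik}t_{kj}\to wt_{ik}t_{kj}t_{ik}=wt_{ij}$. The first step raises length and costs nothing. By induction on $j-i$, the last two cost divisors of $q_{kj}$ and $q_{ik}$, whose product is $q_{ij}$. In the second case use $t_{kj},t_{ik},t_{kj}$ in the same way.
\end{itemize}
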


Finally, we include a nice combinatorial property that is unique to quantum Bruhat graphs $\Gamma_n$. In $\Gamma_n$, label each directed edge $w\to wt_{ij}$ by the root $e_i-e_j\in\Phi^+$. Note that this label refers only to the root and does not reflect the weight of the edge. The following proposition is due to \cite[Theorem~6.6]{BFP-tilted-Bruhat}.
\begin{prop}\label{prop:shortest-path-BFP}
Fix a reflection ordering $\gamma=\gamma_1,\ldots,\gamma_{\binom{n}{2}}$ of the set of positive roots $\Phi^+$. For any $u,v\in S_n$, there exists a unique directed path from $u$ to $v$ in $\Gamma_n$ whose sequence of labels is strictly increasing with respect to $\gamma$. Moreover, this path is a shortest path from $u$ to $v$, of length $\ell(u,v)$.
\end{prop}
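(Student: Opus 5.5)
The plan is to follow the ``quantum Yang--Baxter'' strategy of Brenti--Fomin--Postnikov and encode increasing paths as a product of operators. Let $V$ be the free module with basis $S_n$ over $\Z[q][x]$, where $x$ is a formal variable recording path length. For a positive root $e_i-e_j$ I define the operator
\[R_{ij}=1+x\,Q_{ij},\qquad Q_{ij}(w)=\begin{cases}\wt(w\to wt_{ij})\,wt_{ij}, & \text{if } w\to wt_{ij} \text{ is an edge of } \Gamma_n,\\ 0, & \text{otherwise.}\end{cases}\]
For a reflection ordering $\gamma=\gamma_1,\ldots,\gamma_N$, consider $R_\gamma:=R_{\gamma_1}R_{\gamma_2}\cdots R_{\gamma_N}$, applied in the order $\gamma_1$ first. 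Expanding the product, the coefficient of $v$ in $R_\gamma(u)$ is $\sum_P x^{\mathrm{length}(P)}\wt(P)$, summed over paths $u\to v$ in $\Gamma_n$ whose labels are strictly increasing with respect to $\gamma$. The statement therefore splits into three claims. First, $R_\gamma$ does not depend on $\gamma$. Second, for one convenient ordering each coefficient is a single monomial. Third, that monomial has $x$-degree $\ell(u,v)$.

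\textbf{Independence of $\gamma$.} By \Cref{lemma:bjorner-reflection-order}, reflection orderings correspond to reduced words of $w_0$. By the word property, any two such words are connected by braid moves. A commutation move $s_is_j=s_js_i$ swaps two adjacent roots that are orthogonal, namely $e_a-e_b$ and $e_c-e_d$ with $\{a,b\}\cap\{c,d\}=\emptyset$. A braid move $s_is_{i+1}s_i=s_{i+1}s_is_{i+1}$ reverses a consecutive triple $e_a-e_b,\ e_a-e_c,\ e_b-e_c$. So it suffices to prove two identities:
\[R_{ab}R_{cd}=R_{cd}R_{ab},\qquad R_{ab}R_{ac}R_{bc}=R_{bc}R_{ac}R_{ab}\quad(a<b<c).\]
To check them I would use the cyclic-interval description of edges: $w\to wt_{ij}$ is an edge iff no $w_k$ with $i<k<j$ lies in $(w_i,w_j)_c$. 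For a fixed $w$, both sides only rearrange the values in positions $a,b,c$ (respectively $a,b,c,d$), and whether an edge exists depends only on the relative cyclic position of these values together with the entries lying between them. This reduces each identity to a finite case check. The rank-two check is essentially the hexagon computation visible in \Cref{fig:quantum-Bruhat-graph-n=3}, with the weights $q_{ij}$ matching because both sides move the same values through the same positions. I expect this to be the main obstacle: the commuting case with crossing roots $a<c<b<d$ is not automatic, and the bookkeeping of quantum weights in the triple identity must be done carefully.

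\textbf{Uniqueness and shortestness.} I would take the lexicographic ordering $e_1-e_2<e_1-e_3<\cdots<e_1-e_n<e_2-e_3<\cdots$, which comes from the reduced word $(s_1)(s_2s_1)\cdots$ reversed appropriately. With this ordering an increasing path first performs at most one transposition involving position $1$, then at most one involving position $2$ among the remaining positions, and so on. Hence the path must place the value $v_1$ into position $1$ by a single edge $t_{1j}$, the only candidate being $j=u^{-1}(v_1)$. Such an edge always exists, either as a Bruhat edge or as a quantum edge, once one checks the cyclic-interval condition for the specific choice. Actually, I would instead choose the ordering so that the greedy step is forced with a guaranteed edge, and if necessary use cyclic symmetry (\Cref{lemma:cyclic-symmetry}) to arrange this. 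Then induction on $n$ gives existence and uniqueness of an increasing path for this ordering. Consequently each coefficient of $R_\gamma(u)$ is a single monomial $x^{m}q^{e}$, and by independence the same holds for every reflection ordering.

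Finally, to see that $m=\ell(u,v)$, I would show that any path can be transformed into a $\gamma$-increasing path without increasing its length. Whenever two consecutive labels are out of order, apply the rank-two identities locally: the commutation relation, or the hexagon in the relevant $S_3$, where one checks directly that an out-of-order pair can be replaced by an increasing path of length at most $2$. This is a sorting procedure that terminates, since the number of inversions of the label sequence with respect to $\gamma$ decreases, or the length drops. Applied to a shortest path, it produces an increasing path of length at most $\ell(u,v)$, and by uniqueness this is the increasing path. By \Cref{lemma:shortest-length-equals-weight}, its weight is then the minimal weight $q^{d_{u,v}}$.
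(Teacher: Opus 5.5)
The paper does not prove this itself. It quotes \cite[Theorem~6.6]{BFP-tilted-Bruhat}, a paper whose main tool is Yang--Baxter identities for Bruhat-type operators, so your overall architecture is reasonable. The genuine gap is the base case.

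For the lexicographic ordering you make two claims, and both are false.
\begin{itemize}
\item You assert that an increasing path uses at most one transposition involving position $1$. But $e_1-e_p$ precedes $e_1-e_{p'}$ whenever $p<p'$, so an increasing path may begin with a whole block $t_{1p_1},\dots,t_{1p_m}$ with $p_1<\cdots<p_m$. This is exactly the segment $P^1_{u,v}$ in the proof of \Cref{thm:weight-distance}, which has five steps in the example there.
\item You assert that $v_1$ reaches position $1$ by the single edge $u\to ut_{1j}$ with $j=u^{-1}(v_1)$. This need not be an edge. For $u=123$, $v=321$ we have $u_2=2\in(1,3)_c$, so $123\to321$ is not an edge; the increasing path is $123\to213\to312\to321$.
\end{itemize}
Your fallback, choosing another ordering or a cyclic rotation so that one step per position suffices, cannot work for any ordering. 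Such a path has at most $n-1$ edges. But every edge of $\Gamma_n$ raises $\ell$ by at most one, so any path from $\id$ to $w_0$ has at least $\binom{n}{2}$ edges. Hence uniqueness for a base ordering is not established. Without it, neither the single-monomial property nor uniqueness for arbitrary $\gamma$ follows; existence alone would follow from your sorting step.

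\textbf{What the base case needs.} For the lexicographic ordering, an increasing path is a block of $t_{1p}$'s with increasing $p$, followed by an increasing path on positions $2,\dots,n$. Position $1$ is frozen after the block, so the block must end with $v_1$ in position $1$, and you must show the block is forced.
\begin{itemize}
\item By the cyclic-interval criterion, each step moves the entry in position $1$ clockwise without passing any value at a position strictly between $1$ and the swapped position.
\item The earlier entries of position $1$ sit at such positions, so the total motion is less than one full turn.
\item A short argument then shows the block must be the greedy one: $p_k$ is the least index $>p_{k-1}$ with $u_{p_k}\in(u_{p_{k-1}},v_1]_c$.
\item Finally, induct on $n$. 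Standardizing the values $[n]\setminus\{v_1\}$ preserves the edge criterion for $t_{ij}$ with $2\le i<j$.
\end{itemize}

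\textbf{Two smaller repairs.}
\begin{itemize}
\item In the sorting step, the number of label inversions need not drop. Replacing $(ac,ab)$ by $(ab,bc)$ can create new inversions with later labels lying strictly between $ac$ and $bc$ in $\gamma$. Use a different measure: each local replacement has the same length (your identities are graded by $x$) and a strictly smaller first label, so the label word decreases lexicographically.
\item Extract the local replacements from your operator identities together with positivity of all coefficients, not from the hexagon of $\Gamma_3$. With entries between positions $a,b,c$, the local graph on the coset is in general not $\Gamma_3$. For $w=1342$ and positions $1,2,4$, the step $t_{24}$ is not an edge, although the corresponding step $132\to123$ is an edge of $\Gamma_3$. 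This is also why the triple identity needs the genuine case analysis you postponed.
\end{itemize}
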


\subsubsection{Tilted Bruhat Intervals}\label{sec:prelim-3-6}

In \cite{BFP-tilted-Bruhat}, Brenti, Fomin, and Postnikov introduced \emph{tilted Bruhat intervals} as a tilted analogue of classical Bruhat intervals.

\begin{defin}\label{def:tilted-bruhat-interval} For $u, v \in S_n$, let $\ell(u,v)$ denote the length of the shortest path from $u$ to $v$ in the quantum Bruhat graph $\Gamma_n$. The \emph{tilted Bruhat interval} $[u,v]$ is defined as the set
    \[[u,v]:=\{w\in S_n:\ell(u,w)+\ell(w,v)=\ell(u,v)\},\]
    equipped with the partial order
    \[x\preceq y\iff \ell(u,x)+\ell(x,y)+\ell(y,v)=\ell(u,v).\]
\end{defin}
Intuitively, the tilted Bruhat interval consists of all elements $w \in S_n$ that lie on a shortest path from $u$ to $v$, and $x \preceq y$ if and only if there exists a shortest path from $u$ to $v$ that passes through $x$ before $y$. An example of a tilted Bruhat interval is shown in \Cref{fig:quantum-Bruhat-interval}.

\begin{figure}[ht]
\centering
\begin{tikzpicture}[scale=0.7]
    \node (a) at (0,0) {$\bullet$};
    \node[below] at (a) {$231$};
    \node (b) at (-0.6,1) {$\bullet$};
    \node[left] at (b) {$213$};
    \node (c) at (0,2) {$\bullet$};
    \node[above] at (c) {$123$};
    \node (d) at (0.6,1) {$\bullet$};
    \node[right] at (d) {$321$};
    \draw (0,0)--(-0.6,1)--(0,2)--(0.6,1)--(0,0);
    \end{tikzpicture}
    \caption{The tilted Bruhat interval $[231,123]$}
\label{fig:quantum-Bruhat-interval}
\end{figure}
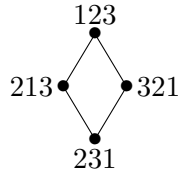
By definition, the subintervals of a tilted Bruhat interval $[u, v]$ are themselves tilted Bruhat intervals $[x, y]$. The interval $[u, v]$ naturally forms a ranked poset, with rank function given by $w \mapsto \ell(u, w)$. Moreover, these intervals are examples of \emph{thin} posets, a property that follows from \cite[Theorem~6.6]{BFP-tilted-Bruhat}.

\begin{lemma}\label{lemma:thin}
Every tilted Bruhat interval $[u, v]$ with $\ell(u, v) = 2$ is isomorphic to the diamond poset.
\end{lemma}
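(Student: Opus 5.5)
Every tilted Bruhat interval $[u,v]$ with $\ell(u,v)=2$ is isomorphic to the diamond poset. The plan is to derive this from \Cref{prop:shortest-path-BFP}, applied with two different reflection orderings, one of which is the reverse of the other.

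The first step is to observe that reversing a reflection ordering gives a reflection ordering. The defining condition is that $e_i-e_k$ lies between $e_i-e_j$ and $e_j-e_k$, and this condition is symmetric under reversal. Concretely, if $\gamma$ comes from a reduced word $s_{i_1}\cdots s_{i_N}$ of $w_0$ via \Cref{lemma:bjorner-reflection-order}, the reversed order comes from a reduced word of $w_0$ as well. Call the two orderings $\gamma$ and $\gamma^{\op}$.

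Now let $\ell(u,v)=2$. The interval consists of $u$, $v$, and the middle elements $w$ with $\ell(u,w)=\ell(w,v)=1$. Each middle element gives a shortest path $u\to w\to v$ of length two, with two edge labels $\alpha,\beta\in\Phi^+$. These labels are distinct: if $\alpha=\beta$ corresponds to $t_{ij}$, then $v=ut_{ij}t_{ij}=u$, contradicting $\ell(u,v)=2$. Hence each such path is either strictly increasing in $\gamma$ or strictly increasing in $\gamma^{\op}$, and not both.

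By \Cref{prop:shortest-path-BFP}, exactly one path from $u$ to $v$ is $\gamma$-increasing, and it is a shortest path, so it has the form $u\to w_1\to v$. Likewise exactly one path is $\gamma^{\op}$-increasing, of the form $u\to w_2\to v$. Every length-two path is one of these two, so there are exactly two shortest paths. They are distinct, since a single path cannot be increasing for both orders.

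It remains to check that $w_1\neq w_2$, that is, that there are not two different label pairs on paths through the same middle vertex. Edges are determined by their endpoints: $w=ut$ forces $t=u^{-1}w$. So a middle vertex $w$ determines its path $u\to w\to v$ uniquely. Therefore the middle elements are in bijection with the shortest paths, and there are exactly two of them.

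For the order relations, $u\preceq x\preceq v$ holds for all $x$ in the interval. The two middle elements $w_1,w_2$ are incomparable: $w_1\preceq w_2$ would require $\ell(u,w_1)+\ell(w_1,w_2)+\ell(w_2,v)=2$, which forces $\ell(w_1,w_2)=0$, impossible since $w_1\neq w_2$. So the poset is the diamond.

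The main point requiring care is the reversal step, namely that $\gamma^{\op}$ is again a reflection ordering. I expect this to be immediate from the betweenness definition, so no real obstacle is anticipated.
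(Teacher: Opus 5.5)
Your proof is correct and takes the same approach as the paper. The paper gives no separate argument and simply attributes thinness to \cite[Theorem~6.6]{BFP-tilted-Bruhat}, i.e.\ \Cref{prop:shortest-path-BFP}; your use of a reflection ordering together with its reverse, giving exactly one $\gamma$-increasing and one $\gamma$-decreasing length-two path and hence exactly two middle elements, is the standard derivation behind that citation, and you handle the needed details (distinct labels, edges determined by their endpoints) correctly.
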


When $u \leq v$ in the classical Bruhat order, the tilted Bruhat interval coincides with the classical Bruhat interval $[u,v]$. 
We provide an efficient combinatorial rule (\Cref{thm:tilted-criterion}) to determine when a given permutation $w$ lies in $[u,v]$ in the tilted setting, analogous to the Ehresmann criterion, filling in a missing piece of the previous literature.


\subsubsection{Path Schubert Polynomials}\label{sec:prelim-3-7}

In \cite{Postnikov-quantum-Bruhat-graph}, Postnikov introduced the \emph{path Schubert polynomials} $\mathfrak{S}_{u,v}(x,q)$ for any permutations $u, v \in S_n$, which encode Gromov--Witten invariants. These are polynomials in $\Z[x_1, \dots, x_n, q_1, \dots, q_{n-1}]$ defined using the structure of the quantum Bruhat graph $\Gamma_n$, as described below. A directed path in $\Gamma_n$
\[w^{(0)}\xrightarrow{t_{a_1b_1}}w^{(1)}\xrightarrow{t_{a_2b_2}}\cdots \xrightarrow{t_{a_\ell b_\ell}}w^{(\ell)}\]
is said to be \emph{$x_k^\ell$-admissible} if it satisfies $a_1\leq a_2 \leq \dots \leq a_\ell\leq k < b_1,\dots,b_\ell$, where the values $b_1,\dots,b_\ell$ are all distinct. Given a sequence $\beta = (\beta_1,\dots,\beta_{n-1})$, a path $P$ in $\Gamma_n$ is called \emph{$x^\beta$-admissible} if it can be written as is a concatenation of paths $P_1,P_2,\dots,P_{n-1}$ such that each $P_k$ is $x_k^{\beta_k}$-admissible for all $k\in [n-1]$.

The \emph{path Schubert polynomial} $\mathfrak{S}_{u,v}$ is then defined by
\[\mathfrak{S}_{u,v}(x,q) := \sum_{\beta\in\Z_{\geq 0}^{n-1}}x^{\rho-\beta}\sum_{\substack{P:u\to v\\x^\beta\text{-admissible}}}\wt(P) ,\]
where $\rho = (n-1,n-2,\dots,1)$.

When expanded in the basis of classical Schubert polynomials, these path Schubert polynomials encode the Gromov–Witten invariants. See \cite[Theorem~4]{Postnikov-quantum-Bruhat-graph} for details.

\begin{prop}\label{prop:path-schub}
Let $u,v\in S_n$ and let $d$ be a degree. Then
    \[\mathfrak{S}_{u,v}(x,q) = \sum_{\substack{w\in S_n\\ d\in \Z_{\geq 0}^{n-1}}} c_{u,w}^{v,d}\; q^d \mathfrak{S}_{w_0w}(x).\]
\end{prop}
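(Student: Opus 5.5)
The plan is to derive the expansion from a quantum Cauchy-type identity in $QH^\ast(\fl_n)\otimes\Z[x]$, following Postnikov's strategy. First, $\mathfrak{S}_{u,v}(x,q)$ is written as $x^\rho$ times a sum over $\beta$ of $x^{-\beta}$ times the weighted count of $x^\beta$-admissible paths. Second, an $x_k^{\ell}$-admissible path is read as one term of an operator $H^{(k)}_\ell$ on the free $\Z[q]$-module spanned by $\{\sigma_w\}$:
\[H^{(k)}_\ell(\sigma_w)=\sum_{P:\,w\to w'\ x_k^\ell\text{-admissible}}\wt(P)\,\sigma_{w'}.\]
Then $\mathfrak{S}_{u,v}$ is the coefficient of $\sigma_v$ in
\[\Bigl(\prod_{k=n-1}^{1}\ \sum_{\ell\ge0}x_k^{\,n-k-\ell}H^{(k)}_\ell\Bigr)\sigma_u,\]
with the operators applied in the order $k=1,2,\dots$, as the concatenation $P_1P_2\cdots P_{n-1}$ dictates.

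\textbf{Step 1 (quantum Pieri via admissible paths).} Show that $H^{(k)}_\ell$ is quantum multiplication by the class corresponding to a quantum elementary/complete symmetric polynomial in the first $k$ quantum variables, namely the quantum Schubert polynomial of a cycle $s_{k-\ell+1}\cdots s_k$-type. I would prove this by induction on $\ell$, using only the quantum Chevalley--Monk formula (edges $w\to wt_{ab}$ with $a\le k<b$ and their weights). The monotonicity $a_1\le\cdots\le a_\ell\le k$ and the distinctness of the $b_i$ are exactly what cancel overcounting when $H^{(k)}_{\ell}$ is expressed as a polynomial in the $\sigma_{s_j}\star$ operators; this is the quantum Pieri rule of Fomin--Gelfand--Postnikov phrased path-wise. \textbf{This step is the main obstacle}, since the sign cancellations among non-admissible paths must be matched with quantum edges. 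I would first try a sign-reversing involution on sequences of Monk steps, swapping the first pair of adjacent steps violating $a_i\le a_{i+1}$. Under this swap, a pair of steps that share a $b$-index must collapse into a quantum edge $q_{ab}$.

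\textbf{Step 2 (Cauchy identity).} The generating product $\prod_{k}\sum_\ell x_k^{n-k-\ell}h^{(k)}_\ell$, with $h^{(k)}_\ell$ the classical counterparts, is the staircase Cauchy kernel $\prod_{k}\prod_{j\le n-k}(x_k+\cdot)$. Classically this kernel equals $\sum_{w}\mathfrak{S}_{w_0w}(x)\,\mathfrak{S}_{w}(\cdot)$, with signs handled by the convention hidden in $x^{\rho-\beta}$. Since quantum Schubert polynomials are obtained from classical ones by the same linear change of elementary symmetric functions into quantum elementary ones, the identity persists after quantization. It therefore yields, as operators on $QH^\ast(\fl_n)$,
\[\prod_{k}\sum_\ell x_k^{\,n-k-\ell}H^{(k)}_\ell=\sum_{w}\mathfrak{S}_{w_0w}(x)\,\bigl(\sigma_w\star\ \cdot\bigr).\]

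\textbf{Step 3 (extracting coefficients).} Apply both sides to $\sigma_u$ and take the coefficient of $\sigma_v$. The right side gives $\sum_{w,d}c^{v,d}_{u,w}q^d\,\mathfrak{S}_{w_0w}(x)$, since $\sigma_u\star\sigma_w=\sum_{v,d}c_{u,w}^{v,d}q^d\sigma_v$. The left side is $\mathfrak{S}_{u,v}(x,q)$ by the path reinterpretation above. This proves the claimed expansion.
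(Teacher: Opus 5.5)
The paper gives no proof of this statement; it quotes Postnikov (Theorem~4 there). Your outline (a Pieri rule for the path operators, a quantum Cauchy identity, then coefficient extraction) is the natural one, and Step~3 is fine. Steps~1 and~2 do not go through as written.

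\textbf{Step~1.} The class is misidentified. Weakly increasing $a$'s with distinct $b$'s is the row (complete homogeneous) Pieri condition. So $0\le\ell\le n-k$, and $H^{(k)}_\ell$ must be $\sigma_{s_{k+\ell-1}\cdots s_{k}}\star$, the class of $h_\ell(x_1,\dots,x_k)$. The cycle $s_{k-\ell+1}\cdots s_k$ gives $e_\ell(x_1,\dots,x_k)$, which would require distinct $a$'s. Already for $n=3$, $H^{(1)}_2\sigma_{\id}=\sigma_{312}$ (the only admissible path is $123\to213\to312$), while $e_2(x_1)=0$.

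Beyond that, Step~1 is precisely Postnikov's quantum Pieri formula, and your involution is not yet an argument. You never say which polynomial in the operators $\sigma_{s_j}\star$ you are expanding. It cannot be $h_\ell$ with $x_j$ replaced by $\sigma_{s_j}\star-\sigma_{s_{j-1}}\star$, because the quantum class carries $q$-corrections. For example, $\sigma_{s_1}\star\sigma_{s_1}=\sigma_{312}+q_1$, where the $q_1$ comes from the path $123\to213\to123$ that distinctness of the $b$'s discards. You should cite the quantum Pieri formula rather than re-derive it.

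\textbf{Step~2.} This is where the real missing idea lies. Fomin--Gelfand--Postnikov quantization is only $\Z$-linear. It turns products into quantum products only for standard elementary monomials $e_{i_1}(z_1)e_{i_2}(z_1,z_2)\cdots e_{i_{n-1}}(z_1,\dots,z_{n-1})$.
\begin{itemize}
\item The kernel you need, $\prod_k\sum_\ell x_k^{n-k-\ell}h_\ell(z_1,\dots,z_k)$, agrees with $\sum_w\mathfrak{S}_{w_0w}(x)\mathfrak{S}_w(z)$ only modulo the ideal of symmetric functions; for $n=3$ the difference is $z_1^3$.
\item Its terms, such as $z_1^2(z_1+z_2)$, are not even in the domain of quantization. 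So nothing you wrote implies that they quantize to $\prod_k H^{(k)}_{\ell_k}$.
\item The exact staircase kernel $\prod_{i+j\le n}(x_i+y_j)$ equals $\sum_w\mathfrak{S}_w(x)\mathfrak{S}_{ww_0}(y)$, not your expression.
\end{itemize}

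A correct route is to start from the exact identity
\[\prod_{1\le i<j\le n}(z_i-y_j)=\prod_{j=2}^{n}\sum_{m=0}^{j-1}e_m(z_1,\dots,z_{j-1})\,(-y_j)^{j-1-m}=\sum_{w\in S_n}\mathfrak{S}_w(z)\,\mathfrak{S}_{ww_0}(-y_n,\dots,-y_1).\]
Its middle term is a $\Z[y]$-combination of standard elementary monomials in $z$. So quantizing in $z$ gives an operator identity on $QH^\ast(\fl_n)$ involving the operators $\sigma_{s_{k-m+1}\cdots s_k}\star$.

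Then apply the ring automorphism $\sigma_w\mapsto\sigma_{w_0ww_0}$, $q_i\mapsto q_{n-i}$ of $QH^\ast(\fl_n)$. It is induced by $F_\bullet\mapsto F_\bullet^\perp$, and on the quantum Bruhat graph it sends $t_{ab}$ to $t_{n+1-b,n+1-a}$. Substituting $x_k=-y_{n+1-k}$ yields exactly
\[\prod_{k=1}^{n-1}\Bigl(\sum_{\ell=0}^{n-k}x_k^{\,n-k-\ell}\,\sigma_{s_{k+\ell-1}\cdots s_{k}}\star\Bigr)=\sum_{w\in S_n}\mathfrak{S}_{w_0w}(x)\,\sigma_w\star .\]
The same automorphism converts the elementary (distinct-$a$) quantum Pieri rule into the distinct-$b$ rule that Step~1 needs. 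Your Step~3 then finishes the proof.
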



\subsection{Deodhar Decomposition and Total Positivity}\label{sec:prelim-4}

\subsubsection{Kazhdan--Lusztig R-Polynomials}\label{sec:prelim-4-1}

In \cite{KL79}, Kazhdan and Lusztig introduced a family of polynomials $\{R_{u,v}(q):u,v\in S_n\}$ in $\Z[q]$, known as the \emph{Kazhdan--Lusztig R-Polynomials} (or simply \emph{R-polynomials}), in their study of the Hecke algebra associated with $S_n$. These polynomials are used to give a recursive formula for the \emph{Kazhdan--Lusztig polynomials}, which play a prominent role
in several branches of mathematics including representation theory and algebraic geometry. We refer the readers to \cite{combinatorial-invariance} for a detailed exposition.

The R-polynomials are defined recursively using the following recurrence relations:
\begin{enumerate}
    \item $R_{u,v}(q)=0$ if $u\nleq v$,
    \item $R_{u,v}(q)=1$ if $u=v$,
    \item If $i\in \Des(v)$, then
    \[R_{u,v}(q)=\begin{cases}
        R_{us_i,vs_i}(q), &\text{if }i\in\Des(u),\\
        q\cdot R_{us_i,vs_i}(q)+(q-1)\cdot R_{u,vs_i}(q), &\text{if }i\not\in\Des(u).
    \end{cases}\]
\end{enumerate}

They can also be defined through the \emph{Hecke algebra} $\cH$ associated with $S_n$, which is a $\Z[q,q^{-1}]$-algebra generated by $\{T_i:i\in[n-1]\}$ with the following relations:
\begin{enumerate}
    \item $T_i^2=(q-1)T_i+q$,
    \item $T_iT_j=T_jT_i$ if $|i-j|>1$,
    \item $T_iT_{i+1}T_i=T_{i+1}T_iT_{i+1}$.
\end{enumerate}
The first relation is referred to as the \emph{Hecke relation}, and the last two are the \emph{braid relations}. The braid relations imply that $T_w :=T_{i_1}T_{i_2}\dots T_{i_{\ell}}$ is well-defined for any reduced word $w = s_{i_1}\cdots s_{i_{\ell}}$. It follows that the set $\{T_w:w\in S_n\}$ forms a $\Z[q,q^{-1}]$-basis for the Hecke algebra $\cH$. The \emph{trace map} $\epsilon:\cH \rightarrow \Z[q,q^{-1}]$ is the $\Z[q,q^{-1}]$-linear map defined by
\[\epsilon(T_w) = \begin{cases}
    1 & \text{if }w = \id,\\
    0 & \text{otherwise}.
\end{cases}\]
An important property of the trace is that it is invariant under conjugation. The following proposition gives an alternative formula for R-polynomials using the Hecke algebra and the trace map. See \cite[Theorem~2.3]{GL24}.
\begin{prop}\label{prop:hecke}
    For any $u,v\in S_n$, we have
    \[R_{u,v}(q) = q^{\ell(v)-\ell(u)}\cdot\epsilon(T_{v}^{-1} T_{u}).\]
\end{prop}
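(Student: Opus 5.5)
The plan is to show that the right-hand side, $P_{u,v}(q):=q^{\ell(v)-\ell(u)}\epsilon(T_v^{-1}T_u)$, satisfies the same recursion (1)--(3) that defines $R_{u,v}(q)$. I will induct on $\ell(v)$. The recursion determines $R_{u,v}$ for all pairs, since repeated use of (3) reduces $\ell(v)$ down to $v=\id$. So it suffices to check two things: the base case $v=\id$ for every $u$, and the recurrence (3) for every $u$ whenever $i\in\Des(v)$. Condition (1), that $R_{u,v}=0$ when $u\nleq v$, then comes out automatically rather than being imposed.

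\textbf{Base case.} For $v=\id$ we have $T_v=1$, so $P_{u,\id}=q^{-\ell(u)}\epsilon(T_u)$. This equals $1$ if $u=\id$ and $0$ otherwise, which matches $R_{u,\id}$ by (1) and (2).

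\textbf{Inductive step.} Take $i\in\Des(v)$ and write $v'=vs_i$, so that $\ell(v')=\ell(v)-1$. Then $T_v=T_{v'}T_i$, hence $T_v^{-1}=T_i^{-1}T_{v'}^{-1}$. Since the trace is invariant under conjugation,
\[\epsilon(T_v^{-1}T_u)=\epsilon(T_{v'}^{-1}T_uT_i^{-1}).\]
From the Hecke relation we get $T_i^{-1}=q^{-1}T_i-q^{-1}(q-1)$. We now split into two cases.
\begin{itemize}
\item If $i\in\Des(u)$, then $T_u=T_{us_i}T_i$, so $T_uT_i^{-1}=T_{us_i}$. The exponents agree because $\ell(v)-\ell(u)=\ell(v')-\ell(us_i)$. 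This gives $P_{u,v}=P_{us_i,vs_i}$.
\item If $i\notin\Des(u)$, then $T_uT_i=T_{us_i}$, so
\[T_uT_i^{-1}=q^{-1}T_{us_i}-q^{-1}(q-1)T_u.\]
The bookkeeping is $\ell(v)-\ell(u)-1=\ell(v')-\ell(u)$ and $\ell(v)-\ell(u)-1=\ell(v')-\ell(us_i)+1$. This yields a relation of the form
\[P_{u,v}=q\,P_{us_i,vs_i}\pm(q-1)P_{u,vs_i}.\]
\end{itemize}

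\textbf{Main obstacle: the sign.} The delicate point is the sign in front of the $(q-1)$ term, and hence the exact normalization of the proposition. A naive computation with $u=\id$, $v=s_i$ gives $q\,\epsilon(T_i^{-1})=-(q-1)$, whereas $R_{\id,s_i}=q-1$. So the conventions must be matched with those of [GL24]. Possible resolutions include an implicit sign $(-1)^{\ell(v)-\ell(u)}$, the convention for $\epsilon$ or for $T_i$ used there, or writing the prefactor as $(-q)^{\ell(v)-\ell(u)}$ in place of $q^{\ell(v)-\ell(u)}$. I would first fix the convention by testing this rank-one case, then rerun the computation above with the corrected normalization. Once the sign is fixed consistently, the two cases reproduce recurrence (3) exactly, and the induction closes.
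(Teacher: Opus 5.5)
Your route is the standard one, and it is exactly how the paper argues the tilted analogue in \Cref{thm:hecke}; for \Cref{prop:hecke} itself the paper only cites \cite[Theorem~2.3]{GL24}. Both check $v=\id$, then use cyclicity of $\epsilon$ and the quadratic relation to reproduce recurrence (3).

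Your sign worry is real. You should resolve it rather than leave a $\pm$, because your own computation already determines the sign. With $T_i^2=(q-1)T_i+q$ and $\epsilon(T_w)=\delta_{w,\id}$ as stated in the paper, we have $T_i^{-1}=q^{-1}T_i-q^{-1}(q-1)$, and your bookkeeping gives
\[P_{u,v}=q\,P_{us_i,vs_i}-(q-1)\,P_{u,vs_i}\qquad\text{for } i\in\Des(v),\ i\notin\Des(u).\]
Now put $\widetilde P_{u,v}:=(-1)^{\ell(v)-\ell(u)}P_{u,v}$. This sign is unchanged under $(u,v)\mapsto(us_i,vs_i)$ and flips under $(u,v)\mapsto(u,vs_i)$. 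Hence $\widetilde P$ satisfies the descent case, satisfies recurrence (3) with the correct plus sign, and has the same base case. Your induction then shows $\widetilde P=R$.

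So with the conventions actually stated in the paper, the identity is
\[R_{u,v}(q)=(-q)^{\ell(v)-\ell(u)}\,\epsilon(T_v^{-1}T_u).\]
The printed formula returns $(-1)^{\ell(v)-\ell(u)}R_{u,v}(q)$. It becomes correct under the normalization $T_i^2=(1-q)T_i+q$, i.e.\ after replacing $T_i$ by $-T_i$.

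The paper's proof of \Cref{thm:hecke} silently uses that normalization when it writes $T_i^{-1}=\frac{q-1}{q}+\frac1q T_i$. This is not the inverse of $T_i$ under the stated Hecke relation, which is where the discrepancy is hidden. The same factor $(-1)^{\ell(u,v)}$ is needed in \Cref{thm:hecke}.
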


Finally, Lusztig, and independently Dyer \cite{Dyer-thesis}, proposed the following conjecture for R-polynomials, known as the \emph{combinatorial invariance conjecture}.
\begin{conj}
    Let $u,v,u',v'\in S_n$ be such that the Bruhat intervals $[u,v]\cong [u',v']$ are isomorphic as posets. Then $R_{u,v}(q)=R_{u',v'}(q)$.
\end{conj}

\subsubsection{The Deodhar Decomposition}\label{sec:prelim-4-2}

It is shown in \cite{Deodhar} that the R-polynomials $R_{u,v}(q)$ can also be interpreted as the $\F_q$-point count of open Richardson varieties:
\[R_{u,v}(q)=\#\cR_{u,v}^\circ(\F_q).\]
To provide a combinatorial formula for these polynomials, Deodhar \cite{Deodhar} introduced the \emph{Deodhar decomposition}, which expresses $\cR^\circ_{u,v}$ into a disjoint union simple pieces, each isomorphic to $\C^a\times (\C^\ast)^b$. We begin by introducing the combinatorial data that index these pieces. The following material is based on \cite{MarshRietsch}.

Given a word $\mathbf{v}=s_{i_1}s_{i_2}\cdots s_{i_\ell}$ for $v\in S_n$, and a subword $\mathbf{u}$ of $\mathbf{v}$ corresponding to $u\in S_n$, define $v^{(j)}$ and $u^{(j)}$ to be the product of the first $j$ factors in $\mathbf{v}$ and $\mathbf{u}$, respectively, for $j\in [\ell]$. The subword $\mathbf{u}$ is called \emph{positive} if $u^{(j-1)}\leq u^{(j)}$ for all $j\in [\ell]$, and \emph{distinguished} if $u^{(j)}\leq u^{(j-1)}s_{i_j}$ for all $j\in [\ell]$. In other words, if right multiplication by $s_{i_j}$ decreases the length of $u^{(j-1)}$, then in a positive subword we must have $u^{(j)}=u^{(j-1)}$, while in a distinguished subword we must have $u^{(j)}=u^{(j-1)}s_{i_j}$. A subword $\mathbf{u}$ is called a \emph{positive distinguished subword} of $\mathbf{v}$ if it is both positive and distinguished. We usually denote such a subword by $\mathbf{u}^+$. The following lemma appears in \cite{MarshRietsch}.
\begin{lemma}[{\cite[Lemma~3.5]{MarshRietsch}}]\label{lemma:distinguished}
    Given permutations $u\leq v$ and a reduced word $\mathbf{v}$ for $v$, there is a unique positive distinguished subword $\mathbf{u}^+$ of $\mathbf{v}$ corresponding to $u$.
\end{lemma}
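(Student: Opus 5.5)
We argue by induction on $\ell=\ell(v)$, the length of the reduced word $\mathbf{v}=s_{i_1}\cdots s_{i_\ell}$, and construct $\mathbf{u}^+$ one letter at a time from right to left. Uniqueness will come out of the same argument. The base case $\ell=0$ is trivial: then $v=\id$, so $u=\id$, and the empty subword is the only one.

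For the inductive step, write $s=s_{i_\ell}$ and $v'=vs$. Then $s_{i_1}\cdots s_{i_{\ell-1}}$ is a reduced word for $v'$, and $\ell(v')=\ell(v)-1$. Suppose $\mathbf{u}$ is a positive distinguished subword for $u$, and let $u'=u^{(\ell-1)}$ be its prefix product. The conditions at the last position say the following. Either $u=u'$ with $u's>u'$ (distinguishedness forbids skipping when $u's<u'$), or $u=u's$ with $u's>u'$ (positivity forbids taking the letter when it decreases length). Both cases therefore have $u's>u'$. Now compare with $u$:
\begin{itemize}
\item If $us<u$, i.e.\ $i_\ell\in\Des(u)$, the first case would give $us=u's>u'=u$, a contradiction. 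So we must be in the second case, $u'=us$.
\item If $us>u$, the second case would give $us=u'<u$, a contradiction. So we must be in the first case, $u'=u$.
\end{itemize}
Thus the last letter, and hence $u'$, is forced: $u'=\min(u,us)$. This gives uniqueness, by induction applied to $u'$ and $\mathbf{v}'$.

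For existence, set $u'=\min(u,us)$. We need $u'\le v'$, after which the inductive hypothesis gives a positive distinguished subword of $\mathbf{v}'$ for $u'$, and we append $s$ or $1$ accordingly. The conditions at the earlier positions are inherited, and the last position satisfies the required condition because $u's>u'$ by construction.

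To show $u'\le v'$, recall that $i_\ell\in\Des(v)$.
\begin{itemize}
\item If $i_\ell\in\Des(u)$, then $u'=us$. Apply the standard lifting property in the form: $x\le v$ with $vs<v$ and $xs<x$ implies $xs\le vs$. This gives $us\le vs=v'$. (When $u=v$ this is immediate.)
\item If $i_\ell\notin\Des(u)$, then $u'=u$. If $u<v$, the lifting property stated in the preliminaries gives $u\le vs$. If $u=v$, this case cannot occur, since $i_\ell\in\Des(v)$.
\end{itemize}

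The main point needing care is the first bullet, since the paper only states one half of the lifting property. The half we use, $xs\le vs$ when both $x$ and $v$ have $s$ as a descent, follows from the subword property. Take a reduced word for $v$ ending in $s$, and a reduced subword for $x$. If that subword uses the final $s$, delete it; this gives a subword of $vs$ for $xs$. Otherwise, exchange arguments show that $xs$ is still a subword of $vs$. Alternatively, this is the standard "Z-property" from Björner–Brenti, which we may cite.
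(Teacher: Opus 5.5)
Your argument is correct and is the standard right-to-left forced construction. You show the last letter forces $u^{(\ell-1)}=\min(u,us_{i_\ell})$, and lifting/property Z keeps $u^{(\ell-1)}\le vs_{i_\ell}$. The paper only cites Marsh--Rietsch for this lemma, but this is their argument and the one the paper uses for the tilted analogue, \Cref{lemma:tilted-pds}. One simplification: your final paragraph needs no exchange argument, since if the reduced subword for $x$ avoids the final $s$, then already $x\le vs$, whence $xs<x\le vs$.
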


We write $\mathbf{u}\prec\mathbf{v}$ if $\mathbf{u}$ is a distinguished subword of $\mathbf{v}$. In this case, define the index sets
\[
\begin{aligned}
    J^+_\mathbf{u}&:=\{j\in [\ell]:u^{(j-1)}<u^{(j)}\},\\
    J^\circ_\mathbf{u}&:=\{j\in [\ell]:u^{(j-1)}=u^{(j)}\},\\
    J^-_\mathbf{u}&:=\{j\in [\ell]:u^{(j-1)}>u^{(j)}\}.
\end{aligned}
\]

For each $i\in[n-1]$, let \(
\phi_i\left(\begin{smallmatrix}
    a & b\\
    c & d
\end{smallmatrix}\right)
\) denote the matrix obtained by replacing the $2 \times 2$ block in rows and columns $i$ and $i+1$ of the identity matrix with the given $2\times 2$ matrix \(
\left(\begin{smallmatrix}
    a & b\\
    c & d
\end{smallmatrix}\right)
\). Define the following matrices in $G$:
\begin{align*}
    y_i(p)&=\phi_{i}\begin{pmatrix}
             1 & 0\\ p & 1
        \end{pmatrix}, &
    \dot{s}_i(p)&=\phi_{i}\begin{pmatrix}
             0 & -1\\ 1 & 0
        \end{pmatrix}, &
    x_i(m)&=\phi_{i}\begin{pmatrix}
             1 & m\\ 0 & 1
        \end{pmatrix}.
\end{align*}
We are now ready to state the Deodhar decomposition, given by the following main theorem.

\begin{theorem}[{\cite[Proposition~5.2]{MarshRietsch}}]\label{thm:deodhar}
Let $u\leq v$ be permutations, and let $\mathbf{v}=s_{i_1}s_{i_2}\cdots s_{i_\ell}$ be a reduced word for $v$. The open Richardson variety $\cR^\circ_{u,v}$ admits a decomposition into Deodhar cells $\cD_{\mathbf{u},\mathbf{v}}$, indexed by distinguished subwords $\mathbf{u}\prec\mathbf{v}$ of $u$:
    \[\cR^\circ_{u,v}=\bigsqcup_{\mathbf{u}\prec \mathbf{v}}\cD_{\mathbf{u},\mathbf{v}},\quad\text{where }\cD_{\mathbf{u},\mathbf{v}}\cong (\C^\ast)^{|J_\mathbf{u}^\circ|}\times \C^{|J_\mathbf{u}^-|}.\]
    Each Deodhar cell $\cD_{\mathbf{u},\mathbf{v}}$ is parametrized as follows:
    \[\cD_{\mathbf{u},\mathbf{v}}:=\left\{gB=g_1g_2\cdots g_\ell B\;\middle\vert\;\begin{aligned}g_j&=\dot{s}_{i_j} &&\text{if }j\in J_\mathbf{u}^+,\\ g_j&=y_{i_j}(p_j) &&\text{if }j\in J_\mathbf{u}^\circ,\\ g_j&=x_{\alpha_i}(m_j)\dot{s}_{i_j}^{-1} &&\text{if }j\in J_\mathbf{u}^-.\end{aligned}\right\},\]
where $p_j\in \C^\ast$ and $m_j\in \C$ are parameters.
\end{theorem}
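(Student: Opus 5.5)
The plan is to build every flag in $X_v^\circ$ from a unique chain of flags that follows the reduced word $\mathbf{v}$ one letter at a time. At each step we record which opposite Schubert cell the partial flag lies in. This step-by-step record is exactly a subword $\mathbf{u}$, and the cells $\cD_{\mathbf{u},\mathbf{v}}$ are the loci where that record is fixed.

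\textbf{Step 1: the gallery of a flag.} Fix the reduced word $\mathbf{v}=s_{i_1}\cdots s_{i_\ell}$. Two flags $F,F'$ are in relative position $s_i$ if they agree in every step except the $i$-th subspace. Because $\mathbf{v}$ is reduced, the Bott--Samelson map onto $X_v$ is an isomorphism over the open cell $X_v^\circ=B\dot vB/B$. Hence every $F_\bullet\in X_v^\circ$ has a unique gallery
\[
B/B=F^{(0)},F^{(1)},\dots,F^{(\ell)}=F_\bullet,
\]
in which consecutive flags $F^{(j-1)},F^{(j)}$ are in relative position $s_{i_j}$. Concretely, $F^{(j)}=g_1\cdots g_jB$ with each $g_j\in B\dot s_{i_j}B$. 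Define $u^{(j)}$ by $F^{(j)}\in\Omega^\circ_{u^{(j)}}=B_-\dot u^{(j)}B/B$, using the Bruhat decomposition $\fl_n=\bigsqcup_w\Omega_w^\circ$. Then $F_\bullet\in\cR^\circ_{u,v}$ if and only if $u^{(\ell)}=u$. This gives a map from $\cR^\circ_{u,v}$ to sequences $(u^{(j)})$, and the cells $\cD_{\mathbf{u},\mathbf{v}}$ are its fibres, so they are automatically disjoint.

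\textbf{Step 2: the local $\P^1$ computation.} For $F=hB\in\Omega^\circ_w$, the flags in relative position $s_i$ with $F$ form $\P^1\setminus\{F\}\cong\C$, namely $\{h\,x_i(m)\dot s_iB\}$. I would show the following.
\begin{enumerate}
\item If $ws_i>w$, exactly one point of this $\C$ lies in $\Omega^\circ_{ws_i}$. After normalizing $h$ so that $h\in B_-\dot w$, this point is $h\dot s_iB$. The remaining points form a copy of $\C^\ast$, parametrized as $h\,y_i(p)B$ with $p\neq0$, and they stay in $\Omega^\circ_w$.
\item If $ws_i<w$, every point of the $\C$ lies in $\Omega^\circ_{ws_i}$, parametrized as $h\,x_i(m)\dot s_i^{-1}B$ with $m\in\C$.
\end{enumerate}
The proof is a rank-$2$ computation. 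Write $h=b_-\dot w$ and use that $\dot w^{-1}B_-\dot w\cap U_{\alpha_i}$ is trivial or all of $U_{\alpha_i}$ according as $w(\alpha_i)>0$ or $w(\alpha_i)<0$. Then apply the $SL_2$ identity $x_i(m)\dot s_i=y_i(m^{-1})\cdot(\text{element of }B)$ for $m\neq0$.

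This dichotomy says precisely three things. First, the allowed sequences $(u^{(j)})$ are the distinguished subwords: whenever $u^{(j-1)}s_{i_j}<u^{(j-1)}$, we are forced to have $u^{(j)}=u^{(j-1)}s_{i_j}$. Second, the indices in $J^+$, $J^\circ$, $J^-$ contribute a point, a $\C^\ast$, and a $\C$ respectively. Third, the representatives $g_j$ can be chosen as in the displayed parametrization.

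\textbf{Step 3: assembling the cells.} I would argue by induction on $j$ that the map from $\prod_{j\in J^\circ}\C^\ast\times\prod_{j\in J^-}\C$ to the truncated galleries is an isomorphism onto its image, using the parameters $g_1,\dots,g_j$. The key point is that at each stage the representative $g_1\cdots g_j$ lies in $B_-\dot u^{(j)}$ up to right multiplication by $B$. This is what makes Step 2 applicable at the next stage. Taking $j=\ell$ and the fibre $u^{(\ell)}=u$ gives $\cD_{\mathbf{u},\mathbf{v}}\cong(\C^\ast)^{|J^\circ_\mathbf{u}|}\times\C^{|J^-_\mathbf{u}|}$ and $\cR^\circ_{u,v}=\bigsqcup_{\mathbf{u}\prec\mathbf{v}}\cD_{\mathbf{u},\mathbf{v}}$.

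\textbf{Main obstacle.} I expect the hardest part to be Step 2 together with the bookkeeping in Step 3. One must keep a normalized representative in $B_-\dot u^{(j)}$ at every stage, so that the explicit matrices $y_i(p)$, $\dot s_i$ and $x_i(m)\dot s_i^{-1}$ genuinely parametrize the fibres and not just their images. My first attempt would be to absorb the leftover $B$-factors at each step into the next $g_{j+1}$, using that $B$ normalizes the relevant root subgroups up to the torus.
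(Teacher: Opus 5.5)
Your proposal is correct. It is essentially the standard Deodhar/Marsh--Rietsch argument, which the paper cites rather than reproves; it is also what the paper's proof of the tilted version (\Cref{prop:tilted-deodhar-cell} and \Cref{thm:tilted-deodhar}) reduces to when $\ba=(1,\dots,1)$, where the standard chart $G_{u,\ba}$ is your normalized set $U^-\dot u$ (up to signs) and \Cref{prop:relative-position}(5) plays the role of your unique gallery.

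The normalization you single out as the main obstacle is in fact automatic, and no $B$-factors ever need to be absorbed. If $h\in U^-\dot w$, then $h\dot s_i$, $h\,y_i(p)$ (when $w(\alpha_i)>0$) and $h\,x_i(m)\dot s_i^{-1}$ (when $w(\alpha_i)<0$) lie in $U^-$ times a representative of $ws_i$, $w$, $ws_i$ respectively, because $\dot w\,y_i(p)\,\dot w^{-1}$, respectively $\dot w\,x_i(m)\,\dot w^{-1}$, lies in $U^-$.
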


As a consequence a combinatorial formula for the R-polynomials.
\begin{cor}[{\cite[Theorem~1.3]{Deodhar}}]
    Let $u\leq v$ be permutations, and let $\mathbf{v}$ be a reduced word for $v$. Then
    \[R_{u,v}(q)=\sum_{\mathbf{u}\prec\mathbf{v}}(q-1)^{|J_{\mathbf{u}}^\circ|}q^{|J_{\mathbf{u}}^-|}.\]
\end{cor}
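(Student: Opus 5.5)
The statement to prove is the corollary expressing $R_{u,v}(q)$ as a sum over distinguished subwords. I would deduce it directly from \Cref{thm:deodhar} together with Deodhar's identification $R_{u,v}(q)=\#\cR^\circ_{u,v}(\F_q)$.

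\textbf{Step 1: work over $\F_q$.} The first step is to observe that everything in \Cref{thm:deodhar} is defined over $\Z$. The matrices $y_i(p)$, $\dot{s}_i$ and $x_i(m)$ have integer entries in the parameters. The Schubert cells and Richardson varieties are cut out by rank conditions, equivalently by Pl\"ucker vanishing and non-vanishing conditions as in \Cref{lemma:SchubPlucker}, which are integral. So the decomposition can be carried out over any field, in particular over $\F_q$. Concretely, I would check that the Marsh--Rietsch argument goes through with $\C$ replaced by $\F_q$. That argument is a factorization of $g$ via the Bruhat decomposition, applied one simple reflection at a time along $\mathbf{v}$, and it uses only linear algebra valid over any field. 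This gives a disjoint union $\cR^\circ_{u,v}(\F_q)=\bigsqcup_{\mathbf{u}\prec\mathbf{v}}\cD_{\mathbf{u},\mathbf{v}}(\F_q)$.

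\textbf{Step 2: the cell parametrization is bijective over $\F_q$.} For each distinguished subword, the map
\[(p_j)_{j\in J^\circ_{\mathbf{u}}},\ (m_j)_{j\in J^-_{\mathbf{u}}}\ \longmapsto\ g_1\cdots g_\ell B\]
is a bijection from $(\F_q^\ast)^{|J^\circ_\mathbf{u}|}\times\F_q^{|J^-_\mathbf{u}|}$ onto $\cD_{\mathbf{u},\mathbf{v}}(\F_q)$. Injectivity comes from the uniqueness of the factorization in Marsh--Rietsch: the parameters are recovered inductively as ratios of specific minors. The main obstacle is exactly this step, namely confirming that the isomorphism in \Cref{thm:deodhar} is defined over $\Z$, with an inverse given by integral rational functions whose denominators are nonvanishing Pl\"ucker-type coordinates. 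Only then do $\F_q$-points correspond to $\F_q$-parameters.

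\textbf{Step 3: count.} Summing cell sizes gives
\[\#\cR^\circ_{u,v}(\F_q)=\sum_{\mathbf{u}\prec\mathbf{v}}(q-1)^{|J^\circ_\mathbf{u}|}q^{|J^-_\mathbf{u}|}.\]
Combining this with $R_{u,v}(q)=\#\cR^\circ_{u,v}(\F_q)$ proves the corollary.

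\textbf{Alternative route.} If the point-count identification is not taken as given, I would prove the formula by induction on $\ell(v)$ using the recurrence in (3). Take $i=i_\ell\in\Des(v)$ and split distinguished subwords by their last letter.
\begin{itemize}
\item If $i\in\Des(u)$, the distinguished condition forces the last letter to be used, and $\ell\in J^-$ (contributing $q$) or $J^+$. Matching these cases against $R_{us_i,vs_i}$ needs care.
\item The cleaner split is by $u^{(\ell-1)}$. Either $u^{(\ell-1)}=us_i$, with last step in $J^+$ or $J^-$, or $u^{(\ell-1)}=u$, with last step in $J^\circ$ (contributing $q-1$, only allowed when $us_i>u$).
\end{itemize}
The two contributions then reproduce $q\,R_{us_i,vs_i}+(q-1)R_{u,vs_i}$ when $i\notin\Des(u)$, and $R_{us_i,vs_i}$ when $i\in\Des(u)$.
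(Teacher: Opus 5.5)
Your main route is the paper's: it presents the corollary as an immediate consequence of the Deodhar decomposition in \Cref{thm:deodhar} (with cells $\cong(\C^\ast)^{|J^\circ_\mathbf{u}|}\times\C^{|J^-_\mathbf{u}|}$) together with Deodhar's identity $R_{u,v}(q)=\#\cR^\circ_{u,v}(\F_q)$, and your check that the parametrization is defined over $\F_q$ makes explicit what the paper leaves implicit. Your inductive alternative is also valid via the split by $u^{(\ell-1)}$, but drop your first bullet: when $i\in\Des(u)$ the last step is forced into $J^+$ and never lies in $J^-$, exactly as your second bullet gives.
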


\subsubsection{The Totally Nonnegative Flag Variety}\label{sec:prelim-4-3}

A real matrix is said to be \emph{totally positive} if all of its minors are positive. The \emph{totally nonnegative flag variety} $\fl_n^{\geq 0}$, first defined by Lusztig \cite{Lusztig98}, is the closure of the set of flags in $\fl_n(\R)$ that can be represented by totally positive matrices. An alternative description of $\fl_n^{\geq 0}$, using Pl\"ucker coordinates and proven in \cite{Bloch-Karp} and \cite{Boretsky}, is given by:
\[\fl_n^{\geq 0}=\{F_\bullet\in \fl_n(\R):\Delta_I(F_\bullet)\geq 0\text{ for all }I\subseteq [n]\}.\]

The \emph{totally nonnegative parts} of Richardson varieties, denoted $\cR_{u,v}^{> 0}$ and $\cR_{u,v}^{\geq 0}$, are defined as the intersections
\[\cR_{u,v}^{> 0}:=\cR_{u,v}^\circ\cap\fl_n^{\geq0},\quad\cR_{u,v}^{\geq 0}:=\cR_{u,v}\cap\fl_n^{\geq0}.\]
The totally nonnegative parts $\cR_{u,v}^{> 0}$ are isomorphic to open cells, as first shown in \cite{rietsch3}. Moreover, these cells admit an explicit parametrization closely related to the Deodhar decomposition, as described in \cite[Theorem~11.3]{MarshRietsch}: 

\begin{theorem}\label{thm:deodhar-positive}
    Let $u\leq v$ be permutations, and let $\mathbf{v}=s_{i_1}s_{i_2}\cdots s_{i_\ell}$ be a reduced word for $v$. Let $\mathbf{u}^+$ be the unique positive distinguished subword of $\mathbf{v}$ corresponding to $u$. Then the totally nonnegative part of the Richardson variety $\cR_{u,v}^{> 0}$ is parametrized as follows:
    \[\cR_{u,v}^{>0}=\left\{gB=g_1g_2\cdots g_\ell B\;\middle\vert\;\begin{aligned}g_j&=\dot{s}_{i_j} &&\text{if }j\in J_{\mathbf{u}^+}^+,\\ g_j&=y_{i_j}(p_j),\text{ with }p_j\in\R_{>0} &&\text{if }j\in J_{\mathbf{u}^+}^\circ.\end{aligned}\right\}.\]
    
    Therefore, $\cR_{u,v}^{> 0}\cong (\R_{>0})^{\ell(v)-\ell(u)}$, which is homeomorphic to an open ball.
\end{theorem}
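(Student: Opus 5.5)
The plan is to obtain $\cR_{u,v}^{>0}$ as a single piece of the Deodhar decomposition of \Cref{thm:deodhar}, cut down to its real positive part. Fix the reduced word $\mathbf{v}=s_{i_1}\cdots s_{i_\ell}$ and let $\mathbf{u}^+$ be the positive distinguished subword given by \Cref{lemma:distinguished}.

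\textbf{Step 1 (the cell).} Since $\mathbf{u}^+$ is positive, $J^-_{\mathbf{u}^+}=\emptyset$. Because $\mathbf{u}^+$ is a reduced-length subword for $u$, we get $|J^+_{\mathbf{u}^+}|=\ell(u)$, and hence $|J^\circ_{\mathbf{u}^+}|=\ell(v)-\ell(u)$. \Cref{thm:deodhar} then says that the map
\[(\C^\ast)^{J^\circ}\to\cD_{\mathbf{u}^+,\mathbf{v}},\qquad (p_j)\mapsto g_1\cdots g_\ell B\]
is an isomorphism. Restricting it to $(\R_{>0})^{J^\circ}$ therefore gives a homeomorphism onto its image $P\subseteq\cR^\circ_{u,v}(\R)$. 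It remains to show $P=\cR^{>0}_{u,v}$.

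\textbf{Step 2 ($P\subseteq\fl_n^{\geq0}$).} I will induct along the word. The goal is to show that every flag Pl\"ucker coordinate $\Delta_I(g_1\cdots g_jB)$, suitably normalized, is a subtraction-free polynomial in the $p$'s. The positivity condition $u^{(j-1)}<u^{(j)}$ on the $\dot s$ steps is exactly what should make the factor $\dot s_{i_j}$ act on the relevant minors by permuting rows without introducing signs. Concretely, I will show that $g_1\cdots g_j$ can be rewritten modulo $B$ as $\dot{u}^{(j)}\,y'(\text{positive})$, a product of $y_i$'s with positive parameters, using the relation $\dot s_i\, y_k(p)$ together with commuting $y$'s past $\dot s$'s whenever the length goes up. Then all minors are nonnegative by the Loewner--Whitney factorization of totally nonnegative lower unitriangular matrices. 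By the Pl\"ucker description of $\fl_n^{\ge0}$ \cite{Bloch-Karp,Boretsky}, this gives $P\subseteq\cR^\circ_{u,v}\cap\fl_n^{\geq0}$.

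\textbf{Step 3 ($\cR^{>0}_{u,v}\subseteq P$).} This is the step I expect to be the main obstacle. Take a totally nonnegative flag in $\cR^\circ_{u,v}$. It lies in a unique Deodhar cell $\cD_{\mathbf{u},\mathbf{v}}$, and I must show that $\mathbf{u}=\mathbf{u}^+$ and that all $p_j>0$. I will argue by induction on $\ell$, using the projection that strips the last factor $g_\ell$. This relates the flag to a point of $\cR^\circ_{u',vs_{i_\ell}}$ with $u'\in\{u,us_{i_\ell}\}$. One then needs total nonnegativity to be inherited by this truncated point: multiplying by $y_i(p)^{-1}$ or $\dot s_i^{-1}$ preserves the relevant nonnegative minors exactly when the corresponding Pl\"ucker coordinate ratio is nonnegative.

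The key claims are then the following. A step of type $J^-$ would force some Pl\"ucker coordinate to be a nontrivial negative combination, contradicting total nonnegativity. The parameter $p_\ell$ equals a ratio of two Pl\"ucker coordinates (a ``chamber minor'' ratio), and these are nonzero on $\cR^\circ$ and nonnegative, hence positive. If the ratio argument proves too delicate, a fallback is to use connectedness: $\cR^{>0}_{u,v}$ is a cell by \cite{rietsch3}, $P$ is open and closed in it by dimension count and properness, and $P$ is nonempty.

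Combining the three steps gives $\cR^{>0}_{u,v}=P\cong(\R_{>0})^{\ell(v)-\ell(u)}$.
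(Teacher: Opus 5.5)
The paper does not prove this theorem. It is quoted from \cite[Theorem~11.3]{MarshRietsch} and later used as an input; in particular, the proof of \Cref{lemma:local-positive} relies on it. So your argument has to stand on its own, and as written it has genuine gaps.

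\textbf{Step 2 rests on a false normal form.} Take $n=3$, $\mathbf{v}=s_1s_2s_1$ and $u=s_1$. Then $\mathbf{u}^+=1\,1\,s_1$ and
\[g=y_1(a)\,y_2(b)\,\dot s_1=\begin{pmatrix}0&-1&0\\1&-a&0\\b&0&1\end{pmatrix}.\]
The flag $gB$ is totally nonnegative. However, the unique lower unitriangular representative of $\dot s_1^{-1}gB$ is
\[L=\begin{pmatrix}1&0&0\\0&1&0\\b&ab&1\end{pmatrix},\]
whose minor on rows $\{2,3\}$ and columns $\{1,2\}$ equals $-b<0$. Hence $gB\neq\dot u\,y'B$ for every product $y'$ of $y_i$'s with positive parameters. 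The cause is that commuting $y$'s past $\dot s$'s produces root-subgroup elements for non-simple roots (and upper unitriangular ones when moving right), and these are not totally nonnegative.

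The opposite normal form $y''\dot uB$ fails as well. For $\mathbf{v}=s_2s_1$ and $u=s_2$ we have $gB=\dot s_2y_1(t)B$, and every lower unitriangular $y''$ with $y''\dot s_2B=gB$ has the form $\left(\begin{smallmatrix}1&0&0\\0&1&0\\t&c&1\end{smallmatrix}\right)$, which has the minor $-t$.

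Also, $\dot s_{i_j}$ acts by right multiplication: it replaces the column $F_{i_j}$ rather than permuting rows. Right multiplication by $g_{j+1}$ is not an operation on flags. So any induction along the word must track invariants of the specific matrix $g_1\cdots g_j$, and the proposal supplies none.

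\textbf{Step 3 is a list of claims, not an argument.}

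\emph{Inheritance.} That total nonnegativity survives stripping the last factor is exactly the local statement of \Cref{lemma:local-positive}. The paper deduces that lemma from the theorem you are proving, so it needs an independent proof.

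\emph{The ratio claim.} The claim that $p_\ell$ is a ratio of Pl\"ucker coordinates nonvanishing on $\cR^\circ_{u,v}$ is false in general. The cells $\cD_{\mathbf{u},\mathbf{v}}$ with $J^-_{\mathbf{u}}\neq\emptyset$ lie inside $\cR^\circ_{u,v}$, and the top-cell parameters degenerate there. For example, take $u=\id$ and $\mathbf{v}=s_1s_2s_1$. On the top cell $y_1(a)y_2(b)y_1(c)B$ one has $c=\Delta_3\Delta_{12}/(\Delta_1\Delta_{13})$. On the cell of $s_1\,1\,s_1\prec\mathbf{v}$, $\Delta_{13}$ vanishes identically while $\Delta_3\Delta_{12}/\Delta_1$ does not, so $c$ blows up along that cell.

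\emph{The fallback.} It gives openness of $P$ but not closedness in $\cR^{>0}_{u,v}$. $P$ is the image of a noncompact orthant. A limit of points of $P$ lying in $\cR^{>0}_{u,v}$ belongs to $P$ only if it lies in $\cD_{\mathbf{u}^+,\mathbf{v}}$. Ruling out limits in the lower Deodhar cells is precisely the statement that those cells miss $\fl_n^{\geq0}$, which is the substance of Step 3 itself. Note that a proper open subset of a cell can itself be homeomorphic to $\R^{m}$, so openness alone does not give equality.
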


\subsubsection{History of Total Positivity}\label{sec:prelim-4-4}
Recall that a CW-complex is \emph{regular} if each attaching map is a homeomorphism. Given a CW-complex $X$, its \emph{face poset} $P(X)$ is the poset whose elements are the cells of $X$, ordered by containment of their closures, with a minimum element $\hat{0}$ adjoined. This notion was first introduced and studied by Bj\"orner in \cite{Bjorner-poset}.

It was shown in \cite{Bjorner-poset} that all Bruhat intervals are CW posets. However, the corresponding regular CW-complexes were constructed \textit{synthetically}, via abstract sequences of cell attachments. A question posed in \cite{Bjorner-poset} asked for a natural geometric realization of such a regular CW-complex whose face poset is a Bruhat interval. A conjectural answer was proposed by Fomin and Shapiro in \cite{fomin-shapiro}, known as the \emph{Fomin--Shapiro conjecture}, and was eventually proven by Hersh in \cite{hersh}.

Williams later generalized this conjecture in \cite{williams-shelling} to the interval posets of Bruhat intervals. The \emph{interval poset} of a poset $P$ is defined as the poset consisting of all intervals in $P$, ordered by containment, with a minimum element $\hat{0}$ adjoined at the bottom. Williams proved that the interval poset of a Bruhat interval $[u,v]$ is a CW poset, and conjectured that a natural geometric realization of the corresponding regular CW-complex is given by the totally nonnegative part of the Richardson variety, stratified as:
\[\cR_{u,v}^{\geq0}=\bigsqcup_{[x,y]\subseteq[u,v]}\cR_{x,y}^{>0},\]
whose face poset, as proven by Rietsch \cite{rietsch2}, is the interval poset of $[u,v]$.

This conjecture inspired a series of developments over the past decades and was ultimately resolved in \cite{GKL3}. Below, we outline the chronological progression of key results:
\begin{itemize}
    \item The totally nonnegative flag variety $\fl_n^{\geq 0}$ is contractible \cite{Lusztig98b}.
    \item Each open cell $\cR_{u,v}^{>0}$ is homeomorphic to an open ball \cite{rietsch3}.
    \item The closed cell $\cR_{u,v}^{\geq 0}$ is the closure of the open cell $\cR_{u,v}^{>0}$, identifying its face poset with the interval poset of the Bruhat interval $[u,v]$ \cite{rietsch2}.
    \item The interval poset of $[u,v]$ is a CW poset, conjecturally suggesting a regular CW-complex structure \cite{williams-shelling}.
    \item The stratification forms a CW-complex \cite{rietsch-williams}.
    \item Each closed cell $\cR_{u,v}^{\geq 0}$ is contractible, with boundary homotopy equivalent to a sphere \cite{lauren-morse}.
    \item The totally nonnegative flag variety $\fl_n^{\geq 0}$ is homeomorphic to a closed ball \cite{GKL1}.
    \item Each closed cell $\cR_{u,v}^{\geq 0}$ is homeomorphic to a closed ball, and the entire space forms a regular CW-complex \cite{GKL3}.
    \item The totally nonnegative J-Richardson varieties in the full flag variety of an arbitrary Kac-Moody group are regular CW-complexes homeomorphic to closed balls \cite{bao-he}.
\end{itemize}

Finally, we note the following result from \cite[Corollary~6.5]{BFP-tilted-Bruhat}:

\begin{prop}\label{prop:cw-complex}
    For each $u,v\in S_n$, the tilted Bruhat interval $[u,v]$ is a CW poset.
\end{prop}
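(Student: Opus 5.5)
The plan is to verify Bj\"orner's characterization of CW posets \cite{Bjorner-poset}. A poset $P$ with a minimum element is a CW poset if every open lower interval $(\hat 0,x)$ has order complex homeomorphic to a sphere. A sufficient condition for this is that $P$ is graded, thin (every length-$2$ interval is a diamond), and that every closed interval of $P$ is shellable. For then each order complex $\Delta((y,x))$ is a shellable pseudomanifold, hence a sphere. We apply this with $P=[u,v]$, with minimum $u$ and rank function $w\mapsto\ell(u,w)$.

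\textbf{Step 1 (reductions).} Gradedness is immediate from \Cref{def:tilted-bruhat-interval}. Every closed subinterval $[x,y]$ of $[u,v]$ is itself a tilted Bruhat interval, since $x\preceq y$ means that $x$ and $y$ lie in order on a shortest path. Thinness is therefore exactly \Cref{lemma:thin}. It remains to show that every tilted Bruhat interval is shellable, and we aim for an EL-shelling.

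\textbf{Step 2 (labeling).} A cover $x\lessdot y$ in $[u,v]$ means $\ell(x,y)=1$. So $y=xt_{ij}$ is an edge of $\Gamma_n$, and we label it by $e_i-e_j\in\Phi^+$. Fix a reflection ordering $\gamma$, which exists by \Cref{lemma:bjorner-reflection-order}. Maximal chains of $[x,y]$ are exactly the shortest paths from $x$ to $y$ in $\Gamma_n$. By \Cref{prop:shortest-path-BFP}, each $[x,y]$ has a unique path with $\gamma$-increasing labels, and this path is shortest, hence a maximal chain. This gives the ``unique increasing chain'' half of the EL condition.

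\textbf{Step 3 (lexicographic minimality; the main obstacle).} We must show the increasing chain is lexicographically smallest among all maximal chains of $[x,y]$. Suppose instead that some shortest path begins with a label $\beta$ strictly $\gamma$-smaller than the first label $\alpha$ of the increasing path, and choose it with first label as small as possible. Restricting to the subinterval starting at its second vertex and applying Step 2 there, we may assume the remainder of this path is increasing. The two paths then differ near $x$. We then try to run a length-$2$ exchange argument: study length-$2$ subintervals $[x,z]$ and use thinness together with the reflection-ordering axiom (for $i<j<k$, the root $e_i-e_k$ lies between $e_i-e_j$ and $e_j-e_k$). The goal is to show that any descent in a length-$2$ shortest path can be replaced by an increasing pair whose first label is smaller, and to use this, by induction on $\ell(x,y)$, to contradict minimality. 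Should this exchange argument become unwieldy, the fallback is to invoke the uniqueness in \Cref{prop:shortest-path-BFP} for the \emph{reversed} reflection ordering, which is again a reflection ordering. Combined with uniqueness for $\gamma$, this plays the role that the standard argument of Dyer plays for Bruhat intervals. This step is the one we expect to be genuinely delicate, since quantum edges do not preserve length monotonicity. It is essentially the content of \cite[Theorem~6.6]{BFP-tilted-Bruhat}.

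\textbf{Step 4 (conclusion).} With Step 3, the labeling is an EL-labeling on every interval. Hence every interval is shellable, and by thinness every open interval is a shellable pseudomanifold, i.e.\ a sphere. Bj\"orner's criterion then yields that $[u,v]$ is a CW poset.
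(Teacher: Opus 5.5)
Your architecture (thinness via \Cref{lemma:thin}, an EL-labeling of every interval by roots with respect to a reflection ordering, and Bj\"orner's criterion that a thin poset with shellable lower intervals is a CW poset) is the standard route to this result. The paper itself gives no argument; it imports the statement from \cite[Corollary~6.5]{BFP-tilted-Bruhat}. The genuine gap is Step~3. Lexicographic minimality is only stated as a goal, and \Cref{prop:shortest-path-BFP}, as quoted here, gives existence, uniqueness and shortness of the increasing path but says nothing about lexicographic order. Your fallback does not fill the gap. Uniqueness of the increasing chain for $\gamma$ and for its reverse does not by itself force the increasing chain to be lexicographically first: an abstractly labeled diamond with chains $(2,3)$ and $(1,0)$ satisfies both uniqueness conditions. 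What the reversed ordering does give is thinness, since a length-$2$ chain has distinct labels and is therefore either increasing or decreasing.

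Your primary exchange argument does work, and it only needs one rank-two fact. \emph{If $x\to xt_\beta\to xt_\beta t_{\beta'}$ is a shortest path with $\beta'<_\gamma\beta$, then the other maximal chain of the diamond $[x,xt_\beta t_{\beta'}]$ has first label $<_\gamma\beta$.} To see this, let $w=t_\beta t_{\beta'}$. Either $w$ is a product of two disjoint transpositions, whose only other factorization is $t_{\beta'}t_\beta$, or $w$ is a $3$-cycle on some $i<j<k$. In the latter case every factorization uses the three roots $r_1<_\gamma r_2<_\gamma r_3$ with $r_2=e_i-e_k$, by the reflection-ordering axiom, and one checks $t_{r_1}t_{r_3}=t_{r_3}t_{r_2}=t_{r_2}t_{r_1}$ and $t_{r_3}t_{r_1}=t_{r_1}t_{r_2}=t_{r_2}t_{r_3}$. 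If $w=t_{r_1}t_{r_3}$, the unique increasing factorization $(r_1,r_3)$ is a maximal chain of the diamond by \Cref{prop:shortest-path-BFP}. So the second chain starts with $r_1<_\gamma\beta\in\{r_2,r_3\}$. If $w=t_{r_3}t_{r_1}$, the only descending factorization is $(r_3,r_1)$, and the other chain starts with $r_1$ or $r_2$. With this fact, Step~3 closes exactly as you set it up. Let $\beta$ be the minimal first label of a maximal chain of $[x,y]$, and replace that chain's tail by the increasing chain of $[xt_\beta,y]$. The second label cannot equal $\beta$. If it were smaller, the fact above would produce a chain with first label below $\beta$. Hence the new chain is increasing, so $\beta=\alpha$. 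Full lexicographic minimality then follows by induction on $\ell(x,y)$, because a label determines the atom $xt_{ij}$.
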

It is then natural to ask for a CW-complex in $\fl_n$ whose face poset is the tilted Bruhat interval $[u,v]$. This is our focus in \Cref{sec:deodhar}.

%% file: tex/3-tilted-order.tex
\section{Combinatorics of Tilted Bruhat Orders}\label{sec:graph}

In this section, we present an explicit combinatorial formula for the minimal degrees $d_{u,v}$ in the quantum Bruhat graph. We also introduce two new partial orders on the symmetric group $S_n$, called the \emph{$\ba$-tilted Bruhat orders}, denoted by $\leq_\ba$ and $\lesssim_\ba$. These orders are indexed by an integer sequence $\ba = (a_1, a_2, \dots, a_n) \in [n]^n$, and specialize to the classical Bruhat order when $\ba = (1, 1, \dots, 1)$. These orders are closely related to the quantum Bruhat graph and tilted Bruhat intervals introduced by Brenti, Fomin, and Postnikov in \cite{BFP-tilted-Bruhat}. In particular, they provide an Ehresmann-like criterion for characterizing tilted Bruhat intervals. We further develop the theory of \emph{$\ba$-tilted reduced words}, and establish several properties analogous to those of classical reduced words.

\subsection{Minimal Degrees in the Quantum Bruhat Graph}\label{sec:minimal-degree}
We provide an explicit formula for the minimal degree $d(u,v)$ (or $d_{u,v}$) of a shortest path from $u$ to $v$ in the quantum Bruhat graph $\Gamma_n$ for $u,v\in S_n$ (\Cref{thm:weight-distance}). For background, see \Cref{sec:prelim-3-5}. 
\begin{defin}\label{def:up-down-path}
Let $A,B\subseteq[n]$ with $|A|=|B|$. The \emph{lattice path} $\p(A,B)$ is defined as the path starting at $(1,0)$ and ending at $(n+1,0)$ with $n$ steps, where the $i$-th step is:
\begin{itemize}
\item an upstep $(1,1)$ if $i\in A$ and $i\notin B$,
\item a downstep $(1,-1)$ if $i\notin A$ and $i\in B$,
\item a horizontal step $(1,0)$ if $i\in A\cap B$ or $i\notin A\cup B$. 
\end{itemize}
The \emph{depth} of the path is defined as the largest number $y \geq 0$ such that the path passes through a point $(x, -y)$ for some $x$. We denote this by $\depth(A, B)$.
\end{defin}
\begin{ex}\label{ex:lattice-path-PAB}
Let $n=7$, with $A=\{3,4,6,7\}$ and $B=\{1,2,3,5\}$. The lattice path $\p(A,B)$ is illustrated in \Cref{fig:lattice-path-PAB}, and we have $\depth(A,B)=2$.
\begin{figure}[ht]
\centering
\begin{tikzpicture}[scale=0.6]
\node at (0,0) {$\bullet$};
\node at (1,-1) {$\bullet$};
\node at (2,-2) {$\bullet$};
\node at (3,-2) {$\bullet$};
\node at (4,-1) {$\bullet$};
\node at (5,-2) {$\bullet$};
\node at (6,-1) {$\bullet$};
\node at (7,0) {$\bullet$};
\draw(0,0)--(2,-2)--(3,-2)--(4,-1)--(5,-2)--(7,0);
\draw[dashed](0,0)--(7,0);
\node (d) at (8.5,-1) {$\depth=2$};
\draw[->] (d)--(8.5,0);
\draw[->] (d)--(8.5,-2);
\node[above] at (0.6,-0.7) {$1$};
\node[above] at (1.6,-1.7) {$2$};
\node[above] at (2.5,-2.1) {$3$};
\node[above] at (3.4,-1.7) {$4$};
\node[above] at (4.6,-1.7) {$5$};
\node[above] at (5.4,-1.7) {$6$};
\node[above] at (6.4,-0.7) {$7$};
\end{tikzpicture}
\caption{The lattice path $\p(A,B)$ for $A=\{3,4,6,7\}$ and $B=\{1,2,3,5\}$}
\label{fig:lattice-path-PAB}
\end{figure}
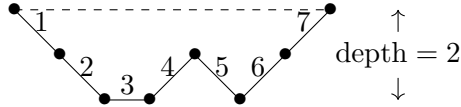
\end{ex}

\begin{theorem}\label{thm:weight-distance}
Let $u, v \in S_n$. The minimal weight of a shortest path from $u$ to $v$ in the quantum Bruhat graph is given by
\[q^{d(u,v)}=q_1^{d_1(u,v)}\cdots q_{n-1}^{d_{n-1}(u,v)},\]
where each exponent is defined by $d_k(u,v):=\depth(u[k],v[k])$, the depth of the lattice path $\p(u[k],v[k])$. Here, $w[k] := \{w_1, \dots, w_k\}$.
\end{theorem}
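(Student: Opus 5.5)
We prove two inequalities. The first is a lower bound: every path from $u$ to $v$ in $\Gamma_n$ has weight divisible by $\prod_k q_k^{\depth(u[k],v[k])}$. The second is an upper bound: we exhibit one path whose weight is exactly this monomial. By \Cref{lemma:shortest-length-equals-weight}, such a path is a shortest path, and its weight equals $q^{d(u,v)}$.

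\textbf{Lower bound: a potential function.} Fix $k$. For a permutation $w$, consider the lattice path $\p(u[k],w[k])$ and its depth. It suffices to show that along any edge $w\to wt_{ij}$ of $\Gamma_n$ the depth changes in a controlled way.

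Suppose $i\le k<j$ (otherwise $w[k]$ is unchanged). The edge replaces $w_i$ by $w_j$ in $w[k]$. This modifies the path $\p(u[k],w[k])$ in one of two ways:
\begin{itemize}
\item If $w_i<w_j$, the downstep at position $w_i$ moves to position $w_j$. This raises the path by $1$ on the interval $[w_i,w_j)$, so the depth cannot increase.
\item If $w_i>w_j$, the path is lowered by $1$ on the interval $[w_j,w_i)$.
\end{itemize}
This second case happens exactly for a quantum edge, whose weight contains $q_k$ to the first power since $i\le k<j$. For a Bruhat edge we always have $w_i<w_j$.

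So each quantum edge increases the depth for index $k$ by at most $1$ and contributes one factor $q_k$. Bruhat edges never increase the depth. The depth is $0$ at $w=u$. Hence any path reaching $v$ uses at least $\depth(u[k],v[k])$ edges carrying $q_k$. This gives divisibility of $\wt(P)$ by the claimed monomial for every path $P$.

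\textbf{Upper bound: an efficient path.} We need a single path along which, for every $k$ simultaneously, the depth of $\p(u[k],\cdot[k])$ increases at each step carrying $q_k$. Equivalently, the $q_k$-exponent of the path equals $\depth(u[k],v[k])$.

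The most economical route is cyclic symmetry (\Cref{lemma:cyclic-symmetry}). Left multiplication by $\cyclic^m$ preserves the unweighted graph and acts on values. We can choose $m$ so that, after rotating values, the relative position becomes a Bruhat comparison as far as possible, and then use a Bruhat chain. The difficulty is that weights are not preserved under rotation, and a single rotation may not work for all $k$ simultaneously.

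So instead I would argue by induction on $\ell(u,v)$, using \Cref{prop:shortest-path-BFP} to supply a shortest path. The aim is to show that the first edge $u\to ut_{ij}$ of such a path, when it is a quantum edge, strictly lowers the path exactly for those $k$ in $[i,j)$, and that each such lowering creates new depth, i.e. it is not absorbed. This is verified using the cyclic interval criterion: for $i<l<j$ we have $w_l\notin(w_i,w_j)_c$.

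An alternative is a direct construction:
\begin{itemize}
\item First apply quantum edges that swap the largest available values downward wherever $\p(u[k],v[k])$ first achieves its minimum.
\item Then finish with Bruhat edges once $u'[k]\le v[k]$ in Gale order for all $k$, using the Ehresmann criterion (\Cref{prop:ehresmann}). Gale order $A\le B$ is exactly $\depth(A,B)=0$ with a suitable sign convention, so zero depth means a Bruhat chain of weight $1$ exists.
\end{itemize}

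\textbf{Main obstacle.} The hardest step is the upper bound: producing a quantum edge from $u$ that reduces $\depth(\cdot,v[k])$ by one for every $k$ in its range $[i,j)$ simultaneously, while not raising any other depth. Concretely, I would take $j$ minimal and $i$ maximal such that the values $u_i>u_j$ bracket the deepest points. I would then check, for each $k\in[i,j)$, that the lowered segment $[u_j,u_i)$ covers the global minimum of $\p(u[k],v[k])$. Once this "depth-reducing quantum edge" lemma is established, induction on $\sum_k d_k$ completes the proof.
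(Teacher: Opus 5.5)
Your lower bound is correct, and it is an ingredient the paper does not use. With the potential $w\mapsto\depth(u[k],w[k])$, a Bruhat edge can only raise $\Lambda(u[k],w[k])$. An edge $t_{ij}$ with $k\notin[i,j)$ leaves it unchanged. A quantum edge with $i\le k<j$ lowers it by one on a window of points and contributes exactly one factor $q_k$. So every path from $u$ to $v$ has weight divisible by $\prod_k q_k^{\depth(u[k],v[k])}$.

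The gap is the other half, which carries all the content of the theorem: you never produce a path of weight exactly $\prod_k q_k^{d_k(u,v)}$.

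Your route via \Cref{prop:shortest-path-BFP} does not work as stated. Inducting on $\ell(u,v)$ through the first edge $u\to u'=ut_{ij}$ requires $\depth(u[k],v[k])=\epsilon_k+\depth(u'[k],v[k])$ for all $k$, where $\epsilon_k=1$ exactly when the edge is quantum and $k\in[i,j)$.
\begin{itemize}
\item For a quantum edge, this says that the raised window of $\Lambda(u[k],v[k])$ (the points $x$ with $u_j<x\le u_i$) contains every minimum point of $\Lambda(u[k],v[k])$.
\item For a Bruhat edge, which you do not treat at all, it says that lowering $\Lambda(u[k],v[k])$ on $u_i<x\le u_j$ does not deepen it.
\end{itemize}
Both conditions depend on $v$. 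They therefore cannot be ``verified using the cyclic interval criterion'', which involves only $u$ and $(i,j)$. What makes them true is that the edge begins a shortest path to $v$, and that is exactly what the theorem is supposed to quantify.

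Your alternative, induction on $\sum_k d_k$, is sound as a reduction. The base case $\depth(u,v)=0$ means Gale comparability at every level, hence $u\le v$ by \Cref{prop:ehresmann}, hence a weight-one chain. But the ``depth-reducing quantum edge'' lemma you defer is the entire difficulty. Your sketched choice (``$j$ minimal and $i$ maximal such that $u_i>u_j$ bracket the deepest points'') is not a definition. The minimum sets of $\Lambda(u[k],v[k])$ differ for different $k$, and you need all of the following at once for every $k\in[i,j)$: $d_k>0$, full coverage of the minima, and $u_j<u_l<u_i$ for $i<l<j$.

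The choice really matters. Take $u=321$ and $v=213$, so $d=(1,1)$, and $\Lambda(u[2],v[2])$ attains its minimum at the points $2$ and $3$. The quantum edge $321\to312$ (weight $q_2$) raises only the point $2$. One computes $\depth(312,213)=(1,1)$, so by your own lower bound every path starting with this edge has weight divisible by $q_1q_2^2$.

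To close the gap you need an explicit construction with verified depth bookkeeping. One that works is the paper's:
\begin{itemize}
\item Bring $v_1$ into position $1$ by transpositions $t_{1p_1},t_{1p_2},\dots$, where $p_0=1$ and $p_s$ is the smallest index greater than $p_{s-1}$ with $u_{p_s}\in(u_{p_{s-1}},v_1]_c$.
\item These are edges of $\Gamma_n$, and at most one of them is quantum. Exactly one is, with weight $q_1\cdots q_{p_s-1}$, when $u_1>v_1$.
\item Comparing $\Lambda(u[k],v[k])$ with $\Lambda(u'[k],v[k])$ shows that this segment's weight times $q^{\depth(u',v)}$ equals $q^{\depth(u,v)}$.
\item Then recurse on positions $2,\dots,n$.
\end{itemize}
Combined with your lower bound, you would not even need \Cref{prop:shortest-path-BFP}. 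The paper invokes it only to know that this explicit path is shortest, since it has no separate lower bound.
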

\begin{proof}
For $u, v \in S_n$, recall that the weight of a shortest path from $u$ to $v$ in $\Gamma_n$ is denoted by $q^{d(u,v)}$. Let $\depth(u,v)$ denote the vector whose $k$-th coordinate is $\depth(u[k], v[k])$, as defined in \Cref{def:up-down-path}. Our goal is to show that
\[q^{d(u,v)}=q^{\depth(u,v)}.\]
Our strategy is to explicitly construct a canonical shortest path $P_{u,v}$ using \Cref{prop:shortest-path-BFP}, and then show that its weight equals $q^{\depth(u,v)}$.

We begin by fixing a reflection ordering $\gamma=e_1-e_2,e_1-e_3,\ldots,e_1-e_n,e_2-e_3,\ldots e_2-e_{n},\ldots,e_{n-1}-e_n$ which lists all positive roots involving $e_1$ first, followed by those involving $e_2$, and so on. This is a valid reflection ordering since, for any $i<j<k$, the roots $e_i-e_j$, $e_i-e_k$ and $e_j-e_k$ appear in the correct order. One can also verify that this ordering corresponds to the reduced word $w_0=(s_1s_2\cdots s_{n-1})(s_1s_2\cdots s_{n-2})\cdots (s_1)$.

To construct $P_{u,v}$ explicitly, we first consider the initial segment of $\gamma$ consisting of the roots $e_1 - e_2, \ldots, e_1 - e_n$. Suppose we select a subset of these reflections, say $e_1 - e_{p_1}, e_1 - e_{p_2}, \dots, e_1 - e_{p_m}$, and apply them in sequence to build a directed subpath:
\[u=w^{(0)}\xrightarrow{t_{i_1j_1}}w^{(1)}\xrightarrow{t_{i_2j_2}}\cdots \xrightarrow{t_{i_\ell j_\ell}}w^{(\ell)}=u'\]
After this sequence, the value of $u'_1$ must equal $v_1$, since all remaining reflections in $\gamma$ involve indices greater than $1$. On the other hand, once $u'_1 = v_1$, the remaining steps of the path can be constructed inductively on $n$. Therefore, to construct the full path $P_{u,v}$, it suffices to find this initial segment, which we denote by $P^1_{u,v}$.

To construct the initial subpath $P^1_{u,v}$, we claim that the sequence of indices $p_1, p_2,\dots,p_m$ can be explicitly determined by the following algorithm. Starting with $p_0 = 1$, and assuming $p_0, p_1, \dots, p_{k-1}$ have been chosen, we define $p_k$ to be the smallest index greater than $p_{k-1}$ such that $u_{p_k}$ lies in the cyclic interval $(u_{p_{k-1}},v_1]_c$. This construction ensures that each edge in the subpath $u = w^{(0)} \xrightarrow{t_{1p_1}} w^{(1)} \xrightarrow{t_{1p_2}} \cdots \xrightarrow{t_{1p_m}} w^{(m)} = u'$ exists in the quantum Bruhat graph $\Gamma_n$, and the algorithm necessarily terminates when $p_m = u^{-1}(v_1)$. Here is an example with $v_1 = 4$:
\[\begin{tabular}{c|c|c|c}
& permutation & root & weight \\ \hline
$u$ & $\underline{\textbf{6}}5\underline{\textbf{7}}913428$ & $e_1-e_3$ & 1  \\ \hline
 & $\underline{\textbf{7}}56\underline{\textbf{9}}13428$ & $e_1-e_4$ & 1 \\ \hline
 & $\underline{\textbf{9}}567\underline{\textbf{1}}3428$ & $e_1-e_5$ & $q_1q_2q_3q_4$ \\\hline
 & $\underline{\textbf{1}}5679\underline{\textbf{3}}428$ & $e_1-e_6$ & 1 \\\hline
 & $\underline{\textbf{3}}56791\underline{\textbf{4}}28$ & $e_1-e_7$ & 1 \\\hline
 & $\textbf{4}56791328$ & & \\
 & $\cdots$ & & \\\hline
$v$ & $\textbf{4}\rule{1.46cm}{0.15mm}$ & & \\\hline
\end{tabular}.\]

Let $\wt(P_{u,v}^1)$ denote the weight of the initial subpath $P^1_{u,v}$. By \Cref{prop:shortest-path-BFP}, since the canonical path $P_{u,v}$ we constructed is a shortest path, we have $\wt(P_{u,v}^1) \cdot q^{d(u',v)} = q^{d(u,v)}$. By the induction hypothesis, since the remaining path does not involve the first index, we also have $q^{d(u',v)} = q^{\depth(u',v)}$. It remains to show that
\[
\wt(P_{u,v}^1)\cdot q^{\depth(u',v)}=q^{\depth(u,v)}.
\]
We now compare the exponents of $q^j$ on both sides of the equation above for all $j \in [n]$. In other words, we compare the depths of the two lattice paths $\p(u[j], v[j])$ and $\p(u'[j], v[j])$. 
\begin{enumerate}
\item \textbf{Case $u_1\leq v_1$.} In this case, we have $u_1 < u_{p_1} < \cdots < u_{p_m} = v_1$, and $\wt(P^1_{u,v}) = 1$. Our goal is to show that $\depth(u[j], v[j]) = \depth(u'[j], v[j])$ for all $j \in [n]$. If $j \geq p_m$, then $u[j] = u'[j]$, so the equality is immediate. If $j < p_m$, note that $v_1 \notin u[j]$, and $u'[j]$ is obtained from $u[j]$ by replacing the largest number less than $v_1$ (call it $b$) with $v_1$. As a result, the lattice paths $\p(u[j], v[j])$ and $\p(u'[j], v[j])$ differ only between steps $b$ and $v_1$. In this interval, we have:
\[\begin{aligned}
\p(u[j],v[j]):\ & (\nearrow\text{ or }\rightarrow)\ +\ (\text{a sequence of }\rightarrow\text{ and }\searrow\text{'s})\ +\ (\searrow), \\
\p(u'[j],v[j]):\ & (\rightarrow\text{ or }\searrow)\ +\ (\text{a sequence of }\rightarrow\text{ and }\searrow\text{'s})\ +\ (\rightarrow).
\end{aligned}\]
This local change does not affect the overall depth of the lattice path. Hence, we conclude that $\depth(u[j], v[j]) = \depth(u'[j], v[j])$.
\item \textbf{Case $u_1>v_1$.} In this case, there exists a unique index $k\in[m]$ such that
\[u_{p_k}<\cdots <u_{p_m}=v_1<u_1=u_{p_0}<\cdots <u_{p_{k-1}},\] 
and the weight of the initial subpath is $\wt(P^1_{u,v}) = q_1 q_2 \cdots q_{p_k - 1}$. For $j < p_k$, we aim to show that $\depth(u[j], v[j]) = \depth(u'[j], v[j]) + 1$. Note that $v_1 \notin u[j]$, and $u'[j]$ is obtained from $u[j]$ by deleting its largest element, $\max u[j]$, and replacing it with $v_1$, which is smaller than all elements in $u[j]$. As a result, the lattice paths $\p(u[j], v[j])$ and $\p(u'[j], v[j])$ differ as illustrated in \Cref{fig:path-depth-change-1}. Specifically, $\p(u'[j], v[j])$ is obtained from $\p(u[j], v[j])$ by shifting the lattice subpath between $(v_1{+}1,-)$ and $(\max u[j],-)$ (shown as the red curve in \Cref{fig:path-depth-change-1}) upward by one unit. This local adjustment reduces the depth by 1, yielding $\depth(u[j], v[j]) = \depth(u'[j], v[j]) + 1$.
\begin{figure}[ht]
\centering
\begin{tikzpicture}[scale=0.5]
\node at (0,0) {$\bullet$};
\node at (1,-1) {$\bullet$};
\node at (2,-1) {$\bullet$};
\node at (3,-2) {$\bullet$};
\node[right] at (3,-2) {$(v_1{+}1,-)$};
\node at (6,2) {$\bullet$};
\node[above] at (6,2) {$(\max u[j],-)$};
\node at (7,2) {$\bullet$};
\node at (8,1) {$\bullet$};
\node at (9,0) {$\bullet$};
\draw(0,0)--(1,-1)--(2,-1)--(3,-2);
\draw(6,2)--(7,2)--(9,0);
\draw[dashed](0,0)--(9,0);
\coordinate (A) at (3,-2);
\coordinate (B) at (6,2);
\draw[red,thick] (A) .. controls (4,2) and (2,1) .. (B);
\def\a{12}
\node at (\a,0) {$\bullet$};
\node at (\a+1,-1) {$\bullet$};
\node at (\a+2,-1) {$\bullet$};
\node at (\a+3,-1) {$\bullet$};
\node[right] at (\a+3,-1) {$(v_1{+}1,-)$};
\node at (\a+6,3) {$\bullet$};
\node[left] at (\a+6,3) {$(\max u[j],-)$};
\node at (\a+7,2) {$\bullet$};
\node at (\a+8,1) {$\bullet$};
\node at (\a+9,0) {$\bullet$};
\draw(\a+0,0)--(\a+1,-1)--(\a+2,-1)--(\a+3,-1);
\draw(\a+6,3)--(\a+7,2)--(\a+9,0);
\draw[dashed](\a+0,0)--(\a+9,0);
\coordinate (A2) at (\a+3,-1);
\coordinate (B2) at (\a+6,3);
\draw[red,thick] (A2) .. controls (\a+4,3) and (\a+2,2) .. (B2);
\end{tikzpicture}
\caption{A comparison of the lattice paths $\p(u[j], v[j])$ (left) and $\p(u'[j], v[j])$ (right) for $j < p_k$ in Case 2 of the proof of \Cref{thm:weight-distance}, where the red curve represents an arbitrary lattice subpath}
\label{fig:path-depth-change-1}
\end{figure}
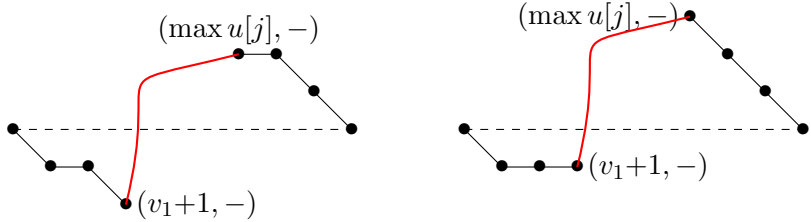

On the other hand, for $j \geq p_k$, we aim to show that $\depth(u[j], v[j]) = \depth(u'[j], v[j])$. This follows from the same reasoning used in Case 1 when $u_1 \leq v_1$, since in this range we have $u[j] = u'[j]$ or the replacement does not affect the depth.
\end{enumerate}
This completes the case analysis. The induction step now follows.
\end{proof}

\begin{remark}
We note that \Cref{thm:weight-distance} can also be obtained by combining the combinatorial formula for minimal degrees in quantum products in the Grassmannian \cite[Theorem~8.1]{Postnikov-quantum-Schur} with a geometric result of Buch, Chung, Li, and Mihalcea \cite[Lemma~4]{bclm2020-quantumK}. Nevertheless, our proof is independent and purely combinatorial.
\end{remark}


\begin{ex}
Consider $u=7364152$ and $v=2513746$ in $S_7$. We compute $d_k(u,v)$ for each $k \in [n-1]$. For $k = 1$, we have $u[1] = {7}$ and $v[1] = {2}$, so the lattice path has an upstep at position $7$ and a downstep at position $2$. This path has depth $1$, meaning that $d_1(u,v) = 1$.

We continue this procedure for each $k$. For example, when $k = 4$, the lattice path $\p(u[4], v[4])$ is discussed in \Cref{ex:lattice-path-PAB}, and has depth $2$, hence $d_4(u,v) = 2$. Carrying out this computation for all $k$, we obtain:
\[q^{d(u,v)}=q_1q_2q_3^2q_4^2q_5q_6.\]
\end{ex}

The following corollary follows immediately from \Cref{thm:weight-distance}.

\begin{cor}\label{cor:weight-distance}
    Let $w\in S_n$ and let $t_{ij}\in S_n$ be a transposition. Then the minimal weight of a shortest path from $w$ to $wt_{ij}$ in the quantum Bruhat graph is at most $q_{ij}:=q_iq_{i+1}\cdots q_{j-1}$.
\end{cor}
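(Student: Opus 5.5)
We will apply \Cref{thm:weight-distance} with $u=w$ and $v=wt_{ij}$, and bound each exponent $d_k(w,wt_{ij})=\depth(w[k],(wt_{ij})[k])$ separately. The goal is to show that this exponent is at most $1$ when $i\le k<j$ and equals $0$ otherwise. The product of the corresponding $q_k$ is then exactly $q_{ij}=q_iq_{i+1}\cdots q_{j-1}$, so the minimal weight $q^{d(w,wt_{ij})}$ is at most (divides) $q_{ij}$.

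First, suppose $k<i$ or $k\ge j$. Then either both positions $i,j$ lie outside $[k]$, or both lie inside it. In either case $(wt_{ij})[k]=w[k]$ as sets. The lattice path $\p(A,A)$ consists only of horizontal steps, so its depth is $0$.

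Next, suppose $i\le k<j$. Then $(wt_{ij})[k]$ is obtained from $w[k]$ by removing $w_i$ and inserting $w_j$. So the two sets $A=w[k]$ and $B=(wt_{ij})[k]$ differ in exactly one element on each side: $A\setminus B=\{w_i\}$ and $B\setminus A=\{w_j\}$. The path $\p(A,B)$ therefore has exactly one upstep, at position $w_i$, and one downstep, at position $w_j$, with all other steps horizontal.
\begin{itemize}
\item If $w_j>w_i$, the path goes up first and returns to $0$, so its depth is $0$.
\item If $w_j<w_i$, it dips to height $-1$ between $w_j$ and $w_i$, so its depth is $1$.
\end{itemize}
In both cases $d_k\le 1$.

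This completes the argument. The only step needing care is recognizing that $d_k$ vanishes outside $[i,j)$ and is at most $1$ inside it. This follows directly from the explicit lattice-path description, so I expect no real obstacle.
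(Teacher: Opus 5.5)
Your proposal is correct and follows the paper's proof: apply \Cref{thm:weight-distance} and observe that $\p(w[k],wt_{ij}[k])$ is flat for $k\notin[i,j)$ and has a single upstep at $w_i$ and a single downstep at $w_j$ for $k\in[i,j)$, so its depth is at most $1$. The minimal weight $q^{d(w,wt_{ij})}$ therefore divides $q_{ij}$.
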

\begin{proof}
    This follows directly from \Cref{thm:weight-distance} and the observation that the lattice path $\p(w[k], w t_{ij}[k])$ has depth at most $1$ for $k \in [i, j)$ and depth $0$ for all other $k$.
\end{proof}

\subsection{Tilted Bruhat Orders}\label{sec:tilted-order}
In this section, we define the \emph{$\ba$-tilted Bruhat orders} $\leq_\ba$ and $\lesssim_\ba$ for any sequence $\ba \in [n]^n$ (\Cref{def:tilted-bruhat-order}), and establish a fundamental existence property of these orders (\Cref{thm:lesssim-exist}). We also establish an Ehresmann-like criterion for tilted Bruhat intervals (see \Cref{def:tilted-bruhat-interval}) using the $\ba$-tilted Bruhat orders (\Cref{thm:tilted-criterion}). We begin with the following definition.

\begin{defin}\label{def:shifted-gale-order}
For $r\in [n]$, the \emph{shifted linear order} $\leq_r$ is a total order on $[n]$ defined by
\[r<_r r+1 <_r \cdots <_r n <_r 1 <_r \cdots <_r r-1.\]
For $r,k\in [n]$, the \emph{shifted Gale order} $\leq_r$ is a partial order on the set $\binom{[n]}{k}$. Given two $k$-element subsets $A,B\in\binom{[n]}{k}$ with $A = \{a_1 <_r \cdots <_r a_k\}$ and $B = \{b_1<_r \cdots <_r b_k\}$, we define
$A\leq_r B$ in the shifted Gale order if $a_i \leq_r b_i$ for all $i\in [k]$.
\end{defin}

We are now ready to define the two $\ba$-tilted Bruhat orders.

\begin{defin}\label{def:tilted-bruhat-order}
    Let $\ba = (a_1, a_2, \dots, a_n) \in [n]^n$. The \emph{$\ba$-tilted Bruhat order} $\leq_\ba$ on $S_n$ is defined by
    \[u\leq_\ba v\quad\text{if}\quad u[k]\leq_{a_k}v[k]\text{ for all }k\in[n].\]
    Define an equivalence relation $\sim_\ba $ on $S_n$ by
    \[u\sim_\ba v\quad\text{if}\quad\size{u[k]\cap[a_k,a_{k+1})_c}=\size{v[k]\cap[a_k,a_{k+1}\}_c}\text{ for all }k\in[n],\]
    where $[a_k,a_{k+1})_c$ denotes the cyclic interval. Then the \emph{$\ba$-tilted Bruhat order modulo $\sim$}, denoted $\lesssim_\ba$, is defined by
    \[u\lesssim_\ba v\quad \text{if}\quad u\leq_\ba v \text{ and } u\sim_\ba v.\]
\end{defin}

It is immediate from the definition that $u \lesssim_\ba v$ implies $u \leq_\ba v$. When the sequence $\ba = (1, 1, \dots, 1)$, both orders recover the classical Bruhat order via the Ehresmann criterion. An example illustrating the orders $\leq_\ba$ and $\lesssim_\ba$ is given in \Cref{fig:a-tilted-bruhat-lessthan} and \Cref{fig:a-tilted-bruhat-lesssim}.

\begin{figure}[ht]
\centering
\begin{tikzpicture}[scale=0.54]
\def\a{0.8};
\def\b{0.4};
\def\h{4.0};
\newcommand\Rec[3]{
\node at (#1,#2) {#3};
\draw(#1-\a,#2-\b)--(#1-\a,#2+\b)--(#1+\a,#2+\b)--(#1+\a,#2-\b)--(#1-\a,#2-\b);
}
\Rec{0}{0}{$2341$}
\Rec{-10*\a}{\h}{$3412$}
\Rec{-6*\a}{\h}{$2431$}
\Rec{-2*\a}{\h}{$3241$}
\Rec{2*\a}{\h}{$2314$}
\Rec{6*\a}{\h}{$1342$}
\Rec{10*\a}{\h}{$1234$}
\Rec{-18*\a}{2*\h}{$4312$}
\Rec{-14*\a}{2*\h}{$3421$}
\Rec{-10*\a}{2*\h}{$4231$}
\Rec{-6*\a}{2*\h}{$2413$}
\Rec{-2*\a}{2*\h}{$3214$}
\Rec{2*\a}{2*\h}{$1432$}
\Rec{6*\a}{2*\h}{$3142$}
\Rec{10*\a}{2*\h}{$1243$}
\Rec{14*\a}{2*\h}{$1324$}
\Rec{18*\a}{2*\h}{$2134$}
\Rec{-10*\a}{3*\h}{$4321$}
\Rec{-6*\a}{3*\h}{$4213$}
\Rec{-2*\a}{3*\h}{$4132$}
\Rec{2*\a}{3*\h}{$1423$}
\Rec{6*\a}{3*\h}{$2143$}
\Rec{10*\a}{3*\h}{$3124$}
\Rec{0}{4*\h}{$4123$}

\draw(0,\b)--(-6*\a,\h-\b);
\draw(0,\b)--(-2*\a,\h-\b);
\draw(0,\b)--(2*\a,\h-\b);
\draw(-10*\a,\h+\b)--(-18*\a,2*\h-\b);
\draw(-10*\a,\h+\b)--(-14*\a,2*\h-\b);
\draw(-10*\a,\h+\b)--(6*\a,2*\h-\b);
\draw(-6*\a,\h+\b)--(-14*\a,2*\h-\b);
\draw(-6*\a,\h+\b)--(-10*\a,2*\h-\b);
\draw(-6*\a,\h+\b)--(-6*\a,2*\h-\b);
\draw(-6*\a,\h+\b)--(18*\a,2*\h-\b);
\draw(-2*\a,\h+\b)--(-14*\a,2*\h-\b);
\draw(-2*\a,\h+\b)--(-10*\a,2*\h-\b);
\draw(-2*\a,\h+\b)--(-2*\a,2*\h-\b);
\draw(2*\a,\h+\b)--(-6*\a,2*\h-\b);
\draw(2*\a,\h+\b)--(-2*\a,2*\h-\b);
\draw(2*\a,\h+\b)--(18*\a,2*\h-\b);
\draw(6*\a,\h+\b)--(2*\a,2*\h-\b);
\draw(6*\a,\h+\b)--(6*\a,2*\h-\b);
\draw(6*\a,\h+\b)--(14*\a,2*\h-\b);
\draw(10*\a,\h+\b)--(10*\a,2*\h-\b);
\draw(10*\a,\h+\b)--(14*\a,2*\h-\b);
\draw(10*\a,\h+\b)--(18*\a,2*\h-\b);
\draw(-18*\a,2*\h+\b)--(-10*\a,3*\h-\b);
\draw(-18*\a,2*\h+\b)--(-2*\a,3*\h-\b);
\draw(-14*\a,2*\h+\b)--(-10*\a,3*\h-\b);
\draw(-14*\a,2*\h+\b)--(10*\a,3*\h-\b);
\draw(-10*\a,2*\h+\b)--(-10*\a,3*\h-\b);
\draw(-10*\a,2*\h+\b)--(-6*\a,3*\h-\b);
\draw(-6*\a,2*\h+\b)--(-6*\a,3*\h-\b);
\draw(-6*\a,2*\h+\b)--(6*\a,3*\h-\b);
\draw(-2*\a,2*\h+\b)--(-6*\a,3*\h-\b);
\draw(-2*\a,2*\h+\b)--(10*\a,3*\h-\b);
\draw(2*\a,2*\h+\b)--(-2*\a,3*\h-\b);
\draw(2*\a,2*\h+\b)--(2*\a,3*\h-\b);
\draw(6*\a,2*\h+\b)--(-2*\a,3*\h-\b);
\draw(6*\a,2*\h+\b)--(10*\a,3*\h-\b);
\draw(10*\a,2*\h+\b)--(2*\a,3*\h-\b);
\draw(10*\a,2*\h+\b)--(6*\a,3*\h-\b);
\draw(14*\a,2*\h+\b)--(2*\a,3*\h-\b);
\draw(14*\a,2*\h+\b)--(10*\a,3*\h-\b);
\draw(18*\a,2*\h+\b)--(6*\a,3*\h-\b);
\draw(18*\a,2*\h+\b)--(10*\a,3*\h-\b);
\draw(-10*\a,3*\h+\b)--(0,4*\h-\b);
\draw(-6*\a,3*\h+\b)--(0,4*\h-\b);
\draw(-2*\a,3*\h+\b)--(0,4*\h-\b);
\draw(2*\a,3*\h+\b)--(0,4*\h-\b);
\draw(6*\a,3*\h+\b)--(0,4*\h-\b);
\draw(10*\a,3*\h+\b)--(0,4*\h-\b);
\end{tikzpicture}
\caption{The Hasse diagram of $\leq_{\ba}$ on $S_4$ for $\ba=(1,2,3,3)$}
\label{fig:a-tilted-bruhat-lessthan}
\end{figure}

\begin{figure}[ht]
\centering
\begin{tikzpicture}[scale=0.54]
\def\a{0.8};
\def\b{0.4};
\def\h{4.0};
\newcommand\Rec[3]{
\node at (#1,#2) {#3};
\draw(#1-\a,#2-\b)--(#1-\a,#2+\b)--(#1+\a,#2+\b)--(#1+\a,#2-\b)--(#1-\a,#2-\b);
}
\Rec{-9*\a}{0}{$3412$}
\Rec{-3*\a}{0}{$2341$}
\Rec{3*\a}{0}{$1342$}
\Rec{9*\a}{0}{$1234$}
\Rec{-16*\a}{\h}{$3421$}
\Rec{-12*\a}{\h}{$4312$}
\Rec{-8*\a}{\h}{$3142$}
\Rec{-4*\a}{\h}{$2431$}
\Rec{0}{\h}{$3241$}
\Rec{4*\a}{\h}{$2314$}
\Rec{8*\a}{\h}{$1432$}
\Rec{12*\a}{\h}{$1324$}
\Rec{16*\a}{\h}{$1243$}
\Rec{-14*\a}{2*\h}{$4321$}
\Rec{-10*\a}{2*\h}{$4132$}
\Rec{-6*\a}{2*\h}{$3124$}
\Rec{-2*\a}{2*\h}{$4231$}
\Rec{2*\a}{2*\h}{$2413$}
\Rec{6*\a}{2*\h}{$3214$}
\Rec{10*\a}{2*\h}{$2134$}
\Rec{14*\a}{2*\h}{$1423$}
\Rec{14*\a}{2*\h}{$1423$}
\Rec{-6*\a}{3*\h}{$4123$}
\Rec{0*\a}{3*\h}{$4213$}
\Rec{6*\a}{3*\h}{$2143$}

\draw(-9*\a,0*\h+\b)--(-16*\a,\h-\b);
\draw(-9*\a,0*\h+\b)--(-12*\a,\h-\b);
\draw(-9*\a,0*\h+\b)--(-8*\a,\h-\b);
\draw(-3*\a,0*\h+\b)--(-4*\a,\h-\b);
\draw(-3*\a,0*\h+\b)--(0*\a,\h-\b);
\draw(-3*\a,0*\h+\b)--(4*\a,\h-\b);
\draw(3*\a,0*\h+\b)--(8*\a,\h-\b);
\draw(3*\a,0*\h+\b)--(12*\a,\h-\b);
\draw(9*\a,0*\h+\b)--(16*\a,\h-\b);
\draw(-16*\a,\h+\b)--(-14*\a,2*\h-\b);
\draw(-16*\a,\h+\b)--(-6*\a,2*\h-\b);
\draw(-12*\a,\h+\b)--(-14*\a,2*\h-\b);
\draw(-12*\a,\h+\b)--(-10*\a,2*\h-\b);
\draw(-8*\a,\h+\b)--(-10*\a,2*\h-\b);
\draw(-8*\a,\h+\b)--(-6*\a,2*\h-\b);
\draw(-4*\a,\h+\b)--(-2*\a,2*\h-\b);
\draw(-4*\a,\h+\b)--(2*\a,2*\h-\b);
\draw(-4*\a,\h+\b)--(10*\a,2*\h-\b);
\draw(0*\a,\h+\b)--(-2*\a,2*\h-\b);
\draw(0*\a,\h+\b)--(6*\a,2*\h-\b);
\draw(4*\a,\h+\b)--(2*\a,2*\h-\b);
\draw(4*\a,\h+\b)--(6*\a,2*\h-\b);
\draw(4*\a,\h+\b)--(10*\a,2*\h-\b);
\draw(8*\a,\h+\b)--(14*\a,2*\h-\b);
\draw(12*\a,\h+\b)--(14*\a,2*\h-\b);
\draw(-14*\a,2*\h+\b)--(-6*\a,3*\h-\b);
\draw(-10*\a,2*\h+\b)--(-6*\a,3*\h-\b);
\draw(-6*\a,2*\h+\b)--(-6*\a,3*\h-\b);
\draw(-2*\a,2*\h+\b)--(0*\a,3*\h-\b);
\draw(2*\a,2*\h+\b)--(0*\a,3*\h-\b);
\draw(2*\a,2*\h+\b)--(6*\a,3*\h-\b);
\draw(6*\a,2*\h+\b)--(0*\a,3*\h-\b);
\draw(10*\a,2*\h+\b)--(6*\a,3*\h-\b);
\end{tikzpicture}
\caption{The Hasse diagram of $\lesssim_{\ba}$ on $S_4$ for $\ba=(1,2,3,3)$}
\label{fig:a-tilted-bruhat-lesssim}
\end{figure}


We now aim to prove the following fundamental existence property of these orders: for any $u, v \in S_n$, there exists a sequence $\ba$ such that $u$ and $v$ are comparable under either $\leq_\ba$ or $\lesssim_\ba$. To establish this, we first present a lemma that relates the shifted Gale order $\leq_r$ to the lattice path construction in \Cref{def:up-down-path}.

\begin{lemma}\label{lemma:r-comparable}
For any $A, B \subseteq [n]$ with $|A| = |B|$ and any $r \in [n]$, we have $A \leq_r B$ in the shifted Gale order if and only if the lattice path $\p(A, B)$ passes through the point $(r, -\depth(A, B))$, where it reaches its minimum.
\end{lemma}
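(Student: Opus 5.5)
Write $h(x)$ for the height of the lattice path $\p(A,B)$ at abscissa $x$. Since the path starts at $(1,0)$, the height after completing steps $1,\dots,x-1$ is $h(x)=\#(A\cap[1,x))-\#(B\cap[1,x))$. The plan is to reduce the statement to the classical Gale order by rotating the ground set so that it starts at $r$.

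First, relabel $[n]$ via the order-preserving bijection $\phi_r:([n],\leq_r)\to([n],\leq)$, namely $\phi_r(i)=i-r+1 \bmod n$. Then $A\leq_r B$ in the shifted Gale order is equivalent to $\phi_r(A)\leq\phi_r(B)$ in the ordinary Gale order. The path $\p(\phi_r(A),\phi_r(B))$ is a cyclic rotation of $\p(A,B)$: it reads the steps of $\p(A,B)$ starting from step $r$, wrapping around. Because $|A|=|B|$, the path closes up. So the rotated path has height function $h_r(y)=h(x)-h(r)$, where $y$ is the position corresponding to $x$ cyclically, and the identity $h(n+1)=h(1)=0$ makes this consistent across the wrap.

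Second, use the standard fact that for $|A'|=|B'|=k$ we have $A'\leq B'$ in Gale order if and only if $\#(A'\cap[1,x])\geq\#(B'\cap[1,x])$ for all $x$. This is proved by comparing the $i$-th smallest elements: $a_i\leq b_i$ for all $i$ is equivalent to the counting inequality at every threshold $x$. In lattice path language, Gale comparability holds exactly when the path $\p(A',B')$ never goes below the $x$-axis, i.e.\ $\depth(A',B')=0$. One should keep straight the conventions here: an element of $A$ gives an upstep, so the condition is $h\geq 0$ at all prefixes.

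Combining the two steps: $A\leq_r B$ holds if and only if $h(x)-h(r)\geq 0$ for all $x$, that is, if and only if $h$ attains its global minimum at $x=r$. The global minimum value of $h$ is $\min(0,\min_x h(x))=-\depth(A,B)$, since $h(1)=0$ is included. This is exactly the statement that the path passes through $(r,-\depth(A,B))$.

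The main obstacle is purely bookkeeping: checking that rotating by $r$ really corresponds to viewing the path from the point $(r,h(r))$, that is, that the first rotated step is step $r$ and the height before it is $h(r)$. It also requires handling the endpoint $x=n+1$ together with $x=1$ via closure of the path. I would verify this carefully on \Cref{ex:lattice-path-PAB}, where the minimum depth $2$ is attained at $x=3$, $4$, and $6$.
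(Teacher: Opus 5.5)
Your proposal is correct and follows essentially the same route as the paper. Both arguments rotate the ground set so that $\leq_r$ becomes the usual order, and both observe that the rotated path is the cyclic rotation of $\p(A,B)$ beginning at step $r$. Both then use the fact that Gale comparability means the path stays weakly above the $x$-axis. Your version makes explicit the height bookkeeping $h_r = h - h(r)$, including the wrap-around via $h(n+1)=0$, which the paper leaves as a ``simple observation.''
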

\begin{proof}
It is clear that $A\leq B$ in the Gale order if and only if the lattice path $\p(A,B)$ does not go below the $x$-axis, i.e. $\p(A,B)$ is a Dyck path. Similarly, $A\leq_r B$ if and only if the  \emph{cyclically shifted} lattice path $\p(A{-}r{+}1,B{-}r{+}1)$ does not go below the $x$-axis, where $A{-}r{+}1:=\{a-r+1\mod n: a\in A\}$.

Now, for general $A$ and $B$ with $|A| = |B|$, let the steps of $\p(A, B)$ be $g_1, g_2, \dots, g_n$, where each $g_i \in \{\rightarrow, \nearrow, \searrow\}$, as in \Cref{def:up-down-path}. Then $\p(A{-}r{+}1, B{-}r{+}1)$ corresponds to the path formed by the steps $g_r, g_{r+1}, \ldots, g_n, g_1, \ldots, g_{r-1}$, taken in this order. The lemma then follows from the simple observation that $\p(A{-}r{+}1, B{-}r{+}1)$ does not go below the $x$-axis if and only if $\p(A, B)$ reaches its minimum at the point $(r, -)$.
\end{proof}
We also present the following lemma, which explains the relationship between shifted Gale orders indexed by different values of $r$.

\begin{lemma}\label{lemma:both-r-comparable}
    Let $A, B \subseteq [n]$ with $|A| = |B|$, and let $r \in [n]$. If $A\leq_rB$, denote $[A,B]_r:=\{I\subseteq[n]:A\leq_rI\leq_rB\}$ as the interval in the shifted Gale order $\leq_r$. We have:
    \begin{enumerate}
        \item If $A \leq_r B$, then for any other $r' \in [n]$, we have
        \[A \leq_{r'} B\iff \size{A\cap[r,r')_c}=\size{B\cap[r,r')_c}.\]
        \item If both $A\leq_rB$ and $A\leq_{r'}B$ hold, then the intervals coincide: $[A,B]_r=[A,B]_{r'}$. Moreover, for any $I\in[A,B]_r$,
    \[
        \size{A\cap [r,r')_c} = \size{I\cap [r,r')_c}=\size{B\cap [r,r')_c}.
    \]
    \end{enumerate}
\end{lemma}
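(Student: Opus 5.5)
The plan is to translate everything into statements about the lattice path $\p(A,B)$ and then apply \Cref{lemma:r-comparable}. Write the steps of $\p(A,B)$ as $g_1,\dots,g_n$. Let $h_x$ be the height of the path at the point $(x,\cdot)$, that is, after the first $x-1$ steps, so that $h_1=0$. For $r<r'$, the height difference $h_{r'}-h_r$ equals the number of up-steps minus the number of down-steps among $g_r,\dots,g_{r'-1}$. Since steps in $A\cap B$ or outside $A\cup B$ contribute nothing, this is exactly $\size{A\cap[r,r')}-\size{B\cap[r,r')}$.

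For the wrap-around case $r>r'$, the cyclic interval $[r,r')_c$ consists of $[r,n]\cup[1,r')$. The total displacement of the path is $0$, so
\[h_{r'}-h_r=\size{A\cap[r,r')_c}-\size{B\cap[r,r')_c}\]
in this case as well. Thus in all cases the cyclic count difference equals $h_{r'}-h_r$.

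For part (1), \Cref{lemma:r-comparable} says that $A\leq_r B$ means $h_r=-\depth(A,B)$, i.e. the path attains its minimum at $r$. Then $A\leq_{r'}B$ holds iff $h_{r'}$ is also the minimum, iff $h_{r'}=h_r$. By the height identity above, this is exactly the equality of counts $\size{A\cap[r,r')_c}=\size{B\cap[r,r')_c}$.

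For part (2), take $I\in[A,B]_r$. The goal is to show $A\leq_{r'}I$ and $I\leq_{r'}B$. The plan is to apply the height identity to the pairs $(A,I)$ and $(I,B)$ and work with the cyclic counts directly. Reindex cyclically starting at $r$, so that $\leq_r$ becomes the ordinary Gale order and the point $r'$ becomes a cut position $m$. Under this reindexing, $[r,r')_c$ is the initial segment of the first $m$ elements in the new order.

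The Gale condition $A\leq I$ means that for every threshold $t$, the count of $A$ in the first $t$ positions is at least the count of $I$ there; this is the Dyck/ballot property. Likewise $I\leq B$ gives that the count of $I$ in every initial segment is at least the count of $B$. At $t=m$, the hypothesis from part (1) says the counts of $A$ and $B$ agree. Sandwiching gives
\[\size{A\cap[r,r')_c}\geq\size{I\cap[r,r')_c}\geq\size{B\cap[r,r')_c}=\size{A\cap[r,r')_c},\]
so all three counts are equal. This is the "moreover" claim.

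Next, apply part (1) to the pairs $(A,I)$ and $(I,B)$, both of which are $\leq_r$-comparable. Since the counts at the cut agree, part (1) yields $A\leq_{r'}I$ and $I\leq_{r'}B$, so $[A,B]_r\subseteq[A,B]_{r'}$. The roles of $r$ and $r'$ are symmetric, which gives the reverse inclusion. One small check remains: the inequality convention in the ballot condition must match the direction of the sandwich, namely that smaller sets in Gale order have at least as many elements in each initial segment. This holds because $a_i\leq b_i$ for all $i$ forces $A$ to have at least as many elements as $B$ in every prefix.

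The main subtlety is bookkeeping the wrap-around cyclic interval in the height identity. This is handled by the zero total displacement of the path.
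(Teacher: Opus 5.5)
Your proposal is correct and follows the paper's proof essentially step for step. Part (1) goes through \Cref{lemma:r-comparable} and the equality of the heights of $\p(A,B)$ at $r$ and $r'$. Part (2) uses the same sandwich of counts on the initial segment $[r,r')_c$, followed by applying part (1) to the pairs $(A,I)$ and $(I,B)$. You also spell out two details the paper leaves implicit: the wrap-around case of the height identity (via zero total displacement) and the reverse inclusion (by symmetry in $r,r'$).
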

\begin{proof}
     We first prove the $\implies$ direction in (1). Suppose $A \leq_r B$ and $A \leq_{r'} B$. By \Cref{lemma:r-comparable}, the lattice path $\p(A, B)$ passes through both points $(r, -d)$ and $(r', -d)$, where $d = \depth(A, B)$. Since these points lie at the same height, the number of $\nearrow$ and $\searrow$ steps between them must be equal. Therefore,
     \[\size{A\cap [r,r')_c} = \size{B\cap [r,r')_c}.\]
     The $\impliedby$ direction follows from the same reasoning in reverse: if the counts over $[r, r')_c$ agree, then $\p(A, B)$ passes through both $(r,-d)$ and $(r',-d)$ at the same depth, implying $A \leq_{r'} B$.

     For (2), consider any $I \in [A, B]_r$. We have:
    \[\begin{aligned}
        I\geq_r A&\implies \size{I\cap [r,r')_c}\leq\size{A\cap [r,r')_c},\\
        I\leq_r B&\implies\size{I\cap [r,r')_c}\geq\size{B\cap [r,r')_c}.
    \end{aligned}\] 
    Since $\size{A \cap [r, r')_c} = \size{B \cap [r, r')_c}$ by (1), it follows that
    \[\size{I\cap [r,r')_c} = \size{A\cap [r,r')_c} = \size{B\cap [r,r')_c}.\]
    By the $\impliedby$ direction of (1), this implies $I \in [A, B]_{r'}$. Therefore, $[A,B]_{r}=[A,B]_{r'}$.
\end{proof}
An immediate corollary is another characterization of the $\ba$-tilted Bruhat order $\lesssim_\ba$.
\begin{cor}\label{cor:alter-a-lesssim}
$u \lesssim_\ba v$ if and only if $u[k] \leq_{a_k} v[k]$ and $u[k] \leq_{a_{k+1}} v[k]$ for all $k \in [n-1]$.
\end{cor}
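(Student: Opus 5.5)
The plan is to read both directions of the equivalence directly off \Cref{lemma:both-r-comparable}(1), applied separately to each $k$.

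Start by unpacking the definition. We have $u\lesssim_\ba v$ exactly when $u[k]\leq_{a_k}v[k]$ for all $k\in[n]$ and $\size{u[k]\cap[a_k,a_{k+1})_c}=\size{v[k]\cap[a_k,a_{k+1})_c}$ for all $k$. Note that $k=n$ gives no information: $u[n]=v[n]=[n]$, so both the comparison $u[n]\leq_{a_n}v[n]$ and the counting condition hold trivially. The same remark handles any index convention for $a_{n+1}$. So both sides of the claimed equivalence really range only over $k\in[n-1]$.

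For the forward direction, fix $k\in[n-1]$. We already have $u[k]\leq_{a_k}v[k]$ and $|u[k]|=|v[k]|=k$. Apply \Cref{lemma:both-r-comparable}(1) with $A=u[k]$, $B=v[k]$, $r=a_k$, $r'=a_{k+1}$. The counting equality over $[a_k,a_{k+1})_c$ then gives $u[k]\leq_{a_{k+1}}v[k]$.

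For the reverse direction, assume $u[k]\leq_{a_k}v[k]$ and $u[k]\leq_{a_{k+1}}v[k]$ for every $k\in[n-1]$. The first family of conditions, together with the trivial case $k=n$, is exactly $u\leq_\ba v$. For each $k$, the $\implies$ direction of \Cref{lemma:both-r-comparable}(1), again with $r=a_k$ and $r'=a_{k+1}$, gives the cardinality equality over $[a_k,a_{k+1})_c$. Hence $u\sim_\ba v$, and so $u\lesssim_\ba v$.

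There is no real obstacle here. The only care needed is the boundary index $k=n$ (and the meaning of $a_{n+1}$), which is vacuous as noted above. One should also check the degenerate case $a_k=a_{k+1}$: then $[a_k,a_{k+1})_c$ is empty, and the statement remains consistent because the two shifted Gale conditions coincide.
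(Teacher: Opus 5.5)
Your proof is correct and is exactly the argument the paper intends. The paper states this corollary as an immediate consequence of \Cref{lemma:both-r-comparable}(1), applied for each $k$ with $r=a_k$ and $r'=a_{k+1}$. Your treatment of the vacuous index $k=n$ and of the degenerate case $a_k=a_{k+1}$ supplies the only details the paper leaves implicit.
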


We now prove the fundamental existence property of the $\ba$-tilted Bruhat orders.

\begin{theorem}\label{thm:lesssim-exist}
    For any $u,v\in S_n$, there exists a sequence $\ba\in[n]^n$ such that $u\leq_\ba v$. Furthermore, there exists a sequence $\ba\in[n]^n$ such that $u\lesssim_\ba v$.
\end{theorem}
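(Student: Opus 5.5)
The first claim follows directly from \Cref{lemma:r-comparable}. For each $k\in[n]$, the lattice path $\p(u[k],v[k])$ attains its minimum height $-\depth(u[k],v[k])$ at some lattice point $(r,-)$ with $r\in[n+1]$. If the minimum is attained only at the endpoint $(n+1,0)$, it is also attained at $(1,0)$, since both have height $0$. So we may always pick $r\in[n]$. Setting $a_k:=r$, \Cref{lemma:r-comparable} gives $u[k]\leq_{a_k}v[k]$, and this for every $k$ is exactly $u\leq_\ba v$. For $k=n$ the path is horizontal, and any $a_n$ works.

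For the second claim we use \Cref{cor:alter-a-lesssim}. It suffices to find $\ba$ such that, for each $k\in[n-1]$, both $a_k$ and $a_{k+1}$ are positions where $\p(u[k],v[k])$ attains its minimum. Let $M_k\subseteq[n]$ denote the set of minimizing positions of $\p(u[k],v[k])$, with $n+1$ identified with $1$ as above. The requirement is $a_k\in M_{k-1}\cap M_k$ for $2\le k\le n-1$, $a_1\in M_1$, and $a_n\in M_{n-1}$. So the whole statement reduces to the key combinatorial claim
\[
M_{k-1}\cap M_k\neq\emptyset\qquad\text{for all }2\le k\le n-1.
\]
Given this claim, choose $a_k$ in the intersection, choose $a_1\in M_1$ and $a_n\in M_{n-1}$ arbitrarily, and then \Cref{cor:alter-a-lesssim} gives $u\lesssim_\ba v$.

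To prove the key claim, compare $A=u[k-1]$, $B=v[k-1]$ with $A'=A\cup\{x\}$, $B'=B\cup\{y\}$, where $x=u_k$ and $y=v_k$. Work cyclically, viewing each path as a closed walk of $n$ steps on the circle. Adding $x$ to $A$ raises the step at $x$ by one (a horizontal step becomes an upstep, or a downstep becomes horizontal). Adding $y$ to $B$ lowers the step at $y$ by one. Hence the height function $h'$ of the new path differs from the old height function $h$ by $+1$ on the cyclic arc $(x,y]_c$ and by $0$ elsewhere, up to a global constant (and the two changes cancel if $x=y$). Now pick a minimizer $r$ of $h$ lying outside the raised arc. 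Because $h'\ge h-c$ for the appropriate constant $c$, with equality exactly off the arc, $r$ is then also a minimizer of $h'$, and so $r\in M_{k-1}\cap M_k$.

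If every minimizer of $h$ lies in the raised arc, use the complementary description instead. Up to a constant, $h'$ equals $h-1$ on the complementary arc and $h$ on $(x,y]_c$. A minimizer of $h'$ on the complementary arc is then a minimizer of $h$ provided $h$ attains its minimum there. Otherwise we must argue directly: all minimizers of $h$ lie in $(x,y]_c$, and since $h$ moves by steps of size at most $1$, the value of $h$ on the complement is at least $\min h+1$. Subtracting $1$ on the complement therefore cannot push those values strictly below the minimum values inside the arc, so the old minimizers in the arc stay minimizers of $h'$.

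The main obstacle I expect is this step: the endpoint conventions ($x$ versus $y$ boundaries, half-open arcs, and the $n+1\equiv1$ identification) must be checked carefully so that the ``$+1$ on one arc'' description is exact and a common minimizer always exists. I would verify it against the lattice-path pictures in the proof of \Cref{thm:weight-distance}.
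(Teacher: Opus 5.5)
Your proof is correct and takes essentially the paper's route: both reduce, via \Cref{cor:alter-a-lesssim} and \Cref{lemma:r-comparable}, to showing that $\p(u[k-1],v[k-1])$ and $\p(u[k],v[k])$ share a lowest point, using that the second path is the first one shifted by one unit on a cyclic arc (your endpoint conventions check out: the raised positions are exactly $(u_k,v_k]_c$, up to a global shift by $-1$ when $u_k>v_k$). The only difference is organizational. The paper rotates so that a minimum of the first path sits at $(1,0)$ and splits on whether $u_k<v_k$ or $u_k>v_k$, whereas you split on whether some minimizer avoids the raised arc; in your second case, the bound $h\geq \min h+1$ off the arc follows from integrality alone, not from the step-size remark.
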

\begin{proof}
    The existence of a sequence $\ba$ such that $u \leq_\ba v$ follows directly from \Cref{lemma:r-comparable}, which guarantees that for each $k \in [n]$, there exists $a_k$ such that $u[k] \leq_{a_k} v[k]$.

    To find a sequence $\ba$ where $u \lesssim_\ba v$, we apply \Cref{cor:alter-a-lesssim}, which reduces the condition to: for each $k \in [n-1]$, find $a_{k+1}$ such that both $u[k] \leq_{a_{k+1}} v[k]$ and $u[k+1] \leq_{a_{k+1}} v[k+1]$. By \Cref{lemma:r-comparable}, this is equivalent to showing that the lattice paths $\p_k := \p(u[k], v[k])$ and $\p_{k+1} := \p(u[k+1], v[k+1])$ share a lowest point $(r, -)$ for some $r \in [n]$.

    Assume $\p_k$ reaches its minimum at $(r, -)$. Cyclically shift both $\p_k$ and $\p_{k+1}$ left by $r-1$ steps (as in the proof of \Cref{lemma:r-comparable}) so that $\p_k$ becomes a Dyck path with minimum at $(1, 0)$. We now consider three cases:
    \begin{enumerate}
        \item \textbf{Case $u_{k+1}<v_{k+1}$:} In this case, $\p_{k+1}$ is obtained from $\p_k$ by shifting the lattice subpath between $(u_{k+1}{+}1, -)$ and $(v_{k+1}, -)$ upward by one unit. This local adjustment preserves the Dyck path structure, so $\p_{k+1}$ also has minimum at $(1, 0)$.
        \item \textbf{Case $u_{k+1}=v_{k+1}$:} Here, $\p_{k+1} = \p_k$, so the result is immediate.
        \item \textbf{Case $u_{k+1}>v_{k+1}$:} In this case, $\p_{k+1}$ is obtained from $\p_k$ by shifting the lattice subpath between $(v_{k+1}{+}1, -)$ and $(u_{k+1}, -)$ downward by one unit, increasing the depth by at most one. If $\depth(\p_{k+1}) = 0$, then both paths share the minimum at $(1, 0)$. If $\depth(\p_{k+1}) = 1$, let $(r, -1)$ be a lowest point of $\p_{k+1}$. Then $\p_k$ must pass through $(r, 0)$, so both paths share a minimum at $(r, -)$.
    \end{enumerate}
    In all cases, we can find $r$ such that both $u[k] \leq_r v[k]$ and $u[k+1] \leq_r v[k+1]$ hold. Thus, such a sequence $\ba$ exists.
\end{proof}

We now prove the following theorem, which establishes an Ehresmann-like criterion for tilted Bruhat intervals (see \Cref{def:tilted-bruhat-interval}) using the $\ba$-tilted Bruhat orders.

\begin{theorem}\label{thm:tilted-criterion}
For $u,v,w\in S_n$, the following statements are equivalent:
\begin{enumerate}
\item $w$ lies in the tilted Bruhat interval $[u,v]$;
\item for all sequences $\ba\in [n]^n$ such that $u\leq_\ba v$, we have $u\leq_\ba w\leq_\ba v$;
\item there exists a sequence $\ba\in[n]^n$ such that $u\leq_\ba w\leq_\ba v$;
\item for all sequences $\ba\in [n]^n$ such that $u\lesssim_\ba v$, we have $u\lesssim_\ba w\lesssim_\ba v$;
\item there exists a sequence $\ba\in[n]^n$ such that $u\lesssim_\ba w\lesssim_\ba v$.
\end{enumerate}
\end{theorem}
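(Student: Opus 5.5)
The plan is to reduce everything to a statement about a single pair of lattice paths, one value of $k$ at a time. The bridge between the quantum Bruhat graph and the lattice paths is a length formula. Along any edge $w\to wt_{ij}$ of $\Gamma_n$, the length $\ell$ changes by $+1$ if the edge has weight $1$, and by $1-2(j-i)$ if it has weight $q_{ij}$, and $q_{ij}$ has total degree $j-i$. Hence any path $P$ from $u$ to $v$ of length $L$ and weight $q^{e}$ satisfies
\[L=\ell(v)-\ell(u)+2|e|.\]
Applying this to a shortest path and using \Cref{lemma:shortest-length-equals-weight} and \Cref{thm:weight-distance} gives
\[\ell(u,v)=\ell(v)-\ell(u)+2\sum_{k}\depth(u[k],v[k]).\]
Concatenating minimal paths $u\to w\to v$ gives a path $u\to v$. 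By \Cref{lemma:shortest-length-equals-weight} its weight is divisible by $q^{d(u,v)}$, so $d(u,w)+d(w,v)\geq d(u,v)$ componentwise. Combining this with the length formula, $w\in[u,v]$ holds if and only if
\[\depth(u[k],w[k])+\depth(w[k],v[k])=\depth(u[k],v[k])\quad\text{for every }k\in[n].\]

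The core is then a set-level lemma. Let $A,B,C\in\binom{[n]}{k}$ and let $h_{A,B}(x)$ be the height of $\p(A,B)$ at $(x,-)$, namely $\#(A\cap[1,x))-\#(B\cap[1,x))$. These heights are additive: $h_{A,B}=h_{A,C}+h_{C,B}$. Also $\depth(A,B)=-\min h_{A,B}$, so $\depth(A,C)+\depth(C,B)\geq\depth(A,B)$. Equality holds exactly when $h_{A,C}$ and $h_{C,B}$ have a common minimizer $r$, and in that case the minimizers of $h_{A,B}$ are precisely the common minimizers of $h_{A,C}$ and $h_{C,B}$. By \Cref{lemma:r-comparable}, $A\leq_r B$ means exactly that $r$ minimizes $h_{A,B}$. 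The lemma therefore reads as follows: equality holds $\iff$ there is $r$ with $A\leq_r C\leq_r B$ $\iff$ for every $r$ with $A\leq_r B$ we have $A\leq_r C\leq_r B$. The second form is not vacuous, since a minimizer of $h_{A,B}$ always exists.

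Applying the lemma coordinatewise with $A=u[k]$, $C=w[k]$, $B=v[k]$ and $r=a_k$ gives (1)$\Rightarrow$(2). For (2)$\Rightarrow$(3), \Cref{thm:lesssim-exist} supplies some $\ba$ with $u\leq_\ba v$. For (3)$\Rightarrow$(1), the lemma gives the depth equality for each $k$, which is (1) by the reduction above.

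For the $\lesssim_\ba$ statements I will use \Cref{cor:alter-a-lesssim}: $u\lesssim_\ba v$ if and only if both $a_k$ and $a_{k+1}$ minimize $h_{u[k],v[k]}$ for each $k$. Given (1), the lemma's "for every $r$" clause, applied with $r=a_k$ and with $r=a_{k+1}$, yields $u[k]\leq_{r}w[k]\leq_{r}v[k]$ for both values of $r$. By \Cref{cor:alter-a-lesssim} again this is $u\lesssim_\ba w\lesssim_\ba v$, proving (1)$\Rightarrow$(4). Next, (4)$\Rightarrow$(5) uses the second part of \Cref{thm:lesssim-exist}, and (5)$\Rightarrow$(3) is immediate because $\lesssim_\ba$ implies $\leq_\ba$.

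I expect the main obstacle to be the first step, the identity for $\ell(u,v)$ together with the componentwise triangle inequality for $d$. The bookkeeping of $\ell$ along quantum edges must be done carefully, and one must check that the "divisible by the minimal weight" clause of \Cref{lemma:shortest-length-equals-weight} applies to the concatenated path. Once that reduction is in place, the rest is the elementary height-function argument.
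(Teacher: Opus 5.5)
Your proof is correct, but its key step differs from the paper's.

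\textbf{The paper's route.} For (1)$\Rightarrow$(2), the paper starts from $d(u,v)=d(u,w)+d(w,v)$ and applies the automorphism $w\mapsto\tau^{1-a_k}w$ of the unweighted quantum Bruhat graph (\Cref{lemma:cyclic-symmetry}). After this rotation, \Cref{thm:weight-distance} shows the $k$-th coordinate of the minimal degree of the rotated pair is zero, which forces the $k$-th coordinates of both summands to vanish. The paper then derives (4) from (2) via \Cref{lemma:both-r-comparable}. For (3)$\Rightarrow$(1) it uses the clause of \Cref{lemma:shortest-length-equals-weight} that a path of weight exactly $q^{d_{u,v}}$ is shortest.

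\textbf{Your route.} You first use the length formula $L=\ell(v)-\ell(u)+2|e|$ to turn membership in $[u,v]$ into coordinatewise depth additivity. You then prove one lemma about height functions: equality in depth subadditivity holds iff $h_{A,C}$ and $h_{C,B}$ have a common minimizer, and in that case the minimizers of $h_{A,B}$ are exactly the common minimizers.

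\textbf{Comparison.} Your refinement handles (1)$\Rightarrow$(2), (3)$\Rightarrow$(1) and (1)$\Rightarrow$(4) uniformly, for every admissible $a_k$ and $a_{k+1}$ at once, without using cyclic symmetry. The paper's argument is shorter given the symmetry and shows rotation invariance at work. Yours is more elementary and self-contained once \Cref{thm:weight-distance} is available. As a bonus, your length formula shows directly that shortest paths are exactly the paths of minimal total degree.

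The step you flagged as the main obstacle is lighter than you expect. The componentwise inequality $d(u,w)+d(w,v)\geq d(u,v)$ already follows from the subadditivity half of your height lemma together with \Cref{thm:weight-distance}. So the divisibility clause of \Cref{lemma:shortest-length-equals-weight} is not needed there.
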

\begin{proof}
We prove the equivalence in the following order: $(2)\implies(4)\implies(5)\implies (3)\implies(1)\implies (2)$. $(2) \implies (4)$ reduces to showing that if $u \leq_\ba w \leq_\ba v$ and $u \lesssim_\ba v$, then $u \lesssim_\ba w \lesssim_\ba v$, which follows from \Cref{lemma:both-r-comparable}. $(4)\implies(5)$ follows from the existence result in \Cref{thm:lesssim-exist}. $(5)\implies (3)$ is immediate from the definitions.

For $(3) \implies (1)$, let $\ba$ be the sequence given by (3) such that $u \leq_\ba w \leq_\ba v$. For any $k \in [n]$, since $u[k] \leq_{a_k} w[k] \leq_{a_k} v[k]$, the three lattice paths $\p(u[k], v[k])$, $\p(u[k], w[k])$, and $\p(w[k], v[k])$ all reach their lowest point at $(a_k, -)$ by \Cref{lemma:r-comparable}. By the construction in \Cref{def:up-down-path}, we then have:
\[
\begin{aligned}
    \depth(u[k],v[k])&=\size{v[k]\cap[a_k]}-\size{u[k]\cap[a_k]},\\
    \depth(u[k],w[k])&=\size{w[k]\cap[a_k]}-\size{u[k]\cap[a_k]},\\
    \depth(w[k],v[k])&=\size{v[k]\cap[a_k]}-\size{w[k]\cap[a_k]}.\\
\end{aligned}
\]
It follows that
\[\depth(u[k],v[k])=\depth(u[k],w[k])+\depth(w[k],v[k]),\]
Therefore, by \Cref{thm:weight-distance}, the $k$-th coordinate of $d(u, v)$ equals the sum of the $k$-th coordinates of $d(u, w)$ and $d(w, v)$, where $d(u, v)$ is the degree of a shortest path from $u$ to $v$ in $\Gamma_n$. Since this holds for all $k$, we conclude that $d(u, v) = d(u, w) + d(w, v)$, and hence $w \in [u, v]$ by \Cref{lemma:shortest-length-equals-weight}.

Finally, for $(1)\implies (2)$, fix a sequence $\ba$ such that $u \leq_\ba v$. Since $w \in [u, v]$, it lies on a shortest path from $u$ to $v$, so by \Cref{lemma:shortest-length-equals-weight}, we have $d(u, v) = d(u, w) + d(w, v)$. Recall that the long cycle $\cyclic = 23\cdots n1 \in S_n$ defines an automorphism of the unweighted quantum Bruhat graph by \Cref{lemma:cyclic-symmetry}. Applying this automorphism, define $u' := \cyclic^{1 - a_k} u$, $v' := \cyclic^{1 - a_k}v$, and $w' := \cyclic^{1 - a_k} w$. Then $w'$ also lies on a shortest path from $u'$ to $v'$, so
\[d(u', v') = d(u', w') + d(w', v').\]
By \Cref{thm:weight-distance}, the $k$-th coordinate of $d(u', v')$ is equal to the depth of the lattice path $\p(u'[k], v'[k])$, which is $0$ since $u[k] \leq_{a_k} v[k]$. It follows that the $k$-th coordinates of both $d(u', w')$ and $d(w', v')$ must also be $0$, which translates to $u[k] \leq_{a_k} w[k]$ and $w[k] \leq_{a_k} v[k]$. Hence, we conclude that $u \leq_\ba w \leq_\ba v$.
\end{proof}
An important consequence of \Cref{thm:tilted-criterion} is that the intervals in all $\ba$-tilted Bruhat orders coincide with the tilted Bruhat intervals themselves. Therefore, there is no ambiguity in referring to the tilted Bruhat interval $[u, v]$ as an interval in any of the aforementioned partial orders. As a direct consequence of \Cref{thm:tilted-criterion}, we obtain the following:

\begin{cor}\label{cor:xy}
Let $[x, y] \subseteq [u, v]$ be any subinterval. For any sequence $\ba \in [n]^n$, we have: \begin{enumerate}
\item If $u \leq_\ba v$, then $x \leq_\ba y$;
\item If $u\lesssim_\ba v$, then $x\lesssim_\ba y$.
\end{enumerate}
\end{cor}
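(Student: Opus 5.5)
The plan is to deduce both parts directly from \Cref{thm:tilted-criterion}, applied twice: once to the pair $(u,v)$ and once to the pair $(x,y)$. Since $[x,y]\subseteq[u,v]$ is a subinterval, both endpoints $x$ and $y$ belong to the tilted Bruhat interval $[u,v]$. I also read "subinterval" as meaning $x\preceq y$ in the order of \Cref{def:tilted-bruhat-interval}, so that some shortest path from $u$ to $v$ passes through $x$ before $y$.

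For part (1), fix $\ba$ with $u\leq_\ba v$.

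\begin{itemize}
\item Since $x,y\in[u,v]$, the implication $(1)\Rightarrow(2)$ of \Cref{thm:tilted-criterion} gives $u\leq_\ba x\leq_\ba v$ and $u\leq_\ba y\leq_\ba v$.
\item This alone does not compare $x$ with $y$. That comparison needs $x\preceq y$, i.e. $\ell(u,x)+\ell(x,y)+\ell(y,v)=\ell(u,v)$. In particular $y$ lies on a shortest path from $x$ to $v$, so $y\in[x,v]$.
\item Applying $(1)\Rightarrow(2)$ to the pair $(x,v)$ gives $x\leq_\ba y$, provided $x\leq_\ba v$. We already have $x\leq_\ba v$ from the first step, so this concludes part (1).
\end{itemize}

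An alternative for the last step is coordinatewise. Fix $k$, rotate by $\cyclic^{1-a_k}$ using \Cref{lemma:cyclic-symmetry}, and use additivity of minimal degrees along shortest paths (\Cref{lemma:shortest-length-equals-weight}) to get
\[
d(u,v)=d(u,x)+d(x,y)+d(y,v).
\]
The $k$-th coordinate of the rotated $d(u,v)$ vanishes, so that of $d(x,y)$ vanishes as well. By \Cref{thm:weight-distance} and \Cref{lemma:r-comparable}, this says exactly $x[k]\leq_{a_k}y[k]$.

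For part (2), assume $u\lesssim_\ba v$. Then in particular $u\leq_\ba v$, so part (1) gives $x\leq_\ba y$. It remains to show $x\sim_\ba y$.

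\begin{itemize}
\item The implication $(1)\Rightarrow(4)$ of \Cref{thm:tilted-criterion}, applied to $x$ and to $y$, gives $u\lesssim_\ba x$ and $u\lesssim_\ba y$. Hence $x\sim_\ba u\sim_\ba y$.
\item Since $\sim_\ba$ is an equivalence relation (it is defined by equality of counts), $x\sim_\ba y$ follows, and so $x\lesssim_\ba y$.
\item Alternatively, one can use \Cref{cor:alter-a-lesssim}. By part (1) applied to the shifted sequences, $x[k]\leq_{a_k}y[k]$ and $x[k]\leq_{a_{k+1}}y[k]$, since $u[k]\leq_{a_{k+1}}v[k]$ also holds.
\end{itemize}

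The only subtle point is the step relating $x$ and $y$ in part (1). It relies on $x\preceq y$ rather than mere membership of $x,y$ in $[u,v]$, and I would make explicit that "subinterval" means $x\preceq y$. Once that is fixed, the corollary is a direct restatement of \Cref{thm:tilted-criterion}.
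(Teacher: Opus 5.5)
Your proof is correct and is essentially the paper's argument. The paper records this corollary as a direct consequence of \Cref{thm:tilted-criterion} without further detail. Your deduction is the intended unpacking: read ``subinterval'' as $x\preceq y$, observe that $y\in[x,v]$, apply $(1)\Rightarrow(2)$ to the pair $(x,v)$, and for part (2) combine part (1) with transitivity of $\sim_\ba$ via $(1)\Rightarrow(4)$.
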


\subsection{Properties of Tilted Bruhat Orders}
Readers may skip forward to \Cref{sec:tilted-richardson-definition} for our main geometric objects of interest, as necessary notations have been provided.

We continue in this section with more combinatorial properties of tilted Bruhat orders that will be used later. In particular, we provide an explicit criterion for covering relations in the $\ba$-tilted Bruhat orders (\Cref{prop:covering}), and show that the partial order $\lesssim_\ba$ is a ranked poset by introducing the notion of \emph{$\ba$-tilted length} as a rank function (\Cref{thm:length-rank-function}). Furthermore, we prove an analogue of the classical lifting property for $\ba$-tilted Bruhat orders $\lesssim_\ba$ (\Cref{thm:lifting}).  We also discuss several related conjectures. 

Let $\lessdot_\ba$ and $\lesssimdot_\ba$ denote the covering relations of the $\ba$-tilted Bruhat orders $\leq_\ba$ and $\lesssim_\ba$, respectively. The following proposition characterizes these covering relations.

\begin{prop}\label{prop:covering}
    Let $w\in S_n$, let $t_{ij}\in S_n$ be a transposition, and let $\ba = (a_1,\dots, a_{n})\in[n]^n$. Then:
    \begin{enumerate}
        \item $w<_{\ba} wt_{ij}\iff a_k\notin (w_i,w_j]_c$ for all $k\in [i,j)$;
        \item $w\lesssim_{\ba} wt_{ij}\iff a_k\notin (w_i,w_j]_c$ for all $k\in [i,j]$;
        \item $w\lessdot_{\ba} wt_{ij}\iff a_k\notin (w_i,w_j]_c$ for all $k\in [i,j)$ and $w_k\notin (w_i,w_j)_c$ for all $k\in (i,j)$;
        \item $w\lesssimdot_{\ba} wt_{ij}\iff a_k\notin (w_i,w_j]_c$ for all $k\in [i,j]$ and $w_k\notin (w_i,w_j)_c$ for all $k\in (i,j)$.
    \end{enumerate}
    Furthermore, for any $w,w'\in S_n$ and sequence $\ba$, if $w\lessdot_\ba w'$ or $w\lesssimdot_\ba w'$, then there exists a transposition $t_{ij}\in S_n$ such that $w'=wt_{ij}$.
\end{prop}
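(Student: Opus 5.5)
The plan is to reduce everything to a statement about a single prefix $k$ at a time, using the lattice-path description of the shifted Gale order from \Cref{lemma:r-comparable}. Set $w'=wt_{ij}$ with $i<j$. Then $w[k]=w'[k]$ for $k\notin[i,j)$. For $k\in[i,j)$, $w'[k]$ is obtained from $w[k]$ by replacing $w_i$ with $w_j$. Hence the path $\p(w[k],w'[k])$ consists of one upstep at $w_i$, one downstep at $w_j$, and horizontal steps elsewhere, so it is a single bump.

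The condition $w[k]\le_{r} w'[k]$ means that the lowest point of this path is attained at $x=r$. Equivalently, the cyclically shifted sequence starting at $r$ meets the upstep before the downstep, i.e.\ $r\notin(w_i,w_j]_c$. When $w_i<w_j$ the bump is nonnegative, so its minimum is $0$, attained at positions $1,\dots,w_i$ and $w_j+1,\dots,n$; this is exactly the complement of $(w_i,w_j]$. The case $w_i>w_j$ works the same way using the cyclic interval. This gives (1). Note also that $w\neq w'$, so $\le$ becomes $<$.

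For (2), I will use \Cref{cor:alter-a-lesssim}. The order $\lesssim_\ba$ requires $w[k]\le_{a_k}w'[k]$ and $w[k]\le_{a_{k+1}}w'[k]$ for all $k$. For $k\notin[i,j)$ both conditions are trivial, because the sets are equal. For $k\in[i,j)$ the conditions involve the indices $a_k,a_{k+1}$ with $k$ ranging over $[i,j)$, so they cover all $a_m$ with $m\in[i,j]$. This gives (2).

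For the covering statements (3) and (4), I would first prove the final claim: every cover is of the form $w\to wt_{ij}$. Given $w<_\ba w''$, I will produce a transposition $t$ with $w<_\ba wt\le_\ba w''$, mimicking the classical proof of the Ehresmann criterion. To make this reduction work I will apply the rotation $w\mapsto\cyclic^{m}w$, which conjugates $\le_r$ to $\le_{r+m}$. One caveat is that $\ba$ varies with $k$, so a single rotation does not normalize every $a_k$ simultaneously. I therefore need a direct argument.

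The direct argument goes as follows. Take the smallest index $i$ with $w_i\ne w''_i$. Among positions $j>i$ with $w_j$ lying cyclically in $(w_i,w''_i]_{c}$ (measured in $\le_{a_i}$), pick the one whose value is closest to $w_i$, as in the algorithm in the proof of \Cref{thm:weight-distance}. Then check prefix by prefix, via the lattice paths, that $wt_{ij}\le_{a_k}w''[k]$ for every $k$.

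This step is the main obstacle. The choice of $j$ must work simultaneously for the different shift parameters $a_k$, and I expect to need \Cref{lemma:both-r-comparable}(1) to transfer comparability between the different $a_k$ along $[i,j)$.

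Once the cover claim is available, $w\to wt_{ij}$ is a cover exactly when there is no intermediate $wt_{ij}$-chain. In the transposition case, any intermediate element can be taken to be $wt_{ik}$ or $wt_{kj}$ with $i<k<j$ and $w_k\in(w_i,w_j)_c$. I will verify via (1) and (2) that such a $w_k$ indeed yields an intermediate element, since the required $a$-conditions are inherited. Conversely, if no such $w_k$ exists, a length/depth count on the bump paths rules out any intermediate element. This gives (3) and (4).
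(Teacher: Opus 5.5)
Your treatment of (1) and (2) is the paper's argument. For $k\in[i,j)$ the path $\p(w[k],wt_{ij}[k])$ is a single bump, and it is flat otherwise. \Cref{lemma:r-comparable} turns $\le_{a_k}$ into ``the minimum is attained at $a_k$'', and \Cref{cor:alter-a-lesssim} adds the index $a_{k+1}$.

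The gap is in the covering part. The step you call the main obstacle is never carried out, and the construction you propose for it is false as stated. Take $n=3$, $\ba=(1,2,1)$, $w=132$, $w''=312$. Then $w<_\ba w''$: the prefixes of size $2$ and $3$ agree as sets, and $\{1\}\le_1\{3\}$. The first difference is at $i=1$, and $(w_1,w''_1]_c=\{2,3\}$. The position $j>1$ whose value there is closest to $w_1$ is $j=3$. But $wt_{13}=231$, and $w\not<_\ba wt_{13}$ by your own (1), since $a_2=2\in(w_1,w_3]_c=\{2\}$. (Here $w''=wt_{12}$ is itself the cover.) Note also that the algorithm in the proof of \Cref{thm:weight-distance}, which you cite, chooses the smallest \emph{index}, not the closest value.

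The simplest way to close the gap is the paper's route. By \Cref{thm:tilted-criterion}, the interval $\{x: w\le_\ba x\le_\ba w'\}$ (and likewise for $\lesssim_\ba$) is the tilted Bruhat interval $[w,w']$. So $w\lessdot_\ba w'$ iff $\ell(w,w')=1$, i.e.\ iff $w\to w'$ is an edge of $\Gamma_n$. This gives $w'=wt_{ij}$ and the condition $w_k\notin(w_i,w_j)_c$ at once, and (3), (4) then follow from (1), (2).

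If you prefer a direct argument, it does go through once the choice of $j$ is fixed. Let $I:=(w_i,w''_i]_c$ and take $j$ to be the smallest index $j>i$ with $w_j\in I$. For every $k\in[i,j)$ one has $w[k]\cap I=w[i-1]\cap I$, while $w''[k]\cap I\supseteq(w[i-1]\cap I)\cup\{w''_i\}$. Hence every point of $\p(w[k],w''[k])$ with $x$-coordinate in $(w_i,w''_i]_c$ lies strictly above the point at $w''_i+1$, and so strictly above the minimum. This gives two things:
\begin{itemize}
\item $a_k\notin(w_i,w_j]_c$, so $w<_\ba wt_{ij}$ by (1);
\item $wt_{ij}[k]\le_{a_k}w''[k]$, since passing to $\p(wt_{ij}[k],w''[k])$ lowers exactly the points in $(w_i,w_j]_c$ by one, which does not disturb the minimum.
\end{itemize}
For $\lesssim_\ba$ you still need \Cref{lemma:both-r-comparable}(2) to upgrade $w\le_\ba wt_{ij}\le_\ba w''$ to $\lesssim_\ba$.

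The converse half of (3) also needs an actual argument; ``a length/depth count'' does not supply one. Any intermediate $x$ agrees with $w$ outside positions $[i,j]$. Suppose no $w_k$ with $k\in(i,j)$ lies in $(w_i,w_j)_c$. Then the sandwich $w[k]\le_{a_k}x[k]\le_{a_k}wt_{ij}[k]$ forces $x[k]\in\{w[k],wt_{ij}[k]\}$. The nesting $x[k]\subseteq x[k+1]$ then forces $x\in\{w,wt_{ij}\}$.
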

\begin{proof}
    For (1), we analyze the lattice path $\p_k := \p(w[k], wt_{ij}[k])$ for each $k \in [n]$. By \Cref{lemma:r-comparable}, we have $w <_\ba wt_{ij}$ if and only if $\p_k$ reaches its minimum at $(a_k, -)$ for all $k \in [n]$. For $k \in [i, j)$, the path $\p_k$ has a single upstep at position $w_i$ and a single downstep at position $w_j$, while for all other $k$, the path $\p_k$ is a flat line. Therefore, the condition holds if and only if $a_k \notin (w_i, w_j]_c$ for all $k \in [i, j)$.

    For (2), the proof closely follows that of (1), except that we additionally require each path $\p_k$ to reach its minimum at both $(a_k, -)$ and $(a_{k+1}, -)$, which corresponds to the definition of $\lesssim_\ba$ in \Cref{cor:alter-a-lesssim}.

    Now assume $w <_\ba w'$ (or $w\lesssim_\ba w'$). Then by \Cref{thm:tilted-criterion}, we have $w \lessdot_\ba w'$ (or $w \lesssimdot_\ba w'$) if and only if the tilted Bruhat interval $[w, w']$ contains exactly two elements, which is equivalent to $w \to w'$ being an edge in the quantum Bruhat graph $\Gamma_n$. This implies that $w' = wt_{ij}$ for some transposition $t_{ij}$ and, by definition of the quantum Bruhat graph, we must have $w_k \notin (w_i, w_j)_c$ for all $k \in (i, j)$. Statements (3), (4), and the final claim follow immediately from this observation.
\end{proof}


\begin{cor}\label{cor:tilted-bruhat-graph}
    Let $u,v,w\in S_n$ and let $t_{ij}\in S_n$ be a transposition. If both $w$ and $wt_{ij}$ lie in the tilted Bruhat interval $[u,v]$, then they are comparable in the poset $[u,v]$.
\end{cor}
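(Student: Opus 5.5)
Fix a sequence $\ba\in[n]^n$ with $u\leq_\ba v$; such an $\ba$ exists by \Cref{thm:lesssim-exist}. Since $w,wt_{ij}\in[u,v]$, \Cref{thm:tilted-criterion} gives
\[u\leq_\ba w\leq_\ba v \quad\text{and}\quad u\leq_\ba wt_{ij}\leq_\ba v.\]
Writing $A=w[k]$ and $A'=wt_{ij}[k]$, all four pairs $(u[k],A)$, $(A,v[k])$, $(u[k],A')$, $(A',v[k])$ are $\leq_{a_k}$-comparable for every $k$. For $k\notin[i,j)$ we have $A=A'$, so nothing needs checking there. For $k\in[i,j)$ the two sets differ by a single swap: $A'=A-w_i+w_j$.

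The key step is to show that $w$ and $wt_{ij}$ are $\leq_\ba$-comparable, i.e.\ that either $a_k\notin(w_i,w_j]_c$ for all $k\in[i,j)$ (giving $w<_\ba wt_{ij}$ by \Cref{prop:covering}(1)), or $a_k\notin(w_j,w_i]_c$ for all $k\in[i,j)$ (the reverse). Since $\leq_{a_k}$ is a total order of $[n]$ shifted cyclically, the single swap $A\to A'$ is automatically $\leq_{a_k}$-comparable: $A<_{a_k}A'$ exactly when $w_i<_{a_k}w_j$, i.e.\ when $a_k\notin(w_i,w_j]_c$. So pointwise comparability is free. The real issue is that the \emph{direction} must be the same for all $k\in[i,j)$.

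The main obstacle is ruling out a mixed direction. Suppose $w_i<_{a_k}w_j$ but $w_j<_{a_{k'}}w_i$ for some $k,k'\in[i,j)$. The tool I would use is the lattice-path picture of \Cref{lemma:r-comparable}. Because $u[k]\leq_{a_k}A\leq_{a_k}v[k]$, the three paths $\p(u[k],A)$, $\p(A,v[k])$ and $\p(u[k],v[k])$ all attain their minimum at $a_k$. Depths are therefore additive, so
\[\depth(u[k],v[k])=\depth(u[k],A)+\depth(A,v[k]),\]
and the same identity holds with $A'$ in place of $A$. Passing from $A$ to $A'$ raises the path $\p(u[k],\cdot)$ by one unit on the cyclic arc between $w_i$ and $w_j$ (and correspondingly lowers $\p(\cdot,v[k])$). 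Minima must be preserved at $a_k$ for both $A$ and $A'$. This forces the arc $(w_i,w_j]_c$ either to avoid every lowest point of $\p(u[k],v[k])$, or to cover all the relevant ones, and thus pins down which side of the arc $a_k$ lies on.

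Rather than chasing the $k$-dependence by hand, I would exploit the quantum Bruhat graph directly. By \Cref{cor:weight-distance}, $d(w,wt_{ij})$ and $d(wt_{ij},w)$ are each at most $q_{ij}$ coordinatewise, and by \Cref{thm:weight-distance} their $k$-th coordinates are $0$ or $1$ according to the side on which the relevant minimum falls. The fallback is then to show that the $\leq_\ba$-direction at each $k$ is determined by the sign of $\ell(u,w)-\ell(u,wt_{ij})$. This would use the rank function $x\mapsto\ell(u,x)$ on $[u,v]$ together with the degree additivity
\[d(u,v)=d(u,x)+d(x,v)\quad\text{for } x\in\{w,wt_{ij}\},\]
coming from \Cref{lemma:shortest-length-equals-weight}. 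Comparing the $k$-th coordinates of $d(u,w)$ and $d(u,wt_{ij})$, which differ by at most one, should force consistency of direction across all $k\in[i,j)$.

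Once we have, say, $w<_\ba wt_{ij}$ with $u\leq_\ba w$ and $wt_{ij}\leq_\ba v$, the proof of $(3)\Rightarrow(1)$ in \Cref{thm:tilted-criterion}, applied to the chain $u\leq_\ba w\leq_\ba wt_{ij}\leq_\ba v$, gives the degree additivity
\[d(u,v)=d(u,w)+d(w,wt_{ij})+d(wt_{ij},v).\]
By \Cref{lemma:shortest-length-equals-weight} this means the three shortest paths concatenate to a shortest path from $u$ to $v$. Hence $\ell(u,w)+\ell(w,wt_{ij})+\ell(wt_{ij},v)=\ell(u,v)$, which is exactly $w\preceq wt_{ij}$.
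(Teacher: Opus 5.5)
There is a genuine gap, and it sits exactly at the step you call the main obstacle. Your observation that each single $k$ is automatically $\leq_{a_k}$-comparable is fine, and so is your last paragraph, which turns $w<_\ba wt_{ij}$ (together with $u\leq_\ba w$ and $wt_{ij}\leq_\ba v$) into $w\preceq wt_{ij}$. What is never proved is that the direction of $w[k]$ versus $wt_{ij}[k]$ is the same for all $k\in[i,j)$.

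Your lattice-path paragraph works at one fixed $k$. At best it shows that the direction at $k$ does not depend on which minimum of $\p(u[k],v[k])$ is used as $a_k$. It never relates $a_k$ to $a_{k+1}$. With only $u\leq_\ba v$ the entries $a_k$ are chosen independently, so there is nothing coupling them.

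The rank-function fallback cannot supply this coupling either. For $k\in[i,j)$ the coordinate $d_k(u,wt_{ij})-d_k(u,w)$ does encode the direction at $k$. But since $\ell(u,x)=\ell(x)-\ell(u)+2|d(u,x)|$, the sign of $\ell(u,wt_{ij})-\ell(u,w)$ only sees the sum of these coordinates. For instance, if $w_i<w_j$, then $\ell(u,wt_{ij})-\ell(u,w)=1+2m-2b$. Here $m=\#\{k\in(i,j):w_i<w_k<w_j\}$ and $b$ is the number of $k\in[i,j)$ with $w_j<_{a_k}w_i$. The sign of this quantity does not force $b\in\{0,j-i\}$. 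So ``should force consistency'' is an unsupported assertion, and it is the entire content of the corollary.

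The missing idea is to choose $\ba$ with $u\lesssim_\ba v$, which exists by \Cref{thm:lesssim-exist}, rather than merely $u\leq_\ba v$. Then \Cref{thm:tilted-criterion}(4) gives $u\lesssim_\ba w\lesssim_\ba v$ and $u\lesssim_\ba wt_{ij}\lesssim_\ba v$, hence $w\sim_\ba wt_{ij}$. For $k\in[i,j)$ we have $wt_{ij}[k]=(w[k]\setminus\{w_i\})\cup\{w_j\}$. So the equality $\#(w[k]\cap[a_k,a_{k+1})_c)=\#(wt_{ij}[k]\cap[a_k,a_{k+1})_c)$ says that $[a_k,a_{k+1})_c$ contains both or neither of $w_i,w_j$. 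Equivalently, $a_k$ and $a_{k+1}$ lie in the same one of the two arcs $(w_i,w_j]_c$ and $(w_j,w_i]_c$.

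Chaining over $k=i,\dots,j-1$ puts all of $a_i,\dots,a_j$ in a single arc. Then \Cref{prop:covering}(2) gives $w\lesssim_\ba wt_{ij}$ or $wt_{ij}\lesssim_\ba w$, and \Cref{thm:tilted-criterion} converts this into comparability in $[u,v]$. This is the paper's argument: the relation $\sim_\ba$ is precisely what ties consecutive $a_k$ together, and it is invisible if you work only with $\leq_\ba$.

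Your intermediate claim for an arbitrary $\ba$ with $u\leq_\ba v$ is in fact true. The natural way to see it is to reduce to the case above: by \Cref{lemma:both-r-comparable}, the orders $\leq_r$ for different minima $r$ of $\p(u[k],v[k])$ agree on the interval $[u[k],v[k]]_r$.
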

\begin{proof}
    Fix a sequence $\ba\in [n]^n$ such that $u\lesssim_\ba v$, whose existence is guaranteed by \Cref{thm:lesssim-exist}. By \Cref{thm:tilted-criterion}, it suffices to show that $w$ and $wt_{ij}$ are comparable under $\lesssim_\ba$. Since $w,wt_{ij}\in [u,v]$, we have $w\sim_\ba wt_{ij}$. This implies that the entries $a_i,a_{i+1},\dots a_j$ all lie either in the interval $(w_j,w_i]_c$ or in $(w_i,w_j]_c$. In the first case, by \Cref{prop:covering}, we have $w\lesssim_\ba wt_{ij}$; in the second case, again by \Cref{prop:covering}, we have $wt_{ij}\lesssim_\ba w$. This completes the proof.
\end{proof}

Now we aim to show that the partial order $\lesssim_\ba$ defines a ranked poset. We begin by introducing a tilted analogue of the classical permutation length.

\begin{defin}\label{def:a-tilted-length}
    For any sequence $\ba\in[n]^n$, the set of \emph{$\ba$-inversions} of a permutation $w\in S_n$ is defined as
    \[\inv_\ba(w):=\{(i,j)\in[n]^2:i<j,\ w_i>_{a_i}w_j\}.\]
    The \emph{$\ba$-tilted length} of $w$, denoted $\ell_\ba(w)$, is defined as the number of $\ba$-inversions
    \[\ell_\ba(w):=\left|\inv_\ba(w)\right|.\]
\end{defin}

We remark that when $\ba=(1,1,\dots,1)$, $\inv_\ba(w)$ recovers the classical inversion set of $w$, and $\ell_\ba(w)$ recovers its classical length. The following theorem shows that $\lesssim_\ba$ is a ranked poset, with rank function given by the $\ba$-tilted length.

\begin{theorem}\label{thm:length-rank-function}
    For any sequence $\ba\in[n]^n$ and any covering relation $w\lesssimdot_\ba w'$, we have $\ell_\ba(w')=\ell_\ba(w)+1$. Consequently, the poset on $S_n$ defined by $\lesssim_\ba$ is a ranked poset, with rank function $\ell_\ba(w)$.
\end{theorem}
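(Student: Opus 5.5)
The plan is a direct count of how $\ell_\ba$ changes across a covering relation. By the final clause of \Cref{prop:covering}, any covering $w\lesssimdot_\ba w'$ has the form $w'=wt_{ij}$ with $i<j$. By \Cref{prop:covering}(4), this covering satisfies two conditions:
\[a_k\notin(w_i,w_j]_c \text{ for all } k\in[i,j], \qquad w_k\notin(w_i,w_j)_c \text{ for all } k\in(i,j).\]
Write $x=w_i$ and $y=w_j$. I will show $\ell_\ba(wt_{ij})-\ell_\ba(w)=1$ by sorting the pairs $(p,q)$ with $p<q$ according to how they meet $\{i,j\}$. The ranked-poset statement then follows formally, since every cover raises $\ell_\ba$ by exactly one.

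The key tool is an elementary identity about the shifted orders. Suppose $r\notin(x,y]_c$, equivalently $r\in(y,x]_c$. Reading $[n]$ in the order $<_r$, one meets $x$ before $y$, and the elements strictly between them are exactly $(x,y)_c$, independently of $r$. Hence, for every $z\neq x,y$,
\[[x>_r z]-[y>_r z]=-[z\in(x,y)_c], \qquad [z>_r y]-[z>_r x]=-[z\in(x,y]_c].\]
Both identities apply to every $r\in\{a_i,\dots,a_j\}$, because all of these lie in $(y,x]_c$.

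With this identity I handle the pair types in turn.
\begin{enumerate}
\item \textbf{The pair $(i,j)$.} It uses $a_i$, and since $x<_{a_i}y$, it is not an $\ba$-inversion of $w$ but is one of $wt_{ij}$. This gives the $+1$.
\item \textbf{Pairs $(p,i)$ and $(p,j)$ with $p<i$.} Both are judged by the same order $<_{a_p}$ against the same value $w_p$. Swapping the values $x$ and $y$ only exchanges the two indicator terms, so their total is unchanged.
\item \textbf{Pairs $(i,q)$ and $(j,q)$ with $q>j$.} Set $z=w_q$. The net change is
\[\bigl([x>_{a_i}z]-[y>_{a_i}z]\bigr)-\bigl([x>_{a_j}z]-[y>_{a_j}z]\bigr).\]
By the first identity this equals $-[z\in(x,y)_c]+[z\in(x,y)_c]=0$.
\item \textbf{Pairs $(i,k)$ and $(k,j)$ with $i<k<j$.} Set $z=w_k$; these pairs use $a_i$ and $a_k$ respectively. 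The difference of the two contributions (before minus after) is
\[\bigl([x>_{a_i}z]-[y>_{a_i}z]\bigr)+\bigl([z>_{a_k}y]-[z>_{a_k}x]\bigr)=-[z\in(x,y)_c]-[z\in(x,y]_c].\]
This vanishes because $z\notin(x,y)_c$ by the second covering condition, and $z\neq y$.
\end{enumerate}
All other pairs avoid positions $i$ and $j$ and are unaffected. Summing the four types gives exactly $+1$.

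The main obstacle is the bookkeeping in the last two pair types, because each pair is judged by the order indexed by its smaller position, and different pairs therefore use different orders. The identity above is what removes this difficulty: it shows the relevant differences depend only on the cyclic intervals $(x,y)_c$ and $(x,y]_c$, not on which $a_k$ is used. It requires all of $a_i,\dots,a_j$ to avoid $(x,y]_c$, which is precisely the strengthened condition $k\in[i,j]$ that distinguishes $\lesssim_\ba$ from $\le_\ba$; for $\le_\ba$ the argument for $(i,q),(j,q)$ with $q>j$ would fail. I would double-check the endpoint conventions, such as $z=y$ and $r=x$, when writing the proof out.
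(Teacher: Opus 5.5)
Your proposal is correct and follows essentially the same argument as the paper. The paper also counts $\ba$-inversions directly across the cover $w\lesssimdot_\ba wt_{pq}$, using exactly the two conditions of \Cref{prop:covering}(4); it merely organizes the count by first coordinate, via $\ell_\ba^k(w)=\size{w[k,n]\cap[a_k,w_k)_c}$, rather than by your four pair types. Your $+1$ from $(i,j)$, the vanishing of the pairs $(i,k)$ for $i<k<j$, and the cancellation of $(i,q),(j,q)$ for $q>j$ are together the paper's combined computation for rows $k=p$ and $k=q$, which likewise uses $a_q\notin(w_p,w_q]_c$.
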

\begin{proof}
    Since $w \lesssimdot_\ba w'$, it follows from \Cref{prop:covering} that $w = w t_{pq}$ for some transposition $t_{pq} \in S_n$, and the following conditions hold: $a_k\notin(w_p,w_q]_c$ for all $k\in [p,q]$ and $w_k\notin(w_p,w_q)_c$ for all $k\in (p,q)$. Define $\ell_\ba^k(w)$ as the number of $\ba$-inversions $(i,j)$ with first coordinate $i=k$, or equivalently,
    \[\ell_\ba^k(w):=\left|\{w_j:j>k,\ w_j<_{a_k}w_k\}\right|=\size{w[k,n]\cap[a_k,w_k)_c]},\]
    where $w[a,b]:=\{w_a,w_{a+1},\dots,w_b\}$. We now compare $\ell^k_\ba(w)$ and $\ell^k_\ba(w')$ for all $k\in [n]$ by considering the following cases:
    \begin{enumerate}
        \item \textbf{Case $k<p$ or $k>q$:} $w[k,n]=w'[k,n]$ and $w_k=w'_k$, so $\ell^k_\ba(w)=\ell^k_\ba(w')$.
        \item \textbf{Case $p<k<q$:} Here $w_k = w'_k$, and $w'[k,n]$ is obtained from $w[k,n]$ by replacing $w_q$ with $w_p$. Since $w_p$ and $w_q$ either both lie in $[a_k, w_k)_c$ or both lie outside it (by the covering condition), we have $\ell^k_\ba(w) = \ell^k_\ba(w')$.
        \item \textbf{Case $k=p$ or $k=q$:} If $k = p$, then $w[p,n] = w'[p,n]$, and since $a_p \notin (w_p, w_q]_c$,
        \[\begin{aligned}
            \ell_\ba^p(w')-\ell_\ba^p(w) &= \size{w[p,n]\cap[a_p,w_q)_c}-\size{w[p,n]\cap[a_p,w_p)_c}\\
            &=\size{w[p,n]\cap[w_p,w_q)_c}.
        \end{aligned}\]
         If $k = q$, then since $a_q \notin (w_p, w_q]_c$, we have
        \[\begin{aligned}
            \ell_\ba^q(w)-\ell_\ba^q(w') &= \size{w[q,n]\cap[a_q,w_q)_c}-\size{w[q,n]\cap[a_q,w_p)_c}\\
            &= \size{w[q+1,n]\cap[a_q,w_q)_c}-\size{w[q+1,n]\cap[a_q,w_p)_c}\\
            &=\size{w[q+1,n]\cap[w_p,w_q)_c}.
        \end{aligned}\]
        By taking the difference, we have
        \[\begin{aligned}
            \ell_\ba^p(w') + \ell_\ba^q(w') - \ell_\ba^p(w) - \ell_\ba^q(w)
            =&\size{w[p,n]\cap[w_p,w_q)_c}-\size{w[q+1,n]\cap[w_p,w_q)_c}\\
            =&\size{w[p,q]\cap[w_p,w_q)_c}=1,
        \end{aligned}\]
        where the last equality follows from $w_k\notin(w_p,w_q)_c$ for all $k\in (p,q)$.
    \end{enumerate}
    Summing over all $k \in [n]$, we conclude that $\ell_\ba(w') = \ell_\ba(w) + 1$, completing the proof.
\end{proof}

The following corollary follows immediately from \Cref{thm:length-rank-function} and \Cref{thm:tilted-criterion}.

\begin{cor}\label{cor:length-rank-function}
    Let $u,v\in S_n$ and $\ba\in [n]^n$ such that $u\lesssim_\ba v$. Then the length of a shortest path from $u$ to $v$ in the quantum Bruhat graph $\Gamma_n$ is given by
    \[\ell(u,v)=\ell_\ba(v)-\ell_\ba(u).\]
\end{cor}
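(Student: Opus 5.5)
The plan is to combine \Cref{thm:length-rank-function} with \Cref{thm:tilted-criterion} and the definition of the tilted Bruhat interval as a graded poset. Fix $\ba$ with $u\lesssim_\ba v$. By \Cref{thm:tilted-criterion}, the tilted Bruhat interval $[u,v]$ coincides as a set with the interval $\{w: u\lesssim_\ba w\lesssim_\ba v\}$ in the poset $(S_n,\lesssim_\ba)$. I will show that $\ell(u,v)$ equals the length of any maximal chain from $u$ to $v$ in $\lesssim_\ba$. Since $\lesssim_\ba$ is ranked by $\ell_\ba$, every such chain has length $\ell_\ba(v)-\ell_\ba(u)$.

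The first step is to show that every step of a shortest path in the quantum Bruhat graph is a cover in $\lesssim_\ba$. Take a shortest path $u=w^{(0)}\to w^{(1)}\to\cdots\to w^{(m)}=v$ with $m=\ell(u,v)$, which exists by \Cref{lemma:shortest-length-equals-weight}. Each $w^{(i)}$ lies on a shortest path, so $w^{(i)}\in[u,v]$. Consecutive vertices $w^{(i)},w^{(i+1)}$ also satisfy $w^{(i)}\preceq w^{(i+1)}$, so $[w^{(i)},w^{(i+1)}]\subseteq[u,v]$ is a subinterval. \Cref{cor:xy}(2) then gives $w^{(i)}\lesssim_\ba w^{(i+1)}$. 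Moreover, $\ell(w^{(i)},w^{(i+1)})=1$, so the tilted interval $[w^{(i)},w^{(i+1)}]$ has exactly two elements. Applying \Cref{thm:tilted-criterion} to this pair, the $\lesssim_\ba$-interval between them is $\{w^{(i)},w^{(i+1)}\}$, so $w^{(i)}\lesssimdot_\ba w^{(i+1)}$.

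The second step applies \Cref{thm:length-rank-function} to each cover, giving $\ell_\ba(w^{(i+1)})=\ell_\ba(w^{(i)})+1$. Summing over $i$ yields $\ell_\ba(v)-\ell_\ba(u)=m=\ell(u,v)$.

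The only delicate point is the use of \Cref{thm:tilted-criterion} for the pair $(w^{(i)},w^{(i+1)})$, since that theorem is stated for the pair $(u,v)$ and a sequence $\ba$ with $u\lesssim_\ba v$. Here we already have $w^{(i)}\lesssim_\ba w^{(i+1)}$ for the same $\ba$, so clause (4) applies to this pair. Any $w$ with $w^{(i)}\lesssim_\ba w\lesssim_\ba w^{(i+1)}$ satisfies (5) and hence lies in the two-element tilted interval. This confirms the covering relation and completes the argument.
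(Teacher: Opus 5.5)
Your argument is correct and follows the paper's route: the paper deduces this corollary directly from \Cref{thm:length-rank-function} and \Cref{thm:tilted-criterion}. You make explicit the intermediate step that each edge of a shortest path in $\Gamma_n$ is a cover in $\lesssim_\ba$, via \Cref{cor:xy} and the two-element tilted interval, so that each edge raises $\ell_\ba$ by exactly one.
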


Some basic properties of the $\ba$-tilted Bruhat orders are of further interests.
\begin{conj}
$\leq_\ba$ is a ranked poset.
\end{conj}
Note that the $\ba$-tilted length $\ell_\ba(w)$ does not serve as a rank function for $\leq_\ba$.
\begin{conj}
$\ell_\ba(w)$ is a connected poset and $\lesssim_\ba$ is connected when restricted to each equivalence class under $\sim_\ba$. Here, a poset is \emph{connected} if its Hasse diagram is connected.
\end{conj}

Now we state an analogue of the classical lifting property for $\ba$-tilted Bruhat orders $\lesssim_\ba$.

\begin{defin}\label{def:ascdes}
    Let $w\in S_n$ and $\ba\in[n]^n$. The \emph{$\ba$-descent set} and \emph{$\ba$-ascent set} of $w$ are
    \[\begin{aligned}
        \Des_\ba(w) &:= \{i\in [n-1]:a_i = a_{i+1},\ w_i >_{a_i}w_{i+1}\},\\
        \Asc_\ba(w) &:= \{i\in [n-1]:a_i = a_{i+1},\ w_i<_{a_i}w_{i+1}\}.
    \end{aligned}\]
\end{defin}

Equivalently, if $a_i=a_{i+1}$, then $i\in \Des_\ba(w)$ if and only if $ws_i\lesssim_\ba w$, and $i\in \Asc_\ba(w)$ if and only if $ws_i\gtrsim_\ba w$. We are now ready to state the lifting property.

\begin{theorem}\label{thm:lifting}
    Let $u,v\in S_n$, $\ba\in[n]^n$, and $i\in [n-1]$. If $u\lesssim_\ba v$ and $i\in \Des_\ba(v)\cap \Asc_\ba(u)$, then $us_i\lesssim_\ba v$ and $u\lesssim_\ba vs_i$.
\end{theorem}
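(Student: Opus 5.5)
The plan is to reduce the statement to a single Gale-order inequality at level $i$, using \Cref{cor:alter-a-lesssim}, and then to check that inequality by a counting argument.

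\textbf{Step 1: reduction to level $i$.} Write $r:=a_i=a_{i+1}$; this common value exists because $i\in\Des_\ba(v)\cap\Asc_\ba(u)$. By \Cref{cor:alter-a-lesssim}, $x\lesssim_\ba y$ means $x[k]\le_{a_k}y[k]$ and $x[k]\le_{a_{k+1}}y[k]$ for all $k$. Right multiplication by $s_i$ leaves $x[k]$ unchanged for every $k\neq i$. So all conditions for $us_i\lesssim_\ba v$ and $u\lesssim_\ba vs_i$ with $k\ne i$ are inherited from $u\lesssim_\ba v$. At $k=i$ both relevant shifts equal $r$, so it suffices to prove two inequalities in the shifted Gale order $\le_r$:
\[u[i-1]\cup\{u_{i+1}\}\le_r v[i],\qquad u[i]\le_r v[i-1]\cup\{v_{i+1}\}.\]
For the case $i=1$, $u[0]=v[0]=\emptyset$.

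\textbf{Step 2: normalise to the classical Gale order.} Relabel values cyclically via $\cyclic^{1-r}$ so that $\le_r$ becomes the usual order. Set $A=u[i-1]$, $B=v[i-1]$, $x=u_i$, $x'=u_{i+1}$, $y=v_i$, $y'=v_{i+1}$. The hypotheses then read as follows:
\begin{itemize}
\item $x<x'$ and $y>y'$, by the ascent/descent conditions;
\item $A\le B$, since $a_i=r$ enters the level-$(i-1)$ condition;
\item $A+x\le B+y$, from level $i$;
\item $A+x+x'\le B+y+y'$, since $a_{i+1}=r$ enters the level-$(i+1)$ condition.
\end{itemize}
I will use the standard criterion that $S\le T$ in the Gale order iff $\#(S\cap[t,n])\le\#(T\cap[t,n])$ for all $t$. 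Write $\alpha=\#(A\cap[t,n])$, $\beta=\#(B\cap[t,n])$, and $[z]=1$ if $z\ge t$ and $0$ otherwise.

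\textbf{Step 3: the two inequalities.} This case check is the heart of the proof, though it is elementary.

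For $A+x'\le B+y$ we need $\alpha+[x']\le\beta+[y]$ for every $t$.
\begin{itemize}
\item If $x'<t$, this follows from $A\le B$.
\item If $x\ge t$, then $x'\ge t$ as well. The triple inequality gives $\alpha+2\le\beta+[y]+[y']\le\beta+[y]+1$, because $[y']\le[y]$.
\item If $x<t\le x'$ and $y\ge t$, the claim reduces to $\alpha\le\beta$, which holds.
\item If $x<t\le x'$ and $y<t$, then also $y'<t$, and the triple inequality gives $\alpha+1\le\beta$.
\end{itemize}

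For $A+x\le B+y'$ we need $\alpha+[x]\le\beta+[y']$.
\begin{itemize}
\item If $y'\ge t$, this follows from $A\le B$.
\item If $y'<t\le y$ and $x\ge t$, then $x'\ge t$, and the triple inequality gives $\alpha+2\le\beta+1$.
\item If $y'<t\le y$ and $x<t$, the claim is $\alpha\le\beta$, which holds.
\item If $y<t$, the claim follows from $A+x\le B+y$.
\end{itemize}

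The main obstacle is organising this case analysis correctly. The key observation that makes it work is that $a_i=a_{i+1}$ forces the levels $i-1$, $i$ and $i+1$ to be compared in the same shifted order $\le_r$, which is what allows the three hypotheses to be combined.
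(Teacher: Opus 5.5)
Your proof is correct and is essentially the paper's argument. It uses the same reduction via \Cref{cor:alter-a-lesssim} to a single shifted Gale inequality at level $i$, which works because $a_i=a_{i+1}$, and the same cyclic normalization; your tail-count criterion $\#(S\cap[t,n])\le\#(T\cap[t,n])$ is exactly the Dyck-path condition of \Cref{lemma:r-comparable}. The differences are only presentational: you check the inequality directly for every threshold $t$ rather than by contradiction at a lowest point of the lattice path, and you verify both inequalities, whereas the paper proves $us_i\lesssim_\ba v$ and calls $u\lesssim_\ba vs_i$ analogous.
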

\begin{proof}
    We prove only the first claim $us_i\lesssim_\ba v$, and the second follows by a similar argument. Since $u\lesssim_\ba v$, it suffices to show that
    \[us_i[i]\leq_{a_i}v[i].\]  
    Given that $a_i = a_{i+1}$ and $u \lesssim_\ba v$, we can apply \Cref{cor:alter-a-lesssim} to obtain:
    \[
        u[i-1]\leq_{a_i}v[i-1]\quad\text{and}\quad u[i+1]\leq_{a_i}v[i+1].
    \]
    We now apply \Cref{lemma:r-comparable}, reducing the problem to a combinatorial question about lattice paths: if the paths $\p_{i-1} := \p(u[i-1], v[i-1])$ and $\p_{i+1} := \p(u[i+1], v[i+1])$ both reach their minimum at $(a_i, 0)$, then the path $\p_i := \p(us_i[i], v[i])$ must also reach its minimum at $(a_i, 0)$. Without loss of generality, we may assume $a_i = 1$, since we can cyclically shift all paths to the left by $a_i - 1$ steps (as in the proof of \Cref{lemma:r-comparable}). In this case, the paths $\p_{i-1}$ and $\p_{i+1}$ are both Dyck paths with minimum at $(1, 0)$, and our goal is to show that the same holds for $\p_i$.

    We proceed by contradiction. Suppose $\p_i$ is not a Dyck path, then it must reach its minimum at some point $(r, -1)$. Comparing $\p_{i-1}$ and $\p_i$, we observe that $\p_{i-1}$ passes through $(r, 0)$, and $\p_i$ is obtained from $\p_{i-1}$ by shifting the subpath between $(v_i{+}1, -)$ and $(u_{i+1}, -)$ downward by one unit. This implies $v_i < r \leq u_{i+1}$.

    On the other hand, comparing $\p_i$ and $\p_{i+1}$, we observe that $\p_{i+1}$ is obtained from $\p_i$ by shifting the subpath between $(u_i{+}1, -)$ and $(v_{i+1}, -)$ upward by one unit. This implies $u_i < r \leq v_{i+1}$.
    
    Combining both, we obtain $v_i < r \leq v_{i+1}$, which contradicts the assumption that $i \in \Des_\ba(v)$. Therefore, $\p_i$ remains above the $x$-axis, and we conclude that $u s_i \lesssim_\ba v$.
\end{proof}

The following corollary follows immediately from \Cref{thm:lifting}.

\begin{cor}\label{cor:lifting}
    Let $w,w'\in S_n$ and $\ba\in [n]^n$ such that $w'\lesssimdot_\ba w$. If $i\in \Des_\ba(w)$, then either $i\in\Des_\ba(w')$ or $w'=ws_i$.
\end{cor}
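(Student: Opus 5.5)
Since $w'\lesssimdot_\ba w$, \Cref{prop:covering} gives $w'=wt_{pq}$ for some $p<q$. The plan is to assume $w'\neq ws_i$ and show $i\in\Des_\ba(w')$. By definition, $i\in\Des_\ba(w)$ means $a_i=a_{i+1}$ and $w_i>_{a_i}w_{i+1}$. Since $a_i=a_{i+1}$, we only need to verify $w'_i>_{a_i}w'_{i+1}$.

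We argue by contradiction. Suppose $i\notin\Des_\ba(w')$. Then $w'_i<_{a_i}w'_{i+1}$, so $i\in\Asc_\ba(w')$. Now apply \Cref{thm:lifting} with $u=w'$ and $v=w$. Its hypotheses hold: $w'\lesssim_\ba w$ and $i\in\Des_\ba(w)\cap\Asc_\ba(w')$. The conclusion gives $w's_i\lesssim_\ba w$ and $w'\lesssim_\ba ws_i$.

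Next we compare ranks using \Cref{thm:length-rank-function}. Because $i\in\Asc_\ba(w')$ and $a_i=a_{i+1}$, the remark after \Cref{def:ascdes} gives $w'\lesssim_\ba w's_i$. This is a covering relation by \Cref{prop:covering}(4), since there are no indices strictly between $i$ and $i+1$ and $a_i,a_{i+1}$ avoid the required interval. Hence $\ell_\ba(w's_i)=\ell_\ba(w')+1=\ell_\ba(w)$. Combined with $w's_i\lesssim_\ba w$ and the fact that $\ell_\ba$ is a rank function for $\lesssim_\ba$, this forces $w's_i=w$, i.e.\ $w'=ws_i$. This contradicts our assumption.

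The only care needed is the rank-function step: a relation $x\lesssim_\ba y$ with $\ell_\ba(x)=\ell_\ba(y)$ forces $x=y$, because any strict relation is a chain of covers, each increasing $\ell_\ba$ by one. This also settles the degenerate case $\{p,q\}=\{i,i+1\}$, which is exactly the alternative $w'=ws_i$. So no case analysis on the position of $p,q$ relative to $i$ is needed.
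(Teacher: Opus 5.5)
Your proof is correct and follows the paper's route: both apply \Cref{thm:lifting} to $w'\lesssim_\ba w$ to produce a third element of the interval $[w',w]$ and then use that a covering interval has only two elements. The paper uses $ws_i$ via the second conclusion, while you use $w's_i$ via the first. Your detour through $\ell_\ba$ is valid but unnecessary, since $w'\lesssim_\ba w's_i\lesssim_\ba w$ with $w's_i\neq w'$ already forces $w's_i=w$ by the definition of a cover.
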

\begin{proof}
    If $i\notin \Des_\ba(w')$, or equivalently $i\in \Asc_\ba(w')$, then by \Cref{thm:lifting} we have $ws_i\in [w',w]$. Since $w' \lesssimdot_\ba w$, the interval $[w', w]$ contains exactly two elements, $w'$ and $w$. Thus, it must be that $w' = w s_i$.
\end{proof}

In fact, we prove the following stronger version of the lifting property, which does not rely on the specific choice of $\ba$.

\begin{theorem}\label{thm:strong-lifting}
    Let $u,v\in S_n$ and $i\in[n-1]$. The following statements are equivalent:
    \begin{enumerate}
        \item There exists $\ba\in [n]^n$ such that $u\lesssim_\ba v$ and $i\in \Des_\ba(v)\cap \Asc_\ba(u)$;
        \item There exists $\ba\in [n]^n$ such that $u\lesssim_\ba v$, $a_i=a_{i+1}$, and $us_i,vs_i\in [u,v]$;
        \item The interval $[u,v]$ is invariant under right multiplication by $s_i$, i.e., $[u,v]=[u,v]\cdot s_i$, where $[u,v]\cdot s_i:=\{ws_i:w\in [u,v]\}$.
    \end{enumerate}
\end{theorem}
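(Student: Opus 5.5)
I will prove the cycle of implications $(1)\Rightarrow(2)\Rightarrow(3)\Rightarrow(1)$.

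\textbf{Step $(1)\Rightarrow(2)$.} Take the $\ba$ given in (1). The conditions $i\in\Des_\ba(v)$ and $i\in\Asc_\ba(u)$ both require $a_i=a_{i+1}$. By \Cref{thm:lifting}, we have $us_i\lesssim_\ba v$ and $u\lesssim_\ba vs_i$. Since $i\in\Asc_\ba(u)$, we get $u\lesssim_\ba us_i$; since $i\in\Des_\ba(v)$, we get $vs_i\lesssim_\ba v$. Hence $u\lesssim_\ba us_i\lesssim_\ba v$ and $u\lesssim_\ba vs_i\lesssim_\ba v$. By \Cref{thm:tilted-criterion}(5), both $us_i$ and $vs_i$ lie in $[u,v]$.

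\textbf{Step $(2)\Rightarrow(3)$.} Fix $\ba$ as in (2). For any $w\in[u,v]$, \Cref{thm:tilted-criterion}(4) gives $u\lesssim_\ba w\lesssim_\ba v$. We need $ws_i\in[u,v]$. Since $a_i=a_{i+1}$, exactly one of $ws_i\lesssim_\ba w$ or $w\lesssim_\ba ws_i$ holds, because the pair $w_i,w_{i+1}$ is compared in $\le_{a_i}$.

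First consider the case where $i\in\Asc_\ba(w)$. Because $us_i\in[u,v]$, we have $u\lesssim_\ba us_i$, so $i\in\Asc_\ba(u)$. Also $vs_i\lesssim_\ba v$, so $i\in\Des_\ba(v)$. Applying \Cref{thm:lifting} to the pair $w\lesssim_\ba v$ gives $ws_i\lesssim_\ba v$. Combined with $u\lesssim_\ba w\lesssim_\ba ws_i$, this puts $ws_i$ in $[u,v]$.

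The case $i\in\Des_\ba(w)$ is symmetric. Apply \Cref{thm:lifting} to $u\lesssim_\ba w$, using $i\in\Asc_\ba(u)$ and $i\in\Des_\ba(w)$, to get $u\lesssim_\ba ws_i$. Together with $ws_i\lesssim_\ba w\lesssim_\ba v$, this again puts $ws_i$ in $[u,v]$.

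This shows $[u,v]\cdot s_i\subseteq[u,v]$, and equality follows since right multiplication by $s_i$ is a bijection.

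\textbf{Step $(3)\Rightarrow(1)$.} This is the main obstacle: from the invariance of $[u,v]$ alone we must produce an $\ba$ with $a_i=a_{i+1}$ and $u\lesssim_\ba v$.

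Start from any $\ba$ with $u\lesssim_\ba v$, which exists by \Cref{thm:lesssim-exist}. By invariance, $us_i\in[u,v]$, and $us_i$ differs from $u$ by a transposition. By \Cref{cor:tilted-bruhat-graph}, $u$ and $us_i$ are comparable in $[u,v]$. Since $u$ is the minimum of $[u,v]$, this forces $u\lesssim_\ba us_i$. By \Cref{prop:covering}(2) applied to $t_{i,i+1}$, neither $a_i$ nor $a_{i+1}$ lies in $(u_i,u_{i+1}]_c$. The same argument applied to $vs_i$ and the maximum $v$ gives $vs_i\lesssim_\ba v$, so neither $a_i$ nor $a_{i+1}$ lies in $(v_{i+1},v_i]_c$.

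We now modify $\ba$ by setting $a_{i+1}':=a_i$ and keeping every other entry. By \Cref{cor:alter-a-lesssim}, the new sequence satisfies $u\lesssim_{\ba'}v$ provided $u[i+1]\le_{a_i}v[i+1]$ and $u[i+1]\le_{a_{i+2}}v[i+1]$. The second condition already holds. For the first, apply \Cref{lemma:both-r-comparable}(1) with $r=a_{i+1}$ and $r'=a_i$. It then suffices to check
\[\size{u[i+1]\cap[a_{i+1},a_i)_c}=\size{v[i+1]\cap[a_{i+1},a_i)_c}.\]
We already know the analogous equality for $u[i]$ and $v[i]$, because $u[i]\le_{a_i}v[i]$ and $u[i]\le_{a_{i+1}}v[i]$ both hold. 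Passing from index $i$ to $i+1$ adds $u_{i+1}$ to the left side and $v_{i+1}$ to the right side. So the required equality reduces to the claim that $u_{i+1}$ and $v_{i+1}$ are either both in $[a_{i+1},a_i)_c$ or both outside it.

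I expect to verify this claim by a lattice path argument in the style of the proof of \Cref{thm:lifting}. The idea is to cyclically rotate so that $a_i=1$ and then use the excluded-interval conditions derived above. If the direct argument fails, I will instead choose $a_{i+1}'$ to be a common minimum point of the three paths $\p_{i-1}$, $\p_i$, $\p_{i+1}$, arguing as in \Cref{thm:lesssim-exist}.

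Once $u\lesssim_{\ba'}v$ with $a_i'=a_{i+1}'$ is established, the comparabilities $u\lesssim_{\ba'}us_i$ and $vs_i\lesssim_{\ba'}v$ give $i\in\Asc_{\ba'}(u)\cap\Des_{\ba'}(v)$. This is statement (1).
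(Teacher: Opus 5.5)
Your steps $(1)\Rightarrow(2)$ and $(2)\Rightarrow(3)$ are correct; the latter is the paper's lifting argument for $(1)\Rightarrow(3)$. Your reduction in $(3)\Rightarrow(1)$ is also correct: after setting $a'_{i+1}:=a_i$, everything comes down to showing that $u_{i+1}$ and $v_{i+1}$ lie on the same side of the partition $[a_i,a_{i+1})_c\sqcup[a_{i+1},a_i)_c$.

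\textbf{The gap.} This claim cannot be proved from what you have kept, namely $us_i,vs_i\in[u,v]$ and the resulting excluded-interval conditions on $a_i,a_{i+1}$. Those conditions constrain the pairs $\{u_i,u_{i+1}\}$ and $\{v_i,v_{i+1}\}$ separately: each pair lies on one side. Nothing relates the side of the first pair to the side of the second.

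Concretely, take $n=4$, $u=1342$, $v=4213$, $i=2$ and $\ba=(1,1,3,3)$.
\begin{itemize}
\item We have $u\lesssim_\ba v$, and both $us_2=1432$ and $vs_2=4123$ lie in $[u,v]$.
\item The entries $a_2=1$, $a_3=3$ avoid both $(u_2,u_3]_c=\{4\}$ and $(v_3,v_2]_c=\{2\}$.
\item Yet $u_3=4\in[3,1)_c=\{3,4\}$ while $v_3=1\notin\{3,4\}$. Accordingly $u[3]\not\leq_1 v[3]$, so your modified sequence $(1,1,1,3)$ fails.
\end{itemize}
Worse, no $\ba$ with $a_2=a_3$ and $u\lesssim_\ba v$ exists here. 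The condition $u[3]\leq_{a_3}v[3]$ forces $a_3=3$, while $u[1]\leq_{a_2}v[1]$ (from \Cref{cor:alter-a-lesssim}) rules out $a_2=3$. So statement (1) is false for this pair. Hence no lattice-path argument, and no ``common minimum'' choice of $a'_{i+1}$, can succeed using only $us_i,vs_i\in[u,v]$. Consistently, (3) also fails here: $1324\in[u,v]$ but $1234\notin[u,v]$.

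\textbf{The missing idea.} Use the invariance for every element of the interval, not just the endpoints.
\begin{itemize}
\item For each $w\in[u,v]$, (3) gives $ws_i\in[u,v]$. Then \Cref{thm:tilted-criterion}(4) yields $w\sim_\ba u\sim_\ba ws_i$.
\item Comparing the $k=i$ condition in $w\sim_\ba ws_i$ shows that $w_i$ and $w_{i+1}$ lie on the same side.
\item Walk along a shortest path from $u$ to $v$ in $\Gamma_n$. All its vertices lie in $[u,v]$, and consecutive vertices differ by a transposition.
\item Such a transposition either leaves positions $i,i+1$ untouched, swaps them, or changes exactly one of the two values. In every case the common side of the pair cannot switch.
\end{itemize}
Hence $\{u_i,u_{i+1}\}$ and $\{v_i,v_{i+1}\}$ lie on the same side, which is exactly your claim. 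Your counting argument via \Cref{lemma:both-r-comparable} then finishes the proof. This is how the paper handles the step.
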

\begin{proof}
    The equivalence of (1) and (2) follows from \Cref{thm:lifting}. To prove $(1) \implies (3)$, let $w \in [u, v]$. Since $a_i = a_{i+1}$, we have either $i \in \Des_\ba(w)$ or $i \in \Asc_\ba(w)$. In the former case, \Cref{thm:lifting} implies that $w s_i \in [u, w] \subseteq [u, v]$; in the latter case, the same theorem gives $w s_i \in [w, v] \subseteq [u, v]$. In either case, we conclude that $w s_i \in [u, v]$, establishing (3).

    It remains to show $(3) \implies (2)$, that is, there exists $\ba \in [n]^n$ such that $u \lesssim_\ba v$ and $a_i = a_{i+1}$. By \Cref{thm:lesssim-exist}, there exists some $\ba \in [n]^n$ such that $u \lesssim_\ba v$, though possibly with $a_i \neq a_{i+1}$. We modify this sequence to $\ba' = (a_1, \dots, a_i, a_i, a_{i+2}, \dots)$ and claim that $u \lesssim_{\ba'} v$. Equivalently, it suffices to verify that $u[i+1] \leq_{a_i} v[i+1]$.

    By assumption (3), we have $w s_i \in [u, v]$ for every $w \in [u, v]$. This implies $w \sim_\ba w s_i$, so $w_i$ and $w_{i+1}$ must either both lie in $[a_i, a_{i+1})_c$ or both lie in $[a_{i+1}, a_i)_c$. We claim this choice is consistent across all $w \in [u, v]$. Indeed, if $w$ and $w'$ differ by a transposition $w' = w t_{pq}$, then such a transposition cannot simultaneously move $w_i$ and $w_{i+1}$ from one cyclic interval to the other. Since $[u, v]$ is connected by transpositions, the cyclic interval to which $w_i$ and $w_{i+1}$ belong is consistent throughout $[u, v]$.

    Now assume $u_i, u_{i+1}, v_i, v_{i+1}$ all lie in $[a_i, a_{i+1})_c$ (the case $[a_{i+1}, a_i)_c$ is analogous). Since $u \lesssim_\ba v$, by \Cref{cor:alter-a-lesssim} and \Cref{lemma:r-comparable}, the lattice path $\p_i := \p(u[i], v[i])$ reaches its minimum at both $(a_i, -)$ and $(a_{i+1}, -)$. Moreover, since $u[i+1] \leq_{a_{i+1}} v[i+1]$, the path $\p_{i+1} := \p(u[i+1], v[i+1])$ reaches its minimum at $(a_{i+1}, -)$. But under our assumption that all relevant entries lie in $[a_i, a_{i+1})_c$, the depth at $(a_i, -)$ equals the depth at $(a_{i+1}, -)$, so $\p_{i+1}$ also reaches its minimum at $(a_i, -)$. Hence, $u[i+1] \leq_{a_i} v[i+1]$. This verifies the condition needed for $u \lesssim_{\ba'} v$, and completes the proof.
\end{proof}

\begin{remark}
The lifting property does not hold for the alternative $\ba$-tilted Bruhat order $\leq_\ba$. For example, let $u = 4231$, $v = 3421$, $\ba = (4,2,2,2)$, and $i = 2$. Then $u \leq_\ba v$ and $i \in \Des_\ba(v) \cap \Asc_\ba(u)$, but $v s_i = 3241 \notin [u, v]$.
\end{remark}

\subsection{Tilted Reduced Words}\label{sec:tilted-reduced-word}
In this section, we define \emph{$\ba$-tilted reduced words} for a sequence $\ba \in [n]^n$ (\Cref{def:tiltedword}), which serve as the tilted analogues of classical reduced words under the $\ba$-tilted Bruhat order $\lesssim_\ba$. The material will be primarily used in \Cref{sec:deodhar}.

We start by introducing a technical notion called the \emph{flattening} of a sequence $\ba$, which is a process for producing a simpler sequence derived from $\ba$.

\begin{defin}\label{def:flatten}
Let $\ba = (a_1, a_2, \dots, a_n) \in [n]^n$. The set of \emph{jumps} of $\ba$ is defined by  
\[
\Jump_\ba := \{ j \in [n] : a_j \neq a_{j+1} \}, \quad \text{with the convention that } a_{n+1} := 1.
\]
If $\Jump_\ba \neq \emptyset$ (equivalently, $\ba \neq (1,1,\dots,1)$), the \emph{minimum jump} of $\ba$, denoted $\jmin(\ba)$ (or simply $\jmin$ when the context is clear), is defined as  
\[
\jmin(\ba) := \min \Jump_\ba,
\]  
that is, the smallest index $j$ such that $a_j \neq a_{j+1}$.

We define the \emph{flattening} of $\ba$, denoted $\flatten(\ba)$, as the sequence obtained by replacing the first $\jmin(\ba)$ entries of $\ba$ with the value $a_{\jmin+1}$. More formally,
\[
\flatten(\ba) := (a'_1, a'_2, \dots, a'_n), \quad \text{where } a'_j = \begin{cases}
a_{\jmin+1}, &\text{if } j \leq \jmin, \\
a_j, &\text{if } j > \jmin.
\end{cases}\]
\end{defin}

\begin{ex}
Let $\ba = (2,2,4,4,4,3)$. The jumps of $\ba$ are
\[
\Jump_\ba = \{2, 5, 6\}, \quad \text{since } a_2 \neq a_3,\ a_5 \neq a_6,\ \text{and } a_6 \neq a_7.
\]
Thus, $\jmin(\ba) = 2$, and $a_{\jmin+1} = a_3 = 4$. Therefore, $\flatten(\ba) = (4, 4, 4, 4, 4, 3)$.
\end{ex}

This example illustrates a key property of the flattening operation: it reduces the number of jumps by one:
\[\left|\Jump_{\flatten(\ba)}\right|=\jasize-1.\]
This property enables induction on the number of jumps in $\ba$. By iteratively applying the flattening process, we eventually reach the base case $\ba = (1,1,\dots,1)$ with no jumps, which is the classical setting. We now introduce a new concept that builds on this idea.

\begin{defin}\label{def:a-flattenable}
    Let $w \in S_n$ and $\ba=(a_1,\dots,a_n)\in[n]^n$. We say that $w$ is $\ba$-\emph{flattenable} if there exists an integer $0\leq p \leq \jmin(\ba)$ such that 
    \[w_i \in\begin{cases}
    [a_1,a_{\jmin+1})_c & \text{if }i\leq p,\\
    [a_{\jmin+1},a_1)_c & \text{if }p<i\leq \jmin.
    \end{cases}\]
\end{defin}
\begin{ex}
    We use \Cref{fig:vflattenable} to illustrate the definition of $\ba$-flattenable permutations. In the figure, each $\bullet$ represents an entry of the permutation $w$, and the red lines correspond to the values $a_1$ and $a_{\jmin+1}$ from the sequence $\ba$. The condition for $\ba$-flattenability requires that all $\bullet$’s in the first $\jmin$ columns lie within the shaded region.
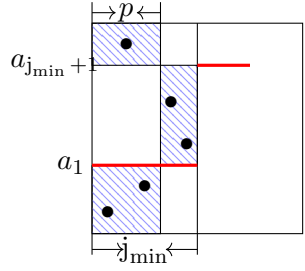
\begin{figure}[ht]
    \centering
    \begin{tikzpicture}[scale = 0.7]
        \draw (0,0) -- (4,0) -- (4,4) -- (0,4) -- (0,0);
        \draw[pattern=north west lines, pattern color = blue!40] (0,4) -- (1.3,4) -- (1.3,3.2) -- (0,3.2) -- (0,4);
        \draw[pattern=north west lines, pattern color = blue!40] (2,1.3) -- (1.3,1.3) -- (1.3,3.2) -- (2,3.2) -- (2,1.3);
        \draw[pattern=north west lines, pattern color = blue!40] (0,1.3) -- (1.3,1.3) -- (1.3,0) -- (0,0) -- (0,1.3);
        \draw (0,4) -- (0,4.4);
        \draw (1.3,4) -- (1.3,4.4);
        \draw[->] (0,4.2) -- (0.45,4.2);
        \draw[->] (1.3,4.2) -- (0.85,4.2);
        \node at (0.65,4.2) {$p$};
        \draw (0,0) -- (0,-0.4);
        \draw (2,4) -- (2,-0.4);
        \draw[->] (0,-0.3) -- (0.6,-0.3);
        \draw[->] (2,-0.3) -- (1.4,-0.3);
        \node at (1,-0.3) {$\jmin$};
        \node at (-0.4,1.3) {$a_1$};
        \node at (-0.7, 3.2) {$a_{\jmin{+}1}$};
        \node at (1.5,2.5) {$\bullet$};
        \node at (1.8,1.7) {$\bullet$};
        \node at (0.3,0.4) {$\bullet$};
        \node at (0.65,3.6) {$\bullet$};
        \node at (1.0,0.9) {$\bullet$};
        \draw[red, very thick] (0,1.3) -- (2,1.3);
        \draw[red, very thick] (2,3.2) -- (3,3.2);
    \end{tikzpicture}
    \caption{Permutation matrix of an $\ba$-flattenable permutation. The $\bullet$’s represent the permutation, and the red lines represent the sequence $\ba$}
    \label{fig:vflattenable}
\end{figure}
\end{ex}

The importance of $\ba$-flattenability is that it allows us to replace $\ba$ with its flattening $\flatten(\ba)$ while preserving comparability under $\lesssim_\ba$. This is formalized in the following lemma.
\begin{lemma}\label{lemma:u<flatten(a)v}
    Suppose $u\lesssim_{\ba}v$ and $v$ is $\ba$-flattenable, then $u\lesssim_{\flatten(\ba)}v$ and $u$ is $\ba$-flattenable. Conversely, if $u\lesssim_{\flatten(\ba)}v$, $u\sim_\ba v$, and both $u$ and $v$ are $\ba$-flattenable, then $u\lesssim_{\ba}v$.
\end{lemma}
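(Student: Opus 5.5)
The plan is to work directly with \Cref{cor:alter-a-lesssim}, which reduces $\lesssim_\ba$ to a family of shifted Gale comparisons. Throughout, write $j:=\jmin(\ba)$, $c:=a_1=\cdots=a_j$ and $b:=a_{j+1}\neq c$, so that $\flatten(\ba)=(b,\dots,b,a_{j+1},\dots,a_n)$. The conditions from \Cref{cor:alter-a-lesssim} for indices $k\geq j+1$ are identical for $\ba$ and $\flatten(\ba)$.
\begin{itemize}
\item For $k\le j$, the condition $u\lesssim_\ba v$ requires $u[k]\le_c v[k]$, and in addition $u[k]\le_b v[k]$ when $k=j$.
\item For $k\le j$, the condition $u\lesssim_{\flatten(\ba)}v$ requires exactly $u[k]\le_b v[k]$.
\end{itemize}
So everything reduces to comparing $\le_c$ and $\le_b$ on the sets $u[k],v[k]$ for $k\le j$. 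The bridge between the two is \Cref{lemma:both-r-comparable}(1): if one of the two comparisons holds, the other holds if and only if $\#(u[k]\cap[c,b)_c)=\#(v[k]\cap[c,b)_c)$.

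\textbf{Forward direction.} If $v$ is $\ba$-flattenable with parameter $p$, then $\#(v[k]\cap[c,b)_c)=\min(k,p)$ for all $k\le j$. Next, $u[k]\le_c v[k]$ in the shifted Gale order with $c$ smallest. Since $[c,b)_c$ is an initial segment of $\le_c$, this gives $\#(u[k]\cap[c,b)_c)\ge\#(v[k]\cap[c,b)_c)=\min(k,p)$.

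On the other hand, $u[j]\le_c v[j]$ and $u[j]\le_b v[j]$ both hold. By \Cref{lemma:both-r-comparable}(1), this forces $\#(u[j]\cap[c,b)_c)=p$. The count $\#(u[k]\cap[c,b)_c)$ is weakly increasing in $k$ and increases by at most one at each step. Together with the lower bound, this forces the count to equal $k$ for $k\le p$ and to equal $p$ for $p\le k\le j$. This says precisely that $u$ is $\ba$-flattenable with the same $p$. It also says the counts of $u[k]$ and $v[k]$ agree for every $k\le j$, so \Cref{lemma:both-r-comparable}(1) upgrades $u[k]\le_c v[k]$ to $u[k]\le_b v[k]$. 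Hence $u\lesssim_{\flatten(\ba)}v$.

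\textbf{Converse.} Let $u$ and $v$ be $\ba$-flattenable with parameters $p_u$ and $p_v$. The relation $u\sim_\ba v$ at index $k=j$ gives $\#(u[j]\cap[c,b)_c)=\#(v[j]\cap[c,b)_c)$, i.e.\ $p_u=p_v$. Therefore the counts $\min(k,p)$ agree for all $k\le j$. From $u\lesssim_{\flatten(\ba)}v$ we have $u[k]\le_b v[k]$ for $k\le j$, and \Cref{lemma:both-r-comparable}(1) then gives $u[k]\le_c v[k]$ as well. These are all the extra conditions needed, so \Cref{cor:alter-a-lesssim} yields $u\lesssim_\ba v$.

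The main point requiring care is the monotonicity-and-sandwich argument in the forward direction that pins down $\#(u[k]\cap[c,b)_c)$. I will check that the inequality from $\le_c$ points the right way. I will also check the edge cases $p=0$ and $p=j$, and that the convention $a_{n+1}=1$ causes no trouble at $k=n$, where $u[n]=v[n]=[n]$ makes every comparison trivial.
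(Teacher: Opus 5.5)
Your proof is correct and follows essentially the same route as the paper. Both reduce, via the index-wise description of $\lesssim_\ba$, to comparing $\le_{a_1}$ and $\le_{a_{\jmin+1}}$ on $u[k],v[k]$ for $k\le\jmin$, get flattenability of $u$ from the $\le_{a_1}$ lower bound together with the count at index $\jmin$, and then transfer comparisons using matching counts (\Cref{lemma:both-r-comparable}(1)). The only difference is cosmetic: the paper uses the lower bound just at $k=p$ to get $u[p]\subseteq[a_1,a_{\jmin+1})_c$, whereas your monotonicity sandwich uses it at every $k\le\jmin$.
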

\begin{proof}
    First, suppose $u\lesssim_\ba v$ and $v$ is $\ba$-flattenable. We show that $u$ is $\ba$-flattenable. Since $v$ is $\ba$-flattenable, let $p$ be the integer from \Cref{def:a-flattenable} such that
    \[v[p]\subseteq[a_1,a_{\jmin+1})_c,\quad v[p+1,\jmin]\subseteq[a_{\jmin+1},a_1)_c.\]
    Since $u\leq_\ba v$, we have $u[p]\leq_{a_1} v[p]$, and because $v[p]\subseteq [a_1,a_{\jmin+1})_c$, it follows that $u[p]\subseteq [a_1,a_{\jmin+1})_c$ as well. Moreover, the equivalence $u\sim_{\ba} v$ implies
    \[\size{u[\jmin] \cap [a_1,a_{\jmin+1})_c} = \size{v[\jmin] \cap [a_1,a_{\jmin+1})_c}=p.\]
    Therefore, the remaining values $u[p+1,\jmin]$ lie in the complementary interval $[a_1,a_{\jmin+1})_c$, and hence $u$ is $\ba$-flattenable.
    
    Now suppose $u \sim_\ba v$ and both $u$ and $v$ are $\ba$-flattenable. By \Cref{def:a-flattenable}, there exists an integer $p$ such that
    \[u[p],v[p]\subseteq[a_1,a_{\jmin+1})_c,\quad u[p+1,\jmin],v[p+1,\jmin]\subseteq[a_{\jmin+1},a_1)_c.\] It remains to show that $u\lesssim_{\ba}v\iff u\lesssim_{\flatten(\ba)}v$, or equivalently,
    \[u[k]\leq_{a_1} v[k]\iff u[k]\leq_{a_{\jmin+1}} v[k]\quad\text{for all } k\in [\jmin],\]
    which holds because the values of both $u$ and $v$ in $[\jmin]$ lie in the same cyclic intervals, so the relevant shifted orders coincide.
\end{proof}

We are now ready to define $\ba$-tilted reduced words.

\begin{defin}\label{def:tiltedword}
    An \emph{$\ba$-tilted word} for $w\in S_n$, denoted $\mathbf{w}=s_{i_1}s_{i_2}\cdots s_{i_\ell}$, is a sequence of simple transpositions whose product equals $w$, together with $\jasize$ bars ``$\mid$'' inserted between positions in the sequence, satisfying the following recursive conditions:
    \begin{enumerate}
        \item If $\Jump_\ba\neq \emptyset$, then no simple transpositions $s_i$ with $i\in \Jump_\ba$ appears after the last bar in $\mathbf{w}$;
        \item If $\Jump_\ba\neq \emptyset$, then the subsequence of $\mathbf{w}$ before the last bar is a $\flatten(\ba)$-tilted word for some $\ba$-flattenable permutation $w'\in S_n$.
    \end{enumerate}
    In the base case where $\Jump_\ba=\emptyset$, or equivalently $\ba =(1,1,\dots,1)$, an $\ba$-tilted word is simply a classical word for $w$, with no bars.
    
    We refer to both the simple transpositions and the bars in $\mathbf{w}$ as the \emph{factors} of the word. The \emph{length} of $\mathbf{w}$ is the total number of factors it contains. If the length of $\mathbf{w}$ is minimal among all $\ba$-tilted words for $w$, then $\mathbf{w}$ is called an \emph{$\ba$-tilted reduced word} for $w$. The length of such a reduced word is called the \emph{$\ba$-tilted word length} of $w$, and is denoted by $\ell^\word_\ba(w)$.
\end{defin}

\begin{ex}\label{ex:tiltedword}
    An example of an $\ba$-tilted word for $\ba=(4,4,2,2)$ and $w=3142$ is
    \[\mathbf{w}=s_1s_2s_3\mid s_1s_2s_3s_2s_1\mid s_1.\]
    In fact, this word is reduced, so $\ell^\word_\ba(w)=11$.
\end{ex}

The existence of $\ba$-tilted words for an arbitrary permutation is not obvious from the definition. The following lemma ensures that such words always exist.

\begin{lemma}\label{lemma:a-tilted-word-exist}
    For any $w\in S_n$ and $\ba\in[n]^n$, there exists an $\ba$-tilted word for $w$.
\end{lemma}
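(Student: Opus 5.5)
We argue by induction on $\jasize$, the number of jumps of $\ba$. In the base case $\Jump_\ba=\emptyset$, i.e.\ $\ba=(1,1,\dots,1)$, an $\ba$-tilted word is just a classical word, and every $w\in S_n$ is a product of simple transpositions. So assume $\Jump_\ba\neq\emptyset$, and write $m:=\jmin(\ba)$ and $\ba':=\flatten(\ba)$. Since $|\Jump_{\ba'}|=\jasize-1$, the induction hypothesis provides a $\ba'$-tilted word for every permutation. It therefore suffices to factor $w=w'\cdot x$ so that the following hold.
\begin{enumerate}
\item $w'$ is $\ba$-flattenable.
\item $x$ is a product of simple transpositions $s_i$ with $i\notin\Jump_\ba$.
\end{enumerate}
Then a $\ba'$-tilted word for $w'$, followed by a bar, followed by a word for $x$ in the allowed generators, is an $\ba$-tilted word for $w$. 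The bar count is correct, because the inner word has $\jasize-1$ bars and we add one.

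To construct the factorization, we note that the simple transpositions $s_i$ with $i\notin\Jump_\ba$ generate the Young subgroup $S_{\mathcal{B}}$. Here the blocks $\mathcal{B}$ are the maximal intervals of positions on which $\ba$ is constant. The condition $a_{n+1}=1$ means that $n$ counts as a jump exactly when $a_n\neq 1$, but $s_n$ does not exist, so this causes no issue. In particular, the first block is $[1,m]$. Right multiplication by $x\in S_{\mathcal B}$ permutes the entries of $w$ within each block of positions. We take $x$ to act trivially on every block except $[1,m]$. On $[1,m]$, we choose $x$ so that the entries of $w'=wx^{-1}$ in positions $1,\dots,m$ are rearranged as follows: first come those values lying in $[a_1,a_{m+1})_c$, in any order, and then those lying in the complementary cyclic interval $[a_{m+1},a_1)_c$. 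The two cyclic intervals partition $[n]$, and in the degenerate case $a_1=a_{m+1}$ one of them is empty and the other is all of $[n]$. So with $p$ equal to the number of values of type $[a_1,a_{m+1})_c$, the permutation $w'$ satisfies \Cref{def:a-flattenable}. Finally, $x^{-1}$ lies in $S_{[1,m]}$, which is generated by $s_1,\dots,s_{m-1}$, and none of these are jumps. So $x$ has a word avoiding $\Jump_\ba$.

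The only subtle point is the bookkeeping of the recursion. The inner word must be a $\flatten(\ba)$-tilted word, and its own last-bar condition refers to $\Jump_{\flatten(\ba)}$, not $\Jump_\ba$. This is handled automatically because the induction hypothesis is applied to $\flatten(\ba)$ for an arbitrary permutation. We must also make sure that $a_{m+1}$ is defined, i.e.\ $m\le n$. This holds with the convention $a_{n+1}=1$, and if $m=n$ the block $[1,n]$ is handled the same way. We therefore expect no real obstacle: once one observes that we only need to sort the first $m$ entries of $w$ by which of the two cyclic intervals they fall into, the existence follows directly.
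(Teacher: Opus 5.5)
Your proof is correct and follows the same route as the paper. Both induct on $|\Jump_\ba|$, rearrange the first $\jmin(\ba)$ entries of $w$ to get an $\ba$-flattenable $w'$, and append after the bar a word in $s_1,\dots,s_{\jmin-1}$ for $(w')^{-1}w$; the paper sorts those entries increasingly in $<_{a_1}$, which is one instance of your two-interval grouping. One small remark: your ``degenerate case'' $a_1=a_{m+1}$ never occurs, since minimality of $m=\jmin(\ba)$ forces $a_1=a_m\neq a_{m+1}$.
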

\begin{proof}
    We induct on $\jasize$. In the base case $\jasize=0$, or equivalently $\ba=(1,1,\dots,1)$, the claim follows from the fact that a classical word (with no bars) exist for $w$.
    
    For the inductive step, define $w'\in S_n$ to be the permutation obtained from $w$ by sorting its first $\jmin$ entries in increasing order with respect to the shifted order $<_{a_1}$. Let $u:=(w')^{-1}w$. By the inductive hypothesis, there exists a $\flatten(\ba)$-tilted word $\mathbf{w}'$ for $w'$, and by classical theory, there exists a reduced word $\mathbf{u}$ for $u$. We claim that the concatenated word $\mathbf{w} = \mathbf{w}' \mid \mathbf{u}$ is a $\ba$-tilted word for $w$. We verify the two conditions from \Cref{def:tiltedword}:
    \begin{enumerate}
        \item Since $w$ and $w'$ differ only in the first $\jmin$ entries, the permutation $u=(w')^{-1}w$ acts only on those positions. Hence, the reduced word $\mathbf{u}$ involves only $s_i$ with $i\geq \jmin$, and therefore none with $i\in \Jump_\ba$.
        \item By construction, the first $\jmin$ entries of $w'$ are sorted under $<_{a_1}$, which implies that $w'$ is $\ba$-flattenable. Therefore, condition (2) holds.
    \end{enumerate}
\end{proof}

\begin{construction}\label{construction:tiltedreducedword}
    The proof of \Cref{lemma:a-tilted-word-exist} suggests an algorithm for constructing a $\ba$-tilted word for any permutation $w\in S_n$. We illustrate this with the example $\ba=(3,3,1,1,1,6)$ and $w=136254$.

    Let the set of jumps of $\ba$ be denoted as $\Jump_\ba=\{\jump_1<\jump_2<\cdots<\jump_t\}$, where $t:=\jasize$. In our example, we have
    \[\Jump_\ba=\{\jump_1,\jump_2,\jump_3\}=\{2,5,6\}.\]
    We now construct a sequence of permutations
    \[\id=w^{(t+1)}\to w^{(t)}\to\cdots\to w^{(0)}=w,\]
    where for each $k\in [t]$, the permutation $w^{(k)}\in S_n$ is obtained from $w$ by sorting its first $\jump_k$ entries in increasing order with respect to the shifted order $<_{a_{\jump_k}}$. We set $w^{(0)}:=w$ and $w^{(t+1)}:=\id$ by convention. In our running example, the sequence becomes
    \[123456\longrightarrow 612345\longrightarrow123564\longrightarrow316254\longrightarrow136254.\]
    Next, for each adjacent pair $w^{(k+1)}\to w^{(k)}$, we compute a reduced word for the premutation $(w^{(k+1)})^{-1}w^{(k)}$, and then concatenate all of these reduced words, separated by bars ``$\mid$'', to form an $\ba$-tilted word for $w$. In our running example, we obtain:
    \[s_5s_4s_3s_2s_1\mid s_1s_2s_3s_5s_4\mid s_2s_1s_4s_3\mid s_1.\]
    In fact, all of the words constructed in this way are $\ba$-tilted reduced words, as can be deduced from \Cref{cor:length-property}.
\end{construction}


\begin{defin}\label{def:tilted-subword}
    Let $\mathbf{v}$ be an $\ba$-tilted word for $v\in S_n$. A \emph{subword} $\mathbf{u}$ of $\mathbf{v}$, corresponding to a permutation $u\in S_n$, is obtained by replacing some of the non-bar factors (i.e., simple transpositions $s_i$) in $\mathbf{v}$ with the identity element $1$, subject to the following conditions:
    \begin{enumerate}
        \item The sequence $\mathbf{u}$, after omitting the $1$'s, forms a valid $\ba$-tilted word (i.e., one satisfying the conditions in \Cref{def:tiltedword});
        \item The resulting product of $\mathbf{u}$ equals $u$, and $u\sim_\ba v$.
    \end{enumerate}
\end{defin}

\begin{ex}
    An example of a subword of the $\ba$-tilted word from \Cref{ex:tiltedword}
    \[\mathbf{w}=s_1s_2s_3\mid s_1s_2s_3s_2s_1\mid s_1,\]
    is the following subword corresponding to $u=4231$:
    \[\mathbf{u}=s_1s_2s_3\mid 1s_211s_1\mid 1.\]
\end{ex}

We also establish several fundamental properties of $\ba$-tilted reduced words and their subwords, which parallel well-known properties in the classical theory of reduced words. We summarize the results as follows:

\begin{enumerate}
    \item \textbf{Length Property:} The $\ba$-tilted word length $\ell^\word(w)$ serves as a rank function for the $\ba$-tilted Bruhat order $\lesssim_\ba$ (\Cref{thm:length-property}).
    \item \textbf{Subword Property:} Let $u,v\in S_n$ and let $\mathbf{v}$ be an $\ba$-tilted reduced word for $v$. Then, $u\lesssim_\ba v$ if and only if there exists a (reduced) subword $\mathbf{u}$ of $\mathbf{v}$ corresponding to $u$ (\Cref{thm:subword-property}). 
    \item \textbf{Word Property:} Any two $\ba$-tilted reduced words for the same permutation $w$ are connected by a sequence of moves involving \emph{braid relations} and \emph{bar relations} (\Cref{thm:word-property}).
\end{enumerate}

We begin with a lemma that will be useful later in the inductive arguments.

\begin{lemma}\label{lemma:tildef(u)}
    Let $\mathbf{w}$ be an $\ba$-tilted word for $w$ with $\ba\neq(1,1,\dots,1)$. Define $\flatten(\mathbf{w})$ to be the sequence obtained from $\mathbf{w}$ by removing the last bar. Then $\flatten(\mathbf{w})$ is a valid $\flatten(\ba)$-tilted word for $w$. Furthermore, if $w$ is $\ba$-flattenable, then $\mathbf{w}$ is reduced if and only if $\flatten(\mathbf{w})$ is reduced.
\end{lemma}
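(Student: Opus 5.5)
The plan has two parts. First I would check that removing the last bar from $\mathbf{w}$ gives a valid $\flatten(\ba)$-tilted word, by unfolding \Cref{def:tiltedword} only one level. Then I would prove the reducedness statement by comparing word lengths through a bar-insertion trick.

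\textbf{Validity of $\flatten(\mathbf{w})$.} Write $\mathbf{w} = \mathbf{p}\mid\mathbf{s}$, where $\mathbf{s}$ is the segment after the last bar. By \Cref{def:tiltedword}, $\mathbf{s}$ uses no $s_i$ with $i\in\Jump_\ba$. Also, $\mathbf{p}$ is a $\flatten(\ba)$-tilted word for some $\ba$-flattenable $w'$, and the product of $\mathbf{p}$ and $\mathbf{s}$ is $w$. The word $\flatten(\mathbf{w})=\mathbf{p}\,\mathbf{s}$ has product $w$ and $|\Jump_\ba|-1=|\Jump_{\flatten(\ba)}|$ bars, so it has the right number of bars. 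Two cases remain.
\begin{enumerate}
\item If $\flatten(\ba)=(1,\dots,1)$, then $\mathbf{p}$ is a classical word with no bars. Hence $\mathbf{p}\,\mathbf{s}$ is a classical word for $w$, as required.
\item Otherwise, write $\mathbf{p}=\mathbf{p}'\mid\mathbf{q}$, where $\mathbf{q}$ avoids $s_i$ for $i\in\Jump_{\flatten(\ba)}$ and $\mathbf{p}'$ is a $\flatten(\flatten(\ba))$-tilted word for a $\flatten(\ba)$-flattenable permutation. Then $\flatten(\mathbf{w})=\mathbf{p}'\mid\mathbf{q}\,\mathbf{s}$. Its last segment $\mathbf{q}\,\mathbf{s}$ avoids $\Jump_{\flatten(\ba)}$, because flattening only merges the first $\jmin$ entries, so $\Jump_{\flatten(\ba)}=\Jump_\ba\setminus\{\jmin\}\subseteq\Jump_\ba$. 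The prefix before its last bar is still $\mathbf{p}'$, so condition (2) holds unchanged.
\end{enumerate}
No deeper induction is needed.

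\textbf{Reducedness when $w$ is $\ba$-flattenable.} The key observation is a converse construction. Take any $\flatten(\ba)$-tilted word $\mathbf{x}$ for $w$ and append a bar at the very end, giving $\mathbf{x}\mid$. This is an $\ba$-tilted word for $w$. Condition (1) holds vacuously, since the segment after the last bar is empty. Condition (2) holds because the part before the last bar is a $\flatten(\ba)$-tilted word for $w$ itself, and $w$ is $\ba$-flattenable by hypothesis. Combining the two directions:
\begin{enumerate}
\item The bar-appending map shows $\ell^\word_\ba(w)\le \ell^\word_{\flatten(\ba)}(w)+1$.
\item The first part, which lowers length by exactly one, shows $\ell^\word_{\flatten(\ba)}(w)\le \ell^\word_\ba(w)-1$.
\end{enumerate}
So $\ell^\word_\ba(w)=\ell^\word_{\flatten(\ba)}(w)+1$, and $\mathbf{w}$ has length $\ell^\word_\ba(w)$ if and only if $\flatten(\mathbf{w})$ has length $\ell^\word_{\flatten(\ba)}(w)$. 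That is exactly the claimed equivalence of reducedness.

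\textbf{Main obstacle.} The only delicate point is whether \Cref{def:tiltedword} allows a bar at the very end of the word, i.e.\ an empty final segment. I will read ``bars inserted between positions'' as allowing this, consistent with the recursive definition. If a trailing bar were forbidden, the fallback would be to move the last bar leftward past a final letter $s_i$ with $i\notin\Jump_\ba$. To justify that, I would use that $\ba$-flattenable permutations are closed under right multiplication by such $s_i$ with $i>\jmin$. I expect this step to need the most care about the conventions.
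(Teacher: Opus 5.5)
Your proposal is correct and follows essentially the same route as the paper. Validity of $\flatten(\mathbf{w})$ is read off directly from the recursive definition, and the reducedness equivalence comes from appending a trailing bar to a $\flatten(\ba)$-tilted word for the $\ba$-flattenable $w$; the paper phrases this as a contradiction argument rather than as your two length inequalities, and it uses the same trailing-bar move again in the length and subword properties, so your fallback is unnecessary.
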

\begin{proof}
    It is clear from \Cref{def:tiltedword} that $\flatten(\mathbf{w})$ is a $\flatten(\ba)$-tilted word for $w$. Suppose $\mathbf{w}$ is reduced and $\flatten(\mathbf{w})$ is not, then there exists a shorter $\flatten(\ba)$-tilted word $\mathbf{w}'$ for $w$. Appending a bar to the end of $\mathbf{w}'$ yields an $\ba$-tilted word for $w$ shorter than $\mathbf{w}$, contradicting the reducedness of $\mathbf{w}$. The opposite direction follows by a similar argument.
\end{proof}

We can now prove the length property.

\begin{theorem}[Length Property]\label{thm:length-property}
    Let $\ba \in [n]^n$. The $\ba$-tilted word length $\ell^\word_\ba(w)$ defines a rank function for the $\ba$-tilted Bruhat order $\lesssim_\ba$. In other words, for any covering relation $w' \lesssimdot_\ba w$, we have $\ell^\word_\ba(w)=\ell^\word_\ba(w')+1$.
\end{theorem}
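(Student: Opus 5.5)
We argue by induction on $\jasize$, following the recursive structure of \Cref{def:tiltedword}. In the base case $\ba=(1,1,\dots,1)$, $\lesssim_\ba$ is the classical Bruhat order, $\ell^\word_\ba(w)=\ell(w)$, and the claim is the classical length property. For the inductive step the plan is to first obtain an explicit formula for $\ell^\word_\ba(w)$, and then compare it with the rank function $\ell_\ba$ from \Cref{thm:length-rank-function}.

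\textbf{Step 1: an explicit formula.} Let $w'$ be obtained from $w$ by sorting its first $\jmin$ entries increasingly with respect to $<_{a_1}$, as in the proof of \Cref{lemma:a-tilted-word-exist}. We aim to show
\[
\ell^\word_\ba(w)=\ell^\word_{\flatten(\ba)}(w')+1+\ell\bigl((w')^{-1}w\bigr).
\]
The inequality $\le$ is exactly the construction in \Cref{lemma:a-tilted-word-exist}. For $\ge$, take any $\ba$-tilted word $\mathbf{x}\mid\mathbf{y}$ for $w$. Its prefix is a $\flatten(\ba)$-word for some $\ba$-flattenable $x$, and $\mathbf{y}$ only uses $s_i$ with $i<\jmin$ (it avoids $\jmin$ itself and all larger jumps). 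Hence $w=xy$ with $y\in S_{\jmin}$, and so $x$ and $w$ share the same first $\jmin$ values as a set. Two facts then need to be shown.
\begin{enumerate}
\item Among the $\ba$-flattenable permutations with this prescribed set of first-$\jmin$ entries, $w'$ minimizes $\ell^\word_{\flatten(\ba)}(x)+\ell(x^{-1}w)$.
\item $\ell^\word_{\flatten(\ba)}(x)\ge \ell^\word_{\flatten(\ba)}(w')+\ell(x^{-1}w')$.
\end{enumerate}
For (2), I would iterate the formula down the chain of \Cref{construction:tiltedreducedword}. This expresses $\ell^\word_\ba(w)$ as $\jasize$ plus a sum of ordinary lengths $\ell\bigl((w^{(k+1)})^{-1}w^{(k)}\bigr)$, each of which counts the inversions of $w$ within the first $\jump_k$ positions relative to the order $<_{a_{\jump_k}}$, minus corrections. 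This closed form should also settle the minimization in (1) directly, since the flattenable permutations differ from $w'$ only by permuting within the two blocks of \Cref{def:a-flattenable}.

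\textbf{Step 2: compare with $\ell_\ba$.} Using the closed form, show that $\ell^\word_\ba(w)-\ell_\ba(w)$ depends only on the $\sim_\ba$-class of $w$, that is, on the counts $\size{w[k]\cap[a_k,a_{k+1})_c}$. Concretely, both quantities count pairs $i<j$ with $w_i>_{r}w_j$ for suitable shifted orders $r$. They differ only in whether $r=a_i$ or $r=a_{\jump}$ for the jump $\jump$ governing the block containing $i$. Such a discrepancy occurs exactly when $w_j$ lies between the two thresholds cyclically, and these are counted by the interval-intersection numbers fixed by $\sim_\ba$. Since a covering $w'\lesssimdot_\ba w$ forces $w'\sim_\ba w$, \Cref{thm:length-rank-function} then gives $\ell^\word_\ba(w)-\ell^\word_\ba(w')=\ell_\ba(w)-\ell_\ba(w')=1$.

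\textbf{Main obstacle.} The hardest part is the lower bound in Step 1, especially (1)–(2): ruling out a cheaper word that passes through a non-sorted flattenable intermediate permutation. If the direct counting proves awkward, a fallback is to use \Cref{lemma:u<flatten(a)v} and \Cref{lemma:tildef(u)} to reduce to the flattenable case, where $\ell^\word_\ba=\ell^\word_{\flatten(\ba)}+1$. The non-flattenable case would then be handled by induction on $\ell((w')^{-1}w)$, using \Cref{cor:lifting} with $s_i$, $i<\jmin$, which is admissible because $a_i=a_{i+1}$ there.
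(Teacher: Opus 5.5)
Your route is genuinely different from the paper's, and it can be made to work. The paper never computes $\ell^\word_\ba$. It takes a minimal counterexample $w'\lesssimdot_\ba w$, minimizing $\ell^\word_\ba(w)$, and splits on the last factor of an $\ba$-tilted reduced word for $w$:
\begin{itemize}
\item a final bar passes to $\flatten(\ba)$ via \Cref{lemma:u<flatten(a)v} and \Cref{lemma:tildef(u)};
\item a final $s_i$ with $i\in\Asc_\ba(w)$ is immediately contradictory;
\item a final $s_i$ with $i\in\Des_\ba(w)$ is handled by \Cref{cor:lifting} and \Cref{thm:lifting}, which give $w's_i\lesssimdot_\ba ws_i$. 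Your fallback is essentially this argument.
\end{itemize}
You instead prove the closed form $\ell^\word_\ba(w)=\jasize+\sum_k\ell\bigl((w^{(k+1)})^{-1}w^{(k)}\bigr)$ along the chain of \Cref{construction:tiltedreducedword}, and then reduce to \Cref{thm:length-rank-function}. This costs more bookkeeping but gives more. It shows directly that the words of \Cref{construction:tiltedreducedword} are reduced. It also shows that $\ell^\word_\ba-\ell_\ba$ is constant on every $\sim_\ba$-class, which the paper's argument does not give: that argument only controls covers, and the classes are only conjecturally connected.

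Two steps need repair.

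\textbf{First repair (Step 1).} $\mathbf{y}$ avoids only the jump indices, so it may contain $s_i$ with $i>\jmin$ not a jump. Thus $y$ lies in the Young subgroup of the blocks $[1,\jump_1],[\jump_1+1,\jump_2],\dots$, not in $S_{\jmin}$.
\begin{itemize}
\item Write $y=y_1y_2$ with $y_1\in S_{\jmin}$, and append a reduced word of $y_2$ to $\mathbf{x}$. This is still a $\flatten(\ba)$-tilted word, now for the flattenable permutation $xy_2$, so you may assume $y\in S_{\jmin}$.
\item Then $x=w'z$ with $z\in S_p\times S_{\jmin-p}$. The inductive closed form gives (2) as an equality, $\ell^\word_{\flatten(\ba)}(w'z)=\ell^\word_{\flatten(\ba)}(w')+\ell(z)$. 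Only the first chain term changes, because inside each of the position blocks $[1,p]$ and $[p+1,\jmin]$ the permutation $w'$ is also increasing for $<_{a_{\jmin+1}}$.
\item Claim (1) then follows from $\ell((w')^{-1}w)\le\ell(z)+\ell(x^{-1}w)$, so no separate minimization is needed.
\end{itemize}

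\textbf{Second repair (Step 2).} Your description is not accurate. The closed form counts a cross-block pair $i<j$ with respect to the threshold of the block of $j$, whereas $\ell_\ba$ uses $a_i$. The discrepancy of a pair depends on whether $w_i$ and $w_j$ are separated by the two thresholds, not on $w_j$ alone. Same-block pairs contribute identically to both counts and cancel.

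Write $r_m$ for the value of $\ba$ on the $m$-th block $B_m$, with $r_{t+1}=1$ and $t=\jasize$. For blocks $B_{m'}$ before $B_m$, set $I=[r_{m'},r_m)_c$. Their total contribution is $|B_m|\,\size{w(B_{m'})\cap I}-|B_{m'}|\,\size{w(B_m)\cap I}$, since the quadratic terms cancel. This quantity is not itself a $\sim_\ba$-invariant.

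To finish, telescope $\chi_I=\sum_{j=m'}^{m-1}\chi_{[r_j,r_{j+1})_c}-W$, where $W$ is a constant independent of the point; it cancels in the difference above. Summing over all pairs of blocks gives $\sum_{j=1}^{t}\bigl(nc_j-\jump_j\,|[r_j,r_{j+1})_c|\bigr)$, where $c_j:=\size{w[\jump_j]\cap[r_j,r_{j+1})_c}$. Re-sorting the already sorted prefix contributes the further terms $c_j(\jump_j-c_j)$. Altogether
\[
\ell^\word_\ba(w)-\ell_\ba(w)=t+\sum_{j=1}^{t}\bigl(c_j(\jump_j-c_j)+nc_j-\jump_j\,|[r_j,r_{j+1})_c|\bigr),
\]
which depends only on the $\sim_\ba$-class, as you need.
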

\begin{proof}
    Suppose the statement does not hold in general. Let $w' \lesssimdot_\ba w$ be a minimal counterexample, meaning that $\ell^\word_\ba(w) \neq \ell^\word_\ba(w') + 1$ and $\ell^\word_\ba(w)$ is as small as possible among such counterexamples. Let $\mathbf{w}$ be a $\ba$-tilted reduced word for $w$ of length $\ell^\word_\ba(w)$. We consider the following cases:
    \begin{enumerate}
        \item Suppose $\mathbf{w}$ ends with a bar. Then $w$ is $\ba$-flattenable, so by \Cref{lemma:tildef(u)}, we have $\ell^\word_\ba(w)=\ell^\word_{\flatten(\ba)}(w)+1$. Since $w' \lesssim_\ba w$, \Cref{lemma:u<flatten(a)v} implies that $w'$ is also $\ba$-flattenable and $\ell^\word_\ba(w')=\ell^\word_{\flatten(\ba)}(w')+1$. Therefore, $\ell^\word_{\flatten(\ba)}(w)\neq \ell^\word_{\flatten(\ba)}(w')+1$, which contradicts the minimality of the counterexample, since $w$ and $w'$ would then form a smaller counterexample under the flattened sequence $\flatten(\ba)$.
        \item Suppose $\mathbf{w}$ ends with $s_i$ and $i\in\Asc_\ba(w)$. Removing the last $s_i$ from $\mathbf{w}$ yields an $\ba$-tilted word for $ws_i$, so $\ell^\word_\ba(w)\geq \ell^\word_\ba(ws_i)+1$. Conversely, appending $s_i$ to an $\ba$-tilted reduced word for $w s_i$ gives an $\ba$-tilted word for $w$, so equality must hold. Since $w \lesssimdot_\ba w s_i$ by \Cref{prop:covering}, and $\ell_\ba^\word(ws_i)<\ell_\ba^\word(w)$, this contradicts the minimality of the counterexample.
        \item Suppose $\mathbf{w}$ ends with $s_i$ and $i\in\Des_\ba(w)$. 
        The same reasoning as in case (2) shows that $\ell^\word_\ba(w) = \ell^\word_\ba(w s_i) + 1$. This implies that $w' \neq ws_i$. By \Cref{cor:lifting}, $i \in \Des_\ba(w')$, and applying \Cref{thm:lifting} to the pair $w$ and $w' s_i$ gives $w' s_i \lesssimdot_\ba w s_i$. Since $\ell^\word_\ba(w s_i) < \ell^\word_\ba(w)$, the minimality of the counterexample implies $\ell^\word_\ba(w s_i) = \ell^\word_\ba(w' s_i) + 1$, so that $\ell^\word_\ba(w) = \ell^\word_\ba(w' s_i) + 2$. On the other hand, applying the same argument in case (2) to $w'$ gives $\ell^\word_\ba(w') \leq \ell^\word_\ba(w' s_i) + 1$, which implies $\ell^\word_\ba(w') < \ell^\word_\ba(w)$. By the minimality of the counterexample, we must have $\ell^\word_\ba(w')=\ell^\word_\ba(w's_i)+1=\ell^\word_\ba(w)-1$, which contradicts the assumption that $\ell^\word_\ba(w) \neq \ell^\word_\ba(w') + 1$.
    \end{enumerate}
    In all three cases, we reach a contradiction. Therefore, the statement must hold.
\end{proof}

The following corollary follows immediately from \Cref{thm:length-property}.

\begin{cor}\label{cor:length-property}
    Let $w\in S_n$, $\ba\in [n]^n$, and $i\in [n-1]$. Then
    \[
        \ell^\word_\ba(w)=\begin{cases}
        \ell^\word_\ba(ws_i) + 1 &\text{if }i\in \Des_\ba(w),\\
        \ell^\word_\ba(ws_i) - 1 &\text{if }i\in \Asc_\ba(w).
    \end{cases}\]
    Consequently, there exists an $\ba$-tilted reduced word for $w$ ending with $s_i$ if and only if $i\in\Des_\ba(w)$.
\end{cor}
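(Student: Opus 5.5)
The plan is to read off both cases of the formula from \Cref{thm:length-property}, using that $i\in\Des_\ba(w)\cup\Asc_\ba(w)$ forces $a_i=a_{i+1}$ and hence a covering relation between $w$ and $ws_i$. Suppose $i\in\Des_\ba(w)$, so $a_i=a_{i+1}$ and $w_i>_{a_i}w_{i+1}$. Set $w'=ws_i=wt_{i,i+1}$; then $w'_i<_{a_i}w'_{i+1}$, and we check \Cref{prop:covering}(4) for the pair $w'\lesssimdot_\ba w'\,t_{i,i+1}$.

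\begin{itemize}
\item The second condition of \Cref{prop:covering}(4) is vacuous, since the open index range $(i,i+1)$ is empty.
\item For the first condition, we need $a_k\notin(w'_i,w'_{i+1}]_c$ for $k\in\{i,i+1\}$, and both such $a_k$ equal $a_i$.
\item Writing $x=w'_i$ and $y=w'_{i+1}$, the hypothesis $x<_{a_i}y$ means that, starting from $a_i$ and moving cyclically upward, we meet $x$ before $y$.
\item Hence the cyclic interval $(x,y]_c$ does not wrap past $a_i$: if $a_i$ lay in $(x,y]_c$, then reading cyclically from $a_i$ we would reach $y$ before returning to $x$, a contradiction.
\end{itemize}

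Thus $ws_i\lesssimdot_\ba w$, and \Cref{thm:length-property} gives $\ell^\word_\ba(w)=\ell^\word_\ba(ws_i)+1$. The ascent case is symmetric: apply the descent case to $ws_i$, which satisfies $i\in\Des_\ba(ws_i)$. (The equivalence ``$ws_i\lesssim_\ba w$ iff $i\in\Des_\ba(w)$'' is also stated just after \Cref{def:ascdes}.)

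For the ``consequently'' part, first take $i\in\Des_\ba(w)$. Let $\mathbf{u}$ be an $\ba$-tilted reduced word for $ws_i$, and consider $\mathbf{u}s_i$, which has length $\ell^\word_\ba(ws_i)+1=\ell^\word_\ba(w)$. This is exactly the step used in case (2) of the proof of \Cref{thm:length-property}. We must check that $\mathbf{u}s_i$ is a valid $\ba$-tilted word.

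\begin{itemize}
\item Condition (1) of \Cref{def:tiltedword} holds because $a_i=a_{i+1}$ means $i\notin\Jump_\ba$ (if $i=n$ the claim is vacuous anyway).
\item Condition (2) is inherited from $\mathbf{u}$, since the prefix before the last bar is unchanged.
\end{itemize}

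So $\mathbf{u}s_i$ is a reduced $\ba$-tilted word for $w$ ending in $s_i$.

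Conversely, suppose some reduced $\mathbf{w}$ for $w$ ends in $s_i$. Validity of $\mathbf{w}$ forces $i\notin\Jump_\ba$, so $a_i=a_{i+1}$. This assumes $\Jump_\ba\neq\emptyset$ with a bar present; otherwise $\ba=(1,\dots,1)$ and $a_i=a_{i+1}$ holds trivially. Deleting the final $s_i$ gives a valid $\ba$-tilted word for $ws_i$ of length $\ell^\word_\ba(w)-1$, so $\ell^\word_\ba(ws_i)\le\ell^\word_\ba(w)-1$.

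Since $a_i=a_{i+1}$, exactly one of $i\in\Des_\ba(w)$ or $i\in\Asc_\ba(w)$ holds. In the ascent case the first part gives $\ell^\word_\ba(ws_i)=\ell^\word_\ba(w)+1$, which contradicts the inequality above. Hence $i\in\Des_\ba(w)$.

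The only delicate point is the cyclic-interval check that $a_i\notin(w'_i,w'_{i+1}]_c$. The rest is bookkeeping with \Cref{def:tiltedword}.
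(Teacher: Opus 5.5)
Your proposal is correct and follows the paper's route. The paper simply notes that the corollary is immediate from \Cref{thm:length-property}, using that for $a_i=a_{i+1}$ one has $i\in\Des_\ba(w)$ if and only if $ws_i\lesssimdot_\ba w$ (by \Cref{prop:covering}, whose middle condition is vacuous for adjacent transpositions), together with the same append/delete-$s_i$ argument as in case (2) of that proof; you have just made the cyclic-interval check and the \Cref{def:tiltedword} bookkeeping explicit.
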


We are now ready to prove the subword property.

\begin{theorem}[Subword Property]\label{thm:subword-property}
    Let $u,v\in S_n$. Let $\mathbf{v}$ be an $\ba$-tilted reduced word for $v$. Then $u\lesssim_\ba v$ if and only if there exists a (reduced) subword $\mathbf{u}$ of $\mathbf{v}$ corresponding to $u$. 
\end{theorem}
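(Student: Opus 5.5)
We will mirror the classical proof of the subword property. The induction is on the length of $\mathbf{v}$, with a secondary induction on $\jasize$ so that bars can be stripped off.

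\textbf{Forward direction ($\Leftarrow$).} Suppose $\mathbf{u}$ is a subword of $\mathbf{v}$ corresponding to $u$. We look at the last factor of $\mathbf{v}$.

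If $\mathbf{v}$ ends with a bar, then everything before it is a $\flatten(\ba)$-tilted word. By \Cref{def:tiltedword}, $v$ is $\ba$-flattenable, and by \Cref{lemma:tildef(u)} this prefix is reduced. The subword $\mathbf{u}$ also ends with that bar, so $u$ is $\ba$-flattenable and $\flatten(\mathbf{u})$ is a subword of $\flatten(\mathbf{v})$. For the requirement $u\sim_{\flatten(\ba)}v$, note that $\sim_{\flatten(\ba)}$ only involves the intervals $[a_{\jmin+1},a_{k+1})_c$ for $k>\jmin$ and the trivial ones for $k<\jmin$. These counts are controlled by $u\sim_\ba v$ together with flattenability. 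Induction on $\jasize$ then gives $u\lesssim_{\flatten(\ba)}v$, and the converse part of \Cref{lemma:u<flatten(a)v} upgrades this to $u\lesssim_\ba v$.

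If $\mathbf{v}$ ends with $s_i$, we check that $a_i=a_{i+1}$. Condition (1) of \Cref{def:tiltedword} forbids $i\in\Jump_\ba$ after the last bar; if there is no bar, $\ba$ is constant. Reducedness together with \Cref{cor:length-property} forces $i\in\Des_\ba(v)$, and the prefix $\mathbf{v}'$ is a reduced word for $vs_i\lesssimdot_\ba v$.

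Let $\mathbf{u}'$ be $\mathbf{u}$ with its last factor dropped; it corresponds to $u$ or to $us_i$. We apply induction to $\mathbf{u}'$ inside $\mathbf{v}'$, noting that $us_i\sim_\ba u$ because $a_i=a_{i+1}$. This gives $u\lesssim_\ba vs_i\lesssim_\ba v$ in the first case. In the second case it gives $us_i\lesssim_\ba vs_i$, and we conclude $u\lesssim_\ba v$ by a two-case lifting argument with \Cref{thm:lifting}:
\begin{itemize}
\item if $i\in\Asc_\ba(us_i)$, then $u\lesssim_\ba (vs_i)s_i=v$;
\item otherwise $u\lesssim_\ba us_i\lesssim_\ba v$.
\end{itemize}

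\textbf{Reverse direction ($\Rightarrow$).} Suppose $u\lesssim_\ba v$; we construct $\mathbf{u}$, again splitting on the last factor of $\mathbf{v}$.

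If $\mathbf{v}$ ends with a bar, then $v$ is flattenable. \Cref{lemma:u<flatten(a)v} gives $u\lesssim_{\flatten(\ba)}v$ and $u$ flattenable. Induction on $\jasize$ produces a subword of the prefix, and appending the bar yields a valid $\ba$-tilted subword, since $u$ is flattenable and $u\sim_\ba v$.

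If $\mathbf{v}$ ends with $s_i$, then $i\in\Des_\ba(v)$ and $a_i=a_{i+1}$.
\begin{itemize}
\item If $i\in\Des_\ba(u)$, then \Cref{thm:lifting} (applied after comparing $us_i$ with $vs_i$) gives $us_i\lesssim_\ba vs_i$. We take a subword for $us_i$ in $\mathbf{v}'$ and keep the final $s_i$.
\item If $i\in\Asc_\ba(u)$, then \Cref{thm:lifting} gives $u\lesssim_\ba vs_i$. We take a subword in $\mathbf{v}'$ and replace the last factor by $1$.
\end{itemize}

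The first case needs $us_i\lesssim_\ba vs_i$, which is the classical ``Z-property''. I expect it to follow by applying \Cref{thm:lifting} to the pair $us_i\lesssim_\ba u\lesssim_\ba v$ and then combining with the rank function of \Cref{thm:length-rank-function}. This step is the main obstacle, since \Cref{thm:lifting} is stated only for $u$ in $\Asc_\ba$. Failing that, I would prove the Z-property directly via the lattice-path argument used in \Cref{thm:lifting}.

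A further delicate point is checking that the subwords obtained satisfy condition (1) of \Cref{def:tilted-subword}, i.e.\ that the pieces before each bar correspond to flattenable permutations at every level of the recursion. This should follow from the first statement of \Cref{lemma:u<flatten(a)v} applied level by level.

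Reducedness of $\mathbf{u}$ (the parenthetical in the statement) can be arranged by \Cref{thm:length-property}. We choose, at each step, the option that keeps the number of non-identity factors plus bars equal to $\ell^\word_\ba(u)$, which the length bookkeeping above permits.
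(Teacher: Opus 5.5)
Your proposal is correct and follows the paper's proof essentially step for step: induction on the reduced word, splitting on whether $\mathbf{v}$ ends in a bar (handled by \Cref{lemma:u<flatten(a)v} and \Cref{lemma:tildef(u)}) or in $s_i$ (handled by \Cref{thm:lifting}). The step you flag as the main obstacle is actually immediate. Since $i\in\Des_\ba(u)$ gives $us_i\lesssim_\ba u\lesssim_\ba v$ and $i\in\Asc_\ba(us_i)\cap\Des_\ba(v)$, the second conclusion of \Cref{thm:lifting} applied to the pair $(us_i,v)$ is exactly $us_i\lesssim_\ba vs_i$, with no rank function or lattice-path argument needed; the same pair $(us_i,v)$, reached via $us_i\lesssim_\ba vs_i\lesssim_\ba v$, is also what justifies your first bullet in the other direction, rather than lifting on $(us_i,vs_i)$.
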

\begin{proof}
    We proceed by induction on $\ell^\word_\ba(v)$. In the base case $\ell^\word_\ba(v)=0$, which corresponds to $\ba=(1,1,\dots,1)$ and $v=\id$, the statement is immediate. For the inductive step, we first prove the $\impliedby$ direction. Let $\mathbf{v}'$ and $\mathbf{u}'$ denote the tilted words obtained by removing the last factor from $\mathbf{v}$ and $\mathbf{u}$, respectively. There are two cases:
    \begin{enumerate}
        \item Suppose $\mathbf{v}$ ends with a bar. Then so does $\mathbf{u}$, and hence $v$ and $u$ are $\ba$-flattenable. By the induction hypothesis, $u\lesssim_{\flatten(\ba)}v$, and since $u\sim_\ba v$, it follows from \Cref{lemma:u<flatten(a)v} that $u\lesssim_{\ba}v$.
        \item Suppose $\mathbf{v}$ ends with $s_i$. Then $i\in \Des_\ba(v)$, and $\mathbf{v}'$ is an $\ba$-reduced word for $vs_i$. Since $\mathbf{u}'$ is a subword of $\mathbf{v}'$ corresponding to either $u$ or $us_i$, the induction hypothesis gives either $u\lesssim_\ba vs_i$ or $u\lesssim_\ba vs_i$. In either case, applying \Cref{thm:lifting} gives $u\lesssim_\ba v$.
    \end{enumerate}
    We now prove the $\implies$ direction. Again, there are two cases:
    \begin{enumerate}
        \item Suppose $\mathbf{v}$ ends with a bar. Then $v$ is $\ba$-flattenable. Since $u\lesssim_\ba v$, by \Cref{lemma:u<flatten(a)v}, $u$ is also $\ba$-flattenable. By the induction hypothesis, there exists a (reduced) subword $\mathbf{u}'$ of the $\flatten(\ba)$-reduced word $\mathbf{v}'$, then $\mathbf{u}:=\mathbf{u}'\mid$ is the desired (reduced) subword of $\mathbf{v}$.
        \item Suppose $\mathbf{v}$ ends with $s_i$. Then $i\in \Des_\ba(v)$, and $\mathbf{v}'$ is an $\ba$-reduced word for $vs_i$. If $i\in \Des_\ba(u)$, then by \Cref{thm:lifting}, $us_i\lesssim_\ba vs_i$. By the induction hypothesis, there exists a (reduced) subword $\mathbf{u}'$ of $\mathbf{v}'$ corresponding to $u s_i$, and setting $\mathbf{u} := \mathbf{u}' s_i$ gives the desired (reduced) subword. If $i\in \Asc_\ba(u)$, then by \Cref{thm:lifting}, $u\lesssim_\ba vs_i$. By the induction hypothesis, there exists a (reduced) subword $\mathbf{u}'$ of $\mathbf{v}'$ corresponding to $u$, and setting $\mathbf{u} := \mathbf{u}' 1$ gives the desired (reduced) subword. 
    \end{enumerate}
\end{proof}

Finally, we are ready to prove the word property.

\begin{theorem}[Word Property]\label{thm:word-property}
    Any two $\ba$-tilted reduced words for the same permutation $w$ are connected by a sequence of moves involving the following relations:
    \begin{itemize}
        \item (Braid relation) $s_is_j=s_js_i$ if $|i-j|>1$,
        \item (Braid relation) $s_is_{i+1}s_i=s_{i+1}s_is_{i+1}$,
        \item (Bar relation) $s_i\mid\;=\;\mid s_i$. (Note: this move may result in an invalid $\ba$-tilted word. In such cases, the move is disallowed.)
    \end{itemize}
\end{theorem}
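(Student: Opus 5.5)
We argue by induction on $\ell^\word_\ba(w)$, following the structure of Matsumoto--Tits' proof of the classical word property, with the bar playing the role of an extra generator. Let $\mathbf{w}_1,\mathbf{w}_2$ be two $\ba$-tilted reduced words for $w$. The base case $\ba=(1,\dots,1)$ is the classical word property (stated in \Cref{sec:prelim-1}). In general, each of $\mathbf{w}_1,\mathbf{w}_2$ ends either with a bar or with some $s_i$. By \Cref{cor:length-property}, if it ends with $s_i$ then $i\in\Des_\ba(w)$. If it ends with a bar then $w$ is $\ba$-flattenable, and by \Cref{lemma:tildef(u)} the word obtained by deleting the last bar is a $\flatten(\ba)$-tilted reduced word for $w$.

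\textbf{Equal last factors.} If both words end with the same factor, we strip it. If the factor is a bar, we obtain two $\flatten(\ba)$-tilted reduced words for $w$; these are connected by induction on $\jasize$, and every intermediate word, with a bar reappended, is still a valid $\ba$-tilted word. If the factor is $s_i$, we obtain two $\ba$-tilted reduced words for $ws_i$ (the length drops by one by \Cref{cor:length-property}), and these are connected by induction on length. Reappending $s_i$ preserves validity, because $i\notin\Jump_\ba$: indeed $i\in\Des_\ba(w)$ forces $a_i=a_{i+1}$.

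\textbf{Different last factors.} The main step is to show that whenever two admissible last factors occur, $w$ has an $\ba$-tilted reduced word ending with the relevant commuting or braid pattern, so that we can pass from one ending to the other.
\begin{enumerate}
\item \emph{Two different descents $s_i,s_j$.} We use the classical argument inside the $\lesssim_\ba$ world. Since $a_i=a_{i+1}$ and $a_j=a_{j+1}$, the simple reflections act within blocks where $<_{a}$ is a linear order. Hence the parabolic subgroup $\langle s_i,s_j\rangle$ behaves as in the classical case, and $w$ has an $\ba$-tilted reduced word ending in the longest element of $\langle s_i,s_j\rangle$, written either way. Concretely, \Cref{cor:length-property} applied repeatedly shows $w = w''\cdot m_{ij}$ with lengths adding. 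Both alternating words are then reduced, and we connect each $\mathbf{w}_k$ to them by the equal-last-factor case plus one braid move.
\item \emph{A bar and a descent $s_i$.} Here $w$ is $\ba$-flattenable and $i\in\Des_\ba(w)$. We claim that $ws_i$ is still $\ba$-flattenable, or that $i>\jmin$. In the latter case the positions moved by $s_i$ are untouched by flattenability, so $ws_i$ is $\ba$-flattenable as well, and $\ell^\word_\ba$ drops by one. Then $w$ has a reduced word of the form $\mathbf{u}\,s_i\mid$ as well as $\mathbf{u}\mid s_i$, and these are related by one bar move. If instead $i<\jmin$, a descent within the first $\jmin$ positions with $a_i=a_{i+1}$ must be checked to preserve flattenability. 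We expect this to follow from $w_i>_{a_i}w_{i+1}$ together with the two-block structure of \Cref{def:a-flattenable}: a descent cannot occur at the boundary position $p$ in a way that breaks the shape, and swapping inside a block preserves it. The case $i=\jmin$ cannot occur, since $\jmin\in\Jump_\ba$.
\end{enumerate}

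We expect the main obstacle to be the bar-versus-descent case, specifically verifying that $ws_i$ remains $\ba$-flattenable and that moving the bar past $s_i$ keeps every intermediate word valid (condition (1) of \Cref{def:tiltedword} at each recursive level). We would first handle it via \Cref{lemma:u<flatten(a)v} applied to the covering $ws_i\lesssimdot_\ba w$. That lemma gives that $ws_i$ is $\ba$-flattenable and $ws_i\lesssim_{\flatten(\ba)}w$, which should yield the required reduced word $\mathbf{u}\,s_i\mid$ via \Cref{thm:subword-property}.
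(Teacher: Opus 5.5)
Your proposal is correct and is essentially the paper's proof. It inducts on $\ell^\word_\ba(w)$ and treats three cases: equal last factors; two descents, handled by a reduced word ending in $s_is_j$ or $s_is_{i+1}s_i$ obtained from \Cref{cor:length-property}; and a bar versus a descent, where \Cref{lemma:u<flatten(a)v} applied to $ws_i\lesssim_\ba w$ shows that $ws_i$ is $\ba$-flattenable. That lemma settles the last case uniformly, so your split into $i>\jmin$ and $i<\jmin$ is unnecessary, and the intermediate word $\mathbf{u}\mid s_i$ comes from \Cref{lemma:tildef(u)} and \Cref{cor:length-property} (take a reduced word for $ws_i$ ending in a bar and append $s_i$) rather than from the subword property.
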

\begin{proof}
    We write $\mathbf{w}\sim\mathbf{w}'$ if the two $\ba$-tilted reduced words for $w$ are related by such moves. We proceed by induction on $\ell^\word_\ba(w)$. In the base case $\ell^\word_\ba(w)=0$, which corresponds to $\ba=(1,1,\dots,1)$ and $w=\id$, the statement is immediate. For the inductive step, we consider the following cases:
    \begin{enumerate}
        \item Suppose both words end in the same factor. Then the claim follows directly from the induction hypothesis.
        \item Suppose one word ends with $s_i$ and the other with $s_j$, where $|i-j|>1$. Denote the two words as $\mathbf{w} s_i$ and $\mathbf{w}' s_j$. Since $i, j \in \Des_\ba(w)$, by \Cref{cor:length-property} there exists an $\ba$-tilted reduced word of the form $\mathbf{w}'' s_i s_j$. By the induction hypothesis, we have:
        \[\mathbf{w}s_i\sim\mathbf{w}''s_js_i\sim\mathbf{w}''s_is_j\sim\mathbf{w}'s_j.\]
        \item Suppose one word ends with $s_i$ and the otherwith $s_{i+1}$. Denote the two words as $\mathbf{w} s_i$ and $\mathbf{w}' s_{i+1}$. Then $a_i=a_{i+1}=a_{i+2}$ and $w_{i}>_{a_i}w_{i+1}>_{a_i}w_{i+2}$. By \Cref{cor:length-property}, there exists an $\ba$-tilted reduced word of the form $\mathbf{w}'' s_i s_{i+1} s_i$, and the induction hypothesis gives:
        \[\mathbf{w}s_i\sim\mathbf{w}''s_is_{i+1}s_i\sim\mathbf{w}''s_{i+1}s_is_{i+1}\sim\mathbf{w}'s_{i+1}.\]
        \item Suppose one word ends with $s_i$ and the other with a bar. Denote the two words as $\mathbf{w} s_i$ and $\mathbf{w}' \mid\;$. Then $w$ and $w s_i$ are both $\ba$-flattenable by \Cref{lemma:u<flatten(a)v}. By \Cref{cor:length-property}, there exists a $\ba$-tilted reduced word of the form $\mathbf{w}'' \mid s_i$, and the induction hypothesis gives:
        \[\mathbf{w}s_i\sim\mathbf{w}''\mid s_i\sim\mathbf{w}''s_i\mid\;\sim\mathbf{w}'\mid\;.\qedhere\]
    \end{enumerate}
\end{proof}

%% file: tex/4-richardson-def.tex
\section{Definitions of Tilted Richardson Varieties}\label{sec:tilted-richardson-definition}

In this section, we define \emph{tilted Richardson varieties} $\cT_{u,v}$ and \emph{tilted Richardson varieties} $\cT_{u,v}^\circ$ for any pair of permutations $u,v\in S_n$. We provide four equivalent definitions, using:
\begin{enumerate}
\item rank conditions on certain submatrices (\Cref{sec:tilted-def-1});
\item intersections of cyclically rotated Grassmannian Richardson varieties (\Cref{sec:tilted-def-2});
\item vanishing loci of multi-Pl\"ucker coordinates (\Cref{sec:tilted-def-3});
\item intersections of two opposite tilted Schubert cells (\Cref{sec:tilted-def-4}).
\end{enumerate}
As an intermediate step, we first introduce $\cT^\circ_{u,v,\ba}$ and $\cT_{u,v,\ba}^\circ$, defined for permutations $u,v\in S_n$ and an integer sequence $\ba=(a_1,\dots,a_n)\in [n]^n$. We then show that these definitions are independent of the choice of the sequence $\ba$ as long as $u\leq_\ba v$, thereby justifying the omission of $\ba$ in our final definitions.

\subsection{Definition via Rank Conditions}\label{sec:tilted-def-1}

We recall the notion of \emph{cyclic intervals} $[a, b)_c$, defined earlier in \Cref{def:cyclicinterval}.

\begin{defin}\label{def:main}
Let $F_\bullet\in \fl_n$ be a flag represented by a matrix $M_F\in G$. For any subset $S\subseteq [n]$ and integer $k\in [n]$, define $\rank_S(F_k)$ as the rank of the submatrix of $M_F$ consisting of the rows indexed by $S$ and the first $k$ columns.  Given permutations $u,v\in S_n$ and a sequence $\ba\in [n]^n$, we define the \emph{tilted Richardson variety} $\cT_{u,v,\ba}$ as follows:
\begin{align*}
    \cT_{u,v,\ba} := \left\{F_{\bullet} \in \fl_n:
    \begin{array}{c}
         \rank_{[a_k,i)_c}(F_k)\leq \size{u[k]\cap [a_k,i)_c}   \\
         \rank_{[i,a_k)_c}(F_k)\leq \size{v[k]\cap [i,a_k)_c} 
    \end{array}, 
    \forall i,k\in [n]
     \right\},
\end{align*}
and define the \emph{open tilted Richardson variety} $\cT_{u,v,\ba}^\circ$ as follows:
\begin{align*}
    \cT_{u,v,\ba}^\circ := \left\{F_{\bullet} \in \fl_n:
    \begin{array}{c}
         \rank_{[a_k,i)_c}(F_k)= \size{u[k]\cap [a_k,i)_c}   \\
         \rank_{[i,a_k)_c}(F_k)= \size{v[k]\cap [i,a_k)_c} 
    \end{array}, 
    \forall i,k\in [n]
     \right\}.
\end{align*}
\end{defin}

We note that these rank functions are well-defined, as they are independent of the choice of the matrix representative $M_F$. Moreover, the right-hand sides of the inequalities correspond exactly to the rank values computed from the permutation matrices $u$ and $v$.
\begin{ex}\label{ex:4321-3142} 
Let $u = 4231, v = 3142$ and $\ba = (4,2,2,3)$. In \Cref{fig:rank-ex}, the $\star$ and $\bullet$ represent $u$ and $v$, respectively. For each $k\in [4]$, the red horizontal line in column $k$ indicates the cutoff of $[n]$ under the order $\leq_{a_k}$ .

For a flag $F_\bullet \in \cT_{u,v,\ba}$, there are $8$ rank conditions imposed on $F_2$ according to \Cref{def:main}. Each of these conditions corresponds to rank conditions on submatrices of $M_{F}$ in the first two columns with (cyclically) consecutive rows that start or end at the red line. For example, the condition $\rank_{\{1,2,3\}}(F_2)\leq 2 = \size{v[2]\cap\{1,2,3\}}$ corresponds to the shaded submatrix in \Cref{fig:4321-3142} having rank at most $2$, the number of $\bullet$ in that region. Similarly, the condition $\rank_{\{2,3\}}(F_2)\leq 1 = \size{u[2]\cap \{2,3\}}$ corresponds to the shaded submatrix in \Cref{fig:4321-3142.2} having rank at most $1$, the number of $\star$ in that region.
\begin{figure}[ht]
\centering
\subcaptionbox{$\rank_{\{3,4,1\}}(F_2)\leq 2$\label{fig:4321-3142}}[.4\textwidth]{
    \begin{tikzpicture}[scale=0.7]
    \fill [green, opacity  = 0.25] (0,3) rectangle (2,4);
    \fill [green, opacity  = 0.25] (0,0) rectangle (2,2);
    \draw (0,0)--(4,0)--(4,4)--(0,4)--(0,0);
    \draw (1,0) -- (1,4);
    \draw (2,0) -- (2,4);
    \draw (3,0) -- (3,4);
    \draw (0,1) -- (4,1);
    \draw (0,2) -- (4,2);
    \draw (0,3) -- (4,3);
    \node at (-0.5,0.5) {$4$};
    \node at (-0.5,1.5) {$3$};
    \node at (-0.5,2.5) {$2$};
    \node at (-0.5,3.5) {$1$};
    \node at (0.5,-0.5) {$1$};
    \node at (1.5,-0.5) {$2$};
    \node at (2.5,-0.5) {$3$};
    \node at (3.5,-0.5) {$4$};
    \node[orange] at (0.5,0.5) {$\star$};
    \node[orange] at (1.5,2.5) {$\star$};
    \node[orange] at (2.5,1.5) {$\star$};
    \node[orange] at (3.5,3.5) {$\star$};
    \node[blue] at (0.5,1.5) {$\bullet$};
    \node[blue] at (1.5,3.5) {$\bullet$};
    \node[blue] at (2.5,0.5) {$\bullet$};
    \node[blue] at (3.5,2.5) {$\bullet$};
    \draw[line width=0.55mm, red] (0,1) -- (1,1);
    \draw[line width=0.55mm, red] (1,3) -- (2,3);
    \draw[line width=0.55mm, red] (2,3) -- (3,3);
    \draw[line width=0.55mm, red] (3,2) -- (4,2);
    \end{tikzpicture}
    }
\subcaptionbox{$\rank_{\{2,3\}}(F_2)\leq 1$\label{fig:4321-3142.2}}[.4\textwidth]{
    \begin{tikzpicture}[scale=0.7]
    \fill [green, opacity  = 0.25] (0,1) rectangle (2,3);
    \draw (0,0)--(4,0)--(4,4)--(0,4)--(0,0);
    \draw (1,0) -- (1,4);
    \draw (2,0) -- (2,4);
    \draw (3,0) -- (3,4);
    \draw (0,1) -- (4,1);
    \draw (0,2) -- (4,2);
    \draw (0,3) -- (4,3);
    \node at (-0.5,0.5) {$4$};
    \node at (-0.5,1.5) {$3$};
    \node at (-0.5,2.5) {$2$};
    \node at (-0.5,3.5) {$1$};
    \node at (0.5,-0.5) {$1$};
    \node at (1.5,-0.5) {$2$};
    \node at (2.5,-0.5) {$3$};
    \node at (3.5,-0.5) {$4$};
    \node[orange] at (0.5,0.5) {$\star$};
    \node[orange] at (1.5,2.5) {$\star$};
    \node[orange] at (2.5,1.5) {$\star$};
    \node[orange] at (3.5,3.5) {$\star$};
    \node[blue] at (0.5,1.5) {$\bullet$};
    \node[blue] at (1.5,3.5) {$\bullet$};
    \node[blue] at (2.5,0.5) {$\bullet$};
    \node[blue] at (3.5,2.5) {$\bullet$};
    \draw[line width=0.55mm, red] (0,1) -- (1,1);
    \draw[line width=0.55mm, red] (1,3) -- (2,3);
    \draw[line width=0.55mm, red] (2,3) -- (3,3);
    \draw[line width=0.55mm, red] (3,2) -- (4,2);
    \end{tikzpicture}
}
\caption{Rank conditions on $\cT_{u,v,\ba}$ for $u = 4231$, $v = 3142$, and $\ba=(4,2,2,3)$}
\label{fig:rank-ex}    
\end{figure}
\end{ex}

\begin{remark}
    If $u\leq v$ in the Bruhat order, which corresponds to the special case $\ba = (1,\dots, 1)$, then the tilted Richardson varieties $\cT_{u,v,\ba}^\circ$ and $\cT_{u,v,\ba}$ recover the classical Richardson varieties $\cR_{u,v}^\circ$ and $\cR_{u,v}$, respectively.
\end{remark}

The following theorem gives a necessary condition for the nonemptiness of tilted Richardson varieties in terms of the $\ba$-tilted Bruhat order $\leq_\ba$ (see \Cref{def:tilted-bruhat-order}) and also characterizes their $T$-fixed points.

\begin{theorem}\label{thm:tilted-T-fixed-point}
    For $u,v\in S_n$, if $\cT_{u,v,\ba} \neq\emptyset$ or $\cT_{u,v,\ba}^\circ \neq\emptyset$, then $u\leq_\ba v$. Furthermore, if $u\leq_\ba v$, the set of $T$-fixed points in $\cT_{u,v,\ba}$ corresponds exactly to the elements of the tilted Bruhat interval $[u,v]$.
\end{theorem}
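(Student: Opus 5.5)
Both claims reduce to elementary statements about flags and permutation matrices, so I will argue directly from \Cref{def:main}.

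\textbf{Nonemptiness implies $u\leq_\ba v$.} Since $\cT^\circ_{u,v,\ba}\subseteq\cT_{u,v,\ba}$, it suffices to treat a flag $F_\bullet\in\cT_{u,v,\ba}$. Fix $k$. The column span $F_k$ has dimension $k$. For any $i$, the rows of $[n]$ split into $[a_k,i)_c\sqcup[i,a_k)_c$, and rank is subadditive over a row partition. This gives
\[k=\rank_{[n]}(F_k)\leq \rank_{[a_k,i)_c}(F_k)+\rank_{[i,a_k)_c}(F_k)\leq \size{u[k]\cap[a_k,i)_c}+\size{v[k]\cap[i,a_k)_c}.\]
Since $\size{v[k]\cap[i,a_k)_c}=k-\size{v[k]\cap[a_k,i)_c}$, we obtain $\size{v[k]\cap[a_k,i)_c}\leq\size{u[k]\cap[a_k,i)_c}$ for every $i$. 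The sets $[a_k,i)_c$ are exactly the initial segments of the shifted order $\leq_{a_k}$. This counting inequality on every initial segment is the standard reformulation of the Gale order, so $u[k]\leq_{a_k}v[k]$. Equivalently, the lattice path $\p(u[k],v[k])$ stays weakly above height $0$ when read starting at $a_k$, as in \Cref{lemma:r-comparable}. Since this holds for all $k$, we get $u\leq_\ba v$.

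\textbf{Characterizing $T$-fixed points.} The $T$-fixed points of $\fl_n$ are the permutation flags $e_w$, $w\in S_n$. For $e_w$, the rank of the rows $S$ in the first $k$ columns is $\size{w[k]\cap S}$. Hence $e_w\in\cT_{u,v,\ba}$ if and only if, for all $i,k$,
\[\size{w[k]\cap[a_k,i)_c}\leq\size{u[k]\cap[a_k,i)_c}\quad\text{and}\quad\size{w[k]\cap[i,a_k)_c}\leq\size{v[k]\cap[i,a_k)_c}.\]
Both families of inequalities are conditions on initial segments of $\leq_{a_k}$. The first says $u[k]\leq_{a_k}w[k]$. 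Using complements $\size{w[k]\cap[i,a_k)_c}=k-\size{w[k]\cap[a_k,i)_c}$ (and likewise for $v$), the second says $w[k]\leq_{a_k}v[k]$. So $e_w\in\cT_{u,v,\ba}$ if and only if $u\leq_\ba w\leq_\ba v$.

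Assuming $u\leq_\ba v$, \Cref{thm:tilted-criterion}, specifically the equivalence (1)$\iff$(2) together with (3)$\implies$(1), shows that $u\leq_\ba w\leq_\ba v$ holds exactly when $w\in[u,v]$. This identifies the fixed points with the tilted Bruhat interval.

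\textbf{Main obstacle.} The only delicate point is the bookkeeping at the wraparound. The initial segments of $\leq_{a_k}$ must be exactly $[a_k,i)_c$ as $i$ ranges over $[n]$. The case $i=a_k$ must be handled correctly: $[a_k,a_k)_c$ is empty and $[a_k,a_k)_c$ taken as the complement side is all of $[n]$, both of which are consistent with \Cref{def:cyclicinterval}. Everything else is formal.
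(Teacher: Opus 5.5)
Your proof is correct and follows the paper's argument. The first part is the same subadditivity-of-rank argument (the paper phrases it as a contradiction), and for the second part the paper simply invokes \Cref{thm:tilted-criterion}; your explicit check that $e_w\in\cT_{u,v,\ba}$ if and only if $u\leq_\ba w\leq_\ba v$ is the step the paper leaves implicit. One small correction: by \Cref{def:cyclicinterval}, both $[a_k,i)_c$ and $[i,a_k)_c$ are empty when $i=a_k$ (the complement side is not $[n]$), so the row splitting should be applied only for $i\neq a_k$; the case $i=a_k$ is vacuous in both the rank conditions and the Gale comparison.
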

\begin{proof}
    We prove the first statement only for $\cT_{u,v}$. The case for $\cT_{u,v}^\circ$ is similar. Assume, for contradiction, that $u\not\leq_\ba v$. By definition, there exist indices $k,i\in [n]$ such that $\size{u[k]\cap [a_k,i)_c} < \size{v[k]\cap [a_k,i)_c}$, or equivalently,
    \[\size{u[k]\cap [a_k,i)_c} + \size{v[k]\cap [i,a_k)_c}<k.\]
    However, if $F_\bullet\in \cT_{u,v}$, then the inequality above implies
    $\rank_{[a_k,i)_c}(F_k) + \rank_{[i,a_k)_c}(F_k)<k$ by \Cref{def:main}. This contradicts with the fact that
    \[\rank_{[a_k,i)_c}(F_k) + \rank_{[i,a_k)_c}(F_k)\geq \dim(F_k)=k.\]
    Thus, we must have $u\leq_\ba v$. The second statement follows directly from \Cref{thm:tilted-criterion}.
\end{proof}

In particular, the set of $T$-fixed points in $\cT_{u,v,\ba}$ is independent of the choice of the sequence $\ba$ as long as $u\leq_\ba v$. We will later show (\Cref{cor:independent-a}) that the varieties $\cT_{u,v,\ba}$ and $\cT_{u,v,\ba}^\circ$ themselves are independent of the choice of the sequence $\ba$.

\subsection{Definition via Cyclically Rotated Grassmannian Richardson Varieties}\label{sec:tilted-def-2}
We present an alternative definition of tilted Richardson varieties using intersections of cyclically rotated Richardson varieties in the Grassmannian (\Cref{thm:altdefT}). Later, we use this alternative definition to show that $\cT_{u,v,\ba}$ and $\cT_{u,v,\ba}$ are independent of the sequence $\ba$ as long as $u\leq_\ba v$ (\Cref{cor:independent-a}). We begin by defining these cyclically rotated varieties.

For a subspace $V\in \gr_{k,n}$, let $M_V$ be a $n\times k$ matrix that represents $V$. Define the \emph{cyclic rotation map} $\cyclic:\gr_{k,n} \rightarrow \gr_{k,n}$ by setting
\[
\cyclic(M_V) :=
\begin{bmatrix}
    \text{---}  & \hspace{-0.2cm}\vec{v}_n & \hspace{-0.2cm}\text{---} \\
    \text{---}  &\hspace{-0.2cm} \vec{v}_1 & \hspace{-0.2cm}\text{---} \\
    & \vdots & \\
    \text{---}  & \hspace{-0.2cm}\vec{v}_{n-1} & \hspace{-0.2cm}\text{---} \\
\end{bmatrix}, 
\quad\text{where}\quad
M_V = 
\begin{bmatrix}
    \text{---}  & \hspace{-0.2cm}\vec{v}_1 & \hspace{-0.2cm}\text{---} \\
    \text{---}  &\hspace{-0.2cm} \vec{v}_2 & \hspace{-0.2cm}\text{---} \\
    & \vdots & \\
    \text{---}  & \hspace{-0.2cm}\vec{v}_n & \hspace{-0.2cm}\text{---} \\
\end{bmatrix}.
\]
For an index set $I = \{i_1,\dots,i_k\}\subseteq [n]$, define its cyclic rotation as $
\cyclic(I):= \{i_1+1,\dots,i_k+1\}$, where the index $n+1$ is identified with $1$.

\begin{defin}
    For subsets $I,J\subseteq [n]$ with $|I|=|J|$ and $r\in[n]$ such that $I\leq_r J$, define the \emph{cyclically rotated (open) Grassmannian Richardson varieties} by
    \[\cR^{\circ}_{I,J,r}:=\cyclic^{r-1}(\cR^{\circ}_{\cyclic^{1-r}(I),\cyclic^{1-r}(J)})\quad\text{and}\quad\cR_{I,J,r}:=\cyclic^{r-1}(\cR_{\cyclic^{1-r}(I),\cyclic^{1-r}(J)}).\]
\end{defin}

Cyclically rotated Grassmannian Richardson varieties are special cases of \emph{positroid varieties} (see \cite[Section~6]{KLSjuggling}).
Like Grassmannian Richardson varieties, they can also be characterized via vanishing conditions on Pl\"ucker coordinates. 

\begin{prop}\label{prop:RIJadef}
Let $I,J\subseteq [n]$, $r\in[n]$ with $|I|=|J|$ and $I\leq_r J$ under the shifted Gale order. Denote by $[I,J]_r:=\{K\subseteq [n]:I\leq_r K\leq _r J\}$ the interval in the shifted Gale order. Then
\[\cR_{I,J,r} = \{V\in \gr_{k,n}: \Delta_K(V) = 0 \text{ for all }K\notin[I,J]_r\}.\]
The corresponding open cell is $\cR^\circ_{I,J,r} = \cR_{I,J,r}\cap\{\Delta_I\Delta_J\neq 0\}$.
\end{prop}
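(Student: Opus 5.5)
The plan is to reduce everything to Lemma~\ref{lemma:grSchubPlucker} by transporting through the cyclic rotation $\cyclic$. Set $I' := \cyclic^{1-r}(I)$ and $J' := \cyclic^{1-r}(J)$. By definition $\cR_{I,J,r} = \cyclic^{r-1}(\cR_{I',J'})$. Since $\cyclic$ is an automorphism of $\gr_{k,n}$, it suffices to do three things:
\begin{enumerate}
\item understand how $\cyclic$ acts on Pl\"ucker coordinates;
\item understand how $\cyclic$ transports the shifted Gale order $\leq_r$ to the ordinary Gale order;
\item check that $I'\leq J'$, so that Lemma~\ref{lemma:grSchubPlucker} applies.
\end{enumerate}

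First, the action on Pl\"ucker coordinates. Let $M_V$ represent $V$. The matrix $\cyclic(M_V)$ has row $i+1$ equal to row $i$ of $M_V$, with indices mod $n$. Hence the submatrix of $\cyclic(M_V)$ on rows $\cyclic(K)$ consists of the rows of $M_V$ indexed by $K$, possibly reordered. So
\[\Delta_{\cyclic(K)}(\cyclic V) = \pm \Delta_K(V)\]
for every $K$, where the sign comes from the cyclic reordering when $n\in K$. In particular $\Delta_{\cyclic(K)}(\cyclic V)=0$ if and only if $\Delta_K(V)=0$. Iterating, for $V' := \cyclic^{1-r}(V)$ we get $\Delta_K(V)=0 \iff \Delta_{\cyclic^{1-r}(K)}(V')=0$.

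Second, the transport of orders. The map $x\mapsto x-r+1 \pmod n$ carries the total order $<_r$ on $[n]$ to the usual order $<$. It follows that $K\leq_r L$ if and only if $\cyclic^{1-r}(K)\leq \cyclic^{1-r}(L)$ in the Gale order. This is the same observation used in the proof of Lemma~\ref{lemma:r-comparable}. Consequently $I'\leq J'$ holds, and $\cyclic^{1-r}$ restricts to a bijection from $[I,J]_r$ onto the Gale interval $[I',J']$.

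Now combine the two. A subspace $V$ lies in $\cR_{I,J,r}$ if and only if $V'\in\cR_{I',J'}$. By Lemma~\ref{lemma:grSchubPlucker}, this holds if and only if $\Delta_{K'}(V')=0$ for all $K'\notin[I',J']$. Substituting $K'=\cyclic^{1-r}(K)$ and using the two facts above, this becomes $\Delta_K(V)=0$ for all $K\notin[I,J]_r$. For the open part, $\cR^\circ_{I',J'} = \cR_{I',J'}\cap\{\Delta_{I'}\Delta_{J'}\neq0\}$ by the same lemma, and this nonvanishing pulls back to $\Delta_I\Delta_J(V)\neq 0$.

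No real obstacle is expected. The only point requiring care is the sign and reordering bookkeeping in the first step, and it is harmless because only vanishing and nonvanishing of coordinates matter.
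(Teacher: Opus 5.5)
Your proposal is correct and follows the same route as the paper: write $\cR_{I,J,r}$ as the $\cyclic^{r-1}$-image of $\cR_{I',J'}$, apply Lemma~\ref{lemma:grSchubPlucker}, and use that $\cyclic^{1-r}$ carries $\leq_r$ to the Gale order. Your explicit sign $\Delta_{\cyclic(K)}(\cyclic V)=\pm\Delta_K(V)$ is slightly more careful than the paper's identity $\cyclic_\ast(\Delta_K)=\Delta_{\cyclic(K)}$, which holds only up to sign, though only vanishing matters in either version.
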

\begin{proof}
    By \Cref{lemma:grSchubPlucker} and the identity $\cyclic_\ast (\Delta_K)=\Delta_{\cyclic(K)}$, $\cR_{I,J,r} = \{V\in \gr_{k,n}:  \Delta_{\cyclic^{r-1}(K)}(V) = 0 \text{ for all }K\notin[\cyclic^{1-r}(I),\cyclic^{1-r}(J)]\}$. The desired statement follows from the fact that $A\leq_r B\iff \cyclic^{1-r}(A)\leq \cyclic^{1-r}(B)$. The open case follows from similar arguments.
\end{proof}

We now present the alternative definition of tilted Richardson varieties as intersections of preimages of cyclically rotated Richardson varieties under natural projection maps.

\begin{theorem}\label{thm:altdefT}
    Let $\pi_k:\fl_n \rightarrow \gr_{k,n}$ be the projection map defined by $\pi_k(F_\bullet)=F_k$. For $u,v\in S_n$ and a sequence $\ba\in[n]^n$, we have
    \[\cT_{u,v,\ba} = \bigcap_{k = 1}^{n-1}\pi_k^{-1}(\cR_{u[k],v[k],a_k})\quad\text{ and }\quad\cT^\circ_{u,v,\ba} = \bigcap_{k = 1}^{n-1}\pi_k^{-1}(\cR^\circ_{u[k],v[k],a_k}).\]
\end{theorem}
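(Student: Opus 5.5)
The plan is to work one level $k$ at a time. \Cref{def:main} imposes conditions only on $F_k = \pi_k(F_\bullet)$ for each fixed $k$, and both sides of the claimed identity are intersections over $k$. So it suffices to show that for $V=F_k\in\gr_{k,n}$ with $A:=u[k]$, $B:=v[k]$, $r:=a_k$,
\[
\rank_{[r,i)_c}(V)\leq \size{A\cap[r,i)_c},\quad \rank_{[i,r)_c}(V)\leq \size{B\cap[i,r)_c}\ \ \forall i\in[n]
\iff V\in \cR_{A,B,r},
\]
and the analogous statement with equalities for the open cells. The level $k=n$ is vacuous, since the conditions there read $\rank\le$ (number of rows) and hold automatically. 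This accounts for the intersection running only over $k\le n-1$.

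First I reduce to $r=1$ by cyclic rotation. By definition $\cR_{A,B,r}=\cyclic^{r-1}(\cR_{\cyclic^{1-r}(A),\cyclic^{1-r}(B)})$. Applying $\cyclic^{1-r}$ to $V$ permutes rows so that the cyclic interval $[r,i)_c$ becomes the initial segment $[1,i-r+1)$ modulo $n$, and $[i,r)_c$ becomes a final segment $[i-r+1,n]$. Row-rank of a set of rows is preserved when the rows are relabeled. The counts $\size{A\cap[r,i)_c}$ transform the same way under replacing $A$ by $\cyclic^{1-r}(A)$.

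After the reduction the conditions read $\rank_{[j]}(V)\le \#(A'\cap[j])$ and $\rank_{[n-j+1,n]}(V)\le\#(B'\cap[n-j+1,n])$ for all $j$. These are exactly the defining rank conditions of $\Omega_{A'}$ and $X_{B'}$ in \Cref{sec:prelim-2-4}. The degenerate index $i=r$ gives an empty set on one side and the full set $[n]$ on the other; both are trivial. Hence the condition says $V\in X_{B'}\cap\Omega_{A'}=\cR_{A',B'}$, which rotates back to $\cR_{A,B,r}$. With equalities in place of inequalities, the same argument gives $X^\circ_{B'}\cap\Omega^\circ_{A'}=\cR^\circ_{A',B'}$.

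One point needs care: I must check that $\cR_{A',B'}$ is defined as $X_{B'}\cap\Omega_{A'}$ even when $A'\not\le B'$, in which case both sides are empty. This is consistent with \Cref{thm:tilted-T-fixed-point}, so no hypothesis $u\le_\ba v$ is needed. The main obstacle is bookkeeping rather than mathematics: I need to confirm that the direction of $\cyclic$ (sending row $j$ to row $j+1$) matches the index shift, so that $[r,i)_c$ really maps to an initial segment. I would verify this on the example of \Cref{ex:4321-3142} before writing the general argument.
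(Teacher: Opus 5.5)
Your proposal is correct and follows the paper's argument. The paper's proof is the one-line observation that, after a cyclic rotation, the level-$k$ rank conditions of \Cref{def:main} are exactly the rank conditions defining $X_{v[k]}\cap\Omega_{u[k]}$ (resp.\ the open cells) in $\gr_{k,n}$, which is what you carry out level by level. Your additions, namely that level $k=n$ is vacuous and that $\cR_{A,B,r}$ should be read as the rotated $X_{B'}\cap\Omega_{A'}$ (hence empty) when $A\not\le_r B$, fill in details the paper leaves implicit. The only slip is cosmetic: by \Cref{def:cyclicinterval}, $[r,r)_c=\emptyset$ on both sides rather than $[n]$ on one, and the conditions at $i=r$ are trivial under either reading.
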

\begin{proof}
This follows immediately by comparing the rank conditions in \Cref{def:main} with the rank conditions on Grassmannian Richardson varieties in \Cref{sec:prelim-2-4}, after applying a cyclic rotation.
\end{proof}

Since each $\pi_k^{-1}(\cR_{u[k],v[k],a_k})$ is a closed subvariety of $\fl_n$ and each $\cR^\circ_{u[k],v[k],a_k}\subseteq \cR_{u[k],v[k],a_k}$ is open, we obtain the following immediate corollary.

\begin{cor}
    $\cT_{u,v,\ba}$ is a closed subvariety of $\fl_n$ and $\cT_{u,v,\ba}^\circ \subseteq \cT_{u,v,\ba}$ is an open subvariety. 
\end{cor}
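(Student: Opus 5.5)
The plan is to deduce both assertions directly from \Cref{thm:altdefT}, which writes each variety as an intersection of preimages of Grassmannian varieties. First, $\cT_{u,v,\ba}=\bigcap_{k=1}^{n-1}\pi_k^{-1}(\cR_{u[k],v[k],a_k})$. Each $\cR_{u[k],v[k],a_k}$ is closed in $\gr_{k,n}$: by \Cref{prop:RIJadef} it is the common vanishing locus of the Pl\"ucker coordinates $\Delta_K$ with $K\notin[u[k],v[k]]_{a_k}$. (Equivalently, it is the image of the closed Grassmannian Richardson variety $\cR_{\cyclic^{1-a_k}(u[k]),\cyclic^{1-a_k}(v[k])}$ under the automorphism $\cyclic^{a_k-1}$ of $\gr_{k,n}$.) Since each projection $\pi_k:\fl_n\to\gr_{k,n}$ is a morphism, each preimage is closed in $\fl_n$, and so the finite intersection $\cT_{u,v,\ba}$ is a closed subvariety of $\fl_n$.

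If $u\not\leq_\ba v$, then by \Cref{thm:tilted-T-fixed-point} both $\cT_{u,v,\ba}$ and $\cT^\circ_{u,v,\ba}$ are empty and there is nothing to prove. So I would assume $u\leq_\ba v$, which guarantees that every $\cR_{u[k],v[k],a_k}$ is defined.

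For the open part, \Cref{prop:RIJadef} gives $\cR^\circ_{u[k],v[k],a_k}=\cR_{u[k],v[k],a_k}\cap\{\Delta_{u[k]}\Delta_{v[k]}\neq0\}$, which is Zariski open in $\cR_{u[k],v[k],a_k}$. Pulling back along $\pi_k$ gives
\[\pi_k^{-1}(\cR^\circ_{u[k],v[k],a_k})=\pi_k^{-1}(\cR_{u[k],v[k],a_k})\cap\{\Delta_{u[k]}\Delta_{v[k]}\neq0\}.\]
Intersecting over $k$ then yields
\[\cT^\circ_{u,v,\ba}=\cT_{u,v,\ba}\cap\Bigl\{\prod_{k=1}^{n-1}\Delta_{u[k]}\Delta_{v[k]}\neq0\Bigr\}.\]
This is the complement in $\cT_{u,v,\ba}$ of the zero locus of a multi-homogeneous function, hence open in $\cT_{u,v,\ba}$.

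The only step needing care is the description of the open Grassmannian piece by nonvanishing of $\Delta_I\Delta_J$. \Cref{prop:RIJadef} provides this, by transporting \Cref{lemma:grSchubPlucker} through the cyclic rotation. With that in hand, the rest is formal.
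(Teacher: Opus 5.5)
Your proposal is correct and follows the paper's argument: the paper also deduces the corollary directly from \Cref{thm:altdefT}, using that each $\pi_k^{-1}(\cR_{u[k],v[k],a_k})$ is closed in $\fl_n$ and each $\cR^\circ_{u[k],v[k],a_k}$ is open in $\cR_{u[k],v[k],a_k}$. Your separate treatment of the case $u\not\leq_\ba v$ is a small extra point the paper glosses over, since the rotated Richardson varieties $\cR_{I,J,r}$ are only defined when $I\leq_r J$.
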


Our next goal is to establish that the tilted Richardson varieties are independent of the choice of the sequence $\ba$ as long as $u\leq_\ba v$. To do so, we first prove the following lemma.

\begin{lemma}\label{lemma:rotateRich}
   Let $I,J \subseteq [n]$ with $|I| = |J|=k$. If there exist distinct indices $r\neq r'\in [n]$ such that $I\leq_r J$ and $I\leq_{r'}J$, then $\cR_{I,J,r} = \cR_{I,J,r'}$ and $\cR^\circ_{I,J,r} = \cR^\circ_{I,J,r'}$.
\end{lemma}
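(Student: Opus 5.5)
The plan is to use the Plücker characterization of \Cref{prop:RIJadef} and the interval coincidence in \Cref{lemma:both-r-comparable}. By \Cref{prop:RIJadef},
\[\cR_{I,J,r}=\{V\in\gr_{k,n}:\Delta_K(V)=0\text{ for all }K\notin[I,J]_r\},\]
and likewise for $r'$ with the interval $[I,J]_{r'}$. Since $I\leq_r J$ and $I\leq_{r'}J$ both hold, \Cref{lemma:both-r-comparable}(2) gives $[I,J]_r=[I,J]_{r'}$. The two vanishing loci are therefore cut out by the same set of Plücker coordinates, so $\cR_{I,J,r}=\cR_{I,J,r'}$ as subsets of $\gr_{k,n}$.

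For the open varieties, \Cref{prop:RIJadef} gives $\cR^\circ_{I,J,r}=\cR_{I,J,r}\cap\{\Delta_I\Delta_J\neq0\}$. The open condition $\Delta_I\Delta_J\neq 0$ involves only the endpoints $I$ and $J$ and does not depend on $r$. Hence the closed equality immediately yields $\cR^\circ_{I,J,r}=\cR^\circ_{I,J,r'}$.

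The only step needing care is \Cref{prop:RIJadef} itself, which rests on the rotation identity $A\leq_r B\iff\cyclic^{1-r}(A)\leq\cyclic^{1-r}(B)$ and on the equivariance of Plücker coordinates under $\cyclic$. One might worry about signs, since cyclic rotation of rows can change Plücker coordinates by a sign $(-1)^{k-1}$. This is harmless: only vanishing and nonvanishing of the coordinates are used, so the statement holds set-theoretically.

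If a scheme-theoretic equality is wanted, the same argument applies. Both schemes are defined by the ideal generated by the same coordinates $\Delta_K$ with $K\notin[I,J]_r=[I,J]_{r'}$, together with the Plücker relations, so they agree as schemes. I expect no genuine obstacle.
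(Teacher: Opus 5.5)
Your proposal is correct and follows the paper's proof essentially verbatim: you reduce the open case to the closed one via $\cR^\circ_{I,J,r}=\cR_{I,J,r}\cap\{\Delta_I\Delta_J\neq 0\}$, then combine the Pl\"ucker description in \Cref{prop:RIJadef} with the interval equality $[I,J]_r=[I,J]_{r'}$ from \Cref{lemma:both-r-comparable}. Your remarks on the rotation sign and on a scheme-theoretic version are fine, but the set-theoretic statement does not need them.
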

\begin{proof}
    Since $\cR^\circ_{I,J,r}=\cR_{I,J,r}\cap\{\Delta_I\Delta_J\neq 0\}$, it suffices to prove the closed case. By \Cref{prop:RIJadef}, we have
    \begin{align*}
        \cR_{I,J,r} &= \{V\in \gr_{k,n}:  \Delta_K(V) = 0 \text{ for all }K\notin[I,J]_r\},\\
        \cR_{I,J,r'} &= \{V\in \gr_{k,n}:  \Delta_K(V) = 0\text{ for all }K\notin[I,J]_{r'}\}.
    \end{align*}
    Since \Cref{lemma:both-r-comparable} implies $[I,J]_r=[I,J]_{r'}$, we have $\cR_{I,J,r}=\cR_{I,J,r'}$.
\end{proof}

Our main result now follows from \Cref{thm:altdefT} and \Cref{lemma:rotateRich}.

\begin{cor}\label{cor:independent-a}
The varieties $\cT_{u,v,\ba}$ and $\cT_{u,v,\ba}^\circ$ are both independent of $\ba$ as long as $u\leq_{\ba}v$.
\end{cor}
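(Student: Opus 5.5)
The plan is to compare the two descriptions of $\cT_{u,v,\ba}$ and $\cT_{u,v,\ba'}$ given by \Cref{thm:altdefT}, factor by factor. Suppose $\ba=(a_1,\dots,a_n)$ and $\ba'=(a'_1,\dots,a'_n)$ both satisfy $u\leq_\ba v$ and $u\leq_{\ba'} v$. By \Cref{thm:altdefT},
\[\cT_{u,v,\ba}=\bigcap_{k=1}^{n-1}\pi_k^{-1}(\cR_{u[k],v[k],a_k}),\qquad \cT_{u,v,\ba'}=\bigcap_{k=1}^{n-1}\pi_k^{-1}(\cR_{u[k],v[k],a'_k}),\]
and the analogous identities hold for the open versions. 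It therefore suffices to show, for each fixed $k\in[n-1]$, that $\cR_{u[k],v[k],a_k}=\cR_{u[k],v[k],a'_k}$ and $\cR^\circ_{u[k],v[k],a_k}=\cR^\circ_{u[k],v[k],a'_k}$ as subvarieties of $\gr_{k,n}$.

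For a fixed $k$, the definition of $\leq_\ba$ (\Cref{def:tilted-bruhat-order}) gives $u[k]\leq_{a_k}v[k]$ and $u[k]\leq_{a'_k}v[k]$ in the shifted Gale order. If $a_k=a'_k$ there is nothing to prove. Otherwise \Cref{lemma:rotateRich} applies directly with $I=u[k]$, $J=v[k]$, $r=a_k$, $r'=a'_k$, and it yields equality of both the closed and open cyclically rotated Grassmannian Richardson varieties. That lemma rests on \Cref{lemma:both-r-comparable}(2), which says $[I,J]_r=[I,J]_{r'}$, together with the Plücker description in \Cref{prop:RIJadef}.

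Taking preimages under $\pi_k$ and intersecting over $k$ then gives $\cT_{u,v,\ba}=\cT_{u,v,\ba'}$ and $\cT^\circ_{u,v,\ba}=\cT^\circ_{u,v,\ba'}$. There is no real obstacle here, since the substantive content sits in \Cref{lemma:rotateRich}.

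One bookkeeping point needs checking: \Cref{thm:altdefT} only intersects over $k\le n-1$, while $\ba$ has an $n$-th entry $a_n$. For $k=n$ the rank conditions in \Cref{def:main} are automatic, because $F_n=\C^n$ and $u[n]=v[n]=[n]$. So the value of $a_n$ plays no role, and the argument covers all admissible sequences.
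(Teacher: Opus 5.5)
Your proposal is correct and follows essentially the same route as the paper, which derives the corollary directly from \Cref{thm:altdefT} and \Cref{lemma:rotateRich}. Your extra remarks are sound: the trivial case $a_k=a'_k$ and the observation that $k=n$ imposes no condition are both fine.
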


As a consequence of \Cref{cor:independent-a}, we omit the sequence $\ba$ and denote the the (open) tilted Richardson varieties simply by $\cT_{u,v}$  and $\cT_{u,v}^\circ$. In particular, when $u\leq v$ in Bruhat order, these varieties recover the classical Richardson varieties $\cT_{u,v}^\circ = \cR_{u,v}^\circ$ and $\cT_{u,v}=\cR_{u,v}$.

\subsection{Definition via Pl\"ucker Coordinates}\label{sec:tilted-def-3}

In this section, we give yet another characterization of tilted Richardson varieties using the vanishing of certain multi-Plücker coordinates (\Cref{thm:altdefTplucker}). We begin with the following lemma.

\begin{lemma}\label{lemma:plucker-set-to-perm}
    Let $F_\bullet\in \fl_n$ and $I\in\binom{[n]}{k}$, if $\Delta_I(F_\bullet)\neq 0$, then there exists a permutation $w\in S_n$ such that $w[k]=I$ and $\Delta_w(F_\bullet)\neq 0$.
\end{lemma}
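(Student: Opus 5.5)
We extend the ordered sequence of rows indexed by $I$ to a full permutation one step at a time, in both directions. Fix a matrix representative $M_F$ and write $M_F^{(j)}$ for the $n\times j$ submatrix formed by its first $j$ columns. Since $\Delta_I(F_\bullet)\neq 0$, the $k\times k$ minor of $M_F^{(k)}$ on rows $I$ is nonzero. We will produce a chain of row sets $I_1\subset I_2\subset\cdots\subset I_n=[n]$ with $|I_j|=j$, $I_k=I$, and $\Delta_{I_j}(F_\bullet)\neq0$ for every $j$. Setting $w_j$ to be the unique element of $I_j\setminus I_{j-1}$ then gives $w[j]=I_j$ for all $j$. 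In particular $w[k]=I$, and every factor of $\Delta_w(F_\bullet)$ is nonzero up to sign, so $\Delta_w(F_\bullet)\neq 0$. Reordering the indices only changes signs, so nonvanishing is unaffected.

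For the downward steps $j<k$, we argue by Laplace expansion. Suppose $\Delta_{I_{j+1}}\neq0$. Expand the $(j+1)\times(j+1)$ minor on rows $I_{j+1}$ and columns $1,\dots,j+1$ along its last column. The result is a signed sum of entries times $j\times j$ minors on rows $I_{j+1}\setminus\{i\}$ and columns $1,\dots,j$. Since the minor is nonzero, some term is nonzero, so some $\Delta_{I_{j+1}\setminus\{i\}}(F_\bullet)\neq0$, and we take $I_j:=I_{j+1}\setminus\{i\}$. Equivalently, the rows $I_{j+1}$ of $M_F^{(j)}$ have full rank $j$ (they are $j$ columns of an invertible $(j+1)\times(j+1)$ block), so they contain $j$ linearly independent rows. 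This is also the incidence relation \eqref{eqn:incidenceplucker1} read appropriately.

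For the upward steps $j>k$, we use the exchange (basis-extension) property. Suppose $\Delta_{I_j}\neq0$, so the rows $I_j$ of $M_F^{(j)}$ are independent. Since $M_F$ is invertible, $M_F^{(j+1)}$ has rank $j+1$. The rows $I_j$ of $M_F^{(j+1)}$ are still independent, because restricting to the first $j$ columns already gives full rank. We can therefore add one row $i\notin I_j$ so that the rows $I_j\cup\{i\}$ of $M_F^{(j+1)}$ are independent, i.e. $\Delta_{I_j\cup\{i\}}\neq0$, and set $I_{j+1}:=I_j\cup\{i\}$.

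No step is a real obstacle; this is elementary linear algebra. The only point needing care is the upward step: it uses invertibility of $M_F$ to guarantee rank $j+1$, and it must add a row to the fixed set $I_j$ rather than choose an arbitrary nonzero minor.
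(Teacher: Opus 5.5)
Your proof is correct, and it takes a different route from the paper. The paper first relabels rows so that $I=[k]$. It then argues by contrapositive: if $\Delta_w(F_\bullet)=0$ for every $w$ with $w[k]=[k]$, that is, for every $w\ngeq s_k$, then \Cref{lemma:SchubPlucker} places $F_\bullet$ in the opposite Schubert divisor $\Omega_{s_k}$, on which $\Delta_{[k]}$ vanishes. Unwound, the paper's witness $w$ is the index of the opposite Schubert cell containing the relabeled flag. So the argument rests on three ingredients: the Bruhat decomposition, the Ehresmann criterion built into \Cref{lemma:SchubPlucker}, and the row-permutation symmetry behind the reduction to $I=[k]$.

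You instead construct the chain $I_1\subset\cdots\subset I_n$ directly. The downward steps use Laplace expansion along the last column, or equivalently \eqref{eqn:incidenceplucker1} with an auxiliary nonvanishing $\Delta_J$. The upward steps use basis extension inside the rank-$(j+1)$ matrix $M_F^{(j+1)}$. Both steps check out, including the point you flag that the new row must be added to the fixed set $I_j$. Your route is elementary and self-contained, with no Schubert geometry and no reduction to $I=[k]$. The paper's route is a two-line consequence of an earlier lemma and pins down a canonical choice of $w$.
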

\begin{proof}
Without loss of generality, assume $I = [k]$. We prove the contrapositive statement. Suppose $\Delta_w(F_\bullet) = 0$ for every permutation $w$ with $w[k] = [k]$. Equivalently, $\Delta_w(F_\bullet) = 0$ for all $w$ where $w\ngeq s_k$ in the Bruhat order. It follows from \Cref{lemma:SchubPlucker} that $F_\bullet\in \Omega_{s_k}$. Hence $\Delta_{[k]}(F_\bullet) = 0$, which is a contradiction.
\end{proof}

We now establish our third characterization of tilted Richardson varieties.

\begin{theorem}\label{thm:altdefTplucker}
For permutations $u,v\in S_n$, we have
\[\cT_{u,v} = \{F_\bullet\in \fl_n: \Delta_w(F_\bullet) = 0 \text{ for all }w\notin[u,v]\}.\]
The corresponding open cell is given by $\cT^\circ_{u,v} = \cT_{u,v}\cap\{\Delta_u\Delta_v\neq 0\}$.
\end{theorem}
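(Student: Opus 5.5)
Fix a sequence $\ba$ with $u\leq_\ba v$, which exists by \Cref{thm:lesssim-exist}. By \Cref{thm:altdefT}, $\cT_{u,v}=\bigcap_k \pi_k^{-1}(\cR_{u[k],v[k],a_k})$. By \Cref{prop:RIJadef}, each factor is cut out in $\gr_{k,n}$ by the vanishing of $\Delta_K$ for all $K\notin[u[k],v[k]]_{a_k}$. So the strategy is to translate between vanishing of single Plücker coordinates $\Delta_K$ (with $|K|=k$) and vanishing of multi-Plücker coordinates $\Delta_w$. The link between $w$ and $K$ is \Cref{thm:tilted-criterion}: $w\in[u,v]$ if and only if $w[k]\in[u[k],v[k]]_{a_k}$ for every $k$, since $w\in[u,v]\iff u\leq_\ba w\leq_\ba v$.

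\textbf{Inclusion $\subseteq$.} Let $F_\bullet\in\cT_{u,v}$ and $w\notin[u,v]$. By \Cref{thm:tilted-criterion} there is some $k$ with $w[k]\notin[u[k],v[k]]_{a_k}$. Then $\Delta_{w[k]}(F_\bullet)=0$, and hence the product $\Delta_w(F_\bullet)=0$.

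\textbf{Inclusion $\supseteq$.} Suppose $\Delta_w(F_\bullet)=0$ for all $w\notin[u,v]$. Fix $k$ and a set $K\notin[u[k],v[k]]_{a_k}$; we must show $\Delta_K(F_\bullet)=0$. Suppose instead $\Delta_K(F_\bullet)\neq0$. \Cref{lemma:plucker-set-to-perm} gives a permutation $w$ with $w[k]=K$ and $\Delta_w(F_\bullet)\neq0$. By hypothesis this forces $w\in[u,v]$, so $w[k]=K\in[u[k],v[k]]_{a_k}$, a contradiction. Hence $F_\bullet$ lies in every $\pi_k^{-1}(\cR_{u[k],v[k],a_k})$, and so in $\cT_{u,v}$.

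\textbf{Open part.} By \Cref{thm:altdefT} and \Cref{prop:RIJadef},
\[
\cT^\circ_{u,v}=\cT_{u,v}\cap\bigcap_k\{\Delta_{u[k]}\Delta_{v[k]}\neq0\}.
\]
The condition $\Delta_u\Delta_v\neq0$ is exactly the statement that every $\Delta_{u[k]}$ and every $\Delta_{v[k]}$ is nonzero, so the two descriptions coincide. The case $k=n$ is trivial because $\Delta_{[n]}\neq0$ always.

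The main point needing care is the $\supseteq$ direction, where a single nonvanishing Plücker coordinate must be lifted to a nonvanishing multi-Plücker coordinate. That is exactly what \Cref{lemma:plucker-set-to-perm} supplies. One should also check that \Cref{thm:tilted-criterion} is being applied with the same $\ba$ used in \Cref{thm:altdefT}; this holds because statement (2) of \Cref{thm:tilted-criterion} is valid for every $\ba$ with $u\leq_\ba v$.
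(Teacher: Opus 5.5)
Your proposal is correct and follows the paper's proof essentially step for step: fix $\ba$ with $u\leq_\ba v$; use \Cref{thm:tilted-criterion} together with \Cref{thm:altdefT} and \Cref{prop:RIJadef} for $\subseteq$; lift a nonvanishing $\Delta_K$ to a nonvanishing $\Delta_w$ via \Cref{lemma:plucker-set-to-perm} for $\supseteq$; and read off the open part from $\cR^\circ_{I,J,r}=\cR_{I,J,r}\cap\{\Delta_I\Delta_J\neq 0\}$. Your added remarks on the trivial $k=n$ factor and on using the same $\ba$ throughout are accurate.
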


\begin{proof}
    Let $\ba$ be a sequence such that $u\leq_{\ba}v$. Suppose $F_\bullet\in \cT_{u,v}$ and $w\notin[u,v]$. By \Cref{thm:tilted-criterion}, there exists some $k\in[n]$ such that $w[k]\notin[u[k],v[k]]_{a_k}$. Since $F_k\in\cR_{u[k],v[k],a_k}$ by \Cref{thm:altdefT}, it follows from \Cref{prop:RIJadef} that $\Delta_{w[k]}(F_k)=0$. Consequently, we obtain $\Delta_w(F_\bullet)=0$, which proves the inclusion $\subseteq$.

    Conversely, suppose $\Delta_{w}(F_\bullet)=0$ for all $w\notin [u,v]$. If $F_\bullet\notin\cT_{u,v}$, by \Cref{prop:RIJadef}, there exists some $k\in[n]$ and $I\in{\binom{[n]}{k}}$ such that $\Delta_I(F_k)\neq 0$ but $I\notin [u[k],v[k]]_{a_k}$. By \Cref{lemma:plucker-set-to-perm}, we find a permutation $w$ such that $\Delta_w(F_\bullet)\neq0$ and $w[k]=I$, hence $w\notin[u,v]$. This contradicts our assumption.

    Finally, for the open variety $\cT_{u,v}^\circ$, note that $\cR^\circ_{u[k],v[k],a_k}= \cR_{u[k],v[k],a_k}\cap\{\Delta_{u[k]}\Delta_{v[k]}\neq 0\}$. Therefore, by \Cref{thm:altdefT}, $\cT^\circ_{u,v}=\cT_{u,v}\cap\{\Delta_u\Delta_v=\prod_{k=1}^{n-1}\Delta_{u[k]}\Delta_{v[k]}\neq 0\}$ as desired.
\end{proof}

\begin{remark}
    One may wonder whether every tilted Richardson variety is trivially isomorphic to some classical Richardson variety. Specifically, given $u,v\in S_n$, does there always exist $w\in S_n$ such that $\cT_{u,v}=w\cR_{u',v'}$ for some classical Richardson variety $\cR_{u',v'}$?  Computational checks on the level of $T$-fixed points show that this is not always true. For example, when $u=512346$ and $v=246513$, the tilted Bruhat interval $[u,v]$ is not equal to $w\cdot[u',v']$ for any classical Bruhat interval $[u',v']$ and $w\in S_n$. Thus, tilted Richardson varieties $\cT_{u,v}$ are a genuinely new class of varieties.
\end{remark}

\subsection{Definition via Tilted Schubert Cells}\label{sec:tilted-def-4}
In this section, we introduce \emph{tilted Schubert cells} $X_{w,\ba}^\circ$ and \emph{opposite tilted Schubert cells} $\Omega_{w,\ba}^\circ$ for a permutation $w\in S_n$ and a sequence $\ba\in [n]^n$, and show that their intersections $X_{v,\ba}^\circ\cap \Omega_{u,\ba}^\circ$ define the open tilted Richardson varieties $\cT_{u,v}^\circ$ whenever $u\lesssim_\ba v$ (\Cref{thm:tilted-Schub-cell-intersect}). We start with some definitions.

\begin{defin}\label{def:tiltedrothe}
    Given $w\in S_n$ and a sequence $\ba\in [n]^n$, the \emph{tilted Rothe diagram} is defined as the following subset of an $n\times n$ grid:
    \[D_{\ba}(w) := \{(w_i,k)\in [n]^2: i>k,\ w_i<_{a_k}w_k\}.\]
    Similarly, the \emph{opposite tilted Rothe diagram} is defined as:
    \[D_{\ba}^\op(w) = \{(w_i,k)\in [n]^2: i>k,\ w_i>_{a_k}w_k\}.\]
\end{defin}

It is obvious that the numbers of cells in these diagrams are directly related to the $\ba$-tilted length $\ell_\ba(w)$ as defined in \Cref{def:a-tilted-length}. Specifically, $|D_{\ba}(w)|=\ell_\ba(w)$ and $|D_{\ba}^\op(w)|=\binom{n}{2}-\ell_\ba(w)$. We now illustrate \Cref{def:tiltedrothe} with a concrete example below.

\begin{ex}\label{ex:tilteddiagram}
Consider $w = 4321$ and $\ba = (4,4,2,2)$. We draw the diagrams in \Cref{fig:tilted-rothe} as follows. Place a $\bullet$ at position $(w_k,k)$ for each $k\in[4]$. Draw a red horizontal line at each column $k$ at the cutoff for $\leq_{a_k}$. For $D_{\ba}(w)$, draw rays to the right and down from each $\bullet$ until they hit the red line or the right boundary. Similarly, for $D_{\ba}^\op(w)$, draw rays to the right and up. The green boxes remaining represent the tilted Rothe daigrams $D_{\ba}(w)$ and $D_{\ba}^\op(w)$, respectively. 
\begin{figure}[ht]
    \centering
    \subcaptionbox{$D_{\ba}(w) = \{(1,2),(2,2)\}$\label{fig:Du}}[.4\textwidth]{
    \begin{tikzpicture}[scale = 0.7]
    \fill [green, opacity  = 0.25] (1,4) rectangle (2,2);
    \draw (0,0)--(4,0)--(4,4)--(0,4)--(0,0);
    \draw (1,0) -- (1,4);
    \draw (2,0) -- (2,4);
    \draw (3,0) -- (3,4);
    \draw (0,1) -- (4,1);
    \draw (0,2) -- (4,2);
    \draw (0,3) -- (4,3);
    \node at (-0.5,0.5) {$4$};
    \node at (-0.5,1.5) {$3$};
    \node at (-0.5,2.5) {$2$};
    \node at (-0.5,3.5) {$1$};
    \node at (0.5,-0.5) {$1$};
    \node at (1.5,-0.5) {$2$};
    \node at (2.5,-0.5) {$3$};
    \node at (3.5,-0.5) {$4$};
    \node at (0.5,0.5) {$\bullet$};
    \node at (1.5,1.5) {$\bullet$};
    \node at (2.5,2.5) {$\bullet$};
    \node at (3.5,3.5) {$\bullet$};
    \draw[line width = 0.35mm] (0.5,0) -- (0.5,0.5) -- (4,0.5);
    \draw[line width = 0.35mm] (0.5,4) -- (0.5,1);
    \draw[line width = 0.35mm] (1.5,1) -- (1.5,1.5) -- (4,1.5);
    \draw[line width = 0.35mm] (2.5,0) -- (2.5,2.5) -- (4,2.5);
    \draw[line width = 0.35mm] (2.5,4) -- (2.5,3);
    \draw[line width = 0.35mm] (3.5,3) -- (3.5,3.5) -- (4,3.5);
    \draw[line width=0.55mm, red] (0,1) -- (2,1);
    \draw[line width=0.55mm, red] (2,3) -- (4,3);
    \end{tikzpicture}}
    \subcaptionbox{$D_{\ba}^\op(w) = \{(1,1),(2,1),(3,1),(1,3)\}$\label{fig:Dv}}[.42\textwidth]{
    \begin{tikzpicture}[scale = 0.7]
    \fill [green, opacity  = 0.25] (0,4) rectangle (1,1);
    \fill [green, opacity  = 0.25] (2,4) rectangle (3,3);
    \draw (0,0)--(4,0)--(4,4)--(0,4)--(0,0);
    \draw (1,0) -- (1,4);
    \draw (2,0) -- (2,4);
    \draw (3,0) -- (3,4);
    \draw (0,1) -- (4,1);
    \draw (0,2) -- (4,2);
    \draw (0,3) -- (4,3);
    \node at (-0.5,0.5) {$4$};
    \node at (-0.5,1.5) {$3$};
    \node at (-0.5,2.5) {$2$};
    \node at (-0.5,3.5) {$1$};
    \node at (0.5,-0.5) {$1$};
    \node at (1.5,-0.5) {$2$};
    \node at (2.5,-0.5) {$3$};
    \node at (3.5,-0.5) {$4$};
    \node at (0.5,0.5) {$\bullet$};
    \node at (1.5,1.5) {$\bullet$};
    \node at (2.5,2.5) {$\bullet$};
    \node at (3.5,3.5) {$\bullet$};
    \draw[line width = 0.35mm] (0.5,1) -- (0.5,0.5) -- (4,0.5);
    \draw[line width = 0.35mm] (1.5,4) -- (1.5,1.5) -- (4,1.5);
    \draw[line width = 0.35mm] (1.5,0) -- (1.5,1);
    \draw[line width = 0.35mm] (2.5,3) -- (2.5,2.5) -- (4,2.5);
    \draw[line width = 0.35mm] (3.5,4) -- (3.5,3.5) -- (4,3.5);
    \draw[line width = 0.35mm] (3.5,0) -- (3.5,3);
    \draw[line width=0.55mm, red] (0,1) -- (2,1);
    \draw[line width=0.55mm, red] (2,3) -- (4,3);
    \end{tikzpicture}}
    \caption{Tilted Rothe diagrams for $w=4321$ and $\ba=(4,4,2,2)$}
    \label{fig:tilted-rothe}
\end{figure}
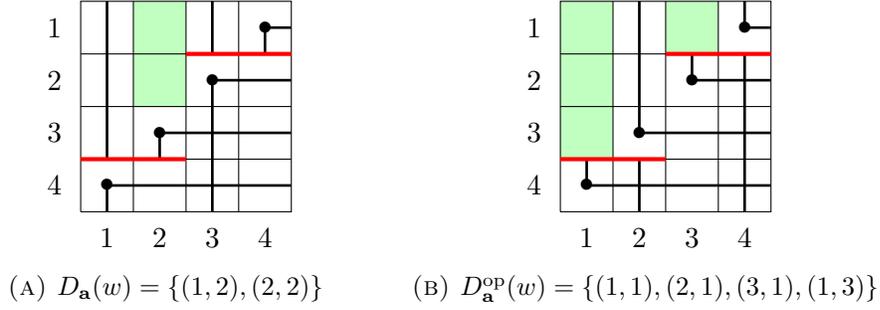
\end{ex}

One may notice similarities between \Cref{fig:Du} and the classical Rothe diagram of a permutation introduced in \cite{rothe}. Indeed, when $\ba = (1,\dots,1)$, the tilted Rothe diagram $D_{\ba}(w)$ recovers the classical Rothe diagram of the permutation $w^{-1}$. 

We now define the tilted analogues of Schubert and opposite Schubert cells.

\begin{defin}\label{def:tilted-Schub-cell}
    Given $w\in S_n$ and a sequence $\ba\in [n]^n$, define the \emph{tilted Schubert cell} as
    \[
        X_{w,\ba}^\circ := \{F_\bullet\in \fl_n: \Delta_w(F_\bullet)\neq 0 \text{ and }  \Delta_{w[k-1]\cup\{i\}}(F_\bullet) = 0\text{ for all }(i,k)\in D_{\ba}^\op(w)\}.
    \]
    Similarly, define the \emph{opposite tilted Schubert cell} as
    \[
        \Omega^\circ_{w,\ba} := \{F_\bullet\in \fl_n: \Delta_w(F_\bullet)\neq 0 \text{ and }  \Delta_{w[k-1]\cup\{i\}}(F_\bullet) = 0\text{ for all }(i,k)\in D_{\ba}(w)\}.
    \]
\end{defin}

The next proposition provides an explicit matrix representative for flags in these tilted Schubert cells, which we call the \emph{canonical representative} or the \emph{canonical matrix}.

\begin{prop}\label{prop:tilted-Schub-cell}
    A flag $F_\bullet\in X^\circ_{w,\ba}$ (resp. $\Omega^\circ_{w,\ba}$) if and only if $F_\bullet$ is represented by a matrix $M_F=(m_{i,k})_{i,k\in[n]}$ such that
    \[
        m_{i,k}=\begin{cases}
            1,&\text{ if }i=w_k,\\
            \ast,&\text{ if }(i,k)\in D_{\ba}(w) \text{ (resp. $D_{\ba}^\op(w)$)},\\
            0,&\text{ otherwise}.
        \end{cases}
    \]
    Here $\ast$ represents an arbitrary number in $\C$. Consequently, we have
    \[
        X_{w,\ba}^\circ\cong \C^{\ell_{\ba}(w)}\quad\text{and}\quad\Omega_{w,\ba}^\circ\cong \C^{\binom{n}{2}-\ell_{\ba}(w)}.
    \]
\end{prop}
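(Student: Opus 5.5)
The plan is to construct a normalized matrix representative by column reduction. We then show that, in that representative, each Plücker coordinate $\Delta_{w[k-1]\cup\{w_i\}}$ (with $i>k$) is, up to sign, a single matrix entry. The vanishing conditions in \Cref{def:tilted-Schub-cell} then become exactly the vanishing of the entries outside the (opposite) tilted Rothe diagram. I treat $X^\circ_{w,\ba}$; the case of $\Omega^\circ_{w,\ba}$ is identical with $D_\ba(w)$ and $D^\op_\ba(w)$ interchanged.

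\textbf{Step 1 (normalization).} Suppose $\Delta_w(F_\bullet)\neq 0$, so $\Delta_{w[k]}(F_\bullet)\neq 0$ for every $k$. Starting from any representative, we may right-multiply by elements of $B$, i.e.\ scale column $k$ and add to it multiples of columns $1,\dots,k-1$. Because the $k\times k$ submatrix on rows $w[k]$ and the first $k$ columns is invertible, induction on $k$ gives a unique such operation after which column $k$ has a $1$ in row $w_k$ and $0$ in rows $w_1,\dots,w_{k-1}$. In the resulting matrix the only unconstrained entries are $m_{w_i,k}$ with $i>k$, and this representative is unique. Conversely, any matrix of this shape is invertible and has $\Delta_{w[k]}=\pm1$ for all $k$, so $\Delta_w\neq 0$. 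Thus $\{\Delta_w\neq0\}$ is identified with $\C^{\binom n2}$, with coordinates $m_{w_i,k}$ for $i>k$.

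\textbf{Step 2 (minors are entries).} Fix $k$ and $i>k$. Consider the $k\times k$ submatrix on rows $w_1,\dots,w_{k-1},w_i$ and the first $k$ columns. Column $j$ vanishes in rows $w_1,\dots,w_{j-1}$ and has a $1$ in row $w_j$. Hence the top $(k-1)\times(k-1)$ block is lower unitriangular, and column $k$ is zero except for the entry $m_{w_i,k}$ in the last row. Expanding along the last column gives
\[
\Delta_{w[k-1]\cup\{w_i\}}(F_\bullet)=\pm\, m_{w_i,k},
\]
where the sign depends only on the ordering convention.

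\textbf{Step 3 (conclusion).} By Step 2, the conditions $\Delta_{w[k-1]\cup\{w_i\}}=0$ for $(w_i,k)\in D^\op_\ba(w)$ say exactly that $m_{w_i,k}=0$ for those cells. For $i>k$ we have $w_i\neq w_k$, so either $w_i<_{a_k}w_k$ or $w_i>_{a_k}w_k$. Hence the cells $(w_i,k)$ with $i>k$ split disjointly into $D_\ba(w)\sqcup D^\op_\ba(w)$, and the surviving free entries are precisely those indexed by $D_\ba(w)$. This gives both directions of the equivalence.

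Since the representative from Step 1 is unique, the map sending a flag to its free entries is an isomorphism $X^\circ_{w,\ba}\cong\C^{|D_\ba(w)|}=\C^{\ell_\ba(w)}$. Likewise $\Omega^\circ_{w,\ba}\cong\C^{|D^\op_\ba(w)|}=\C^{\binom n2-\ell_\ba(w)}$, using the cell counts noted after \Cref{def:tiltedrothe}.

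The only delicate point is Step 2: we must check that the normalization really makes the rows $w[k-1]$ restricted to the first $k$ columns unitriangular with a zero last column. The rest is bookkeeping.
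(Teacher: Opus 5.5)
Your proposal is correct and follows the same route as the paper. Both arguments column-reduce, using $\Delta_w\neq 0$, to the unique representative with $1$'s at $(w_k,k)$ and $0$'s to their right. Both then observe that in this representative $\Delta_{w[k-1]\cup\{i\}}$ equals, up to sign, the single entry $m_{i,k}$, so the defining Pl\"ucker conditions become the vanishing of the entries on $D^\op_\ba(w)$ (resp.\ $D_\ba(w)$). Your Step 2 makes explicit the unitriangular cofactor expansion that the paper's one-line proof leaves implicit.
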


\begin{proof}
    We prove the result for the case $X_{w,\ba}^\circ$; the case for $\Omega_{w,\ba}^\circ$ is similar. By performing column reduction, the condition $\Delta_w(F_\bullet)\neq 0$ implies that $F_\bullet$ can be represented by a unique matrix $M_F=(m_{i,k})_{i,k\in[n]}$ with $1$'s at positions $(w_k,k)$ for all $k\in [n]$ and $0$'s to the right of these $1$'s. The conditions on the remaining entries follow from the fact that $m_{i,k}=0\iff \Delta_{w[k-1]\cup\{i\}}=0$ for each $(i,k)\in D_\ba(w)$.
\end{proof}

\begin{ex}\label{ex:tilted-Schub-cell}
    We demonstrate \Cref{prop:tilted-Schub-cell} with $w = 4321$ and $\ba = (4,4,2,2)$ using diagrams in \Cref{fig:tilted-Schub-cell}. We suggest the readers to compare them with the tilted Rothe diagrams from \Cref{fig:tilted-rothe}.
    \begin{figure}[ht]
    \centering
    \subcaptionbox{$X_{w,\ba}^\circ$\label{fig:Xw}}[.4\textwidth]{
    \begin{tikzpicture}[scale = 0.7]
    \draw (0,0)--(4,0)--(4,4)--(0,4)--(0,0);
    \draw (1,0) -- (1,4);
    \draw (2,0) -- (2,4);
    \draw (3,0) -- (3,4);
    \draw (0,1) -- (4,1);
    \draw (0,2) -- (4,2);
    \draw (0,3) -- (4,3);
    \node at (-0.5,0.5) {$4$};
    \node at (-0.5,1.5) {$3$};
    \node at (-0.5,2.5) {$2$};
    \node at (-0.5,3.5) {$1$};
    \node at (0.5,-0.5) {$1$};
    \node at (1.5,-0.5) {$2$};
    \node at (2.5,-0.5) {$3$};
    \node at (3.5,-0.5) {$4$};
    \node at (0.5,0.5) {$1$};
    \node at (1.5,1.5) {$1$};
    \node at (2.5,2.5) {$1$};
    \node at (3.5,3.5) {$1$};
    \node at (1.5,0.5) {$0$};
    \node at (2.5,0.5) {$0$};
    \node at (3.5,0.5) {$0$};
    \node at (2.5,1.5) {$0$};
    \node at (3.5,1.5) {$0$};
    \node at (3.5,2.5) {$0$};
    \node at (0.5,1.5) {$0$};
    \node at (0.5,2.5) {$0$};
    \node at (0.5,3.5) {$0$};
    \node at (1.5,2.5) {$\ast$};
    \node at (1.5,3.5) {$\ast$};
    \node at (2.5,3.5) {$0$};
    \draw[line width=0.55mm, red] (0,1) -- (2,1);
    \draw[line width=0.55mm, red] (2,3) -- (4,3);
    \end{tikzpicture}}
    \subcaptionbox{$\Omega_{w,\ba}^\circ$\label{fig:Omegaw}}[.4\textwidth]{
    \begin{tikzpicture}[scale = 0.7]
    \draw (0,0)--(4,0)--(4,4)--(0,4)--(0,0);
    \draw (1,0) -- (1,4);
    \draw (2,0) -- (2,4);
    \draw (3,0) -- (3,4);
    \draw (0,1) -- (4,1);
    \draw (0,2) -- (4,2);
    \draw (0,3) -- (4,3);
    \node at (-0.5,0.5) {$4$};
    \node at (-0.5,1.5) {$3$};
    \node at (-0.5,2.5) {$2$};
    \node at (-0.5,3.5) {$1$};
    \node at (0.5,-0.5) {$1$};
    \node at (1.5,-0.5) {$2$};
    \node at (2.5,-0.5) {$3$};
    \node at (3.5,-0.5) {$4$};
    \node at (0.5,0.5) {$1$};
    \node at (1.5,1.5) {$1$};
    \node at (2.5,2.5) {$1$};
    \node at (3.5,3.5) {$1$};
    \node at (1.5,0.5) {$0$};
    \node at (2.5,0.5) {$0$};
    \node at (3.5,0.5) {$0$};
    \node at (2.5,1.5) {$0$};
    \node at (3.5,1.5) {$0$};
    \node at (3.5,2.5) {$0$};
    \node at (1.5,2.5) {$0$};
    \node at (1.5,3.5) {$0$};
    \node at (0.5,2.5) {$\ast$};
    \node at (0.5,3.5) {$\ast$};
    \node at (0.5,1.5) {$\ast$};
    \node at (2.5,3.5) {$\ast$};
    \draw[line width=0.55mm, red] (0,1) -- (2,1);
    \draw[line width=0.55mm, red] (2,3) -- (4,3);
    \end{tikzpicture}}
    \caption{Canonical representatives of tilted Schubert cells for $w=4321$ and $\ba=(4,4,2,2)$}
    \label{fig:tilted-Schub-cell}   
    \end{figure}
\end{ex}

\begin{remark}
    When $\ba = (1,1,\dots,1)$, we recover the classical Schubert cells $X_{w,\ba}^\circ = X_{w}^\circ$ and $\Omega_{w,\ba}^\circ = \Omega_w^\circ$. In this case, the canonical matrices in \Cref{prop:tilted-Schub-cell} coincide with the canonical representatives of Schubert cells in \Cref{sec:prelim-2-3}.
\end{remark}

As a consequence of \Cref{prop:tilted-Schub-cell}, we obtain a \emph{tilted Bruhat decomposition} of the flag variety into tilted Schubert cells, in analogy with the classical Bruhat decomposition:
\begin{lemma}\label{lemma:tilted-bruhat-decomp}
    Let $\ba\in[n]^n$. The tilted Schubert cells define the following decomposition of the flag variety:
    \[\fl_n=\bigsqcup_{w\in S_n}X^\circ_{w,\ba }=\bigsqcup_{w\in S_n}\Omega^\circ_{w,\ba }.\]
\end{lemma}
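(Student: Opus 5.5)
We need to prove the tilted Bruhat decomposition $\fl_n=\bigsqcup_{w}X^\circ_{w,\ba}$, and likewise for $\Omega^\circ_{w,\ba}$. The plan is a column-by-column greedy argument: for each flag we will produce the unique $w$ whose cell contains it.

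\textbf{Step 1 (the cell condition column by column).} By \Cref{prop:tilted-Schub-cell}, $F_\bullet\in X^\circ_{w,\ba}$ exactly when, for each $k$:
\begin{itemize}
\item[] (i) $\Delta_{w[k]}(F_\bullet)\neq 0$;
\item[] (ii) $\Delta_{w[k-1]\cup\{i\}}(F_\bullet)=0$ for every $i\notin w[k]$ with $i>_{a_k}w_k$.
\end{itemize}
In (ii), the indices $i\notin w[k-1]$ with $i\ne w_k$ are precisely $\{w_j: j>k\}$, so $(i,k)\in D^\op_\ba(w)$ means $i>_{a_k}w_k$. Since $\Delta_{w[k-1]}\neq 0$, the condition says the following. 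Take the row-reduced form in which column $k$ has been cleared on rows $w[k-1]$ by adding multiples of earlier columns. Then $w_k$ is the $<_{a_k}$-largest row index $i\notin w[k-1]$ with $\Delta_{w[k-1]\cup\{i\}}(F_\bullet)\neq0$.

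\textbf{Step 2 (existence).} We define $w$ greedily. Suppose $w_1,\dots,w_{k-1}$ have been chosen with $\Delta_{w[k-1]}(F_\bullet)\neq 0$. Since $F_k\supsetneq F_{k-1}$, some $i\notin w[k-1]$ has $\Delta_{w[k-1]\cup\{i\}}(F_\bullet)\ne 0$. This is the Plücker/incidence relation \eqref{eqn:incidenceplucker1}, or simply the fact that extending a full-rank $(k-1)$-minor gives a nonzero $k$-minor. Let $w_k$ be the $<_{a_k}$-maximal such $i$. Then (i) and (ii) hold for this $k$. Induction produces $w$ with $F_\bullet\in X^\circ_{w,\ba}$.

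\textbf{Step 3 (uniqueness).} Suppose $F_\bullet\in X^\circ_{w,\ba}\cap X^\circ_{w',\ba}$, and let $k$ be the first index with $w_k\ne w'_k$. Then $w[k-1]=w'[k-1]$, and both $w_k$ and $w'_k$ lie outside this set with nonzero extended minors. Condition (ii) for $w$ forces $w'_k<_{a_k}w_k$, while condition (ii) for $w'$ forces $w_k<_{a_k}w'_k$. This is a contradiction.

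The $\Omega$ case is identical, with "largest" replaced by "smallest" under $<_{a_k}$, using $D_\ba(w)$.

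The only delicate point is the Step 1 translation: we must check that the vanishing conditions in the definition involve only the $i$ not yet used, and only for the $i$ on the correct side of $w_k$ in $<_{a_k}$. This is a direct reading of \Cref{def:tiltedrothe}, since $i>k$ in that definition ranges over exactly the unused rows.
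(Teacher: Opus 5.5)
Your proof is correct and is essentially the paper's argument. The paper reduces $F_\bullet$ by column operations to the canonical representative of \Cref{prop:tilted-Schub-cell}, and your greedy choice of $w_k$ as the $<_{a_k}$-extremal index $i\notin w[k-1]$ with $\Delta_{w[k-1]\cup\{i\}}(F_\bullet)\neq 0$, together with your first-difference uniqueness check, is exactly that pivot selection written out in Pl\"ucker coordinates. Two minor wording points: the reduction you describe uses column operations, not row operations, and your Step 1 is just \Cref{def:tilted-Schub-cell} rather than a consequence of \Cref{prop:tilted-Schub-cell}.
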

\begin{proof}
    For any point $F_\bullet \in \fl_n$, there is a unique way to reduce $F_\bullet$ to its canonical representative as described in \Cref{prop:tilted-Schub-cell}, using column operations. From this representative, one can uniquely determine a permutation $w$ such that $F_\bullet \in X^\circ_{w, \ba}$. The argument for the decomposition using $\Omega^\circ_{w, \ba}$ is similar.
\end{proof}

Finally, we state the main result of this section, which establishes the relationship between tilted Richardson varieties and the intersections of tilted Schubert and opposite tilted Schubert cells.

\begin{theorem}\label{thm:tilted-Schub-cell-intersect}
    For permutations $u,v\in S_n$ and a sequence $\ba\in [n]^n$,
\begin{enumerate}
    \item if $u\leq_\ba v$, then $\cT^\circ_{u,v}\subseteq X_{v,\ba}^\circ\cap \Omega_{u,\ba}^\circ$;
    \item if $u\lesssim_\ba v$, then $\cT_{u,v}^\circ= X_{v,\ba}^\circ\cap \Omega_{u,\ba}^\circ$.
\end{enumerate}
\end{theorem}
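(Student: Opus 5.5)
We prove the two parts separately, working throughout with the Grassmannian description of \Cref{thm:altdefT} and the Pl\"ucker description of cyclically rotated Richardson varieties in \Cref{prop:RIJadef}.

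\textbf{Part (1).} Let $F_\bullet\in\cT^\circ_{u,v}$ and assume $u\leq_\ba v$. By \Cref{thm:altdefTplucker}, $\Delta_u\Delta_v(F_\bullet)\neq 0$, so it remains to check the vanishing conditions.

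Take $(i,k)\in D^\op_\ba(v)$, so $i=v_j$ with $j>k$ and $v_j>_{a_k}v_k$. Put $I=v[k-1]\cup\{v_j\}$. Then $I$ is obtained from $v[k]$ by replacing $v_k$ with an element that is strictly larger in $<_{a_k}$. Hence $I\geq_{a_k}v[k]$ and $I\neq v[k]$, so $I\not\leq_{a_k}v[k]$ and therefore $I\notin[u[k],v[k]]_{a_k}$. Since $F_k\in\cR_{u[k],v[k],a_k}$ by \Cref{thm:altdefT}, \Cref{prop:RIJadef} gives $\Delta_I(F_k)=0$.

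Symmetrically, take $(i,k)\in D_\ba(u)$, so $i=u_j$ with $j>k$ and $u_j<_{a_k}u_k$. The set $u[k-1]\cup\{u_j\}$ is strictly below $u[k]$ in $\leq_{a_k}$, so it is not $\geq_{a_k}u[k]$ and the corresponding Pl\"ucker coordinate vanishes. This proves $\cT^\circ_{u,v}\subseteq X^\circ_{v,\ba}\cap\Omega^\circ_{u,\ba}$.

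\textbf{Part (2).} Now assume $u\lesssim_\ba v$ and let $F_\bullet\in X^\circ_{v,\ba}\cap\Omega^\circ_{u,\ba}$. Since $\Delta_u\Delta_v(F_\bullet)\neq0$, it suffices by \Cref{thm:altdefT} to show $F_k\in\cR_{u[k],v[k],a_k}$ for every $k$. I would split this into two one-sided statements:
\[F_k\in\cyclic^{a_k-1}\bigl(X_{\cyclic^{1-a_k}v[k]}\bigr)\quad\text{and}\quad F_k\in\cyclic^{a_k-1}\bigl(\Omega_{\cyclic^{1-a_k}u[k]}\bigr),\]
and then intersect them.

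For the first, use the canonical matrix of \Cref{prop:tilted-Schub-cell}. Column $j$ has a $1$ in row $v_j$, and its other nonzero entries lie in rows $v_i$ with $i>j$ and $v_i<_{a_j}v_j$. I would argue by induction on $k$ that, after column operations among the first $k$ columns, the span $F_k$ has a basis whose $j$-th vector is $e_{v_j}$ plus entries only in rows $<_{a_k}v_j$ outside $v[k]$. This is exactly the rotated Schubert cell condition for $v[k]$ in the order $<_{a_k}$.

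The inductive step is the main obstacle. The $\ast$-entries of column $j$ are controlled by $<_{a_j}$, but the target condition uses $<_{a_k}$. Passing from $a_k$ to $a_{k+1}$ only matters for values lying in the cyclic interval $[a_k,a_{k+1})_c$.

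I would try to handle this with the equivalence $u\sim_\ba v$ together with \Cref{cor:alter-a-lesssim}. By \Cref{lemma:r-comparable}, $u[k]\leq_{a_k}v[k]$ and $u[k]\leq_{a_{k+1}}v[k]$ say that the lattice path $\p(u[k],v[k])$ attains its minimum at both $a_k$ and $a_{k+1}$. I would use this to show that the rank conditions for $F_k$ relative to $a_k$ and relative to $a_{k+1}$ agree, in the same way as \Cref{lemma:rotateRich}. The induction then advances from $k$ to $k+1$ by combining information from both $X^\circ_{v,\ba}$ and $\Omega^\circ_{u,\ba}$ at once, rather than treating each cell separately; I expect that the two cells individually may not satisfy the one-sided rotated conditions, while their intersection does.

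If the direct induction proves awkward, the fallback is to induct on $\jasize$ via flattening. Using \Cref{lemma:u<flatten(a)v}, one reduces from $\ba$ to $\flatten(\ba)$ on the first $\jmin$ columns. In the base case $\ba=(1,\dots,1)$ the statement is the classical equality $\cR^\circ_{u,v}=X^\circ_v\cap\Omega^\circ_u$.
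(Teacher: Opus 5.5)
Your part (1) is exactly the paper's argument. For part (2), your skeleton also matches the paper's: induction on $k$, with the passage from $a_{k-1}$ to $a_k$ made by \Cref{lemma:rotateRich} using $u\lesssim_\ba v$. The gap is that you never carry out the inductive step, which is the whole content of (2). Moreover, the plan as literally stated cannot work.

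You propose to prove $F_k\in\cyclic^{a_k-1}\bigl(X_{\cyclic^{1-a_k}(v[k])}\bigr)$ by an induction that uses only the canonical matrix of $X^\circ_{v,\ba}$. That statement is false for $X^\circ_{v,\ba}$ alone. Take $n=3$, $v=\id$, $\ba=(2,1,1)$. Then $X^\circ_{v,\ba}$ contains the flag with columns $e_1+ye_3,\ e_2,\ e_3$, and for $y\neq 0$ the subspace $F_2=\mathrm{span}(e_1+ye_3,e_2)$ does not lie in $X_{\{1,2\}}$, which is the single point $\mathrm{span}(e_1,e_2)$. \Cref{lemma:rotateRich} identifies only the two-sided varieties $\cR_{I,J,r}=\cR_{I,J,r'}$, not rotated Schubert varieties. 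So the induction hypothesis must be the two-sided statement $F_{k-1}\in\cR^\circ_{u[k-1],v[k-1],a_{k-1}}$. You switch it to $\cR^\circ_{u[k-1],v[k-1],a_k}$, and only then derive the two one-sided conditions at level $k$ separately.

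That derivation is the missing step, and in your matrix language it is short.
\begin{enumerate}
\item The switched hypothesis gives a basis of $F_{k-1}$ of the form $b_j=e_{v_j}+(\text{entries in rows }<_{a_k}v_j\text{ outside }v[k-1])$.
\item The $k$-th canonical column $c$ of $X^\circ_{v,\ba}$ is $e_{v_k}$ plus entries in rows $v_i$ with $i>k$ and $v_i<_{a_k}v_k$.
\item Subtract from each $b_j$ the multiple of $c$ that kills its row-$v_k$ entry; this entry is nonzero only if $v_k<_{a_k}v_j$. The shape is preserved, so $F_k\in\cyclic^{a_k-1}\bigl(X^\circ_{\cyclic^{1-a_k}(v[k])}\bigr)$.
\item Repeat with the canonical matrix of $\Omega^\circ_{u,\ba}$ to get the opposite condition, and intersect.
\end{enumerate}

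The paper does the same step with the incidence relation $\Delta_{u[k-1]}\Delta_I=\sum_{i\in I\setminus u[k-1]}\Delta_{I-i}\Delta_{u[k-1]+i}$ for $I\not\geq_{a_k}u[k]$. Terms with $i\geq_{a_k}u_k$ vanish by the switched hypothesis, since then $I\setminus\{i\}\not\geq_{a_k}u[k-1]$. Terms with $i<_{a_k}u_k$ vanish because $(i,k)\in D_\ba(u)$. The $v$ side is symmetric. The paper reuses this Pl\"ucker form for the skew cells in \Cref{prop:uxvanishing}, where the extra condition is quadratic and no linear normal form like yours is available.

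Your flattening fallback does not work as stated. The permutation $v$ need not be $\ba$-flattenable, and \Cref{lemma:u<flatten(a)v} says nothing relating the individual cells $X^\circ_{v,\ba}$ and $X^\circ_{v,\flatten(\ba)}$, which differ in general.
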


\begin{proof}
    In fact, we prove the following stronger version of the theorem: for any $u,v\in S_n$ and $\ba\in [n]^n$, we have (1) $\cT^\circ_{u,v,\ba}\subseteq X_{v,\ba}^\circ\cap \Omega_{u,\ba}^\circ$; (2) if $u\sim_\ba v$, then $\cT_{u,v,\ba}^\circ= X_{v,\ba}^\circ\cap \Omega_{u,\ba}^\circ$. The original theorem then follows by restricting to the case $u\leq_\ba v$.

    To prove (1), we focus on proving $\cT^\circ_{u,v,\ba}\subseteq \Omega_{u,\ba}^\circ$, as the proof for $\cT^\circ_{u,v,\ba}\subseteq X_{v,\ba}^\circ$ is similar. Our goal is to show that any $F_\bullet\in \cT^\circ_{u,v,\ba}$ satisfies the Pl\"ucker conditions in \Cref{def:tilted-Schub-cell}. By \Cref{thm:altdefTplucker},  we already have $\Delta_u(F_\bullet)\neq 0$. It remains to verify that $\Delta_{u[k-1]\cup\{i\}}(F_\bullet) = 0$ for all $(i,k)\in D_{\ba}(u)$. This follows from the fact that $i<_{a_k}u_k$ implies $u[k-1]\cup\{i\}<_{a_k}u[k]$. Since $F_\bullet\in\pi_k^{-1}(\cR^\circ_{u[k],v[k],a_k})$, by \Cref{prop:RIJadef} we have $\Delta_{u[k-1]\cup\{i\}}(F_\bullet)=0$, as required.
    
    To prove (2), it suffices to show the $\supseteq$ direction. Equivalently, we must verify that
    \[X^\circ_{v,\ba}\cap \Omega^\circ_{u,\ba}\subseteq \pi_k^{-1}(\cR^\circ_{u[k],v[k],a_k})\text{ for all }k\in [n-1].\]
    We proceed by induction on $k$. For the base case $k = 1$, we must show that every $F_\bullet\in X^\circ_{v,\ba}\cap \Omega^\circ_{u,\ba}$ satisfies $F_\bullet\in \pi_1^{-1}(\cR^\circ_{u_1,v_1,a_1})$. Since $F_\bullet\in\Omega^\circ_{u,\ba}$ implies $\Delta_{u_1}(F_\bullet)\neq 0$, and $F_\bullet\in X^\circ_{v,\ba}$ implies $\Delta_{v_1}(F_\bullet)\neq 0$, it remains to verify that $\Delta_i(F_\bullet) = 0$ for all $i\in [a_1,u_1)_c\cup(v_1,a_1-1]_c$. The case $i\in [a_1,u_1)_c$ follows directly from the Pl\"ucker conditions on $\Omega^\circ_{u,\ba}$, while the case $i\in (v_1,a_1-1]_c$ follows from those on $X^\circ_{v,\ba}$.
    
    For the induction step, we need to show the following implication:
    \[
         F_\bullet\in\pi_{k-1}^{-1}(\cR^\circ_{u[k-1],v[k-1],a_{k-1}})\cap X^\circ_{v,\ba}\cap \Omega^\circ_{u,\ba}\implies F_\bullet\in\pi_k^{-1}(\cR^\circ_{u[k],v[k],a_k}).
    \]
    Since $u\sim_\ba v$, it follows from \Cref{lemma:rotateRich} that $\cR^\circ_{u[k-1],v[k-1],a_{k-1}} = \cR^\circ_{u[k-1],v[k-1],a_{k}}$. Our goal is to show that $F_\bullet$ satisfies the Pl\"ucker conditions given in \Cref{prop:RIJadef}: that is, if $I\not\geq_{a_k}u[k]$ or $I\not\leq_{a_k}v[k]$, then $\Delta_I(F_\bullet)=0$. We prove the case $I\not\geq_{a_k}u[k]$, as the case $I\nleq_{a_k} v[k]$ follows by a similar argument. 
    
    To show that $\Delta_I(F_\bullet)=0$, we consider the incidence Pl\"ucker relation \eqref{eqn:incidenceplucker1} associated with $u[k-1]$ and $I$:
    \[
        \Delta_{u[k-1]}\Delta_I(F_\bullet)=\sum_{i\in I\setminus u[k-1]}\Delta_{I-i}\Delta_{u[k-1]+i}(F_\bullet).
    \]
    Since $\Delta_{u[k-1]}(F_\bullet)\neq 0$, it suffices to show that each term on the right-hand side vanishes. There are two cases for $i\in I\setminus u[k-1]$:
    \begin{enumerate}
        \item If $i\geq_{a_k}u_k$: then the assumption $I\not\geq_{a_k}u[k]$ implies $I\setminus \{i\}\not\geq_{a_k}u[k-1]$. Therefore, $\Delta_{I-i}(F_\bullet)=0$, since $F_\bullet\in\pi_{k-1}^{-1}(\cR^\circ_{u[k-1],v[k-1],a_{k}})$;
        \item If $i<_{a_k}u_k$: then $(i,k)\in D_\ba(u)$. Therefore, $\Delta_{u[k-1]+i}(F_\bullet)=0$, since $F_\bullet\in \Omega^\circ_{u,\ba}$.
    \end{enumerate}
    In either case, each summand on the right-hand side vanishes, and we conclude that $\Delta_I(F_\bullet) = 0$, as desired.
\end{proof}

\begin{remark}
    In the classical setting, Schubert cells $X_w^\circ$ and $\Omega_w^\circ$ along with their closures, Schubert varieties $X_w$ and $\Omega_w$, have many nice geometric properties. Motivated by this, we define \emph{tilted Schubert varieties} $\overline{X_{v}^\circ}$ and $\overline{\Omega_u^\circ}$ as the closures of the corresponding tilted Schubert cells in the flag variety. However, many aspects of these tilted Schubert cells and varieties remain mysterious, as many properties in the classical setting fail or remain unknown in the tilted setting. We outline several key differences:
    \begin{itemize}
        \item In the classical case, $\cR_{u,v}=X_v\cap \Omega_u$. However, in the tilted case, we do not always have $\cT_{u,v}=\overline{X^\circ_{v,\ba}}\cap\overline{\Omega^\circ_{u,\ba}}$. For example, when $u=14325$, $v=15342$, $\ba=(3,3,1,1,1)$, there exists $w=15243$, with $e_w\in \overline{X^\circ_{v,\ba}}\cap\overline{\Omega^\circ_{u,\ba}}$ but $e_w\notin \cT_{u,v}$.
        \item In the classical setting, every Schubert variety $X_w$ or $\Omega_w$ can be realized as a Richardson variety. However, there exist tilted Schubert varieties that do not coincide with any tilted Richardson varieties. For example, when $w=2314$ and $\ba=(3,3,1,1)$, $\overline{X^\circ_{w,\ba}}$ does not equal any tilted Richardson variety. This is evident by examining the set of $T$-fixed points in $\overline{X^\circ_{w,\ba}}$, which does not equal any tilted Bruhat interval.
        \item The inclusion relations of classical Schubert varieties respect the Bruhat order, satisfying $X_u\subseteq X_v$ if and only if $u\leq v$. However, it is not true in general that $u\lesssim_\ba v$ implies $\overline{X^\circ_{u,\ba}}\subseteq \overline{X^\circ_{v,\ba}}$. For example, when $u=13245$, $v=23145$, and $\ba=(3,3,1,1,1)$, we find that for $w=35241$, $e_w\in \overline{X^\circ_{u,\ba}}$ but $e_w\notin \overline{X^\circ_{v,\ba}}$.
        \item Given a sequence $\ba$, the tilted Bruhat decomposition $\fl_n=\bigsqcup_{w\in S_n}X^\circ_{w,\ba}$ is not necessarily  a stratification. For example, when $w=3124$ and $\ba=(3,3,1,1)$, the tilted Schubert variety $\overline{X_{w,\ba}^\circ}$ is not a union of tilted Schubert cells. Specifically, we have $X_{w,\ba}^\circ\cong \C$ and $\overline{X_{w,\ba}^\circ}\cong \P^1=X_{w,\ba}^\circ\cup\{e_v\}$ for $v=3421$, but $\{e_v\}$ is not a tilted Schubert cell for $\ba$ since $\{e_v\}\subsetneq X_{v,\ba}^\circ$.
    \end{itemize}
\end{remark}
\begin{conj}\label{conj:intersect_transversely}
If $u\lesssim_\ba v$, then $X^\circ_{v,\ba}$ and $\Omega^\circ_{u,\ba}$ intersect transversely.
\end{conj}
\Cref{conj:intersect_transversely} would imply that the tilted Richardson variety $\cT^\circ_{u,v}$ is smooth. 
\begin{prob}
Describe all $T$-fixed points in the tilted Schubert variety $\overline{X_{w,\ba}^\circ}$ explicitly. 
\end{prob}
We know $u\leq_\ba v$ implies $e_u\in \overline{X^\circ_{v,\ba}}$, but it is not enough to determine all $T$-fixed points.

%% file: tex/5-geometric.tex
\section{Geometric Properties of Tilted Richardson Varieties}\label{sec:tilted-richardson-geometric}

In this section, we establish some fundamental geometric properties of the tilted Richardson varieties, analogous to those of the Richardson varieties. In particular, we show that:

\begin{enumerate}
\item There is a stratification $\cT_{u,v}=\bigsqcup_{[x,y]\subseteq[u,v]}\cT_{x,y}^{\circ}$ indexed by subintervals of the tilted Bruhat interval $[u,v]$ (\Cref{sec:geometry-1});
\item The dimensions of $\cT_{u,v}$ and $\cT_{u,v}^{\circ}$ are given by the length of the shortest path from $u$ to $v$ in the quantum Bruhat graph (\Cref{sec:geometry-2});
\item The closure relation $\overline{\cT_{u,v}^{\circ}}=\cT_{u,v}$ holds (\Cref{sec:geometry-3}).
\end{enumerate}

Our proofs differ substantially from Richardson’s original proof of these results for classical Richardson varieties in \cite{richardson}, which heavily relies on the fact that the Richardson variety $\cR_{u,v}$ arises as the transverse intersection of two opposite Schubert varieties $X_v\cap\Omega_u$, or equivalently, as the intersection of two opposite Borel orbits $BvB\cap B_-uB$ in the flag variety. In contrast, tilted Richardson varieties do not inherit this orbit intersection structure, requiring us to develop new methods to establish their geometric properties.

\subsection{A Stratification of Tilted Richardson Varieties}\label{sec:geometry-1}

\begin{theorem}\label{thm:Tunion}
    There is a stratification $\cT_{u,v} = \bigsqcup_{[x,y]\subseteq [u,v]}\cT_{x,y}^\circ$ indexed by subintervals $[x,y]$ of the tilted Bruhat interval $[u,v]$.
\end{theorem}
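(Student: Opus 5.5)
The plan is to work with the Plücker description of \Cref{thm:altdefTplucker},
\[\cT_{u,v}=\{F_\bullet:\Delta_w(F_\bullet)=0\text{ for all }w\notin[u,v]\},\qquad \cT^\circ_{x,y}=\cT_{x,y}\cap\{\Delta_x\Delta_y\neq 0\}.\]
There are three things to prove. First, each $\cT^\circ_{x,y}$ with $[x,y]\subseteq[u,v]$ lies in $\cT_{u,v}$. Second, the pieces are pairwise disjoint. Third, every point of $\cT_{u,v}$ lies in some piece. The word ``stratification'' additionally asks that the closure of each piece be a union of pieces; since $\cT_{x,y}$ is closed and contains $\cT^\circ_{x,y}$, this will follow once we know $\cT_{x,y}=\bigsqcup_{[x',y']\subseteq[x,y]}\cT^\circ_{x',y'}$, which is the same statement applied to $[x,y]$, together with the closure result proven later. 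The first point is immediate: $[x,y]\subseteq[u,v]$ means every $w\notin[u,v]$ also lies outside $[x,y]$, so $\Delta_w$ vanishes on $\cT_{x,y}$.

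For the third point (covering), fix $F_\bullet\in\cT_{u,v}$ and a sequence $\ba$ with $u\lesssim_\ba v$, which exists by \Cref{thm:lesssim-exist}. By the tilted Bruhat decompositions of \Cref{lemma:tilted-bruhat-decomp}, there are unique permutations $x,y$ with $F_\bullet\in\Omega^\circ_{x,\ba}\cap X^\circ_{y,\ba}$. In particular $\Delta_x(F_\bullet)\Delta_y(F_\bullet)\neq0$, so $x,y\in[u,v]$, and by \Cref{cor:xy} and \Cref{thm:tilted-criterion} we get $x\sim_\ba u\sim_\ba y$. I then want to show $x\lesssim_\ba y$, $[x,y]\subseteq[u,v]$, and $F_\bullet\in\cT^\circ_{x,y}$. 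The last claim would follow from the stronger form of \Cref{thm:tilted-Schub-cell-intersect} proven above: when $x\sim_\ba y$, $\Omega^\circ_{x,\ba}\cap X^\circ_{y,\ba}=\cT^\circ_{x,y,\ba}$. Nonemptiness of $\cT^\circ_{x,y,\ba}$ then forces $x\leq_\ba y$ by \Cref{thm:tilted-T-fixed-point}, hence $x\lesssim_\ba y$, and $\cT^\circ_{x,y,\ba}=\cT^\circ_{x,y}$ by \Cref{cor:independent-a}. Finally, $[x,y]\subseteq[u,v]$ follows from \Cref{thm:tilted-criterion}: $u\lesssim_\ba x\lesssim_\ba y\lesssim_\ba v$, so every $w$ with $x\lesssim_\ba w\lesssim_\ba y$ lies between $u$ and $v$.

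The step I expect to be the main obstacle is the equivalence $x\sim_\ba y$ needed to invoke \Cref{thm:tilted-Schub-cell-intersect}(2). My route is through $u$: since $x,y\in[u,v]$, \Cref{thm:tilted-criterion}(4) gives $u\lesssim_\ba x\lesssim_\ba v$ and $u\lesssim_\ba y\lesssim_\ba v$, so both $x$ and $y$ are $\sim_\ba$-equivalent to $u$ and hence to each other. The argument also relies on the fact that the classes $\sim_\ba$ are determined purely combinatorially, so no geometric input is needed at this point.

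For the second point (disjointness), suppose $F_\bullet\in\cT^\circ_{x,y}\cap\cT^\circ_{x',y'}$ with both intervals inside $[u,v]$. By \Cref{thm:tilted-Schub-cell-intersect}(1), applied with the same $\ba$ (we have $x\lesssim_\ba y$ by \Cref{cor:xy}), $F_\bullet\in\Omega^\circ_{x,\ba}\cap X^\circ_{y,\ba}$, and likewise for $x',y'$. Uniqueness in \Cref{lemma:tilted-bruhat-decomp} then forces $x=x'$ and $y=y'$.
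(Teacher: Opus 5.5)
Your proof is correct, but it reaches the covering step by a genuinely different route from the paper's.

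The paper's route:
\begin{itemize}
\item For each $k$, it places $F_k$ in a stratum $\cR^\circ_{I_k,J_k,a_k}$ of the rotated Grassmannian Richardson variety $\cR_{u[k],v[k],a_k}$.
\item One incidence Pl\"ucker relation, together with $u\lesssim_\ba v$ and \Cref{lemma:rotateRich}, then shows $I_{k-1}\subseteq I_k$ and $J_{k-1}\subseteq J_k$.
\item The permutations $x,y$ are assembled from these nested sets, and $u\leq_\ba x\leq_\ba y\leq_\ba v$ is inherited from $u[k]\leq_{a_k}I_k\leq_{a_k}J_k\leq_{a_k}v[k]$.
\item Disjointness is read off from the rank data in \Cref{def:main}.
\end{itemize}

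Your route:
\begin{itemize}
\item You obtain $x,y$ in one stroke from the tilted Bruhat decompositions, so nestedness is automatic.
\item You get $x,y\in[u,v]$ from \Cref{thm:altdefTplucker}, and $x\sim_\ba y$ purely combinatorially through $u$.
\item You hand the remaining geometric content to the strengthened form of \Cref{thm:tilted-Schub-cell-intersect}(2), whose hypothesis is only $x\sim_\ba y$.
\end{itemize}
Using that strengthened form is essential, since $x\leq_\ba y$ is not known in advance. The paper's inductive proof does cover this case. If $x[k]\not\leq_{a_k}y[k]$, the same incidence-relation argument, applied with $I=x[k]$, would force $\Delta_{x[k]}(F_\bullet)=0$, a contradiction.

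What your route buys:
\begin{itemize}
\item Brevity.
\item A canonical description of the stratum through a point: for any $\ba$ with $u\lesssim_\ba v$, it is determined by the pair of tilted Schubert cells containing that point.
\item Disjointness straight from uniqueness in \Cref{lemma:tilted-bruhat-decomp}.
\end{itemize}
The paper's route is more self-contained. It uses only the Grassmannian Richardson stratification and a single Pl\"ucker relation, not the tilted Schubert cell machinery. Both ultimately rest on the same kind of incidence-relation computation.

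One small point: the strata are indexed by poset subintervals, so you need $x\preceq y$ in $[u,v]$, not merely $[x,y]\subseteq[u,v]$ as sets. This follows at once from $u\lesssim_\ba x\lesssim_\ba y\lesssim_\ba v$ and \Cref{cor:length-rank-function}.

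Your remark on the frontier condition is not needed for the statement as the paper proves it. It is also not circular, since \Cref{thm:closure} is proved after, and using, this decomposition.
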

\begin{proof}
    It follows directly from \Cref{def:main} that the strata $\cT^\circ_{x,y}$ on the right-hand side are disjoint and each stratum $\cT_{x,y}^\circ$ is contained in $\cT_{u,v}$ for all subintervals $[x,y]\subseteq [u,v]$. It remains to show that for any $F_\bullet \in \cT_{u,v}$, there exists $[x,y]\subseteq [u,v]$ such that $F_\bullet\in \cT_{x,y}^\circ$. 
    
     We fix a sequence $\ba\in [n]^n$ such that $u\lesssim_{\ba} v$, whose existence is guaranteed by \Cref{thm:lesssim-exist}. For each $k\in [n]$, since $F_k\in \cR_{u[k],v[k],a_k}$, and since $\cR_{u[k],v[k],a_k}$ admits a stratification $\cR_{u[k],v[k],a_k}=\bigsqcup_{I,J}\cR^\circ_{I,J,a_k}$, the subspace $F_k$ must lie in one of these strata, say $\cR^\circ_{I_k,J_k,a_k}$ for some subsets $I_k,J_k\in \binom{[n]}{k}$ satisfying $u[k]\leq_{a_k}I_k\leq_{a_k}J_k\leq_{a_k}v[k]$.

    We now show that $I_{k-1}\subseteq I_k$ for every $k\in [n]$. Consider the incidence Pl\"ucker relation \eqref{eqn:incidenceplucker1} associated with $I_{k-1}$ and $I_k$:
    \[
        \Delta_{I_{k-1}}\Delta_{I_k}(F_\bullet)=\sum_{i\in I_k\setminus I_{k-1}}\Delta_{I_k-i}\Delta_{I_{k-1}+i}(F_\bullet).
    \]
    Since the left-hand side is nonzero, at least one term $\Delta_{I_k-i}\Delta_{I_{k-1}+i}(F_\bullet)$ on the right-hand side must also be nonzero for some $i\in I_k\setminus I_{k-1}$. Because $F_k\in \cR^\circ_{I_k,J_k,a_k}$, \Cref{prop:RIJadef} and $\Delta_{I_{k-1}+i}(F_\bullet)\neq 0$ implies that $I_{k-1}\cup\{i\}\geq_{a_k} I_k$. On the other hand, using the condition $u\lesssim_\ba v$ and \Cref{lemma:rotateRich}, we have $F_{k-1}\in \cR^\circ_{I_k,J_k,a_{k-1}}=\cR^\circ_{I_{k-1},J_{k-1},a_{k}}$, implying $I_k\setminus\{i\}\geq_{a_k} I_{k-1}$, or equivalently $I_k\geq_{a_k} I_{k-1}\cup\{i\}$. These inequalities can only hold if $I_k=I_{k-1}\cup\{i\}$, confirming $I_{k-1}\subseteq I_k$. Similarly, we obtain $J_{k-1}\subseteq J_k$ for every $k\in [n]$.

     Finally, we define permutations $x,y\in S_n$ by setting $x_k = I_k\setminus I_{k-1}$ and $y_k = J_k\setminus J_{k-1}$ for all $k\in [n]$. It follows from \Cref{thm:altdefT} that 
    $F_\bullet \in \cT_{x,y}^\circ$. Since $u[k]\leq_{a_k}I_k\leq_{a_k}J_k\leq_{a_k}v[k]$ for all $k\in[n]$, we conclude that $u\leq_\ba x\leq_\ba y\leq_\ba v$, and thus $F_\bullet\in \cT_{u,v}$ lies in $\cT^\circ_{x,y}$ for some subinterval $[x,y]\subseteq [u,v]$.
\end{proof}

\subsection{Dimensions of Tilted Richardson Varieties}\label{sec:geometry-2}

In this section, we show that the dimension of $\cT_{u,v}^\circ$ is given by $\ell(u,v)$, the length of the shortest path from $u$ to $v$ in the quantum Bruhat graph (\Cref{thm:dimension}). The key observation is that since $\cT_{u,v}^\circ$ is the intersection $X_{v,\ba}^\circ\cap \Omega_{u,\ba}^\circ$ by \Cref{thm:tilted-Schub-cell-intersect}, its codimension is at most the sum of the codimensions of the tilted Schubert cells $X_{v,\ba}^\circ$ and $\Omega_{u,\ba}^\circ$.

For any permutation $x\in S_n$, the \emph{affine chart} $x\Omega_{\id}^\circ$ is the permuted opposite Schubert cell, consisting of flags satisfying $\Delta_x \neq 0$. These flags can also be represented by matrices with $1$'s at positions $(x_k,k)$ for all $k\in [n]$ and $0$'s to the right of $1$'s.

\begin{ex}\label{ex:chart}
    Let $x = 3142$. The affine chart $x\Omega_{\id}^\circ$ can be identified as the following affine space, where each $*$ represents an arbitrary number in $\C$:
    \begin{center}
    \begin{tikzpicture}[scale = 0.7]
    \draw (0,0)--(4,0)--(4,4)--(0,4)--(0,0);
    \draw (1,0) -- (1,4);
    \draw (2,0) -- (2,4);
    \draw (3,0) -- (3,4);
    \draw (0,1) -- (4,1);
    \draw (0,2) -- (4,2);
    \draw (0,3) -- (4,3);
    \node at (-0.5,0.5) {$4$};
    \node at (-0.5,1.5) {$3$};
    \node at (-0.5,2.5) {$2$};
    \node at (-0.5,3.5) {$1$};
    \node at (0.5,-0.5) {$1$};
    \node at (1.5,-0.5) {$2$};
    \node at (2.5,-0.5) {$3$};
    \node at (3.5,-0.5) {$4$};
    \node at (0.5,1.5) {$1$};
    \node at (1.5,3.5) {$1$};
    \node at (2.5,0.5) {$1$};
    \node at (3.5,2.5) {$1$};
    \node at (1.5,1.5) {$0$};
    \node at (2.5,3.5) {$0$};
    \node at (2.5,1.5) {$0$};
    \node at (3.5,0.5) {$0$};
    \node at (3.5,3.5) {$0$};
    \node at (3.5,1.5) {$0$};
    \node at (2.5,2.5) {$*$};
    \node at (1.5,0.5) {$*$};
    \node at (1.5,2.5) {$*$};
    \node at (0.5,0.5) {$*$};
    \node at (0.5,2.5) {$*$};
    \node at (0.5,3.5) {$*$};
    \end{tikzpicture}
    \end{center}
\end{ex}

The following lemma describes the relationship between tilted Richardson varieties, open tilted Richardson varieties, and affine charts. In particular, it shows that the open tilted Richardson varieties $\cT_{u,v}^\circ$ are quasi-affine varieties.

\begin{lemma}\label{lemma:uOmega}
    For any permutations $u,v,x\in S_n$,
    \begin{enumerate}
        \item if $x\notin [u,v]$, then $\cT_{u,v} \cap x\Omega_{\id}^\circ = \cT_{u,v}^\circ \cap x\Omega_{\id}^\circ = \emptyset$,
        \item $\cT_{u,v}^\circ = \cT_{u,v} \cap u\Omega_{\id}^\circ \cap v\Omega_{\id}^\circ$.
    \end{enumerate}
\end{lemma}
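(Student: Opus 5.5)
Both parts follow directly from the Plücker description of $\cT_{u,v}$ in \Cref{thm:altdefTplucker}, together with the fact that the affine chart $x\Omega_{\id}^\circ$ is exactly the locus $\{\Delta_x\neq 0\}$.

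\textbf{Part (1).} Suppose $x\notin[u,v]$. By \Cref{thm:altdefTplucker}, every flag $F_\bullet\in\cT_{u,v}$ satisfies $\Delta_w(F_\bullet)=0$ for all $w\notin[u,v]$. In particular $\Delta_x(F_\bullet)=0$. Every flag in $x\Omega_{\id}^\circ$ has $\Delta_x\neq 0$, so the two sets are disjoint. Since $\cT_{u,v}^\circ\subseteq\cT_{u,v}$, the intersection $\cT_{u,v}^\circ\cap x\Omega_{\id}^\circ$ is also empty.

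\textbf{Part (2).} \Cref{thm:altdefTplucker} gives $\cT_{u,v}^\circ=\cT_{u,v}\cap\{\Delta_u\Delta_v\neq 0\}$. We have
\[
\{\Delta_u\Delta_v\neq 0\}=\{\Delta_u\neq 0\}\cap\{\Delta_v\neq 0\}=u\Omega_{\id}^\circ\cap v\Omega_{\id}^\circ .
\]
Substituting this gives the claimed equality.

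The one point to verify is the identification $x\Omega_{\id}^\circ=\{\Delta_x\neq 0\}$.
\begin{itemize}
\item Any flag in $x\Omega_{\id}^\circ$ has a representative with pivots $1$ at $(x_k,k)$ and zeros to their right. Each leading minor in rows $x[k]$ of such a matrix is then $\pm1$, so $\Delta_x\neq0$.
\item Conversely, suppose $\Delta_x(F_\bullet)\neq 0$. Then every $\Delta_{x[k]}\neq 0$, and column reduction in the order $k=1,\dots,n$ produces such a canonical representative. This is the same argument used in \Cref{prop:tilted-Schub-cell}.
\end{itemize}
The paper already states this identification as the definition of the chart, so there is no real obstacle.
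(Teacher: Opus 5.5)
Your proposal is correct and follows the paper's argument exactly. The paper's proof is the one-line observation that both parts follow directly from the Pl\"ucker description in \Cref{thm:altdefTplucker}; your extra check that $x\Omega_{\id}^\circ=\{\Delta_x\neq 0\}$ via column reduction is sound, though the paper simply takes this as the description of the chart.
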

\begin{proof}
    This result follows directly from \Cref{thm:altdefTplucker}. 
\end{proof}

We are now ready to state the dimension formula.

\begin{theorem}\label{thm:dimension}
    The open tilted Richardson variety $\cT^\circ_{u,v}$ is equidimensional of dimension $\ell(u,v)$.
\end{theorem}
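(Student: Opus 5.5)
The plan is to prove a lower bound on the dimension of every irreducible component of $\cT^\circ_{u,v}$, then an upper bound by induction on $\ell(u,v)$ using the stratification of \Cref{thm:Tunion}. If $\cT^\circ_{u,v}$ is empty the statement is vacuous, so assume throughout that it is nonempty; nonemptiness itself is not addressed by this plan.

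\textbf{Lower bound.} By \Cref{thm:lesssim-exist}, fix $\ba$ with $u\lesssim_\ba v$. By \Cref{thm:tilted-Schub-cell-intersect}, $\cT^\circ_{u,v}=X^\circ_{v,\ba}\cap\Omega^\circ_{u,\ba}$. By \Cref{prop:tilted-Schub-cell}, these two cells are affine spaces of dimensions $\ell_\ba(v)$ and $\binom n2-\ell_\ba(u)$. Both cells lie in $\{\Delta_v\neq0\}$ and $\{\Delta_u\neq0\}$ respectively, and are cut out there by the vanishing of Plücker coordinates. Hence on the smooth open set $u\Omega^\circ_\id\cap v\Omega^\circ_\id$ they are closed smooth subvarieties. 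The standard codimension inequality for intersections in a smooth variety then gives that every component of the intersection has codimension at most
\[
\Bigl(\tbinom n2-\ell_\ba(v)\Bigr)+\ell_\ba(u).
\]
Therefore every component has dimension at least $\ell_\ba(v)-\ell_\ba(u)$, which equals $\ell(u,v)$ by \Cref{cor:length-rank-function}.

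\textbf{Upper bound.} I induct on $\ell(u,v)$, proving simultaneously that $\dim\cT^\circ_{u,v}\le\ell(u,v)$ and, via \Cref{thm:Tunion}, that $\dim\cT_{u,v}\le\ell(u,v)$.
\begin{enumerate}
\item If $\ell(u,v)=0$ then $u=v$. In that case $\cT^\circ_{u,u}$ lies in $X^\circ_{u,\ba}\cap\Omega^\circ_{u,\ba}$, which is the single point $e_u$ by the canonical matrices of \Cref{prop:tilted-Schub-cell}.
\item For $u\neq v$, work in the affine chart $U=u\Omega^\circ_\id$. By \Cref{lemma:uOmega}, $\cT^\circ_{u,v}\subseteq\cT_{u,v}\cap U$. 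Choose a generic cocharacter of $T$ acting on $U$ with all weights positive, so that every point of $U$ flows to $e_u$ as $t\to 0$.
\item Since $\cT_{u,v}$ is closed and $T$-stable, the closure $Z$ of any component of $\cT^\circ_{u,v}$ inside $U$ is a $T$-stable closed subvariety containing $e_u$.
\item Consider the regular function $f=\Delta_v/\Delta_u$ on $U$. It is nonzero on $\cT^\circ_{u,v}$ and vanishes at $e_u$, because $v\neq u$. So $f$ is nonconstant on $Z$ and its zero set in $Z$ is nonempty. By Krull's principal ideal theorem, $\dim Z=\dim(Z\cap\{f=0\})+1$.
\item By \Cref{thm:Tunion}, \Cref{lemma:uOmega} and \Cref{thm:altdefTplucker}, the set $Z\cap\{f=0\}$ is contained in the union of strata $\cT^\circ_{x,y}$ with $[x,y]\subsetneq[u,v]$, and in fact $y\neq v$. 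Each such interval has $\ell(x,y)\le\ell(u,v)-1$, so by induction each stratum has dimension at most $\ell(u,v)-1$.
\item Combining, $\dim Z\le\ell(u,v)$, which together with the lower bound gives equidimensionality.
\end{enumerate}

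The main obstacle is step 2 of the upper bound: finding a cocharacter that contracts the chart to $e_u$ while preserving $\cT_{u,v}$, and checking that $f$ vanishes at $e_u$ but nowhere on $\cT^\circ_{u,v}$. Preservation is automatic, since $\cT_{u,v}$ is defined by $T$-invariant rank conditions. The contraction property I would take from the usual attracting-cell description of $u\Omega^\circ_\id$ under a dominant regular cocharacter, conjugated by $u$.
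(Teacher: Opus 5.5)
Your proof is correct. Its skeleton is the paper's: the lower bound comes from $\cT^\circ_{u,v}=X^\circ_{v,\ba}\cap\Omega^\circ_{u,\ba}$ together with \Cref{cor:length-rank-function}, and the upper bound comes from induction on $\ell(u,v)$ using the stratification of \Cref{thm:Tunion} and a principal-ideal argument. The difference is how you guarantee that the boundary divisor actually meets the closure of a component.

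The paper takes the closure $\overline{Z}$ in the projective variety $\fl_n$, where the boundary is $\overline{Z}\cap\{\Delta_u\Delta_v=0\}$, and splits into two cases. If the boundary is nonempty, Krull gives $\dim\partial Z\ge\dim\overline{Z}-1$, and the induction hypothesis gives a contradiction. If it is empty, $Z$ is both projective and quasi-affine, hence a point.

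You instead stay inside the affine chart $u\Omega^\circ_{\id}$ and use a cocharacter contracting that chart to $e_u$ to force $e_u\in Z$. At $e_u$ the function $\Delta_v/\Delta_u$ vanishes, so the boundary is automatically nonempty. This gives a direct rather than contradiction-based bound and localizes the argument near $e_u$, much as the paper later does in \Cref{lemma:uxOmega} and \Cref{prop:uxvanishing}. The price is the contracting cocharacter and the $T$-stability of $Z$. That stability comes from $T$ being connected, so that it preserves each irreducible component of the $T$-stable variety $\cT^\circ_{u,v}$. It does not follow from $\cT_{u,v}$ being closed, as you wrote.

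On nonemptiness, the paper's proof makes the same tacit assumption. It is supplied only later, by the tilted Deodhar decomposition: the cell $\cD_{\mathbf{u}^+,\mathbf{v}}\cong(\C^\ast)^{|J^\circ_{\mathbf{u}^+}|}$ from \Cref{thm:tilted-deodhar} and \Cref{lemma:tilted-pds} is nonempty, and its construction does not use this theorem.
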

\begin{proof}
    We proceed by induction on $\ell(u,v)$. In the base case $\ell(u,v)=0$, or $u=v$, the statement is immediate since $\cT_{u,u}^\circ=\{e_u\}$ is a single point.
    
    Fix a sequence $\ba\in [n]^n$ such that $u\lesssim_{\ba} v$, whose existence is guaranteed by \Cref{thm:lesssim-exist}. Since $\cT_{u,v}^\circ=X_{v,\ba}^\circ\cap \Omega_{u,\ba}^\circ$ by \Cref{thm:tilted-Schub-cell-intersect}, 
    $\codim(\cT^\circ_{u,v})$ is at most the sum of $\codim(X_{v,\ba}^\circ)$ and $\codim(\Omega_{u,\ba}^\circ)$. These values are given by $\binom{n}{2}-\ell_\ba(v)$ and $\ell_\ba(u)$, respectively. Thus, 
    \[\codim(\cT_{u,v}^\circ) \leq \codim(X^\circ_{v,\ba})+ \codim(\Omega^\circ_{u,\ba}) = \binom{n}{2}-\ell_\ba(v)+\ell_\ba(u).\]
    By \Cref{cor:length-rank-function}, $\ell_\ba(v)-\ell_\ba(u)=\ell(u,v)$, and thus every irreducible component of $\cT^\circ_{u,v}$ has dimension at least $\ell(u,v)$.

    It remains to show $\dim(\cT^\circ_{u,v})\leq\ell(u,v)$. Suppose, for contradiction, that there exists an irreducible component $Z\subseteq \cT^\circ_{u,v}$ with $\dim(Z)\geq \ell(u,v)+1$. Let $\overline{Z}$ be the closure of $Z$ in $\fl_n$ and define its boundary as $\partial Z:=\overline{Z}\setminus Z$. By \Cref{lemma:uOmega}, $Z=\overline{Z}\cap u\Omega_{\id}^\circ \cap v\Omega_{\id}^\circ$. Therefore,
    \[
        \partial Z=\overline{Z}\cap (\fl_n\setminus(u\Omega_{\id}^\circ \cap v\Omega_{\id}^\circ))=\overline{Z}\cap \{\Delta_u\Delta_v=0\}.
    \]
    We consider two cases:
    \begin{enumerate}
        \item If $\partial Z\neq \emptyset$, then since $\partial Z$ is the intersection of $\overline{Z}$ with a union of hypersurfaces $\{\Delta_u\Delta_v=0\}$, we have $\dim(\partial Z)\geq\dim(\overline{Z})-1\geq\ell(u,v)$. However, from \Cref{thm:Tunion}, $\partial Z\subseteq\cT_{u,v}\setminus\cT^\circ_{u,v}=\bigsqcup_{[x,y]\subsetneq[u,v]}\cT^\circ_{x,y}$. By the induction hypothesis, each $\cT^\circ_{x,y}$ has dimension at most $\ell(x,y) \leq \ell(u,v) - 1$, hence $\dim(\partial Z) \leq \ell(u,v) - 1$, a contradiction.
        \item If $\partial Z= \emptyset$, then $Z=\overline{Z}$ is a projective variety. However, $Z\subseteq u\Omega_{\id}^\circ \cap v\Omega_{\id}^\circ$ is quasi-affine. This is only possible if $Z$ is a point, contradicting $\dim(Z)\geq \ell(u,v)+1\geq 1$.
    \end{enumerate}
    Since both cases lead to a contradiction, we conclude that $\dim(Z)=\ell(u,v)$, and thus every irreducible component of $\cT_{u,v}^\circ$ has dimension $\ell(u,v)$.
\end{proof}

\subsection{Closure Relations of Tilted Richardson Varieties}\label{sec:geometry-3}

In this section, we prove the closure relation $\overline{\cT_{u,v}^\circ}=\cT_{u,v}$ (\Cref{thm:closure}). While one direction $\overline{\cT_{u,v}^\circ}\subseteq\cT_{u,v}$ is immediate from the definition, the other direction requires a more delicate analysis of the geometry of $\cT_{u,v}$ on different affine charts.

We now outline the strategy of the proof. The argument proceeds by induction on $\ell(u,v)$. By \Cref{thm:Tunion}, it is enough to show that $\cT_{u,x}^{\circ}\subseteq\overline{\cT_{u,v}^\circ}$ for all $x\in[u,v]$ such that $\ell(u,x)=\ell(u,v)-1$, and dually, $\cT_{y,v}^{\circ}\subseteq\overline{\cT_{u,v}^\circ}$ for all $y\in[u,v]$ such that $\ell(y,v)=\ell(u,v)-1$. We focus on the first case.

To prove this, we introduce a new subvariety of $\fl_n$, called the \emph{skew tilted Schubert cell} $X_{v,x,\ba}^\circ$ (\Cref{def:skew-tilted-Schub-cell}). It is closely related to the tilted Schubert cell from \Cref{def:tilted-Schub-cell}, and is isomorphic to an affine space $\C^{\ell_\ba(v)}$. A key step in the argument is to show that $\cT_{u,v}\cap x\Omega_\id^\circ\cap u\Omega_\id^\circ = X_{v,x,\ba}^\circ\cap \Omega_{u,\ba}^\circ$ (\Cref{prop:uxvanishing}). This identity allows us to lower-bound the dimension of $\cT_{u,v}\cap x\Omega_\id^\circ\cap u\Omega_\id^\circ$, which ultimately leads the desired closure relation via a straightforward geometric argument.

We begin with a lemma that focuses on the intersection $\cT_{u,v}\cap x\Omega_\id^\circ\cap u\Omega_\id^\circ$, which effectively ``isolates" the two relevant strata $\cT_{u,v}^\circ$ and $\cT_{u,x}^\circ$.

\begin{lemma}\label{lemma:uxOmega}
    For any $u,v,x\in S_n$ such that $x\in[u,v]$ and $\ell(u,x)=\ell(u,v)-1$, we have
    \[\cT_{u,v}\cap x\Omega_{\id}^\circ \cap u\Omega_{\id}^\circ = \cT_{u,x}^\circ \sqcup (\cT_{u,v}^\circ \cap x\Omega_{\id}^\circ).\]
\end{lemma}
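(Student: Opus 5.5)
We prove the lemma by decomposing the left-hand side with the stratification of \Cref{thm:Tunion} and then using \Cref{lemma:uOmega} to discard every stratum that cannot meet both charts $x\Omega_{\id}^\circ$ and $u\Omega_{\id}^\circ$. Write
\[\cT_{u,v}\cap x\Omega_{\id}^\circ\cap u\Omega_{\id}^\circ=\bigsqcup_{[y,z]\subseteq[u,v]}\left(\cT^\circ_{y,z}\cap x\Omega_{\id}^\circ\cap u\Omega_{\id}^\circ\right).\]
By \Cref{lemma:uOmega}(1), applied to $\cT^\circ_{y,z}\subseteq\cT_{y,z}$, a stratum can contribute only if both $u\in[y,z]$ and $x\in[y,z]$.

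The first step is to determine which subintervals survive. Since $[y,z]\subseteq[u,v]$, we have $y\in[u,v]$, so $\ell(u,y)+\ell(y,v)=\ell(u,v)$. The condition $u\in[y,z]$ gives $\ell(y,u)+\ell(u,z)=\ell(y,z)$. Moreover, $\ell(y,z)\le \ell(y,v)$, because $y\preceq z$ in $[u,v]$ gives $\ell(u,y)+\ell(y,z)+\ell(z,v)=\ell(u,v)$. Combining these, $\ell(y,u)+\ell(u,z)\le \ell(u,v)-\ell(u,y)$. Since $u\preceq z$ also gives $\ell(u,z)=\ell(u,v)-\ell(z,v)$, this becomes
\[\ell(y,u)+\ell(u,y)\le \ell(z,v).\]

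This step is the main obstacle: we must show it forces $y=u$. I would use the poset structure directly. Both $u$ and $y$ lie in $[y,z]\subseteq[u,v]$, and the rank function of $[u,v]$ is $w\mapsto\ell(u,w)$. The element $u$ is the unique minimum of $[u,v]$, so $u\preceq y$ in $[u,v]$. On the other hand, $u\in[y,z]$ means $y\preceq u$ within the subinterval $[y,z]$. Since $[y,z]$ carries the order induced from $[u,v]$, antisymmetry gives $y=u$. One should check carefully that the order on $[y,z]$ agrees with the induced order of $[u,v]$; this follows from \Cref{thm:tilted-criterion}, since all these intervals are intervals in a single order $\lesssim_\ba$ with $u\lesssim_\ba v$, via \Cref{cor:xy}.

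So $y=u$, and $x\in[u,z]$ with $z\in[u,v]$. Then $\ell(u,x)\le\ell(u,z)\le\ell(u,v)=\ell(u,x)+1$. Hence either $z=v$, or $\ell(u,z)=\ell(u,x)$. In the second case, $x\in[u,z]$ with $\ell(x,z)=0$ forces $z=x$.

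The surviving strata are therefore $\cT^\circ_{u,x}$ and $\cT^\circ_{u,v}$. For the first, $\cT^\circ_{u,x}\subseteq u\Omega_{\id}^\circ\cap x\Omega_{\id}^\circ$ by \Cref{lemma:uOmega}(2). For the second, \Cref{lemma:uOmega}(2) gives $\cT^\circ_{u,v}\cap u\Omega_{\id}^\circ=\cT^\circ_{u,v}$. This yields exactly $\cT_{u,x}^\circ\sqcup(\cT_{u,v}^\circ\cap x\Omega_{\id}^\circ)$, and the union is disjoint because the strata of \Cref{thm:Tunion} are disjoint.
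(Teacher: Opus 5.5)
Your proof is correct and follows the paper's argument: stratify $\cT_{u,v}$ via \Cref{thm:Tunion}, then use \Cref{lemma:uOmega} to see that only $\cT^\circ_{u,x}$ and $\cT^\circ_{u,v}$ meet both charts, with you supplying the interval bookkeeping that the paper leaves implicit. Your displayed inequality is an unused detour: keeping the equality $\ell(y,z)=\ell(u,v)-\ell(u,y)-\ell(z,v)$, instead of bounding $\ell(y,z)$ by $\ell(y,v)$, gives $\ell(u,y)+\ell(y,u)=0$ and hence $y=u$ directly.
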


\begin{proof}
    By \Cref{thm:Tunion}, there is a decomposition $\cT_{u,v} = \bigsqcup_{[x,y]\subseteq [u,v]}\cT_{x,y}^\circ$. According to \Cref{lemma:uOmega}, the only strata that intersect both $u\Omega_{\id}^\circ$ and $x\Omega_{\id}^\circ$ are $\cT^\circ_{u,x}$ and $\cT^\circ_{u,v}$, which implies the claim.
\end{proof}

We now define the skew tilted Schubert cells.

\begin{defin}\label{def:skew-tilted-Schub-cell}
    Fix $x,v\in S_n$ and a sequence $\ba\in [n]^n$ such that $x\lesssimdot_\ba v$ and $x=vt_{pq}$ for $p<q$. Define the following subsets of an $n\times n$ grid (the second is the tilted Rothe diagram from \Cref{def:tiltedrothe}):
    \begin{align*}
        D^\op_\ba(x,v) &:= \{(x_i,k)\in [n]^2: i>k,\ x_i<_{a_k}v_k\},\\
        D^\op_\ba(x) &:= \{(x_i,k)\in [n]^2: i>k,\ x_i<_{a_k}x_k\}.
    \end{align*}
    
    The \emph{skew tilted Schubert cell} $X_{v,x,\ba}^\circ$ is defined as the set of flags $F_{\bullet} \in \fl_n$ satisfying the following Pl\"ucker conditions:
    \begin{enumerate}
        \item $\Delta_x(F_\bullet)\neq 0$;
        \item $\Delta_{x[k-1]\cup\{i\}}(F_\bullet)=0$ for all $(i,k)\in D^\op_\ba(x,v)\cap D^\op_\ba(x)$;
        \item For all $(i,k)\in D^\op_\ba(x,v)\setminus D^\op_\ba(x)$ (necessarily with $k=q$), we require:
        \[\Delta_{x[p-1]+i}\Delta_{x[q]}(F_\bullet)=\Delta_{x[q-1]+i}\Delta_{x[p-1]+x_q}(F_\bullet).\]
    \end{enumerate}
\end{defin}

We claim that $|D^\op_{\ba}(x,v)| = \binom{n}{2}-\ell_{\ba}(v)$ and that the skew tilted Schubert cell satisfies $X_{v,x,\ba}^\circ \cong \C^{\ell_{\ba}(v)}$. Instead of giving a formal proof for these claims, they will become clear once we present the following example for \Cref{def:skew-tilted-Schub-cell}.

\begin{ex}
    Consider $v=465123$, $x=265143$ and $\ba = (2,2,2,6,6,6)$. We draw the diagrams $D^\op_\ba(x)$ and $D^\op_\ba(x,v)$ in \Cref{fig:tilted-rothe-skew}. The diagram $D^\op_\ba(x)$ is defined as in \Cref{def:tiltedrothe}. To construct $D^\op_\ba(x,v)$, place a $\bullet$ at position $(x_k,k)$ for each $k\in[6]$, and place an $\star$ at position $(v_k,k)$ for the two indices $k=1,5$ where $v$ differs from $x$. Draw a red horizontal line at each column $k$ at the cutoff for $\leq_{a_k}$. Draw rays to the right from each $\bullet$ until they hit the the right boundary, and draw rays upward from the $\bullet$, or from the $\star$ instead if present in that column, until they hit the red line.
    
    The remaining boxes, which represent $D^\op_{\ba}(x,v)$ and $D^\op_{\ba}(x)$, are colored either green or blue. The green boxes are those shared by both diagrams, while the blue boxes indicate where they differ. Notably, it is generally true that the blue boxes in each diagram lie entirely within one of two the columns where $x$ and $v$ differ, and that the number of blue boxes differs by exactly one. As a result, we have
    \[|D^\op(x,v)|=|D^\op(x)|-1=\binom{n}{2}-\ell_\ba(v).\]
    \begin{figure}[ht]
    \centering
    \subcaptionbox{$D^\op_\ba(x,v)$\label{fig:D_xv}}[.4\textwidth]{
    \begin{tikzpicture}[scale = 0.6]
    \fill [green, opacity  = 0.25] (0,2) rectangle (1,0);
    \fill [green, opacity  = 0.25] (0,6) rectangle (3,5);
    \fill [green, opacity  = 0.25] (3,4) rectangle (4,2);
    \fill [blue, opacity  = 0.25] (4,3) rectangle (5,4);
    \draw (0,0)--(6,0)--(6,6)--(0,6)--(0,0);
    \draw (1,0) -- (1,6);
    \draw (2,0) -- (2,6);
    \draw (3,0) -- (3,6);
    \draw (4,0) -- (4,6);
    \draw (5,0) -- (5,6);
    \draw (0,1) -- (6,1);
    \draw (0,2) -- (6,2);
    \draw (0,3) -- (6,3);
    \draw (0,4) -- (6,4);
    \draw (0,5) -- (6,5);
    \node at (-0.5,0.5) {$6$};
    \node at (-0.5,1.5) {$5$};
    \node at (-0.5,2.5) {$4$};
    \node at (-0.5,3.5) {$3$};
    \node at (-0.5,4.5) {$2$};
    \node at (-0.5,5.5) {$1$};
    \node at (0.5,-0.5) {$1$};
    \node at (1.5,-0.5) {$2$};
    \node at (2.5,-0.5) {$3$};
    \node at (3.5,-0.5) {$4$};
    \node at (4.5,-0.5) {$5$};
    \node at (5.5,-0.5) {$6$};
    \draw[line width = 0.35mm] (0.5,4.5) -- (6,4.5);
    \draw[line width = 0.35mm, orange] (0.5,2.5) -- (0.5,5);
    \draw[line width = 0.35mm] (1.5,5) -- (1.5,0.5) -- (6,0.5);
    \draw[line width = 0.35mm] (2.5,5) -- (2.5,1.5) -- (6,1.5);
    \draw[line width = 0.35mm] (3.5,6) -- (3.5,5.5) -- (6,5.5);
    \draw[line width = 0.35mm] (3.5,0) -- (3.5,1);
    \draw[line width = 0.35mm] (4.5,2.5) -- (6,2.5);
    \draw[line width = 0.35mm, orange] (4.5,0) -- (4.5,1);
    \draw[line width = 0.35mm, orange] (4.5,4.5) -- (4.5,6);
    \draw[line width = 0.35mm] (5.5,6) --(5.5,3.5) -- (6,3.5);
    \draw[line width = 0.35mm] (5.5,0) -- (5.5,1);
    \draw[line width=0.55mm, red] (0,5) -- (3,5);
    \draw[line width=0.55mm, red] (3,1) -- (6,1);
    \node at (0.5,4.5) {$\bullet$};
    \node at (1.5,0.5) {$\bullet$};
    \node at (2.5,1.5) {$\bullet$};
    \node at (3.5,5.5) {$\bullet$};
    \node at (4.5,2.5) {$\bullet$};
    \node at (5.5,3.5) {$\bullet$};
    \node[orange, font=\scriptsize] at (0.5,2.5) {$\bigstar$};
    \node[orange, font=\scriptsize] at (4.5,4.5) {$\bigstar$};
    \end{tikzpicture}}
    \subcaptionbox{$D^\op_\ba(x)$\label{fig:D_x}}[.4\textwidth]{
    \begin{tikzpicture}[scale = 0.6]
    \fill [green, opacity  = 0.25] (0,2) rectangle (1,0);
    \fill [blue, opacity  = 0.25] (0,4) rectangle (1,2);
    \fill [green, opacity  = 0.25] (0,6) rectangle (3,5);
    \fill [green, opacity  = 0.25] (3,4) rectangle (4,2);
    \draw (0,0)--(6,0)--(6,6)--(0,6)--(0,0);
    \draw (1,0) -- (1,6);
    \draw (2,0) -- (2,6);
    \draw (3,0) -- (3,6);
    \draw (4,0) -- (4,6);
    \draw (5,0) -- (5,6);
    \draw (0,1) -- (6,1);
    \draw (0,2) -- (6,2);
    \draw (0,3) -- (6,3);
    \draw (0,4) -- (6,4);
    \draw (0,5) -- (6,5);
    \node at (-0.5,0.5) {$6$};
    \node at (-0.5,1.5) {$5$};
    \node at (-0.5,2.5) {$4$};
    \node at (-0.5,3.5) {$3$};
    \node at (-0.5,4.5) {$2$};
    \node at (-0.5,5.5) {$1$};
    \node at (0.5,-0.5) {$1$};
    \node at (1.5,-0.5) {$2$};
    \node at (2.5,-0.5) {$3$};
    \node at (3.5,-0.5) {$4$};
    \node at (4.5,-0.5) {$5$};
    \node at (5.5,-0.5) {$6$};
    \node at (0.5,4.5) {$\bullet$};
    \node at (1.5,0.5) {$\bullet$};
    \node at (2.5,1.5) {$\bullet$};
    \node at (3.5,5.5) {$\bullet$};
    \node at (4.5,2.5) {$\bullet$};
    \node at (5.5,3.5) {$\bullet$};
    \draw[line width = 0.35mm] (0.5,5) -- (0.5,4.5) -- (6,4.5);
    \draw[line width = 0.35mm] (1.5,5) -- (1.5,0.5) -- (6,0.5);
    \draw[line width = 0.35mm] (2.5,5) -- (2.5,1.5) -- (6,1.5);
    \draw[line width = 0.35mm] (3.5,6) -- (3.5,5.5) -- (6,5.5);
    \draw[line width = 0.35mm] (3.5,0) -- (3.5,1);
    \draw[line width = 0.35mm] (4.5,6) -- (4.5,2.5) -- (6,2.5);
    \draw[line width = 0.35mm] (4.5,0) -- (4.5,1);
    \draw[line width = 0.35mm] (5.5,6) --(5.5,3.5) -- (6,3.5);
    \draw[line width = 0.35mm] (5.5,0) -- (5.5,1);
    \draw[line width=0.55mm, red] (0,5) -- (3,5);
    \draw[line width=0.55mm, red] (3,1) -- (6,1);
    \end{tikzpicture}}
    \caption{Tilted Rothe Diagrams in \Cref{def:skew-tilted-Schub-cell} for $v=465123$, $x=265143$ and $\ba = (2,2,2,6,6,6)$}
    \label{fig:tilted-rothe-skew}
    \end{figure}
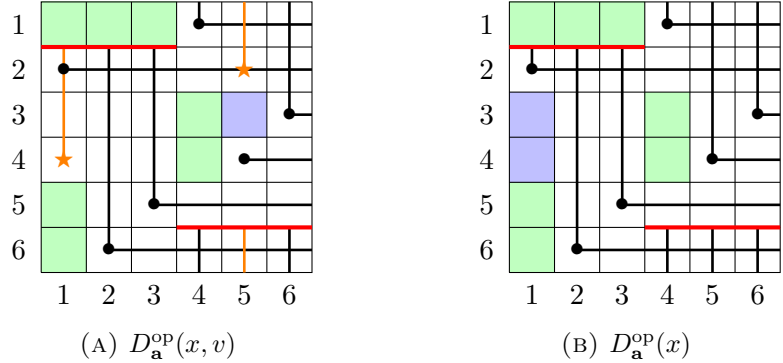

    We now illustrate the Plücker conditions in \Cref{def:skew-tilted-Schub-cell}. Each flag $F_\bullet \in X_{v,x,\ba}^\circ$ can be represented by a matrix as shown in \Cref{fig:X_vxa}. To justify this, note that the condition $\Delta_x(F_\bullet) \neq 0$ ensures that $F_\bullet$ lies in the affine chart $x\Omega^\circ$, meaning it can be represented by a matrix with $1$'s at positions $(x_k, k)$ for all $k \in [6]$ and $0$'s to the right of $1$'s. Condition (2) in \Cref{def:skew-tilted-Schub-cell} then corresponds to placing $0$'s in the green boxes of \Cref{fig:D_xv}, while condition (3), the quadratic Plücker relation, corresponds to the vanishing of a $2 \times 2$ minor indicated by the blue brackets in \Cref{fig:X_vxa}. The remaining entries of the matrix are filled with arbitrary values in $\C$ (denoted by $\ast$). This matrix representative makes it clear that the number of free variables equals $\binom{n}{2}-|D^\op(x,v)|=\ell_{\ba}(v)$, hence $X_{v,x,\ba}^\circ \cong \C^{\ell_{\ba}(v)}$.
    \begin{figure}[ht]
    \centering
    \begin{tikzpicture}[scale = 0.6]
    \fill [green, opacity  = 0.25] (0,2) rectangle (1,0);
    \fill [green, opacity  = 0.25] (0,6) rectangle (3,5);
    \fill [green, opacity  = 0.25] (3,4) rectangle (4,2);
    \draw (0,0)--(6,0)--(6,6)--(0,6)--(0,0);
    \draw (1,0) -- (1,6);
    \draw (2,0) -- (2,6);
    \draw (3,0) -- (3,6);
    \draw (4,0) -- (4,6);
    \draw (5,0) -- (5,6);
    \draw (0,1) -- (6,1);
    \draw (0,2) -- (6,2);
    \draw (0,3) -- (6,3);
    \draw (0,4) -- (6,4);
    \draw (0,5) -- (6,5);
    \node at (-0.5,0.5) {$6$};
    \node at (-0.5,1.5) {$5$};
    \node at (-0.5,2.5) {$4$};
    \node at (-0.5,3.5) {$3$};
    \node at (-0.5,4.5) {$2$};
    \node at (-0.5,5.5) {$1$};
    \node at (0.5,-0.5) {$1$};
    \node at (1.5,-0.5) {$2$};
    \node at (2.5,-0.5) {$3$};
    \node at (3.5,-0.5) {$4$};
    \node at (4.5,-0.5) {$5$};
    \node at (5.5,-0.5) {$6$};
    \node at (0.5,4.5) {$1$};
    \node at (1.5,0.5) {$1$};
    \node at (2.5,1.5) {$1$};
    \node at (3.5,5.5) {$1$};
    \node at (4.5,2.5) {$1$};
    \node at (5.5,3.5) {$1$};
    \node at (1.5,4.5) {$0$};
    \node at (2.5,4.5) {$0$};
    \node at (3.5,4.5) {$0$};
    \node at (4.5,4.5) {$0$};
    \node at (5.5,4.5) {$0$};
    \node at (2.5,0.5) {$0$};
    \node at (3.5,0.5) {$0$};
    \node at (4.5,0.5) {$0$};
    \node at (5.5,0.5) {$0$};
    \node at (3.5,1.5) {$0$};
    \node at (4.5,1.5) {$0$};
    \node at (4.5,5.5) {$0$};
    \node at (5.5,1.5) {$0$};
    \node at (5.5,2.5) {$0$};
    \node at (5.5,5.5) {$0$};
    \node at (0.5,0.5) {$0$};
    \node at (0.5,1.5) {$0$};
    \node at (0.5,2.5) {$a$};
    \node at (0.5,3.5) {$ab$};
    \node at (0.5,5.5) {$0$};
    \node at (1.5,1.5) {$*$};
    \node at (1.5,2.5) {$*$};
    \node at (1.5,3.5) {$*$};
    \node at (1.5,5.5) {$0$};
    \node at (2.5,2.5) {$*$};
    \node at (2.5,3.5) {$*$};
    \node at (2.5,5.5) {$0$};
    \node at (3.5,2.5) {$0$};
    \node at (3.5,3.5) {$0$};
    \node at (4.5,3.5) {$b$};
    \draw[line width=0.55mm, blue] (0,2) -- (1,2) -- (1,4) -- (0,4) -- (0,2);
    \draw[line width=0.55mm, blue] (4,2) -- (5,2) -- (5,4) -- (4,4) -- (4,2);
    \end{tikzpicture}
    \caption{The matrix representative of a skew tilted Schubert cell $X_{v,x,\ba}^\circ$ for $v=465123$, $x=265143$ and $\ba = (2,2,2,6,6,6)$}
    \label{fig:X_vxa}
    \end{figure}
\end{ex}

We are ready to prove \Cref{prop:uxvanishing}, which relies on the following two lemmas.

\begin{lemma}\label{lemma:indplucker}
    Let $I\in \binom{[n]}{a}, J\in \binom{[n]}{b}$ with $1\leq b < a < n$. Suppose there exists an integer $k\in (b,a)$ and subsets $K_1,K_2\in \binom{[n]}{k}$ such that for some $r,r'\in [n]$, we have $F_\bullet \in \cR_{K_1,K_2,r} = \cR_{K_1,K_2,r'}$. Then the following identity holds:
    \[
        \sum_{i\in I \cap [r,r')_c} \Delta_{I- i}\Delta_{J+i}(F_\bullet) = 0.
    \]
\end{lemma}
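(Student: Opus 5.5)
The idea is to show that $F_k$ splits along the coordinate decomposition $\C^n=\C^{S}\oplus\C^{S^c}$, where $S:=[r,r')_c$. The partial sum over $i\in I\cap S$ is then the full incidence relation \eqref{eqn:incidenceplucker2} with the identity replaced by the coordinate projection $P$ onto $\C^S$, and the splitting makes $P$ harmless.

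\textbf{Step 1 (support of the Pl\"ucker vector of $F_k$).} Since $F_k\in\cR_{K_1,K_2,r}$, \Cref{prop:RIJadef} gives that $\Delta_K(F_k)\neq 0$ only if $K\in[K_1,K_2]_r$. The hypothesis $\cR_{K_1,K_2,r}=\cR_{K_1,K_2,r'}\neq\emptyset$ presumably forces $K_1\leq_{r'}K_2$ as well; I expect to check this by evaluating at a torus fixed point, or by a direct argument analogous to \Cref{thm:tilted-T-fixed-point}. Granting it, \Cref{lemma:both-r-comparable}(2) shows that every $K$ in the support satisfies $\size{K\cap S}=c$ for the fixed constant $c:=\size{K_1\cap S}$.

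\textbf{Step 2 (splitting).} Let $t\in\C^\ast$ act on $\C^n$ by scaling the coordinates in $S$ by $t$. This multiplies $\Delta_K$ by $t^{\size{K\cap S}}=t^c$, a common scalar, so $F_k$ is fixed by this one-parameter torus. Hence $F_k$ decomposes into weight spaces: $F_k=A\oplus B$ with $A=F_k\cap\C^S$ and $B=F_k\cap\C^{S^c}$. Equivalently, the projection $P=\sum_{i\in S}e_i\otimes e_i^\ast$ satisfies $P(F_k)=A\subseteq F_k$.

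\textbf{Step 3 (rewriting the sum).} I will rewrite $\sum_{i\in I\cap S}\Delta_{I-i}\Delta_{J+i}(F_\bullet)$ as the $e_I$-coefficient of $(Px)\wedge\omega_{a-1}$. Here $\omega_{a-1}$ is the decomposable vector of $F_{a-1}$, and $x:=\sum_i \Delta_{J+i}(F_\bullet)e_i$ is the contraction of $\omega_{b+1}$ by $e_J^\ast$, so $x\in F_{b+1}$. When $P$ is replaced by the identity this is exactly the derivation of \eqref{eqn:incidenceplucker2} from $F_{b+1}\subseteq F_{a-1}$. The sign convention $\Delta_{I-i_j}=(-1)^{a-j}\Delta_{\dots}$ is designed so that $e_i\wedge e_{I-i}=\pm e_I$ with a uniform sign, matching the expansion of $x\wedge\omega_{a-1}$.

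\textbf{Step 4 (conclusion).} Since $b<k<a$, we have $b+1\leq k\leq a-1$. Thus $x\in F_{b+1}\subseteq F_k$, so $Px\in A\subseteq F_k\subseteq F_{a-1}$, and therefore $(Px)\wedge\omega_{a-1}=0$. In particular its $e_I$-coefficient vanishes, which is the claimed identity.

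I expect the main obstacle to be the sign bookkeeping in Step 3, which must match the paper's alternating conventions for $\Delta_{I-i}$ and $\Delta_{J+i}$ exactly. A secondary point is justifying $K_1\leq_{r'}K_2$ in Step 1 from the stated equality of varieties. If that is awkward, Step 2 can instead be obtained directly from \Cref{lemma:grSchubPlucker} applied in both rotations.
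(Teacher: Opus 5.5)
Your proof is correct, and it takes a genuinely different route from the paper's.

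\textbf{Shared input.} Both arguments rest on the same fact: by \Cref{prop:RIJadef} and \Cref{lemma:both-r-comparable}(2), every $K$ with $\Delta_K(F_k)\neq 0$ has $\#(K\cap[r,r')_c)$ equal to a fixed constant. Your ``secondary point'' is not an issue. The notation $\cR_{K_1,K_2,r'}$ is only defined when $K_1\leq_{r'}K_2$, so that comparability is already part of the hypothesis.

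\textbf{The paper's route.} It stays inside the Pl\"ucker calculus. It multiplies the sum by an auxiliary nonzero $\Delta_K(F_\bullet)$ and expands each product $\Delta_{J+i}\Delta_K$ with \eqref{eqn:incidenceplucker3}. The support condition then shows that $\Delta_{K-j+i}$ vanishes unless $i$ and $j$ lie on the same side of $[r,r')_c$. This lets the paper reindex the double sum so that $i$ runs over all of $I$ and $j$ over $K\cap[r,r')_c$. Each inner sum is then killed by \eqref{eqn:incidenceplucker2}.

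\textbf{Your route.} You turn the support condition into geometry. $F_k$ is fixed by the one-parameter subgroup scaling the coordinates in $S$. Hence the projection $P$, being a polynomial in that torus element, preserves $F_k$. The partial sum equals $(-1)^{a-1}$ times the $e_I$-coefficient of $(Px)\wedge\omega_{a-1}$. That coefficient vanishes because $P(F_{b+1})\subseteq F_k\subseteq F_{a-1}$. Your sign bookkeeping and the membership $x\in F_{b+1}$ (cofactor expansion of $\Delta_{J+i}$ along its last row) both check out.

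\textbf{What each buys.} Your approach gives a conceptual explanation and a stronger structural fact, the splitting $F_k=(F_k\cap\C^S)\oplus(F_k\cap\C^{S^c})$. It reproves the relevant case of the incidence relation rather than citing it. It also makes the summed identity in \Cref{lemma:induxplucker} immediate by linearity in $P$. The paper's approach is uniform with the Pl\"ucker-relation manipulations used elsewhere in that section, such as \Cref{prop:uxvanishing}, and needs no exterior-algebra setup.
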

\begin{proof}
    Fix any $K\in \binom{[n]}{k}$ such that $\Delta_K(F_\bullet) \neq 0$. Multiply the desired expression by $\Delta_K(F_\bullet)$, and expand $\Delta_{J+i}\Delta_K(F_\bullet)$ using \eqref{eqn:incidenceplucker3}, the left hand side becomes 
    \[\sum_{i\in I \cap [r,r')_c} \Delta_{I- i}\Delta_{J+ i}\Delta_K(F_\bullet)=\sum_{\substack{i\in I\cap [r,r')_c\\j\in K}}\Delta_{I-i}\Delta_{J+j}\Delta_{K-j+i}(F_\bullet).\]
    By \Cref{prop:RIJadef}, $\Delta_{K-j+i}(F_\bullet)$ is nonzero only if $K\setminus\{j\}\cup\{i\}\in[K_1,K_2]_r=[K_1,K_2]_{r'}$, which by \Cref{lemma:both-r-comparable} implies $i,j\in [r,r')_c$. So the sum above can be rewritten as
    \[\sum_{\substack{i\in I\cap[r,r')_c\\j\in K\cap[r,r')_c}}\Delta_{I-i}\Delta_{J+j}\Delta_{K-j+i}(F_\bullet)=\sum_{\substack{i\in I\\j\in K\cap[r,r')_c}}\Delta_{I-i}\Delta_{J+j}\Delta_{K-j+i}(F_\bullet).\]
    For each fixed $j \in K \cap [r, r')_c$, $\sum_{i\in I}\Delta_{I-i}\Delta_{K-j+i}(F_\bullet)=0$ by \eqref{eqn:incidenceplucker2}, completing the proof.
\end{proof}

\begin{lemma}\label{lemma:induxplucker}
    Let $u,v\in S_n$ and $\ba\in [n]^n$ such that $u\lesssim_\ba v$. For any subsets $I\in \binom{[n]}{q}$, $J\in \binom{[n]}{p-1}$ with $1\leq p < q < n$, suppose a flag $F_\bullet$ satisfies $F_\bullet \in\pi_{k}^{-1}(\cR_{u[k],v[k],a_k})$ for every $p\leq k<q$. Then the following identity holds:
    \[\sum_{\substack{i\in I\cap [a_p,a_q)_c}}\Delta_{I- i}\Delta_{J+ i}(F_\bullet) = 0.\]
\end{lemma}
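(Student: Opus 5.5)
The plan is to reduce \Cref{lemma:induxplucker} to repeated applications of \Cref{lemma:indplucker} by telescoping the cyclic interval $[a_p,a_q)_c$ through the intermediate values $a_p,a_{p+1},\dots,a_q$. Since $u\lesssim_\ba v$, \Cref{cor:alter-a-lesssim} gives $u[k]\leq_{a_k}v[k]$ and $u[k]\leq_{a_{k+1}}v[k]$ for every $k$. By \Cref{lemma:rotateRich} this yields $\cR_{u[k],v[k],a_k}=\cR_{u[k],v[k],a_{k+1}}$. Hence for each $p\leq k<q$ the flag satisfies $F_\bullet\in\pi_k^{-1}(\cR_{K_1,K_2,r})$ with $K_1=u[k]$, $K_2=v[k]$, $r=a_k$, $r'=a_{k+1}$, which is exactly the hypothesis of \Cref{lemma:indplucker}.

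Applying \Cref{lemma:indplucker} with this $k$ requires $b<k<a$, where $a=|I|=q$ and $b=|J|=p-1$. For $p\leq k<q$ we have $p-1<k<q$, so the condition holds, and the lemma gives
\[\sum_{i\in I\cap[a_k,a_{k+1})_c}\Delta_{I-i}\Delta_{J+i}(F_\bullet)=0\quad\text{for each }p\leq k<q.\]
Summing these identities over $k=p,\dots,q-1$ should produce the desired sum over $I\cap[a_p,a_q)_c$. I would write $f(S):=\sum_{i\in I\cap S}\Delta_{I-i}\Delta_{J+i}(F_\bullet)$, which is additive over disjoint unions of index sets. The remaining task is to see that the cyclic intervals $[a_k,a_{k+1})_c$ combine to $[a_p,a_q)_c$ modulo full turns.

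This is the main obstacle. Cyclic intervals do not concatenate cleanly: $[a,b)_c\sqcup[b,c)_c$ equals either $[a,c)_c$ or $[a,c)_c\sqcup[n]$, depending on whether the arc wraps around. Each extra full turn contributes $f([n])=\sum_{i\in I}\Delta_{I-i}\Delta_{J+i}(F_\bullet)$, and this vanishes by the incidence Plücker relation \eqref{eqn:incidenceplucker2}, since $|I|-|J|=q-p+1\geq 2$. (Also $[a,a)_c=\emptyset$ contributes nothing.) The precise statement I would prove is
\[\sum_{k=p}^{q-1}f([a_k,a_{k+1})_c)=f([a_p,a_q)_c)+m\,f([n])\]
for some integer $m\geq 0$. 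I would prove it by an induction on $q$ that tracks the multiset identity of indicator functions, $\sum_k \mathbf 1_{[a_k,a_{k+1})_c}=\mathbf 1_{[a_p,a_q)_c}+m\mathbf 1_{[n]}$. This follows from the winding-number interpretation of cyclic arcs. Combining the two displays then gives $f([a_p,a_q)_c)=0$, which is the claim.
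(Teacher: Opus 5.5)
Your proposal is correct and follows the paper's proof: for each $p\leq k<q$ you apply Lemma~\ref{lemma:indplucker} with $r=a_k$, $r'=a_{k+1}$ (justified via Corollary~\ref{cor:alter-a-lesssim} and Lemma~\ref{lemma:rotateRich}), sum over $k$, and cancel the wraparound terms using the full-sum relation \eqref{eqn:incidenceplucker2}. Your indicator identity $\sum_k \mathbf{1}_{[a_k,a_{k+1})_c}=\mathbf{1}_{[a_p,a_q)_c}+m\,\mathbf{1}_{[n]}$ makes precise the paper's phrase ``subtracting the full-sum relation the appropriate number of times.''
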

\begin{proof}
    For each integer $k$ with $p\leq k<q$, \Cref{lemma:indplucker} gives the relation:
    \[\sum_{\substack{i\in I\cap [a_k,a_{k+1})_c}}\Delta_{I- i}\Delta_{J+ i}(F_\bullet) = 0.\]
    Summing these equations over $k=p,\dots,q-1$, and subtracting the full-sum relation $\sum_{i\in I}\Delta_{I-i}\Delta_{J+i}(F_\bullet) = 0$ (as given by \eqref{eqn:incidenceplucker2}) the appropriate number of times, we obtain the desired result.
\end{proof}

We are now ready to prove the key intersection result of this section.

\begin{prop}\label{prop:uxvanishing}
    Let $u,v,x\in S_n$ and $\ba\in[n]^n$ such that $u\lesssim_\ba v$, $x\in[u,v]$ and $\ell(u,x)=\ell(u,v)-1$. Then, we have
    \[\cT_{u,v}\cap x\Omega_{\id}^\circ \cap u\Omega_{\id}^\circ = X_{v,x,\ba}^\circ\cap\Omega_{u,\ba}^\circ.\]
    As a consequence, the intersection $\cT_{u,v}\cap x\Omega_{\id}^\circ \cap u\Omega_{\id}^\circ$ is equidimensional of dimension $\ell(u,v)$.
\end{prop}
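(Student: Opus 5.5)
The plan is to prove the two inclusions separately, working throughout in the affine chart $x\Omega_{\id}^\circ$ and using the canonical coordinates given there by $\Delta_x\neq 0$. First fix the combinatorial set-up. Since $x\in[u,v]$ and $\ell(x,v)=1$, \Cref{cor:xy} and \Cref{prop:covering} give $u\lesssim_\ba x\lesssimdot_\ba v$ with $x=vt_{pq}$ for some $p<q$. In particular:
\begin{itemize}
\item $x[k]=v[k]$ for $k\notin[p,q)$;
\item $a_k\notin(x_p,x_q]_c$ for all $k\in[p,q]$ (with the roles of $x$ and $v$ as in \Cref{prop:covering});
\item $x_k\notin(x_p,x_q)_c$ for $p<k<q$.
\end{itemize}
By \Cref{lemma:rotateRich} and $u\sim_\ba v$, each $\cR_{u[k],v[k],a_k}$ can equally be written with $a_{k+1}$. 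This is exactly what is needed to apply \Cref{lemma:induxplucker} on the range $p\le k<q$.

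For the inclusion $\subseteq$, take $F_\bullet\in\cT_{u,v}$ with $\Delta_x\Delta_u(F_\bullet)\neq 0$.
\begin{enumerate}
\item \emph{Membership in $\Omega^\circ_{u,\ba}$.} This repeats part (1) of the proof of \Cref{thm:tilted-Schub-cell-intersect}. That argument only used the closed conditions $F_k\in\cR_{u[k],v[k],a_k}$ together with $\Delta_u\neq 0$: if $(i,k)\in D_\ba(u)$, then $u[k-1]\cup\{i\}<_{a_k}u[k]$, so the corresponding Pl\"ucker coordinate vanishes by \Cref{prop:RIJadef}.
\item \emph{Condition (2) of \Cref{def:skew-tilted-Schub-cell}.} For $(i,k)\in D^\op_\ba(x,v)\cap D^\op_\ba(x)$, I will check that $x[k-1]\cup\{i\}\not\leq_{a_k}v[k]$. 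Outside $[p,q)$ this is immediate because $x[k]=v[k]$. Inside, it uses the covering conditions on $\ba$ listed above. Then \Cref{prop:RIJadef} gives the vanishing.
\item \emph{Condition (3), the quadratic relation in column $q$.} Apply \Cref{lemma:induxplucker} with $I=x[q-1]+i$ and $J=x[p-1]$. In the resulting sum over $i'\in I\cap[a_p,a_q)_c$:
\begin{itemize}
\item terms with $i'\in x[p-1]$ vanish because $J+i'$ has a repeated index;
\item terms with $i'=x_k$ for $p<k<q$ vanish by condition (2) already established, together with $x_k\notin(x_p,x_q)_c$;
\item exactly two terms survive, namely $i'=x_p$ (up to the appropriate reindexing) and $i'=i$.
\end{itemize}
After sign bookkeeping, the vanishing of the sum rearranges to the stated identity $\Delta_{x[p-1]+i}\Delta_{x[q]}=\Delta_{x[q-1]+i}\Delta_{x[p-1]+x_q}$.
\end{enumerate}

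For the inclusion $\supseteq$, take $F_\bullet\in X^\circ_{v,x,\ba}\cap\Omega^\circ_{u,\ba}$. By \Cref{thm:altdefT} it suffices to show $\Delta_I(F_\bullet)=0$ whenever $I\notin[u[k],v[k]]_{a_k}$, and I will do this by induction on $k$, mimicking part (2) of the proof of \Cref{thm:tilted-Schub-cell-intersect}.
\begin{itemize}
\item For $I\not\geq_{a_k}u[k]$: expand $\Delta_{u[k-1]}\Delta_I$ by \eqref{eqn:incidenceplucker1} and use $\Delta_{u[k-1]}\neq 0$. Each summand dies either by the induction hypothesis or by the defining conditions of $\Omega^\circ_{u,\ba}$, exactly as before.
\item For $I\not\leq_{a_k}v[k]$: expand against $x[k-1]$ instead, using $\Delta_x\neq 0$. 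For $k\neq q$ the summands die by the induction hypothesis together with condition (2). In column $q$, the extra terms indexed by the blue boxes of $D^\op_\ba(x,v)\setminus D^\op_\ba(x)$ do not vanish individually. Instead they cancel in pairs using condition (3).
\end{itemize}
I expect this column-$q$ bookkeeping, and the analogous reduction in step (3) of the forward inclusion, to be the main obstacle. My first attempt there is to multiply through by $\Delta_{x[p-1]+x_q}$ and rewrite everything in terms of the entries $a,b$ of the canonical matrix, as in \Cref{fig:X_vxa}.

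Finally, the dimension statement. In the chart $x\Omega_{\id}^\circ\cong\C^{\binom n2}$, the cell $X^\circ_{v,x,\ba}$ is a closed subvariety isomorphic to $\C^{\ell_\ba(v)}$. Also $\Omega^\circ_{u,\ba}\cap x\Omega_{\id}^\circ$ is locally closed in the chart and cut out by at most $\ell_\ba(u)$ equations on the open set $\Delta_u\neq 0$. Hence every component of the intersection has dimension at least $\ell_\ba(v)-\ell_\ba(u)=\ell(u,v)$, by \Cref{cor:length-rank-function}. For the upper bound, \Cref{lemma:uxOmega} writes the intersection as $\cT^\circ_{u,x}\sqcup(\cT^\circ_{u,v}\cap x\Omega_{\id}^\circ)$. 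By \Cref{thm:dimension} these pieces have dimensions $\ell(u,v)-1$ and at most $\ell(u,v)$ respectively. Therefore the intersection is equidimensional of dimension $\ell(u,v)$.
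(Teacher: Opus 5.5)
Your outline matches the paper on the easier parts: membership in $\Omega^\circ_{u,\ba}$ from the closed conditions plus $\Delta_u\neq 0$, condition (2), the reverse induction for $k\neq q$, and the dimension count. However, the two column-$q$ arguments, which are the real content of the proposition, do not work as proposed.

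\textbf{Forward inclusion, step (3).} Applying \Cref{lemma:induxplucker} with $I=x[q-1]\cup\{i\}$ gives a relation among products $\Delta_A\Delta_B$ with $|A|=q-1$ and $|B|=p$. Condition (3), by contrast, involves $|A|=q$ and $|B|=p$. Moreover $x_q\notin I$, so $\Delta_{x[q]}$ and $\Delta_{x[p-1]+x_q}$ can never appear, and no sign bookkeeping turns one relation into the other. You must instead take $I=x[q]\cup\{i\}$; the proof of \Cref{lemma:induxplucker} works verbatim for $|I|=q+1$. Equivalently, start from \eqref{eqn:incidenceplucker3}:
\[\Delta_{x[p-1]+i}\Delta_{x[q]}-\Delta_{x[q-1]+i}\Delta_{x[p-1]+x_q}=\sum_{p\le k<q}\Delta_{x[p-1]+x_k}\Delta_{x[q]-x_k+i}.\]
Even then, your accounting of the terms is wrong:
\begin{itemize}
\item The $x_p$ term vanishes rather than survives, since $x[q]\setminus\{x_p\}\cup\{i\}=v[q-1]\cup\{i\}>_{a_q}v[q]$.
\item Terms with $x_k>_{a_p}x_q$ vanish by condition (2) at column $p$.
\item Terms with $i>_{a_q}x_k$ vanish because $F_q\in\cR_{u[q],v[q],a_q}$, not by condition (2).
\end{itemize}
The leftover terms occur only in the cyclic configuration $a_p\le x_k<a_q\le x_p<i<x_q$, and they do not vanish individually. \Cref{lemma:induxplucker} is needed precisely to kill their sum, which is the sum over $(x[q]\cup\{i\})\cap[a_p,a_q)_c$. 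In that configuration none of $x_p$, $i$, $x_q$ lies in $[a_p,a_q)_c$, so the lemma cannot be the source of the identity in the way you describe.

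\textbf{Reverse inclusion at $k=q$.} There is no pairwise cancellation here. Multiplying by $\Delta_{x[p-1]+x_q}$ and applying condition (3) turns the offending part $\sum_{i\in K\cap(x_p,x_q]_c}\Delta_{x[q-1]+i}\Delta_{K-i}$ into $\Delta_{x[q]}\sum_{i\in K\cap(x_p,x_q]_c}\Delta_{x[p-1]+i}\Delta_{K-i}$. Two further steps are then missing from your plan.
\begin{itemize}
\item $\Delta_{x[p-1]+x_q}(F_\bullet)$ may vanish (the entry $a$ in \Cref{fig:X_vxa}), and this case needs its own argument. There, condition (3) forces $\Delta_{x[p-1]+i}(F_\bullet)=0$ for every $(i,q)\in D^\op_\ba(x,v)\setminus D^\op_\ba(x)$. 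Hence $F_\bullet\in X^\circ_{x,\ba}\cap\Omega^\circ_{u,\ba}=\cT^\circ_{u,x}\subseteq\cT_{u,v}$ by \Cref{thm:tilted-Schub-cell-intersect} and \Cref{thm:Tunion}.
\item The transformed sum vanishes only globally, not term by term. Split the rest of $K$ into three ranges:
\begin{itemize}
\item $(x_q,a_p)_c$, termwise zero by condition (2) at column $p$;
\item $[a_q,x_p]_c$, termwise zero by the induction hypothesis at $q-1$;
\item $[a_p,a_q)_c$, zero as a sum by \Cref{lemma:induxplucker}, which uses the induction hypothesis for every $p\le k<q$.
\end{itemize}
Then compare with the full relation \eqref{eqn:incidenceplucker2}.
\end{itemize}
Without these column-$q$ arguments, neither inclusion is established.
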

\begin{proof}
Throughout this proof, we assume that $x=vt_{pq}$ for some integers $p<q$. We begin by proving the containment $\subseteq$. Since the left-hand side lies in $\Omega_{u,\ba}^\circ$ by \Cref{thm:tilted-Schub-cell-intersect}, it suffices to show that
\[\cT_{u,v}\cap x\Omega_{\id}^\circ \subseteq X_{v,x,\ba}^\circ.\]
That is, we need to verify that any flag $F_\bullet\in \cT_{u,v}\cap x\Omega_{\id}^\circ$ satisfies the Pl\"ucker conditions listed in \Cref{def:skew-tilted-Schub-cell}. Since $F_\bullet\in x\Omega_{\id}^\circ$, we already have $\Delta_x(F_\bullet)\neq 0$. It remains to verify the two additional types of Pl\"ucker conditions.

We first verify condition (2) of \Cref{def:skew-tilted-Schub-cell}. For all $(i,k)\in D^\op_\ba(x,v)\cap D^\op_\ba(x)$, we claim that $x[k-1]\cup\{i\}\not\leq_{a_k}v[k]$. This will imply $\Delta_{x[k-1]\cup\{i\}}(F_\bullet)=0$ by \Cref{prop:RIJadef}, since $F_\bullet\in \pi_k^{-1}(\cR_{u[k],v[k],a_k})$. The claim follows by case analysis:
\begin{enumerate}
    \item If $k<p$ or $k>q$: then $(i,k)\in D^\op(x,v)$ implies $i>_{a_k}v_k$, so $x[k-1]\cup\{i\}>_{a_k}v[k]$;
    \item If $k=q$: then $(i,q)\in D^\op(x)$ implies $i>_{a_q}x_q$, so $x[k-1]\cup\{i\}>_{a_k}v[k]$;
    \item If $p<k<q$: suppose for contradiction that $x[k-1]\cup\{i\}\leq_{a_k}v[k]$. Then we must have $\{i,v_q\}\leq_{a_k}\{v_k,v_p\}$. But $(i,k)\in D^\op(x,v)$ implies $i>_{a_k}v_k$, so we must have $v_q<_{a_k}v_k<_{a_k}i\leq_{a_k}v_p$, which contradicts the condition $x=vt_{pq}\lessdot_\ba v$ by \Cref{prop:covering}.
\end{enumerate}
    
We now verify condition (3) of \Cref{def:skew-tilted-Schub-cell}. For all $(i,q)\in D^\op_\ba(x,v)\setminus D^\op_\ba(x)$, we apply the incidence Pl\"ucker relation from \eqref{eqn:incidenceplucker3}:
\[\Delta_{x[p-1]+i}\Delta_{x[q]}(F_\bullet)=\sum_{k\in[p,q]}\Delta_{x[p-1]+x_k}\Delta_{x[q]-x_k+i}(F_\bullet).\]
Moving the $k=q$ term to the left-hand side, we obtain:
\[\Delta_{x[p-1]+i}\Delta_{x[q]}(F_\bullet)-\Delta_{x[p-1]+x_q}\Delta_{x[q-1]+i}(F_\bullet)=\sum_{k\in[p,q)}\Delta_{x[p-1]+x_k}\Delta_{x[q]-x_k+i}(F_\bullet).\]
Since the left-hand side matches condition (3), it remains to show that the right-hand side vanishes. We analyze the terms in the right-hand side through the following cases:
\begin{enumerate}
    \item If $k=p$: then $(i,k)\in D^\op(x,v)$ implies $i>_{a_q}v_q$, so $x[q]\setminus\{x_p\}\cup\{i\}>_{a_q}v[q]$. Hence, $\Delta_{x[q]-x_p+i}(F_\bullet)=0$ by \Cref{prop:RIJadef}, since $F_\bullet\in \pi_q^{-1}(\cR_{u[q],v[q],a_q})$;
    \item If $p<k<q$ and $x_k>_{a_p}x_q$: then $x[p-1]\cup\{x_k\}>_{a_p}v[p]$, so $\Delta_{x[p-1]+x_k}(F_\bullet)=0$, since $F_\bullet\in \pi_p^{-1}(\cR_{u[p],v[p],a_p})$;
    \item If $p<k<q$ and $i>_{a_q}x_k$: then $x[q]\setminus\{x_k\}\cup\{i\}>_{a_q}v[q]$, so $\Delta_{x[q]-x_k+i}(F_\bullet)=0$, since $F_\bullet\in \pi_q^{-1}(\cR_{u[q],v[q],a_q})$;
    \item In the remaining case where $p<k<q$ and none of the above apply, the following conditions must hold:
    \begin{enumerate}
        \item $x_k\leq_{a_p} x_q$, and by \Cref{prop:covering}, this can be strengthened to $x_k<_{a_p} x_p<_{a_p}x_q$;
        \item $i\leq_{a_q}x_k$;
        \item Additionally, since $(i,q)\in D^\op_\ba(x,v)\setminus D^\op_\ba(x)$, we have $x_p<_{a_q}i<_{a_q}x_q$ .
    \end{enumerate}
    Placing these six integers on a circle in an increasing order (clockwise), we observe the following cyclic configuration (up to rotation):
    \[a_p\leq x_k<a_q\leq x_p<i<x_q<a_p.\]
    In this setting, the right-hand side simplifies to
    \[\sum_{x_k\in [a_p,a_q)_c} \Delta_{x[p-1]+x_k}\Delta_{x[q]-x_k+i}(F_\bullet),\]
    which vanishes by \Cref{lemma:induxplucker}.
\end{enumerate}

We now prove the $\supseteq$ direction. Since, by definition, the right-hand side $X_{v,x,\ba}^\circ\cap\Omega_{u,\ba}^\circ$ is contained in both $x\Omega_\id^\circ$ and $u\Omega_\id^\circ$, it suffices to show that $X_{v,x,\ba}^\circ\cap\Omega_{u,\ba}^\circ\subseteq\cT_{u,v}$, or equivalently,
\[ X_{v,x,\ba}^\circ\cap\Omega_{u,\ba}^\circ\subseteq \pi_k^{-1}(\cR_{u[k],v[k],a_k}) \text{ for all }k\in [n].\]
We proceed by induction on $k$. The overall structure, including the base case, is the same as in the proof of \Cref{thm:tilted-Schub-cell-intersect}. For the induction step, we must show the implication:
\[F_\bullet\in\pi_{k-1}^{-1}(\cR_{u[k-1],v[k-1],a_{k-1}})\cap X^\circ_{v,x,\ba}\cap \Omega^\circ_{u,\ba}\implies F_\bullet\in\pi_k^{-1}(\cR_{u[k],v[k],a_k}).\]
Since $u\lesssim_\ba v$, it follows from \Cref{lemma:rotateRich} that $\cR_{u[k-1],v[k-1],a_{k-1}} = \cR_{u[k-1],v[k-1],a_{k}}$. Our goal is to show that $F_\bullet$ satisfies the Pl\"ucker conditions in \Cref{prop:RIJadef}: that is, if $K\not\geq_{a_k}u[k]$ or $K\not\leq_{a_k}v[k]$, then $\Delta_K(F_\bullet)=0$. The case $K\not\geq_{a_k}u[k]$ is handled exactly as in the proof of \Cref{thm:tilted-Schub-cell-intersect}, so we focus on the case $K\not\leq_{a_k}v[k]$.

To show that $\Delta_K(F_\bullet)=0$, we consider the incidence Pl\"ucker relation \eqref{eqn:incidenceplucker1} associated with $x[k-1]$ and $K$:
\[\Delta_{x[k-1]}\Delta_{K}(F_\bullet) = \sum_{i\in K\setminus x[k-1]}\Delta_{x[k-1]+i}\Delta_{K-i}(F_\bullet).\]
Since $\Delta_{x[k-1]}(F_\bullet)\neq 0$, it suffices to show that right-hand side vanishes. We analyze each term on the right-hand side by considering the following three cases for $i\in K\setminus x[k-1]$:
\begin{enumerate}
    \item If $i\leq_{a_k}v_k$: then the assumption $K\not\leq_{a_k}v[k]$ implies $K\setminus \{i\}\not\leq_{a_k}v[k-1]$. Therefore, $\Delta_{K-i}(F_\bullet)=0$, since $F_\bullet\in\pi_{k-1}^{-1}(\cR_{u[k-1],v[k-1],a_{k}})$;
    \item If $i>_{a_k}v_k$ and $i>_{a_k}x_k$: then $(i,k)\in D^\op_\ba(x,v)\cap D^\op_\ba(x)$, Therefore, $\Delta_{x[k-1]+i}(F_\bullet)=0$, since $F_\bullet\in X^\circ_{v,x,\ba}$.
    \item If $i>_{a_k}v_k$ but $i\leq_{a_k}x_k$: this scenario occurs only when $k=q$, $i\in[x_p,x_q)_c$, and $(i,k)\in D^\op(x,v)\setminus D^\op(x)$. In this case, we must show that the sum
    \[\sum_{i\in K\cap(x_p,x_q]_c}\Delta_{x[q-1]+i}\Delta_{K-i}(F_\bullet)\]
    vanishes. To prove this, we wish to multiply both sides by $\Delta_{x[p-1]+x_q}(F_\bullet)$ and apply condition (3) from \Cref{def:skew-tilted-Schub-cell}. Although it is not guaranteed that $\Delta_{x[p-1] + x_q}(F_\bullet)$ is nonzero, in the degenerate case where it vanishes (e.g., when $a = 0$ in \Cref{fig:X_vxa}), the Pl\"ucker conditions in \Cref{def:skew-tilted-Schub-cell} reduce precisely to those defining $X_{x,\ba}^\circ$ in \Cref{def:tilted-Schub-cell}. This implies that $F_\bullet\in X_{x,\ba}^\circ$ in such case. Therefore, by \Cref{thm:tilted-Schub-cell-intersect}, we conclude that
    \[F_\bullet\in X_{x,\ba}^\circ\cap\Omega_{u,\ba}^\circ=\cT_{u,x}^\circ\subseteq \cT_{u,v}.\]
    Hence, we may safely assume that $\Delta_{x[p-1] + x_q}(F_\bullet) \neq 0$. Multiplying both sides by this nonzero Pl\"ucker coordinate and applying condition (3) from \Cref{def:skew-tilted-Schub-cell}, we obtain:
    \[\sum_{i\in K\cap(x_p,x_q]_c} \Delta_{x[p-1]+x_q}\Delta_{x[q-1]+i}\Delta_{K-i}(F_\bullet) = \sum_{i\in K\cap(x_p,x_q]_c} \Delta_{x[q]}\Delta_{x[p-1]+i}\Delta_{K-i}(F_\bullet).\]
    To prove the vanishing of the right-hand side, we divide the sum into three parts and show that each of them vanishes:
    \begin{enumerate}
        \item For $i\in K\cap (x_q,a_p)_c$, we have
        \[\sum_{i\in K\cap(x_q,a_p)_c}\Delta_{x[p-1]+i}\Delta_{K-i}(F_\bullet)=0.\]
        This is because each term vanishes individually: since $i>_{a_p}x_q$, we have $(i,p)\in D^\op_\ba(x,v)\cap D^\op_\ba(x)$. Therefore, $\Delta_{x[p-1]+i}(F_\bullet)=0$, since $F_\bullet\in X^\circ_{v,x,\ba}$.
        \item For $i\in K\cap[a_q,x_p]_c$, we have
        \[\sum_{i\in K\cap[a_q,x_p]_c}\Delta_{x[p-1]+i}\Delta_{K-i}(F_\bullet)=0.\]
        This is because each term vanishes individually: since $i\leq_{a_q}x_p$, the assumption $K\not\leq_{a_q}v[q]$ implies $K\setminus \{i\}\not\leq_{a_q}v[q-1]$. Therefore, $\Delta_{K-i}(F_\bullet)=0$, since $F_\bullet\in\pi_{q-1}^{-1}(\cR_{u[q-1],v[q-1],a_{q}})$.
        \item For $i\in K\cap[a_p,a_q)_c$, we have
        \[\sum_{i\in K\cap[a_p,a_q)_c}\Delta_{x[p-1]+i}\Delta_{K-i}(F_\bullet)=0.\]
        This follows directly from \Cref{lemma:induxplucker} and the induction hypothesis.
    \end{enumerate}
    Combining these three identities and subtracting their sum from the full-sum relation
    \[\sum_{i\in K}\Delta_{x[p-1]+i}\Delta_{K-i}(F_\bullet) = 0\]
    (as given by \eqref{eqn:incidenceplucker2}), we conclude that the original right-hand side vanishes.
\end{enumerate}
In either case, we have shown that $\Delta_I(F_\bullet) = 0$, as required. This completes the induction and establishes the intersection claim in the theorem.

Finally, for the dimension statement, since
\[\cT_{u,v}\cap x\Omega_{\id}^\circ \cap u\Omega_{\id}^\circ=X_{v,x,\ba}^\circ\cap\Omega_{u,\ba}^\circ\]
is cut out by $\binom{n}{2}-\ell(u,v)$ equations, every irreducible component has dimension at least $\ell(u,v)$. On the other hand, by \Cref{thm:dimension}, every irreducible component has dimension at most $\ell(u,v)$. Hence, the intersection is equidimensional of dimension $\ell(u,v)$.
\end{proof}

Finally, we arrive at the desired closure result.

\begin{theorem}\label{thm:closure}
    For all $u,v\in S_n$, we have $\cT_{u,v} = \overline{\cT_{u,v}^\circ}$.
\end{theorem}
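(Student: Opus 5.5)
We argue by induction on $\ell(u,v)$, following the strategy outlined at the start of \Cref{sec:geometry-3}. The inclusion $\overline{\cT_{u,v}^\circ}\subseteq\cT_{u,v}$ is immediate, since $\cT_{u,v}$ is closed and contains $\cT^\circ_{u,v}$. The base case $\ell(u,v)=0$ is trivial, as both sides are the point $e_u$.

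For the reverse inclusion, \Cref{thm:Tunion} writes $\cT_{u,v}$ as a union of strata $\cT^\circ_{x,y}$ with $[x,y]\subseteq[u,v]$. If $[x,y]\neq[u,v]$, then either $x\neq u$ or $y\neq v$. Since tilted intervals are graded, we can choose $x'$ with $[x,y]\subseteq[u,x']$ and $\ell(u,x')=\ell(u,v)-1$, or dually $y'$ with $[x,y]\subseteq[y',v]$ and $\ell(y',v)=\ell(u,v)-1$. By induction, $\cT^\circ_{x,y}\subseteq\cT_{u,x'}=\overline{\cT^\circ_{u,x'}}$. It therefore suffices to prove two claims:
\[
\cT^\circ_{u,x}\subseteq\overline{\cT^\circ_{u,v}}\quad\text{for every coatom } x \text{ of } [u,v],
\]
and the analogous statement for atoms $y$. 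The atom case should follow by the symmetric argument, using an opposite skew tilted Schubert cell in place of $X_{v,x,\ba}^\circ$, or a symmetry such as $w\mapsto w_0w$ composed with cyclic rotation.

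Fix a coatom $x$ and choose $\ba$ with $u\lesssim_\ba v$ using \Cref{thm:lesssim-exist}. By \Cref{cor:xy} and \Cref{prop:covering}, we have $x\lesssimdot_\ba v$, so the skew tilted Schubert cell is defined. Set $U:=\cT_{u,v}\cap x\Omega^\circ_\id\cap u\Omega^\circ_\id$.
\begin{itemize}
\item By \Cref{lemma:uxOmega}, $U=\cT^\circ_{u,x}\sqcup(\cT^\circ_{u,v}\cap x\Omega^\circ_\id)$.
\item By \Cref{prop:uxvanishing}, $U$ is equidimensional of dimension $\ell(u,v)$.
\item By \Cref{thm:dimension}, $\dim\cT^\circ_{u,x}=\ell(u,x)=\ell(u,v)-1$.
\end{itemize}
Consequently, no irreducible component of $U$ can lie inside $\cT^\circ_{u,x}$, since such a component would have dimension at most $\ell(u,v)-1$. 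Every component of $U$ therefore meets the open set $\cT^\circ_{u,v}\cap x\Omega^\circ_\id$ of $U$ in a dense subset. Taking closures gives $\cT^\circ_{u,x}\subseteq U\subseteq\overline{\cT^\circ_{u,v}}$, which is the first claim.

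The main obstacle is the dual claim for atoms. We need the mirror of \Cref{prop:uxvanishing}: an identity $\cT_{u,v}\cap y\Omega^\circ_\id\cap v\Omega^\circ_\id=X^\circ_{v,\ba}\cap\Omega^\circ_{u,y,\ba}$ for a suitably defined opposite skew cell $\Omega^\circ_{u,y,\ba}$. We would obtain this by exchanging the roles of $u$ and $v$ and reversing the shifted orders. The cleanest route is a global involution of $\fl_n$ that reverses row order (left multiplication by $w_0$), suitably combined with a cyclic shift of $\ba$. Such an involution should send $\cT_{u,v}$ to a tilted Richardson variety with the roles of $u$ and $v$ swapped, so that the coatom argument applies verbatim. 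If such an involution is not readily available, we would redo the Pl\"ucker computation of \Cref{prop:uxvanishing} in the opposite orientation.
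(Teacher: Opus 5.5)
Your proposal is correct and follows the paper's proof essentially step for step: induction on $\ell(u,v)$, reduction via \Cref{thm:Tunion} to the coatoms of $[u,v]$, and then the component/dimension argument on $\cT_{u,v}\cap x\Omega^\circ_\id\cap u\Omega^\circ_\id$ using \Cref{lemma:uxOmega}, \Cref{prop:uxvanishing} and \Cref{thm:dimension}. The paper also writes out only the coatom case and calls the atom case dual; your suggested symmetry does settle it without any cyclic rotation, since left multiplication by $w_0$, with each $a_k$ replaced by $n+2-a_k$ modulo $n$, carries $\cT_{u,v}$ and $\cT^\circ_{u,v}$ to $\cT_{w_0v,w_0u}$ and $\cT^\circ_{w_0v,w_0u}$.
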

\begin{proof}
    We proceed by induction on $\ell(u,v)$. the base case $\ell(u,v)=0$ corresponds to $u=v$, in which case $\cT_{u,u} = \cT_{u,u}^\circ = \{e_u\}$, so the claim holds.
    
    Now assume $\ell(u,v)>0$. By \Cref{thm:Tunion}, it suffices to show that $\cT^\circ_{u,x} \subseteq \overline{\cT_{u,v}^\circ}$ for all $x\in [u,v]$ with $\ell(u,x) = \ell(u,v)-1$. From \Cref{lemma:uxOmega}, we have the decomposition
    \[\cT_{u,v}\cap x\Omega_\id^\circ\cap u\Omega_\id^\circ = \cT_{u,x}^\circ \sqcup (\cT^\circ_{u,v}\cap x\Omega_\id^\circ).\]
    Taking closures on both sides yields
    \[\overline{\cT_{u,v}\cap x\Omega_\id^\circ\cap u\Omega_{\id}^\circ} = \cT_{u,x} \cup \overline{\cT^\circ_{u,v}\cap x\Omega_\id^\circ}.\]
    Let $Z$ be an irreducible component of the left-hand side. By \Cref{prop:uxvanishing}, we know that $\dim(Z) = \ell(u,v)$. On the other hand, by \Cref{thm:dimension}, $\dim(\cT_{u,x}) = \ell(u,x) = \ell(u,v) - 1$, so $Z$ cannot be contained in $\cT_{u,x}$. Therefore, $Z$ must be contained in $\overline{\cT^\circ_{u,v} \cap x\Omega_{\id}^\circ}$. Since this holds for every irreducible components $Z$, we conclude that
    \[\overline{\cT_{u,v}\cap x\Omega_{\id}^\circ\cap u\Omega_{\id}^\circ} \subseteq \overline{\cT^\circ_{u,v}\cap x\Omega_{\id}^\circ}.\]
    Now by applying \Cref{lemma:uxOmega} again, we obtain
    \[\cT^\circ_{u,x}\subseteq\overline{\cT_{u,v}\cap x\Omega_{\id}^\circ\cap u\Omega_{\id}^\circ}\subseteq\overline{\cT^\circ_{u,v}\cap x\Omega_{\id}^\circ}\subseteq\overline{\cT_{u,v}^\circ},\]
    which completes the proof. 
\end{proof}

%% file: tex/6-deodhar.tex
\section{Tilted Deodhar Decomposition and Its Applications}\label{sec:deodhar}

In this section, we introduce the \emph{tilted Deodhar decomposition} of tilted Richardson varieties $\cT_{u,v}^\circ$, in analogy with the classical Deodhar decomposition of Richardson varieties $\cR^\circ_{u,v}$. This decomposition plays a central role in proving the irreducibility of $\cT_{u,v}^\circ$ and has further applications in the development of a theory of \emph{tilted Kazhdan--Lusztig $R$-polynomials} $R^\tilt_{u,v}(q)$ and \emph{total positivity} for tilted Richardson varieties.

\subsection{Tilted Distinguished Subwords and Regular Tilted Reduced Words}\label{sec:tilted-distinguished}
We introduce \emph{tilted distinguished subwords} (\Cref{def:distinguished}) and \emph{regular tilted reduced words} (\Cref{def:regular-word}), which serve as the combinatorial data labeling each cell in the tilted Deodhar decomposition. For background on tilted reduced words, see \Cref{sec:tilted-reduced-word}. We begin by establishing some notation that will be used consistently throughout the section.

\begin{notation}\label{notation:tiltedword}
    Let $\ba = (a_1, \dots, a_n) \in [n]^n$ be a sequence, and let the set of jumps of $\ba$ be denoted by $\Jump_\ba = { \jump_1 < \jump_2 < \cdots < \jump_t }$, where $t := \jasize$.

    Let $\mathbf{v} = s_{i_1}s_{i_2} \cdots s_{i_\ell}$ be a $\ba$-tilted reduced word for $v \in S_n$ of length $\ell$. Define the index sets of bar and non-bar factors in $\mathbf{v}$ as
    \[\begin{aligned}
        J_\mathbf{v}^\mid&:=\{j\in[\ell]:\text{the $j$-th factor of $\mathbf{v}$ is a bar}\},\\
        J_\mathbf{v}^s&:=\{j\in[\ell]:\text{the $j$-th factor of $\mathbf{v}$ is a non-bar factor $s_{i_j}$}\}.
    \end{aligned}\]
    Let $\mathbf{u}$ be a subword of $\mathbf{v}$ corresponding to $u \in S_n$. For $0 \leq j \leq \ell$, let $\mathbf{v}^{(j)}$ and $\mathbf{u}^{(j)}$ denote the subsequences consisting of the first $j$ factors of $\mathbf{v}$ and $\mathbf{u}$, respectively. Define $v^{(j)} \in S_n$ and $u^{(j)} \in S_n$ to be the products of the first $j$ factors of $\mathbf{v}$ and $\mathbf{u}$, respectively, with the convention that $v^{(0)} = u^{(0)} = \id$.

    Now define a sequence of sequences $\ba^{(\ell)}, \ba^{(\ell - 1)}, \dots, \ba^{(0)}$ in $[n]^n$ recursively as follows: start with $\ba^{(\ell)} := \ba$, and for each $j = \ell, \dots, 1$, set
    \[\ba^{(j-1)}:=\begin{cases}
        \ba^{(j)}, &\text{if }j\in J_\mathbf{v}^s,\\
        \flatten(\ba^{(j)}), &\text{if }j\in J_\mathbf{v}^\mid.
    \end{cases}\]
    With this setup, for each $0 \leq j \leq \ell$, the subword $\mathbf{v}^{(j)}$ is a $\ba^{(j)}$-tilted reduced word for $v^{(j)}$, and $\mathbf{u}^{(j)}$ is a subword of $\mathbf{v}^{(j)}$ corresponding to $u^{(j)}$.
\end{notation}
\begin{ex}
    Let $n = 4$ and $\ba = (3, 2, 2, 1)$. Then the set of jumps is
    \[\Jump_\ba=\{\jump_1,\jump_2\}=\{1,3\},\quad \text{so }t=2.\]
    Let the $\ba$-tilted reduced word for $v=1423$ be
    \[\mathbf{v}=s_1s_3s_2\mid s_2s_1\mid s_2,\]
    with total length $\ell=8$, including $2$ bars. The index sets are:
    \[
        J_\mathbf{v}^\mid=\{4,7\},\quad J_\mathbf{v}^s=\{1,2,3,5,6,8\}.
    \]
    We track the intermediate sequences $\mathbf{v}^{(j)}$, permutations $v^{(j)}$, and sequences $\ba^{(j)}$:
    \begin{align*}
        \mathbf{v}^{(0)}&=\emptyset & v^{(0)}&=1234 & \ba^{(0)}&=(1,1,1,1)\\
        \mathbf{v}^{(1)}&=s_1 & v^{(1)}&=2134 & \ba^{(1)}&=(1,1,1,1)\\
        \mathbf{v}^{(2)}&=s_1s_3 & v^{(2)}&=2143 & \ba^{(2)}&=(1,1,1,1)\\
        \mathbf{v}^{(3)}&=s_1s_3s_2 & v^{(3)}&=2413 & \ba^{(3)}&=(1,1,1,1)\\
        \mathbf{v}^{(4)}&=s_1s_3s_2\mid & v^{(4)}&=2413 & \ba^{(4)}&=(2,2,2,1)\\
        \mathbf{v}^{(5)}&=s_1s_3s_2\mid s_2 & v^{(5)}&=2143 & \ba^{(5)}&=(2,2,2,1)\\
        \mathbf{v}^{(6)}&=s_1s_3s_2\mid s_2s_1 & v^{(6)}&=1243 & \ba^{(6)}&=(2,2,2,1)\\
        \mathbf{v}^{(7)}&=s_1s_3s_2\mid s_2s_1\mid & v^{(7)}&=1243 & \ba^{(7)}&=(3,2,2,1)\\
        \mathbf{v}^{(8)}&=s_1s_3s_2\mid s_2s_1\mid s_2 & v^{(8)}&=1423 & \ba^{(8)}&=(3,2,2,1)
    \end{align*}
    Notice that for each $j$, the prefix $\mathbf{v}^{(j)}$ is an $\ba^{(j)}$-tilted reduced word for $v^{(j)}$.
\end{ex}


\begin{defin}\label{def:distinguished}
    Let $\mathbf{v}=s_{i_1}s_{i_2}\cdots s_{i_\ell}$ be an $\ba$-tilted reduced word for $v\in S_n$. A subword $\mathbf{u}$ of $\mathbf{v}$ is called a \emph{(tilted) distinguished subword} if
    \[u^{(j)}\leq_{\ba^{(j)}} u^{(j-1)}s_{i_j}\text{ for all }j\in J_\mathbf{v}^s.\]
    Equivalently, if right multiplication by $s_{i_j}$ decreases $u^{(j-1)}$ in the $\leq_{\ba^{(j)}}$ order, then we must have $u^{(j)} = u^{(j-1)} s_{i_j}$. We write $\mathbf{u}\prec\mathbf{v}$ if $\mathbf{u}$ is a distinguished subword of $\mathbf{v}$. In this case, define the following index sets:
    \[
    \begin{aligned}
        J^+_\mathbf{u}&:=\{j\in J_\mathbf{v}^s: u^{(j-1)}<_{\ba^{(j)}}u^{(j)}\},\\
        J^\circ_\mathbf{u}&:=\{j\in J_\mathbf{v}^s:u^{(j-1)}=u^{(j)}\},\\
        J^-_\mathbf{u}&:=\{j\in J_\mathbf{v}^s:\;u^{(j-1)}>_{\ba^{(j)}}u^{(j)}\}.
    \end{aligned}
    \]
\end{defin}
It follows directly from the definition that if $\mathbf{u} \prec \mathbf{v}$, then for every $0 \leq j \leq \ell$, the prefix $\mathbf{u}^{(j)}$ is a distinguished subword of $\mathbf{v}^{(j)}$.

\begin{ex}\label{ex:distinguished}
    Let $v=246513$ and $\ba =(5,5,5,1,1,1)$. Consider the $\ba$-tilted reduced word for $v$ given by
    \[\mathbf{v}=s_3s_4s_5s_1s_2s_3s_4s_3s_2s_1\mid s_1s_2.\]
    There are four distinguished subwords $\mathbf{u} \prec \mathbf{v}$ corresponding to $u = 512346$ listed below. In each subword, we mark factors $s_{i_j}$ with $\mathcolor{red}{\widetilde{s_{i_j}}}$ when $j \in J^-_\mathbf{u}$.
    \begin{align*}
        \mathbf{u}&= 111111s_4s_3s_2s_1\mid 11 & \mathbf{u}&= 111s_111s_4s_3s_2s_1\mid 1\mathcolor{red}{\widetilde{s_2}}\\
        \mathbf{u}&= s_31111\mathcolor{red}{\widetilde{s_3}}s_4s_3s_2s_1\mid 11 & 
        \mathbf{u}&= s_311s_11\mathcolor{red}{\widetilde{s_3}}s_4s_3s_2s_1\mid 1\mathcolor{red}{\widetilde{s_2}}
    \end{align*}
\end{ex}

We proceed with the notion of \emph{regular} $\ba$-tilted words, which will play an important role in the main theorem of the tilted Deodhar decomposition. 
\begin{defin}\label{def:bigrass}
    A permutation $w\in S_n$ is called a \emph{Grassmannian permutation} if it has at most one descent. It is a \emph{bi-Grassmannian permutation} if both $w$ and $w^{-1}$ are Grassmannian permutations. For $a,b\in [n]$, define the bi-Grassmannian permutation $s_{a,b}\in S_n$ by
    \[s_{a,b}(i):=\begin{cases}
        i+a &\text{if }i\leq b,\\
        i-b &\text{if }b<i\leq a+b,\\
        i &\text{if }i>a+b.
    \end{cases}\]
    Equivalently, in one-line notation,
    \[s_{a,b}=(a+1)\;\cdots\;(a+b)\;1\;\cdots\;a\;(a+b+1)\;\cdots\;n\in S_n.\]
\end{defin}

\begin{defin}\label{def:regular-word}
    An $\ba$-tilted word $\mathbf{v}$ for $v \in S_n$ is called \emph{regular} if, for each bar in $\mathbf{v}$ at position $j \in J_\mathbf{v}^\mid$, there exists a subsequence of simple transpositions immediately preceding that bar in $\mathbf{v}$, specifically a subsequence of the form $s_{i_{k+1}} \cdots s_{i_{j-1}}$ (often marked in blue as $\mathcolor{blue}{\overline{s_{i_{k+1}} \cdots s_{i_{j-1}}}}$), which forms a reduced word for the bi-Grassmannian permutation $s_{q-p,p}\in S_n$, where $q:=\jmin(\ba^{(j)})$ and $p:=\size{v[q]\cap[\ba^{(j)}_1,\ba^{(j)}_q)_c}$. In particular, $p$ is the integer appearing in \Cref{def:a-flattenable} for the $\ba^{(j)}$-flattenable permutation $v^{(j)}$.
\end{defin}
\begin{ex}
    The $\ba$-tilted reduced word from \Cref{ex:distinguished} is regular. The subsequence marked in blue, immediately preceding the bar, has product $s_{2,1}:=312456$, which satisfies the condition in \Cref{def:regular-word}:
    \[\mathbf{v}=s_3s_4s_5s_1s_2s_3s_4s_3\mathcolor{blue}{\overline{s_2s_1}}\mid s_1s_2.\]
\end{ex}

\begin{construction}\label{construction:regular-tiltedreducedword}
    We describe an algorithm that constructs a regular $\ba$-tilted reduced word for any permutation $w \in S_n$. We illustrate the algorithm using the example $\ba = (3,3,1,1,1,6)$ and $w = 136254$. The set of jumps of $\ba$ in this example is
    \[\Jump_\ba=\{\jump_1,\jump_2,\jump_3\}=\{2,5,6\},\quad\text{where }t=3.\]
    First, we construct a sequence of permutations:
    \[\id\to \mathcolor{blue}{\tilde{w}^{(t)}\to} w^{(t)}\to\cdots\to\mathcolor{blue}{\tilde{w}^{(1)}\to} w^{(1)}\to w,\]
    where for each $k\in [t]$, the permutation $w^{(k)}\in S_n$ is obtained from $w$ by sorting its first $\jump_k$ entries in increasing order with respect to the shifted order $<_{a_{\jump_k}}$, and the permutation $\mathcolor{blue}{\tilde{w}^{(k)}}\in S_n$ is obtained from $w$ by sorting its first $\jump_k$ entries in increasing order with respect to the shifted order $<_{a_{\jump_k+1}}$ (with the convention that $a_{n+1}=1$). In our running example, the resulting sequence is:
    \[123456\to \mathcolor{blue}{123456 \to}  612345\to\mathcolor{blue}{612354\to}123564\to\mathcolor{blue}{136254\to}316254\to136254.\]
    Next, for each adjacent pair $w'\to w''$ in the sequence above, we compute a reduced word for $(w')^{-1}w''$, concatenate all these reduced words, and insert a bar after every two reduced word segments to form a regular $\ba$-tilted word for $w$. In our running example, we obtain:
    \[\mathcolor{blue}{\overline{s_5s_4s_3s_2s_1}}\mid s_5\mathcolor{blue}{\overline{s_1s_2s_3s_4}}\mid s_2s_4s_3\mathcolor{blue}{\overline{s_1}}\mid s_1.\]
    All words constructed in this way are regular $\ba$-tilted reduced words.
\end{construction}

Finally, we state the following key lemma.
\begin{lemma}\label{lemma:regular-subword}
    Any subword of a regular $\ba$-tilted reduced word is also regular.
\end{lemma}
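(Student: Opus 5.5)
The plan is to check the defining condition of \Cref{def:regular-word} bar by bar. Let $\mathbf{v}$ be a regular $\ba$-tilted reduced word for $v$, let $\mathbf{u}\prec\mathbf{v}$ (or more generally any subword in the sense of \Cref{def:tilted-subword}) correspond to $u$, and fix a bar at position $j\in J^\mid_\mathbf{v}$. We use \Cref{notation:tiltedword}, with $q=\jmin(\ba^{(j)})$. In $\mathbf{v}$ there is a blue segment $s_{i_{k+1}}\cdots s_{i_{j-1}}$ immediately before the bar whose product is $s_{q-p,p}$, where $p=\size{v^{(j)}[q]\cap[\ba^{(j)}_1,\ba^{(j)}_q)_c}$. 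We must show two things about $\mathbf{u}$: (i) the corresponding integer $p_u:=\size{u^{(j)}[q]\cap[\ba^{(j)}_1,\ba^{(j)}_q)_c}$ equals $p$, and (ii) the factors of $\mathbf{u}$ in positions $k+1,\dots,j-1$ still form a reduced word for $s_{q-p,p}$. In other words, none of the blue factors is replaced by $1$.

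\textbf{Step 1: $p_u=p$.} We argue that $u^{(j)}\sim_{\ba^{(j)}}v^{(j)}$ for every $j$, by downward induction on $j$ starting from $u\sim_\ba v$ (part of \Cref{def:tilted-subword}).
\begin{itemize}
\item Across a non-bar factor $s_i$, the sequence $\ba^{(j)}$ is unchanged. Condition (1) of \Cref{def:tiltedword} forces $i\notin\Jump_{\ba^{(j)}}$, so $a_i=a_{i+1}$. Swapping (or not swapping) positions $i,i+1$ then does not change any of the counts $\size{w[k]\cap[a_k,a_{k+1})_c}$, in $u$ or in $v$.
\item Across a bar, flattening only merges the first $\jmin+1$ values of $\ba$, so the equivalence for $\ba^{(j)}$ implies the one for $\flatten(\ba^{(j)})$.
\end{itemize}
The count at index $q$ with cyclic interval $[\ba^{(j)}_1,\ba^{(j)}_{q+1})_c=[\ba^{(j)}_1,\ba^{(j)}_q)_c$ is exactly $p$. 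Hence $p_u=p$. Alternatively, one can note that both $u^{(j)}$ and $v^{(j)}$ are $\ba^{(j)}$-flattenable and invoke \Cref{lemma:u<flatten(a)v}.

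\textbf{Step 2: the blue factors survive.} This is the main obstacle. Write $w'=u^{(k)}$ and $w''=u^{(j-1)}=u^{(j)}$.
\begin{itemize}
\item Since $\mathbf{u}^{(j)}$ must be a valid $\ba^{(j)}$-tilted word, $w''$ is $\ba^{(j)}$-flattenable with parameter $p$. Concretely, its first $p$ entries lie in $[a_1,a_q)_c$ and its next $q-p$ entries lie in the complement, where $a=\ba^{(j)}$.
\item The word $\mathbf{v}^{(k)}$ was produced, as in \Cref{construction:regular-tiltedreducedword}, so that $v^{(k)}$ has its first $q$ entries sorted in the order $<_{a_{q+1}}$. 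In particular the complementary block comes first. I would show that the same block structure holds for $w'$ by the counting of Step 1 applied at level $k$, using that $\ba^{(k)}=\ba^{(j)}$ on positions $>q$.
\item The only permutation of positions $[q]$ that maps the first arrangement to the second is the one moving each of the $p$ values past each of the $q-p$ values. Its length is $p(q-p)=\ell(s_{q-p,p})$.
\item A subword of a reduced word of length $p(q-p)$ whose product has length $p(q-p)$ must retain every factor.
\end{itemize}
If the block-structure claim for $w'$ proves delicate, I would instead argue by minimality. Dropping a blue factor would yield a $\ba$-tilted word for $u$ that is too short to be compatible with $\ell^\word$ being a rank function (\Cref{thm:length-property}, \Cref{cor:length-property}), contradicting that subwords of reduced words are reduced.

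Finally, I would combine Steps 1 and 2 over all bars. This shows that each bar of $\mathbf{u}$ is preceded by a reduced word for $s_{q-p_u,p_u}$, so $\mathbf{u}$ is regular.
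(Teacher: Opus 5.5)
Your Step 1 is correct. It also supplies the $\sim$ condition needed to regard $\mathbf{u}^{(k)}$ as a subword of $\mathbf{v}^{(k)}$. One small fix: with $a=\ba^{(j)}$, the relevant interval is $[a_1,a_{q+1})_c$, since $[a_1,a_q)_c$ is empty when $a_1=a_q$; this typo comes from \Cref{def:regular-word}. Your closing length count in Step 2 is also fine, provided you know the block structure of $w'=u^{(k)}$: its $q-p$ entries from $[a_{q+1},a_1)_c$ must sit in positions $1,\dots,q-p$.

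The gap is exactly this block structure, and neither of your routes supplies it. Counting cannot. Since $\ba^{(k)}=\flatten(\ba^{(j)})$ is constant on positions $1,\dots,q+1$, the $\sim_{\ba^{(k)}}$ conditions with index $\le q$ are vacuous. The remaining ones, like the $\sim_{\ba^{(j)}}$ condition at index $q$, see only the set $w'[q]$, not which of its elements occupy positions $1,\dots,q-p$. So nothing in your argument rules out the blue segment of $\mathbf{u}$ being all $1$'s, i.e.\ $w'=u^{(j)}$ with the $[a_1,a_{q+1})_c$-block first.

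What rules this out is an order condition. Apply \Cref{thm:subword-property} to $\mathbf{u}^{(k)}$ as a subword of the reduced word $\mathbf{v}^{(k)}$ to get $u^{(k)}\lesssim_{\ba^{(k)}}v^{(k)}$. In particular $u^{(k)}[q-p]\le_{a_{q+1}}v^{(k)}[q-p]$ in the shifted Gale order. Since $v^{(k)}[q-p]\subseteq[a_{q+1},a_1)_c$, an initial segment of $<_{a_{q+1}}$, this forces $u^{(k)}[q-p]\subseteq[a_{q+1},a_1)_c$. The blue letters are all $s_i$ with $i<q$, so $u^{(k)}[q]=u^{(j)}[q]$ as sets. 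Combined with your count from Step 1, this gives the block structure, and your length argument then finishes. This is how the paper argues.

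Your fallback does not repair this. It relies on subwords of tilted reduced words being reduced, which is false: \Cref{thm:subword-property} only guarantees that some reduced subword exists. The lemma must cover non-reduced subwords, because \Cref{thm:tilted-deodhar} applies it to arbitrary distinguished subwords. In \Cref{ex:distinguished}, three of the four distinguished subwords for $u=512346$ contain letters indexed by $J^-_\mathbf{u}$ and are not reduced. A subword with a blue letter replaced by $1$ is just some $\ba$-tilted word for $u$, of length at least $\ell^\word_\ba(u)$, so the rank property of $\ell^\word_\ba$ gives no contradiction.

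Separately, you may not assume $\mathbf{v}^{(k)}$ comes from \Cref{construction:regular-tiltedreducedword}, since the lemma concerns an arbitrary regular word. The block structure of $v^{(k)}$ (though not its internal sorting) does hold in general: $v^{(j)}=v^{(k)}s_{q-p,p}$, and $v^{(j)}$ is $\ba^{(j)}$-flattenable with parameter $p$.
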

\begin{proof}
    Let $\mathbf{v}$ be a regular $\ba$-tilted reduced word, and let $\mathbf{u}$ be a subword of $\mathbf{v}$. Since $\mathbf{v}$ is regular, by \Cref{def:regular-word}, for every bar in $\mathbf{v}$ at position $j \in J^\mid_\mathbf{v}$, there exists a subsequence $\mathcolor{blue}{\overline{s_{i_{k+1}} \cdots s_{i_{j-1}}}}$ immediately preceding the bar that forms a reduced word for the bi-Grassmannian permutation $s_{q-p,p}$. It remains to show that $\mathbf{u}$ also contains this subsequence immediately before position $j$, i.e., that $u^{(j)} = u^{(k)} s_{q-p,p}$. Since $v^{(j)}$ is $\ba^{(j)}$-flattenable and $v^{(k)} = v^{(j)} s_{q-p,p}$, we have:
    \[v^{(k)}[q-p]\subseteq[a^{(j)}_{q+1},a^{(j)}_1)_c,\quad v^{(k)}[q-p+1,q]\subseteq[a^{(j)}_1,a^{(j)}_{q+1})_c.\]
    Because $\mathbf{v}^{(k)}$ is an $\ba^{(k)}$-tilted reduced word and $\mathbf{u}^{(k)}$ is a subword, \Cref{thm:subword-property} implies that $u^{(k)} \lesssim_{\ba^{(k)}} v^{(k)}$. Therefore,
    \[u^{(k)}[q-p]\subseteq[a^{(j)}_{q+1},a^{(j)}_1)_c,\quad u^{(k)}[q-p+1,q]\subseteq[a^{(j)}_1,a^{(j)}_{q+1})_c.\]
    On the other hand, since $u^{(j)}$ is $\ba^{(j)}$-flattenable, we have:
    \[u^{(j)}[p]\subseteq[a^{(j)}_{1},a^{(j)}_{q+1})_c,\quad u^{(j)}[p+1,q]\subseteq[a^{(j)}_{q+1},a^{(j)}_{1})_c.\]
    This implies that $u^{(j)}$ and $u^{(k)}$ have completely swapped the values in the first $q$ positions:
    \[u^{(j)}[p]=u^{(k)}[q-p+1,q],\quad u^{(j)}[p+1,q]=u^{(k)}[q-p].\]
    Hence, the permutation $(u^{(k)})^{-1} u^{(j)}$ sends the values in positions $[q]$ according to $s_{q-p,p} \cdot x$, where $x \in S_p \times S_{q-p}$ is a block permutation that preserves the two segments. Since $s_{q-p,p} \cdot x$ is a subword of $s_{q-p,p}$, we have $x = \id$, and $(u^{(k)})^{-1} u^{(j)} = s_{q-p,p}$, completing the proof.
\end{proof}

\subsection{Tilted Deodhar Decomposition}\label{sec:tilted-deodhar-main}

In this section, we prove the main theorem of the tilted Deodhar decomposition. We begin by introducing the notion of \emph{tilted relative position} (\Cref{def:tilted-relative-position}), which is analogous to the classical notion of relative position. This leads naturally to the definition of \emph{tilted Deodhar cells} $\cD_{\mathbf{u},\mathbf{v}}$ (\Cref{prop:tilted-deodhar-cell}), indexed by an $\ba$-tilted reduced word $\mathbf{v}$ and a distinguished subword $\mathbf{u} \prec \mathbf{v}$. Finally, we state and prove the main theorem of the tilted Deodhar decomposition (\Cref{thm:tilted-deodhar}).

We begin with some definitions. For each $i \in [n - 1]$, let $
\phi_i\left(\begin{smallmatrix}
    a & b\\
    c & d
\end{smallmatrix}\right)$ denote the matrix obtained by replacing the $2 \times 2$ block in rows and columns $i$ and $i+1$ of the identity matrix with the given $2\times 2$ matrix $\left(\begin{smallmatrix}
    a & b\\
    c & d
\end{smallmatrix}\right)$. Using this notation, define the following elements of $G$:
\begin{align*}
    y_i(p)&=\phi_{i}\begin{pmatrix}
             1 & 0\\ p & 1
        \end{pmatrix}, &
    \dot{s}_i&=\phi_{i}\begin{pmatrix}
             0 & -1\\ 1 & 0
        \end{pmatrix}, &
    x_i(m)&=\phi_{i}\begin{pmatrix}
             1 & m\\ 0 & 1
        \end{pmatrix}.
\end{align*}

We now recall the classical notion of \emph{relative position} (see \cite[Section~4.2]{MarshRietsch}), and then introduce the notion of \emph{tilted relative position}, defined using tilted Schubert cells and the tilted Bruhat decomposition (see \Cref{sec:tilted-def-4}).

\begin{defin}\label{def:tilted-relative-position}
    Let $y_1 = g_1 B$, $y_2 = g_2 B \in G/B$. The double coset $y_1^{-1} y_2 = B g_1^{-1} g_2 B$ is well-defined and equals $BwB$ for some permutation $w \in S_n$ by the Bruhat decomposition. We call this permutation the \emph{relative position} of $y_1$ and $y_2$, and denote it by $y_1 \xrightarrow{w} y_2$.

    For any $w \in S_n$ and $\ba\in[n]^n$, we define the following \emph{tilted relative positions}:
    \begin{align*}
    e \xrightarrow{w, \ba} gB &\quad \text{if } gB \in X^\circ_{w, \ba},\\
    w_0 \xrightarrow{w, \ba} gB &\quad \text{if } gB \in \Omega^\circ_{w, \ba}.
    \end{align*}
\end{defin}

We now state several properties of classical and tilted relative positions.

\begin{prop}\label{prop:relative-position}
    Let $w \in S_n$ and $\ba \in [n]^n$. Let $y_1, y_2, y_3 \in \fl_n$ be flags. Then:
    \begin{enumerate}
        \item If $y_1 \xrightarrow{w} y_2$, then $y_2 \xrightarrow{w^{-1}} y_1$.
        \item If $y_1 \xrightarrow{w} y_2 \xrightarrow{s_i} y_3$, then
        \[\begin{cases}
            y_1 \xrightarrow{w s_i} y_3 & \text{if } \ell(w s_i) > \ell(w), \\
            y_1 \xrightarrow{w s_i} y_3 \text{ or } y_1 \xrightarrow{w} y_3 & \text{if } \ell(w s_i) < \ell(w).
        \end{cases}\]
        \item If $e \xrightarrow{w, \ba} y_1 \xrightarrow{s_i} y_2$, then
        \[
        \begin{cases}
            e \xrightarrow{w s_i, \ba} y_2 & \text{if } i \in \Asc_\ba(w), \\
            e \xrightarrow{w s_i, \ba} y_2 \text{ or } e \xrightarrow{w, \ba} y_2 & \text{if } i \in \Des_\ba(w).
        \end{cases}
        \]
        \item If $w_0 \xrightarrow{w, \ba} y_1 \xrightarrow{s_i} y_2$, then
        \[
        \begin{cases}
            w_0 \xrightarrow{w s_i, \ba} y_2 & \text{if } i \in \Des_\ba(w), \\
            w_0 \xrightarrow{w s_i, \ba} y_2 \text{ or } w_0 \xrightarrow{w, \ba} y_2 & \text{if } i \in \Asc_\ba(w).
        \end{cases}
        \]
        \item Suppose $e \xrightarrow{w, \ba} y$. If $i \in \Des_\ba(w)$, then there exists a unique flag $y' \in \fl_n$ such that
        \[
        e \xrightarrow{w s_i,\ba} y' \xrightarrow{s_i} y.
        \]
        The map $\pi_{w s_i}^{w, \ba} : X^\circ_{w, \ba} \to X^\circ_{w s_i, \ba}$ defined by $y \mapsto y'$ is algebraic and given explicitly by
        \[
        gB \mapsto g \dot{s}_i B,
        \]
        where $g \in G$ is the canonical representative of $gB$, as described in \Cref{prop:tilted-Schub-cell}.
    \end{enumerate}
\end{prop}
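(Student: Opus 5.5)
Parts (1) and (2) are classical facts about Bruhat cells, recalled from \cite[Section~4.2]{MarshRietsch}, and we will simply cite them. Part (2) follows from $BwB\cdot Bs_iB=Bws_iB$ when $\ell(ws_i)>\ell(w)$, and $BwB\cdot Bs_iB=BwB\cup Bws_iB$ otherwise. The new content is in (3)--(5). There we work with canonical representatives from \Cref{prop:tilted-Schub-cell}, and with the observation that the relation $\xrightarrow{s_i}$ only alters columns $i,i+1$. Concretely, $y_1=gB\xrightarrow{s_i}y_2$ holds if and only if $y_2=g\,x_i(m)\dot s_iB$ for some $m\in\C$. Equivalently, $y_2$ shares all subspaces $F_k$, $k\neq i$, with $y_1$ and has $F_i\neq F_i(y_1)$.

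For (5), let $g$ be the canonical matrix of $y\in X^\circ_{w,\ba}$, and set $y'=g\dot s_iB$. Since $i\in\Des_\ba(w)$, we have $a_i=a_{i+1}$ and $w_{i+1}<_{a_i}w_i$. The matrix $g\dot s_i$ swaps columns $i,i+1$ up to sign, so its pivots sit at $(ws_i)_k$. We then check entrywise that the entries of $g\dot s_i$ below or outside the pivots vanish exactly off $D_\ba(ws_i)$, possibly after a column operation that adds a multiple of column $i$ to column $i+1$. The point is that $D_\ba(w)$ and $D_\ba(ws_i)$ agree outside columns $i,i+1$. In those two columns, because $a_i=a_{i+1}$, the cutoff is the same, and the diagrams differ only by the cell $(w_{i+1},i)$. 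That cell is nonempty in $D_\ba(w)$ and corresponds to the pivot of column $i+1$ in $ws_i$. Hence $y'\in X^\circ_{ws_i,\ba}$, and clearly $y'\xrightarrow{s_i}y$.

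For uniqueness in (5), any $y''$ with $y''\xrightarrow{s_i}y$ has the form $g x_i(m)\dot s_iB$. For $m\neq0$, its column $i$ is $g_{\cdot,i}+m\,g_{\cdot,i+1}$, and its pivot in the shifted order is $w_i$ rather than $w_{i+1}$. Such a flag therefore lies in $X^\circ_{w,\ba}$ by \Cref{lemma:tilted-bruhat-decomp}, not in $X^\circ_{ws_i,\ba}$. This leaves $y'$ as the only choice, and the formula $gB\mapsto g\dot s_iB$ is visibly algebraic in the $\ast$-entries.

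For (3), write $y_2=g x_i(m)\dot s_iB$, with $g$ canonical for $y_1$, and read off the tilted cell of $y_2$ by column-reducing the new columns $i,i+1$. Everything reduces to comparing the pivot of column $i$ of the new matrix with the tilted Rothe conditions. In the ascent case, $w_i<_{a_i}w_{i+1}$, the new column $i$ has leading entry at $w_{i+1}$ for every $m$, because in the shifted order $<_{a_i}$ that entry dominates. So $y_2\in X^\circ_{ws_i,\ba}$.

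The hard step is the case where $a_i\neq a_{i+1}$, so that neither $\Asc_\ba$ nor $\Des_\ba$ contains $i$. Note that the statement only covers $i\in\Asc_\ba(w)\cup\Des_\ba(w)$, so this case need not be handled. In the descent case, $m=0$ gives the $ws_i$ cell by (5) reversed, and $m\neq0$ gives pivot $w_i$, hence the $w$ cell. Part (4) is the mirror argument with $\Omega^\circ_{w,\ba}$ and $D^\op_\ba$, which exchanges the roles of ascents and descents.

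The main obstacle is verifying that the reduced matrix has zeros exactly in the complement of the target tilted Rothe diagram. This is where $a_i=a_{i+1}$ is essential, since it ensures that the row cutoffs in columns $i$ and $i+1$ coincide. I would verify it by explicit case analysis on the rows relative to $w_i$, $w_{i+1}$ and the cutoff $a_i$, using the defining Plücker conditions from \Cref{def:tilted-Schub-cell}.
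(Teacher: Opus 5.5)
Your proposal is correct and is essentially the paper's argument. Both use canonical representatives, write the $s_i$-neighbours of $gB$ as $g\,x_i(m)\dot s_iB$, and analyse the pivot of the new $i$-th column: in the ascent case one always lands in the $ws_i$ cell, while in the descent case $m=0$ gives the $ws_i$ cell after one column operation and $m\neq 0$ gives the $w$ cell, with (4) by symmetry. The only variation is uniqueness in (5), which you read off directly from the descent case $m\neq 0$, whereas the paper deduces it from (1), (2) and the ascent case of (3) applied to $ws_i$. One small slip: the new $i$-th column of $g\,x_i(m)\dot s_i$ is $m\,g_{\cdot,i}+g_{\cdot,i+1}$, not $g_{\cdot,i}+m\,g_{\cdot,i+1}$. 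It is this form that makes $m=0$ correspond to $y'=g\dot s_iB$ and gives pivot $w_i$ for $m\neq 0$.
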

\begin{proof}
    Parts (1) and (2) are stated in \cite[Section~4.2]{MarshRietsch}. For part (3), let $y_1 = gB$, where $g$ is the canonical representative of $gB$ as described in \Cref{prop:tilted-Schub-cell}. Depending on whether $i \in \Des_\ba(w)$ or $i \in \Asc_\ba(w)$, the set of $\ast$ entries in the $i$-th and $(i+1)$-th columns of $g$ are nested. See \Cref{fig:i-th-column-ast} for an illustration of the structure in these columns.
    \begin{figure}[ht]
\centering
\subcaptionbox{Case $i\in \Des_\ba(w)$. \label{fig:i-des}}[.4\textwidth]{
    \begin{tikzpicture}[scale = 0.7]
        \draw (0,0) -- (4,0) -- (4,4) -- (0,4) -- (0,0);
        \draw (1,0) -- (1,4);
        \draw (1.5,0) -- (1.5,4);
        \draw (2,0) -- (2,4);
        \draw[red, very thick] (1,0.7) -- (2,0.7);
        \node at (1.25,4.3) {$i$};
        \node at (1.25,1.7) {$1$};
        \node[blue] at (1.25,2.4) {$\ast$};
        \node[blue] at (1.25,3.1) {$c$};
        \node at (1.75,3.1) {$1$};
        \node[blue] at (1.25,3.7) {$\ast$};
        \node[blue] at (1.75,3.7) {$\ast$};
        \node[blue] at (1.25,0.3) {$\ast$};
        \node[blue] at (1.75,0.3) {$\ast$};
    \end{tikzpicture}}
\subcaptionbox{Case $i\in \Asc_\ba(w)$. \label{fig:i-asc}}[.4\textwidth]{
    \begin{tikzpicture}[scale = 0.7]
        \draw (0,0) -- (4,0) -- (4,4) -- (0,4) -- (0,0);
        \draw (1,0) -- (1,4);
        \draw (1.5,0) -- (1.5,4);
        \draw (2,0) -- (2,4);
        \draw[red, very thick] (1,0.7) -- (2,0.7);
        \node at (1.25,4.3) {$i$};
        \node at (1.75,1.7) {$1$};
        \node[blue] at (1.75,2.4) {$\ast$};
        \node at (1.25,3.1) {$1$};
        \node[blue] at (1.25,3.7) {$\ast$};
        \node[blue] at (1.75,3.7) {$\ast$};
        \node[blue] at (1.25,0.3) {$\ast$};
        \node[blue] at (1.75,0.3) {$\ast$};
    \end{tikzpicture}
}
\caption{The $i$-th and $(i+1)$-th row in the canonical representative described in \Cref{prop:tilted-Schub-cell}. The red line indicates $a_i=a_{i+1}$. Zero entries are omitted, and only nonzero entries are shown.}
\label{fig:i-th-column-ast}    
\end{figure}
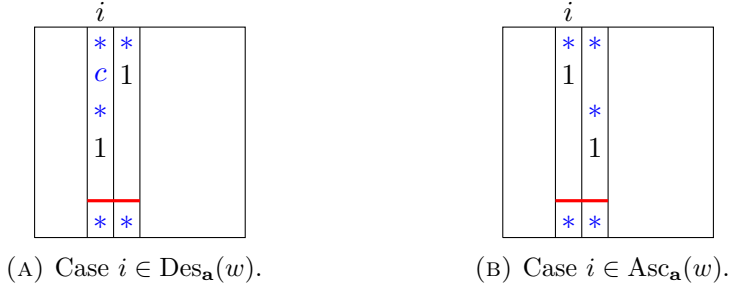
    
    Since $y_1\xrightarrow{s_i}y_2$, we have $y_2=g'B$, where $g'=g\cdot\phi_i\left(\begin{smallmatrix}p&1\\1&0\end{smallmatrix}\right)$ for some $p\in \C$. We analyze the following cases:
    \begin{enumerate}
        \item If $i\in\Des_\ba(w)$ (as in \Cref{fig:i-des}) and $p=0$, then $e\xrightarrow{ws_i,\ba}y_2$ because $y_2=g'B=g'\cdot\phi_i\left(\begin{smallmatrix}1&-c\\0&1\end{smallmatrix}\right)B$, and $g'\cdot\phi_i \left(\begin{smallmatrix}1&-c\\0&1\end{smallmatrix}\right)=g\cdot\phi_i\left(\begin{smallmatrix}0&1\\1&-c\end{smallmatrix}\right)$ is the canonical representative for $y_2\in X_{ws_i,\ba}^\circ$.
        \item If $i\in\Des_\ba(w)$ and $p\neq 0$, then $e\xrightarrow{w,\ba}y_2$ because $y_2=g'B=g'\cdot\phi_i\left(\begin{smallmatrix}1/p&1\\0&-p\end{smallmatrix}\right)B$, and $g'\cdot\phi_i\left(\begin{smallmatrix}1/p&1\\0&-p\end{smallmatrix}\right)=g\cdot\phi_i\left(\begin{smallmatrix}1&0\\1/p&1\end{smallmatrix}\right)$ is the canonical representative for $y_2\in X_{w,\ba}^\circ$.
        \item If $i\in\Asc_\ba(w)$ (as in \Cref{fig:i-asc}), then $e\xrightarrow{ws_i,\ba}y_2$ because $y_2=g'B=g\cdot\phi_i\left(\begin{smallmatrix}p&1\\1&0\end{smallmatrix}\right)B$ is already the canonical representative for $y_2\in X_{ws_i,\ba}^\circ$.
    \end{enumerate}
    This completes the proof of (3). The argument for (4) is analogous. For part (5), existence follows from case (1): when $i \in \Des_\ba(w)$ and $p = 0$, we have $g\dot{s}_iB\in X_{ws_i,\ba}^\circ$. For uniqueness, suppose there exist two such flags $y'$ and $y''$. Then, by part (1), we have $y' \xrightarrow{s_i} y \xrightarrow{s_i} y''$, so by part (2), either $y' = y''$ or $y' \xrightarrow{s_i} y''$. In the latter case, since $e \xrightarrow{w s_i, \ba} y' \xrightarrow{s_i} y''$, applying part (3) yields $e \xrightarrow{w, \ba} y''$, which contradicts the assumption that $e \xrightarrow{w s_i, \ba} y''$.
\end{proof}

We are now ready to define the tilted Deodhar cells. See \Cref{notation:tiltedword} for the notation used in the following proposition.

\begin{prop}\label{prop:tilted-deodhar-cell}
    Suppose $u\lesssim_\ba v$, and let $\mathbf{v}=s_{i_1}s_{i_2}\cdots s_{i_\ell}$ be an $\ba$-tilted reduced word for $v$. For any flag $gB \in \cT_{u,v}^\circ$, there exists a unique distinguished subword $\mathbf{u} \prec \mathbf{v}$ corresponding to $u$, and a unique sequence of flags $g_0 B, g_1 B, \dots, g_\ell B \in \fl_n$ such that the following diagram of tilted relative positions holds:
\[\begin{tikzcd}[row sep=large]
    e\dar[swap]{v^{(0)},\ba^{(0)}} & e\dar[swap]{v^{(1)},\ba^{(1)}} & & e\dar[swap]{v^{(\ell)},\ba^{(\ell)}}\\
    B=g_0B\rar{s_{i_1}} & g_1B\rar{s_{i_2}} & \cdots\rar{s_{i_\ell}} &g_\ell B=gB\\
    w_0\uar{u^{(0)},\ba^{(0)}} & w_0\uar{u^{(1)},\ba^{(1)}} & & w_0\uar{u^{(\ell)},\ba^{(\ell)}}
\end{tikzcd}.\]
    If $j \in J^\mid_\mathbf{v}$, we require that $g_{j-1} B = g_j B$. Define the \emph{tilted Deodhar cell} $\cD_{\mathbf{u}, \mathbf{v}}$ to be the set of $gB \in \cT_{u,v}^\circ$ for which the above diagram is satisfied.
\end{prop}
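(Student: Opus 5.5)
We argue by backward induction along the word, building the flags $g_\ell B, g_{\ell-1}B,\dots,g_0B$ from right to left, mirroring Marsh--Rietsch's classical argument. We set $g_\ell B:=gB$. Since $gB\in\cT_{u,v}^\circ$ and $u\lesssim_\ba v$, \Cref{thm:tilted-Schub-cell-intersect} gives $gB\in X^\circ_{v,\ba}\cap\Omega^\circ_{u,\ba}$, i.e.\ $e\xrightarrow{v,\ba}g_\ell B$ and $w_0\xrightarrow{u,\ba}g_\ell B$. The inductive claim at step $j$ is: given $g_jB$ with $e\xrightarrow{v^{(j)},\ba^{(j)}}g_jB$ and $w_0\xrightarrow{u^{(j)},\ba^{(j)}}g_jB$, where $u^{(j)}\lesssim_{\ba^{(j)}}v^{(j)}$, there is a unique $g_{j-1}B$ and a uniquely forced $u^{(j-1)}$ continuing the diagram, with the distinguished condition holding at $j$. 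Uniqueness of the whole sequence and of $\mathbf u$ then follows because every step is forced.

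There are two kinds of steps. If $j\in J^s_\mathbf{v}$, then $\mathbf v^{(j)}$ ends in $s_{i_j}$, so by \Cref{cor:length-property} $i_j\in\Des_{\ba^{(j)}}(v^{(j)})$ and $v^{(j-1)}=v^{(j)}s_{i_j}$. \Cref{prop:relative-position}(5) gives a unique $g_{j-1}B$ with $e\xrightarrow{v^{(j-1)},\ba^{(j)}}g_{j-1}B\xrightarrow{s_{i_j}}g_jB$; here $\ba^{(j-1)}=\ba^{(j)}$. Since $g_{j-1}B\xrightarrow{s_{i_j}}g_jB$ we also have $g_jB\xrightarrow{s_{i_j}}g_{j-1}B$ by \Cref{prop:relative-position}(1). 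Applying \Cref{prop:relative-position}(4) to $w_0\xrightarrow{u^{(j)},\ba^{(j)}}g_jB\xrightarrow{s_{i_j}}g_{j-1}B$, we get $w_0\xrightarrow{u^{(j-1)},\ba^{(j)}}g_{j-1}B$ with $u^{(j-1)}\in\{u^{(j)},u^{(j)}s_{i_j}\}$. Here $a_{i_j}=a_{i_j+1}$ holds because $i_j\notin\Jump$ after the last bar, so the $\ba$-descent/ascent dichotomy applies. If $i_j\in\Des_{\ba^{(j)}}(u^{(j)})$, the step is forced to $u^{(j)}s_{i_j}$; this is exactly the distinguished condition $u^{(j)}\le_{\ba^{(j)}}u^{(j-1)}s_{i_j}$ read backwards. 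The choice is determined by which tilted opposite cell $g_{j-1}B$ lies in, and the tilted Bruhat decomposition (\Cref{lemma:tilted-bruhat-decomp}) makes that cell unique. To keep the induction going we need $u^{(j-1)}\lesssim_{\ba^{(j)}}v^{(j-1)}$. We plan to get this from nonemptiness, via $g_{j-1}B\in X^\circ_{v^{(j-1)}}\cap\Omega^\circ_{u^{(j-1)}}$ and \Cref{thm:tilted-T-fixed-point}/\Cref{thm:tilted-Schub-cell-intersect}, or alternatively from \Cref{thm:lifting}.

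If $j\in J^\mid_\mathbf v$, we set $g_{j-1}B=g_jB$, so $v^{(j-1)}=v^{(j)}$, $u^{(j-1)}=u^{(j)}$ and $\ba^{(j-1)}=\flatten(\ba^{(j)})$. We must show that $g_jB\in X^\circ_{v^{(j)},\ba^{(j)}}\cap\Omega^\circ_{u^{(j)},\ba^{(j)}}$ implies the same membership with $\flatten(\ba^{(j)})$. The word $\mathbf v^{(j)}$ ends in a bar, so $v^{(j)}$ is $\ba^{(j)}$-flattenable. \Cref{lemma:u<flatten(a)v} then gives $u^{(j)}\lesssim_{\flatten(\ba^{(j)})}v^{(j)}$, and by \Cref{cor:independent-a} both pairs of cells intersect in the same variety $\cT^\circ_{u^{(j)},v^{(j)}}$ by \Cref{thm:tilted-Schub-cell-intersect}. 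This settles the bar step without choices.

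The main obstacle is the invariant in the $s$-step. We must check that $g_jB$ genuinely lies in $\cT^\circ_{u^{(j)},v^{(j)}}$, equivalently that $u^{(j)}\lesssim_{\ba^{(j)}}v^{(j)}$ rather than merely $\le$. Only then do the bar steps and \Cref{thm:tilted-Schub-cell-intersect}(2) apply, and the subword produced must satisfy $u\sim_\ba v$ as required by \Cref{def:tilted-subword}. We would first try to deduce $\sim$ from the fact that $s_{i_j}$ with $a_{i_j}=a_{i_j+1}$ preserves the counts in \Cref{def:tilted-bruhat-order}. The recursion ends at $j=0$ with $v^{(0)}=\id$, $\ba^{(0)}=(1,\dots,1)$, so $g_0B\in X^\circ_\id=\{B\}$, which forces $u^{(0)}=\id$. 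This confirms that the choices assemble into a genuine subword of $\mathbf v$ corresponding to $u$.
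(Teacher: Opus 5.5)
Your proposal is correct and follows the paper's proof essentially step for step. Both arguments do a backward induction from $g_\ell B=gB$ and treat the $s$-steps via \Cref{prop:relative-position}(5),(1),(4). Both maintain the invariant $u^{(j)}\lesssim_{\ba^{(j)}}v^{(j)}$ by noting that $s_{i_j}$ (with $a^{(j)}_{i_j}=a^{(j)}_{i_j+1}$) preserves $\sim_{\ba^{(j)}}$, then using nonemptiness of $X^\circ_{v^{(j-1)},\ba^{(j)}}\cap\Omega^\circ_{u^{(j-1)},\ba^{(j)}}$ together with \Cref{thm:tilted-Schub-cell-intersect} and \Cref{thm:tilted-T-fixed-point}. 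Both handle the bar steps via \Cref{lemma:u<flatten(a)v}.

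One caution: the alternative you float via \Cref{thm:lifting} does not cover the case $u^{(j-1)}=u^{(j)}s_{i_j}$ with $i_j\in\Asc_{\ba^{(j)}}(u^{(j)})$ (the $J^-_{\mathbf u}$ case), so there the nonemptiness argument is the one that actually does the work.
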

\begin{proof}
    Our goal is to start from $g_{(\ell)}B=gB$, and iteratively construct a unique sequence of flags $g_\ell B,g_{\ell-1}B,\dots,g_0B$ such that the diagram in the proposition holds. We also verify along the way that $u^{(j)}\lesssim_{\ba^{(j)}}v^{(j)}$ at each step. There are two types of induction steps depending on whether $j\in J_\mathbf{v}^s$ or $j\in J_\mathbf{v}^\mid$, corresponding to the following two diagrams:
    \[
    \begin{tikzcd}[column sep=small]
        & e\dlar[swap]{v^{(j-1)},\ba^{(j)}}\drar{v^{(j)},\ba^{(j)}} & \\
        g_{j-1}B\arrow[rr, "s_{i_j}"] & & g_jB\\
        & w_0\ular{u^{(j-1)},\ba^{(j)}}\urar[swap]{u^{(j)},\ba^{(j)}} &
    \end{tikzcd}\hspace{6ex}
    \begin{tikzcd}[column sep=small]
        & e\dlar[swap]{v^{(j-1)},\flatten(\ba^{(j)})}\drar{v^{(j)},\ba^{(j)}} & \\
        g_{j}B\arrow[rr, equals] & & g_{j}B\\
        & w_0\ular{u^{(j-1)},\flatten(\ba^{(j)})}\urar[swap]{u^{(j)},\ba^{(j)}} &
    \end{tikzcd}.
    \]
    \begin{enumerate}
        \item In the first diagram (where $j\in J_\mathbf{v}^s$), suppose $g_jB$ is given and $u^{(j)}\lesssim_{\ba^{(j)}}v^{(j)}$. Since $\mathbf{v}^{(j)}$ is an $\ba^{(j)}$-tilted reduced word, we have $i_j\in\Des_{\ba^{(j)}}(v^{(j)})$. Then, by \Cref{prop:relative-position} (5), there exists a unique flag $g_{j-1}B\in\fl_n$ making the top triangle commute. By \Cref{prop:relative-position} (4), we have $u^{(j-1)}\in\{u^{(j)}s_{i_j},u^{(j)}\}$. In either case, we have $u^{(j-1)}\sim_{\ba_{(j)}}v^{(j-1)}$. Furthermore, since $g_{j-1}B\in X_{v^{(j-1)},\ba^{(j)}}^\circ\cap \Omega_{u^{(j-1)},\ba^{(j)}}^\circ$, which is nonempty, we conclude $u^{(j-1)}\lesssim_{\ba_{(j)}}v^{(j-1)}$ by \Cref{thm:tilted-Schub-cell-intersect}.
        \item In the second diagram (where $j\in J_\mathbf{v}^\mid$), suppose $g_{j}B$ is given and $u^{(j)}\lesssim_{\ba^{(j)}}v^{(j)}$. Since $\mathbf{v}^{(j)}$ is an $\ba^{(j)}$-tilted reduced word, $v^{(j)}$ is $\ba^{(j)}$-flattenable. Then, by \Cref{lemma:u<flatten(a)v}, $u^{(j)}$ is also $\ba^{(j)}$-flattenable, and $u^{(j)}\lesssim_{\flatten(\ba^{(j)})}v^{(j)}$. The commutative diagram holds because, by \Cref{thm:tilted-Schub-cell-intersect}, we have:
        \[g_{j}B\in X_{v^{(j)},\ba^{(j)}}^\circ\cap \Omega_{u^{(j)},\ba^{(j)}}^\circ=\cT_{u^{(j)},v^{(j)}}^\circ=X_{v^{(j)},\flatten(\ba^{(j)})}^\circ\cap \Omega_{u^{(j)},\flatten(\ba^{(j)})}^\circ.\]
    \end{enumerate}
    Finally, for the last step, we have $v^{(0)}=\id$ and $\ba^{(0)}=(1,1,\dots,1)$, an thus $g_0B=B$ and $u^{(0)}=\id$. The conditions for $\mathbf{u}$ to be a distinguished subword of $\mathbf{v}$ are verified at each step of the induction, and uniqueness follows from \Cref{prop:relative-position} (5).
\end{proof}

Now we are ready to state the main theorem of the tilted Deodhar decomposition.

\begin{theorem}\label{thm:tilted-deodhar}
Suppose $u\lesssim_\ba v$, and let $\mathbf{v}=s_{i_1}s_{i_2}\cdots s_{i_\ell}$ be a regular $\ba$-tilted reduced word for $v$. Then the open tilted Richardson variety $\cT^\circ_{u,v}$ admits a decomposition into tilted Deodhar cells $\cD_{\mathbf{u},\mathbf{v}}$, indexed by tilted distinguished subwords $\mathbf{u}\prec\mathbf{v}$ corresponding to $u$:
    \[\cT^\circ_{u,v}=\bigsqcup_{\mathbf{u}\prec \mathbf{v}}\cD_{\mathbf{u},\mathbf{v}},\quad\text{where }\cD_{\mathbf{u},\mathbf{v}}\cong (\C^\ast)^{|J_\mathbf{u}^\circ|}\times \C^{|J_\mathbf{u}^-|}.\]
    Each Deodhar cell $\cD_{\mathbf{u},\mathbf{v}}$ admits the following explicit parametrization:
    \[\cD_{\mathbf{u},\mathbf{v}}\cong\left\{gB=g_1g_2\cdots g_\ell B\;\middle\vert\;\begin{aligned}g_j&=\dot{s}_{i_j} &&\text{if }j\in J_\mathbf{u}^+,\\ g_j&=y_{i_j}(p_j) &&\text{if }j\in J_\mathbf{u}^\circ,\\ g_j&=x_{\alpha_i}(m_j)\dot{s}_{i_j}^{-1} &&\text{if }j\in J_\mathbf{u}^-.\end{aligned}\right\},\]
    where $p_j\in \C^\ast$ and $m_j\in \C$ are parameters.
\end{theorem}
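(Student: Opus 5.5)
The proof mirrors the classical argument of Marsh--Rietsch (\Cref{thm:deodhar}), run along the tilted reduced word. Disjointness and exhaustion come almost for free from \Cref{prop:tilted-deodhar-cell}. Every $gB\in\cT^\circ_{u,v}$ determines a unique distinguished subword $\mathbf{u}\prec\mathbf{v}$ and a unique chain $g_0B,\dots,g_\ell B$. Hence $\cT^\circ_{u,v}=\bigsqcup_{\mathbf{u}\prec\mathbf{v}}\cD_{\mathbf{u},\mathbf{v}}$ as sets. The content lies in identifying each $\cD_{\mathbf{u},\mathbf{v}}$ with the explicit set of products $g_1\cdots g_\ell B$. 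From that parametrization, the isomorphism with $(\C^\ast)^{|J^\circ_\mathbf{u}|}\times\C^{|J^-_\mathbf{u}|}$ follows.

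To build the parametrization, we go step by step in $j$ and track the chain. At a bar position $j\in J^\mid_\mathbf{v}$ we set $g_j=1$, which matches the requirement $g_{j-1}B=g_jB$. At a non-bar position, the step $g_{j-1}B\xrightarrow{s_{i_j}}g_jB$ means $g_jB=g_{j-1}\phi_{i_j}\left(\begin{smallmatrix}p&1\\1&0\end{smallmatrix}\right)B$ for some $p\in\C$, as in the proof of \Cref{prop:relative-position}. The top row of the diagram, $e\xrightarrow{v^{(j)},\ba^{(j)}}g_jB$, is automatic, because $i_j$ is an $\ba^{(j)}$-ascent of $v^{(j-1)}$ and case (3) of \Cref{prop:relative-position}(3) applies.

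The bottom row is governed by \Cref{prop:relative-position}(4), applied in the opposite tilted Schubert cells with respect to $\ba^{(j)}$. If $i_j\in\Asc_{\ba^{(j)}}(u^{(j-1)})$, two outcomes are possible. Either $u^{(j)}=u^{(j-1)}s_{i_j}$, which is the index set $J^+_\mathbf{u}$ and forces a single choice of $p$, normalized to $g_j=\dot s_{i_j}$. Or $u^{(j)}=u^{(j-1)}$, which is $J^\circ_\mathbf{u}$; there $p$ ranges over $\C^\ast$, and after column reduction we get $g_j=y_{i_j}(p_j)$. If $i_j\in\Des_{\ba^{(j)}}(u^{(j-1)})$, distinguishedness forces $j\in J^-_\mathbf{u}$, all $p\in\C$ are allowed, and we get $g_j=x_{i_j}(m_j)\dot s_{i_j}^{-1}$. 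These normalizations are the $2\times2$ computations already carried out in the proof of \Cref{prop:relative-position}. Once we show the chain is an arbitrary sequence of such choices, the parameters $(p_j,m_j)$ give a bijective morphism from $(\C^\ast)^{|J^\circ|}\times\C^{|J^-|}$ onto $\cD_{\mathbf{u},\mathbf{v}}$. Its inverse is algebraic, since each $g_{j-1}B$ is recovered from $g_jB$ by the algebraic map $\pi^{w,\ba}_{ws_i}$ of \Cref{prop:relative-position}(5), and the parameter $p_j$ is read off from a ratio of Plücker coordinates of the canonical representatives.

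The main obstacle is the converse direction: every such product must actually lie in $\cT^\circ_{u,v}$ and satisfy the diagram, and in particular the bars must be compatible. Specifically, at a bar $j$ the flag $g_jB$ must lie in both $X^\circ_{v^{(j)},\ba^{(j)}}$ and $\Omega^\circ_{u^{(j)},\ba^{(j)}}$, whereas the construction only puts it in the tilted cells for $\flatten(\ba^{(j)})$. Here regularity enters.

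Take the blue block $s_{q-p,p}$ immediately preceding the bar. By \Cref{lemma:regular-subword}, the subword $\mathbf{u}$ also uses this entire block, so the block's factors lie in $J^+_\mathbf{u}$ and contribute the constant matrix $\dot s_{q-p,p}$. I plan to show directly with canonical matrices that right-multiplying by $\dot s_{q-p,p}$ maps the relevant $\flatten$-tilted cells into the $\ba^{(j)}$-tilted cells. The argument would check that the shuffled first $q$ columns have their $\ast$-entries on the correct side of the cutoffs $a_1^{(j)}$ and $a_{q+1}^{(j)}$, using the description in \Cref{prop:tilted-Schub-cell} and the flattenability of $u^{(j)}$ and $v^{(j)}$ (\Cref{lemma:u<flatten(a)v}). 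Having done this at every bar, the inductive chain puts $g_\ell B$ in $X^\circ_{v,\ba}\cap\Omega^\circ_{u,\ba}$, which equals $\cT^\circ_{u,v}$ by \Cref{thm:tilted-Schub-cell-intersect}. This completes the decomposition.
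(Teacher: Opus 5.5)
There is a genuine gap, at the step you treat as routine.

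\textbf{Missing invariant on the representative.} The parametrization claims that when $i_j\in\Asc_{\ba^{(j)}}(u^{(j-1)})$, the unique flag at relative position $s_{i_j}$ from $g_{j-1}B$ with $u^{(j)}=u^{(j-1)}s_{i_j}$ is exactly $g_1\cdots g_{j-1}\dot s_{i_j}B$. In other words, the special point of the $\P^1$ sits at $p=0$ for the \emph{specific} matrix $g_1\cdots g_{j-1}$.

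\Cref{prop:relative-position}(4) only says that a special point exists; which value of $p$ it corresponds to depends on the representative. The $2\times 2$ computations in the proof of \Cref{prop:relative-position} are done with canonical matrices of tilted Schubert cells. The running product is not such a matrix, and its earlier factors are already fixed, so you cannot ``normalize'' to $g_j=\dot s_{i_j}$. Without this, even your converse direction breaks: you cannot certify that $g_j=\dot s_{i_j}$ (resp.\ $y_{i_j}(p_j)$) yields $u^{(j)}=u^{(j-1)}s_{i_j}$ (resp.\ $u^{(j)}=u^{(j-1)}$).

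The paper carries an inductive invariant that your proposal lacks. Every product $g_1\cdots g_j$ lies in the standard chart $G_{u^{(j)},\ba^{(j)}}$, consisting of matrices with $\pm1$ at $(u_k,k)$ and $0$ at $(i,k)$ whenever $i<_{a_k}u_k$. For $\ba=(1,\dots,1)$ this is the $U_-\dot u$ of the classical argument. Given this, write $\ba^{(j)}=(a_1,\dots,a_n)$, $M=g_1\cdots g_{j-1}$ and $i=i_j$. The $i$-th column of $M\dot s_i$ is $\pm$ the $(i+1)$-st column of $M$. That column vanishes on the rows $[a_i,u^{(j-1)}_{i+1})_c$, since $a_i=a_{i+1}$. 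Hence $\rank_{[a_i,u^{(j-1)}_{i+1})_c}(F_i)=\rank_{[a_i,u^{(j-1)}_{i+1})_c}(F_{i-1})$, and the rank conditions of \Cref{def:main} rule out $u^{(j)}=u^{(j-1)}$.

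\textbf{Role of regularity.} You misplace where regularity is needed. At a bar, the fact that $g_jB$ lies in the $\ba^{(j)}$-cells, and not only in the $\flatten(\ba^{(j)})$-cells, requires no regularity. By \Cref{lemma:u<flatten(a)v}, $u^{(j)}\lesssim_{\ba^{(j)}}v^{(j)}$ and $u^{(j)}\lesssim_{\flatten(\ba^{(j)})}v^{(j)}$. So by \Cref{thm:tilted-Schub-cell-intersect} both intersections equal $\cT^\circ_{u^{(j)},v^{(j)}}$; this is case (2) of the proof of \Cref{prop:tilted-deodhar-cell}. Also, ``right multiplication by $\dot s_{q-p,p}$ maps cells to cells'' is not well defined on $G/B$, because it depends on the representative.

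What a bar does affect is the chart invariant. Before the bar, the product is only known to lie in $G_{u,\flatten(\ba)}$, while the later steps need $G_{u,\ba}$. Concretely, you need zeros in rows $[a_1,a_{q+1})_c$ and columns $[p+1,q]$.

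This is exactly where regularity enters. By \Cref{lemma:regular-subword}, the blue block consists of $J^+$ factors, so the product is $M_{F'}\cdot\dot s_{q-p,p}$ with $F'\in\cT^\circ_{u^{(k)},v^{(k)}}$. The required entries are therefore the entries of $M_{F'}$ in rows $[a_1,a_{q+1})_c$ and columns $[q-p]$. These vanish because $\rank_{[a_1,a_{q+1})_c}(F'_{q-p})=0$ by \Cref{def:main}.

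Your column-shuffling check points in this direction. However, it has to be a statement about the representative matrix, not about flags lying in cells. And the invariant must also be verified at every non-bar step, in both the ascent and descent cases for $u$.
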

\begin{proof}
    It follows immediately from \Cref{prop:tilted-deodhar-cell} that $\cT_{u,v}^\circ$ is the disjoint union of tilted Deodhar cells $\cD_{\mathbf{u},\mathbf{v}}$ where $\mathbf{u}\prec\mathbf{v}$. It remains to prove the explicit parametrization.

    We begin by introducing a definition needed for the argument. Define the \emph{standard chart} $G_{u,\ba}\subseteq G$ to be the set of matrices $M=(m_{i,k})_{i,k\in [n]}$ such that
    \[m_{i,k}=\begin{cases}
        \pm1 &\text{if }i=u_k,\\
        0 &\text{if }i<_{a_k}u_k.
    \end{cases}\]
    
    Let $G_{\mathbf{u},\mathbf{v}}\subseteq G$ denote the candidate parametrization for the cell $\cD_{\mathbf{u},\mathbf{v}}$ in the theorem. We proceed by induction on $\ell^\word_\ba(v)$ to show that the map $G_{\mathbf{u},\mathbf{v}}\to \cD_{\mathbf{u},\mathbf{v}}$ is an isomorphism and that $G_{\mathbf{u},\mathbf{v}}\subseteq G_{u,\ba}$. Let $\mathbf{v}'$ and $\mathbf{u}'$ be the words obtained from $\mathbf{v}$ and $\mathbf{u}$ by removing the last factor. Let $u'\in S_n$ and $v'\in S_n$ be the corresponding permutations. By the induction hypothesis, we have $G_{\mathbf{u}',\mathbf{v}'}\cong \cD_{\mathbf{u}',\mathbf{v}'}$ and $G_{\mathbf{u}',\mathbf{v}'}\subseteq G_{u',\ba'}$. We consider three cases:
    \begin{enumerate}
        \item Suppose $\mathbf{v}$ ends with $s_i$ and $i\in \Des_\ba(u')$, so $\mathbf{u}=\mathbf{u}'s_i$. By \Cref{prop:relative-position}, any flags $F_\bullet$ such that $F_\bullet\xrightarrow{s_i}F'_\bullet$ for some $F'_\bullet\in \cD_{\mathbf{u}', \mathbf{v}'}$ must lie in $\cD_{\mathbf{u},\mathbf{v}}$. We thus obtain the following cartesian square:
        \[\begin{tikzcd}
            G_{\mathbf{u}',\mathbf{v}'}\times\{x_i(\C)\cdot \dot{s}_i\}\rar{\sim}\dar{pr_1} & \cD_{\mathbf{u},\mathbf{v}}\dar{\pi^{v,\ba}_{vs_i}}\\
            G_{\mathbf{u}',\mathbf{v}'}\rar{\sim} & \cD_{\mathbf{u}', \mathbf{v}'}.
        \end{tikzcd}.\]
        It is straightforward to check that $G_{\mathbf{u},\mathbf{v}}=G_{\mathbf{u}',\mathbf{v}'}\times\{x_i(\C)\cdot \dot{s}_i\}\subseteq G_{us_i,\ba}$.
        \item Suppose $\mathbf{v}$ ends with $s_i$ and $i\in \Asc_\ba(u')$, so $\mathbf{u}\in\{\mathbf{u}'s_i,\mathbf{u}'1\}$. By \Cref{prop:relative-position}, among all flags $F_\bullet$ such that $F_\bullet\xrightarrow{s_i}F'_\bullet$ for some  $F'_\bullet\in \cD_{\mathbf{u}', \mathbf{v}'}$, exactly one lies in $\cD_{\mathbf{u}'s_i,\mathbf{v}}\subseteq \cT_{us_i,v}^\circ$, and the rest lie in $\cD_{\mathbf{u}'1,\mathbf{v}}\subseteq \cT_{u,v}^\circ$. To show the following two cartesian squares:
        \[\begin{tikzcd}
            G_{\mathbf{u}',\mathbf{v}'}\times\{y_i(\C^\ast)\}\rar{\sim}\dar{pr_1} & \cD_{\mathbf{u}'1,\mathbf{v}}\dar{\pi^{v,\ba}_{vs_i}}\\
            G_{\mathbf{u}',\mathbf{v}'}\rar{\sim} & \cD_{\mathbf{u}',\mathbf{v}'}
        \end{tikzcd}\hspace{3ex}
        \begin{tikzcd}
            G_{\mathbf{u}',\mathbf{v}'}\times\{\dot{s}_i\}\rar{\sim}\dar{pr_1} & \cD_{\mathbf{u}'s_i,\mathbf{v}}\dar{\pi^{v,\ba}_{vs_i}}\\
            G_{\mathbf{u}',\mathbf{v}'}\rar{\sim} & \cD_{\mathbf{u}',\mathbf{v}'}
        \end{tikzcd},\]
        it suffices to show that all flags $F_\bullet$ represented by matrices $M_F\in G_{\mathbf{u}',\mathbf{v}'}\times\{\dot{s}_i\}$ belong to $\cT_{us_i,v}^\circ$ but not to $\cT_{u,v}^\circ$. Since $G_{\mathbf{u}',\mathbf{v}'}\subseteq G_{u',\ba}$, the entries of $M_F$ in the $i$-th column and in rows indexed by $[a_i,u_{i+1})_c$ are all zero. Hence,
        \[\rank_{[a_i,u_{i+1})_c}(F_i)=\rank_{[a_i,u_{i+1})_c}(F_{i-1}).\]
        By the rank condition in \Cref{def:main}, this implies that $F_\bullet$ belongs to $\cT_{us_i,v}^\circ$ but not to $\cT_{u,v}^\circ$. Finally, it is clear that $G_{\mathbf{u}'1,\mathbf{v}} =G_{\mathbf{u}',\mathbf{v}'}\times\{y_i(\C^\ast)\}\subseteq G_{u,\ba}$ and $G_{\mathbf{u}'s_i,\mathbf{v}}=G_{\mathbf{u}',\mathbf{v}'}\times \{\dot{s}_i\}\subseteq G_{us_i,\ba}$.
        \item Suppose $\mathbf{v}$ ends with a bar, so $\mathbf{u}=\mathbf{u}'\mid$. We need to show that for all flags $F_\bullet\in \cD_{\mathbf{u},\mathbf{v}}$ represented by $M_F=g_1\cdots g_{\ell-1}\in G_{u,\flatten(\ba)}$, we in fact have $M_F\in G_{u,\ba}$. Since $v$ is $\ba$-flattenable, let $p$ be the integer from \Cref{def:a-flattenable} such that $v[p]\subseteq[a_1,a_{\jmin+1})_c$ and $v[p+1,\jmin]\subseteq[a_{\jmin+1},a_1)_c$. Comparing the two standard charts $G_{u,\flatten(\ba)}$ and $G_{u,\ba}$, it suffices to show that all entries in the submatrix of $M_F$ consisting of rows indexed by $[a_1,a_{\jmin+1})_c$ and columns indexed by $[p+1,\jmin]$ are zero.
        
        Since $\mathbf{v}$ is regular, it follows from \Cref{lemma:regular-subword} that $\mathbf{u}$ is also regular. Then, by \Cref{def:regular-word} we have $M_F=M_{F'}\cdot g_{k+1}\cdots g_{\ell-1}$, where $g_{k+1}\cdots g_{\ell-1}$ is the permutation matrix corresponding to the bi-Grassmannian permutation $s_{\jmin-p,p}$, and $M_{F'}$ represents the flag $F'_\bullet\in \cD_{u^{(k)},v^{(k)}}\subseteq \cT^\circ_{u^{(k)},v^{(k)}}$. Therefore, the desired statement on $M_F$ reduces to showing that all entries in the submatrix of $M_{F'}$ consisting of rows indexed by $[a_1,a_{\jmin+1})_c$ and columns indexed by $[\jmin-p]$ are zero. This follows from the rank condition $\rank_{[a_1,a_{\jmin+1})_c}(F'_{\jmin-p})=0$, as required by the definition of $\cT_{u^{(k)},v^{(k)}}^\circ$ in \Cref{def:main}.
    \end{enumerate}
    This completes the induction step and the proof.
\end{proof}

\begin{remark}
    We note that the regularity condition on the $\ba$-tilted reduced word $\mathbf{v}$ is necessary in the current proof of \Cref{thm:tilted-deodhar}. However, we conjecture that \Cref{thm:tilted-deodhar} holds for any $\ba$-tilted reduced word $\mathbf{v}$, even without assuming that $\mathbf{v}$ is regular.
\end{remark}

\subsection{Irreducibility of Tilted Richardson Varieties}\label{sec:irreducible}

In this section, we use the tilted Deodhar decomposition to prove the irreducibility of the tilted Richardson varieties $\cT_{u,v}^\circ$ and $\cT_{u,v}$ (\Cref{thm:closure}). We begin with the following definition.

\begin{defin}
    Let $\mathbf{v}=s_{i_1}s_{i_2}\cdots s_{i_\ell}$ be an $\ba$-tilted reduced word for $v\in S_n$. A distinguished subword $\mathbf{u}\prec\mathbf{v}$ is called a \emph{positive distinguished subword} if $J^-_\mathbf{u}=\emptyset$. We denote such a subword by $\mathbf{u}^+$.
\end{defin}

The following lemma identifies the unique positive distinguished subword.

\begin{lemma}\label{lemma:tilted-pds}
    Suppose $u\lesssim_\ba v$, and let $\mathbf{v}=s_{i_1}s_{i_2}\cdots s_{i_\ell}$ be an $\ba$-tilted reduced word for $v$. Then there exists a unique positive distinguished subword $\mathbf{u}^+\prec\mathbf{v}$ corresponding to $u$.
\end{lemma}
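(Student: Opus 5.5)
We argue by induction on the length $\ell=\ell^\word_\ba(v)$ of the tilted reduced word $\mathbf{v}$, constructing $\mathbf{u}^+$ from right to left, as in the classical proof of \Cref{lemma:distinguished}. The base case $\ell=0$ forces $\ba=(1,\dots,1)$ and $u=v=\id$; there the empty subword is the only one. For the inductive step, let $\mathbf{v}'$ be $\mathbf{v}$ with its last factor removed, $v'$ its product, and $\ba'$ the corresponding sequence, so that $\ba'=\ba$ if the last factor is some $s_i$ and $\ba'=\flatten(\ba)$ if it is a bar. We show that the last factor of $\mathbf{u}^+$, and hence the permutation $u'$ represented by the prefix, is forced. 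We also show that $u'\lesssim_{\ba'}v'$, so that the inductive hypothesis applies to $(u',\mathbf{v}')$.

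\textbf{Case 1: the last factor is a bar.} Then $v$ is $\ba$-flattenable. By \Cref{lemma:u<flatten(a)v}, $u$ is also $\ba$-flattenable and $u\lesssim_{\flatten(\ba)}v$. Any subword must end in the bar, so $u'=u$ and $\mathbf{u}^+=(\mathbf{u}')^+\mid$. Existence and uniqueness then follow from the inductive hypothesis applied to $\mathbf{v}'$. The subword conditions in \Cref{def:tilted-subword} hold: the prefix is a valid $\flatten(\ba)$-tilted word for the $\ba$-flattenable permutation $u$, and $u\sim_\ba v$.

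\textbf{Case 2: the last factor is $s_i$.} By \Cref{cor:length-property}, $i\in\Des_\ba(v)$, so $a_i=a_{i+1}$, and $v'=vs_i$. Since $a_i=a_{i+1}$, exactly one of $i\in\Des_\ba(u)$ and $i\in\Asc_\ba(u)$ holds.
\begin{itemize}
\item If $i\in\Des_\ba(u)$, then $us_i<_\ba u$. Being positive forbids the last index from lying in $J^-$, so we need $u^{(\ell-1)}\leq_\ba u$. The option $u^{(\ell-1)}=us_i$ would put the index in $J^+$ only if $us_i<_\ba u$ failed, which it does not. Hence the last factor must be $1$ and $u'=u$. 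This is allowed by the distinguished condition: $u^{(\ell)}=u\leq_\ba us_i\cdot s_i$ holds trivially, since $u^{(\ell-1)}s_i=us_i>_\ba$? We check directly: the distinguished condition reads $u\leq_\ba u s_i$, which fails. So this subcase forces a different reading.
\end{itemize}
The correct assignment is as follows. Write $u^{(\ell-1)}=u'$. Distinguishedness requires $u\leq_\ba u's_i$, and positivity requires $u'\leq_\ba u$.
\begin{itemize}
\item If $i\in\Des_\ba(u)$, we take $u'=us_i$, which gives $J^+$. The alternative $u'=u$ violates distinguishedness, since $u\not\leq_\ba us_i$. By \Cref{thm:lifting} (with $u s_i$ in the role of the smaller element), we get $us_i\lesssim_\ba vs_i$.
\item If $i\in\Asc_\ba(u)$, the option $u'=us_i$ would put the index in $J^-$, which positivity forbids. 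So $u'=u$, giving $J^\circ$; distinguishedness holds because $u\leq_\ba us_i$. Again \Cref{thm:lifting} gives $u\lesssim_\ba vs_i$.
\end{itemize}
In either subcase we also need $u'\sim_\ba v'$ and the validity of the prefix as a subword. Both follow from $u'\lesssim_{\ba}v'$ together with \Cref{thm:subword-property}. The inductive hypothesis then gives a unique positive distinguished subword of $\mathbf{v}'$ for $u'$, and appending the forced last factor produces $\mathbf{u}^+$.

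\textbf{Main obstacle.} The delicate point is the lifting step in Case 2. We must apply \Cref{thm:lifting}, or its descent variant derived the same way as in the proof of \Cref{thm:subword-property}, to obtain $u'\lesssim_\ba vs_i$ in both subcases. In particular, when $i\in\Des_\ba(u)$ we need $us_i\lesssim_\ba vs_i$. We would derive this from the same lattice-path argument, or equivalently from the interval statement \Cref{thm:strong-lifting}: the condition $a_i=a_{i+1}$ makes $[u,v]$ closed under right multiplication by $s_i$ in the relevant sense. We would also check that the positive and distinguished conditions, stated in \Cref{def:distinguished} with $\leq_{\ba^{(j)}}$, agree with $\lesssim_\ba$ on the pairs $w,ws_i$ with $a_i=a_{i+1}$. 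This agreement holds because $w\sim_\ba ws_i$ in that situation.
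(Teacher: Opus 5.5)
Your argument is correct and follows the paper's approach: the last factor of $\mathbf{u}^+$ is forced (a bar is always kept, and $s_{i_\ell}$ is kept exactly when $i_\ell\in\Des_\ba(u)$), so $\mathbf{u}^+$ is built deterministically from right to left, and your use of \Cref{thm:lifting} (applied to the pair $(us_i,v)$ in the descent case) and \Cref{lemma:u<flatten(a)v} to maintain $u^{(j)}\lesssim_{\ba^{(j)}}v^{(j)}$ makes explicit the existence check that the paper leaves implicit. In a write-up, delete the abandoned first bullet of Case 2, and also delete the appeal to \Cref{thm:strong-lifting} in your last paragraph: its hypotheses need not hold when $i\in\Des_\ba(u)$, and you do not need it.
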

\begin{proof}
    By definition, if $\mathbf{u}$ is a positive distinguished subword, its last factor $\mathbf{u}_\ell$ must satisfy
    \[\mathbf{u}_\ell=\begin{cases}
        s_{i_\ell} &\text{if } i_\ell\in\Des_\ba(u),\\
        1 &\text{if }i_\ell\in \Asc_\ba(u).
    \end{cases}\]
    Therefore, we can construct such a subword $\mathbf{u}^+$ inductively from right to left. This process is deterministic and produces a unique result.
\end{proof}

We now state the main result of this section.

\begin{theorem}\label{thm:irreducible}
    For $u,v\in S_n$, the tilted Richardson varieties $\cT_{u,v}^\circ$ and $\cT_{u,v}$ are irreducible.
\end{theorem}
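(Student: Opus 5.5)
We will derive irreducibility from the tilted Deodhar decomposition of \Cref{thm:tilted-deodhar}. The plan is to show that exactly one Deodhar cell, the positive one, has top dimension and is dense in $\cT_{u,v}^\circ$. Irreducibility of $\cT_{u,v}^\circ$ then follows, and that of $\cT_{u,v}$ follows from the closure relation in \Cref{thm:closure}.

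First fix $\ba$ with $u\lesssim_\ba v$, which exists by \Cref{thm:lesssim-exist}. Then fix a regular $\ba$-tilted reduced word $\mathbf{v}$, which exists by \Cref{construction:regular-tiltedreducedword}. By \Cref{thm:tilted-deodhar}, we have $\cT^\circ_{u,v}=\bigsqcup_{\mathbf{u}\prec\mathbf{v}}\cD_{\mathbf{u},\mathbf{v}}$ with $\cD_{\mathbf{u},\mathbf{v}}\cong(\C^*)^{|J^\circ_\mathbf{u}|}\times\C^{|J^-_\mathbf{u}|}$. Each cell is therefore irreducible of dimension $|J^\circ_\mathbf{u}|+|J^-_\mathbf{u}|$.

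The combinatorial step is to count factors. Tracking $\ell_{\ba^{(j)}}(u^{(j)})$ along the word, a step in $J^+_\mathbf{u}$ raises the tilted length by one and a step in $J^-_\mathbf{u}$ lowers it by one, by \Cref{thm:length-rank-function} applied to the covers $w\lesssimdot ws_i$. A step in $J^\circ_\mathbf{u}$ leaves it unchanged. For the bar steps, we need that $\ell_{\ba}$ and $\ell_{\flatten(\ba)}$ differ on $\ba$-flattenable permutations by a quantity that is the same for $u^{(j)}$ and $v^{(j)}$; this should follow from \Cref{lemma:u<flatten(a)v} and \Cref{lemma:both-r-comparable}. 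Doing the same bookkeeping for $\mathbf{v}$ itself, where every non-bar step is in $J^+$, we obtain
\[|J^+_\mathbf{u}|-|J^-_\mathbf{u}| = |J^s_\mathbf{v}|-\ell(u,v),\]
using \Cref{cor:length-rank-function}. Since $|J^+|+|J^\circ|+|J^-|=|J^s_\mathbf{v}|$, this gives
\[\dim\cD_{\mathbf{u},\mathbf{v}}=\ell(u,v)-|J^-_\mathbf{u}|.\]
A cleaner way to reach the same formula is to run the identical dimension count through the induction in the proof of \Cref{thm:tilted-deodhar} and compare with \Cref{thm:dimension}. Consequently, $\dim\cD_{\mathbf{u},\mathbf{v}}=\ell(u,v)$ if and only if $J^-_\mathbf{u}=\emptyset$, that is, if and only if $\mathbf{u}=\mathbf{u}^+$. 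By \Cref{lemma:tilted-pds} this subword exists and is unique, and every other cell has dimension strictly less than $\ell(u,v)$.

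Now \Cref{thm:dimension} says $\cT_{u,v}^\circ$ is equidimensional of dimension $\ell(u,v)$. The decomposition is finite and every cell other than $\cD_{\mathbf{u}^+,\mathbf{v}}$ has dimension less than $\ell(u,v)$. So each irreducible component, having dimension $\ell(u,v)$, must meet $\cD_{\mathbf{u}^+,\mathbf{v}}$ in a dense subset, and is therefore contained in the closure of that cell. This cell is irreducible because it is isomorphic to $(\C^*)^{\ell(u,v)}$. Hence $\cT_{u,v}^\circ=\overline{\cD_{\mathbf{u}^+,\mathbf{v}}}\cap\cT_{u,v}^\circ$ is irreducible. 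Finally, $\cT_{u,v}=\overline{\cT_{u,v}^\circ}$ by \Cref{thm:closure}, and the closure of an irreducible set is irreducible.

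The main obstacle is the dimension identity $\dim\cD_{\mathbf{u},\mathbf{v}}=\ell(u,v)-|J^-_\mathbf{u}|$, specifically its behaviour across bars, where the ambient sequence changes from $\ba^{(j)}$ to $\flatten(\ba^{(j)})$. I would first try to prove the bar-invariance of $\ell_{\ba}(v^{(j)})-\ell_{\ba}(u^{(j)})$ directly from the lattice-path description in \Cref{cor:length-rank-function}. Both sides equal $\ell(u^{(j)},v^{(j)})$ whenever $u^{(j)}\lesssim v^{(j)}$ holds in both sequences, and \Cref{lemma:u<flatten(a)v} supplies exactly that.
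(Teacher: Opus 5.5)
Your proposal is correct and follows the paper's proof: the tilted Deodhar decomposition for a regular word, the unique top-dimensional cell $\cD_{\mathbf{u}^+,\mathbf{v}}$ from \Cref{lemma:tilted-pds}, equidimensionality from \Cref{thm:dimension}, and then \Cref{thm:closure} for $\cT_{u,v}$. The only difference is in justifying $\dim\cD_{\mathbf{u},\mathbf{v}}=\ell(u,v)-|J^-_\mathbf{u}|$. The paper tracks the word length $\ell^\word_\ba$, where each bar adds exactly $1$ for both $u$ and $v$, and uses \Cref{thm:length-property} to get $\ell^\word_\ba(v)-\ell^\word_\ba(u)=\ell(u,v)$. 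You track $\ell_{\ba^{(j)}}$ and handle the bars via \Cref{cor:length-rank-function} and \Cref{lemma:u<flatten(a)v}, which works equally well.
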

\begin{proof}
    Let $\ba\in[n]^n$ be a sequence such that $u\lesssim_\ba v$, whose existence is guaranteed by \Cref{thm:lesssim-exist}. Let $\mathbf{v}$ be a regular $\ba$-tilted reduced subword for $v$. By \Cref{thm:tilted-deodhar}, 
    \[\cT^\circ_{u,v}=\bigsqcup_{\mathbf{u}\prec \mathbf{v}}\cD_{\mathbf{u},\mathbf{v}} \implies \overline{\cT^\circ_{u,v}} =\bigcup_{\mathbf{u}\prec \mathbf{v}}\overline{\cD_{\mathbf{u},\mathbf{v}}}.\]
    Let $Z\subseteq \overline{\cT^\circ_{u,v}}$ be any irreducible component. Then $Z$ must be contained in one of the closures $\overline{\cD_{\mathbf{u},\mathbf{v}}}$. However, by \Cref{thm:dimension}, we know that $\dim(Z)=\ell(u,v)$, and the dimension of each tilted Deodhar cell is given by
    \[\dim(\cD_{\mathbf{u},\mathbf{v}})=|J_\mathbf{u}^\circ|+|J_\mathbf{u}^-|=\ell^\word_\ba(v)-\ell^\word_\ba(u)-|J_\mathbf{u}^-|=\ell(u,v)-|J_\mathbf{u}^-|\leq \ell(u,v).\]
    By \Cref{lemma:tilted-pds}, there exists a unique Deodhar cell $\cD_{\mathbf{u}^+,\mathbf{v}}$ of maximal dimension $\ell(u,v)$. Therefore, $Z\subseteq \overline{\cD_{\mathbf{u}^+,\mathbf{v}}}$. Since this holds for every irreducible component $Z$, we conclude that $\overline{\cT_{u,v}^\circ}= \overline{\cD_{\mathbf{u}^+,\mathbf{v}}}$. As $\cD_{\mathbf{u}^+,\mathbf{v}} \cong(\C^\ast)^{\ell(u,v)}$ is irreducible, it follows that $\cT_{u,v}^\circ$ is irreducible. Finally, by \Cref{thm:closure}, the closure $\cT_{u,v}=\overline{\cT_{u,v}^\circ}$ is also irreducible.
\end{proof}

\subsection{Tilted Kazhdan--Lusztig R-Polynomials}\label{sec:tilted-r-poly}

In this section, we define and study the \emph{tilted Kazhdan--Lusztig R-polynomials} $R^\tilt_{u,v}(q)$ (or simply \emph{tilted R-polynomials}), which count the number of $\F_q$-points in the open tilted Richardson variety $\cT_{u,v}^\circ$. These polynomials generalize the classical Kazhdan--Lusztig R-polynomials defined in \cite{KL79} when $u\leq v$ in the Bruhat order. See \Cref{sec:prelim-4-1} for a discussion of the classical case. We present three equivalent formulas for this new family of polynomials (\Cref{prop:r-poly-1}, \Cref{prop:r-poly-2}, and \Cref{thm:hecke}) and conclude with several open questions. We begin with a direct combinatorial formula, which follows immediately as a corollary of \Cref{prop:tilted-deodhar-cell}.

\begin{prop}\label{prop:r-poly-1}
    Let $u,v\in S_n$. The tilted R-polynomial $R^\tilt_{u,v}(q)$ is defined as
    \[R^\tilt_{u,v}(q):=\#\cT_{u,v}^\circ(\F_q).\]
    Then, for any $\ba\in[n]^n$ such that $u\lesssim_\ba v$, and any $\ba$-tilted reduced word $\mathbf{v}$ for $v$, we have
    \[R^\tilt_{u,v}(q)=\sum_{\mathbf{u}\prec\mathbf{v}}(q-1)^{|J_\mathbf{u}^\circ|}q^{|J_{\mathbf{u}}^-|}.\]
\end{prop}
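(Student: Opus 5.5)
The proof will be a point count over the tilted Deodhar decomposition, with $\cT_{u,v}^\circ$ regarded as a scheme defined over $\Z$. The defining Pl\"ucker and rank conditions of \Cref{def:main} and \Cref{thm:altdefTplucker} have integer coefficients, so $\cT_{u,v}^\circ(\F_q)$ makes sense for every prime power $q$.

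The first step is to rerun the argument of \Cref{prop:tilted-deodhar-cell} over an arbitrary field $\F_q$. That argument uses only \Cref{prop:relative-position}, \Cref{thm:tilted-Schub-cell-intersect} and \Cref{lemma:u<flatten(a)v}, and each of these is a field-independent statement: column reductions, incidence Pl\"ucker relations, and the explicit matrices $\dot{s}_i$, $y_i(p)$, $x_i(m)$. For every $\F_q$-point $gB$ of $\cT^\circ_{u,v}$ the backward construction therefore produces a unique distinguished subword $\mathbf{u}\prec\mathbf{v}$ together with flags $g_jB$, all defined over $\F_q$. In part (5) of \Cref{prop:relative-position} the map $gB\mapsto g\dot{s}_iB$ uses the canonical representative, and that representative is $\F_q$-rational. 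This gives a set-theoretic partition
\[\cT_{u,v}^\circ(\F_q)=\bigsqcup_{\mathbf{u}\prec\mathbf{v}}\cD_{\mathbf{u},\mathbf{v}}(\F_q).\]

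The second step is to count each piece. I will show that $\cD_{\mathbf{u},\mathbf{v}}(\F_q)$ is in bijection with $(\F_q^\ast)^{|J_\mathbf{u}^\circ|}\times\F_q^{|J^-_\mathbf{u}|}$ by tracking the last factor, exactly as in the three cases of the proof of \Cref{thm:tilted-deodhar}. When $j\in J^\circ_\mathbf{u}\cup J^-_\mathbf{u}$ and the factor is $s_i$, the flags $g_jB$ with $g_{j-1}B\xrightarrow{s_i}g_jB$ are $g_{j-1}\phi_i\left(\begin{smallmatrix}p&1\\1&0\end{smallmatrix}\right)B$ with $p\in\F_q$. By \Cref{prop:relative-position} (3) and (4), the value $p=0$ is excluded for the $J^\circ$ step, which leaves $q-1$ choices, while in the $J^-$ step all $q$ values occur. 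A $J^+$ step contributes exactly one choice, and a bar contributes one. Multiplying these counts and summing over $\mathbf{u}$ gives the stated formula.

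The main obstacle is the claim that this holds for \emph{any} $\ba$-tilted reduced word $\mathbf{v}$, since \Cref{thm:tilted-deodhar} assumed regularity. Regularity was used only to prove $G_{\mathbf{u},\mathbf{v}}\subseteq G_{u,\ba}$, which is needed for the case distinction at ascent steps, namely to decide which of the $q$ lifts lands in $\cT_{us_i,v}^\circ$ rather than in $\cT_{u,v}^\circ$. I will avoid this by working directly with relative positions. Given $g_{j-1}B$ in the $(\mathbf{u}',\mathbf{v}')$-piece, the fiber of lifts is $\{g_{j-1}\phi_i(\cdot)B\}\cong\F_q$. Part (4) of \Cref{prop:relative-position} says that for $i\in\Asc$ exactly one lift gives $w_0\xrightarrow{u's_i}$, and the canonical-representative computation identifies it as $p=0$ uniformly. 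The remaining $q-1$ lifts give $w_0\xrightarrow{u'}$. Membership in $\cT^\circ_{u^{(j)},v^{(j)}}$ is then automatic from \Cref{thm:tilted-Schub-cell-intersect}, because $u^{(j)}\sim_{\ba^{(j)}}v^{(j)}$ holds by the subword definition. Only cardinalities are needed here, not the isomorphism $G_{\mathbf{u},\mathbf{v}}\cong\cD_{\mathbf{u},\mathbf{v}}$, so this fiber count is enough. If this fails, the fallback is to prove the formula for a regular word first and then pass to any reduced word. The word property \Cref{thm:word-property} connects any two reduced words by braid and bar moves, and I would check that the sum $\sum_{\mathbf{u}}(q-1)^{|J^\circ|}q^{|J^-|}$ is invariant under these moves, as in the classical Deodhar setting.
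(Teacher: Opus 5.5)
Your proposal is correct and is essentially the paper's argument: the paper obtains the formula directly from the cell partition of \Cref{prop:tilted-deodhar-cell}, and your lift count is what makes the per-cell count valid for an arbitrary, possibly non-regular, $\ba$-tilted reduced word, where \Cref{thm:tilted-deodhar} is unavailable (at an ascent step exactly one of the $q$ lifts is special, at a descent step all $q$ lifts are, and bars contribute a factor $1$). One small caveat: the special lift is $p=0$ relative to the canonical $\Omega^\circ_{u^{(j-1)},\ba^{(j)}}$-representative of $g_{j-1}B$, not necessarily relative to the Deodhar product $g_1\cdots g_{j-1}$; this is harmless because you only use cardinalities, and for the same reason the braid/bar-move fallback is not needed.
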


We present a recursive formula, analogous to the original definition of classical R-polynomials.
\begin{prop}\label{prop:r-poly-2}
    Let $\left\{R_{u,v,\ba}(q):u,v\in S_n,\;\ba\in[n]^n\right\}$ be the family of polynomial defined recursively as follows:
    \begin{enumerate}
    \item $R_{u,v,\ba}(q)=1$ if $u=v$;
    \item $R_{u,v,\ba}(q)=0$ if $u\not\lesssim_\ba v$;
    \item If $u\lesssim_\ba v$ and $i\in \Des_\ba(v)$, then
    \[R_{u,v,\ba}(q)=\begin{cases}
        R_{us_i,vs_i,\ba}(q), &\text{if }i\in\Des_\ba(u),\\
        q\cdot R_{us_i,vs_i,\ba}(q)+(q-1)\cdot R_{u,vs_i,\ba}(q), &\text{if }i\in\Asc_\ba(u).
    \end{cases}\]
    \item If $u\lesssim_\ba v$ and $\Des_\ba(v)=\emptyset$, then $R_{u,v,\ba}(q)=R_{u,v,\flatten(\ba)}(q)$.
    \end{enumerate}
    Then, $R^\tilt_{u,v}(q)=R_{u,v,\ba}(q)$ for any $\ba$ such that $u\lesssim_\ba v$.
\end{prop}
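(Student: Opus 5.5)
The plan is to prove, by induction on $\ell^\word_\ba(v)$ (and on the number of jumps of $\ba$), that for any $\ba$ with $u\lesssim_\ba v$ and any $\ba$-tilted reduced word $\mathbf{v}$ for $v$,
\[
R_{u,v,\ba}(q)=\sum_{\mathbf{u}\prec\mathbf{v}}(q-1)^{|J_\mathbf{u}^\circ|}q^{|J_\mathbf{u}^-|}.
\]
Here the sum runs over tilted distinguished subwords corresponding to $u$. Combined with \Cref{prop:r-poly-1}, this gives $R^\tilt_{u,v}(q)=R_{u,v,\ba}(q)$. As a byproduct, the recursion yields a well-defined value independent of which descent $i$ one uses in rule (3): the right-hand side is the point count of $\cT^\circ_{u,v}$, which does not depend on $\mathbf{v}$.

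The base cases are immediate. If $u=v$, take $\ba$ arbitrary with a word whose only distinguished subword is $\mathbf{v}$ itself, giving $1$. If $u\not\lesssim_\ba v$, there is no subword by \Cref{thm:subword-property}, and also $\cT^\circ_{u,v,\ba}$ has no such interpretation, so the sum is $0$.

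For rule (3), let $i\in\Des_\ba(v)$. By \Cref{cor:length-property} we may choose $\mathbf{v}=\mathbf{v}'s_i$ with $\mathbf{v}'$ an $\ba$-tilted reduced word for $vs_i$; in this case $\ba^{(\ell)}=\ba^{(\ell-1)}=\ba$. We split the distinguished subwords $\mathbf{u}\prec\mathbf{v}$ by their last factor.
\begin{itemize}
\item If $i\in\Des_\ba(u)$, distinguishedness (right multiplication by $s_i$ decreases $u^{(\ell-1)}$ in $\leq_\ba$ exactly when $i\in\Des_\ba(u^{(\ell-1)})$) forces the last factor to be $s_i$ with $u^{(\ell-1)}=us_i$. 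Since $us_i<_\ba u$, this step lies in $J^+$, and the map $\mathbf{u}'\mapsto\mathbf{u}'s_i$ is a weight-preserving bijection onto distinguished subwords of $\mathbf{v}'$ for $us_i$. This gives $R_{us_i,vs_i,\ba}$.
\item If $i\in\Asc_\ba(u)$, the last factor is either $1$, contributing a factor $(q-1)$ from $J^\circ$ on top of subwords of $\mathbf{v}'$ for $u$, or $s_i$ coming from $us_i>_\ba u$. The second case is a decreasing step from $u^{(\ell-1)}=us_i$ and contributes a factor $q$ from $J^-$.
\end{itemize}
One must check that these prefixes are exactly the distinguished subwords counted by $R_{u,vs_i,\ba}$ and $R_{us_i,vs_i,\ba}$. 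The condition $u'\sim_\ba vs_i$ holds automatically because $s_i$ with $a_i=a_{i+1}$ preserves $\sim_\ba$. Terms with $u\not\lesssim_\ba vs_i$ vanish consistently on both sides by \Cref{thm:subword-property}. This step mirrors Deodhar's classical argument.

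For rule (4), assume $\Des_\ba(v)=\emptyset$. By \Cref{cor:length-property}, no reduced word for $v$ ends in some $s_i$, so every $\ba$-tilted reduced word $\mathbf{v}$ ends in a bar. Then $v$ is $\ba$-flattenable, and by \Cref{lemma:u<flatten(a)v}, $u\lesssim_{\flatten(\ba)}v$. Removing the bar is a bijection between distinguished subwords of $\mathbf{v}$ and of $\flatten(\mathbf{v})$ (\Cref{lemma:tildef(u)}) that preserves $J^\circ$ and $J^-$. This gives $R_{u,v,\ba}=R_{u,v,\flatten(\ba)}$ by induction.

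Since $\flatten$ reduces $|\Jump_\ba|$ and rule (3) reduces $\ell^\word_\ba$, the induction terminates. The main obstacle I expect is the bookkeeping in rule (3): verifying that distinguished subwords of $\mathbf{v}'$ for $us_i$ extend validly in both subcases, and that the $J^\pm$ classification computed with respect to $\leq_{\ba}$ matches $\Des_\ba/\Asc_\ba$. This should follow from \Cref{prop:covering} together with the remark after \Cref{def:ascdes}.
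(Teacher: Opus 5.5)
Your proposal is correct and takes the same route as the paper, whose proof simply observes that the distinguished-subword sum of \Cref{prop:r-poly-1} satisfies recurrences (1)--(4) when subwords are split according to their last factor (an $s_i$ or a bar). You carry out that verification in detail, and you add the useful observation that, because the result equals $\#\cT^\circ_{u,v}(\F_q)$, the recursion is independent of which descent $i$ is chosen.
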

\begin{proof}
    This follows immediately from \Cref{prop:r-poly-1} and the recursive structure of tilted distinguished subwords.
\end{proof}

Finally, we provide an alternative formula for $R^\tilt_{u,v}(q)$ using the Hecke algebra $\cH$. See \Cref{sec:prelim-4-1} for the definition and properties of the Hecke algebra $\cH$. For a tilted word $\mathbf{w}=s_{i_1}s_{i_2}\cdots s_{i_\ell}$, define the corresponding element of $\cH$ by
\[T_\mathbf{w}:=T_{i_1}T_{i_2}\cdots T_{i_\ell}.\]

\begin{theorem}\label{thm:hecke}
    Suppose $u\lesssim_\ba v$. Let $\mathbf{u}$ and $\mathbf{v}$ be $\ba$-tilted reduced words for $u$ and $v$, respectively. Then
    \[R^\tilt_{u,v}(q) = q^{\ell(u,v)}\cdot \epsilon(T_{\mathbf{v}}^{-1} T_{\mathbf{u}}),\]
    where $\epsilon$ is the trace map.
\end{theorem}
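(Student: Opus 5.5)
I would prove the formula by induction on $\ell^\word_\ba(v)$ (together with the number of jumps $\jasize$), and show that the right-hand side obeys exactly the recursion of \Cref{prop:r-poly-2}. Set
\[
P_{\mathbf{u},\mathbf{v}}(q):=q^{\ell(u,v)}\,\epsilon(T_{\mathbf{v}}^{-1}T_{\mathbf{u}}).
\]
The first step is independence of the chosen reduced words. By the word property (\Cref{thm:word-property}), two $\ba$-tilted reduced words for the same permutation are related by braid moves and bar moves. Bars contribute no Hecke generator, so $T_{\mathbf{w}}$ is unchanged by bar moves, and braid moves preserve $T_{\mathbf{w}}$ by the braid relations in $\cH$. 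Hence $T_{\mathbf{u}}$ and $T_{\mathbf{v}}$ depend only on $u,v,\ba$. Note that $T_{\mathbf{w}}$ is generally not $T_w$, because tilted reduced words are not classically reduced.

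The base cases come next. If $u=v$, take $\mathbf{u}=\mathbf{v}$; then $\epsilon(1)=1$ and $\ell(u,u)=0$, which gives $1$. Condition (2) of \Cref{prop:r-poly-2} is vacuous here, since we assume $u\lesssim_\ba v$.

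For the recursion, suppose $i\in\Des_\ba(v)$. By \Cref{cor:length-property} we may take $\mathbf{v}=\mathbf{v}'s_i$, with $\mathbf{v}'$ an $\ba$-tilted reduced word for $vs_i$, so $T_{\mathbf{v}}^{-1}=T_i^{-1}T_{\mathbf{v}'}^{-1}$.

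If $i\in\Des_\ba(u)$, write $\mathbf{u}=\mathbf{u}'s_i$. By conjugation invariance of the trace,
\[
\epsilon(T_i^{-1}T_{\mathbf{v}'}^{-1}T_{\mathbf{u}'}T_i)=\epsilon(T_{\mathbf{v}'}^{-1}T_{\mathbf{u}'}).
\]
Since $\ell(us_i,vs_i)=\ell(u,v)$ by \Cref{cor:length-rank-function} and \Cref{thm:length-rank-function}, this yields $P_{u,v}=P_{us_i,vs_i}$.

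If $i\in\Asc_\ba(u)$, then $us_i$ has a reduced word $\mathbf{u}s_i$, so $T_{\mathbf{u}}=T_{\mathbf{u}s_i}T_i^{-1}$. Conjugating, we get
\[
\epsilon(T_{\mathbf{v}}^{-1}T_{\mathbf{u}})=\epsilon(T_{\mathbf{v}'}^{-1}T_{\mathbf{u}s_i}T_i^{-2}).
\]
Now use $T_i^{-1}=q^{-1}T_i-(1-q^{-1})$, which follows from the Hecke relation. Expanding $T_i^{-2}$ and conjugating back gives a $q^{-1}$-multiple of the $(us_i,vs_i)$ term plus a $(q^{-1}-1)$-type multiple of the $(u,vs_i)$ term. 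By the lifting property (\Cref{thm:lifting}), $u\lesssim_\ba vs_i$ and $us_i\lesssim_\ba vs_i$, so both terms are defined. Adjusting the $q$-powers using $\ell(u,vs_i)=\ell(u,v)-1$ and $\ell(us_i,vs_i)=\ell(u,v)$ should recover
\[
q\,R_{us_i,vs_i}+(q-1)\,R_{u,vs_i}.
\]
The cleaner way to organise this is probably to mimic the classical proof of \Cref{prop:hecke} from \cite{GL24}: peel off $T_i^{-1}$ from $T_{\mathbf{v}}^{-1}$ while keeping $T_{\mathbf{u}}$ fixed, and use the trace to move generators around.

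If $\Des_\ba(v)=\emptyset$, the reduced word ends with a bar. Then $v$ is $\ba$-flattenable, and by \Cref{lemma:u<flatten(a)v} so is $u$, with $u\lesssim_{\flatten(\ba)}v$. Removing the final bars (\Cref{lemma:tildef(u)}) leaves $T_{\mathbf{u}}$ and $T_{\mathbf{v}}$ unchanged and gives $\flatten(\ba)$-tilted reduced words, so $P$ matches recursion (4). Induction then identifies $P$ with $R_{u,v,\ba}=R^\tilt_{u,v}$ via \Cref{prop:r-poly-2}.

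I expect the main obstacle to be the ascent case. The exact bookkeeping of the $q$-powers, together with checking that $\mathbf{u}s_i$ (or $\mathbf{v}'$ as a word for the relevant element) really is an $\ba$-tilted reduced word, needs care. This should come from \Cref{cor:length-property} applied to $u$ with $i\in\Asc_\ba(u)$, which requires $a_i=a_{i+1}$ and so legitimately allows $s_i$ after the last bar.
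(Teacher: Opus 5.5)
Your word-independence argument, the descent case and the bar case agree with the paper. The ascent case, however, has a genuine gap. \Cref{thm:lifting} gives $us_i\lesssim_\ba v$ and $u\lesssim_\ba vs_i$. It does \textit{not} give $us_i\lesssim_\ba vs_i$, and that fails already for $\ba=(1,\dots,1)$, $u=\id$, $v=s_i$. In that situation recursion (3) of \Cref{prop:r-poly-2} involves $R_{us_i,vs_i,\ba}=0$, supplied by recursion (2). So condition (2) is not vacuous. To match it you must prove
\[
\epsilon\bigl(T_{\mathbf y}^{-1}T_{\mathbf x}\bigr)=0\qquad\text{whenever } x\sim_\ba y \text{ but } x\not\lesssim_\ba y,
\]
applied to $x=us_i$, $y=vs_i$ (note $us_i\sim_\ba vs_i$ because $a_i=a_{i+1}$).

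Classically this is automatic: $T_{\mathbf w}=T_w$, and $\epsilon(T_y^{-1}T_x)\neq 0$ forces $x\le y$. But, as you observed yourself, a tilted reduced word gives a product $T_{w_1}\cdots T_{w_{t+1}}$ over its bar-separated segments rather than a basis element. So there is no a priori reason for this trace to vanish.

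Proving this vanishing is the main content of the paper's proof, which proceeds as follows:
\begin{enumerate}
\item The induction is run over all pairs $u\sim_\ba v$, not just comparable ones.
\item By peeling off descents of $v$ and by flattening (\Cref{lemma:u<flatten(a)v}), one reduces to the case where $v$ is $\ba$-flattenable but $u$ is not.
\item One then takes a regular word $\mathbf v=\mathbf v'\mathbf x\mid$, where the segment just before the last bar is the bi-Grassmannian swap $x$ of \Cref{def:regular-word}, and writes $\mathbf u=\mathbf u'\mathbf x\mid\mathbf u^\circ$.
\item \Cref{lemma:heckeconjugate} shows that $T_xT_{u^\circ}T_x^{-1}$ is supported on $T_w$ with $w$ outside the Young subgroup. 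Hence every surviving term is a trace attached to a non-comparable pair for $\flatten(\ba)$, and it vanishes by induction.
\end{enumerate}
Your plan has neither the strengthened induction nor these ingredients (regular words and the conjugation lemma), so the induction does not close.

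Separately, the bookkeeping in your ascent case is off. There $\ell^\word_\ba(vs_i)=\ell^\word_\ba(v)-1$ and $\ell^\word_\ba(us_i)=\ell^\word_\ba(u)+1$. So the exponent attached to $(us_i,vs_i)$ is $\ell(u,v)-2$, not $\ell(u,v)$. With this correction, your (correct) identity $T_i^{-1}=q^{-1}T_i-(1-q^{-1})$ yields
\[
P_{u,v}=q\,P_{us_i,vs_i}+(1-q)\,P_{u,vs_i}.
\]
Thus it is $(-1)^{\ell(u,v)}P$, not $P$, that satisfies recursion (3). For example, $q\,\epsilon(T_i^{-1})=1-q$, whereas $R_{\id,s_i}=q-1$. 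This sign is a normalization issue already present in \Cref{prop:hecke} as quoted; the paper's own computation uses $T_i^{-1}=\frac{q-1}{q}+\frac{1}{q}T_i$, which has the wrong sign. It is not a defect of your strategy, but the $q$-power adjustment does not recover the stated recursion as you claim.
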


We begin with the following technical lemma.
\begin{lemma}\label{lemma:heckeconjugate}
    Let $x=s_{a,b}\in S_{a+b}$ be the bi-Grassmannian permutation defined in \Cref{def:bigrass}. For any permutation $w\notin S_b\times S_a$, we have
    \[T_xT_wT_x^{-1} = \sum_{w'\notin S_a\times S_b} c_{w'} T_{w'}\quad\text{where each }c_{w'}\in \Z[q,q^{-1}].\]
\end{lemma}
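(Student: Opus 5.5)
The plan is to characterize the span of $\{T_w : w\notin S_b\times S_a\}$ intrinsically as an orthogonal complement under the trace form, and then use conjugation invariance of the trace $\epsilon$. Let $J=\{s_i : i\neq b\}$ and $K=\{s_i: i\neq a\}$ in $S_{a+b}$, so $W_J=S_b\times S_a$ and $W_K=S_a\times S_b$. Let $\cH_J,\cH_K$ be the parabolic subalgebras spanned by $\{T_w\}_{w\in W_J}$ and $\{T_w\}_{w\in W_K}$. Let $M_J,M_K$ be the complementary spans of the $T_w$ with $w\notin W_J$, respectively $w\notin W_K$. The lemma is exactly the inclusion $T_xM_JT_x^{-1}\subseteq M_K$.

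\textbf{Step 1: $T_x$ conjugates $\cH_J$ onto $\cH_K$.} Check directly from the one-line notation $x=(a+1)\cdots(a+b)\,1\cdots a$ how $x$ interacts with the generators of $J$. For $j<b$ we have $x s_j=s_{j+a}x$, and $x_j<x_{j+1}$, so $\ell(xs_j)=\ell(x)+1=\ell(s_{j+a}x)$. Likewise, for $b<j<a+b$ we have $xs_j=s_{j-b}x$, again with lengths adding. Since both factorizations are length-additive, the braid relations give
\[T_xT_{s_j}=T_{xs_j}=T_{s_{j\pm\cdot}x}=T_{s_{j\pm\cdot}}T_x,\]
where $s_{j\pm\cdot}$ stands for $s_{j+a}$ or $s_{j-b}$ as appropriate. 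Hence $T_xT_{s_j}T_x^{-1}=T_{s_{j\pm\cdot}}$. The map $j\mapsto j+a$ (for $j<b$) and $j\mapsto j-b$ (for $j>b$) is a bijection from $J$ onto $K$. It follows that $T_x\cH_JT_x^{-1}=\cH_K$, or equivalently $T_x^{-1}\cH_KT_x=\cH_J$. This is the step where the bi-Grassmannian shape of $x$ is really used, and the index bookkeeping (which block goes where) is where I expect to be most careful.

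\textbf{Step 2: $M_J$ is the orthogonal complement of $\cH_J$.} Use the standard identity $\epsilon(T_wT_v)=q^{\ell(w)}\delta_{w,v^{-1}}$. It is proved by induction on $\ell(v)$ using the Hecke relation, or quoted as a classical fact. With respect to the nondegenerate $\Z[q,q^{-1}]$-bilinear form $(h,h')=\epsilon(hh')$, this identity says that $M_J=\{h\in\cH:\epsilon(hh')=0\ \text{for all } h'\in\cH_J\}$. Indeed, since $W_J$ is closed under inverses, $\epsilon(T_wT_v)=0$ whenever $w\notin W_J$ and $v\in W_J$. Conversely, an element with a nonzero $T_w$-coefficient for some $w\in W_J$ pairs nontrivially with $T_{w^{-1}}$. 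The same description holds for $M_K$.

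\textbf{Step 3: conclude.} Take $m\in M_J$, for instance $m=T_w$ with $w\notin S_b\times S_a$, and any $h'\in\cH_K$. Invariance of $\epsilon$ under conjugation and Step 1 give
\[\epsilon(T_xmT_x^{-1}h')=\epsilon\bigl(m\,T_x^{-1}h'T_x\bigr)=0,\]
since $T_x^{-1}h'T_x\in\cH_J$. By Step 2 applied to $K$, this shows $T_xmT_x^{-1}\in M_K$, which is the stated expansion with coefficients $c_{w'}\in\Z[q,q^{-1}]$.
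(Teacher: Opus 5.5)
Your proof is correct and follows the same strategy as the paper. Both detect the coefficients indexed by $S_a\times S_b$ through the trace form, move $T_x$ across using conjugation invariance of $\epsilon$, and use that conjugation by $T_x$ carries the parabolic subalgebra for $S_b\times S_a$ onto the one for $S_a\times S_b$.

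The differences are minor. You pair against the dual basis element $T_{y^{-1}}$, using $\epsilon(T_wT_v)=q^{\ell(w)}\delta_{w,v^{-1}}$, rather than against $T_y^{-1}$. This removes the need for the paper's Bruhat-minimal choice of $y$ and its implicit use that parabolic subgroups are Bruhat lower ideals. You also verify the intertwining $T_xT_{s_j}T_x^{-1}=T_{s_{j+a}}$ (resp.\ $T_{s_{j-b}}$) via length-additivity, which the paper only asserts.
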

\begin{proof}
    Suppose there exists $y\in S_a\times S_b$ such that $T_y$ appears in the expansion of $T_xT_wT_x^{-1}$. Without loss of generality, assume such a $y$ is miminal in the Bruhat order. By \Cref{prop:hecke}, it follows that $\epsilon(T_xT_wT_x^{-1}T_y^{-1}) \neq 0$. Therefore, to prove the original statement, it suffices to show that
    \[\epsilon(T_xT_wT_x^{-1}T_y^{-1}) = 0 \quad\text{for all } y\in S_a\times S_b.\]
    Since $x=s_{a,b}$ permutes the block $S_a$ and $S_b$, we have $T_yT_x = T_{y'}T_x$ for some $y'\in S_b\times S_a$. Since the trace is invariant under conjugation, we get
    \[\epsilon(T_xT_wT_x^{-1}T_y^{-1})= \epsilon(T_xT_wT_{y'}^{-1}T_x^{-1})=\epsilon(T_wT_{y'}^{-1}).\]
    However, since $w\notin S_b\times S_a$ and $y'\in S_b\times S_a$, it is impossible for $w\leq y'$, so $\epsilon(T_wT_{y'}^{-1})=0$, which completes the proof.
\end{proof}

\begin{proof}[Proof of \Cref{thm:hecke}]
    By the word property in \Cref{thm:word-property}, the elements $T_\mathbf{v}$ and $T_\mathbf{u}$ are independent of the choice of the $\ba$-tilted reduced words, so we denote them by $T_{v,\ba}$ and $T_{u,\ba}$, respectively. We will prove that the right-hand side of the theorem satisfies the same recursive relations as in \Cref{prop:r-poly-2}. More precisely, we show that for any $u\sim_\ba v$,
    \[R_{u,v,\ba}(q)=q^{\ell^\word_\ba(v)-\ell^\word_\ba(u)}\cdot\epsilon(T_{\mathbf{v},\ba}^{-1}T_{\mathbf{u},\ba}),\]
    by verifying all recurrence relations from \Cref{prop:r-poly-2}. Recurrence (1) is immediate.
    
    For recurrence (3), since $i\in \Des_\ba(v)$, there is an $\ba$-tilted reduced word $\mathbf{v}$ for $v$ ending with $s_i$, so $T_{v,\ba} = T_{vs_i,\ba}T_i$. There are two cases to consider:
    \begin{itemize}
        \item If $i\in \Des_\ba(u)$, then there is an $\ba$-tilted reduced word $\mathbf{u}$ of $u$ ending with $s_i$, so $T_{u,\ba} = T_{us_i,\ba}T_i$. Since conjugating by $T_i$ preserves the trace, we have
        \[\epsilon(T_{v,\ba}^{-1}T_{u,\ba}) = \epsilon(T_i^{-1}T_{vs_i,\ba}^{-1}T_{us_i,\ba}T_i)=\epsilon(T_{vs_i,\ba}^{-1}T_{us_i,\ba}),\]
        which proves the recurrence.
        \item If $i\in \Asc_\ba(u)$, then using the Hecke relation $T_i^{-1}=\frac{q-1}{q}+\frac{1}{q}T_i$, we obtain
        \[\epsilon(T_{v,\ba}^{-1}T_{u,\ba}) = \epsilon(T_{vs_i,\ba}^{-1}T_{u,\ba}T_i^{-1})= \frac{q-1}{q}\epsilon(T_{vs_i,\ba}^{-1}T_{u,\ba}) + \frac{1}{q}\epsilon(T_{vs_i,\ba}^{-1}T_{us_i,\ba}).\]
        The recurrence holds after multiplying both sides by the appropriate power of $q$.
    \end{itemize}
    
    For recurrence (4), if $\Des_\ba(v) = \emptyset$, then $v$ is $\ba$-flattenable. Since $u\lesssim_\ba v$, it follows from \Cref{lemma:u<flatten(a)v} that $u$ is also $\ba$-flattenable. Then by \Cref{lemma:tildef(u)}, we have $T_{v,\flatten(\ba)}=T_{v,\ba}$ and $T_{u,\flatten(\ba)}=T_{u,\ba}$, so the recurrence holds.

    It remains to prove recurrence (2), which states that if $u\sim_\ba v$ but $u\not\lesssim_\ba v$, then $\epsilon(T_{v,\ba}^{-1}T_{u,\ba})=0$. We proceed by induction on $\ell_{\ba}^\word(v)$. In the base case $\ell^\word_\ba(v)=0$, which corresponds to $\ba=(1,1,\dots,1)$ and $v=\id$, we must have $u\neq \id$, and the statement is immediate. We now apply two reductions:

    \noindent \textit{Reduction to $v$ is $\ba$-flattenable:} If $v$ is not $\ba$-flattenable, then $\Des_\ba(v)\neq\emptyset$. Take any $i\in \Des_\ba(v)$. By recurrence (4), we have:
    \[\epsilon(T_{v,\ba}^{-1}T_{u,\ba}) = \begin{cases}
        \epsilon(T_{vs_i,\ba}^{-1}T_{us_i,\ba}) & \text{if }i\in\Des_\ba(u),\\
        \frac{q-1}{q}\epsilon(T_{vs_i,\ba}^{-1}T_{u,\ba}) + \frac{1}{q}\epsilon(T_{vs_i,\ba}^{-1}T_{us_i,\ba})&\text{if }i\in\Asc_\ba(u).
    \end{cases}.
    \]
    In either case, the right-hand side vanishes by the induction hypothesis, so $\epsilon(T_{v,\ba}^{-1}T_{u,\ba})=0$.

    \noindent \textit{Reduction to $u$ is not $\ba$-flattenable:} Suppose both $u$ and $v$ are $\ba$-flattenable. Then recurrence (3) gives:
    \[\epsilon(T_{v,\ba}^{-1}T_{u,\ba}) =\epsilon(T_{v,\flatten(\ba)}^{-1}T_{u,\flatten(\ba)}).\]
    Since $u\not\lesssim_\ba v$, we have $u\not\lesssim_{\flatten(\ba)}v$ by \Cref{lemma:u<flatten(a)v}. The induction hypothesis then implies that the right-hand side is zero, so $\epsilon(T_{v,\ba}^{-1}T_{u,\ba})=0$.

    Using the two reductions above, we may assume that $v$ is $\ba$-flattenable but $u$ is not. Let $\mathbf{v} = \mathbf{v}'\mathbf{x}\mid$ be a regular $\ba$-tilted reduced word for $v$, where $x=s_{p,q}$ is the bi-Grassmannian permutation as in \Cref{def:regular-word}. Let $\mathbf{u} = \mathbf{u}'\mathbf{x}\mid\mathbf{u}^\circ$ be the regular $\ba$-tilted reduced word for $u$ constructed in \Cref{construction:regular-tiltedreducedword}, such that the first $\jmin$ entries of $u'$ are increasing under the order $<_{a_{\jmin+1}}$. Since $u$ is not $\ba$-flattenable, we have $u^\circ\notin S_q\times S_p$. By \Cref{lemma:heckeconjugate},
    \[\epsilon(T_{v,\ba}^{-1} T_{u,\ba}) =\epsilon(T_x^{-1}T_{\mathbf{v}'}^{-1} T_{\mathbf{u}'} T_x T_{u^\circ}) = \epsilon(T_{\mathbf{v}'}^{-1} T_{\mathbf{u}'} T_x T_{u^\circ}T_x^{-1}) = \sum_{w\in S_{p+q}\setminus S_p\times S_q} c_{w} \; \epsilon(T_{\mathbf{v}'}^{-1}T_{\mathbf{u}'}T_{w}).\]
    It remains to show that every term in the above sum vanishes. By construction, $\mathbf{v}'$ is a $\flatten(\ba)$-reduced word for $v'$, and $\mathbf{u}'\mathbf{w}$ is a $\flatten(\ba)$-reduced word for $u'w$. Since both $v'x$ and $u'x$ are $\ba$-flattenable, we have:
    \[
        v'[p],u'[p] \subseteq [a_{\jmin+1}, a_1)_c,\quad
        v'[p+1,p+q],u'[p+1,p+q] \subseteq [a_1, a_{\jmin+1})_c.
    \]
    However, since $w\notin S_p\times S_q$, we have $u'w[p]\not\subseteq[a_{\jmin+1}, a_1)_c$, which implies $u'w[p]\not\leq_{a_{\jmin+1}}v'[p]$, and thus $u'w\not\lesssim_{\flatten(\ba)}v'$. By the induction hypothesis,
    \[\epsilon(T_{\mathbf{v}'}^{-1}T_{\mathbf{u}'}T_{w})=\epsilon(T_{v',\flatten(\ba)}^{-1}T_{u'w,\flatten(\ba)})=0,\]
    as desired.
\end{proof}

There are several natural questions related to the tilted R-polynomials. 
\begin{conj}\label{conj:tiltedKLconj}
Let $u,v,u'v'\in S_n$ be such that the tilted Bruhat intervals $[u,v]\cong [u',v']$ are isomorphic as posets, then $R^\tilt_{u,v}(q)=R^\tilt_{u',v'}(q)$.
\end{conj}
In particular, in the case where $u\leq v$ and $u'\leq v'$ in strong Bruhat order, \Cref{conj:tiltedKLconj} reduces to the combinatorial invariance conjecture (see e.g., \cite{Dyer-hecke, combinatorial-invariance}) in type $A$. 

The next conjecture is an analogue of \cite[Corollary~3.4]{Dyer-hecke}.
\begin{conj}
For a tilted Bruhat interval $[u, v]$, define the \emph{tilted Bruhat interval graph} as the directed graph with vertex set $[u, v]$ and an edge $w \xrightarrow{e_i - e_j} wt_{ij}$ whenever $w \prec wt_{ij}$ in the poset $[u, v]$. Here, recall that $w$ and $wt_{ij}$ are always comparable by \Cref{cor:tilted-bruhat-graph}. For any reflection ordering $\gamma$ of the set of positive roots $\Phi^+$, we have
\[R^\tilt_{u,v}(q)=q^{\frac{\ell(u,v)}{2}}\sum_{P:u\to v}\left(q^{\frac{1}{2}}-q^{-\frac{1}{2}}\right)^{\ell(P)},\]
where the sum is over all directed paths $P$ from $u$ to $v$ in the tilted Bruhat interval graph whose sequence of edge labels is strictly increasing with respect to $\gamma$ and $\ell(P)$ denotes the number of edges in the path $P$.
\end{conj}

\subsection{Total Positivity of Tilted Richardson Varieties}\label{sec:total-positive}

In this section, we define the \emph{totally nonnegative parts} of tilted Richardson varieties (\Cref{def:TNNtilted}), provide an explicit parametrization (\Cref{thm:tilted-deodhar-positive}), and show that these spaces form CW-complexes (\Cref{thm:CW-tilted}). We begin with the following definition.

\begin{defin}\label{def:TNNflag}
    Let $\ba\in[n]^{n}$ be a sequence. The \emph{$\ba$-tilted totally nonnegative flag variety} is the subset $\fl_n^{\ba,\geq0}\subseteq \fl_n(\R)$ consisting of all real flags $F_\bullet$ such that the Pl\"ucker coordinates
    \[\{\Delta_{i_1,\dots,i_k}(F_\bullet):i_1<_{a_k}<i_2<_{a_k}\cdots <_{a_k}i_k\}\]
    are either all nonnegative or all nonpositive, for every $k\in [n]$.
\end{defin}
\begin{ex}
    If $n=4$ and $\ba=(4,3,2,2)$. Then a flag $F_\bullet\in\fl_n(\R)$ lies in $\fl_n^{\ba,\geq0}$ if and only if the following groups of Pl\"ucker coordinates of $F_\bullet$ satisfy:
    \begin{align*}
        \Delta_4,\Delta_1,\Delta_2,\Delta_3&\quad \text{have the same sign},\\ \Delta_{34},\Delta_{31},\Delta_{32},\Delta_{41},\Delta_{42},\Delta_{12}&\quad \text{have the same sign},\\ \Delta_{234},\Delta_{231},\Delta_{241},\Delta_{341}&\quad \text{have the same sign}.
    \end{align*}
\end{ex}

We now define the totally nonnegative parts of tilted Richardson varieties.
\begin{defin}\label{def:TNNtilted}
Suppose $u\leq_\ba v$. The \emph{totally nonnegative parts} of the tilted Richardson varieties are defined as
\[\cT_{u,v}^{> 0}:=\cT_{u,v}^\circ\cap\fl_n^{\ba,\geq0},\quad\cT_{u,v}^{\geq 0}:=\cT_{u,v}\cap\fl_n^{\ba,\geq0}.\]
This definition is independent of the choice of $\ba$.
\end{defin}
\begin{prop}
\Cref{def:TNNtilted} is independent of the choice of $\ba$ so that the totally nonnegative parts are well-defined as above.
\end{prop}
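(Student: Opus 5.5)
Since $\cT_{u,v}$ and $\cT_{u,v}^\circ$ are already known to be independent of $\ba$ (\Cref{cor:independent-a}), I need to show that for two sequences $\ba,\ba'$ with $u\leq_\ba v$ and $u\leq_{\ba'} v$,
\[\cT_{u,v}\cap\fl_n^{\ba,\geq0}=\cT_{u,v}\cap\fl_n^{\ba',\geq0}.\]
The intersection with $\cT^\circ_{u,v}$ then follows. The plan is to compare sign conditions column by column: the condition defining $\fl_n^{\ba,\geq0}$ at level $k$ involves only $a_k$ and the $k$-th Pl\"ucker vector of $F_\bullet$. So it suffices to fix $k$ and show the following. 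If $F_k\in\cR_{u[k],v[k],r}=\cR_{u[k],v[k],r'}$, where $u[k]\leq_r v[k]$ and $u[k]\leq_{r'}v[k]$ (\Cref{lemma:rotateRich}), then the $<_r$-sorted Pl\"ucker coordinates of $F_k$ all share a sign if and only if the $<_{r'}$-sorted ones do.

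Fix such $k$, $r$, $r'$. By \Cref{prop:RIJadef}, the only possibly nonzero coordinates $\Delta_K(F_k)$ are those with $K\in[u[k],v[k]]_r=[u[k],v[k]]_{r'}$. By \Cref{lemma:both-r-comparable}(2), each such $K$ satisfies $\#(K\cap[r,r')_c)=\#(u[k]\cap[r,r')_c)=:m$, a constant. Now compare writing $K$ in $<_r$-increasing order with writing it in $<_{r'}$-increasing order. The $<_r$ order lists the $m$ elements of $K\cap[r,r')_c$ first, then the $k-m$ elements of $K\cap[r',r)_c$. The $<_{r'}$ order lists these two blocks in the opposite order, each block internally in the same relative order, since both shifted orders restrict to the same cyclic order on each arc. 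So the two orderings differ by a block transposition of sign $(-1)^{m(k-m)}$, which is independent of $K$. Hence the $<_{r'}$-sorted coordinates are exactly $(-1)^{m(k-m)}$ times the $<_r$-sorted ones on the support. Off the support all coordinates vanish in both conventions, so "all $\geq0$ or all $\leq0$" is preserved.

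Running this over every $k\in[n]$ gives the proposition. The step needing the most care is the claim that the nonvanishing Pl\"ucker coordinates of a point of $\cT_{u,v}$ are confined to the common interval with constant arc count $m$. This combines \Cref{prop:RIJadef} with the interval coincidence of \Cref{lemma:both-r-comparable}, applied with the single $k$-th coordinate pair $(a_k,a'_k)$. Note that $u[k]\leq_{a_k}v[k]$ and $u[k]\leq_{a'_k}v[k]$ both hold by hypothesis, so the lemma applies directly, with no need for $\lesssim$.
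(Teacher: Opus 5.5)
Your proposal is correct and is essentially the paper's own argument. You restrict to the support $[u[k],v[k]]_{a_k}=[u[k],v[k]]_{a'_k}$ via \Cref{prop:RIJadef} and \Cref{lemma:both-r-comparable}, then note that re-sorting each $K$ from $<_{a_k}$ to $<_{a'_k}$ is a block swap with the $K$-independent sign $(-1)^{m(k-m)}$; your side remarks, that $\cT^\circ_{u,v}$ follows as a subset of $\cT_{u,v}$ and that only $\leq_\ba$ (not $\lesssim_\ba$) is needed, are also accurate.
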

\begin{proof}
    We must show that for any two sequences $\ba$ and $\ba'$ such that $u\leq_\ba v$ and $u\leq_{\ba'} v$, the following conditions are equivalent:
    \begin{align*}
    \{\Delta_{i_1,\dots,i_k}(F_\bullet):i_1<_{a_k}<i_2<_{a_k}\cdots <_{a_k}i_k\}&\quad\text{have the same sign},\\
    \{\Delta_{j_1,\dots,j_k}(F_\bullet):j_1<_{a'_k}<j_2<_{a'_k}\cdots <_{a'_k}j_k\}&\quad\text{have the same sign}.
    \end{align*}
    Since $u[k]\leq_{a_k}v[k]$ and $u[k]\leq_{a'_k}v[k]$, by \Cref{lemma:both-r-comparable}, a Pl\"ucker coordinate $\Delta_I(F_\bullet)\neq 0$ for a $k$-element subset $I\subseteq [n]$ only if
    \[\size{I\cap[a_k,a'_k)_c}=\size{u[k]\cap[a_k,a'_k)_c}=\size{v[k]\cap[a_k,a'_k)_c}=d.\]
    If we sort the elements of $I$ under both the shifted orders $<_{a_k}$ and $<_{a'_k}$, we obtain
    \[I=\{i_1<_{a_k}<i_2<_{a_k}\cdots <_{a_k}i_k\}=\{j_1<_{a'_k}<j_2<_{a'_k}\cdots <_{a'_k}j_k\}.\]
    Then, the signs of the Plücker coordinates satisfy:
    \[\Delta_{i_1,\dots,i_k}(F_\bullet)=(-1)^{d(k-d)}\Delta_{j_1,\dots,j_k}(F_\bullet)\]
    so the two sign conditions are equivalent. This completes the proof.
\end{proof}

The totally nonnegative parts $\cT_{u,v}^{> 0}$ are isomorphic to open cells, and they admit explicit parametrizations closely related to the tilted Deodhar decomposition. We state this parametrization in the following theorem.
\begin{theorem}\label{thm:tilted-deodhar-positive}
    Suppose $u\lesssim_\ba v$, and let $\mathbf{v}=s_{i_1}s_{i_2}\cdots s_{i_\ell}$ be a regular $\ba$-tilted reduced word for $v$. Let $\mathbf{u}^+$ be the unique positive distinguished subword of $\mathbf{v}$ corresponding to $u$. Then the totally nonnegative part of the tilted Richardson variety $\cT_{u,v}^{> 0}$ admits the following parametrization:
    \[\cT_{u,v}^{>0}\cong\left\{gB=g_1g_2\cdots g_\ell B\;\middle\vert\;\begin{aligned}g_j&=\dot{s}_{i_j}&&\text{if }j\in J_{\mathbf{u}^+}^+,\\ g_j&=y_{i_j}(\pm p_j),\text{ with }p_j\in\R_{>0} &&\text{if }j\in J_{\mathbf{u}^+}^\circ.\end{aligned}\right\},\]
    where the sign $\pm$ in each term $y_j(\pm p_j)$ is fixed in advance and depends only on $\mathbf{v}$. Consequently, $\cT_{u,v}^{> 0}\cong (\R_{>0})^{\ell(u,v)}$, and in particular, it is homeomorphic to an open ball.
\end{theorem}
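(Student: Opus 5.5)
The plan is to prove the parametrization by induction on $\ell^\word_\ba(v)$, in parallel with the proof of \Cref{thm:tilted-deodhar}, and to intersect that decomposition with $\fl_n^{\ba,\geq0}$. We show two things: every point of $\cT_{u,v}^{>0}$ lies in the positive cell $\cD_{\mathbf{u}^+,\mathbf{v}}$, and inside that cell the sign conditions of \Cref{def:TNNflag} cut out exactly the locus where each parameter $p_j$ has a prescribed sign $\epsilon_j\in\{\pm1\}$.

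\textbf{Fixing the signs.} We define $\epsilon_j$ recursively along $\mathbf{v}$. Each $y_{i_j}(p)$ is a classical totally positive generator, so the only source of sign changes is the relabeling of the row order. In the $\ba^{(j)}$-tilted setting, the relevant Pl\"ucker coordinates are indexed by sets sorted by $<_{a_k}$. Sorting by $<_{a_k}$ instead of the usual order introduces a factor $(-1)^{d(k-d)}$, exactly as in the proof that \Cref{def:TNNtilted} is independent of $\ba$. The $\dot s_{i_j}$ factors and the bi-Grassmannian blocks preceding each bar contribute further fixed signs. So whenever $\mathbf{v}$ passes a bar, the sign convention changes by a fixed amount depending only on $\mathbf{v}$ and the jump data, and this defines the $\epsilon_j$.

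\textbf{Inductive step.} Let $\mathbf{v}',\mathbf{u}'$ be the words obtained by deleting the last factor.
\begin{enumerate}
\item If the last factor is a bar, then $\cT^\circ_{u,v}$ is the same variety computed with $\flatten(\ba)$. Condition (1) of \Cref{def:tiltedword} together with \Cref{lemma:u<flatten(a)v} shows that the two sign systems ($\ba$ versus $\flatten(\ba)$) differ on the relevant nonzero Pl\"ucker coordinates by a uniform factor in each degree $k$. Hence $\cT^{>0}$ does not change, and the statement follows from the inductive hypothesis.
\item If the last factor is $s_i$ with $i\in \Des_\ba(u)$, the cell is $\cD_{\mathbf{u}',\mathbf{v}'}\times x_i(\C)\dot s_i^{-1}$. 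We must show that positivity forces $m=0$, that is, $J^-$ is empty on the totally nonnegative part. Following Marsh--Rietsch, we do this by exhibiting a Pl\"ucker coordinate in the $\ba$-ordering that is linear in $m$ with a nonzero constant term of opposite sign.
\item If $i\in\Asc_\ba(u)$, multiplying by $y_i(p)$ changes the $k=i$ coordinates by $\Delta_{I}\mapsto \Delta_I+p\,\Delta_{I'}$, where $I'$ replaces the $<_{a_i}$-successor. Since $a_i=a_{i+1}$, the row swap is local, so all coordinates keep a common sign exactly when $p$ has the sign $\epsilon_j$. The case $\dot s_i$ is a relabeling.
\end{enumerate}

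\textbf{Main obstacle and conclusion.} The main obstacle is case (2), together with the converse direction that every flag in the cell with correctly signed parameters is genuinely $\ba$-nonnegative in all degrees $k$, not only $k=i$. For this we will use the standard-chart containment $G_{\mathbf{u},\mathbf{v}}\subseteq G_{u,\ba}$ from the proof of \Cref{thm:tilted-deodhar}, and reduce, on each stretch between bars, to the classical statement \Cref{thm:deodhar-positive} after cyclic rotation by $\cyclic^{1-a}$. The homeomorphism $\cT_{u,v}^{>0}\cong(\R_{>0})^{|J^\circ_{\mathbf{u}^+}|}$ then follows, and $|J^\circ_{\mathbf{u}^+}|=\ell(u,v)$ by the dimension count in \Cref{thm:irreducible}.
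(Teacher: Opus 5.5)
Your skeleton matches the paper's: induction on $\ell^\word_\ba(v)$, the same two claims, and the bar step via independence of the nonnegative part from $\ba$. However, the step where $\mathbf{v}$ ends in $s_i$, which carries all the content, is missing its key ingredient.

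\textbf{Only $F_i$ moves.} Right multiplication by $y_i(p)$, $\dot s_i$ or $x_i(m)\dot s_i^{-1}$ is a column operation that changes only $F_i$. For $|I|=i$ one has $\Delta_I(g'y_i(p))=\Delta_I(g')+p\,\Delta_I(g'')$ with the \emph{same} index $I$, where $g''$ is $g'$ with column $i$ replaced by column $i+1$. Your ``$I'$ replaces the $<_{a_i}$-successor'' describes a row operation instead. So nonnegativity in degrees $k\neq i$ is inherited from $g'B$ for free, and your worry about those degrees is unfounded. Your remedy (standard charts plus rotation by $\cyclic^{1-a}$ on each stretch between bars) would not work anyway: $\ba^{(j)}$ is not constant on a stretch, and $G_{u,\ba}$ records only vanishing entries, not signs.

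\textbf{The missing lemma.} What is genuinely needed is a statement inside the single Grassmannian $\gr_{i,n}$. Suppose $U=F_{i-1}$ and $W=F_{i+1}$ are $a_i$-nonnegative; for $W$ this uses $a_{i+1}=a_i$, and for $U$ that on $\cT_{u',v'}$ the $a_{i-1}$- and $a_i$-sign conditions in degree $i-1$ agree. Then the $a_i$-nonnegative points of the circle $\P(U,W)$ form a closed arc joining the two rank-special points $V_1\neq V_2$, or the single point $V_1=V_2$, or nothing. This is \Cref{lemma:local-positive}. Its proof rotates to $r=1$, lifts $U,W$ into a totally nonnegative flag via $\gr^{\geq0}_{k,n}=\pi_k(\fl_n^{\geq0})$, and reads off the picture from \Cref{thm:deodhar} and \Cref{thm:deodhar-positive}. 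One then matches it with the tilted Deodhar fibers:
\begin{itemize}
\item For $i\in\Des_\ba(u')$, the $J^-$ fiber $\{g'x_i(m)\dot s_i^{-1}B\}$ is $\P(U,W)$ minus the unique special point $V_1=V_2=g'B$, so it contains no nonnegative point.
\item For $i\in\Asc_\ba(u')$, the special points are $g'B$ and $g'\dot s_iB$. The two open arcs are $p>0$ and $p<0$, and once $g'B$ is nonnegative exactly one arc is nonnegative.
\end{itemize}

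\textbf{Case (2).} Your substitutes do not deliver these facts. The $J^-$ option occurs for $i\in\Des_\ba(u')$, i.e.\ $i\in\Asc_\ba(u)$ with $\mathbf{u}$ ending in $s_i$, not for $i\in\Des_\ba(u)$. A single coordinate $A+mB$ takes both signs as $m$ ranges over $\R$, so a ``constant term of opposite sign'' excludes nothing. Forcing $m=0$ would not help either, since $m=0$ still lies in the $J^-$ cell.

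\textbf{Case (3).} The $\dot s_i$ step is not a relabeling. The flag $g'\dot s_iB$ has $i$-th space $F_{i-1}+\langle c_{i+1}\rangle$, with $c_{i+1}$ the $(i+1)$-st column of $g'$, and its nonnegativity must be proved. Moreover, ``common sign exactly when $p$ has sign $\epsilon_j$'' needs two things: both endpoints $g'B$ and $g'\dot s_iB$ must be nonnegative, and their Pl\"ucker supports must be incomparable. Otherwise small or large $p$ of the wrong sign stays nonnegative.

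\textbf{First claim and signs.} The closed-arc statement is also what lets you pass from $F$ nonnegative to $F'=g'B$ nonnegative. Your induction for the first claim needs this and never addresses it. Finally, your a priori sign recipe is neither justified nor needed: the sign of $p_j$ is simply which open arc is nonnegative, determined step by step in the induction.
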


\begin{ex}\label{ex:tilted-positive}
    Let $u=4231$, $v=3142$, and let $\ba=(4,4,2,2)$ be a sequence such that $u\lesssim_\ba v$. A pair consisting of a regular $\ba$-tilted reduced word for $v$ and the corresponding positive distinguished subword for $u$ is given by:
    \begin{align*}
        \mathbf{v}&=s_1s_2s_3\mid s_1s_2s_3s_2s_1\mid s_1,\\
        \mathbf{u}&=s_1s_2s_3\mid 111s_2s_1\mid 1.
    \end{align*}
    Then, the parametrization of $\cT_{u,v}^{>0}$ as described in \Cref{thm:tilted-deodhar-positive} is given by:
    \[\dot{s}_1\dot{s}_2\dot{s}_3y_1(a)y_2(b)y_3(-c)\dot{s}_2\dot{s}_1y_1(-d)=\begin{pmatrix}
        c & 0 & 0 & -1\\
        d & -1 & 0 & 0\\
        ad & -a & -1 & 0\\
        1 & 0 & -b & 0
    \end{pmatrix},\]
    where $a,b,c,d\in\R_{>0}$.
\end{ex}

To prove \Cref{thm:tilted-deodhar-positive}, we present the following lemma, which reveals the local structure of the totally nonnegative flag variety. For any $k,r\in [n]$, define the \emph{$r$-tilted totally nonnegative Grassmannian} as the subset $\gr_{k,n}^{r,\geq 0}\subseteq \gr_{k,n}(\R)$ consisting of all $k$-dimensional subspaces $V\subseteq \R^n$ such that the Pl\"ucker coordinates
\[\{\Delta_{i_1,\dots,i_k}(V):i_1<_{r}<i_2<_{r}\cdots <_{r}i_k\}\]
are all nonnegative or nonpositive.

\begin{lemma}\label{lemma:local-positive}
    Let $U\in \gr_{k-1,n}(\R)$ and $W\in \gr_{k+1,n}(\R)$ be two linear subspaces such that $U\subseteq W$. Define the set of all $k$-dimensional subspaces between $U$ and $W$ as
    \[\P(U,W):=\{V\in \gr_{k,n}(\R): U\subseteq V \subseteq W\}\subseteq \gr_{k,n}(\R).\]
    It is clear that $\P(U,W)\cong\P^1(\R)\cong S^1$. Fix $r\in [n]$. Then:
    \begin{enumerate}
        \item There exists a unique subspace $V_1\in \P(U,W)$ such that the data $\left\{\rank_{[i,r)_c}(V_1):i\in[n]\right\}$ is distinct from that of any other $V\in \P(U,W)$. Similarly, there exists a unique subspace $V_2\in \P(U,W)$ such that the data $\left\{\rank_{[r,i)_c}(V_2):i\in[n]\right\}$ is distinct from that of any other $V\in \P(U,W)$.
        \item Suppose $U\in\gr_{k-1,n}^{r,\geq0}$ and $W\in\gr_{k+1,n}^{r,\geq0}$. Then among all $V\in \P(U,W)$, the subset of points lying in $\gr_{k,n}^{r,\geq 0}$ falls into one of the four configurations illustrated below, where red and solid markers indicate membership in $\gr_{k,n}^{r,\geq 0}$:
        \[\begin{tikzpicture}[scale = 0.75]
        \draw[color=red] (0,1) arc (90:270:1);
        \draw[style=dashed] (0,-1) arc (-90:90:1);
        \node[circle, fill=red, inner sep=2pt, label=above:$V_1$] at (0,1) {};
        \node[circle, fill=red, inner sep=2pt, label=above:$V_2$] at (0,-1) {};
        \draw[style=dashed] (3,0) circle (1);
        \node[circle, draw=black, inner sep=2pt, label=above:$V_1$] at (3,1) {};
        \node[circle, draw=black, inner sep=2pt, label=above:$V_2$] at (3,-1) {};
        \draw[style=dashed] (6,0) circle (1);
        \node[circle, fill=red, inner sep=2pt, label=above:{$V_1=V_2$}] at (6,1) {};
        \draw[style=dashed] (9,0) circle (1);
        \node[circle, draw=black, inner sep=2pt, label=above:{$V_1=V_2$}] at (9,1) {};
        \end{tikzpicture}\]
    \end{enumerate}
\end{lemma}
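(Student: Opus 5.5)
Throughout, I work in coordinates adapted to $U\subseteq W$. I choose a basis $w_1,\dots,w_{k-1}$ of $U$ and extend it to a basis $w_1,\dots,w_{k+1}$ of $W$. Every $V\in\P(U,W)$ is then $V_{[s:t]}=U+\langle s\,w_k+t\,w_{k+1}\rangle$ with $[s:t]\in\P^1(\R)$. Any Pl\"ucker coordinate is linear in $[s:t]$:
\[\Delta_I(V_{[s:t]})=s\,\Delta_{I}(U+w_k)+t\,\Delta_I(U+w_{k+1}).\]
Similarly, for any row set $S$, $\rank_S(V)$ equals $\rank_S(U)$ or $\rank_S(U)+1$. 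It equals $\rank_S(U)$ exactly when the image of $s w_k+t w_{k+1}$ in the row-restricted quotient lies in the span of the images of $U$, and this is a linear condition on $[s:t]$.

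For part (1), the locus in $\P(U,W)$ where the rank drops is therefore either a single point, all of $\P(U,W)$, or empty. I would consider the nested chain of row sets $[i,r)_c$ for $i=r-1,r-2,\dots$ (cyclically), growing to $[n]$. Let $i^\ast$ be the first $i$ at which the generic rank of $V$ restricted to $[i,r)_c$ exceeds $\rank_{[i,r)_c}(U)$ while some $V$ fails to; equivalently, $i^\ast$ is where the linear condition cuts out exactly one point. Such an $i^\ast$ exists, because for $S=[n]$ every $V$ has rank $k$, while for $S=\emptyset$ all ranks are $0$, and the rank function of $W$ jumps relative to that of $U$ by total $2$ along the chain. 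I define $V_1$ to be the point cut out at $i^\ast$. Nestedness of the row sets then shows that $V_1$ is the unique point whose rank data differ, with all other $V$ sharing the generic data. Concretely, $V_1$ is the unique $V$ containing $U$ together with a vector of $W$ supported on the smallest possible cyclic interval ending at $r$. $V_2$ is constructed the same way with the intervals $[r,i)_c$. This is essentially the Schubert-cell position of the line $V/U\subseteq W/U$ with respect to the rotated flag, so I could alternatively cite the $\P^1$ case of the Bruhat decomposition after applying $\cyclic^{1-r}$.

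For part (2), I apply $\cyclic^{1-r}$ to reduce to $r=1$. This changes signs by a global factor only, so I may assume $U,W$ are totally nonnegative in the usual sense, up to a global sign. Using the three-term incidence Pl\"ucker relations \eqref{eqn:incidenceplucker1}, \eqref{eqn:incidenceplucker3} among $U$, $V$, $W$, the set of $[s:t]$ at which all $\Delta_I(V)$ share a sign is an intersection of half-lines in $\P^1(\R)$, so it is either a closed arc, a point, or empty. I would then identify the endpoints. At an endpoint some minor of $V$ vanishes maximally, and I expect the endpoints to be exactly $V_1$ and $V_2$, which are the extremal positions in the two Gale-type orders. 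This reproduces the classical fact that the totally nonnegative part of a $\P^1$ fibre in a positroid stratum is a closed arc between the two extreme positions, or a single point when $V_1=V_2$. The empty configurations arise when $V_1$ (or $V_2$) itself is not nonnegative.

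The main obstacle is showing that the arc is bounded precisely by $V_1$ and $V_2$, and not by some other vanishing locus. I would first try to show that the Pl\"ucker vector of $V_{[s:t]}$ restricted to nonzero coordinates is a convex combination $s\,P_1+t\,P_2$, where $P_i$ is the Pl\"ucker vector of $V_i$. This is plausible because $V_1,V_2$ are two points of $\P(U,W)$, and when $V_1\neq V_2$ they span the linear pencil. I would then check, via the incidence relations and nonnegativity of $U,W$, that $P_1$ and $P_2$ are each sign-coherent whenever any point of the pencil is. If $P_1,P_2$ are both nonnegative, the positive combinations give the red arc. If they are not, no point of the pencil is sign-coherent, which gives the other pictures.
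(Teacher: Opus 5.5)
Your part (1) is correct. It is a direct linear-algebra version of the paper's argument, which embeds $U\subset V\subset W$ in a flag and uses that the $\P^1$-fibre meets the Schubert cells in one point and one affine line. You should state one detail explicitly: adding a single row to $S$ raises $\rank_S(W)-\rank_S(U)$ by at most one. That is what forces a step of the chain at which exactly one point of $\P(U,W)$ is cut out.

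Part (2), however, has a genuine gap. Two things you do establish are formal and never use the hypotheses $U\in\gr^{r,\geq0}_{k-1,n}$, $W\in\gr^{r,\geq0}_{k+1,n}$. One is that the sign-coherent locus is a closed arc, a point, or empty. The other is that for $V_1\neq V_2$ the Pl\"ucker vectors along the pencil are $sP_1+tP_2$.

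The content of the lemma lies in what you defer with ``I would then check, via the incidence relations\dots''. You need that $P_1$ and $P_2$ are sign-coherent whenever some point of the pencil is. You also need the case $V_1=V_2$, which you do not treat: there $P_1,P_2$ do not span the pencil, and one must show no other point is nonnegative.

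Neither statement follows from the shape of the arc, because without the $r$-specific hypothesis the endpoints of the nonnegative arc need not be $V_1,V_2$. Take $n=4$, $k=1$, $U=0$ and $W=\langle(1,1,1,0),(0,1,1,1)\rangle$. This $W$ lies in $\gr^{1,\geq0}_{2,4}$ but not in $\gr^{2,\geq0}_{2,4}$. The nonnegative lines in $W$ are $\langle(s,s+t,s+t,t)\rangle$ with $st\geq0$, for every $r$. For $r=2$ one gets $V_1=\langle(0,1,1,1)\rangle$, but $V_2=\langle(1,0,0,-1)\rangle$ is not nonnegative. The other endpoint $\langle(1,1,1,0)\rangle$ is cut out by the interval $\{4\}$, which is neither of the form $[i,2)_c$ nor $[2,i)_c$. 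Similarly, $W=\langle(1,0,0,1),(0,1,1,0)\rangle$ with $r=3$ (where $W\notin\gr^{3,\geq0}_{2,4}$) gives $V_1=V_2=\langle(1,0,0,1)\rangle$ as an endpoint of a nondegenerate nonnegative arc. So you need an actual mechanism showing that the arc's endpoints are the special points of the intervals $[i,r)_c$ and $[r,i)_c$ rather than of other cyclic intervals. This is exactly where the sign conditions on $U$ and $W$ must enter, and your sketch provides none.

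The paper supplies this input from the totally nonnegative flag variety. It extends $U$ and $W$ to a flag whose other members are nonnegative, using $\pi_j(\fl_n^{\geq0})=\gr_{j,n}^{\geq0}$. Then $V$ is nonnegative if and only if the whole flag is. Next it writes the flag $F^1$ with $F_k=V_1$ as $g'B$ via the classical Deodhar parametrization. The fibre then consists of $g'y_k(p)B$ and $g'\dot{s}_kB$ if $V_1\neq V_2$, and of $g'x_k(m)\dot{s}_k^{-1}B$ if $V_1=V_2$. By \Cref{thm:deodhar-positive}, only the final factors $y_k(\R_{>0})$ and $\dot{s}_k$ can occur in a positive cell. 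To complete your route you would need a direct Grassmannian substitute for this positivity input.
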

\begin{proof}
    We prove the case for $r=1$. The general statement follows from this case by applying the cyclic rotation map $\cyclic^{r-1}$, defined in \Cref{sec:tilted-def-2}.

    We begin with part (1). Consider any flag $F_\bullet\in \fl_n(\R)$ such that the three subspaces $F_{k-1},F_{k},F_{k+1}$ are equal to $U,V,W$, respectively. Fix all other subspaces of $F_\bullet$, and allow $F_k=V$ to vary over $\P(U,W)$. By \Cref{prop:relative-position} (2), the Schubert stratification of $\fl_n$ cuts this family of flags into two strata: a single point $F_\bullet'$ and an affine line isomorphic to $\R$. By examining the rank conditions defining the Schubert cells, the subspace $V\in\P(U,W)$ corresponding to the unique flag $F_\bullet'$ is the special subspace $V_1$. The subspace $V_2$ can be obtained in the same way by considering the opposite Schubert stratification.

    For part (2), assume $U\in\gr_{k-1,n}^{\geq0}$ and $W\in\gr_{k+1,n}^{\geq0}$. Then there exists a flag $F_\bullet\in \fl_n(\R)$ such that $F_{k-1}=U$, $F_{k+1}=W$, and $F_i\in \gr_{i,n}^{\geq 0}$ for all $i< k$ and $i>k$. This follows from the known fact that $\gr_{k,n}^{\geq0}=\pi_k(\fl_n^{\geq 0})$, as stated in \cite[Corollary~1.2]{network-williams}. One may construct such a flag by taking a preimage of $U$ under $\pi_{k-1}:\fl_n^{\geq 0}\longrightarrow\gr_{k-1,n}^{\geq0}$, and similarly for $W$.
    
    In this setting, whether a subspace $V$ lies in $\gr_{k,n}^{\geq 0}$ reduces to whether the corresponding flag lies in $\fl_n^{\geq 0}$. Let $F^1_\bullet$ be the flag obtained by taking $F_k=V_1$. By the classical Richardson stratification, $F^1_\bullet\in \cR_{u,v}^\circ$ for some $u\leq v$. Since $V_1$ is the special point in $\P(U,W)$, it follows that $i\in\Asc(v)$. Let $\mathbf{v}$ be any reduced word for $v$. Under the classical Deodhar decomposition, the family of flags takes the form of one of the following two configurations:
    \[\begin{tikzpicture}[scale = 0.75]
        \draw (0,0) circle (1);
        \node[circle, fill=black, inner sep=2pt, label=above:{$F_\bullet^1\in\cD_{\mathbf{u},\mathbf{v}}$}] at (0,1) {};
        \node[circle, label=right:{$\cD_{\mathbf{u}1,\mathbf{v}s_i}$}] at (1,0) {};
        \node[circle, fill=black, inner sep=2pt, label=below:{$\cD_{\mathbf{u}s_i,\mathbf{v}s_i}$}] at (0,-1) {};
        \draw (5.5,0) circle (1);
        \node[circle, fill=black, inner sep=2pt, label=above:{$F_\bullet^1\in\cD_{\mathbf{u},\mathbf{v}}$}] at (5.5,1) {};
        \node[circle, label=right:{$\cD_{\mathbf{u}s_i,\mathbf{v}s_i}$}] at (6.5,0) {};
    \end{tikzpicture}\]
    The conclusion now follows from the parametrization of the Deodhar cells in \Cref{thm:deodhar} and their totally nonnegative parts in \Cref{thm:deodhar-positive}, based on the following observations:
    \begin{enumerate}
        \item If $F_\bullet\in \fl_n^{\geq 0}$, then $\mathbf{u}=\mathbf{u}^+$, and $F^1_\bullet\in \fl_n^{\geq 0}$. The first statement holds because only the maximal Deodhar cell $\cD_{\mathbf{u}^+,\mathbf{v}}$ intersects $\fl_n^{\geq 0}$. The second follows from the fact that the parametrization of $F^1_\bullet$ is obtained by removing the last factor from the positive parametrization of $F_\bullet$ in \Cref{thm:deodhar-positive}, which preserves total nonnegativity.
        \item Conversely, if $\mathbf{u}=\mathbf{u}^+$ and $F^1_\bullet\in \fl_n^{\geq 0}$, then the set of points in $\fl_n^{\geq 0}$ must follow the desired configuration. This is because, in \Cref{thm:deodhar-positive}, the only allowed final factors in the parametrization are $y_i(\R_{>0})$ or $\dot{s}_i$.
    \end{enumerate}\end{proof}

Now we are ready to prove the main theorem.

\begin{proof}[Proof of \Cref{thm:tilted-deodhar-positive}]
    Let $F_\bullet\in \cT_{u,v}^\circ(\R)$. By the tilted Deodhar decomposition in \Cref{thm:tilted-deodhar}, $F_\bullet$ lies in a unique tilted Deodhar cell $F_\bullet\in \cD_{\mathbf{u},\mathbf{v}}(\R)$. We proceed by induction on $\ell^\word_\ba(v)$ to prove the following two statements: 
    \begin{enumerate}
        \item If $F_\bullet\in \cT_{u,v}^{>0}$, then the associated subword $\mathbf{u}\prec\mathbf{v}$ must be the unique positive distinguished subword $\mathbf{u}^+$ corresponding to $u$;
        \item Moreover, if $F_\bullet\in \cD_{\mathbf{u}^+,\mathbf{v}}(\R)$, then in the parametrization given by \Cref{thm:tilted-deodhar}, $F_\bullet\in \cT_{u,v}^\circ$ if and only if each Deodhar parameter $p_j$ has a fixed predetermined sign.
    \end{enumerate}
    In the base case $\ell^\word_\ba(v)=0$, which corresponds to $\ba=(1,1,\dots,1)$ and $u=v=\id$, the statement is immediate. If $\mathbf{v}$ ends with a bar, then we may remove the final bar and work with $\flatten(\ba)$ in place of $\ba$, since $\cT_{u,v}^\circ$ does not depend on the specific choice of $\ba$. The result then follows by the induction hypothesis.
    
    The only nontrivial case occurs when $\mathbf{v}$ ends with a simple transposition $s_i$. Let $\mathbf{v}'$ and $\mathbf{u}'$ be the words obtained by removing the last factor from $\mathbf{v}$ and $\mathbf{u}$, respectively. Let $u'\in S_n$ and $v'=vs_i\in S_n$ be the associated permutations. Now consider the set of flags obtained from $F_\bullet$ by replacing the $i$-th subspace $F_i$ with another subspace in $\P(F_{i-1},F_{i+1})$. By the definition of tilted Deodhar cells in \Cref{prop:tilted-deodhar-cell}, we are in one of the local configurations depicted in \Cref{fig:local-Duv}.
    
    \begin{figure}[ht]
    \centering
        \begin{tikzpicture}[scale = 0.75]
        \node[rectangle, draw=black] at (0,2.5) {(a) If $i\in\Asc_\ba(u')$};
        \draw (0,0) circle (1);
        \node[circle, fill=black, inner sep=2pt, label=above:{$F'_\bullet\in\cD_{\mathbf{u}',\mathbf{v}'}$}] at (0,1) {};
        \node[circle, draw=blue, inner sep=2pt, label=left:{$F_\bullet\in \cD_{\mathbf{u}'1,\mathbf{v}'s_i}$}] at (-1,0) {};
        \node[circle, fill=black, inner sep=2pt, label=below:{$F_\bullet\in\cD_{\mathbf{u}'s_i,\mathbf{v}'s_i}$}] at (0,-1) {};
        \node[rectangle, draw=black] at (7,2.5) {(b) If $i\in\Des_\ba(u')$};
        \draw (7,0) circle (1);
        \node[circle, fill=black, inner sep=2pt, label=above:{$F'_\bullet\in\cD_{\mathbf{u}',\mathbf{v}'}$}] at (7,1) {};
        \node[circle, draw=blue, inner sep=2pt, label=left:{$F_\bullet\in \cD_{\mathbf{u}'s_i,\mathbf{v}'s_i}$}] at (6,0) {};
        \end{tikzpicture}
    \caption{Local configurations of flags in the tilted Deodhar decomposition obtained by replacing $F_i$ in $F_\bullet$ with another subspace in $\P(F_{i-1},F_{i+1})$}
    \label{fig:local-Duv}
    \end{figure}
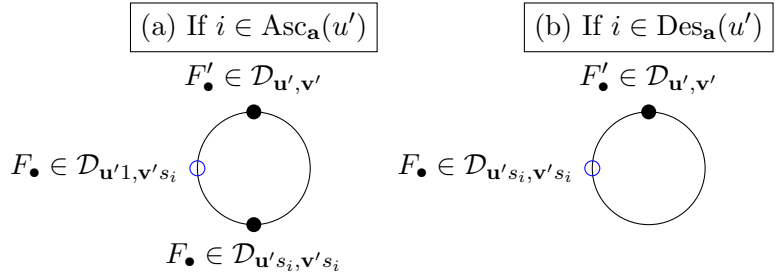
    We compare these configurations to those described in \Cref{lemma:local-positive}. The black bullet points in \Cref{fig:local-Duv} correspond to the special points $V_1$ and $V_2$ from \Cref{lemma:local-positive}, as determined by the rank conditions in \Cref{def:main}. Case (a) corresponds to the case $V_1\neq V_2$, while case (b) corresponds to $V_1=V_2$.
    
    If $F_\bullet\in \cT_{u,v}^{>0}$, then by \Cref{lemma:local-positive}, we must be in case (a), and $F'_\bullet\in \cT_{u',v'}^{>0}$. By the induction hypothesis, $\mathbf{u}'\prec\mathbf{v}'$ must be the positive distinguished subword, so $\mathbf{u}\prec\mathbf{v}$ must also be the positive distinguished subword. This proves part (1).

    For part (2), let $g'\in G$ be the matrix representative of $F'_\bullet$ from the parametrization in \Cref{thm:tilted-deodhar}. Then the parametrization of the flags in case (a) of \Cref{fig:local-Duv} is given by:
    \[
        \begin{tikzpicture}[scale = 0.75]
        \draw (0,0) circle (1);
        \node[circle, fill=black, inner sep=2pt, label=above:$g'$] at (0,1) {};
        \node[circle, label=left:{$g'\cdot y(\pm\R_{>0})$}] at (-1,0) {};
        \node[circle, label=right:$g'\cdot y(\mp\R_{>0})$] at (1,0) {};
        \node[circle, fill=black, inner sep=2pt, label=below:$g'\cdot \dot{s}_i$] at (0,-1) {};
        \end{tikzpicture}\]
    If $F'_\bullet\in \cT_{u',v'}^\circ$, then by \Cref{lemma:local-positive}, we have $F_\bullet \in \cT_{u',v'}^\circ$ if and only if it is represented by $g'\cdot y(\pm\R_{>0})$ (with a predetermined sign for the parameter) or by $g'\cdot \dot{s}_i$. Part (2) then follows from the induction hypothesis.
\end{proof}

\begin{construction}
    We present an algorithm to determine the sign of each Deodhar parameter $p_j$ in the parametrization given in \Cref{thm:tilted-deodhar-positive}. To do this, we iteratively construct a sequence of sign vectors $\sign^{(j)}\in\{+,-\}^{n}$ as follows. For each $j$, the $k$-th entry of $\sign^{(j)}$ records whether the following two collections of Pl\"ucker coordinates in the prefix $g^{(j)}:=g_1\cdots g_j$ have the same sign or opposite signs:
    \[
        \left\{\Delta_{i_1,\dots,i_{k-1}}(g^{(j)}):i_1<_{a^{(j)}_k}\cdots <_{a^{(j)}_k} i_{k-1}\right\}\quad\text{and}\quad \left\{\Delta_{i_1,\dots,i_{k}}(g^{(j)}):i_1<_{a^{(j)}_k}\cdots <_{a^{(j)}_k} i_{k}\right\}.
    \]
    Start with $\sign^{(0)}=(+,\cdots,+)$ and update $\sign^{(j)}$ for $j=1,\ldots,\ell$ according to the cases:
     \begin{enumerate}
         \item If $j\in J_{\mathbf{u}^+}^+$, set $\sign^{(j)}=\sign^{(j-1)}$. The sign of the Deodhar parameter $p_j$ in $g_j=y_{i_j}(\pm p_j)$ is determined by the product of the $i_j$-th and $(i_j+1)$-th entry of $\sign^{(j)}$.
         \item If $j\in J_{\mathbf{u}^+}^\circ$, obtain $\sign^{(j)}$ from $\sign^{(j-1)}$ by swapping the $i_j$-th and $(i_j+1)$-th entries.
         \item If $j\in J_{\mathbf{u}^+}^\mid$, let $q:=\jmin(\ba^{(j)})$ and let $p:=\size{v[q]\cap[\ba^{(j)}_1,\ba^{(j)}_q)_c}$ be the integer appearing in \Cref{def:a-flattenable} for the $\ba^{(j)}$-flattenable permutation $v^{(j)}$. Then obtain $\sign^{(j)}$ from $\sign^{(j-1)}$ by multiplying all entries in positions $[p+1,q]$ by $(-1)^p$.
     \end{enumerate}
    We illustrate this algorithm using the running example from \Cref{ex:tilted-positive}. In this case, we obtain the following sequence of sign vectors. The circled entries indicate the positions used to determine the signs of the Deodhar parameters $a,b,c,d$.
    \begin{align*}
        \sign^{(0)}&=(+,+,+,+) & \sign^{(4)}&=(+,+,+,-) & \sign^{(8)}&=(+,+,+,-)\\
        \sign^{(1)}&=(+,+,+,+) & \sign^{(5)}&=(\tikz[baseline]{\node[draw=blue,anchor=base,ellipse,inner xsep=-1pt,inner ysep=0pt]{$+,+$}},+,-) & \sign^{(9)}&=(+,+,+,-)\\
        \sign^{(2)}&=(+,+,+,+) & \sign^{(6)}&=(+,\tikz[baseline]{\node[draw=blue,anchor=base,ellipse,inner xsep=-1pt,inner ysep=0pt]{$+,+$}},-) & \sign^{(10)}&=(+,-,+,-)\\
        \sign^{(3)}&=(+,+,+,+) & \sign^{(7)}&=(+,+,\tikz[baseline]{\node[draw=blue,anchor=base,ellipse,inner xsep=-1pt,inner ysep=0pt]{$+,-$}}) & \sign^{(11)}&=(\tikz[baseline]{\node[draw=blue,anchor=base,ellipse,inner xsep=-1pt,inner ysep=0pt]{$+,-$}},+,-)
    \end{align*}
\end{construction}

Finally, we present the following lemma, which plays a crucial role in showing that the collection of cells $\cT_{u,v}^{>0}$ forms a CW-complex.

\begin{lemma}\label{lemma:positive-coeff}
    Let $\rho_{\mathbf{u}^+,\mathbf{v}}:(\R_{>0})^{\ell(u,v)}\xrightarrow{\sim}\cT_{u,v}^{> 0}$ be the parametrization from \Cref{thm:tilted-deodhar-positive}, defined in terms of the Deodhar parameters $p_j\in\R^{>0}$. Then any Pl\"ucker coordinate $\Delta_I$ of a matrix in the image of $\rho_{\mathbf{u}^+,\mathbf{v}}$ is a polynomial in the parameters $p_j$, with either all nonnegative or all nonpositive coefficients.
\end{lemma}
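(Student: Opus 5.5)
The plan is to compute Plücker coordinates of the product $g=g_1g_2\cdots g_\ell$ directly, via a Cauchy--Binet expansion along the factorization, and to track signs factor by factor. Write $\Delta_I(g)$ for the minor of $g$ in rows $I$ and the first $k$ columns. Cauchy--Binet expresses this minor as a sum, over chains of $k$-subsets $I=I_0,I_1,\dots,I_\ell=[k]$, of products $\prod_j \Delta_{I_{j-1},I_j}(g_j)$ of minors of the individual factors. Each factor $g_j$ is either $\dot s_{i_j}$ or $y_{i_j}(\pm p_j)$, and these are near-identity $2\times 2$ block matrices. So every minor of a single factor is one of $0$, $\pm1$, or $\pm p_j$. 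Hence $\Delta_I(g)$ is a polynomial in the $p_j$ with integer coefficients, and each monomial $\prod_{j\in S}p_j$ appears with coefficient equal to a signed count of chains.

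The main step is to show that all chains contributing to a nonzero term of a fixed $\Delta_I$ carry the same sign. I would organize this by induction on $\ell=\ell^\word_\ba(v)$, following the structure already used in the proof of \Cref{thm:tilted-deodhar-positive} and the sign-vector construction just given.

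\begin{enumerate}
\item \textbf{Inductive hypothesis.} For the prefix $g^{(j)}=g_1\cdots g_j$, every Plücker coordinate $\Delta_{i_1,\dots,i_k}(g^{(j)})$, with indices listed in $<_{a^{(j)}_k}$-increasing order, is a polynomial whose coefficients all have the sign $\epsilon^{(j)}_k$. This sign depends only on $k$ and $j$ and is recorded by the sign vector $\sign^{(j)}$.
\item \textbf{Bar step.} Passing a bar changes $\ba^{(j)}$ to $\flatten(\ba^{(j)})$. Reordering the indices from $<_{a_1}$-order to $<_{a_{\jmin+1}}$-order multiplies by a uniform sign $(-1)^{d(k-d)}$; this is exactly the computation in the proof that \Cref{def:TNNtilted} is independent of $\ba$. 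The hypothesis that $d$ is the same for every nonvanishing $I$ comes from \Cref{lemma:both-r-comparable} together with regularity, via \Cref{lemma:regular-subword}. So the uniform-sign property persists.
\item \textbf{Factor $\dot s_i$.} Right multiplication by $\dot s_i$ acts on $\Delta_I$ for $|I|=i$ by a single column swap with a sign. It merely permutes coordinates up to a sign depending only on $k$, so uniformity is preserved.
\item \textbf{Factor $y_i(\pm p_j)$.} Right multiplication by $y_i(\pm p_j)$ changes only the $k=i$ coordinates:
\[\Delta_I(g'y_i(c))=\Delta_I(g')+c\,\Delta_{I\setminus\{\text{top}\}\cup\cdots}(g'),\]
that is, $\Delta_I(g')$ plus $c$ times a coordinate of $g'$ in which the $i$-th column is replaced by the $(i+1)$-th. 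The replaced-column minor can be rewritten using coordinates of $F_{i-1}$ and $F_{i+1}$ of $g'$, which by induction have uniform signs. The sign in $y_i(\pm p_j)$ was chosen in \Cref{thm:tilted-deodhar-positive} precisely so that the two summands have the same sign whenever both are nonzero. Concretely, I would use the expansion
\[\Delta_I(g'y_i(c)) = \Delta_I(g') + c\,\Delta_{(I\cap F\text{-rows})}\ \text{minor built from } g'_{\text{col }1..i-1,\,i+1},\]
whose sign can be read off from the configurations in \Cref{lemma:local-positive}.
\end{enumerate}

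The step I expect to be the main obstacle is the $y_i$ step. Showing that the added term has coefficients of the correct sign requires knowing that the minor with column $i+1$ in place of column $i$ itself has uniform-sign coefficients, but that minor is not directly a Plücker coordinate of a prefix flag. To handle this, I would observe that $g'\dot s_i$ is again a prefix product of the same kind, namely the parametrization of the cell $\cD_{\mathbf{u}'s_i,\mathbf{v}}$. That cell lies in the boundary stratum $\cT^{>0}_{us_i,v}$, so the induction hypothesis applies to it, and its $i$-th Plücker coordinates are exactly these replaced-column minors, up to a global sign. Matching that global sign against the chosen $\pm$ then reduces to the bookkeeping already encoded in the $\sign^{(j)}$ updates. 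This is the part of the argument I would check most carefully.
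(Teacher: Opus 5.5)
There is a genuine gap. It is the one you flagged, but it already enters at your $\dot s_i$ step, which you treat as harmless. Right multiplication by $\dot s_i$ leaves $\Delta_I$ unchanged for $|I|\neq i$. For $|I|=i$, however, it gives $\Delta_I(g\dot s_i)=\det g_{I,\{1,\dots,i-1,i+1\}}$. This is a Pl\"ucker coordinate of a \emph{different} point of the line $\P(F_{i-1},F_{i+1})$, not a permuted Pl\"ucker coordinate of the flag $gB$.

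So the $\dot s_i$ step produces exactly the replaced-column minors that obstruct your $y_i$ step. Your inductive hypothesis only controls Pl\"ucker coordinates of the prefix flags $g^{(j)}B$, so it says nothing about whether these minors are sign-uniform polynomials. Your proposed fix is therefore circular: $g'\dot s_i$ has the same word length as $g'y_i(\pm p_j)$. The only way your induction reaches the cell $\cD_{\mathbf{u}'s_i,\mathbf{v}}$ is through the $\dot s_i$ step, whose new $i$-th coordinates are the very minors in question. Nor can these minors be rewritten in terms of the coordinates of $F_{i-1}$ and $F_{i+1}$, since a point of $\P(F_{i-1},F_{i+1})$ is not determined by its endpoints.

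(A minor point: in the bar step, the constancy of $d$ comes from having both $u^{(j)}\lesssim_{\ba^{(j)}}v^{(j)}$ and $u^{(j)}\lesssim_{\flatten(\ba^{(j)})}v^{(j)}$, by \Cref{lemma:u<flatten(a)v} and \Cref{lemma:both-r-comparable}. Regularity plays no role there.)

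The missing idea is that no coefficient-level induction is needed. You already observed that each $p_j$ enters only through $g_j$, whose minors are $0,\pm1,\pm p_j$, so $\Delta_I$ is multilinear in the $p_j$. By \Cref{thm:tilted-deodhar-positive}, the image of $\rho_{\mathbf{u}^+,\mathbf{v}}$ lies in $\cT^{>0}_{u,v}$, so $\Delta_I$ has a constant weak sign on $(\R_{>0})^{\ell(u,v)}$. The sign is constant because $\Delta_{u[k]}$ never vanishes on $\cT^\circ_{u,v}$ and the orthant is connected.

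For a multilinear polynomial, this forces every coefficient to have that sign. Suppose the monomial $\prod_{j\in S}p_j$ had a coefficient of the wrong sign. Set $p_j=t$ for $j\in S$ and $p_j=t^{-1}$ for $j\notin S$. Every other monomial is then $O(t^{|S|-1})$, so this one dominates as $t\to\infty$ and gives a value of the wrong sign. That is the paper's proof.

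You could rescue your framework by applying the same device at each $\dot s_i$ step: pointwise nonnegativity of $g'\dot s_iB$ also comes from \Cref{thm:tilted-deodhar-positive}, applied to the pair $(u^{(j-1)}s_{i_j},v^{(j)})$. But then the induction does no work.
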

\begin{proof}
    By the Cauchy--Binet formula, the Pl\"ucker coordinate $\Delta_I$ is a polynomial in the Deodhar parameters $p_j$, with each $p_j$ appearing with degree at most one. Since $\rho_{\mathbf{u}^+, \mathbf{v}}$ maps $(\R_{>0})^{\ell(u,v)}$ into $\cT_{u,v}^{>0}$ by \Cref{thm:tilted-deodhar-positive}, it follows that $\Delta_I$ evaluates to either a nonnegative or a nonpositive real number for all positive inputs. Without loss of generality, we assume the sign is nonnegative.
    
    Suppose, for contradiction, that some monomial in $\Delta_I$ has a negative coefficient. By letting the variables appearing in that monomial tend to infinity, and the others tend to zero, the value of $\Delta_I$ becomes negative, contradicting its nonnegativity. 
\end{proof}

As a direct consequence of \Cref{lemma:positive-coeff}, we obtain the following theorem.

\begin{theorem}\label{thm:CW-tilted}
    For any $u,v\in S_n$, the totally nonnegative part of the tilted Richardson variety $\cT_{u,v}^{\geq 0}$, stratified as
    \[\cT_{u,v}^{\geq0}=\bigsqcup_{[x,y]\subseteq[u,v]}\cT_{x,y}^{>0},\]
    forms a CW-complex.
\end{theorem}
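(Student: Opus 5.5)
We will follow the strategy of Rietsch--Williams \cite{rietsch-williams} for the classical case, with \Cref{lemma:positive-coeff} as the key input. By \Cref{thm:Tunion}, $\cT_{u,v}^{\geq 0}$ is the disjoint union of the sets $\cT_{x,y}^{\geq0}\cap\cT^\circ_{x,y}=\cT_{x,y}^{>0}$ over subintervals $[x,y]\subseteq[u,v]$. Here we use that for $[x,y]\subseteq[u,v]$ one can pick a common $\ba$ with $u\leq_\ba v$ and $x\leq_\ba y$ by \Cref{cor:xy}, so that the sign conditions defining $\fl_n^{\ba,\geq0}$ agree. By \Cref{thm:tilted-deodhar-positive}, each $\cT_{x,y}^{>0}$ is homeomorphic to $(\R_{>0})^{\ell(x,y)}$, an open ball. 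Also $\cT_{u,v}^{\geq0}$ is compact, being a closed semialgebraic subset of the compact space $\fl_n(\R)$. What remains is the CW condition: for each cell we need a continuous characteristic map from a closed ball that restricts to the given homeomorphism on the interior and sends the boundary sphere into the union of lower-dimensional cells.

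To build the characteristic map, fix $[x,y]$, an $\ba$ with $x\lesssim_\ba y$ (\Cref{thm:lesssim-exist}), and a regular tilted reduced word, and let $\rho=\rho_{\mathbf{x}^+,\mathbf{y}}$ be the parametrization. Following \cite{rietsch-williams}, we compose $\rho$ with the map from the open simplex (or cube) given by $p_j\mapsto p_j/(1-\sum p_i)$ after a suitable rescaling, or more simply work in Plücker coordinates. By \Cref{lemma:positive-coeff}, every Plücker coordinate of $\rho(p)$ is, up to a global sign, a polynomial in the $p_j$ with nonnegative coefficients and multilinear in each $p_j$. Consider the multihomogenized point: for each $k$ the vector $(\Delta_I(\rho(p)))_{|I|=k}$ lies in projective space, and all coordinates have a common sign. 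Normalizing each degree-$k$ block by the sum of absolute values gives a map from $(\R_{>0})^{\ell}$ to a product of simplices. Via $p_j=t_j/(1-t_j)$ this extends continuously to the closed cube $[0,1]^{\ell}$. The point is that a nonnegative-coefficient polynomial, after normalization, has a well-defined limit along any face of the cube, because the leading monomials in the limit have nonnegative coefficients and cannot cancel. The extension is continuous, its image lies in $\fl_n(\R)$ because the flag variety is closed, and it satisfies the sign conditions. Hence the image lies in the closure of $\cT_{x,y}^{>0}$, which is contained in $\cT_{x,y}^{\geq0}$.

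It remains to show that the boundary of the cube maps into $\bigsqcup_{[x',y']\subsetneq[x,y]}\cT^{>0}_{x',y'}$. Since $\cT_{x,y}$ is closed, the limit points lie in $\cT_{x,y}\cap\fl_n^{\ba,\geq0}=\cT^{\geq0}_{x,y}$. By \Cref{thm:Tunion}, such a point lies in some $\cT^{>0}_{x',y'}$. Because $\rho$ is a homeomorphism onto the open cell and the boundary points are limits escaping every compact set of the open cell, they cannot land in $\cT^{>0}_{x,y}$ itself. This needs care: one must check that a boundary limit can never return into the open cell. We plan to argue this using the properness of the homeomorphism $\rho:(\R_{>0})^\ell\to\cT^{>0}_{x,y}$ together with the fact that $\cT^{>0}_{x,y}=\cT^{\geq0}_{x,y}\cap\{\Delta_x\Delta_y\neq0\}$ is open in $\cT^{\geq0}_{x,y}$. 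The dimension bound $\ell(x',y')<\ell(x,y)$ then follows from $\ell(x',y')\le\ell(x,y)$ and $[x',y']\neq[x,y]$, using \Cref{def:tilted-bruhat-interval}.

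We expect the main obstacle to be the continuity of the extension to the closed cube, together with the claim that the boundary misses the open cell. Our first attempt at the former is the nonnegative-coefficient argument of \Cref{lemma:positive-coeff}: along a boundary face, dominant monomials cannot cancel. If the boundary-avoidance argument proves delicate, we would fall back on showing that a limit point has some Plücker coordinate $\Delta_{x[k]}$ or $\Delta_{y[k]}$ tending to zero relative to the block normalization, via the explicit unipotent/Deodhar form of $\rho$. Regularity of the CW complex is not claimed.
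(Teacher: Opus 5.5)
Your outline matches the paper, which proves this by invoking \cite[Section~5]{rietsch-williams} with \Cref{lemma:positive-coeff} as the input. Several of your steps are fine:
\begin{itemize}
\item the stratification via \Cref{thm:Tunion} and \Cref{cor:xy};
\item the open cells from \Cref{thm:tilted-deodhar-positive};
\item your properness argument that a boundary point cannot map into the open cell;
\item the dimension count for proper subintervals.
\end{itemize}

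The gap is the step you flag yourself. The blockwise-normalized chart does not extend continuously to the closed cube under $p_j=t_j/(1-t_j)$, and your justification does not give it. Nonnegative coefficients only prevent cancellation, so limiting block vectors are nonzero. They do not make limits path-independent at corners where some parameters tend to $0$ and others to $\infty$.

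The classical cell $\cR^{>0}_{123,231}$ with $\mathbf{v}=s_1s_2$ is already a counterexample. Its chart is $y_1(p_1)y_2(p_2)B$, with first block $[1:p_1:0]$ and second block $[\Delta_{12}:\Delta_{13}:\Delta_{23}]=[1:p_2:p_1p_2]$. Approaching the corner $t=(1,0)$ along $p_2=c/p_1$ gives the limit $[1:0:c]$, which depends on $c>0$. So that corner would have to cover the whole closed edge joining $e_{213}$ and $e_{231}$, while the rest of the face $\{t_1=1\}$ collapses to the single point $e_{231}$. Simplex or radial compactifications fail in the same way, e.g.\ for the commuting factors $y_1(p_1)y_3(p_3)$ in $\fl_4$. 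The coordinatewise compactification of the Deodhar chart is therefore the wrong closed ball. Your fallback (some $\Delta_{x[k]}$ or $\Delta_{y[k]}$ tending to zero) addresses only boundary avoidance, not continuity.

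One way to repair this, which is where \Cref{lemma:positive-coeff} genuinely enters, is a toric compactification.
\begin{enumerate}
\item After fixing a sign in each block, let $\mathcal{S}_k$ be the set of exponent vectors of monomials occurring in the $k$-th block.
\item Factor the chart as $\rho=L\circ\mu$. Here $\mu=(\mu_k)_k$ with $\mu_k(p)=[p^s]_{s\in\mathcal{S}_k}$, and $L=(L_k)_k$ is the linear map given by the coefficients.
\item All coefficients are nonnegative, and each $s\in\mathcal{S}_k$ occurs with a positive coefficient in some coordinate. Hence $L$ is defined and continuous on the entire nonnegative part of the closure of $\mu(\R_{>0}^{\ell})$.
\item Via the Segre embedding and the algebraic moment map, that nonnegative toric variety is homeomorphic to the polytope $Q=\sum_k\mathrm{conv}(\mathcal{S}_k)$, with $\mu(\R_{>0}^{\ell})$ corresponding to $\mathrm{int}(Q)$.
\item $\dim Q=\ell(x,y)$, because $\rho$ is injective and hence so is $\mu$.
\end{enumerate}
The resulting map $Q\to\cT^{\geq0}_{x,y}$ is continuous and restricts to a homeomorphism of $\mathrm{int}(Q)$ onto $\cT^{>0}_{x,y}$. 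Your properness argument then sends $\partial Q$ into the lower-dimensional cells $\cT^{>0}_{x',y'}$ with $[x',y']\subsetneq[x,y]$.
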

\begin{proof}
    The proof is identical to the proof given in \cite[Section~5]{rietsch-williams}, with \Cref{lemma:positive-coeff} serving as a key ingredient in the tilted setting.
\end{proof}

There are several interesting directions in the line of research of total positivity. 
\begin{conj}\label{conj:positive_part_closure}
$\overline{\cT_{u,v}^{>0}}=\cT_{u,v}^{\geq 0}$.
\end{conj}
\Cref{conj:positive_part_closure} would imply that the face poset of the CW-complex in \Cref{thm:CW-tilted} is given by the interval poset of the tilted Bruhat interval $[u,v]$. We note, however, that the strategy used in \cite{rietsch2} for the classical case does not apply here, due to the absence of a unique maximal element $w_0$ in the tilted Bruhat orders.
\begin{conj}
The CW-complex in \Cref{thm:CW-tilted} is regular.
\end{conj}

%% file: tex/7-curve-neighborhood.tex
\section{Connections to Quantum Schubert Calculus}\label{sec:curve-neighborhood-sec}

In this section, we connect tilted Richardson varieties with quantum Schubert calculus. Specifically, we show that tilted Richardson varieties $\cT_{u,v}$ are equivalent to the two-point curve neighborhoods $\Gamma_{d_{u,v}}(\Omega_u, X_v)$ in minimal degree (\Cref{thm:Gamma=T}), providing an explicit geometric description of a new class of curve neighborhoods (\Cref{thm:cohomology}). As a consequence, we derive an explicit formula for the cohomology classes $[\cT_{u,v}]$ in terms of Gromov--Witten invariants, and use this to prove a quantum analogue of the descent-cycling formula (\Cref{thm:descent-cycling}).

\subsection{Equivalence to Minimal-Degree Curve Neighborhoods}\label{sec:curve-neighborhood}
For $i\in \{1,2,3\}$, let $\ev_i: \overline{M}_{0,3}(\fl_n,d)\rightarrow \fl_n$ be the evaluation maps as defined in \Cref{sec:prelim-3-3}. Recall from \Cref{sec:prelim-3-4} that the two-point curve neighborhood of degree $d$ is defined as 
\[\Gamma_d(\Omega_u,X_v) = \ev_3(\ev_1^{-1}(\Omega_u)\cap \ev_2^{-1}(X_v)).\]
The curve neighborhoods are the ``quantum‘’ generalizations of Richardson varieties in the context of Schubert calculus. Understanding the expansions of the cohomology classes $[\Gamma_d(\Omega_u,X_v)]$ into Schubert classes would lead to a solution to the quantum Schubert calculus problem (\Cref{prop:Gamma-nice}). However, unlike Richardson varieties, very little in the way of explicit descriptions of $\Gamma_d(\Omega_u,X_v)$ was known \cite{LiMihalcea,BM15}. Here we provide a complete answer for the minimal-degree case $d = d_{u,v}$.



We begin by introducing some definition used in the proof. The \emph{Bruhat graph} is an undirected, weighted graph on the vertex set $S_n$, where an edge connects $w$ and $w t_{ij}$ for each transposition $t_{ij} \in S_n$. The edge is assigned weight $q_{ij} := q_i q_{i+1} \cdots q_{j-1}$.

The moduli space $\overline{M}_{0,3}(\fl_n, d)$ admits a natural $T$-action. The $T$-fixed points of this moduli space can be described in terms of the Bruhat graph by the following lemma (see \cite[Lemma~4.2]{FW}):

\begin{lemma}\label{lemma:t-fixed-tree}
For any distinct $w, w' \in S_n$, there exists a $T$-invariant curve $C \subseteq \fl_n$ containing both $e_w$ and $e_{w'}$ if and only if $w$ and $w'$ are adjacent in the Bruhat graph. Moreover, in this case, the curve $C$ is unique and has degree equal to the weight of the edge $(w, w')$.

Consequently, the $T$-fixed points of $\overline{M}_{0,3}(\fl_n, d)$ correspond to collections of edges (with multiplicities) in the Bruhat graph whose total weight equals $q^d$, and which together form a connected tree.
\end{lemma}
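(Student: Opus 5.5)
Since this statement is quoted from \cite[Lemma~4.2]{FW}, I would give a short self-contained argument via the standard GKM description of the torus action on $\fl_n$, rather than re-derive the moduli theory. First, the $T$-fixed points of $\fl_n$ are exactly the coordinate flags $e_w$, $w\in S_n$. Every irreducible $T$-invariant curve $C\subseteq\fl_n$ is the closure of a one-dimensional $T$-orbit, so it contains exactly two $T$-fixed points. I would therefore classify the $T$-curves through a fixed point $e_w$ by their tangent directions. In the affine chart $w\Omega_\id^\circ$ around $e_w$ (the chart with $1$'s at $(w_k,k)$, as in \Cref{ex:chart}), $T$ acts linearly on the free entries. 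The entry in row $w_i$, column $j$ with $i<j$ has weight $e_{w_i}-e_{w_j}$. These $\binom{n}{2}$ weights are pairwise non-proportional, since they are distinct roots $e_a-e_b$ with no root appearing together with its negative. Hence every one-dimensional $T$-orbit closure through $e_w$ lies in a single coordinate axis of the chart. So there are exactly $\binom{n}{2}$ $T$-curves through $e_w$, one for each pair $i<j$. This multiplicity-one / non-proportionality step is the one I expect to carry the weight of the argument: it is what rules out any other $T$-invariant curves and gives uniqueness.

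Next I would identify each such axis explicitly. Taking the free entry in position $(w_i,j)$ to be $t$ and all other free entries to be $0$ gives the curve $C_{w,ij}=w\cdot\overline{U_{-(e_i-e_j)}}\,e_\id$. It is a $\P^1$ whose second fixed point, reached as $t\to\infty$ after column reduction, is $e_{wt_{ij}}$. This proves the ``if and only if'' statement and the uniqueness.

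For the degree, I would compute the intersection of $C_{w,ij}$ with the Schubert divisors, i.e.\ the degree of the pullback of $\mathcal O(1)$ under $\pi_k$ followed by the Plücker embedding of $\gr_{k,n}$. Along $C_{w,ij}$, the only Plücker coordinates of $F_k$ that are nonzero are $\Delta_{w[k]}$, which is constant, and $\Delta_{w[k]\setminus\{w_i\}\cup\{w_j\}}$, which equals $\pm t$; the latter occurs exactly when $i\le k<j$. Thus $\pi_k(C_{w,ij})$ is a line when $i\le k<j$ and a point otherwise. So the class of $C_{w,ij}$ is $\sum_{k=i}^{j-1}[X_{s_k}]$, which is the weight $q_{ij}$.

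For the consequence about fixed points of $\overline{M}_{0,3}(\fl_n,d)$, let $f:C\to\fl_n$ be a $T$-fixed stable map. Then $f(C)$ is a connected $T$-invariant curve, hence a union of $T$-curves $C_{w,ij}$. Each component of $C$ is either contracted to some $e_w$ (the marked points and nodes must sit over fixed points) or maps onto some $C_{w,ij}$ as a cover of some degree $m$, totally ramified over its two fixed points. Recording each non-contracted component as its edge $(w,wt_{ij})$ with multiplicity $m$ gives a collection of weighted edges of total weight $\sum m\,q_{ij}=q^d$. Since $C$ is a connected nodal curve of genus $0$, its dual graph is a tree. Collapsing the contracted components then yields a connected tree in the Bruhat graph. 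Conversely, any such weighted tree is realized by gluing the corresponding covers. This last bookkeeping step is routine, following the description of $T$-fixed loci in \cite{FW}.
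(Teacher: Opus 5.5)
The paper does not prove this lemma at all; it quotes it from \cite[Lemma~4.2]{FW}. Your proposal is a correct self-contained GKM-style proof, so the comparison is between an argument and a citation. You isolate the two facts that actually carry the statement. First, the tangent weights of $T$ on the chart $w\Omega_\id^\circ$ are distinct roots, none appearing together with its negative; this classifies the $T$-curves through $e_w$ and gives uniqueness. Second, along $C_{w,ij}$ the only varying Pl\"ucker coordinate of $F_k$ is $\pm t$, exactly when $i\le k<j$; this gives degree $q_{ij}$, and the normalization $\int_{X_{s_m}}c_1(\pi_k^*\mathcal{O}(1))=\delta_{km}$ is your own computation for $w=\id$, $(i,j)=(m,m+1)$. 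The citation buys brevity and the full description of the fixed loci of the moduli space. Your route makes transparent exactly what \Cref{prop:T-fixed-pts} uses.

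Two corrections.

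First, you have transposed the chart coordinates. In $w\Omega_\id^\circ$ the entries to the right of each pivot vanish, so the free entries sit in positions $(w_j,i)$ with $i<j$, with tangent weight $e_{w_j}-e_{w_i}$. Putting $t$ in position $(w_i,j)$ instead gives $w(I+tE_{ij})\in wB$, which is a constant curve. Your formula $w\,\overline{U_{-(e_i-e_j)}}\,e_\id$ and your Pl\"ucker values already correspond to the correct position, so only that one sentence needs fixing.

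Second, in the consequence, the ``connected tree'' must be the abstract tree built from the domain, together with its map to the Bruhat graph. This tree has one edge for each non-contracted component and one vertex for each connected component of the preimage of the $T$-fixed points. The image itself need not be a tree. For instance (with $n\ge 4$), a chain of four components mapping onto the $T$-curves of the square $w, ws_1, ws_1s_3, ws_3$ is a $T$-fixed stable map whose image is a $4$-cycle.

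The fixed locus is also not finite. A contracted component with at least four special points moves in a positive-dimensional family. So ``correspond'' should mean that fixed points map onto such decorated trees, not that they are in bijection with them.

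These imprecisions are already present in the statement as quoted. The proof of \Cref{prop:T-fixed-pts} needs only two things: the forward direction, read with the abstract tree, and the gluing of a chain of $T$-curves. Your sketch supplies both.
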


We are now ready to prove the following proposition, which shows that the $T$-fixed points in the minimal-degree curve neighborhood $\Gamma_{d_{u,v}}(\Omega_u, X_v)$ correspond precisely to the elements in the tilted Bruhat interval $[u, v]$.

\begin{prop}\label{prop:T-fixed-pts}
    For $u,v\in S_n$, the set of $T$-fixed points in $\Gamma_{d_{u,v}}(\Omega_u,X_v)$ is $\{e_w:w\in [u,v]\}$. 
\end{prop}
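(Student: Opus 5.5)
We will prove both inclusions by analyzing $T$-fixed points of the Gromov--Witten variety $GW_{d}(\Omega_u,X_v)$ with $d=d_{u,v}$, using \Cref{lemma:t-fixed-tree}. The key observation is that $\Gamma_d(\Omega_u,X_v)$ is a $T$-stable closed subvariety and $\ev_3$ is $T$-equivariant and proper. Hence a point $e_w$ lies in $\Gamma_d$ if and only if some $T$-fixed point of $GW_d(\Omega_u,X_v)$ maps to $e_w$. The fiber $\ev_3^{-1}(e_w)\cap GW_d$ is a nonempty projective $T$-stable variety, so by Borel's fixed point theorem it contains a $T$-fixed point.

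By \Cref{lemma:t-fixed-tree}, a $T$-fixed stable map is a tree $\mathcal{T}$ of $T$-invariant curves, i.e.\ edges in the Bruhat graph, with total weight $q^d$. The marked points sit at $T$-fixed points $e_{x},e_{y},e_w$ on it, with $e_x\in\Omega_u$ and $e_y\in X_v$, i.e.\ $u\le x$ and $y\le v$.

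\emph{Inclusion $\supseteq$.} For $w\in[u,v]$, take a shortest path $u\to\cdots\to w\to\cdots\to v$ in $\Gamma_n$. By \Cref{lemma:shortest-length-equals-weight} it has weight $q^{d_{u,v}}$. Each edge $w'\to w't_{ij}$ of the quantum Bruhat graph is an edge of the Bruhat graph of weight $1$ (Bruhat edge, where the $T$-curve has degree $q_{ij}$) or $q_{ij}$ (quantum edge). This mismatch is the first point to check. For a strong Bruhat edge the quantum-graph weight is $1$ while the $T$-curve degree is $q_{ij}$, so the naive path has the wrong degree. I will fix this by choosing the marked points differently. Put $p_1$ at $e_{x}$ for the first vertex $x$ of the path lying in $\Omega_u$, and $p_2$ at the last vertex in $X_v$. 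Concretely, use the segment of the path consisting of quantum edges, while Bruhat-upward edges are absorbed into the conditions $u\le x$ and $y\le v$. The cleaner route is to reorder via \Cref{prop:shortest-path-BFP}. With a reflection ordering arranged so that quantum edges come together, a shortest path through $w$ factors as $u\le x$ (Bruhat up-steps) $\to$ quantum edges $\to y\le v$. The quantum edges give a chain of $T$-curves of total degree exactly $q^{d_{u,v}}$ through $e_w$. This requires $w$ to lie on the quantum segment, which is not automatic. So instead I will argue via the tilted structure: using \Cref{thm:tilted-criterion} and cyclic symmetry (\Cref{lemma:cyclic-symmetry}), I construct $x\ge u$ and $y\le v$ together with a degree-$d$ tree through $e_x,e_w,e_y$. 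This construction is the main obstacle.

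\emph{Inclusion $\subseteq$.} Given a $T$-fixed tree of degree $d_{u,v}$ containing $e_x,e_w,e_y$ with $u\le x$ and $y\le v$, the unique tree path $x\to w\to y$ uses Bruhat edges $t_{ij}$ of total weight dividing $q^{d}$. By \Cref{cor:weight-distance}, each such edge $z\to zt_{ij}$ can be replaced by a quantum Bruhat path of weight at most $q_{ij}$, in either direction. Also $u\le x$ gives a path $u\to x$ of weight $1$, and $y\le v$ gives one of weight $1$. Concatenating yields a directed path $u\to x\to w\to y\to v$ in $\Gamma_n$ of weight dividing $q^{d_{u,v}}$. By \Cref{lemma:shortest-length-equals-weight} the weight is exactly $q^{d_{u,v}}$, so the path is shortest and passes through $w$. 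Hence $w\in[u,v]$. Equivalently, $d(u,w)+d(w,v)\le d(u,v)$ coordinatewise, which with the triangle inequality and \Cref{thm:weight-distance} forces equality.

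For $\supseteq$, I expect the cleanest approach is the reverse of this degree bookkeeping. It suffices to find a degree-$d_{u,w}$ chain from some $x\ge u$ to $w$, and a degree-$d_{w,v}$ chain from $w$ to some $y\le v$, then glue them at $e_w$ to get degree $d_{u,w}+d_{w,v}=d_{u,v}$. This reduces the statement to the one-point case $\Gamma_{d_{u,w}}(\Omega_u)\ni e_w$, which follows from \cite{BM15} (one-point curve neighborhoods contain $e_w$ when the minimal degree is realized). Alternatively, induct along the shortest path, using that a Bruhat edge $z\to zt_{ij}$ followed by the condition $\le v$ can be absorbed. Verifying this reduction is the main difficulty.
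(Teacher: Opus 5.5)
\paragraph{The inclusion $\subseteq$.} This follows the paper's argument, but it has a small hole. If $w$ is not on the tree path from $x$ to $y$, the walk $x\to w\to y$ traverses the spur to $w$ twice. Its edge weights, counted with multiplicity, then need not divide $q^{d_{u,v}}$, and \Cref{cor:weight-distance} alone does not fix this. You need the extra observation the paper makes: for a Bruhat-graph edge $\{z,zt_{ij}\}$, the Bruhat-upward direction is realized by a chain of strong Bruhat edges of weight $1$. So an edge traversed in both directions still contributes at most $q_{ij}$, and only once.

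\paragraph{The inclusion $\supseteq$.} This is the genuine gap: you do not actually prove it. Nothing in \Cref{prop:shortest-path-BFP} lets you choose a reflection ordering that groups the quantum edges. The ``tilted structure'' construction is never given. The gluing reduction rests on an unverified appeal to \cite{BM15}.

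The missing idea, which disposes of exactly the obstacle you raised ($w$ need not lie on the quantum segment), is to normalize the two halves separately. Take shortest paths $u\to w$ and $w\to v$. In the first, suppose a step $x\xrightarrow{q}y\xrightarrow{1}z$ occurs. Then $[x,z]$ is a diamond by \Cref{lemma:thin}, and its other side $x\to y'\to z$ has the same weight $q_{ij}$ by \Cref{lemma:shortest-length-equals-weight}. That side cannot be Bruhat--Bruhat. It cannot be quantum--Bruhat either, since its quantum edge would have weight $q_{ij}$, forcing $y'=xt_{ij}=y$. So it is Bruhat--quantum or quantum--quantum. Replacing repeatedly, the quantum edges either increase in number or move right, so the process yields $u\xrightarrow{1}\cdots\xrightarrow{1}u'\xrightarrow{q}\cdots\xrightarrow{q}w$. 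Symmetrically you get $w\xrightarrow{q}\cdots\xrightarrow{q}v'\xrightarrow{1}\cdots\xrightarrow{1}v$. The segment $u'\to v'$ then consists only of quantum edges, whose $T$-curves have degree equal to their weights. Hence it is a $T$-invariant chain of degree $d_{u,v}$ through $e_w$, meeting $\Omega_u$ at $e_{u'}$ and $X_v$ at $e_{v'}$.

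Your gluing route can also be completed, but it needs a real argument for $e_w\in\Gamma_{d_{u,w}}(\Omega_u)$. For instance:
\begin{enumerate}
\item $\Gamma_{d}(\Omega_u)$ is closed, irreducible and $B_-$-stable, hence equal to some $\Omega_z$.
\item \Cref{prop:postnikov-minimal-degree} gives $GW_{d_{u,w}}(\Omega_u,X_w)\neq\emptyset$, so $\Omega_z\cap X_w\neq\emptyset$ and $z\le w$.
\item The symmetric argument gives $e_w\in\Gamma_{d_{w,v}}(X_v)$.
\item Borel's theorem on each fiber gives $T$-fixed maps, which you glue at $e_w$ via a contracted component carrying the third marked point, using $d_{u,v}=d_{u,w}+d_{w,v}$.
\end{enumerate}
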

\begin{proof}
    We begin with one direction. Suppose $e_w \in \Gamma_{d_{u,v}}(\Omega_u, X_v)$. Then there exists a stable curve $C$ of degree $d_{u,v}$ passing through $e_w$ and intersecting both $\Omega_u$ and $X_v$. Since $\ev_3^{-1}(e_w)$ is $T$-invariant, and every $T$-invariant closed subvariety contains a $T$-fixed point, we may assume that $C$ is $T$-invariant. By \Cref{lemma:t-fixed-tree}, such a $T$-invariant stable curve corresponds to a connected tree $C^T$ in the Bruhat graph. This tree contains $w$, along with some $u'\geq u$ and some $v'\leq v$, and has total weight $q^{d_{u,v}}$.

    Our goal is to construct a shortest path in the quantum Bruhat graph from $u$ to $v$ that passes through $w$. If such a path exists, then $w \in [u, v]$ by definition of the tilted Bruhat interval. The construction proceeds in three steps:
    \begin{enumerate}
        \item Since $C^T$ is a connected tree, there is a unique path in $C^T$ from $u'$ to $w$ and another from $w$ to $v'$. Concatenating them gives a path $P$ (possibly with repeated edges) from $u'$ to $v'$ in $C^T$ that passes through $w$.
        \item Replace each edge in $P$ with a shortest path in the quantum Bruhat graph between its endpoints. This yields a directed path $P'$ from $u'$ to $v'$ in the quantum Bruhat graph that still passes through $w$.
        \item Since $u' \geq u$ and $v' \leq v$ in the strong Bruhat order, we can prepend a strong Bruhat path from $u$ to $u'$ and append one from $v'$ to $v$, obtaining a path $P''$ from $u$ to $v$ in the quantum Bruhat graph that passes through $w$.
    \end{enumerate}
    We now claim that the weight of the path $P''$ is at most $q^{d_{u,v}}$. The strong Bruhat edges used to connect $u$ to $u'$ and $v'$ to $v$ have weight $1$, so they do not contribute to the total weight. Hence, $\wt(P'') = \wt(P')$. Moreover, by \Cref{cor:weight-distance}, each subpath in $P'$ has weight less than or equal to the weight of the corresponding edge in $P$, and the total weight of $P$ is at most the total weight of $C^T$, which is $q^{d_{u,v}}$.

    We remark that some edges $(w, w')$ in the path $P$ may be used in both directions. However, this does not affect the total weight: in constructing $P'$, each such edge is replaced by a directed shortest path from $w$ to $w'$ in the quantum Bruhat graph. Since one of the two directions, either $w \to w'$ or $w' \to w$, has weight $1$, each edge contributes at most once to the total quantum weight.

    We now prove the other direction. Recall that in the quantum Bruhat graph, a \emph{strong Bruhat edge} (denoted $\xrightarrow{1}$) is an edge with weight $1$, and a \emph{quantum edge} (denoted $\mathcolor{blue}{\xrightarrow{q}}$) is an edge with nontrivial quantum weight. For any $w \in [u, v]$, we claim that there exist two shortest paths, one from $u$ to $w$ and one from $w$ to $v$, of the following form:
    \[
    \begin{aligned}
        u\xrightarrow{1}\cdots\xrightarrow{1} u'\mathcolor{blue}{\xrightarrow{q}\cdots\xrightarrow{q}}w,\\
        w\mathcolor{blue}{\xrightarrow{q}\cdots\xrightarrow{q}}v'\xrightarrow{1}\cdots\xrightarrow{1}v.
    \end{aligned}
    \]
    We prove the claim for the first path, and the second follows by a similar argument. Begin with a shortest path from $u$ to $w$ in the quantum Bruhat graph. If the path contains a subpath of the form $x \mathcolor{blue}{\xrightarrow{q}} y \xrightarrow{1} z$, then by \Cref{lemma:thin}, we may replace this subpath with an alternative path $x \to y' \to z$ in the quantum Bruhat graph. Since both paths have the same total weight by \Cref{lemma:shortest-length-equals-weight}, the only two possibilities for the replacement path are:
    \[x\xrightarrow{1}y'\mathcolor{blue}{\xrightarrow{q}}z\quad\text{or}\quad x\mathcolor{blue}{\xrightarrow{q}}y'\mathcolor{blue}{\xrightarrow{q}}z.\]
    In either case, the number of quantum edges either increases or shifts further to the right in the path. This process must eventually terminates, yielding a path of the desired form.

    Finally, we concatenate the two paths to obtain a shortest path from $u$ to $v$ that passes through $w$, of the form:
    \[
        u\xrightarrow{1}\cdots\xrightarrow{1} u'\mathcolor{blue}{\xrightarrow{q}\cdots\xrightarrow{q}}w\mathcolor{blue}{\xrightarrow{q}\cdots\xrightarrow{q}}v'\xrightarrow{1}\cdots\xrightarrow{1}v.
    \]
    By \Cref{lemma:t-fixed-tree}, the subpath from $u'$ to $v'$ corresponds to a $T$-invariant stable curve $C$ of degree $d_{u,v}$ passing through $e_w$, intersecting $\Omega_u$ at $e_{u'}$, and intersecting $X_v$ at $e_{v'}$. Therefore, by definition, we conclude that $e_w \in \Gamma_{d_{u,v}}(\Omega_u, X_v)$.
\end{proof}

We are now ready to prove the main theorem.

\begin{theorem}\label{thm:Gamma=T}
    Let $u,v\in S_n$, then $\Gamma_{d_{u,v}}(\Omega_u,X_v)= \cT_{u,v}$.
\end{theorem}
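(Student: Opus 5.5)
We prove the two inclusions separately, using that both sides are irreducible closed subvarieties of the same dimension. By \Cref{prop:Gamma-nice} and \Cref{prop:postnikov-minimal-degree}, $q^{d_{u,v}}$ appears in $\sigma_u\star\sigma_{w_0v}$. Hence $\Gamma:=\Gamma_{d_{u,v}}(\Omega_u,X_v)$ is irreducible and closed, of dimension $\ell(v)-\ell(u)+2|d_{u,v}|$. On the other side, $\cT_{u,v}$ is irreducible by \Cref{thm:irreducible} and has dimension $\ell(u,v)$ by \Cref{thm:dimension} and \Cref{thm:closure}. The dimensions agree because of a combinatorial identity: along any shortest path in $\Gamma_n$, a Bruhat edge raises $\ell$ by $1$, while a quantum edge of weight $q_{ij}$ lowers $\ell$ by $2(j-i)-1$, i.e.\ raises it by $1-2\deg q_{ij}$. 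Summing over the edges of a shortest path of weight $q^{d_{u,v}}$ gives $\ell(v)-\ell(u)=\ell(u,v)-2|d_{u,v}|$. So it suffices to prove a single inclusion between irreducible varieties of equal dimension.

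The main step is to prove $\Gamma\subseteq\cT_{u,v}$. The plan is to use the Plücker description in \Cref{thm:altdefT}: it suffices to show that for every $k$ and every $F_\bullet\in\Gamma$, we have $F_k\in\cR_{u[k],v[k],a_k}$ for a fixed $\ba$ with $u\leq_\ba v$. This inclusion is the main obstacle. Both sets are closed and $B$- or $T$-stable in a suitable sense, so I would first reduce to the Grassmannian. The projection $\pi_k(\Gamma)$ lies in the two-point curve neighborhood in $\gr_{k,n}$ of $\pi_k(\Omega_u)=\Omega_{u[k]}$ and $\pi_k(X_v)=X_{v[k]}$, in degree $d_k(u,v)=\depth(u[k],v[k])$ (\Cref{thm:weight-distance}). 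Pushing a stable curve forward by $\pi_k$ gives a curve of degree $d_k$. Then I would invoke the known description of Grassmannian curve neighborhoods: the union of degree-$d$ rational curves meeting $\Omega_I$ and $X_J$ is the positroid (cyclically shifted Richardson) variety obtained by kernel/span considerations, in the sense of Buch--Mihalcea and Knutson--Lam--Speyer. For the minimal degree $d=\depth(I,J)$, I expect it to equal $\cR_{I,J,r}$ for any $r$ with $I\leq_r J$.

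If citing this is undesirable, I would argue it directly. A degree-$d$ rational curve in $\gr_{k,n}$ has kernel $\ker$ of dimension at least $k-d$ and span of dimension at most $k+d$. Containing points of $\Omega_I$ and $X_J$ bounds the ranks of $\ker$ and span on initial and final row segments. Each such rank bound then translates into a rank condition on every point of the curve, namely $\rank_{[r,i)_c}(V)\leq\#(I\cap[r,i)_c)$ and $\rank_{[i,r)_c}(V)\leq\#(J\cap[i,r)_c)$. The minimality of $d$ is exactly what makes the cyclic point $r$ (the minimum of the lattice path, \Cref{lemma:r-comparable}) give sharp inequalities. This rank bookkeeping is where I expect the difficulty.

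Once $\Gamma\subseteq\cT_{u,v}$ is established, equality follows from irreducibility of $\cT_{u,v}$ and equality of dimensions, provided $\Gamma$ is nonempty, which \Cref{prop:T-fixed-pts} guarantees. As a consistency check, \Cref{prop:T-fixed-pts} and \Cref{thm:tilted-T-fixed-point} show that the $T$-fixed points of the two sides agree. As a fallback for the reverse inclusion, I would use the $T$-fixed points together with the fact that $\Gamma$ is $T$-stable, closed, and of full dimension inside the irreducible $\cT_{u,v}$.
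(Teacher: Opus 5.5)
Your overall architecture matches the paper's. You prove $\Gamma:=\Gamma_{d_{u,v}}(\Omega_u,X_v)\subseteq\cT_{u,v}$ and then conclude from irreducibility of $\cT_{u,v}$ and equality of dimensions. Your derivation of $\ell(v)-\ell(u)+2|d_{u,v}|=\ell(u,v)$, by summing length changes along a shortest path of weight $q^{d_{u,v}}$, is correct; the paper only asserts it.

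The gap is the inclusion itself, which you flag as the main obstacle and do not resolve. The Grassmannian route hinges on the claim that the degree-$d_k(u,v)$ two-point curve neighborhood of $\Omega_{u[k]}$ and $X_{v[k]}$ in $\gr_{k,n}$ lies in $\cR_{u[k],v[k],a_k}$. You only expect this, and you do not attach it to a precise statement in the literature. The direct kernel/span argument is also not routine. Bounding the kernel (dimension $\geq k-d$) via the point of $\Omega_{u[k]}$ and the span (dimension $\leq k+d$) via the point of $X_{v[k]}$ each loses $d$. So on a cyclic interval $[a_k,i)_c$ that is neither an initial nor a final segment of $[n]$, the combined loss is $2d$. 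The depth of the lattice path at its minimum $a_k$ only provides a margin of $d$. As written, the key step is unproved.

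The missing idea is that you never need to leave $\fl_n$: the fixed-point comparison you call a consistency check is the proof.
\begin{itemize}
\item $\Gamma$ is closed, since $\ev_3$ is proper.
\item $\Gamma$ is $T$-stable, since the evaluation maps are $T$-equivariant and $\Omega_u,X_v$ are $T$-stable.
\item Suppose some $F_\bullet\in\Gamma$ had $\Delta_w(F_\bullet)\neq 0$. Then $F_\bullet$ lies in the chart $w\Omega_{\id}^\circ$, which a suitable generic one-parameter subgroup of $T$ contracts onto $e_w$.
\item Hence $e_w\in\overline{T\cdot F_\bullet}\subseteq\Gamma$, and \Cref{prop:T-fixed-pts} forces $w\in[u,v]$.
\end{itemize}
So $\Gamma\subseteq\{F_\bullet:\Delta_w(F_\bullet)=0\text{ for all }w\notin[u,v]\}$, and this set is $\cT_{u,v}$ by \Cref{thm:altdefTplucker}. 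Your dimension count plus \Cref{thm:irreducible} then finish the proof exactly as in the paper.

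Your stated ``fallback'' puts the fixed points in the wrong place. Once the forward inclusion holds, the reverse inclusion needs nothing beyond irreducibility and the dimension count. It is the forward inclusion that the fixed points supply.
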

\begin{proof}
    By \Cref{prop:T-fixed-pts} and \Cref{thm:tilted-T-fixed-point}, $\Gamma_{d_{u,v}}(\Omega_u, X_v)$ and $\cT_{u,v}$ share the same set of $T$-fixed points.
    Since $\Gamma_{d_{u,v}}(\Omega_u, X_v)$ is $T$-invariant and closed in $\fl_n$, we have
    \[\Gamma_{d_{u,v}}(\Omega_u,X_v)\subseteq \{F_\bullet\in\fl_n:\Delta_w(F_\bullet)=0\text{ for all }w\notin [u,v]\}.\]
    By \Cref{thm:altdefTplucker}, the right-hand side is precisely $\cT_{u,v}$, so we have $\Gamma_{d_{u,v}}(\Omega_u, X_v) \subseteq \cT_{u,v}$.
    
    Finally, by \Cref{thm:dimension} and \Cref{prop:Gamma-nice}, both varieties have the same dimension:
    \[\dim(\Gamma_{d_{u,v}}(\Omega_u,X_v))=\dim(\cT_{u,v})=\ell(v)-\ell(u)+2|d_{u,v}|=\ell(u,v),\]
    and $\cT_{u,v}$ is irreducible by \Cref{thm:irreducible}. Therefore, the two varieties must be equal.
\end{proof}

\subsection{Cohomology Classes of Tilted Richardson Varieties}\label{sec:cohomology}

In this section, we compute the cohomology class of the tilted Richardson variety $\cT_{u,v}$ (\Cref{thm:cohomology}), which, by \Cref{thm:Gamma=T}, coincides with the cohomology class of the minimal-degree curve neighborhood $\Gamma_{d_{u,v}}(\Omega_u, X_v)$ in the cohomology ring $H^*(\fl_n)$. By \Cref{prop:Gamma-nice}, it remains to compute the constant $c$ in \Cref{prop:Gamma-nice}.


Our strategy is to use path Schubert polynomials $\mathfrak{S}_{u,v}(x, q)$ introduced by Postnikov \cite{Postnikov-quantum-Bruhat-graph} (see \Cref{sec:prelim-3-7} for more details). We show that there exists a monomial of the form $q^{d_{u,v}}x^{\rho - \beta}$ that appears in $\mathfrak{S}_{u,v}(x, q)$ with coefficient $1$. This implies $c = 1$. 

We say that a directed edge $w \to w'$ lies in the tilted Bruhat interval $[u, v]$ if $w'$ covers $w$ in the poset $[u, v]$. A directed path lies in $[u, v]$ if all of its edges lie in $[u, v]$. We begin with the following lemma, which plays a key role in the construction of the special monomial in the path Schubert polynomial.

\begin{lemma}\label{lemma:cover-max}
    Let $u,v\in S_n$ and $i\in[n-1]$ be such that $u_j=v_j$ for all $j\in [i-1]$. Among all directed paths in the tilted Bruhat interval $[u, v]$ of the form
    \[u=w^{(0)}\xrightarrow{t_{ip_1}}w^{(1)}\xrightarrow{t_{ip_2}}\cdots\xrightarrow{t_{ip_\ell}}w^{(\ell)}=u'\]
    with $p_j>i$ for all $j$, there exists a unique path of maximal length $\ell$. Moreover, this path is $x_i^\ell$-admissible, and the $i$-th entry of the final permutation satisfies $u'_i = v_i$.
\end{lemma}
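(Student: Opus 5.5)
The plan is to build the maximal path from the canonical shortest path of \Cref{prop:shortest-path-BFP}, and then to bound the length of an arbitrary path of the stated form by an exchange argument. Throughout, fix $\ba$ with $u\lesssim_\ba v$ (\Cref{thm:lesssim-exist}). By \Cref{thm:tilted-criterion} and \Cref{prop:covering}, the edges lying in $[u,v]$ are exactly the covers $w\lesssimdot_\ba wt_{ip}$ inside $[u,v]$.

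\textbf{Step 1 (the first $i-1$ entries are frozen).} Since $u[k]=v[k]$ for $k<i$, the condition $u[k]\leq_{a_k}w[k]\leq_{a_k}v[k]$ forces $w[k]=u[k]$. Hence every $w\in[u,v]$ satisfies $w_j=u_j$ for all $j<i$. In particular $w[i]=u[i-1]\cup\{w_i\}$, and the comparison $u[i]\leq_{a_i}w[i]\leq_{a_i}v[i]$ reduces to a single inequality $u_i\leq_{a_i}w_i\leq_{a_i}v_i$. Now take a cover $w\lesssimdot_\ba wt_{ip}$ with $p>i$. By \Cref{prop:covering}(4), $a_i\notin(w_i,w_p]_c$, so $w_i<_{a_i}w_p$. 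Consequently, along any path of the stated form, the $i$-th entries $u_i=w^{(0)}_i<_{a_i}w^{(1)}_i<_{a_i}\cdots<_{a_i}w^{(\ell)}_i\leq_{a_i}v_i$ form a strictly increasing chain. This gives an a priori bound on $\ell$.

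\textbf{Step 2 (existence and admissibility via BFP).} Choose the reflection ordering of the proof of \Cref{thm:weight-distance}, shifted so that the roots $e_i-e_{i+1},\dots,e_i-e_n$ come first among those not involving indices below $i$. For example, take the ordering attached to a reduced word of $w_0$ beginning with $(s_1\cdots s_{n-1})\cdots$. Since positions $<i$ of $u$ and $v$ already agree, the unique increasing path of \Cref{prop:shortest-path-BFP} from $u$ to $v$ uses no roots $e_j-e_k$ with $j<i$. So it begins with an initial segment $u\xrightarrow{t_{ip_1}}\cdots\xrightarrow{t_{ip_\ell}}u'$ with $i<p_1<\cdots<p_\ell$. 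After this segment position $i$ is never touched again, so $u'_i=v_i$. This segment is part of a shortest path from $u$ to $v$, so all its edges lie in $[u,v]$. It is $x_i^\ell$-admissible because every first index equals $i$ and the $p_j$ are distinct. As in the proof of \Cref{thm:weight-distance}, the $p_j$ are produced greedily: $p_k$ is the smallest index $>p_{k-1}$ whose entry lies in $(w_{p_{k-1}},v_i]_c$.

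\textbf{Step 3 (maximality and uniqueness; the main obstacle).} I must show that every other path of the stated form in $[u,v]$ is strictly shorter. First I will use the cover condition in \Cref{prop:covering}(4): the condition $w_k\notin(w_i,w_p)_c$ for $i<k<p$ says that a single step cannot skip a value lying in the open range at an earlier position. By induction on the step, I will compare an arbitrary path with the greedy one: at each step, the greedy choice has the smallest admissible $p$ and hence the smallest new value $w_i$ in $<_{a_i}$. An exchange argument then shows that any other step can be refined into greedy steps staying inside $[u,v]$. Membership in $[u,v]$ is checked through the frozen structure of Step 1 together with $\sim_\ba$-compatibility. The conclusion is that the greedy path attains the chain bound of Step 1 and is the only path that does.

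I expect Step 3 to be the hardest part. My first attempt will be the following inductive claim: the set of values $\{w^{(j)}_i\}$ along any such path is a subset of the greedy value sequence, and equality of these sets forces equality of the paths.
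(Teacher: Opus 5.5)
Your Step 1 is correct. It even shows that every path of the stated form is $x_i^\ell$-admissible: the values in position $i$ strictly increase in $<_{a_i}$, so reusing an index $p$ would bring back an older, $<_{a_i}$-smaller value, which is not an upward cover.

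\textbf{The gap (Steps 2--3).} The path you build is not the maximal one. The initial segment of the increasing path of \Cref{prop:shortest-path-BFP} always has $p_1<p_2<\cdots$: in any reflection ordering where the roots $e_i-e_p$ precede all $e_j-e_k$ with $i<j<k$, the roots $e_i-e_p$ must appear in increasing order of $p$. The maximal path need not have this property. Take the classical case $\ba=(1,1,1,1)$, $u=1324$, $v=4321$, $i=1$. Your construction gives
\[1324\xrightarrow{t_{12}}3124\xrightarrow{t_{14}}4123,\]
of length $2$. On the other hand,
\[1324\xrightarrow{t_{13}}2314\xrightarrow{t_{12}}3214\xrightarrow{t_{14}}4213\]
consists of Bruhat covers in $[u,v]$ and has length $3$. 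By your own chain bound (values $1<2<3<4$ in position $1$), it is the unique maximum. This example also refutes your proposed inductive claim, since $\{1,2,3,4\}\not\subseteq\{1,3,4\}$.

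The underlying error is in Step 3, where you say the smallest admissible $p$ gives the smallest new value; the reverse holds. Suppose $w\to wt_{ip}$ and $w\to wt_{ip'}$ are both upward edges with $p<p'$. The edge condition for $p'$ forces $w_p\notin(w_i,w_{p'})_c$, and hence $w_{p'}<_{a_i}w_p$. So a larger index means a smaller new value in position $i$.

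\textbf{The repair.} At each step, take the \emph{largest} $p$ such that the edge lies in $[u,v]$, equivalently the $<_{a_i}$-smallest available new value; this is the paper's construction. Maximality and uniqueness then follow from an exchange argument, by induction on $\ell(u,v)$. Suppose a competing path $u\xrightarrow{t_{iq_1}}x^{(1)}\to\cdots\to x^{(\ell')}$ has $q_1\neq p_1$, so that $q_1<p_1$. Set $y^{(k)}:=x^{(k)}t_{q_1p_1}$. Then
\[u\xrightarrow{t_{ip_1}}ut_{ip_1}\xrightarrow{t_{iq_1}}y^{(1)}\xrightarrow{t_{iq_2}}\cdots\xrightarrow{t_{iq_{\ell'}}}y^{(\ell')}\]
is one step longer than the competing path. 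You check two things:
\begin{itemize}
\item each step is a cover, via \Cref{prop:covering};
\item $y^{(k)}\leq_\ba v$, because $y^{(k)}[j]$ is the join of $x^{(k)}[j]$ and $y^{(k-1)}[j]$ in $\leq_{a_j}$.
\end{itemize}
Your BFP path is still useful, but only for the termination claim. If the greedy path stopped at some $u'$ with $u'_i\neq v_i$, the first step of the increasing path from $u'$ to $v$ would be a move $t_{ip}$ that extends it.
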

\begin{proof}
    We construct the desired path via the following greedy algorithm. Starting with $w^{(0)} := u$, and assuming $w^{(0)}, \dots, w^{(k-1)}$ have been chosen, we define $p_k>i$ to be the largest index such that the directed edge $w^{(k-1)}\xrightarrow{t_{ip_k}} w^{(k)}$ lies in $[u, v]$. We terminate when no such $p_k$ exists.

    Fix a sequence $\ba = (a_1, \dots, a_n) \in [n]^n$ such that $u \leq_\ba v$. The path is $x_i^\ell$-admissible: the construction ensures that the transposed entries $u_{p_k}$ are strictly increasing with respect to the shifted order $<_{a_i}$, so the indices $p_1, \dots, p_k$ are distinct. Additionally, if $u'_i \neq v_i$, then the path could be extended by taking the next step from $u'$ along the canonical shortest path from $u'$ to $v$ constructed in the proof of \Cref{thm:weight-distance}, which would yield a further transposition $u' \xrightarrow{t_{ip_{\ell+1}}} u''$. Thus, we must terminate at $u'_i = v_i$.
    
    We now prove that this is the unique maximal-length path of the given form. The proof is by induction on $\ell(u, v)$. Suppose there exists another path of length $\ell' \geq \ell$:
    \[u\xrightarrow{t_{iq_1}}x^{(1)}\xrightarrow{t_{iq_2}}\cdots\xrightarrow{t_{iq_{\ell'}}}x^{(\ell')},\]
    We aim to show that $\ell' = \ell$ and this path coincides with the greedy path. If $\ell = 0$, then the only possible path is the singleton $u$, so the claim is immediate. If $p_1 = q_1$, then $w^{(1)} = x^{(1)}$, and the induction hypothesis applied to $[w^{(1)}, v]$ completes the argument. So we may assume $p_1 \neq q_1$. By construction, $p_1 > q_1$. Now consider the path
    \[u\xrightarrow{t_{ip_1}}y^{(0)}\xrightarrow{t_{iq_1}}y^{(1)}\xrightarrow{t_{iq_2}}\cdots\xrightarrow{t_{iq_{\ell'}}}y^{(\ell')}.\]
    We claim that this path lies entirely in $[u, v]$ and has length $\ell' + 1$, contradicting the maximality of $\ell'$. Let $\ba$ be as before, with $u \leq_\ba v$. Using \Cref{thm:tilted-criterion}, we work in the partial order $\leq_\ba$ and verify the claim step by step:
    \begin{enumerate}
        \item For each $k \in [\ell']$, we have $y^{(k)} = x^{(k)} t_{q_1 p_1}$, and $y^{(k)} \gtrdot_\ba x^{(k)}$. The first identity is immediate. The second follows from the assumption $u \lessdot_\ba u t_{1p_1}$ and \Cref{prop:covering}.
        \item For each $k \in [\ell']$, we have $y^{(k-1)} \lessdot_\ba y^{(k)}$. This follows from $x^{(k-1)} \lessdot_\ba x^{(k)}$ and again by applying \Cref{prop:covering}.
        \item For each $k \in [\ell']$, we have $y^{(k)} \leq_\ba v$. This is proven inductively. Assume $y^{(k-1)} \leq_\ba v$ and $x^{(k)} \leq_\ba v$. It then suffices to show that
        \[
        y^{(k)}[j] = \max_{\leq_{a_j}} \left( x^{(k)}[j], y^{(k-1)}[j] \right) \quad \text{for all } j \in [n],
        \]
        where $\max_{\leq_{a_j}}$ denotes the least upper bound in the lattice defined by $\leq_{a_j}$. If $j < q_1$, then $y^{(k)}[j] = x^{(k)}[j]$. If $j \geq q_k$, then $y^{(k)}[j] = y^{(k-1)}[j]$. For $q_1 \leq j < q_k$, since the only values that differ in $x^{(k)}, y^{(k)}, y^{(k-1)}$ are among $\{ u_i, u_{p_1}, u_{q_{k-1}}, u_{q_k} \}$, the statement reduces to
        \[
        \{ u_{p_1}, u_{q_k} \} = \max_{\leq_{a_j}} \left( \{ u_i, u_{p_1} \}, \{ u_{q_{k-1}}, u_{q_k} \} \right),
        \]
        which holds since $u_i <_{a_j} u_{p_1}$ and $u_{q_{k-1}} <_{a_j} u_{q_k}$, and the intervals $[u_i, u_{p_1})_c$ and $[u_{q_{k-1}}, u_{q_k})_c$ do not intersect.
    \end{enumerate}
    Therefore, the path from $u$ to $y^{(\ell')}$ lies in $[u, v]$ and has length $\ell' + 1$, contradicting the assumption that $\ell'$ is maximal. This proves that the greedy path is the unique maximal-length path of the specified form.
\end{proof}

Denote the \emph{reverse lexicographic order} (or \emph{revlex} for short) as the monomial order on the polynomial ring $\Z[q_1,\dots,q_{n-1},x_1,\dots,x_n]$ that compares exponents from left to right, where the first differing exponent determines the order, and the monomial with the larger exponent in that position is considered smaller. The following lemma shows that the leading monomial of $\mathfrak{S}_{u,v}(x, q)$ under this order appears with coefficient $1$.

\begin{lemma}\label{lemma:coeff1}
Let $u, v \in S_n$. The leading monomial of $\mathfrak{S}_{u,v}(x, q)$ under the revlex order is of the form $q^{d_{u,v}} x^{\rho - \beta}$ with coefficient $1$.
\end{lemma}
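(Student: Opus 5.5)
The plan is to identify the revlex-leading term by a greedy argument that maximizes the exponents $\beta_1,\beta_2,\dots$ in turn, and to show that the maximizing path is unique. Recall that
\[\mathfrak{S}_{u,v}(x,q)=\sum_\beta x^{\rho-\beta}\sum_P \wt(P),\]
where $P$ ranges over $x^\beta$-admissible paths. All coefficients are nonnegative, so there is no cancellation.

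\textbf{Step 1: the $q$-part.} Every path from $u$ to $v$ has weight divisible by $q^{d_{u,v}}$, by \Cref{lemma:shortest-length-equals-weight}. Since revlex reads the $q$-exponents first and prefers smaller exponents, the leading monomial must have $q$-part exactly $q^{d_{u,v}}$, provided some admissible path of that weight exists. Paths of weight exactly $q^{d_{u,v}}$ are precisely the shortest paths, and every vertex on such a path lies in $[u,v]$. So each admissible path contributing to the $q^{d_{u,v}}$ coefficient is a shortest path whose edges all lie in $[u,v]$, in the sense used before \Cref{lemma:cover-max}. Among these we must maximize $\beta$ lexicographically: revlex compares $x$-exponents left to right and prefers the smaller exponent, so a smaller $\rho_i-\beta_i$ means a larger $\beta_i$.

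\textbf{Step 2: the $x$-part.} Decompose $P=P_1\cdots P_{n-1}$, where $P_k$ is $x_k^{\beta_k}$-admissible. Every transposition in $P_k$ is $t_{ab}$ with $a\le k<b$. First I argue that the path can be normalized: for $P_1$, all transpositions are $t_{1b}$, and after $P_1$ the first entry is never changed again, since later segments only use $t_{ab}$ with $a\ge 2$. Hence $P_1$ must end with first entry $v_1$.

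By \Cref{lemma:cover-max} with $i=1$, there is a unique maximal-length path of the form $u\to\cdots$ using only $t_{1p}$ inside $[u,v]$. It is $x_1^\ell$-admissible and ends at some $u'$ with $u'_1=v_1$. Hence $\beta_1\le\ell$, with equality only for this unique greedy $P_1$. We also need its continuation to $v$ to remain a shortest path. Since $u'\in[u,v]$, any shortest path from $u'$ to $v$ concatenated with $P_1$ gives a path of length $\ell(u,u')+\ell(u',v)=\ell(u,v)$, so this holds.

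We then iterate: replace $u$ by $u'$, which agrees with $v$ in position $1$, and apply \Cref{lemma:cover-max} with $i=2$ inside $[u',v]\subseteq[u,v]$, and so on. Subintervals of $[u,v]$ through which the path passes are again tilted intervals, and edges lying in $[u',v]$ are exactly edges lying in $[u,v]$ between those vertices.

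\textbf{Main obstacle.} The delicate point is a mismatch between the two notions of segment. In an $x_k$-admissible segment the first indices may vary ($a_1\le\cdots\le a_\ell\le k$), so $P_k$ could also use $t_{ab}$ with $a<k$. I need to exclude this when $\beta$ is lexicographically maximal. The argument I would try is that after $P_1,\dots,P_{k-1}$ the first $k-1$ entries already equal those of $v$. Any later transposition $t_{ab}$ with $a<k$ would then move entry $a$ away from $v_a$, and it would have to be moved back. That costs two extra steps, which contradicts the shortest-path property.

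It remains to check that the unique choice at each stage leaves a unique path overall, so that the contribution is a single path of weight exactly $q^{d_{u,v}}$. The coefficient of $q^{d_{u,v}}x^{\rho-\beta}$ is therefore $1$.
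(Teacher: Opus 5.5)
Your strategy is the paper's. You force the $q$-part to be $q^{d_{u,v}}$ via \Cref{lemma:shortest-length-equals-weight}, then maximize $\beta$ lexicographically by concatenating the unique maximal greedy segments of \Cref{lemma:cover-max}. You are more careful than the paper, whose proof silently treats the $x_k$-admissible segments as paths of transpositions $t_{kp}$ only. However, your justification of that reduction does not work, in two places.

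\begin{itemize}
\item The normalization sentence in Step~2 is false as stated. An $x_k^{\beta_k}$-admissible segment may use $t_{ab}$ with any $a\le k$, so for a general admissible path a later segment can change the first entry, and $P_1$ need not end at $v_1$. This sentence is not load-bearing, since \Cref{lemma:cover-max} already bounds every $t_{1p}$-path in $[u,v]$.
\item More seriously, the principle that moving entry $a$ away from $v_a$ and back ``costs two extra steps'' is wrong. The path $123\to213\to231\to321$ is a shortest path in $\Gamma_3$: all edges are Bruhat covers and its length is $3=\ell(123,321)$. Yet position $2$ starts and ends with value $2$ and is changed at every step. Agreement in a single position imposes no rigidity, and no step-counting argument will give it.
\end{itemize}

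What actually forces rigidity is agreement of the whole prefix, via the Ehresmann-type criterion. By lexicographic maximality and the uniqueness in \Cref{lemma:cover-max}, the segments must be the greedy ones $G_1,\dots,G_{k-1}$. The current permutation $w'$ then satisfies $w'_j=v_j$ for all $j<k$, hence $w'[j]=v[j]$ for $j<k$. Choose $\ba$ with $w'\le_\ba v$ (\Cref{thm:lesssim-exist}). By \Cref{thm:tilted-criterion}, every $w\in[w',v]$ satisfies $w'[j]\le_{a_j}w[j]\le_{a_j}v[j]=w'[j]$. Hence $w[j]=v[j]$, and so $w_j=v_j$ for all $j<k$.

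Every later vertex of a shortest path from $u$ to $v$ through $w'$ lies in $[w',v]$. Therefore no transposition $t_{ab}$ with $a<k<b$ can occur afterwards, since it would change position $a$. So $P_k$ uses only transpositions $t_{kb}$. The hypothesis of \Cref{lemma:cover-max} with $i=k$ holds, so $\beta_k$ is at most the greedy length, with equality only for $G_k$. After $n-1$ stages the permutation agrees with $v$ in its first $n-1$ positions, so it equals $v$. With this argument in place of the step-counting heuristic, your proof is complete.
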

\begin{proof}
     We construct the path $P_{u,v}$ as a concatenation of $n-1$ subpaths $P_1, P_2, \dots, P_{n-1}$, where each $P_i$ is the unique maximal-length $x_i^{\beta_i}$-admissible path in $[u, v]$, as constructed in \Cref{lemma:cover-max}. By construction, $P_{u,v}$ is an $x^\beta$-admissible path from $u$ to $v$.

     Since $P_{u,v}$ is a shortest path from $u$ to $v$, its quantum weight is $q^{d_{u,v}}$ by \Cref{lemma:shortest-length-equals-weight}. The corresponding monomial in $\mathfrak{S}_{u,v}(x, q)$ is $q^{d_{u,v}} x^{\rho - \beta}$, and it is the leading monomial with respect to the revlex order due to the length-maximality of each $P_i$. Moreover, $P_{u,v}$ is the unique $x^\beta$-admissible path, again by the uniqueness of each maximal-length $P_i$. Therefore, this leading monomial appears with coefficient $1$.
\end{proof}

\begin{remark}
    In the special case where $v = w_0$, the path Schubert polynomial $\mathfrak{S}_{u,v}(x, q)$ coincides with the classical Schubert polynomial $\mathfrak{S}_u(x)$. The leading monomial constructed in \Cref{lemma:coeff1} corresponds to the \emph{bottom pipedream} for $u$, introduced in \cite{rc-graph}.
\end{remark}

We are now ready to compute the cohomology class $[\cT_{u,v}]$.

\begin{theorem}\label{thm:cohomology}
    Let $u,v\in S_n$. The cohomology classes $[\cT_{u,v}]$ and $[\Gamma_{d_{u,v}}(\Omega_u,X_v)]$ in $H^\ast(\fl_n)$ both equal the minimal quantum degree component of the quantum product $\sigma_{u}\star \sigma_{w_0v}$:
    \[[\cT_{u,v}]=[\Gamma_{d_{u,v}}(\Omega_u,X_v)]=[q^{d_{u,v}}]\sigma_u\star\sigma_{w_0v}=\sum_{w\in S_n}c_{u,w}^{v,d_{u,v}}\sigma_{w_0w}\in H^\ast(\fl_n).\]
\end{theorem}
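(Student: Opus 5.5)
By \Cref{thm:Gamma=T}, $\cT_{u,v}=\Gamma_{d_{u,v}}(\Omega_u,X_v)$, so the two classes coincide. By \Cref{prop:postnikov-minimal-degree}, $q^{d_{u,v}}$ actually appears in $\sigma_u\star\sigma_{w_0v}$. \Cref{prop:Gamma-nice} therefore applies and gives
\[[\cT_{u,v}]=\tfrac{1}{c}\sum_{w}c^{v,d_{u,v}}_{u,w}\sigma_{w_0w},\]
where $c\in\Z_{>0}$ is the degree of $\ev_3:GW_{d_{u,v}}(\Omega_u,X_v)\to\Gamma_{d_{u,v}}(\Omega_u,X_v)$. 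The whole task reduces to showing $c=1$.

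To show $c=1$, I will use the observation that $c$ divides every Gromov--Witten invariant $c^{v,d_{u,v}}_{u,w}$. Indeed, $[\cT_{u,v}]$ is an integral class, and its coefficients in the Schubert basis are the integers $\tfrac1c c^{v,d_{u,v}}_{u,w}$. It therefore suffices to exhibit a single $w$ with $c^{v,d_{u,v}}_{u,w}=1$.

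For this I will use Postnikov's path Schubert polynomials. By \Cref{prop:path-schub}, the coefficient of $q^{d_{u,v}}$ in $\mathfrak S_{u,v}(x,q)$ is
\[\sum_w c^{v,d_{u,v}}_{u,w}\,\mathfrak S_{w_0w}(x),\]
a nonnegative combination of Schubert polynomials. By \Cref{lemma:coeff1}, the revlex-leading monomial of $\mathfrak S_{u,v}$ is $q^{d_{u,v}}x^{\rho-\beta}$ with coefficient $1$. Restricting to the $q^{d_{u,v}}$-part, the monomial $x^{\rho-\beta}$ is the revlex-leading monomial of $\sum_w c^{v,d_{u,v}}_{u,w}\mathfrak S_{w_0w}$.

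Schubert polynomials have nonnegative monomial coefficients, so no cancellation can occur among them. The coefficient $1$ of $x^{\rho-\beta}$ is therefore a sum of positive integers $c^{v,d_{u,v}}_{u,w}\cdot[x^{\rho-\beta}]\mathfrak S_{w_0w}$, so exactly one $w$ contributes, and for it $c^{v,d_{u,v}}_{u,w}=1$. Hence $c\mid 1$, so $c=1$.

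The main obstacle I expect is in this last step: ensuring that the $q$-part of the revlex order is compatible. The revlex order compares the $q$-exponents first. Since every monomial of $\mathfrak S_{u,v}$ carries a $q$-weight divisible by $q^{d_{u,v}}$ (\Cref{lemma:shortest-length-equals-weight}), the leading monomial lies in the $q^{d_{u,v}}$-component, and restricting to that component is legitimate. The alternative argument would compare monomial coefficients of $\mathfrak S_{w_0w}$ directly via bottom pipe dreams, but the divisibility argument above avoids needing this.
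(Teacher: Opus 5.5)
Your proposal is correct and follows the paper's proof: reduce via \Cref{thm:Gamma=T} and \Cref{prop:Gamma-nice} to showing $c=1$, note that integrality of $[\cT_{u,v}]$ forces $c$ to divide every $c^{v,d_{u,v}}_{u,w}$, and conclude using \Cref{prop:path-schub} together with the coefficient-$1$ monomial from \Cref{lemma:coeff1}. The only difference is that you extract a single $w$ with $c^{v,d_{u,v}}_{u,w}=1$, which uses nonnegativity of Schubert polynomial coefficients and of Gromov--Witten invariants. The paper instead observes directly that, since $c$ divides every $c^{v,d_{u,v}}_{u,w}$, it divides every monomial coefficient of $[q^{d_{u,v}}]\mathfrak{S}_{u,v}$, so no positivity is needed.
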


\begin{proof}
    By \Cref{thm:Gamma=T} and \Cref{prop:Gamma-nice}, we have
    \[[\cT_{u,v}]=[\Gamma_{d_{u,v}}(\Omega_u,X_v)]=\frac{1}{c}\;[q^{d_{u,v}}]\sigma_u\star\sigma_{w_0v}=\frac{1}{c}\sum_{w\in S_n}c_{u,w}^{v,d_{u,v}}\sigma_{w_0w}\in H^\ast(\fl_n),\]
    where $c\in\Z_{>0}$ is the degree of the projection map $\ev_3:GW_{d_{u,v}}(\Omega_u,X_v)\to\Gamma_{d_{u,v}}(\Omega_u,X_v)$. It remains to show that $c=1$. Since $[\cT_{u,v}]$ expands integrally in the Schubert basis, each Gromov--Witten invariant $c_{u, w}^{v, d_{u,v}}$ must be divisible by $c$. By \Cref{prop:path-schub}, this implies that every coefficient in the monomial expansion of $[q^{d_{u,v}}] \mathfrak{S}_{u,v}(x, q)$ is divisible by $c$. However, by \Cref{lemma:coeff1}, there exists a monomial whose coefficient is equal to $1$, so $c=1$. This concludes the proof.
\end{proof}

\subsection{A Descent-Cycling Formula for Gromov--Witten Invariants}\label{sec:descent-cycling}

In this section, we present a descent-cycling formula for Gromov--Witten invariants (\Cref{thm:descent-cycling}) as an application of the theory of tilted Richardson varieties.

Our proof differs from Knutson’s original approach to the descent-cycling formula for Littlewood--Richardson coefficients in \cite{knutson-cycling}, which is combinatorial and based on divided difference operators on Schubert polynomials. In contrast, our argument is geometric, relying heavily on the tilted Richardson varieties. It remains an open question whether there exists a combinatorial proof of the same identity for Gromov--Witten invariants using divided difference operators.

We also remark that a similar relation among Gromov--Witten invariants was established in \cite{LeungLi}. However, their results and ours are independent and do not recover one another.

We begin by introducing some terminology. For $i \in [n]$, let $\fl_{[n] \setminus {i}}$ denote the \emph{partial flag variety}, consisting of nested linear subspaces omitting the $i$-th subspace:
\[\fl_{[n]\setminus i}:=\{F_1\subseteq\cdots \subseteq F_{i-1}\subseteq F_{i+1}\subseteq\cdots\subseteq F_n=\C^n:\dim (F_j)=j\text{ for all }j\in[n]\setminus\{i\}\}.\]
Let $\hpi:\fl_n\to\fl_{[n]\setminus\{i\}}$ be the natural projection map that forgets the $i$-th subspace $F_i$ of a complete flag $F_\bullet \in \fl_n$. This projection $\hpi$ is a proper, flat morphism whose fibers are isomorphic to $\P^1$. Define the \emph{divided difference operator} $\partial_i : H^\ast(\fl_n) \to H^{\ast}(\fl_n)$ as the composition
\[\partial_i:=(\hpi)^\ast\circ(\hpi)_\ast,\] 
where $(\hpi)\ast$ is the proper pushforward and $(\hpi)^*$ is the flat pullback. We now state the following proposition, which describes the action of $\partial_i$ on cohomology classes of tilted Richardson varieties $\cT_{u,v}$.

\begin{prop}\label{prop:push-pull}
Let $u,v\in S_n$ and $i\in [n-1]$ such that $[u,v]\cdot s_i=[u,v]$. Then:
\begin{enumerate}
    \item $\partial_i[\cT_{u,v}]=0$;
    \item $\partial_i[\cT_{us_i,v}]= \partial_i[\cT_{u,vs_i}]=[\cT_{u,v}]$.
\end{enumerate}
\end{prop}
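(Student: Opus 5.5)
The plan is to compute $(\hpi)_\ast$ on each class by analyzing the geometry of $\hpi$ restricted to the relevant tilted Richardson varieties, using the standard fact that for an irreducible closed $Z\subseteq\fl_n$ one has $(\hpi)_\ast[Z]=\deg(Z\to\hpi(Z))\,[\hpi(Z)]$ if $\dim\hpi(Z)=\dim Z$ and $0$ otherwise. Then $\partial_i[Z]=\deg\cdot[\hpi^{-1}\hpi(Z)]$, and the task is to show $\hpi^{-1}\hpi(\cT_{u,v})=\cT_{u,v}$ (a $\P^1$-bundle over its image) and that $\hpi$ restricted to $\cT_{us_i,v}$ and to $\cT_{u,vs_i}$ is birational onto $\hpi(\cT_{u,v})$.

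First fix a sequence $\ba$ with $u\lesssim_\ba v$ and $a_i=a_{i+1}$; this exists by the proof of \Cref{thm:strong-lifting}, since $[u,v]\cdot s_i=[u,v]$. With $a_i=a_{i+1}$, the rank conditions of \Cref{def:main} on $F_i$ use the same cyclic cutoff as those on $F_{i-1}$ and $F_{i+1}$. \Cref{thm:strong-lifting} gives $i\in\Des_\ba(v)\cap\Asc_\ba(u)$, so $us_i\lesssim_\ba v$ and $u\lesssim_\ba vs_i$ by \Cref{thm:lifting}. Consequently $[us_i,v]$ and $[u,vs_i]$ are subintervals of $[u,v]$ of length $\ell(u,v)-1$, by \Cref{cor:length-rank-function} and \Cref{thm:length-rank-function}, and $\cT_{us_i,v},\cT_{u,vs_i}\subseteq\cT_{u,v}$.

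For (1), I would show that $\cT_{u,v}$ is a union of fibers of $\hpi$. Given $F_\bullet\in\cT_{u,v}$ and any $F_i'$ with $F_{i-1}\subseteq F_i'\subseteq F_{i+1}$, the conditions on $F_{i-1},F_{i+1}$ are unchanged. The conditions $\rank_{[a_i,j)_c}(F'_i)\le\#(u[i]\cap[a_i,j)_c)$ should follow from the corresponding bounds on $F_{i-1}$ and $F_{i+1}$, since $\rank_S(F'_i)\le\min(\rank_S(F_{i-1})+1,\rank_S(F_{i+1}))$. This requires the combinatorial fact that $\min(\#(u[i-1]\cap S)+1,\#(u[i+1]\cap S))\le\#(u[i]\cap S)$ for every cyclic interval $S$ starting at $a_i$, which is exactly where $i\in\Asc_\ba(u)$ enters: $u_i<_{a_i}u_{i+1}$ means that $u_{i+1}\in S$ forces $u_i\in S$. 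The $v$-side works the same way using $i\in\Des_\ba(v)$. Hence $\cT_{u,v}=\hpi^{-1}\hpi(\cT_{u,v})$, so $\dim\hpi(\cT_{u,v})=\ell(u,v)-1<\dim\cT_{u,v}$ and $(\hpi)_\ast[\cT_{u,v}]=0$.

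For (2), the image $\hpi(\cT_{us_i,v})$ lies in $\hpi(\cT_{u,v})$, which has dimension $\ell(u,v)-1=\dim\cT_{us_i,v}$ by \Cref{thm:dimension}. Both are irreducible by \Cref{thm:irreducible}, so it suffices to show that $\hpi|_{\cT_{us_i,v}}$ is injective on a dense open set. Then $\hpi^{-1}\hpi(\cT_{us_i,v})=\cT_{u,v}$ and $\partial_i[\cT_{us_i,v}]=[\cT_{u,v}]$. For injectivity, take a generic point of $\cT^\circ_{u,v}$, say in the top Deodhar cell $\cD_{\mathbf{u}^+,\mathbf{v}}$ for a regular $\ba$-tilted reduced word ending in $s_i$ (available by \Cref{cor:length-property}). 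The fiber $\P(F_{i-1},F_{i+1})$ then matches configuration (a) of \Cref{fig:local-Duv}: exactly one point lies in $\cT^\circ_{us_i,v}$ (the $\dot s_i$ endpoint) and exactly one in $\cT^\circ_{u,vs_i}$. \Cref{lemma:local-positive}(1) supports this, as it gives the unique special subspaces $V_1,V_2$ cut out by the rank data. So each generic fiber meets each divisor in one point, giving degree $1$.

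The main obstacle is this generic one-point intersection, and in particular showing that the special point $V_2$ (respectively $V_1$) of the fiber lands in the open stratum $\cT^\circ_{us_i,v}$ rather than in a smaller stratum, and that $V_1\neq V_2$ generically. I would handle this via the Deodhar parametrization of \Cref{thm:tilted-deodhar}, where the last factor varies over $y_i(\C^\ast)\cup\{\dot s_i\}$. Alternatively, I would use the rank characterization of \Cref{def:main} with $a_i=a_{i+1}$: the special subspace is forced by a single rank drop, which the interval structure guarantees.
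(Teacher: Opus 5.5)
Your proposal is correct and follows the same overall plan as the paper: show $\cT_{u,v}$ is a union of fibers of $\hpi$, show $\hpi$ has degree one on $\cT_{us_i,v}$ and on $\cT_{u,vs_i}$ with image $\hpi(\cT_{u,v})$, then push and pull. The three geometric inputs, however, are obtained differently.

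\textbf{How the paper argues.} It proves everything with the relative-position calculus of \Cref{prop:relative-position} together with the stratification \Cref{thm:Tunion}.
\begin{itemize}
\item Saturation: if $F_\bullet\xrightarrow{s_i}F'_\bullet$ with $F'_\bullet\in\cT^\circ_{x,y}$, then $F_\bullet\in\cT^\circ_{x',y'}$ for some $x'\in\{x,xs_i\}$ and $y'\in\{y,ys_i\}$.
\item The equality $\hpi^{-1}\hpi(\cT_{us_i,v})=\cT_{u,v}$ is proved directly, by a case split on $i\in\Des_\ba(x)$ (use lifting) versus $i\in\Asc_\ba(x)$ (use part (5)).
\item Injectivity holds on all of $\cT^\circ_{us_i,v}$ in one line. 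Two distinct points of it in one fiber are in relative position $s_i$. Since $i\in\Des_\ba(us_i)$, part (4) would put the second point in $\Omega^\circ_{u,\ba}$, contradicting the tilted Bruhat decomposition.
\end{itemize}

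\textbf{How you argue, and what it buys.}
\begin{itemize}
\item Your saturation argument uses $\rank_S(F'_i)\le\min(\rank_S(F_{i-1})+1,\rank_S(F_{i+1}))$ and the counting inequality. This is more elementary and works directly on the closed variety without the stratification. It also shows exactly where $a_i=a_{i+1}$, $i\in\Asc_\ba(u)$ and $i\in\Des_\ba(v)$ are used.
\item One step needs a justification you did not give. Reading the conditions on $F_{i-1}$ with cutoff $a_i$ requires $u[i-1]\le_{a_i}v[i-1]$, which comes from \Cref{cor:alter-a-lesssim} and \Cref{lemma:rotateRich}. The equality $a_i=a_{i+1}$ alone does not give it.
\item For surjectivity you use dimension counting plus \Cref{thm:irreducible}, instead of proving the set equality directly.
\item For degree one you use the Deodhar parametrization, which is heavier. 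It does work. You need a regular word ending in $s_i$; append $s_i$ to a regular word for $vs_i$, which is legal since $i\notin\Jump_\ba$. For $g'B\in\cD_{\mathbf{u}',\mathbf{v}'}$, the fiber is $\{g'y_i(p')B\}_{p'\in\C}\cup\{g'\dot{s}_iB\}$. It meets $\cT^\circ_{u,vs_i}$ only at $p'=0$ and $\cT^\circ_{us_i,v}$ only at $g'\dot{s}_iB$. This gives injectivity on the dense cells $\cD_{\mathbf{u}',\mathbf{v}'}$ and $\cD_{\mathbf{u}'s_i,\mathbf{v}}$, which suffices.
\end{itemize}

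The ``main obstacle'' you identify, namely the special point landing in the open stratum, disappears entirely with the paper's short relative-position argument. That argument needs neither irreducibility nor a regular word.
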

\begin{proof}
    By \Cref{thm:strong-lifting}, fix a sequence $\ba\in[n]^n$ such that $u\lesssim_\ba v$ and $i\in\Asc_\ba(u)\cap\Des_\ba(v)$. We first establish the following geometric statements:
    \begin{enumerate}
        \item[(a)] $\hpi^{-1}(\hpi(\cT_{u,v}))=\cT_{u,v}$;
        \item[(b)] $\hpi^{-1}(\hpi(\cT_{us_i,v}))=\hpi^{-1}(\hpi(\cT_{u,vs_i}))=\cT_{u,v}$;
        \item[(c)] The projection $\hpi$ restricts to an isomorphism over $\cT_{us_i, v}^\circ$ and $\cT_{u,vs_i}^\circ$.
    \end{enumerate}
    For (a), it suffices to show that $\hpi^{-1}(\hpi(\cT_{u,v}))\subseteq \cT_{u,v}$. Any $F_\bullet \in \hpi^{-1}(\hpi(\cT_{u,v}))$ is either equal to some $F'_\bullet \in \cT_{u,v}$ or satisfies $F_\bullet \xrightarrow{s_i} F'_\bullet$. If $F_\bullet = F'_\bullet$, the result is clear. Otherwise, by \Cref{thm:Tunion}, we have $F'_\bullet \in \cT_{x,y}^\circ$ for some $[x, y] \subseteq [u, v]$, or equivalently, $e \xrightarrow{y, \ba} F'_\bullet \xleftarrow{x, \ba} w_0$. Since $F_\bullet \xrightarrow{s_i} F'_\bullet$, \Cref{prop:relative-position} implies that $e \xrightarrow{y', \ba} F_\bullet \xleftarrow{x', \ba} w_0$ for some $x' \in \{x, x s_i\}$ and $y' \in \{y, y s_i\}$. In either case, $x', y' \in [u, v]$, so $F_\bullet \in \cT_{x', y'}^\circ \subseteq \cT_{u,v}$.

    For (b), we prove only that $\cT_{u,v} \subseteq \hpi^{-1}(\hpi(\cT_{u s_i, v}))$,, as the other case is analogous. Let $F_\bullet \in \cT_{u,v}$. We aim to show that there exists $F'_\bullet \in \cT_{u s_i, v}$ such that either $F_\bullet = F'_\bullet$ or $F_\bullet \xrightarrow{s_i} F'_\bullet$. Since $F_\bullet \in \cT_{x,y}^\circ$ for some $[x, y] \subseteq [u, v]$, we consider two cases. If $i \in \Des_\ba(x)$, then by \Cref{thm:lifting}, we have $[x, y] \subseteq [u s_i, v]$, which implies $F_\bullet \in \cT_{x, y}^\circ \subseteq \cT_{u s_i, v}$. If $i \in \Asc_\ba(x)$, then by \Cref{prop:relative-position} (5), there exists $F'_\bullet$ such that $w_0 \xrightarrow{x s_i, \ba} F'_\bullet \xrightarrow{s_i} F_\bullet$. Moreover, we have $e \xrightarrow{y', \ba} F'_\bullet$ for some $y' \in \{y, y s_i\}$, and therefore $F'_\bullet \in \cT_{x s_i, y'}^\circ \subseteq \cT_{u s_i, v}$.
    
    For (c), suppose for contradiction that two distinct flags $F_\bullet, F'_\bullet \in \cT_{u s_i, v}^\circ$ have the same image under $\hpi$. Then necessarily $F_\bullet \xrightarrow{s_i} F'_\bullet$. However, since both $w_0\xrightarrow{us_i,\ba}F_\bullet$ and $w_0\xrightarrow{us_i,\ba}F'_\bullet$, this contradicts \Cref{prop:relative-position} (4).
    
    We now return to the main statements. For part (1), by statement (a), the projection $\cT_{u,v} \to \hpi(\cT_{u,v})$ has fibers isomorphic to $\P^1$, and is not generically finite. Therefore, the pushforward vanishes: $(\hpi)_\ast[\cT_{u,v}]=0$, and hence $\partial_i[\cT_{u,v}]=0$.
    
    For part (2), we prove only $\partial_i[\cT_{u s_i, v}] = [\cT_{u,v}]$, and the other identity is similar. By statement (c), the projection $\cT_{u s_i, v} \to \hpi(\cT_{u s_i, v})$ restricts to an isomorphism over the dense open subset $\cT_{u s_i, v}^\circ \subseteq \cT_{u s_i, v}$, so the projection has degree one:
    \[(\hpi)_\ast[\cT_{us_i,v}]=[\hpi(\cT_{us_i,v})].\]
    Then, using statement (b), we compute:
    \[\partial_i[\cT_{us_i,v}]=(\hpi)^\ast[\hpi(\cT_{us_i,v})]=[\hpi^{-1}(\hpi(\cT_{us_i,v}))]=[\cT_{u,v}]. \qedhere\]
\end{proof}

The following well-known lemma describes the action of the operator $\partial_i$ on Schubert classes. It is a direct corollary of \Cref{prop:push-pull}.
\begin{lemma}\label{lemma:push-pull}
    Let $w\in S_n$ and $i\in [n-1]$. Then
    \[\partial_i\sigma_w=\begin{cases}
        \sigma_{ws_i} &\text{if }w>ws_i,\\
        0 &\text{if }w<ws_i.
    \end{cases}\]
\end{lemma}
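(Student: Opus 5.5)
The plan is to realize each Schubert class as the class of a classical Richardson variety, which is a special tilted Richardson variety, and then apply \Cref{prop:push-pull}. Recall that $\sigma_w=[\Omega_w]=[\cR_{w,w_0}]=[\cT_{w,w_0}]$, since $\cR_{w,w_0}=\Omega_w\cap X_{w_0}=\Omega_w$. The tilted interval here is the ordinary Bruhat interval $[w,w_0]$, because $w\le w_0$.

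First take the case $w<ws_i$, i.e.\ $i\in\Asc(w)$. I want to show $[w,w_0]\cdot s_i=[w,w_0]$. Take $y\in[w,w_0]$ with $ys_i<y$. By the lifting property, using that $i\in\Des(y)\setminus\Des(w)$, we get $ws_i\le y$. Combined with $w\le ws_i$, this gives $w\le ys_i$. If instead $ys_i>y$, then $ys_i\ge y\ge w$ directly. Since $ys_i\le w_0$ always, the interval is stable under right multiplication by $s_i$. Part (1) of \Cref{prop:push-pull} then gives $\partial_i\sigma_w=\partial_i[\cT_{w,w_0}]=0$.

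Now take the case $w>ws_i$. Set $u=ws_i$, so that $u<us_i=w$. By the previous paragraph, $[u,w_0]\cdot s_i=[u,w_0]$. Part (2) of \Cref{prop:push-pull} then gives
\[\partial_i[\cT_{us_i,w_0}]=[\cT_{u,w_0}].\]
Here $[\cT_{us_i,w_0}]=\sigma_w$ and $[\cT_{u,w_0}]=\sigma_{ws_i}$, so $\partial_i\sigma_w=\sigma_{ws_i}$.

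The one point needing care is the invariance check $[u,w_0]\cdot s_i=[u,w_0]$ when $u<us_i$. This rests on the classical lifting property applied with top element $w_0$, and it is routine.
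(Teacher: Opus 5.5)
Your proposal is correct and follows the paper's argument: specialize \Cref{prop:push-pull} to $v=w_0$ using $\cT_{u,w_0}=\cR_{u,w_0}=\Omega_u$, and your explicit check that $[u,w_0]\cdot s_i=[u,w_0]$ when $u<us_i$ verifies a hypothesis the paper leaves implicit. One small repair: the inference ``$ws_i\le y$ combined with $w\le ws_i$ gives $w\le ys_i$'' is not valid as written, but you do not need it, since the lifting property applied to $w<y$ (strict because $y\neq w$) gives $w\le ys_i$ directly as its second conclusion.
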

\begin{proof}
    This follows directly from \Cref{prop:push-pull} in the case $v = w_0$, since $\cT_{u, w_0} = \Omega_u$.
\end{proof}

Finally, we state the descent-cycling formula for Gromov--Witten invariants.

\begin{theorem}[Descent-Cycling Formula]\label{thm:descent-cycling}
    Let $u,v,w\in S_n$ and $i\in [n-1]$ such that $[u,v]\cdot s_i=[u,v]$, and $ws_i>w$. Then:
    \begin{enumerate}
        \item $c_{u,w}^{v,d_{u,v}}=0$;
        \item $c_{u,ws_i}^{v,d_{u,v}} =c_{us_i,w}^{v,d_{us_i,v}}=c_{u,w}^{vs_i,d_{u,vs_i}}$.
    \end{enumerate}
\end{theorem}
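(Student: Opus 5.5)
The plan is to combine \Cref{thm:cohomology} with \Cref{prop:push-pull} and then read off coefficients using \Cref{lemma:push-pull} together with the duality lemma. By \Cref{thm:cohomology},
\[
[\cT_{u,v}]=\sum_{x\in S_n}c_{u,x}^{v,d_{u,v}}\sigma_{w_0x},
\]
and analogous expansions hold for $[\cT_{us_i,v}]$ (with degree $d_{us_i,v}$) and for $[\cT_{u,vs_i}]$ (with degree $d_{u,vs_i}$). The coefficient of $\sigma_{w_0x}$ can be extracted by pairing with $\sigma_x$, using \Cref{lemma:duality}.

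For part (1), I apply $\partial_i$ to the expansion of $[\cT_{u,v}]$. By \Cref{prop:push-pull}(1) the result is $0$. By \Cref{lemma:push-pull}, $\partial_i\sigma_{w_0x}=\sigma_{w_0xs_i}$ when $w_0x>w_0xs_i$, which is equivalent to $xs_i>x$, and vanishes otherwise. Hence
\[
0=\sum_{x:\,xs_i>x}c_{u,x}^{v,d_{u,v}}\sigma_{w_0xs_i}.
\]
The classes $\sigma_{w_0xs_i}$ for distinct $x$ are distinct Schubert classes, so they are linearly independent. Therefore $c_{u,w}^{v,d_{u,v}}=0$ whenever $ws_i>w$.

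For part (2), I apply $\partial_i$ to the expansion of $[\cT_{us_i,v}]$ and use \Cref{prop:push-pull}(2):
\[
\sum_{x:\,xs_i>x}c_{us_i,x}^{v,d_{us_i,v}}\sigma_{w_0xs_i}=[\cT_{u,v}]=\sum_{y}c_{u,y}^{v,d_{u,v}}\sigma_{w_0y}.
\]
I compare the coefficients of $\sigma_{w_0ws_i}$ on both sides, where $ws_i>w$. The left side contributes exactly the term with $x=w$, giving $c_{us_i,w}^{v,d_{us_i,v}}$. The right side contributes the term with $y=ws_i$, giving $c_{u,ws_i}^{v,d_{u,v}}$. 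Running the same argument with $[\cT_{u,vs_i}]$ yields $c_{u,w}^{vs_i,d_{u,vs_i}}=c_{u,ws_i}^{v,d_{u,v}}$.

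The point I expect to need care is that the degrees appearing in \Cref{thm:cohomology} for $\cT_{us_i,v}$ and $\cT_{u,vs_i}$ are their own minimal degrees. This is automatic, since \Cref{thm:cohomology} applies to any pair of permutations. The degrees $d_{us_i,v}$ and $d_{u,v}$ may differ, but they enter only through the labels on the Gromov--Witten invariants, so no relation between them is needed for the identities above. One should also check the bookkeeping of $w_0$-conjugation, namely that $w_0x>w_0xs_i$ holds iff $x<xs_i$; this is standard, because left multiplication by $w_0$ reverses the Bruhat order.
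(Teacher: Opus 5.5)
Your proposal is correct and follows the paper's proof. The paper also expands the identities $\partial_i[\cT_{u,v}]=0$ and $[\cT_{u,v}]=\partial_i[\cT_{us_i,v}]=\partial_i[\cT_{u,vs_i}]$ from \Cref{prop:push-pull} in the Schubert basis via \Cref{thm:cohomology} and \Cref{lemma:push-pull}, then compares coefficients; you have just made explicit the bookkeeping the paper leaves implicit, namely $w_0x>w_0xs_i\iff xs_i>x$ and the injectivity of $x\mapsto w_0xs_i$.
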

\begin{proof}
    Part (1) follows from the Schubert expansion of the first equation $\partial_i[\cT_{u, v}] = 0$ in \Cref{prop:push-pull}, combined with \Cref{thm:cohomology} and \Cref{lemma:push-pull}.

    Part (2) follows by comparing all sides of the second equation in \Cref{prop:push-pull}:
    \[[\cT_{u,v}]=\partial_i[\cT_{us_i,v}]= \partial_i[\cT_{u,vs_i}],\]
    combined with \Cref{thm:cohomology} and \Cref{lemma:push-pull}.
\end{proof}

%% file: tex/8-projections.tex
\section{Projections of Tilted Richardson Varieties}\label{sec:projection}

Let $\pi_k:\fl_n\rightarrow \gr(k,n)$ be the natural projection onto the $k$-th flag. In \cite{KLSjuggling, KLSprojection}, Knutson--Lam--Speyer studied the images of Richardson varieties $\cR_{u,v}$ under $\pi_k$. Here we extend their results to tilted Richardson varieties. In particular, we show that
\begin{itemize}
    \item the image of any tilted Richardson variety under $\pi_k$ is a positroid variety (\Cref{thm:proj=positroid});
    \item the restriction $\pi_k\colon \cT_{u,v}\to \pi_k(\cT_{u,v})$ is birational if and only if $u\le_{\ba}^k v$
    (\Cref{def:ktilted}, \Cref{thm:birational}).
\end{itemize}

We begin with a recursion for open tilted Richardson varieties.

\begin{prop}\label{prop:recur}
    For $u\lesssim_\ba v$ and $i\in \Des_\ba(v)$,
    \[\cT_{u,v}^\circ \cong 
    \begin{cases}
        \bigl(\cT_{u,vs_i}^\circ \times \C^\ast\bigr)\;\sqcup\;\bigl(\cT_{us_i,vs_i,\ba}^\circ \times \C\bigr)
        & \text{if } us_i>_\ba u,\\
        \cT_{us_i,vs_i}^\circ
        & \text{if } us_i<_\ba u.
    \end{cases}\]
    Here we write $\cT_{us_i,vs_i,\ba}^\circ$ to emphasize that it is empty if $us_i\nleq_\ba vs_i$.
\end{prop}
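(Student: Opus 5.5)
We will prove the recursion with the map $\pi^{v,\ba}_{vs_i}$ from \Cref{prop:relative-position}(5), which is the tilted analogue of Deodhar's $s_i$-recursion. Since $u\lesssim_\ba v$, \Cref{thm:tilted-Schub-cell-intersect} gives $\cT^\circ_{u,v}=X^\circ_{v,\ba}\cap\Omega^\circ_{u,\ba}$. Because $i\in\Des_\ba(v)$, \Cref{prop:relative-position}(5) supplies an algebraic map
\[\pi:=\pi^{v,\ba}_{vs_i}\colon X^\circ_{v,\ba}\to X^\circ_{vs_i,\ba},\qquad gB\mapsto g\dot s_iB,\]
with $g$ the canonical representative. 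Concretely, in the notation of the proof of \Cref{prop:relative-position}(3), $X^\circ_{v,\ba}\cong X^\circ_{vs_i,\ba}\times\C$: a flag $y'\in X^\circ_{vs_i,\ba}$ with canonical matrix $g'$ corresponds to the line of flags $g'\phi_i\left(\begin{smallmatrix}p&1\\1&0\end{smallmatrix}\right)B$, $p\in\C$. So we are led to study the fibers of $\pi$ restricted to $\Omega^\circ_{u,\ba}$.

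\textbf{Step 1: the image lands in the expected strata.} Let $y\in\cT^\circ_{u,v}$ and put $y'=\pi(y)$, so $y'\xrightarrow{s_i}y$, hence $y\xrightarrow{s_i}y'$. By \Cref{prop:relative-position}(4), $w_0\xrightarrow{x,\ba}y'$ with $x\in\{u,us_i\}$, and $x=us_i$ is forced when $i\in\Des_\ba(u)$.
\begin{itemize}
\item If $us_i<_\ba u$, then $y'\in X^\circ_{vs_i,\ba}\cap\Omega^\circ_{us_i,\ba}$. Since $us_i\sim_\ba u\sim_\ba v\sim_\ba vs_i$ (an adjacent swap with $a_i=a_{i+1}$ preserves $\sim_\ba$), the stronger form proved for \Cref{thm:tilted-Schub-cell-intersect} gives $y'\in\cT^\circ_{us_i,vs_i,\ba}$. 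By \Cref{thm:lifting}, or by the nonemptiness criterion \Cref{thm:tilted-T-fixed-point}, this set equals $\cT^\circ_{us_i,vs_i}$.
\item If $us_i>_\ba u$, then $y'$ lies in $\cT^\circ_{u,vs_i}$ or in $\cT^\circ_{us_i,vs_i,\ba}$.
\end{itemize}

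\textbf{Step 2: identify the fibers.} We run the argument of case (2) in the proof of \Cref{thm:tilted-deodhar} in reverse. Fix $y'$ in the target, with canonical matrix $g'$ as in \Cref{prop:tilted-Schub-cell}. The fiber of $\pi$ over $y'$ is $\{g'\phi_i\left(\begin{smallmatrix}p&1\\1&0\end{smallmatrix}\right)B:p\in\C\}$. We determine which of these lie in $\Omega^\circ_{u,\ba}$ using the case analysis in the proof of \Cref{prop:relative-position}(3), applied to the opposite cells, i.e.\ its part (4).
\begin{itemize}
\item $y'\in\Omega^\circ_{us_i,\ba}$ with $i\in\Des_\ba(us_i)$ (that is, $u<_\ba us_i$): every $p\in\C$ gives a point of $\Omega^\circ_{u,\ba}$. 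The fiber is $\C$, which is the $\cT^\circ_{us_i,vs_i,\ba}\times\C$ piece. It also yields the isomorphism in the second case, with the roles of $u$ and $us_i$ exchanged.
\item $y'\in\Omega^\circ_{u,\ba}$ with $i\in\Asc_\ba(u)$: exactly one value, the ``$p=\infty$'' point in the $\P^1$ (the one giving $y$ with $y\xrightarrow{s_i}$ of relative position lying in $\Omega^\circ_{us_i,\ba}$), is excluded. The remaining fiber is $\C^\ast$, in the coordinate of the $y_i(p)$ parametrization.
\end{itemize}
In the case $us_i<_\ba u$, the map is a bijection, because the fiber consists of the single point $y'\dot s_i$. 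This is the uniqueness statement in \Cref{prop:relative-position}(5), now used for the opposite cells.

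\textbf{Step 3: algebraicity and the main obstacle.} The fiber coordinate must be a regular function of the canonical entries of $g'$, so that the bijections are isomorphisms of varieties. For this we will use the explicit matrices $x_i(m)\dot s_i$ and $y_i(p)$ as in \Cref{thm:tilted-deodhar}. The expected obstacle is the $\C^\ast$ case. There we must show that the excluded point is exactly one value of the line coordinate, uniformly in $y'$, and that the trivialization $\cT^\circ_{u,vs_i}\times\C^\ast$ is global rather than merely fiberwise. We will first try to normalize $g'$ inside the standard chart $G_{u,\ba}$, reusing its stability under right multiplication by $y_i$ from the proof of \Cref{thm:tilted-deodhar}. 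The distinguished point should then correspond to $\dot s_i$, and the rank argument on the rows $[a_i,u_{i+1})_c$ from that proof shows that this point lies in $\cT^\circ_{us_i,v}$ rather than in $\cT^\circ_{u,v}$.
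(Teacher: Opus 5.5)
Your route differs from the paper's. The paper takes a tilted reduced word $\mathbf v$ for $v$ ending in $s_i$ and applies the full tilted Deodhar decomposition (\Cref{thm:tilted-deodhar}). It then sorts the cells $\cD_{\mathbf u,\mathbf v}$ by the last letter of $\mathbf u$, and the cell parametrizations $g\,y_i(p)$, $g\,x_i(m)\dot s_i^{-1}$, $g\dot s_i$ give the three pieces at once. You instead study $\pi^{v,\ba}_{vs_i}$ directly on $\cT^\circ_{u,v}=X^\circ_{v,\ba}\cap\Omega^\circ_{u,\ba}$, which amounts to running the inductive step of the proof of \Cref{thm:tilted-deodhar} once, on the whole variety. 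This buys independence from the Deodhar theorem and its regularity hypothesis, and it gives genuine isomorphisms of each stratum rather than a cell-by-cell matching.

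Step 1 is correct. So is the $\C$-piece: with $g'$ the canonical representative of $y'$ in $X^\circ_{vs_i,\ba}$, the matrix $g'\phi_i\left(\begin{smallmatrix}p&1\\1&0\end{smallmatrix}\right)$ is already canonical in $X^\circ_{v,\ba}$. Hence $(y',p)\mapsto g'\phi_i\left(\begin{smallmatrix}p&1\\1&0\end{smallmatrix}\right)B$ is an isomorphism from $\cT^\circ_{us_i,vs_i,\ba}\times\C$ onto its image.

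The gap is in the other two pieces. In Step 2 you locate the special points in the wrong chart, and this is exactly what Step 3 leaves open. In the coordinate $p\mapsto g'\phi_i\left(\begin{smallmatrix}p&1\\1&0\end{smallmatrix}\right)B$, the point $p=\infty$ is $y'$ itself, which is not in the fiber. The point to exclude in the $\C^\ast$ case, and the unique fiber point when $us_i<_\ba u$, sit at a finite, $y'$-dependent value that is generally not $0$.

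For example, take $\ba=(1,1,1)$, $v=321$, $i=1$ and $g'=\left(\begin{smallmatrix}a&b&1\\1&0&0\\0&1&0\end{smallmatrix}\right)$ with $ab\neq 0$. For $u=123$ the excluded value is $p=-b/a$. For $u=213$ the unique fiber point is also at $p=-b/a$, whereas $g'\dot s_1B$ (the value $p=0$) lies in $\Omega^\circ_{123}$. So ``$y'\dot s_i$'' is wrong if computed from $g'$.

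The missing idea is to use the canonical representative $h(y')$ of $y'$ in the \emph{opposite} tilted Schubert cell ($\Omega^\circ_{u,\ba}$, resp.\ $\Omega^\circ_{us_i,\ba}$), whose entries are regular on the base. The column manipulation of case (1) in the proof of \Cref{prop:relative-position}, transposed to opposite cells, shows that $h\dot s_iB$ lies in $\Omega^\circ_{us_i,\ba}$ (resp.\ $\Omega^\circ_{u,\ba}$). The uniqueness argument of part (5), transposed the same way, shows it is the only point of the $\P^1$ in that cell. Consequently:
\begin{itemize}
\item the map $(y',p)\mapsto h(y')\,y_i(p)B$ with $p\in\C^\ast$ parametrizes the $\C^\ast$-piece;
\item the map $y'\mapsto h(y')\dot s_iB$ inverts $\pi$ when $us_i<_\ba u$;
\item both inverses are given by $\pi$ together with the coordinate $p$ read off from $F_i$.
\end{itemize}
Equivalently, the excluded value $p_0(y')$ in your chart is a ratio of regular functions with nonvanishing denominator, so $p\mapsto p-p_0(y')$ trivializes globally. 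Your suggested normalization into $G_{u,\ba}$ is the same fix, since $h(y')\in G_{u,\ba}$. With this written out, your argument is complete.
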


\begin{proof}
    Since $i\in \Des_\ba(v)$, set $\mathbf{v}$ to be an $\ba$-tilted reduced word for $v$ that ends with $s_i$. By \Cref{thm:tilted-deodhar}, 
    \[\cT_{u,v}^\circ = \bigsqcup_{\mathbf{u}\prec \mathbf{v}}D_{\mathbf{u},\mathbf{v}}.\]
    Let $\mathbf{v^-},\mathbf{u^-}$ be obtained from $\mathbf{v},\mathbf{u}$ by removing the last letter (note that for $\mathbf{u}^-$, the removed letter could be either $1$ or $s_i$). Then $\mathbf{v^-}$ is an $\ba$-tilted reduced word for $vs_i$ and $\mathbf{u}^-\prec \mathbf{v^-}$.
    If $us_i <_\ba u$ then every distinguished subword $\mathbf{u}$ of $\mathbf{v}$ ends with $s_i$ instead of $1$. Therefore by the parametrization in \Cref{thm:tilted-deodhar},
    \begin{align}\label{eqn:Jan18aaa}
    \begin{split}
        D_{\mathbf{u^-},\mathbf{v^-}}&\cong D_{\mathbf{u},\mathbf{v}}\\
        gB &\mapsto g\dot{s}_iB.
    \end{split}
    \end{align}
     Since $\mathbf{u}\prec \mathbf{v}\iff \mathbf{u^-}\prec \mathbf{v^-}$, we have $\cT_{u,v}^\circ \cong \cT_{us_i,vs_i}^\circ$ under the map in \eqref{eqn:Jan18aaa}.
     
    If $us_i>_\ba u$, then by \Cref{thm:tilted-deodhar},
    \[\cT_{u,v}^\circ = \bigsqcup_{\substack{\mathbf{u}\prec \mathbf{v}\\\mathbf{u}\text{ ends with }1}}D_{\mathbf{u},\mathbf{v}}\sqcup \bigsqcup_{\substack{\mathbf{u}\prec \mathbf{v}\\\mathbf{u}\text{ ends with }s_i}}D_{\mathbf{u},\mathbf{v}}.\]
    If $\mathbf{u}\prec \mathbf{v}$ ends with $1$, then 
    \begin{align}\label{eqn:Jan18bbb}
    \begin{split}
        D_{\mathbf{u^-},\mathbf{v^-}}\times \C^*&\cong D_{\mathbf{u},\mathbf{v}}\\
        (gB,p) &\mapsto gy_i(p)B.
    \end{split}
    \end{align}
    Otherwise if $\mathbf{u}\prec \mathbf{v}$ ends with $s_i$, then 
    \begin{align}\label{eqn:Jan18ccc}
    \begin{split}
        D_{\mathbf{u^-},\mathbf{v^-}}\times \C&\cong D_{\mathbf{u},\mathbf{v}}\\
        (gB,m) &\mapsto gx_i(m)\dot{s}_i^{-1}B.
    \end{split}
    \end{align}
    Since $\{\mathbf{u}^-:\mathbf{u}\prec\mathbf{v}\text{ ends with }1\}$ and $\{\mathbf{u}^-:\mathbf{u}\prec\mathbf{v}\text{ ends with }s_i\}$ are the sets of distinguished subwords of $\mathbf{v}^-$ for $u$ and $us_i$ respectively, using the maps \eqref{eqn:Jan18bbb} and \eqref{eqn:Jan18ccc}, we have
    $$\bigsqcup_{\substack{\mathbf{u}\prec \mathbf{v}\\\mathbf{u}\text{ ends with }1}}D_{\mathbf{u},\mathbf{v}} \cong \cT^\circ_{u,vs_i}\times \C^* \text{ and }\bigsqcup_{\substack{\mathbf{u}\prec \mathbf{v}\\\mathbf{u}\text{ ends with }s_i}}D_{\mathbf{u},\mathbf{v}} \cong \cT^\circ_{us_i,vs_i}\times \C.$$
    Combining these pieces gives the stated decomposition of $\cT_{u,v}^\circ$.
\end{proof}

\begin{prop}\label{prop:projrecur}
    For $u\lesssim_\ba v, i\in \Des_\ba(v)$ and $k\neq i$,
    \[\pi_k(\cT_{u,v}) = \begin{cases}
        \pi_k(\cT_{u,vs_i}) &\text{ if }us_i>_\ba u\\
        \pi_k(\cT_{us_i,vs_i}) & \text{ if }us_i<_\ba u
    \end{cases}.\]
\end{prop}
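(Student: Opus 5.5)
We will work with the open varieties and the recursion in \Cref{prop:recur}, and then pass to closures. The key observation is that all the maps in \Cref{prop:recur} act on a flag by right multiplication by an element supported in the $2\times 2$ block of rows and columns $i,i+1$. Concretely, they are $gB\mapsto g\dot{s}_iB$, $gB\mapsto gy_i(p)B$, and $gB\mapsto gx_i(m)\dot{s}_i^{-1}B$. Right multiplication by such a matrix only changes columns $i$ and $i+1$ of the representative. It therefore leaves the column span of the first $k$ columns unchanged whenever $k\neq i$. Hence each of these maps commutes with $\pi_k$ for $k\neq i$.

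\textbf{Case $us_i<_\ba u$.} By \Cref{prop:recur}, the map $gB\mapsto g\dot{s}_iB$, applied cell by cell, gives a bijection $\cT^\circ_{us_i,vs_i}\to\cT^\circ_{u,v}$. Since this map preserves $F_k$, we get $\pi_k(\cT^\circ_{u,v})=\pi_k(\cT^\circ_{us_i,vs_i})$.

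\textbf{Case $us_i>_\ba u$.} By \Cref{prop:recur}, $\cT^\circ_{u,v}$ is the union of two pieces. The first is the image of $\cT^\circ_{u,vs_i}\times\C^\ast$ under $(gB,p)\mapsto gy_i(p)B$. The second is the image of $\cT^\circ_{us_i,vs_i,\ba}\times\C$ under $(gB,m)\mapsto gx_i(m)\dot{s}_i^{-1}B$. Applying $\pi_k$ to both pieces gives
\[\pi_k(\cT^\circ_{u,v})=\pi_k(\cT^\circ_{u,vs_i})\cup\pi_k(\cT^\circ_{us_i,vs_i,\ba}).\]
We need the second term to be absorbed into the closure of the first. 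For this we use the containment $\cT_{us_i,vs_i}\subseteq\cT_{u,vs_i}$ whenever the former is nonempty. This holds because $u\lesssim_\ba us_i\lesssim_\ba vs_i$ makes $[us_i,vs_i]$ a subinterval of $[u,vs_i]$ (by \Cref{thm:tilted-criterion}), and then \Cref{thm:Tunion} applies.

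\textbf{Passing to closures.} The map $\pi_k$ is proper, so it sends closures to closures. Using \Cref{thm:closure}, we get
\[\pi_k(\cT_{u,v})=\overline{\pi_k(\cT^\circ_{u,v})}\quad\text{and}\quad \pi_k(\cT_{u,vs_i})=\overline{\pi_k(\cT^\circ_{u,vs_i})}.\]
Taking closures of the displayed union in the second case then yields $\pi_k(\cT_{u,v})=\pi_k(\cT_{u,vs_i})\cup\pi_k(\cT_{us_i,vs_i})$, and the second term lies inside the first by the containment above. The first case follows the same way.

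\textbf{Expected obstacle.} The main point requiring care is the relation $u\lesssim_\ba us_i$ with $us_i\lesssim_\ba vs_i$, that is, checking the $\sim_\ba$ condition. We plan to get it from $i\in\Des_\ba(v)$, which forces $a_i=a_{i+1}$, together with \Cref{prop:covering}(2). When $us_i\not\leq_\ba vs_i$, the corresponding piece is empty and this step is vacuous.
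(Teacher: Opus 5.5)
Your proposal is correct and follows the paper's proof essentially step for step. You use properness of $\pi_k$ together with $\cT_{u,v}=\overline{\cT^\circ_{u,v}}$, push the decomposition of \Cref{prop:recur} forward along $\pi_k$ (noting that the maps fix $F_k$ for $k\neq i$), and absorb the second piece using $\cT_{us_i,vs_i}\subseteq\cT_{u,vs_i}$. The only difference is that you justify two points the paper asserts without detail: that the maps preserve $F_k$ because they only alter columns $i,i+1$, and that the containment holds via $u\lesssim_\ba us_i\lesssim_\ba vs_i$.
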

\begin{proof}

    We first claim that $\pi_k(\cT_{u,v})=\overline{\pi_k(\cT^\circ_{u,v})}$. Since $\pi_k$ is proper, the image of the closed set $\cT_{u,v}=\overline{\cT^\circ_{u,v}}$ is closed, hence $\pi_k(\cT_{u,v})=\pi_k(\overline{\cT^\circ_{u,v}})=\overline{\pi_k(\cT^\circ_{u,v})}$.

    If $us_i<_\ba u$, then by \Cref{prop:recur} we have $\cT_{u,v}^\circ \cong \cT_{us_i,vs_i}^\circ$ via the map $gB\mapsto g\dot{s}_iB$. Since $k\neq i$, this identification does not change the $k$-th flag, and hence $\pi_k(\cT^\circ_{u,v})=\pi_k(\cT^\circ_{us_i,vs_i})$, which implies $\pi_k(\cT_{u,v})=\pi_k(\cT_{us_i,vs_i})$.

    If $us_i>_\ba u$, then by \Cref{prop:recur} we have
    \[\cT_{u,v}^\circ\cong\bigl(\cT_{u,vs_i}^\circ\times \C^\ast\bigr)\;\sqcup\;\bigl(\cT_{us_i,vs_i,\ba}^\circ\times \C\bigr).\]
    Since this isomorphism does not affect the $k$-th flag, it follows that
    \[
    \pi_k(\cT_{u,v}^\circ)=\pi_k(\cT_{u,vs_i}^\circ)\,\cup\,\pi_k(\cT_{us_i,vs_i,\ba}^\circ).\]
    Moreover, $\cT_{us_i,vs_i,\ba}\subseteq \cT_{u,vs_i}$, so taking closures yields $\pi_k(\cT_{u,v})=\pi_k(\cT_{u,vs_i})$ as claimed.
\end{proof}

To prove \Cref{thm:proj=positroid}, we also need a version of \Cref{lemma:u<flatten(a)v} where we flatten $\ba$ from the back instead of the front:
\begin{defin}
    For $\ba \in [n]^n$, let $\jmax$ be the smallest index such that $a_{\jmax+1} = a_{\jmax+2} = \dots = a_n$. Define $\bflat(\ba)\in [n]^n$ to be the sequence obtained from $\ba$ by changing the last $n-\jmax$ numbers to $a_{\jmax}$. 
\end{defin}
\begin{lemma}\label{lemma:backflatten}
    If $u\lesssim_\ba v$ and $\Des_\ba(v)\subseteq [\jmax]$, then  $u\lesssim_{\bflat(\ba)} v$. 
\end{lemma}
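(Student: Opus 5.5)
By \Cref{cor:alter-a-lesssim}, $u\lesssim_{\ba'}v$ for a sequence $\ba'$ amounts to the conditions $u[k]\leq_{a'_k}v[k]$ and $u[k]\leq_{a'_{k+1}}v[k]$ for all $k\in[n-1]$. The sequences $\ba$ and $\bflat(\ba)$ agree in positions $1,\dots,\jmax$ and differ only in positions $\jmax+1,\dots,n$, where $\bflat(\ba)$ takes the constant value $a_{\jmax}$. Since $u\lesssim_\ba v$ already gives every condition that involves only indices at most $\jmax$, my plan is to reduce the lemma to a single claim:
\[
u[k]\leq_{a_{\jmax}}v[k]\quad\text{for all }k\geq \jmax.
\]
The case $k=\jmax$ is already known.

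For $k\geq\jmax+1$ we have $\Des_\ba(v)\cap[\jmax+1,n-1]=\emptyset$. Positions $\jmax+1,\dots,n$ all carry the common value $r:=a_{\jmax+1}$, so $v_{\jmax+1}<_r v_{\jmax+2}<_r\cdots<_r v_n$. In other words, the tail of $v$ is increasing in the shifted order $<_r$.

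The first step is to use \Cref{lemma:r-comparable}, which converts each condition into the statement that the lattice path $\p_k:=\p(u[k],v[k])$ attains its minimum at $x=a_{\jmax}$. From $u\lesssim_\ba v$ I know that $\p_{\jmax}$ attains its minimum at both $a_{\jmax}$ and $r$. I also know that each $\p_k$ with $k>\jmax$ attains its minimum at $r$.

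The second step is to pass from $\p_{k-1}$ to $\p_k$, as in the proof of \Cref{thm:lesssim-exist}. When $u_k<_r v_k$, the segment of the path between $u_k$ and $v_k$ is raised by one unit; when $u_k>_r v_k$, the segment between $v_k$ and $u_k$ is lowered by one unit. Both descriptions are cyclic, taken in the order $<_r$. By the second bullet of \Cref{lemma:both-r-comparable}, the path has equal heights at $r$ and at $a_{\jmax}$ exactly when $u[k]$ and $v[k]$ have the same number of elements in $[r,a_{\jmax})_c$. I intend to prove
\[
\size{u[k]\cap[r,a_{\jmax})_c}=\size{v[k]\cap[r,a_{\jmax})_c}
\]
by induction on $k$. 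Equal counts, combined with the fact that $\p_k$ is minimized at $r$, give $u[k]\leq_{a_{\jmax}}v[k]$ by the first bullet of \Cref{lemma:both-r-comparable}.

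The third step, which is the main obstacle, is to rule out the possibility that $u_k$ and $v_k$ lie on different sides of the cut, with one in $[r,a_{\jmax})_c$ and the other not. I plan to argue by contradiction. Suppose $k$ is the first index at which such a crossing occurs. At $k=n$ the two sets coincide and the counts agree, so the discrepancy created at step $k$ must be undone at some later step. I will then use the monotonicity of $v_{k},\dots,v_n$ in $<_r$ together with the fact that every path $\p_{k'}$ for $k'\geq k$ is minimized at $r$, that is, the $r$-Gale conditions $u[k']\leq_r v[k']$. The aim is to show that once the count of $v[k']\cap[r,a_{\jmax})_c$ falls below (or rises above) that of $u$, the increasing tail of $v$ prevents the gap from ever closing. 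This argument compares $r$-sorted positions in the manner of the lifting argument in \Cref{thm:lifting}.

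Concretely, I would first try to show that the count $\size{v[k]\cap[r,a_{\jmax})_c}$ is controlled by the initial segment of the increasing sequence $v_{\jmax+1},\dots$. From that control I would derive that $u[k]\leq_r v[k]$ forces the count for $u$ to be at least the count for $v$. The complementary inequality would come from reading the same argument backwards from $k=n$, applied to the complements $[n]\setminus u[k]$ and $[n]\setminus v[k]$. Putting the two inequalities together gives equality of the counts and finishes the proof.
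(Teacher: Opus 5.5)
Your reduction is correct and matches the paper's. You need $u[k]\leq_{a_{\jmax}}v[k]$ for $k>\jmax$. With $C:=[r,a_{\jmax})_c$, \Cref{lemma:both-r-comparable}(1) turns this into showing that $u_k$ and $v_k$ lie on the same side of $C$ for every $k>\jmax$; the paper proves exactly this ``same side'' statement.

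The gap is in your concrete plan for that step.
\begin{itemize}
\item The inequality $|u[k]\cap C|\ge|v[k]\cap C|$ needs no control from the tail of $v$. Since $C$ is an initial segment of $<_r$, it is just a restatement of $u[k]\le_r v[k]$.
\item The complementary inequality cannot come from ``reading backwards from $k=n$'' on complements. Combining $u[k+1,n]\ge_r v[k+1,n]$ with the initial segment $C$ gives $|u[k+1,n]\cap C|\le|v[k+1,n]\cap C|$, which is the same inequality again.
\item At $k=n$ the data are vacuous, and tail data alone cannot yield the reverse direction. For example, take $n=2$, $\ba=(2,1)$, $v=21$, $u=12$: the tail conditions hold, but $|u[1]\cap C|>|v[1]\cap C|$.
\item The reverse inequality must use the equal counts at $k=\jmax$, i.e.\ $u[\jmax]\le_{a_{\jmax}}v[\jmax]$. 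This is the only hypothesis involving $a_{\jmax}$, and your concrete plan never invokes it.
\end{itemize}

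The missing idea is an elementwise comparison.
\begin{itemize}
\item For $j>\jmax$, the increasing tail makes $v_j$ the $<_r$-minimum of $v[j,n]$.
\item Also $u[j,n]\ge_r v[j,n]$, as the complement of $u[j-1]\le_r v[j-1]$; for $j=\jmax+1$ this uses $u[\jmax]\le_{a_{\jmax+1}}v[\jmax]$.
\item Hence $u_j\ge_r \min u[j,n]\ge_r v_j$, so $u_j\in C$ forces $v_j\in C$.
\item Therefore $|u[k]\cap C|-|v[k]\cap C|$ is non-increasing for $k\ge\jmax$. It starts at $0$ at $k=\jmax$ and is $\ge 0$ by the Gale condition, so it is identically $0$.
\end{itemize}
This is essentially the paper's argument. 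The paper combines $u_k\ge_{a_n}v_k$ with the equality of tail counts coming from $u\sim_\ba v$, and then concludes $u_k\ge_{a_{\jmax}}v_k$ elementwise.

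Your first-crossing sketch can also be completed, but the two cases need different reasons.
\begin{itemize}
\item If $u_k\in C$ but $v_k\notin C$: the increasing tail puts $v_k,\dots,v_n$ all outside $C$. So the excess of $u$ can never be cancelled, contradicting $u[n]=v[n]$.
\item If $v_k\in C$ but $u_k\notin C$: this contradicts $u[k]\le_r v[k]$ at once, and the tail plays no role.
\end{itemize}
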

\begin{proof}
    Since $u\sim_\ba v$, we have $u\sim_{\bflat(\ba)} v$. It is then enough to check that $u[k] \leq_{a_{\jmax}} v[k]$ for all $k > \jmax$. Equivalently, this is $u[k+1,n]\geq_{a_{\jmax}} v[k+1,n]$ for all $k > \jmax$. Since $u\leq_\ba v$, we have
    \begin{equation}\label{eqn:Nov19aaa}
        u[k+1,n] \geq_{a_{n}} v[k+1,n] \text{ for all }k > \jmax.
    \end{equation}
    Since $\Des_\ba(v)\subseteq [\jmax]$, 
    $v_{\jmax+1} <_{a_n} \dots <_{a_n} v_n$. Therefore by \eqref{eqn:Nov19aaa}, $u_k \geq_{a_n} v_k$ for all $k > \jmax$. 

    Since $u\sim_\ba v$, we have
    \[|u[\jmax+1, n] \cap [ a_{n}, a_{\jmax})_c| = |v[\jmax+1, n] \cap [a_n, a_{\jmax})_c|.\]
    Therefore for all $k > \jmax$ either $u_k,v_k \in [ a_{n}, a_{\jmax})_c$ or $u_k,v_k \in [a_{\jmax}, a_n)_c$. Since $u_k \geq_{a_n} v_k$, we can conclude that $u_k \geq_{a_{\jmax}} v_k$, and thus $u[k+1,n]\geq_{a_{\jmax}} v[k+1,n]$ for all $k > \jmax$ as we desired.
\end{proof}

\begin{theorem}\label{thm:proj=positroid}
    The image $\Pi_{u,v}:=\pi_k(\cT_{u,v})$ is a positroid variety.
\end{theorem}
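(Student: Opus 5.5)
We will show by induction on $\ell(u,v)$ (or on $\ell^\word_\ba(v)$ together with the number of jumps of $\ba$) that $\pi_k(\cT_{u,v})$ equals $\pi_k(\cT_{u',v'})$ for some pair with $u'\lesssim_{\ba'}v'$, where $\ba'$ has the special shape making $\cT_{u',v'}$ essentially a cyclically rotated classical Richardson variety. The images of classical Richardson varieties under $\pi_k$ are positroid varieties by Knutson--Lam--Speyer \cite{KLSprojection}. Positroid varieties are invariant under the cyclic rotation $\cyclic$, so the conclusion will follow once we reach such a base case.

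Fix $\ba$ with $u\lesssim_\ba v$ (\Cref{thm:lesssim-exist}). First, as long as $\Des_\ba(v)$ contains some $i\neq k$, we apply \Cref{prop:projrecur}. This replaces $(u,v)$ by $(u,vs_i)$ or $(us_i,vs_i)$ without changing $\pi_k$. By \Cref{thm:lifting} and \Cref{cor:length-property}, the new pair still satisfies $\lesssim_\ba$, and $\ell^\word_\ba(v)$ strictly drops. Hence after finitely many steps, $\Des_\ba(v)\subseteq\{k\}$.

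Next we change $\ba$ itself. If $v$ has no $\ba$-descents in the first block, then $v$ is $\ba$-flattenable (this is the argument in case (1) of the proof of \Cref{thm:length-property}). By \Cref{lemma:u<flatten(a)v}, we may then replace $\ba$ by $\flatten(\ba)$; since $\cT_{u,v}$ is independent of $\ba$, this changes nothing geometrically. Symmetrically, \Cref{lemma:backflatten} lets us replace $\ba$ by $\bflat(\ba)$ when $\Des_\ba(v)\subseteq[\jmax]$. Each flattening reduces $\jasize$. After it, new $\ba$-descents (for $i\ne k$) may appear, and we return to the first step. The combined quantity ($\jasize$, $\ell^\word_\ba(v)$) decreases lexicographically, so the process terminates.

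The expected terminal configuration is one of two kinds. In the first, $\ba$ is constant, so $u\le v$ in the Bruhat order up to a global cyclic rotation, and $\cT_{u,v}$ is a rotated Richardson variety. In the second, $\ba$ is constant on $[1,k]$ and on $[k+1,n]$, say with values $r$ and $r'$, and $v$ is increasing for $<_{r}$ on the first block and for $<_{r'}$ on the second. In that case $\cT_{u,v}$ should be (up to rotation) the full preimage $\pi_k^{-1}(\cR_{u[k],v[k],a_k})$ intersected with fibres, and its image is the rotated Grassmannian Richardson $\cR_{u[k],v[k],a_k}$, which is a positroid variety by \cite{KLSjuggling}.

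The main obstacle is this terminal analysis. When a jump sits exactly at position $k$, we cannot flatten across it, and we must verify directly from \Cref{thm:altdefT} that $\pi_k(\cT_{u,v})=\cR_{u[k],v[k],a_k}$. Surjectivity onto the Grassmannian Richardson is the real point. We would try to prove it by lifting a point $V$ there to a flag: choose $F_1\subset\cdots\subset F_k=V$ using the constant-$a$ structure on $[1,k]$, where the conditions form a classical (rotated) Schubert-type condition on flags inside $V$, and similarly for the quotient $\C^n/V$ on $[k+1,n]$. We would then use the stratification \Cref{thm:Tunion} and the dimension count \Cref{thm:dimension} to check that the lift lies in $\cT_{u,v}$.
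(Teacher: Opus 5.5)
Your Steps 1 and 2 are exactly the mechanism of the paper's proof: remove $\ba$-descents $i\neq k$ with \Cref{prop:projrecur}, then flatten $\ba$ from the front or the back. However, your terminal analysis is wrong, and the case you call ``the main obstacle'' rests on a false claim.

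First, a jump at position $k$ does \emph{not} block flattening, and your own Step 2 criterion already applies there. Front flattening only needs $v$ to be $<_{a_1}$-increasing on $[1,\jmin]$, i.e.\ $\Des_\ba(v)\cap[\jmin-1]=\emptyset$. Moreover, $\jmin$ itself is never an $\ba$-descent, since $a_{\jmin}\neq a_{\jmin+1}$.

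More generally, once $\Des_\ba(v)\subseteq\{k\}$ and $\ba$ is not constant, some flattening is always available:
\begin{itemize}
\item if $k\notin\Des_\ba(v)$, flatten from the front;
\item if $k\in\Des_\ba(v)$, then $a_k=a_{k+1}$, so $k\neq\jmin$. Either $k>\jmin$, and you flatten from the front, or $k<\jmin\le\jmax$, and you flatten from the back by \Cref{lemma:backflatten}.
\end{itemize}
In particular, your second configuration never survives. There $v$ is $\ba$-flattenable and $\flatten(\ba)=(r',\dots,r')$, so $\cT_{u,v}$ is already a rotated Richardson variety. The process therefore always ends at constant $\ba$. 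This is the paper's argument: front-flatten the jumps $\le k$, back-flatten the jumps $>k$, then apply \cite[Theorem~5.9]{KLSjuggling} and cyclic rotation.

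Second, the statement you single out as ``the real point'' is false. You claim that in that configuration $\pi_k(\cT_{u,v})=\cR_{u[k],v[k],a_k}$, so the proposed lifting argument cannot work. Since $v$ is sorted on both blocks, \Cref{cor:length-rank-function} gives $\ell(u,v)=\ell_\ba(v)-\ell_\ba(u)\le\dim\cR_{u[k],v[k],r}$. Equality holds only if $u$ has no $\ba$-inversions inside the two blocks.

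Concretely, take $n=4$, $k=1$, $\ba=(4,1,1,1)$, $u=1324$, $v=3124$. Then $u\lesssim_\ba v$ and $\Des_\ba(v)=\emptyset$, so this is your second configuration. But $\cT_{u,v}=\cR_{1324,3124}$ is the $\P^1$ of lines $F_1\subset F_2=\mathrm{span}(e_1,e_3)$, with $F_3=\{x_4=0\}$. Its image under $\pi_1$ is the line $\{\Delta_2=\Delta_4=0\}$, whereas $\cR_{\{1\},\{3\},4}=\{\Delta_4=0\}$ is a plane.

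A minor point: your termination measure should count only jumps $j<n$. With the convention $a_{n+1}=1$, back-flattening need not lower $\jasize$; for example, $(2,2,1,1)\mapsto(2,2,2,2)$ keeps one jump.
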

\begin{proof}
     We will proceed by induction on $t = |J_\ba|$. Consider the two base cases: $J_\ba = \emptyset$ and $J_\ba = \{n\}$. In these two base cases, $\ba = (a_1 = a_2 = \dots = a_n)$ and $\cT_{u,v}$ is a (rotated) Richardson variety. By \cite[Theorem~5.9]{KLSjuggling}, $\pi_k(\cT_{u,v})$ is a positroid variety. Now suppose $t\geq 1$ and $J_\ba \neq \{n\}$. We claim that there is a chain of tilted Richardson varieties:
     \begin{equation}\label{eqn:chainofTuv}
         \cT_{u,v} = \cT_{u^{(0)},v^{(0)},\ba^{(0)}} \rightarrow \dots \rightarrow\cT_{u^{(m)},v^{(m)},\ba^{(m)}}\rightarrow \dots \rightarrow \cT_{u^{(N)},v^{(N)},\ba^{(N)}}  
     \end{equation}
     such that $\ba^{(N)} = (a_1 = a_2 = \dots = a_n)$,  $u^{(p)}\lesssim_{\ba^{(p)}} v^{(p)}$ for all $0\leq p\leq N$ and one of the following two conditions holds:
    \begin{enumerate}
        \item $v^{(p+1)} = v^{(p)}s_i$, $\ba^{(p+1)} = \ba^{(p)}$ and $u^{(p+1)}$ is either $u^{(p)}$ or $u^{(p)}s_i$ as in \Cref{prop:projrecur}  where 
        \[i\in\begin{cases}
            \Des_{\ba^{(p)}}(v^{(p)})\cap [\jmin^{(p)}] & \text{ if }p<m\\
            \Des_{\ba^{(p)}}(v^{(p)})\cap [\jmax^{(p)}+1,n] & \text{ if }p \geq m.
        \end{cases} \]
         and $\jmin^{(p)} = \min\{i<n: \ba^{(p)}_i \neq \ba^{(p)}_{i+1}\}$ and $\jmax^{(p)} = \max\{i<n: \ba^{(p)}_i \neq \ba^{(p)}_{i+1}\}$. 
        \item $u^{(p+1)} = u^{(p)}, v^{(p+1)} = v^{(p)}$ and 
        \[\ba^{(p+1)} = \begin{cases}
            \flatten(\ba^{(p)}) & \text{ if }p<m\\
            \bflat(\ba^{(p)}) & \text{ if }p\geq m.
        \end{cases}\]
    \end{enumerate}
     

     In particular, by \Cref{prop:projrecur}, \Cref{lemma:backflatten} and \Cref{lemma:u<flatten(a)v}, $$\pi_k(\cT_{u^{(p-1)},v^{(p-1)}}) = \pi_k(\cT_{u^{(p)},v^{(p)}}) \text{ for all }p\in [N],$$
     hence $\pi_k(\cT_{u,v})$ is a positroid variety by the base cases. 

     Let us explain how to obtain such a chain of tilted Richardson varieties. If $\jump_{t_p}^{(p)}\leq k$, we can keep applying \Cref{prop:projrecur} as long as $\Des_\ba(v)\cap[\jmin^{(p)}] \neq \emptyset$. Each of these steps is of the form (1). Now if $\Des_\ba(v)\cap[\jump_{t_p}^{(p)}] = \emptyset$, then by \Cref{lemma:u<flatten(a)v}, $u\lesssim_{\flatten(\ba)}v$ and $\cT_{u^{(p)},v^{(p)},\ba^{(p)}} = \cT_{u^{(p)},v^{(p)},\flatten(\ba^{(p)})}$ and we set $u^{(p+1)} = u^{(p)}, v^{(p+1)} = v^{(p)}$ and $\ba^{(p+1)} = \flatten(\ba^{(p)})$ as in case (2). Since $\jmin$ increases every time we flatten $\ba^{(p)}$, there will come a time where $\jmin^{(p)} > k$. We set $m$ to be the index of the first time this happens.
     
     Now for $p\geq m$, we similarly apply \Cref{prop:projrecur} until $\Des_\ba(v)\cap[\jmax^{(p)}+1,n] = \emptyset$. Then by \Cref{lemma:backflatten}, we flatten $\ba$ from the back and reduce $|J_\ba|$ without changing $\pi_k(\cT_{u,v})$. This process will end once $J_{\ba^{(p)}} \subseteq \{n\}$ and we obtain the chain in \eqref{eqn:chainofTuv} as desired. 
\end{proof}

\begin{defin}\label{def:ktilted}
    For $u\leq_\ba v\in S_n$, we say $u$ is less than or equals $v$ in \emph{$k$-tilted Bruhat order} (denoted as $u\leq_\ba^{k} v$) if there exists a shortest path in 
    $\Gamma_n$
    \begin{equation}\label{eqn:kchain}
        u = w^{(0)} \xrightarrow{t_{c_1,d_1}} w^{(1)} \xrightarrow{t_{c_2,d_2}} \dots \xrightarrow{t_{c_\ell,d_\ell}} w^{(\ell)} = v
    \end{equation}
    such that $c_j\leq k<d_j$ for all $j\in [\ell]$.
\end{defin}

\begin{remark}
    In the case where $u\leq v$ in strong Bruhat order, the $k$-tilted Bruhat order is equivalent to the $k$-Bruhat order first appeared in \cite{LS82} and studied extensively in \cite{BS98}. 
\end{remark}

\begin{lemma}\label{lemma:ktiltedBruhat}
        For $u\lesssim_\ba v, i\in \Des_\ba(v)$ and $i\neq k$, we have
        \begin{enumerate}
        \item if $u\leq_\ba^k v$ then $i\in \Des_\ba(u)$. 
            \item if $i\in \Des_\ba(u)$, then $u\leq_\ba^k v$ if and only if $us_i\leq_\ba^k vs_i$.
        \end{enumerate}
        
    \end{lemma}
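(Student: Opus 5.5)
The plan is to work with shortest paths in $\Gamma_n$ inside $[u,v]$, using the Ehresmann-like criterion \Cref{thm:tilted-criterion} to reduce everything to the poset $\lesssim_\ba$, and to exploit that $i\neq k$. The key observation is the following. Right multiplication by $s_i$ with $i\neq k$ commutes with the condition ``$c\le k<d$'' after relabeling positions. Indeed, $ws_i\cdot t_{c,d}=w\,t_{s_i(c),s_i(d)}\,s_i$, and since $i\neq k$, the positions $i,i+1$ lie on the same side of the cut $k\mid k+1$. Hence $s_i(c)\le k<s_i(d)$ if and only if $c\le k<d$.

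For part (2), assume $i\in\Des_\ba(u)\cap\Des_\ba(v)$. By \Cref{thm:strong-lifting} we do not get invariance of $[u,v]$ under $s_i$, but \Cref{thm:lifting} together with \Cref{cor:lifting} gives control of descents along a saturated chain. Given a $k$-chain \eqref{eqn:kchain} from $u$ to $v$, I would show that every $w^{(j)}$ has $i\in\Des_\ba(w^{(j)})$. The argument goes downward from $v$ via \Cref{cor:lifting}: if $w^{(j)}\lesssimdot_\ba w^{(j+1)}$ and $i\in\Des_\ba(w^{(j+1)})$, then either $i\in\Des_\ba(w^{(j)})$ or $w^{(j)}=w^{(j+1)}s_i$. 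The latter would be a step by $t_{i,i+1}$, whose positions $i,i+1$ cannot straddle $k$ since $i\ne k$, so this contradicts the $k$-condition. Then $w^{(j)}s_i\lesssimdot_\ba w^{(j+1)}s_i$, which is a covering by \Cref{thm:lifting} together with the rank function \Cref{thm:length-rank-function}. Also $w^{(j+1)}s_i=w^{(j)}s_i\,t_{s_i(c),s_i(d)}$, so we obtain a $k$-chain from $us_i$ to $vs_i$, and this chain is shortest because $\ell(us_i,vs_i)=\ell_\ba(vs_i)-\ell_\ba(us_i)=\ell(u,v)$ by \Cref{cor:length-rank-function}. For the converse, I would run the same argument upward. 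Starting from a $k$-chain from $us_i$ to $vs_i$, where now $i\in\Asc_\ba$ at both ends, I would show that $i\in\Asc_\ba$ along the chain, using the dual of \Cref{cor:lifting} (proved the same way from \Cref{thm:lifting}). Multiplying the chain by $s_i$ then gives a $k$-chain from $u$ to $v$.

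For part (1), suppose $u\le^k_\ba v$ via a $k$-chain but $i\in\Asc_\ba(u)$. The descent-propagation argument above, run downward from $v$, shows that $i\in\Des_\ba(w^{(j)})$ for all $j$, and in particular $i\in\Des_\ba(u)$, a contradiction. So part (1) actually falls out of the same lemma. Note that in this argument the exceptional case $w^{(j)}=w^{(j+1)}s_i$ is excluded purely by the $k$-condition, and that is exactly where $i\neq k$ is used.

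The main obstacle is making sure that a shortest path in $\Gamma_n$ inside $[u,v]$ is the same thing as a saturated $\lesssim_\ba$-chain. This follows from \Cref{thm:tilted-criterion}, \Cref{prop:covering}, and the fact that both $\ell(u,\cdot)$ and $\ell_\ba$ are rank functions. It is needed so that \Cref{cor:lifting} applies at each step. A further subtlety is that \Cref{cor:lifting} requires $a_i=a_{i+1}$, which holds because $i\in\Des_\ba(v)$. The case where a transposition involves position $i$ or $i+1$ also needs care, but conjugating by $s_i$ handles it uniformly.
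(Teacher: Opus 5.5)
Your proposal is correct and follows the paper's proof. Both arguments propagate the $\ba$-descent at $i$ down the chain via the lifting property (\Cref{thm:lifting}), rule out the exceptional step $w^{(j)}=w^{(j+1)}s_i$ because $t_{i,i+1}$ cannot straddle $k$ when $i\neq k$, and then conjugate each step by $s_i$. The only cosmetic difference is how you certify that $w^{(j)}s_i \to w^{(j+1)}s_i$ is a shortest-path edge: the paper reads it off the diamond shape of the rank-two interval $[w^{(j)}s_i,w^{(j+1)}]$ (\Cref{lemma:thin}), whereas you use the rank function $\ell_\ba$ together with \Cref{thm:length-rank-function} and \Cref{cor:length-rank-function}.
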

    \begin{proof}
        We first prove (1). For $u\leq_\ba^k v$, let $u = w^{(0)} \xrightarrow{t_{c_1,d_1}} w^{(1)} \xrightarrow{t_{c_2,d_2}} \dots \xrightarrow{t_{c_\ell,d_\ell}} w^{(\ell)} = v$ be as in \eqref{eqn:kchain}. We claim that $i\in \Des_\ba(w^{(j)})$ for all $j\in [\ell]$. Suppose $i\in \Des_\ba(w^{(j)})$ for some $j$, by the lifting property (\Cref{thm:lifting}), either $i\in \Des_\ba(w^{(j-1)})$ or $w^{(j)}s_i,w^{(j-1)}s_i\in [w^{(j-1)},w^{(j)}]$. Since $[w^{(j-1)},w^{(j)}]$ is a rank $1$ interval, the latter implies that $w^{(j)} = w^{(j-1)}s_i$, impossible if $i\neq k$. Therefore $i\in \Des_\ba(w^{(j-1)})$ and we are done by induction.

        For ($\implies$) direction of (2), if $u\leq_\ba^k v$, then by part (1), $i\in \Des_\ba(w^{(j)})$ for all $0\leq j\leq \ell$. We claim that 
        \begin{equation}\label{eqn:Feb9aaa}
            us_i = w^{(0)}s_i \xrightarrow{t_{c'_1,d'_1}} w^{(1)}s_i \xrightarrow{t_{c'_2,d'_2}} \dots \xrightarrow{t_{c'_\ell,d'_\ell}} w^{(\ell)}s_i = vs_i
        \end{equation}
        is a shortest path on $\Gamma_n$ such that $c'_j\leq k <d'_j$ for all $j\in [\ell]$. 
        By the lifting property (\Cref{thm:lifting}), $w^{(j)}s_i\in [w^{(j-1)}s_i,w^{(j)}]$.
        Consider the rank $2$ interval $[w^{(j-1)}s_i,w^{(j)}]$ as in \Cref{fig:ktiltedrank2}, we have $t_{c_j',d_j'} = s_i t_{c_j,d_j} s_i$. Since $i\neq k$, it is straightforward to see that $c'_j \leq k < d'_j$. Therefore \eqref{eqn:Feb9aaa} produces a chain from $us_i$ to $vs_i$ as in \Cref{def:ktilted} and we can conclude that $us_i \leq_\ba^{k} vs_i$.
    \begin{figure}[h]
    \centering
    \begin{tikzpicture}[scale = 0.8]
        \node at (0,0) {$\bullet$};
        \node[left] at (-0.4,0.3) {$s_i$};
        \node[below] at (0,0) {$w^{(j-1)}s_i$};
        \node at (-1,1) {$\bullet$};
\node[left] at (-1,1) {$w^{(j-1)}$};
\node[left] at (-0.4,1.75) {$t_{c_j,d_j}$};
\node at (0,2) {$\bullet$};
\node[above] at (0,2) {$w^{(j)}$};
\node at (1,1) {$\bullet$};
\node[right] at (0.4,1.7) {$s_i$};
\node[right] at (1,1) {$w^{(j)}s_i$};
\node[right] at (0.4,0.3) {$t_{c_j',d_j'}$};
\draw(0,0)--(-1,1)--(0,2)--(1,1)--(0,0);
    \end{tikzpicture}
    \caption{The tilted Bruhat interval $[us_i, ut_{a,b}]$}
    \label{fig:ktiltedrank2}
\end{figure}
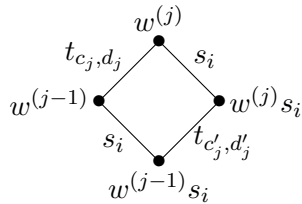

The ($\impliedby$) direction of (2) follows by similar reasoning, replacing $\Des_\ba(w^{(j)})$ with $\Asc_\ba(w^{(j)})$ whenever necessary. 
\end{proof}

Motivated by the connection between $k$-Bruhat orders and projections of Richardson varieties \cite{KLSjuggling,KLSprojection}, we prove a generalization to all tilted Richardson varieties. 

\begin{theorem}\label{thm:birational}
    The map $\pi_k:\cT_{u,v}\rightarrow \Pi_{u,v}$ is birational if and only if $u\leq_\ba^k v$. 
\end{theorem}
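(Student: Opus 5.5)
We reduce \Cref{thm:birational} step by step until $\ba$ is constant, mirroring the chain \eqref{eqn:chainofTuv} in the proof of \Cref{thm:proj=positroid}. The base case will be the Knutson--Lam--Speyer theorem for (cyclically rotated) Richardson varieties \cite{KLSjuggling,KLSprojection}: if $\ba$ is constant, then $\cT_{u,v}$ is a rotated Richardson variety. In that case $\pi_k|_{\cT_{u,v}}$ is birational exactly when $u\le v$ in the rotated $k$-Bruhat order, which coincides with $u\leq_\ba^k v$ by the remark after \Cref{def:ktilted}, after conjugating by $\cyclic^{a_1-1}$ using \Cref{lemma:cyclic-symmetry}.

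For the inductive step we fix $\ba$ with $u\lesssim_\ba v$, which exists by \Cref{thm:lesssim-exist}, and perform the same two kinds of moves used to build \eqref{eqn:chainofTuv}.

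\textbf{Moves of type (2).} These flatten $\ba$ from the front or back via \Cref{lemma:u<flatten(a)v} and \Cref{lemma:backflatten}. They leave $u$, $v$ and the variety $\cT_{u,v}$ unchanged. They also leave $\leq^k_\ba$ unchanged, because \Cref{def:ktilted} only refers to shortest paths in $\Gamma_n$ and is therefore independent of $\ba$. So both sides of the equivalence are invariant under these moves.

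\textbf{Moves of type (1).} Pick $i\in\Des_\ba(v)$ with $i\ne k$, and consider two cases.

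If $i\in\Asc_\ba(u)$, then by \Cref{lemma:ktiltedBruhat}(1) we have $u\not\leq^k_\ba v$, so we must show $\pi_k$ is not birational. By \Cref{prop:recur}, $\cT^\circ_{u,v}$ contains the dense open piece $\cT^\circ_{u,vs_i}\times\C^\ast$, where the $\C^\ast$ factor $y_i(p)$ only changes $F_i$. Since $i\ne k$, this factor does not affect $F_k$. Hence the generic fiber of $\pi_k$ on $\cT_{u,v}$ contains a curve, so $\pi_k$ is not generically finite, let alone birational.

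If $i\in\Des_\ba(u)$, then by \Cref{prop:recur} the map $gB\mapsto g\dot s_iB$ is an isomorphism $\cT^\circ_{u,v}\cong\cT^\circ_{us_i,vs_i}$ that preserves $F_k$. Consequently $\pi_k$ is birational on $\cT_{u,v}$ if and only if it is birational on $\cT_{us_i,vs_i}$; here we use \Cref{thm:closure} and the fact that the images coincide by \Cref{prop:projrecur}. On the combinatorial side, \Cref{lemma:ktiltedBruhat}(2) gives $u\leq^k_\ba v\iff us_i\leq^k_\ba vs_i$. Both conditions therefore transfer along the move.

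Iterating these moves as in the proof of \Cref{thm:proj=positroid} ends at a constant sequence, where the base case applies.

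The main obstacle is ensuring that the first case of a type (1) move is never forced at a point where we would still need to continue the chain. As stated, however, it terminates the argument immediately with a non-birational conclusion, which is consistent. A second concern is checking that the type (1) moves used in the chain avoid $i=k$. The chain only uses descents in $[\jmin]$ while $\jmin\le k$, which could include $k$. To handle this, I would first try to show that when $k\in\Des_\ba(v)\cap[\jmin]$, one can instead choose a different descent or flatten earlier. Failing that, I would handle $i=k$ directly: in that case $\pi_k$ still sees $F_k$, and I would compare fiber dimensions using the explicit Deodhar parametrization in \Cref{thm:tilted-deodhar}.
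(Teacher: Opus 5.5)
Your proposal is correct and follows essentially the same route as the paper: reduce along the chain from the proof of \Cref{thm:proj=positroid} using \Cref{prop:recur}, \Cref{prop:projrecur}, the flattening lemmas and both parts of \Cref{lemma:ktiltedBruhat}, with the Knutson--Lam--Speyer result for (rotated) Richardson varieties as the base case. Your ascent-case argument (the $y_i(p)$ factor gives curves in the generic fibers) is just a rephrasing of the paper's dimension drop. Your worry about $i=k$ is vacuous: since $\Des_\ba(v)\subseteq\{i:a_i=a_{i+1}\}$, any descent in $[\jmin]$ is strictly less than $\jmin\le k$, and any descent used in the back phase exceeds $\jmax\ge\jmin>k$.
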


\begin{proof}
    If $\cT_{u,v}$ is a Richardson variety, this is \cite[Corollary~3.4]{KLSprojection}. Similarly, if $\cT_{u,v}$ is a rotated Richardson variety, then $u\leq_\ba v$ where $\ba = (a,\dots,a)$ for some $a\in [n]$. Equivalently, $\cyclic^{-a+1}(\cT_{u,v}) = \cT_{\cyclic^{-a+1}u,\cyclic^{-a+1}v}$ is a Richardson variety. Since $\Gamma_n$ is invariant under left multiplication by $\cyclic$, $u\leq_\ba^k v \iff \cyclic^{-a+1}u\leq_{(1,\dots,1)}^k \cyclic^{-a+1}v$. Since $\cyclic$ commutes with $\pi_k$ and $\cyclic$ sends a positroid variety to a positroid variety, \Cref{thm:birational} holds in this case.

    Now when $\cT_{u,v}$ is not a (rotated) Richardson variety, as in the proof of \Cref{thm:proj=positroid}, there exists a chain of tilted Richardson varieties
    \begin{equation}\label{eqn:Feb25aaa}
        \cT_{u,v} = \cT_{u^{(0)},v^{(0)}} \rightarrow \cT_{u^{(1)},v^{(1)}}\rightarrow \dots \rightarrow \cT_{u^{(N)},v^{(N)}}
    \end{equation}
    such that $\cT_{u^{(N)},v^{(N)}}$ is a rotated Richardson variety and $\pi_k(\cT_{u^{(p)},v^{(p)}}) = \pi_k(\cT_{u^{(p+1)},v^{(p+1)}})$ for all $p<N$ as in \Cref{prop:projrecur}. 
    
    ($\implies$): Notice that by  \Cref{prop:projrecur}, $\dim(\cT_{u^{(p)},v^{(p)}}) \geq \dim(\cT_{u^{(p+1)},v^{(p+1)}})$ and that by the construction in \eqref{eqn:chainofTuv}, equality holds if and only if one of the following holds:
    \begin{enumerate}
        \item $(u^{(p+1)}, v^{(p+1)}) = (u^{(p)}, v^{(p)})$;
        \item $(u^{(p+1)}, v^{(p+1)}) = (u^{(p)}s_i, v^{(p)}s_i)$ where $i\in \Des_{\ba^{(p)}}(v^{(p)})\cap \Des_{\ba^{(p)}}(u^{(p)})$ and $i\neq k$.
    \end{enumerate}
    Therefore $\pi_k$ is birational on $\cT_{u,v}$ if and only if each step in the chain of tilted Richardsons must be one of the two cases above and $u^{(N)}\leq_{\ba^{(N)}}^k v^{(N)}$. By (2) of \Cref{lemma:ktiltedBruhat}, $\pi_k$ is birational implies $u\leq_\ba^k v$. 

    ($\impliedby$): 
    For any step in \eqref{eqn:Feb25aaa} where $(u^{(p+1)}, v^{(p+1)}) \neq  (u^{(p)}, v^{(p)})$, we have $v^{(p+1)} = v^{(p)}s_i$ with $i\in \Des_{\ba^{(p)}}(v^{(p)})$. By (1) of \Cref{lemma:ktiltedBruhat}, $i\in \Des_{\ba^{(p)}}(u^{(p)})$ and thus $u^{(p+1)} = u^{(p)}s_i$. In particular, $\dim(\cT_{u^{(p)},v^{(p)}}) = \dim(\cT_{u^{(p+1)},v^{(p+1)}})$ and $\pi_k$ is birational on $\cT_{u^{(p)},v^{(p)}}$ if and only if it is birational on $\cT_{u^{(p+1)},v^{(p+1)}}$.  Therefore by (2) of \Cref{lemma:ktiltedBruhat}, $u^{(p)} \leq_{\ba^{(p)}}^k v^{(p)}$ implies $u^{(p+1)} \leq_{\ba^{(p+1)}}^k v^{(p+1)}$. Since $\leq_{\ba}^k$ is independent of the choice of $\ba$,  we also have $u^{(p)} \leq_{\ba^{(p)}}^k v^{(p)}$ implies $u^{(p+1)} \leq_{\ba^{(p+1)}}^k v^{(p+1)}$ when $(u^{(p+1)}, v^{(p+1)}) = (u^{(p)}, v^{(p)})$. 
    Since $u = u^{(0)}\leq_\ba^k v^{(0)} = v$, we can conclude that $u^{(N)}\leq_{\ba^{(N)}}^{k} v^{(N)}$. Therefore $\pi_k$ is birational on $\cT_{u,v}$. 
\end{proof}

\begin{theorem}
    If $\pi_k:\cT_{u,v}\rightarrow \Pi_{u,v}$ is birational, then $\pi_k:\cT_{u,v}^\circ \rightarrow \Pi_{u,v}^\circ$ is an isomorphism.
\end{theorem}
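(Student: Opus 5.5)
The plan is to reuse the chain of tilted Richardson varieties built in the proofs of \Cref{thm:proj=positroid} and \Cref{thm:birational}, and to show that each step of the chain is compatible with $\pi_k$ already at the level of open varieties. This reduces the statement to the rotated Richardson case, where it is known from Knutson--Lam--Speyer. Throughout, I read $\Pi_{u,v}^\circ$ as the open positroid variety inside $\Pi_{u,v}$; part of the work is to show that it coincides with $\pi_k(\cT_{u,v}^\circ)$.

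Assume $\pi_k\colon \cT_{u,v}\to\Pi_{u,v}$ is birational, so $u\leq_\ba^k v$ by \Cref{thm:birational}. Take the chain \eqref{eqn:Feb25aaa}. By the ($\implies$) direction of that proof, every step is of one of two kinds.
\begin{enumerate}
\item A flattening step ($\ba\mapsto\flatten(\ba)$ or $\ba\mapsto\bflat(\ba)$). By \Cref{cor:independent-a}, together with \Cref{lemma:u<flatten(a)v} and \Cref{lemma:backflatten}, such a step leaves both $\cT_{u^{(p)},v^{(p)}}$ and $\cT^\circ_{u^{(p)},v^{(p)}}$ unchanged.
\item A step $(u,v)\mapsto(us_i,vs_i)$ with $i\in\Des_\ba(u)\cap\Des_\ba(v)$ and $i\neq k$.
\end{enumerate}
For a step of the second kind, \Cref{prop:recur} (case $us_i<_\ba u$) gives an isomorphism $\cT^\circ_{u,v}\cong\cT^\circ_{us_i,vs_i}$. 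It is given by $gB\mapsto g\dot s_iB$ on the Deodhar parametrization, and it only changes the $i$-th subspace. Since $i\neq k$, it commutes with $\pi_k$, so $\pi_k|_{\cT^\circ_{u,v}}$ factors through this isomorphism followed by $\pi_k|_{\cT^\circ_{us_i,vs_i}}$. Consequently $\pi_k(\cT^\circ_{u,v})=\pi_k(\cT^\circ_{us_i,vs_i})$, and $\pi_k$ is an isomorphism on one open variety if and only if it is on the other. Composing along the chain reduces the claim to $\cT^\circ_{u^{(N)},v^{(N)}}$, a rotated open Richardson variety, with $u^{(N)}\leq^k_{\ba^{(N)}}v^{(N)}$ guaranteed by \Cref{lemma:ktiltedBruhat}(2).

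For the base case, apply $\cyclic^{1-a}$ as in the proof of \Cref{thm:birational}. This turns the base case into a classical open Richardson variety $\cR^\circ_{x,y}$ with $x\leq_k y$. Here one invokes Knutson--Lam--Speyer \cite{KLSprojection}: when $x\leq_k y$, the projection $\pi_k$ restricts to an isomorphism from $\cR^\circ_{x,y}$ onto the open positroid variety $\Pi^\circ$. Since $\cyclic$ commutes with $\pi_k$ and permutes open positroid varieties, the rotated case follows.

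The main obstacle is identifying the target. One must check that $\pi_k(\cT^\circ_{u,v})$ is exactly the open positroid variety $\Pi^\circ_{u,v}$ attached to $\Pi_{u,v}$, and not merely a dense open subset of $\Pi_{u,v}$. The transport argument gives this for free once $\Pi^\circ_{u,v}$ is defined as the open stratum of the positroid variety $\Pi_{u,v}=\pi_k(\cT_{u,v})$. Two facts make this work: the chain steps preserve $\pi_k$ of the open varieties, and the closed positroid variety $\Pi_{u,v}$, hence its open stratum, is unchanged along the chain. I would verify carefully that the open sets match in the case $us_i<_\ba u$, since only that case of \Cref{prop:recur} occurs under birationality.
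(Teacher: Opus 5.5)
Your proposal is correct and follows essentially the same route as the paper. You take the chain from the proof of \Cref{thm:birational}, use \Cref{prop:recur} with $i\neq k$ to see that each nontrivial step preserves $\pi_k$ on the open varieties, and conclude from the rotated Richardson base case via Knutson--Lam--Speyer. Your explicit observation that $\pi_k|_{\cT^\circ_{u,v}}$ factors as the composite of these isomorphisms followed by $\pi_k|_{\cT^\circ_{u^{(N)},v^{(N)}}}$ is slightly more complete than the paper's write-up, which only records the equality of images $\pi_k(\cT^\circ_{u,v})=\Pi^\circ_{u,v}$.
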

\begin{proof}
    We will show that if $\pi_k:\cT_{u,v}\rightarrow \Pi_{u,v}$ is birational, then there exists a rotated Richardson variety $\cT_{u^{(N)},v^{(N)}}$ such that 
    \[\pi_k(\cT_{u,v}^\circ) = \pi_k(\cT_{u^{(N)},v^{(N)}}^\circ) = \Pi_{u,v}^\circ.\]
    
    By the ``$\implies$'' part in the proof of \Cref{thm:birational}, there exists a chain of tilted Richardson varieties of the form \eqref{eqn:Feb25aaa} such that 
    \begin{itemize}
        \item $\cT_{u^{(N)},v^{(N)}}$ is a rotated Richardson variety,
        \item $\pi_k(\cT_{u^{(N)},v^{(N)}}) = \Pi_{u,v}$,
    \end{itemize}
    and, for all $p<N$, either
    \begin{enumerate}
        \item $(u^{(p+1)}, v^{(p+1)}) = (u^{(p)}, v^{(p)})$; or
        \item $(u^{(p+1)}, v^{(p+1)}) = (u^{(p)}s_i, v^{(p)}s_i)$ where $i\in \Des_{\ba^{(p)}}(v^{(p)})\cap \Des_{\ba^{(p)}}(u^{(p)})$ and $i\neq k$.
    \end{enumerate}
    Notice that in case (1), we automatically have $\pi_k(\cT_{u^{(p+1)}, v^{(p+1)}}^\circ) = \pi_k(\cT_{u^{(p)}, v^{(p)}}^\circ)$. In case (2), by \Cref{prop:recur}, $\cT_{u^{(p+1)}, v^{(p+1)}}^\circ \cong \cT_{u^{(p)}s_i, v^{(p)}s_i}^\circ$ under the map $gB\mapsto g\dot{s}_iB$. Since $i\neq k$, we have $\pi_k(\cT_{u^{(p+1)}, v^{(p+1)}}^\circ) = \pi_k(\cT_{u^{(p)}, v^{(p)}}^\circ)$. Therefore $\pi_k(\cT_{u^{(p+1)}, v^{(p+1)}}^\circ) = \pi_k(\cT_{u^{(p)}, v^{(p)}}^\circ)$ for all $p<N$ and thus $\pi_k(\cT_{u,v}^\circ) = \pi_k(\cT_{u^{(N)}, v^{(N)}}^\circ)$. 

    Since $\cT_{u^{(N)}, v^{(N)}}$ is a rotated Richardson variety, $u^{(N)}\leq^k_{\ba}v^{(N)}$ where $\ba = (a,a,\dots,a)$ for some $a$. Since the quantum Bruhat graph $\Gamma_n$ is invariant under left multiplication by the long cycle $\cyclic = 23\cdots 1$ (\Cref{lemma:cyclic-symmetry}), we have $\cyclic^{-a+1}u^{(N)}\leq \cyclic^{-a+1}v^{(N)}$ in $k$-Bruhat order. Since the cyclic rotation $\cyclic$ commutes with $\pi_k$, by \cite[Theorem~5.9]{KLSjuggling}, $\pi_k(\cT_{u^{(N)}, v^{(N)}}^\circ) = \Pi_{u,v}^\circ$. We can then conclude that $\pi_k(\cT_{u,v}^\circ) =\Pi_{u,v}^\circ$ as desired. 
\end{proof}